\documentclass[a4paper]{amsbook}

\frenchspacing



\usepackage{leftidx}
\usepackage{amsthm}
\usepackage{epsfig}
\usepackage{alltt}
\usepackage[active]{srcltx}
\usepackage{newlfont}
\usepackage{amsmath}
\usepackage{amssymb}
\usepackage{amsfonts}
\usepackage{amscd}
\usepackage{stmaryrd}
\usepackage{enumerate}
\usepackage{verbatim}
\usepackage{changebar}
\changebarsep85pt
\usepackage{prettyref}
\usepackage{hyperref}
\usepackage{mathrsfs}
\usepackage[all, arc]{xy}
\SelectTips{eu}{}

\usepackage{ulem}

\makeindex



\def\11{\mathbf{1}}

\newcommand{\Bl}[1]{{\mathbb{#1}}}

\def\cE{\mathcal{E}}

\def\cG{\mathcal{G}}

\newcommand{\const}[1]{\underline{#1}}
\def\conv{\star}

\def\cT{\mathcal{T}}
\def\cX{\mathcal{X}}

\def\CC{\mathbb{C}} 

\newcommand{\DC}{\Bl{C}}
\def\DD{\mathbf{D}}

\def\DMT{\mathrm{MTDer}}

\newcommand{\DQ}{\Bl{Q}}

\newcommand{\DZ}{\Bl{Z}}

\newcommand{\et}{\mathrm{\acute{e}t}}

\def\End{\mathrm{End}}

\def\fin{\mathrm{fin}}

\def\For{\mathrm{For}}


\def\GG{\mathbb{G}}

\def\Ho{\mathrm{Hot}^\mathrm{b}}
\def\Hom{\mathrm{Hom}}

\def\id{\mathrm{id}}

\def\iHom{\underline{\Hom}}
\def\ind{\mathrm{Ind}}

\newcommand{\op}{\operatorname}

\newcommand{\pdef}{\mathrel{\mathop:}=}
\def\pt{\mathrm{pt}}

\def\PP{\mathbb{P}}

\newcommand{\qtimes}[1]{\times_{/#1}}

\def\QQ{\mathbb{Q}} 

\def\res{\mathrm{Res}}

\def\Spec{\mathrm{Spec}\,}

\newcommand{\ttimes}{\mathop{\widetilde{\boxtimes}}}



\def\ZZ{\mathbb{Z}}


\newcommand{\hra}{\hookrightarrow}



\newcommand{\mapright}[1]{\xrightarrow{#1}}

\newcommand{\ra}{\rightarrow}

\newcommand{\RA}{\Rightarrow}

\newcommand{\sira}{\mapright{\sim}}
\newcommand{\sirra}{\mapright{\approx}}
\newcommand{\sra}{\twoheadrightarrow}

\renewcommand{\thechapter}{\Roman{chapter}}
\renewcommand{\thesection}{\thechapter.\arabic{section}}





\newtheorem{theorem}{Theorem}[section]

\newtheorem{lemma}[theorem]{Lemma}
\newtheorem{proposition}[theorem]{Proposition}
\newtheorem{corollary}[theorem]{Corollary}

{
\theoremstyle{definition}
\newtheorem{definition}[theorem]{Definition}
\newtheorem{convention}[theorem]{Notational convention}
\theoremstyle{remark}
\newtheorem{remark}[theorem]{Remark}
\newtheorem{example}[theorem]{Example}

\newtheorem{Bemerkung}[theorem]{}
}

\let\emph\relax
\DeclareTextFontCommand{\emph}{\bfseries}

\begin{document}

\frontmatter

\title[Equivariant motives and representation theory]{Equivariant motives and \\ geometric representation theory\\ (with an appendix \\ by F. H\"ormann and M. Wendt)}

\date{\today}

\author{Wolfgang Soergel, Rahbar Virk and Matthias Wendt}
\address{Wolfgang Soergel, Mathematisches Institut, Albert-Ludwigs-Uni\-ver\-si\-t\"at Freiburg, Eckerstra\ss{}e 1, 79104 Freiburg im Breisgau, Germany}
\email{wolfgang.soergel@math.uni-freiburg.de}

\address{Rahbar Virk}
\email{rsvirk@gmail.com}

\address{Matthias Wendt, Institut f\"ur Mathematik, Universit\"at Osnabr\"uck, Albrechtstra\ss{}e 28a, 49076 Osnabr\"uck, Germany}
\email{m.wendt.c@gmail.com}

\subjclass{14C15, 14M15, 17B10, 22E47}

\keywords{geometric representation theory, Borel-equivariant mixed Tate motives, mixed geometry, weight structures, gradings, equivariant formality}  

\date{September 2018}

\begin{abstract}
We consider categories of equivariant mixed Tate motives, where equivariant is understood in the sense of Borel. We give the two usual definitions of equivariant motives, via the simplicial Borel construction and via algebraic approximations of it. The definitions turn out to be equivalent and give rise to a full six-functor formalism. For rational \'etale motives over a finite field or the homotopical stable algebraic derivator arising from semisimplified Hodge realization, the equivariant mixed Tate motives provide a graded version of the equivariant derived category. We show that, in sufficiently nice and clean cases, these categories admit weight structures; moreover, a tilting result holds which identifies the category of equivariant mixed Tate motives with the bounded homotopy category of the heart of its weight structure. This can be seen as a formality result for equivariant derived categories. We also discuss convolution functors on equivariant mixed Tate motives, and consequences for the categorification of the Hecke algebras and modules.
\end{abstract}

\maketitle 
\setcounter{page}{4}
\setcounter{tocdepth}{1}
\tableofcontents

\mainmatter

\renewcommand{\thesection}{\arabic{section}}

\chapter*{Introduction}
\label{sec:introduction}

\setcounter{section}{1}

\begin{Bemerkung}
  An important part of geometric representation theory consists of applying Grothendieck's function-sheaf correspondence to understand various function spaces occurring in representation theory. In this way one obtains natural categorifications of the function spaces in question by derived categories of Weil sheaves. Geometric tools available for Weil sheaves, such as the decomposition theorem and the weight filtration, can then be applied to solve representation-theoretic problems. 

  Recent progress in the theory of motives allows to replace Weil sheaves by suitable motivic sheaves in the above contexts. This setting has the advantage that it is possible to construct variants of triangulated categories of motivic sheaves with the property that the Tate motivic sheaves on a point do not admit any extensions amongst themselves. Using such motivic sheaves, one  obtains direct geometric constructions of graded versions of categories of representations, which up to now were constructed, if at all, in ways which to us seem rather artificial.
\end{Bemerkung}

\begin{Bemerkung}
  In \cite{SoWe}, this philosophy was exemplified in the case of varieties with affine Whitney--Tate stratifications and applied to the construction of graded versions of category $\mathcal O$. In the present article, we discuss  how to construct equivariant versions of motivic triangulated categories in the spirit of \cite{BeLu}. We also discuss versions of equivariant mixed Tate motives and explain how these lead to a graded categorification of the Hecke algebra and more generally graded categorifications of the Hecke modules appearing in the representation theory  of real reductive groups. 

As another direct consequence of the behaviour of the weight structures on equivariant mixed Tate motives in suitably nice geometric situations, we obtain streamlined proofs of formality theorems such as \cite{SchTH}, see also the sketch in \cite[5.6]{BezFi}. In our motivic framework, this can be seen as a special case of tilting \cite{Kel}, with tilting objects of explicitly geometric origin.
\end{Bemerkung}

\begin{Bemerkung}
In the case of the Hecke algebra, a graded categorification has already been constructed as the bounded homotopy category of the category of Soergel bimodules, and this has been used by Khovanov \cite{KhoHH} to give an alternative construction of his knot homology. This approach still presents some seemingly artificial technical difficulties, e.g. in proving the braid relations among the tensor products of so-called Rouquier complexes associated to links. In our motivic version of the graded categorification, the Rouquier complexes in the homotopy category of Soergel bimodules can now be understood as  standard motivic sheaves on the double cosets of a Borel subgroup in a reductive group. From this point of view, the fact that the Rouquier complexes satisfy braid relations, which is not at all obvious in the bimodule setting, becomes an immediate consequence of suitable geometric isomorphisms.

The existence of  graded categorifications in the generality of the Hecke modules mentioned above was a conjecture in \cite{So-L}, more precisely Conjecture 4.2.2 and Conjecture 4.2.3. The other conjectures in \cite[Section 4]{So-L} concerned the existence of graded categorifications of the representation-theoretic side of the main conjecture in \cite{So-L}. We think that this can be done via a suitable category of ``monodromic motives'', but this will be discussed in a sequel. In this way we can, building on \cite{BeLuK} and unpublished work of Bernstein with one of the authors, rewrite the Langlands correspondence for the field of real numbers as a (yet conjectural) equivalence of some (non-conjectural) motivic triangulated categories. As these motivic categories have nice geometric constructions, we hope that this will eventually lead to a geometric construction of the equivalence itself.   
\end{Bemerkung}

\subsection*{Equivariant motives and formalism of six functors}

The technical foundation for the representation-theoretic applications in this paper are suitable categories of equivariant mixed Tate motives. Through the recent advances in the theory of motives (possibly with coefficients) \cite{ayoub:thesis1,ayoub:thesis2,cisinski:deglise,drew:thesis}, we now have available categories of motives over fairly general base schemes and these categories of motives are connected by a six-functor formalism. Actually, and this is relevant for our present work, it is even possible to define motives over general diagrams of schemes and develop a six-functor formalism for those. 

Using these categories of motives, in the axiomatic form of a homotopical stable algebraic derivator $\mathbb{D}$, we can proceed along the way laid out by Bernstein and Lunts \cite{BeLu} to define categories $\mathbb{D}^+_G(X)$ of $G$-equivariant motives over $X$ for a $G$-variety $X$. There are essentially two possibilities for a definition, one as cartesian motives over the simplicial Borel construction ${\op{E}}G\times_{/G}X$ and one as cartesian motives over the category $\op{Res}_G(X)$ of $G$-resolutions of $X$. Both approaches have their advantages and disadvantages, but fortunately yield equivalent categories of motives, at least when restricted to motives which are bounded below for the homotopy t-structure. 

When it comes to getting the full six-functor formalism off the ground, both approaches as well as their equivalence are necessary. On the one hand, the functors $f_\ast$ and $f^!$ do not generally preserve cartesian objects over the Borel construction, hence only the 2-functors $f^\ast$ and $f_!$ can be defined via the simplicial approach. On the other hand, all the functors can be defined using the approach via resolutions, but this involves choices and hence does not directly give rise to a 2-functor. Having both equivalent approaches means that in each of the relevant pairs $(f^\ast,f_\ast)$ and $(f_!,f^!)$ the left adjoint is part of a 2-functor, which allows to rectify the right adjoints. With the functors defined, all the usual formulas like base change, localization and Verdier duality can be deduced immediately. As a byproduct, the motivic setup even allows to formulate ``classical'' objects like equivariant derived categories of $\ell$-adic sheaves in a much cleaner way.

\subsection*{Equivariant mixed Tate motives and weight structures}

Having equivariant categories of motivic sheaves with six-functor formalism, we can ask which motives should be considered ``$G$-equivariant mixed Tate motives on $X$''. Since we know mixed Tate motives in the non-equivariant situation, the only sensible answer is to extend to the equivariant situation by requiring that equivariant mixed Tate motives should be stable under the quotient equivalence -- with a grain of salt, equivariant mixed Tate motives are equivariant motives whose restrictions to ``points'' (or better orbits) are mixed Tate. As in \cite{SoWe} (or more classical references), this notion is only well-behaved in the presence of an ``equivariant Whitney--Tate'' condition which ensures that extension and restriction functors preserve equivariant mixed Tate motives. If the equivariant Whitney--Tate condition is satisfied for a variety $X$ with $G$-action, we define in Definitions~\ref{mtderdef1} and \ref{def:mtderdef2} the subcategory $\DMT_G(X)\subset \mathbb{D}^+_G(X)$ of \emph{equivariant mixed Tate motives on $X$} by the condition that forgetting the $G$-action and (both  $\ast$- and $!$-) restricting  to any point yields a (non-equivariant) mixed Tate motive. 

At this point, we consider a suitable collection of ``Bott--Samelson motives'', defined inductively by starting from extensions of local systems on orbits and closed under induction and restriction for the various inclusions of parabolic subgroups. These motives are the equivariant analogues of the motives of Bott--Samelson resolutions of Schubert cells, cf. \cite[Section 6]{SoWe}. Under suitable assumptions which are in particular satisfied for the varieties of interest, cf. Section~\ref{sec:BS}, these motives generate all equivariant mixed Tate motives, establishing the equivariant Whitney--Tate condition. The changes necessary to go from \cite{SoWe} to the more general equivariant setting mostly follow the approach outlined in \cite{virk}.

There is an additional payoff from the study of the explicit collection of Bott--Samelson motives: the usual Springer-type argument using contracting slices implies that the Bott--Samelson motives are in fact pointwise pure. Consequently, we obtain two ways to establish the existence of a weight structure on equivariant mixed Tate motives, cf. Section~\ref{sec:weights}. On the one hand, we can use the weight structures on ordinary non-equivariant motives, use the quotient equivalence and suitable gluing of weight structures. On the other hand, the Bott--Samelson motives provide a negative generating collection, giving rise to a weight structure whose heart consists of the Bott--Samelson motives. These two constructions yield the same weight structure on equivariant mixed Tate motives.
However, we can only show the existence of these weight structures in special situations, when the approach via Bott--Samelson motives works. It remains an interesting problem to construct a weight structure on equivariant motives in general.


As in \cite{SoWe}, the structure of mixed Tate motives is greatly simplified by working in a more restrictive setting. We will usually assume that the homotopical stable algebraic derivator $\mathbb{D}$ from which we construct the equivariant motivic categories $\mathbb{D}^+_G(X)$ satisfies the following two conditions, cf. Convention~\ref{conditions:grading} and \ref{conditions:weight}: 
\begin{enumerate}
\item \index{grading condition} the  \emph{grading condition} requires the category $\DMT(k)$ of mixed Tate $\mathbb{D}$-motives over the  base field $k$ to be equivalent to the derived category of the category of $\mathbb{Z}$-graded vector spaces,
\item \index{weight condition} the \emph{weight condition} requires the existence of suitably compatible weight structures on the motivic categories.
\end{enumerate} 
These conditions are satisfied in two important cases: rational \'etale motives $\mathbf{DA}^{\et}$ or Beilinson motives over a finite field $\mathbb{F}_q$ or its algebraic closure $\overline{\mathbb{F}_q}$, and motives with coefficients in the semisimplified Hodge realization over $\mathbb{C}$ denoted by $\op{MDer}$. They imply that the category of $G$-equivariant mixed Tate motives over a point, with $G$ a connected split reductive group, can be described completely in terms of the Chow ring of ${\op{B}}G$, cf. Theorem~\ref{thm:tiltpoint}. This is relevant for the final application to the categorification of modules over the Hecke algebra.

While all the results below are formulated over algebraically closed fields, some of them actually hold over arbitrary fields. The reason is that the categories of equivariant mixed Tate motives are sufficiently combinatorial so that algebraic field extensions induce equivalences (in the situations we consider). This is also a consequence of the simplifying conditions imposed on the underlying homotopical stable algebraic derivators.

\subsection*{Tilting, formality and categorification}

Using the weight structures on equivariant mixed Tate motives described above together with the pointwise purity of Bott--Samelson motives, we can then prove tilting results in the following cases of interest, cf. Sections~\ref{sec:BS} and \ref{sec:tiltingapp}, in particular Corollary~\ref{tiFL} and Proposition~\ref{prop:symmpurity}.

\begin{theorem}[Tilting for equivariant mixed Tate motives]
\label{thm:maintilt}
Assume either that $k=\overline{\mathbb{F}_q}$ and $\mathbb{D}=\mathbf{DA}^{\et}_{\mathbb{Q}}$, or that $k=\mathbb{C}$ and
$\mathbb{D}=\op{MDer}$. 
\begin{enumerate}
\item Let $G$ be a connected reductive algebraic group and let $P,Q\subseteq G$ be parabolic subgroups. Then the tilting functor is an equivalence of categories
\[
\op{Hot}^{\op{b}}(\DMT_{P\times Q}(G)_{\op{wt}=0})\sirra \DMT_{P\times Q}(G).
\]
\item  Let $G$ be a connected reductive algebraic group with a parabolic subgroup $P\subset G$. Let $\sigma$ be an involution of $G$ and let $K=G^\sigma$ be the subgroup of its fixed points. Then the tilting functor is an equivalence of categories
\[
\op{Hot}^{\op{b}}(\DMT_{P\times K}(G)_{\op{wt}=0})\sirra \DMT_{P\times K}(G).
\]
\item Let $G$ be a connected adjoint semisimple group and let $B\subseteq G$ be a  Borel subgroup. Let $X$ be the wonderful compactification of $G$. Then the tilting functor is an equivalence of categories 
\[
\op{Hot}^{\op{b}}(\DMT_{B\times B}(X)_{\op{wt}=0})\sirra \DMT_{B\times B}(X).
\]
\end{enumerate}
\end{theorem}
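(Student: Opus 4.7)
The plan is to verify in each of the three cases the hypotheses of a general tilting criterion for weight structures: if a triangulated category $\mathcal{T}$ carries a bounded weight structure whose heart $\mathcal{H}$ satisfies $\op{Hom}_{\mathcal{T}}(A,B[n])=0$ for all $A,B\in\mathcal{H}$ and all $n\ne 0$, then the realization functor $\op{Hot}^{\op{b}}(\mathcal{H})\to\mathcal{T}$ is an equivalence. Under the grading condition on $\mathbb{D}$, this Hom-vanishing in the heart follows from pointwise purity of the generators, as in \cite{SoWe}. Thus everything reduces to producing, in each case, a negatively generating family of pointwise pure Bott--Samelson motives.

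For part (1), I would use the $P\times Q$-equivariant motives of Bott--Samelson resolutions associated to reduced expressions indexing the $P\times Q$-orbits on $G$ (i.e.\ double cosets in $W_P\backslash W/W_Q$). Pointwise purity of these motives follows from a Springer-type contracting $\GG_m$-slice argument, as in the affine Whitney--Tate setup of \cite{SoWe}; generation uses the Schubert cell decomposition and the standard induction on length. Parts (2) and (3) are structurally parallel. For (2), the role of the Weyl group is played by the Richardson--Springer parameter set for $K$-orbits on $G/P$, and $K$-equivariant Bott--Samelson resolutions of the orbit closures yield the required pointwise pure generators; the purity statement is precisely what Proposition~\ref{prop:symmpurity} is designed to supply. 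For (3), the $B\times B$-orbits on the wonderful compactification $X$ are indexed by pairs consisting of a subset $I$ of simple roots and a double coset in a smaller Weyl group attached to $I$; each stratum admits a Bott--Samelson-type resolution equipped with a contracting $\GG_m$-action on a transverse slice, so the same machinery applies.

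With pointwise purity and generation in hand, the rest is formal. The Bott--Samelson collection is closed, up to extensions, shifts and direct summands, under the relevant push and pull functors by Section~\ref{sec:BS}, hence defines a weight structure on $\DMT_{P\times Q}(G)$, respectively on $\DMT_{P\times K}(G)$ and $\DMT_{B\times B}(X)$, whose heart is generated by the Bott--Samelson motives. Combining the pointwise purity with the grading condition yields the vanishing $\op{Hom}(A,B[n])=0$ for $n\ne 0$ on the heart, and the general tilting criterion then produces the claimed equivalence in each case.

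The main obstacle is the pointwise purity assertion in cases (2) and (3), where the contracting $\GG_m$-actions on normal slices to the orbit closures are not directly supplied by the group-theoretic data and must be built by hand using the explicit geometry of the symmetric variety, respectively of the boundary strata of the wonderful compactification. A secondary point is checking the equivariant Whitney--Tate condition in part (3), which depends on a careful bookkeeping of closure relations between the mixed strata indexed by $(I,w)$-pairs; once this is settled, generation by the Bott--Samelson family is essentially automatic from the inductive structure used to define them.
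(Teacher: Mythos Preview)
Your overall architecture is the paper's: Bott--Samelson generators, pointwise purity via contracting slices, then the abstract tilting criterion. But you have missed one of the two essential hypotheses of the Bott--Samelson formalism in Section~\ref{sec:BS}, and this is precisely where the real work in case~(2) lies.

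The machinery of Section~\ref{sec:BS} needs \emph{two} inputs before it produces a weight structure with the desired heart: (a) cuspidals are clean, which yields the equivariant Whitney--Tate condition via Theorem~\ref{thm:wtcond}, and (b) pointwise purity of Bott--Samelson motives. You only discuss~(b). Without~(a), the category $\DMT_{P\times K}(G)$ is not known to be well-defined (the $\ast$- and $!$-orbitwise mixed Tate conditions could differ), and the Bott--Samelson motives of Definition~\ref{BSmot} are not even available as a class. In cases~(1) and~(3) cleanness is essentially free: all $B$-orbits are of type~\textbf{G} or~\textbf{U} relative to every simple reflection (Example~\ref{ex:exgbhecke}, Lemma~\ref{lemma:easypurity}, and \cite[Lemma~1.4]{SpCompact} respectively), so there are no nontrivial cuspidals away from closed orbits. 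In case~(2) this fails: orbits of type~\textbf{T} and~\textbf{N} occur, there are genuine cuspidal local systems on non-closed orbits, and proving that these extend cleanly is the content of Proposition~\ref{prop:symmtilt}. That argument is not a slice construction; it is a careful inductive analysis using the Mars--Springer result \cite[\S7.2.1]{Mars-Springer} that $P_I\cdot Z=\overline{Z}$ for the set $I$ of simple reflections of types~\textbf{T}/\textbf{N}, together with the monodromy computation of Lemma~\ref{lemma:gmlocalsystem}.

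Your assessment of the difficulties is therefore inverted. The contracting slices in cases~(2) and~(3) are not built by hand in this paper; they are imported directly from \cite[\S6.4]{Mars-Springer} and \cite[Proposition~1.6]{SpCompact}, and then Lemma~\ref{lemma:contractingslices} converts them into pointwise purity mechanically. The Whitney--Tate condition in case~(3) is likewise easy once one knows from \cite{SpCompact} that only types~\textbf{G} and~\textbf{U} appear. The substantive obstacle is cuspidal cleanness in the symmetric case, which your outline does not mention.
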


The tilting result for equivariant mixed Tate motives can be seen as a stronger version of previously known formality results for equivariant derived categories, cf. e.g. \cite{SchTH}. As before, the formality essentially follows from pointwise purity of the generating objects, the Bott--Samelson motives. 

For a better understanding of equivariant mixed Tate motives, it would be much preferrable to have a combinatorial model for pure weight $0$ equivariant mixed Tate motives. In the case of $P\times Q$-equivariant motives over a reductive group $G$, combinatorial objects corresponding to weight $0$ motives are given by Soergel bimodules. The following result provides a motivic version of categorifications of the Schur algebroid via Soergel bimodules, cf.~Section~\ref{sec:parabolic}, in particular Proposition~\ref{prop:ff} and Corollary~\ref{cor:pf142}. 

\begin{theorem}[Graded categorification A]
\label{thm:motivebimod}
Assume either that $k=\overline{\mathbb{F}_q}$ and $\mathbb{D}=\mathbf{DA}^{\et}_{\mathbb{Q}}$ or $k=\mathbb{C}$ and
$\mathbb{D}=\op{MDer}$. Let $G$ be a reductive group over $k$, and let $P,Q\subseteq G$ be two parabolic subgroups. Denote by $\mathbb{H}$ the functor which computes equivariant cohomology of the realization of a motive. 
\begin{enumerate}
\item Equivariant cohomology induces an equivalence of tensor triangulated categories
\[
\mathbb{H}:\DMT_{P\times Q}(G)_{\op{wt}=0}\stackrel{\approx}{\longrightarrow}  \mathcal{A}_P\op{-SMod-}\mathcal{A}_Q
\]
where (as usual)  $\mathcal{A}_X={\op{H}}^\ast_X(\op{pt})$ denotes the cohomology ring of the classifying space of the group $X$. 
\item 
This in turn induces a zig-zag of equivalences of tensor triangulated categories
\[
\mathbb{T}:\DMT_{P\times Q}(G) \xleftarrow{\approx} \op{Hot}^{\op{b}}(\DMT_{P\times Q}(G)_{\op{wt}=0}) \xrightarrow{\approx} \op{Hot}^{\op{b}}(\mathcal{A}_P\op{-SMod-}\mathcal{A}_Q).
\]
\end{enumerate}
\end{theorem}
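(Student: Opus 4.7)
The plan is to reduce both parts to Theorem~\ref{thm:maintilt}(1) together with a comparison result at the level of the weight-zero hearts. Part~(2) follows at once from part~(1): applying $\op{Hot}^{\op{b}}$ to the tensor-triangulated equivalence in~(1) yields an equivalence of bounded homotopy categories $\op{Hot}^{\op{b}}(\DMT_{P\times Q}(G)_{\op{wt}=0})\sirra \op{Hot}^{\op{b}}(\cA_P\op{-SMod-}\cA_Q)$, and pre-composing with the tilting equivalence of Theorem~\ref{thm:maintilt}(1), read from right to left, produces the desired zig-zag. So the real content is in part~(1).

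For part~(1), the key observation is that by the analysis of Bott--Samelson motives preceding Theorem~\ref{thm:maintilt}, the heart $\DMT_{P\times Q}(G)_{\op{wt}=0}$ is Karoubi-generated by Tate twists of direct images $\pi_!\const{\mathbb{Q}}$ under proper maps $\pi:\op{BS}(\underline{s})\to P\backslash G/Q$ coming from Bott--Samelson-type resolutions indexed by sequences of simple reflections, and these generators are pointwise pure. Applied to such a generator, the equivariant cohomology functor $\mathbb{H}$ produces an iterated tensor product $\cA_{P_1}\otimes_{\cA_{P_2}}\cdots\otimes_{\cA_{P_{k-1}}}\cA_Q$ of cohomology rings of classifying spaces, that is, a Bott--Samelson bimodule, which is by definition a Soergel bimodule. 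So $\mathbb{H}$ actually lands in $\cA_P\op{-SMod-}\cA_Q$.

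The heart of the argument is then the motivic analogue of Soergel's hom-formula: for $E,F$ in the weight-zero heart,
\[
\Hom_{\DMT_{P\times Q}(G)}(E,F)\;\cong\;\Hom_{\cA_P\op{-SMod-}\cA_Q}(\mathbb{H}(E),\mathbb{H}(F)).
\]
I would prove this by reducing, via the $(f_!,f^!)$ and $(f^\ast,f_\ast)$ adjunctions along inclusions of Schubert cells, to computing $\Hom$-groups of shifted Tate motives over a point; there the grading condition (Convention~\ref{conditions:grading}) identifies these $\Hom$-groups with the corresponding graded vector spaces of equivariant cohomology. Pointwise purity of the Bott--Samelson generators forces the relevant spectral sequence to degenerate, giving the claimed isomorphism. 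Compatibility with the tensor-triangulated structure is then checked on the generators: convolution of two Bott--Samelson motives is itself a Bott--Samelson motive (associated to the concatenated sequence of simple reflections), and under $\mathbb{H}$ this matches the tensor product of the associated Bott--Samelson bimodules over the middle cohomology ring, as an immediate consequence of the projection formula and proper base change in the equivariant six-functor formalism established earlier.

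The main obstacle I expect is essential surjectivity together with the precise matching of indecomposables. On the motivic side the indecomposable weight-zero objects are the intersection-cohomology-type motives on the closures of $P\times Q$-orbits in $G$, whose existence and purity require a motivic decomposition theorem for pointwise pure objects; on the bimodule side they are the indecomposable Soergel bimodules, classified by their support via Soergel's theorem. Matching them up via $\mathbb{H}$ will use the full weight condition (Convention~\ref{conditions:weight}) together with the fact that, in the two motivic setups allowed by the hypothesis, pointwise pure equivariant mixed Tate motives do satisfy the decomposition theorem. Once each indecomposable weight-zero motive is identified with the Soergel bimodule having the same support, essential surjectivity follows, and with the hom-formula above and the tensor-compatibility on generators this completes part~(1); part~(2) is then obtained as indicated at the beginning.
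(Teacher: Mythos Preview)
Your overall architecture is right and matches the paper: part~(2) follows from part~(1) via the tilting equivalence, and part~(1) splits into showing that $\mathbb{H}$ lands in Soergel bimodules plus full faithfulness. But two points deserve correction.

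First, you are making essential surjectivity far harder than it is. Singular Soergel bimodules are \emph{defined} (following Williamson) as direct summands of Bott--Samelson bimodules, i.e.\ of iterated tensor products $\mathcal{A}_{P}\otimes_{\mathcal{A}_{R(1)}}\mathcal{A}_{S(1)}\otimes\cdots\otimes_{\mathcal{A}_{R(n)}}\mathcal{A}_{Q}$. Once you know $\mathbb{H}$ is fully faithful and sends Bott--Samelson motives to Bott--Samelson bimodules (which you correctly observe), the essential image is automatically the Karoubi closure of the latter, which is $\mathcal{A}_P\op{-SMod-}\mathcal{A}_Q$ by definition. No motivic decomposition theorem, no Soergel support classification, no matching of indecomposables is required; this is the content of Corollary~\ref{cor:boo4} in the paper.

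Second, your full-faithfulness sketch via adjunctions along inclusions of Schubert cells and a degenerating spectral sequence is not the paper's route and is too vague as stated: Bott--Samelson motives are not pushforwards from single cells, so those adjunctions do not directly reduce the problem to a point. The paper's Proposition~\ref{prop:ff} instead exploits that Bott--Samelson motives are built via \emph{convolution}. Using that convolution with $\leftidx{_Q}{\underline{P}}{_P}$ and $\leftidx{_P}{\underline{P}}{_Q}$ realizes restriction and induction between parabolics (Propositions~\ref{prop:convres} and \ref{prop:convind}), one reduces the first argument $M$ to a generating motive $\leftidx{_Q}{\underline{P}}{_P}$; a projective-bundle argument then reduces to $P=Q=B$; finally a localization triangle for $i\colon B\hookrightarrow G$ handles the base case, the key point being that $\mathbb{H}(j_\ast j^\ast N)$ admits no nonzero bimodule maps from $\mathcal{A}_B$ since its associated graded is built from twisted bimodules $\mathcal{A}_B 1_x$ with $x\neq e$.
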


As we already know that the category $\mathcal{A}_P\op{-SMod-}\mathcal{A}_Q$ of Soergel bimodules provides a graded categorification of the Hecke algebra, parabolic Hecke modules or more generally the Schur algebroid, the above result shows that equivariant mixed Tate motives provide an alternative way to categorify these structures. This can be applied immediately to give a simple geometric proof of the braid relations for Rouquier complexes, which is relevant for the construction of Khovanov's knot homology via Soergel bimodules, cf.~Section~\ref{sec:knot}. 

\begin{corollary}[braid relations]
\label{cor:link}
Assume the situation of Theorem~\ref{thm:motivebimod} with $G=\op{GL}_n$ and $P=Q=B$ a Borel subgroup of $G$. Denote by $\mathcal{A}_B=\op{H}^\bullet_{B}(\op{pt})$. For a simple reflection $s\in \op{S}_n$, denote $T_s^!:=i_{s,!}\underline{BsB}$ and $T_s^\ast:=i_{s,\ast}\underline{BsB}$, where $i_s:BsB\hookrightarrow G/B$ denotes the inclusion of the $B$-orbit corresponding to $s$. These are motivic lifts of the Rouquier complexes in the sense that there are isomorphisms (in the category $\op{Hot}^{\op{b}}(\mathcal{A}_B\op{-SMod-}\mathcal{A}_B)$)
\begin{eqnarray*}
\mathbb{T}(T_s^!)&\cong& \left[\mathcal{A}_B\otimes_{\mathcal{A}_B^s}\mathcal{A}_B\twoheadrightarrow \mathcal{A}_B\right]\\
\mathbb{T}(T_s^\ast)&\cong& \left[\mathcal{A}_B\hookrightarrow\mathcal{A}_B\otimes_{\mathcal{A}_B^s}\mathcal{A}_B\right].
\end{eqnarray*}
For two simple reflections $s,t\in S_n$ with $sts=tst$, these satisfy  braid relations 
\[
T_s^!\conv_B T_t^!\conv_B T_s^!\cong T_t^!\conv_B T_s^!\conv_B T_s^!\quad\textrm{ and }\quad
T_s^\ast\conv_B T_t^\ast\conv_B T_s^\ast\cong T_t^\ast\conv_B T_s^\ast\conv_B T_t^\ast
\]
which follow immediately from the geometric isomorphisms
\[
BsB\times_B BtB\times_BBsB\cong BstsB=BtstB\cong BtB\times_B BsB\times_BBtB.
\]
Since equivariant cohomology is compatible with convolution, these braid relations imply braid relations for the Rouquier complexes.
\end{corollary}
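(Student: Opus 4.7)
The plan is to establish the corollary in three stages corresponding to the three sets of assertions in the statement: identify the motivic lifts $T_s^!$ and $T_s^\ast$ with the displayed two-term Rouquier complexes, prove motivic braid relations $T_s^!\conv_B T_t^!\conv_B T_s^!\cong T_t^!\conv_B T_s^!\conv_B T_t^!$ and their $\ast$-analogue, and then transport these along the equivalence $\mathbb{T}$ of Theorem~\ref{thm:motivebimod}.

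For the identification of motivic lifts, the Schubert variety $\overline{BsB}/B\subseteq G/B$ is a $\mathbb{P}^1$ with unique $B$-fixed point $B/B$ and open cell $BsB/B$. The closed--open localization triangle
\[
T_s^!\lra\underline{\overline{BsB}}\lra\underline{B/B}
\]
on $G/B$ has pure weight-zero terms (they are Bott--Samelson motives of the kind considered in Section~\ref{sec:BS}), so under the tilting equivalence of Theorem~\ref{thm:maintilt}(1) it is represented by the two-term complex $[\,\underline{\overline{BsB}}\twoheadrightarrow \underline{B/B}\,]$ in $\op{Hot}^{\op{b}}(\DMT_{B\times B}(G)_{\op{wt}=0})$. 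The $B\times B$-equivariant cohomology computation carried out in Theorem~\ref{thm:motivebimod}(1) then yields
\[
\mathbb{T}(T_s^!)\cong\bigl[\mathcal{A}_B\otimes_{\mathcal{A}_B^s}\mathcal{A}_B\twoheadrightarrow\mathcal{A}_B\bigr],
\]
since the equivariant cohomology of $\overline{BsB}$ and of the point orbit are $\mathcal{A}_B\otimes_{\mathcal{A}_B^s}\mathcal{A}_B$ and $\mathcal{A}_B$ respectively, with canonical multiplication between them. The case of $T_s^\ast$ is strictly dual.

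For the braid relations I would use proper base change together with smoothness of the quotient $G\to G/B$ to identify convolution of two $!$-extensions with the $!$-extension from the fiber product over $B$:
\[
i_{u,!}\underline{BuB}\conv_B i_{v,!}\underline{BvB}\;\cong\; m_!\,\underline{BuB\times_B BvB},
\]
where $m\colon BuB\times_B BvB\to G/B$ is the multiplication map. Iterating and invoking the geometric isomorphisms
\[
BsB\times_B BtB\times_B BsB\cong BstsB=BtstB\cong BtB\times_B BsB\times_B BtB
\]
(which hold because $sts=tst$ is a reduced expression of length three, so the corresponding Bott--Samelson-type multiplication map is an isomorphism onto the Schubert cell), both $T_s^!\conv_B T_t^!\conv_B T_s^!$ and $T_t^!\conv_B T_s^!\conv_B T_t^!$ are canonically isomorphic to $j_!\underline{BstsB}$, where $j\colon BstsB\hookrightarrow G/B$. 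The same argument with $i_\ast$ in place of $i_!$ handles the $T_s^\ast$ braid relation. Applying the tensor equivalence $\mathbb{T}$ of Theorem~\ref{thm:motivebimod}(2) transports these motivic braid relations to the required braid relations in $\op{Hot}^{\op{b}}(\mathcal{A}_B\op{-SMod-}\mathcal{A}_B)$.

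The main obstacle is the rigorous verification that convolution is computed by $!$-pushforward from the fiber product in the present equivariant six-functor formalism: this relies on smooth and proper base change applied to the standard convolution diagram, together with the interaction of $!$-functors with the quotient equivalence used to identify $B$-equivariant motives on $G/B$ with $B\times B$-equivariant motives on $G$. Once that fiber-product formula is in hand, the braid relation is a direct translation of the geometric isomorphism, and the passage to Rouquier complexes is a purely formal consequence of the tensor triangulated structure on $\mathbb{T}$.
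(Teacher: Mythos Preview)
Your proposal is correct and follows essentially the same approach as the paper. The paper's Proposition~\ref{prop:rouquier} carries out the identification of $T_s^!$ and $T_s^\ast$ via the same localization triangles (working with the minimal parabolic $P_s\subset G$ under the $B\times B$-action rather than with $\overline{BsB}/B\subset G/B$, but these are the same picture via the quotient equivalence), and the subsequent Bemerkung deduces the braid relations from the multiplication isomorphism $BxB\times_B ByB\cong BxyB$ for $l(xy)=l(x)+l(y)$, exactly as you outline. The only cosmetic difference is that the paper computes the equivariant cohomology of $\underline{P_s}$ by writing it explicitly as the convolution $\leftidx{_B}{\underline{P_s}}_{P_s}\conv_{P_s}\leftidx{_{P_s}}{\underline{P_s}}_B$ and invoking Proposition~\ref{prop:boo4}, whereas you appeal directly to Theorem~\ref{thm:motivebimod}(1); and the paper records the twist $\mathcal{A}_B\langle 2\rangle$ in the $T_s^\ast$ complex coming from $i^!\underline{P_s}\cong\underline{B}(1)[2]$.
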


\subsection*{Graded version of equivariant derived categories and applications}
Finally, we can also apply realization functors to compare the categories of equivariant mixed Tate motives with the usual equivariant derived categories, for $\ell$-adic sheaves or Hodge modules. The following is proved in Theorem~\ref{thm:gradedparabolic}.

\begin{theorem}
\label{thm:motivesparabolic}
\begin{enumerate}
\item
Let $k=\overline{\mathbb{F}_q}$, let $G$ be a reductive group over $k$, and let $P,Q\subseteq G$ be two parabolic subgroups. Then the $\ell$-adic realization functor
\[
\op{Real}_\ell: \DMT_{P\times Q}(G)\to \op{Der}^{\op{b}}_{P\times Q}(G;\mathbb{Q}_\ell)
\]
is fully faithful on weight $0$ objects, with the essential image of the zero-weight part consisting of shifted intersection complexes concentrated in even degrees.
\item 
Let $k=\mathbb{C}$, let $G$ be a reductive group over $k$, and let $P,Q\subseteq G$ be two parabolic subgroups. Then the Hodge realization functor 
\[
\op{Real}_{\op{H}}: \DMT_{P\times Q}(G)\to \op{Der}^{\op{b}}_{P\times Q}(G;\mathbb{C})
\]
is fully faithful on weight $0$ objects, with the essential image of the zero-weight part consisting of shifted intersection complexes concentrated in even degrees. 
\item Motivic (graded) lifts of the standard and costandard objects in the equivariant derived category are given by
\[
i_{w,!}(\underline{PwQ}[l(w)])\quad \textrm{ and }\quad
i_{w,\ast}(\underline{PwQ}[l(w)]).
\]
\item
The functors in points (1) and (2) are compatible with convolution and Verdier duality. 
\item The functors in points (1) and (2) are degrading functors in the sense of \cite{BGSo}. 
\end{enumerate}
\end{theorem}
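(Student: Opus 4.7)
The plan is to bootstrap everything from the tilting equivalence in Theorem~\ref{thm:maintilt} and the categorification Theorem~\ref{thm:motivebimod}, combined with the standard compatibility of realization functors with the six-functor formalism. I would handle (3) first, then (1)--(2), then deduce (4) and (5).

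For (3), one observes that the $\ell$-adic and Hodge realizations commute with the six operations (this is built into their construction at the level of $\mathbb{D}$, and the equivariant versions inherit it from the simplicial Borel construction, where everything is defined diagramwise). Since the orbits $PwQ$ are smooth and the constant motive $\underline{PwQ}$ realizes to the constant sheaf, the shifts $[l(w)]$ produce precisely the standard perverse shifts on the orbits, so $i_{w,!}\underline{PwQ}[l(w)]$ and $i_{w,\ast}\underline{PwQ}[l(w)]$ realize to the usual standard and costandard perverse sheaves. The motivic lift property is then immediate.

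For (1) and (2), full faithfulness is tested on a set of generators. By Theorem~\ref{thm:motivebimod}, the weight-zero heart $\DMT_{P\times Q}(G)_{\op{wt}=0}$ is generated (as an additive category closed under direct summands) by Bott--Samelson motives, which are pointwise pure. The grading condition guarantees that $\op{Hom}$ groups between weight-zero motives decompose as a sum over Tate twists of graded pieces, each isomorphic to an equivariant cohomology group of a Bott--Samelson variety (computed via $\mathbb{H}$ and the Soergel bimodule description). The same kind of computation in the equivariant derived category -- using pointwise purity of the corresponding IC sheaves together with the fact that equivariant cohomology of Bott--Samelson varieties is concentrated in even degrees and freely generated over $\mathcal{A}_P$, $\mathcal{A}_Q$ -- yields an $\op{Hom}$ which matches the sum of all Tate-twisted motivic $\op{Hom}$s. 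Essential image: realizations of the Bott--Samelson generators are, by the decomposition theorem for pointwise pure objects, direct sums of shifted $\op{IC}$ sheaves on orbits, and the evenness assertion follows since the generating Bott--Samelson motives are pure of weight $0$ and realize to complexes concentrated in even cohomological degrees (which is precisely the even degrees statement after applying the perverse shift convention).

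Part (4) is essentially automatic: convolution on both sides is assembled from pullback, pushforward, and exterior product via the standard formula $A\conv B = p_{13,!}(p_{12}^\ast A\otimes p_{23}^\ast B)$, all of which commute with realization; Verdier duality commutes with realization because it is defined from $f^!$ and $f^\ast$ applied to the dualizing object, and the dualizing object is preserved. For (5), degrading in the sense of \cite{BGSo} requires exhibiting the motivic category as a graded enhancement of the equivariant derived category, with Tate twist $(1)$ playing the role of the grading shift and the realization collapsing the grading. Concretely one must check, on standard/costandard generators, that
\[
\op{Ext}^\bullet_{\op{Der}^{\op{b}}_{P\times Q}}(\op{Real}(A),\op{Real}(B)) \;\cong\; \bigoplus_{n\in\ZZ} \op{Hom}_{\DMT_{P\times Q}(G)}(A,B(n)[\bullet]),
\]
which one reduces to the weight-zero case via the tilting equivalence and then to the explicit Soergel-bimodule computation of Theorem~\ref{thm:motivebimod}. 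The main obstacle will be this last bookkeeping: matching the BGSo axioms for a degrading functor precisely, especially ensuring that the decomposition of classical Ext into Tate-twisted motivic Homs is compatible with composition, which requires tracing through the tilting equivalence carefully and invoking the already-established compatibility with convolution to reduce to the point case where the Chow ring of $\op{B}G$ supplies the answer.
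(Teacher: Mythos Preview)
Your approach is broadly correct and close in spirit to the paper's, but differs in two places and contains one muddled formulation.

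For the essential image in (1)--(2), you invoke the decomposition theorem applied to realizations of Bott--Samelson motives. The paper instead uses the inductive definition of the BS-closure directly: the generating motives are $j_\ast M$ for clean $M$ on the closed orbit, whose realization is already an IC sheaf since $j_! M \cong j_\ast M$; the closure operations (twist, summand, extension, restriction, induction) each preserve the property ``realizes to a direct sum of even-shifted IC complexes''. This avoids appealing to the decomposition theorem and makes the evenness statement transparent (only the operation $M\mapsto M(1)[2]$ introduces shifts).

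For full faithfulness on the heart, your sentence about Hom groups ``decomposing as a sum over Tate twists of graded pieces'' is not right as stated: for $M,N$ in the heart there is a single Hom group, and the Tate-twist decomposition you describe is the degrading isomorphism of (5), not a property of weight-zero Homs. What actually proves full faithfulness is the commutative triangle
\[
\DMT_{P\times Q}(G)_{\op{wt}=0}\ \to\ \op{Der}^{\op{b},\op{ss},\op{ev}}_{P\times Q}(G)\ \to\ \mathcal{A}_P\op{-SMod-}\mathcal{A}_Q,
\]
where both the composite (motivic hypercohomology, Theorem~\ref{thm:motivebimod}) and the second arrow (classical equivariant hypercohomology on IC sheaves) are equivalences; hence so is the first. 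Your idea is the right one, but the articulation is off.

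For (5), you reduce to weight zero via tilting and then to the bimodule side. The paper's argument is more direct and avoids the formality input: it uses a d\'evissage writing any mixed Tate motive as an iterated extension of standard/costandard objects (via localization triangles along the orbit stratification), reducing the degrading isomorphism to the case of constant motives on a single orbit, where it follows from the induction equivalence and conservativity of realization on $\DMT_H(\pt)$. This is cleaner because it does not require knowing in advance that the image of the heart in the derived category satisfies its own tilting property.
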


In the case $P=Q=B$, this statement about the realization functor applied to the tilting equivalence of Theorem~\ref{thm:maintilt} recovers the formality result of Schn\"urer, cf. \cite[Theorem 1]{SchTH}.
The result also provides a graded version of the equivariant derived categories in the parabolic situations. On the level of Grothendieck groups, this result implies a second version of motivic graded categorification of the Hecke algebra and its parabolic modules, which is obtained by applying Grothendieck's function-sheaf correspondence to the $\ell$-adic sheaves obtained from realization of equivariant mixed Tate motives. The following is proved in Section~\ref{sec:parabolic}, more precisely Theorem~\ref{thm:schur}. 

\begin{theorem}[Graded categorification B]
\label{thm:functionsheaf}
Let $k=\mathbb{F}_q$, let $G$ be a connected reductive  group over $k$ let $B\subset G$ a Borel,  and let $P,Q\subseteq G$ be two parabolic subgroups. 
\begin{enumerate}
\item 
  Combining the $\ell$-adic realization functor with Grothendieck's function-sheaf correspondence induces an isomorphism from $\op{K}_0(\DMT_{B\times B}(G))$ to the Iwahori--Hecke-algebra. Applying $\op{K}_0$ to the Verdier duality functor yields the Kazhdan--Lusztig involution. 
\item The function-sheaf correspondence of Grothendieck induces an isomorphism from the split Grothendieck group of $\DMT_{B\times B}(G)_{\op{wt}=0}$ to the Iwahori--Hecke algebra, and similarly from the split Grothendieck group of $\DMT_{P\times Q}(G)_{\op{wt}=0}$ to the corresponding parabolic Hecke-bimodule. 
\end{enumerate}
\end{theorem}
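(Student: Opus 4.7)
The plan is to assemble three ingredients already developed in the paper: the tilting equivalence of Theorem~\ref{thm:maintilt}, which reduces $K_0$ of the triangulated category to the split Grothendieck group of its weight-$0$ heart; the realization statement of Theorem~\ref{thm:motivesparabolic}, which identifies the indecomposable weight-$0$ objects with shifted and Tate-twisted intersection complexes of Bruhat (resp.\ double-coset) strata; and the classical function-sheaf dictionary, which sends intersection complexes on $B\backslash G/B$ (resp.\ $P\backslash G/Q$) to the Kazhdan--Lusztig basis of the Iwahori--Hecke algebra (resp.\ its parabolic bimodule). The only new work is bookkeeping of shifts and twists, and verifying compatibility with convolution and Verdier duality.

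I would first establish part (2). By Theorem~\ref{thm:maintilt} the category $\DMT_{B\times B}(G)_{\op{wt}=0}$ is Karoubian additive, and by Theorem~\ref{thm:motivesparabolic} its indecomposables are exactly the motivic lifts of the shifted intersection complexes $\op{IC}(BwB)[\ell(w)]$ together with all their Tate twists $(n)$, for $w\in W$ and $n\in\ZZ$. The Tate twist endows the split Grothendieck group with a $\ZZ[v,v^{\pm1}]$-module structure, where after realization over $\FF_q$ the Frobenius eigenvalue of $(1)$ corresponds to the Hecke parameter (with the standard normalization $v=q^{-1/2}$). Grothendieck's function-sheaf correspondence then sends the realization of each such indecomposable to the Kazhdan--Lusztig element $v^n C_w$; since the $\{v^n C_w\}$ form a $\ZZ$-basis of the Iwahori--Hecke algebra, this yields the claimed isomorphism. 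The parabolic case is completely parallel, with Bruhat cells $BwB$ replaced by double cosets $PwQ$ representing shortest elements, and the Kazhdan--Lusztig basis replaced by its parabolic analogue.

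Part (1) is then deduced from part (2) by applying the tilting equivalence: since $\op{Hot}^{\op{b}}$ of an additive category has Grothendieck group equal to the split Grothendieck group of the category with the shift acting as $-1$, Theorem~\ref{thm:maintilt} gives
\[
K_0(\DMT_{B\times B}(G))\cong K_0^{\op{split}}(\DMT_{B\times B}(G)_{\op{wt}=0})/([M[1]]+[M]).
\]
Combined with part (2), this identifies the left-hand side with the Iwahori--Hecke algebra. Point (4) of Theorem~\ref{thm:motivesparabolic}, asserting that realization is compatible with convolution, upgrades this to an isomorphism of $\ZZ[v,v^{\pm1}]$-algebras; the underlying ring structure on motivic $K_0$ is the convolution product $\conv_B$, which is straightforwardly compatible with the product of functions under function-sheaf correspondence. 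For the Verdier duality statement, one uses the motivic lifts $i_{w,!}(\underline{BwB}[\ell(w)])$ and $i_{w,\ast}(\underline{BwB}[\ell(w)])$ from Theorem~\ref{thm:motivesparabolic}(3): Verdier duality swaps them and matches the bar-involution formula $\overline{T_w}=T_{w^{-1}}^{-1}$ on standard basis elements, so by faithfulness of the realization-plus-function-sheaf functor the induced map on $K_0$ is exactly the Kazhdan--Lusztig involution.

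The main obstacle is the normalization and bookkeeping of shifts and Tate twists: one must verify carefully that the Frobenius eigenvalue of the Tate twist $(1)$ matches the $v$-parameter consistently, so that the map on $K_0$ is indeed $\ZZ[v,v^{\pm1}]$-linear and that the shifted IC sheaves land on the self-dual Kazhdan--Lusztig elements $C_w$ rather than on some rescaled variant. The parity vanishing for IC sheaves on Schubert cells, which is implicit in Theorem~\ref{thm:motivesparabolic} via ``shifted intersection complexes concentrated in even degrees'', is what guarantees that the Frobenius traces produce integer polynomial coefficients with no half-integer ambiguity, and is therefore what makes the isomorphism with the integral form of the Hecke algebra (rather than a Laurent extension) work.
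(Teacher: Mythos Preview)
Your proposal is correct and follows essentially the same approach as the paper's proof (given as Theorem~\ref{thm:schur}): both assemble the tilting equivalence (Theorem~\ref{thm:maintilt}/Corollary~\ref{tiFL}), the realization identification of weight-$0$ objects with even-shifted intersection complexes (Theorem~\ref{thm:motivesparabolic}/\ref{thm:gradedparabolic}), and the classical categorification via function--sheaf correspondence. The only difference is cosmetic ordering---you deduce (1) from (2) via tilting, while the paper proves (1) directly from realization to $\op{Der}^{\op{b},\op{ss},\op{ev}}_{P\times Q}(G)$ and then obtains (2) by invoking the tilting equivalence---and your displayed quotient $K_0^{\op{split}}(\ldots)/([M[1]]+[M])$ is redundant, since the identification $K_0(\op{Hot}^{\op{b}}(\mathcal{A}))\cong K_0^{\op{split}}(\mathcal{A})$ holds without any further relation.
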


Actually, Theorem~\ref{thm:schur} provides a categorification of the full Schur algebroid (whose definition is recalled in Section~\ref{sec:hecke}). The two graded categorifications we obtained in Theorem~\ref{thm:motivebimod} and Theorem~\ref{thm:functionsheaf} actually agree in the sense that we get a commutative diagram of isomorphisms of graded algebras with involution (resp. modules over them). This essentially follows from various compatibility statements proved throughout the text.

Now, finally, there is a similar statement for the case of symmetric varieties, providing a motivic graded version of the Hecke module considered by Mars and Springer in \cite{Mars-Springer}, cf. Theorems~\ref{thm:gradedsymm} and \ref{thm:comparisonMS}. 

\begin{theorem}
\label{thm:motivesymmetric}
Let $k=\overline{\mathbb{F}_q}$ be the algebraic closure of a finite field of odd characteristic, let $G$ be a connected reductive group, let $\theta:G\to G$ be a non-trivial algebraic involution, and let $T\subset B\subset G$ be $\theta$-stable maximal torus and Borel subgroup. Denote by $K$ the subgroup of $G$ fixed by the involution $\theta$. Assume all these data are defined over the finite field $\mathbb{F}_q$. 

\begin{enumerate}
\item
The $\ell$-adic realization functor 
\[
\op{Real}_\ell: \DMT_{B\times K}(G)\to \op{Der}^{\op{b}}_{B\times K}(G;\mathbb{Q}_\ell)
\]
is fully faithful on the heart of the weight structure. The essential image of the heart consists of intersection complexes concentrated in even degrees. 
\item Motivic (graded) lifts of the standard and costandard objects are given by $!$- and $\ast$-extensions of local systems on $B$-orbits of $G/K$. 
\item $\ell$-adic realization is compatible with involution and Verdier duality. 
\item $\ell$-adic realization is a degrading functor in the sense of \cite{BGSo}. 
\end{enumerate}
As a result, the $\ell$-adic realization functor induces an isomorphism of Grothendieck groups
\[
\op{K}_0(\DMT_{B\times K}(G))\xrightarrow{\cong} \op{K}_0(\mathcal{A}_{G/K}), 
\]
where $\op{K}_0(\mathcal{A}_{G/K})$ denotes the Hecke module considered in \cite{Mars-Springer}. 
\end{theorem}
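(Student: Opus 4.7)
The plan is to transplant the argument from the parabolic case (Theorem~\ref{thm:motivesparabolic}) to the symmetric-variety setting, with the Richardson--Springer combinatorics of $B$-orbits on $G/K$ replacing the Bruhat decomposition. The keystone is Theorem~\ref{thm:maintilt}(2), which presents $\DMT_{B\times K}(G)$ as the bounded homotopy category of its weight-zero heart; once full faithfulness of $\op{Real}_\ell$ is established on the heart together with the even-concentration statement, the claims for the whole category follow by passage to homotopy categories.

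For assertion~(1) I would proceed in three steps. First, the realization of any Bott--Samelson motive is pure by the pointwise purity established in Section~\ref{sec:BS} via a Springer-type contracting-slice argument; by the decomposition theorem, its realization splits as a direct sum of shifted intersection complexes of $B$-orbit closures on $G/K$, with Tate coefficients living only in even cohomological degrees. Second, I would verify that every $B$-equivariant intersection complex on such an orbit closure appears as a direct summand of some Bott--Samelson realization; this uses Bott--Samelson-type resolutions of orbit closures in $G/K$ in the spirit of Richardson--Springer, the existence of which is what needs the odd residue characteristic. Third, full faithfulness on the heart reduces to a comparison of graded $\op{Hom}$-groups between pointwise pure motives; these are equivariant cohomology groups, which by purity live only in even degrees, and the comparison with their $\ell$-adic counterparts is an isomorphism by the grading condition together with Theorem~\ref{thm:tiltpoint}.

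Assertions~(2)--(4) then follow formally. For (2), the local systems with finite monodromy on $B$-orbits of $G/K$ lift to equivariant mixed Tate motives, and $i_{w,!}, i_{w,\ast}$ preserve $\DMT$ by the equivariant Whitney--Tate condition which is part of the setup needed for $\DMT_{B\times K}(G)$ to be defined in the first place. For (3), compatibility with Verdier duality is automatic since $\op{Real}_\ell$ commutes with the six operations; compatibility with the involution follows because $\theta$ acts compatibly on both sides via the symmetry of the six-functor formalism. For (4), the degrading property in the sense of \cite{BGSo} is the combination of full faithfulness on the heart with the tilting identification, which together turn the weight filtration into a grading. For the final $\op{K}_0$-isomorphism, the standard objects of~(2) give a basis on the left, Mars--Springer's description provides the matching basis on the right, and compatibility with the Hecke-module structure is transported from realization's compatibility with convolution.

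The main obstacle is the second step of assertion~(1): exhibiting enough Bott--Samelson-type resolutions of $B$-orbit closures on $G/K$ to exhaust all $B$-equivariant $\op{IC}$-sheaves. Unlike the parabolic flag variety, the $B$-orbit closures on $G/K$ are in general neither smooth nor rationally smooth, and their Bott--Samelson-type resolutions require the full Richardson--Springer theory of crossings and Cayley transforms. The odd-characteristic assumption enters precisely here, to ensure that the involution $\theta$ and the Richardson--Springer combinatorics behave well, so that the resolutions produce the desired pointwise-pure motivic lifts of the intersection complexes.
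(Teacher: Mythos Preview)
Your overall strategy---parallel the parabolic case, use tilting to reduce to the heart, then check full faithfulness and essential image there---matches the paper's. Steps~1 and~3 of your argument for (1) are essentially what the paper does, and (3)--(4) do follow formally from compatibility of realization with the six functors.

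The genuine discrepancy is in Step~2 and your identified ``main obstacle.'' You propose to exhaust all $B$-equivariant IC sheaves by pushing forward from Bott--Samelson-type resolutions of orbit closures. But the paper explicitly notes (cf.~\ref{symmBSresolution}) that in the symmetric case the natural Richardson--Springer maps $Z_{(\mathbf{v},\mathbf{s})}\to\overline{V}$ are generically finite of degree $2^{c(V)}$, not resolutions except in multiplicity-one cases. More seriously, the IC sheaves on $G/K$ include those with \emph{non-trivial} local-system coefficients coming from non-connected stabilizers; exhibiting these as summands of pushforwards of constant sheaves from geometric resolutions is not straightforward.

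The paper takes a different route: it first proves (2), lifting local systems on orbits directly to $\DMT_K(Y)$ via the induction equivalence $\DMT_K(Y)\approx\DMT_{K'}(\pt)$, since the component group of the stabilizer $K'$ is finite (of exponent~2) and its representations lift to mixed Tate motives. These lifts are the cuspidal motives in the Bott--Samelson formalism, and the key technical input is that \emph{cuspidals are clean} (Proposition~\ref{prop:symmtilt}), which relies on \cite[\S7.2.1]{Mars-Springer}. This is what establishes the equivariant Whitney--Tate condition and makes the Bott--Samelson category well-behaved; it is the real place where the symmetric-variety hypothesis and odd characteristic are used, rather than in constructing geometric resolutions. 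Once the local systems are lifted, every IC complex appears as a summand of some Bott--Samelson motive because the Bott--Samelson closure is built from exactly these cuspidals via induction and restriction.
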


There are similar results for wonderful compactifications of adjoint semisimple groups, cf. Theorems~\ref{thm:gradedwonderful} and \ref{thm:comparisonSpCompact}. There is also a version of the above result for the case where the base field is $k=\mathbb{C}$ which provides an approach to mixed geometric representation theory without passing to finite fields. 

\index{Soergel--Lunts conjecture}
The tilting results discussed above also imply strong formality results for the $P$-equivariant derived categories of symmetric varieties $G/K$. As a particular consequence, we can prove the \emph{Soergel--Lunts conjecture} which allows to identify the equivariant derived category as a derived category of modules over the geometric extension algebra. See Theorem~\ref{thm:slsymm} for a precise statement, and \ref{SLconj} for the relevant notation.

The above theorem provides graded versions of equivariant derived categories in the case of symmetric varieties. In \cite[Section 4]{So-L}, the existence of such graded versions is part of a series of conjectures formulating Langlands duality for representations of real Lie groups in the context of Koszul duality patterns. The following result establishes those conjectures which are related to the ``geometric'' side of the expected Koszul duality, cf. Theorem~\ref{thm:abvconjecture}.

\begin{corollary}[Soergel conjectures]
\index{Soergel conjecture}
The category $\DMT_G(X)$ satisfies the requirements for $\mathcal{D}_g$ in \cite[Conjecture 4.2.2 and 4.2.3]{So-L}. 
\end{corollary}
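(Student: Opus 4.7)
The plan is to verify, item by item, that $\DMT_G(X)$ together with its realization functor satisfies each clause of the requirements for $\mathcal{D}_g$ in \cite[Conjecture 4.2.2 and 4.2.3]{So-L}. Since at this point in the paper all the heavy lifting has been done, the proof should be a matter of organizing the existing results into the format demanded by Soergel's list.

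First I would reproduce Soergel's axiomatic list for $\mathcal{D}_g$: schematically, $\mathcal{D}_g$ is to be a triangulated category equipped with a Tate twist $\langle 1\rangle$, a collection of standard/costandard objects indexed by $B$-orbits on $X$, a weight/grading structure whose heart consists of pure objects, compatibility with a convolution action by the graded Hecke category, compatibility with Verdier duality, and a \emph{degrading functor} to the ordinary equivariant derived category of the symmetric variety $G/K$ (or, in the compactified case, the wonderful compactification). Having this list in hand, I would match each clause to the relevant theorem already proved in the body of the paper.

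The degrading functor is provided by $\op{Real}_\ell$ (or $\op{Real}_{\op{H}}$), whose degrading property is exactly Theorem~\ref{thm:motivesymmetric}(4); the motivic standard/costandard objects of Theorem~\ref{thm:motivesymmetric}(2) lift the standard/costandard perverse sheaves, and the Tate twist $(1)[2]$ on the motivic side realizes Soergel's grading shift. The weight structure together with its pointwise purity is Theorem~\ref{thm:maintilt}(2) (and its wonderful-compactification analogue), and the tilting equivalence with $\op{Hot}^{\op{b}}$ of its heart expresses the formality/Koszulity input demanded in \cite[4.2.2]{So-L}. Verdier duality compatibility is Theorem~\ref{thm:motivesymmetric}(3), and the convolution structure of the parabolic Hecke bimodule, on both the categorical and the Grothendieck-group level, is supplied by Theorem~\ref{thm:motivesparabolic} together with Theorem~\ref{thm:functionsheaf} and the identification of $\op{K}_0$ with the Mars--Springer module in Theorem~\ref{thm:motivesymmetric}.

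The main obstacle is not mathematical but translational: Soergel's conjectures in \cite{So-L} are phrased in a language (``mixed category'', ``grading functor'', Hecke-module action) that predates the motivic framework used here, so the step that requires actual attention is to pin down the dictionary, i.e.\ to check that the Tate twist matches Soergel's grading shift, that the weight structure matches his mixed structure, and that the convolution coming from the six-functor formalism matches the Hecke action he postulates. Once these identifications are made, each clause of the conjecture is discharged by citing one of the theorems above, and no further computation is needed.
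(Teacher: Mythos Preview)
Your approach is essentially the paper's: the corollary is proved (as Theorem~\ref{thm:abvconjecture}) by citing the already-established results, chiefly Theorem~\ref{thm:gradedsymm} (the detailed version of what you call Theorem~\ref{thm:motivesymmetric}), and checking these discharge the clauses of \cite[4.2.2, 4.2.3]{So-L}.

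There are, however, two points where your proposal diverges from the paper's actual argument. First, your reconstruction of Soergel's list is broader than what the paper claims is required: the paper reads Conjecture~4.2.2 as asking only for a graded version of $\op{Der}_B(G/K)$ together with lifts of the standard objects, and Conjecture~4.2.3 as asking for lifts of the costandard objects. The convolution/Hecke-module action and Verdier-duality compatibility you list are established in the paper, but they are not invoked as part of verifying these two conjectures. Second, the paper adds two ingredients you do not mention: a short uniqueness argument for the lifts (uniqueness up to Tate twist, pinned down by the nonvanishing of a single Hom), and a fully faithful embedding $\DMT_G(X)\hookrightarrow \op{Der}^-(\op{Ext}_G(X)\op{-mod})$, obtained by mapping into the motivic lift of $\bigoplus \mathcal{L}_x$, recognizing the weight-zero part as a tilting subcategory, and then combining Theorem~\ref{thm:derivatortilting} with Proposition~\ref{prop:symmpurity}. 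This last item is part of the ``More precisely'' content of Theorem~\ref{thm:abvconjecture} and is absent from your outline.
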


What would still be missing now for a better understanding of $\DMT_{B\times K}(G)$ are combinatorial models for the heart of the weight structure, similar to the Soergel bimodules for $\DMT_{P\times Q}(G)$. 

We also expect that the approach via suitable categories of mixed Tate motives should also provide a solution for the representation-theoretic side of Soergel's conjectures in \cite{So-L}. More precisely, there should be categories of motives with suitable monodromy conditions which, via a motivic version of the localization results of Bernstein and Lunts in \cite{BeLuK}, provide graded versions of derived categories of Harish-Chandra modules. Then we can hope for a motivic Koszul duality modelling the Langlands duality in the representation theory of real Lie groups, as envisioned \cite{So-L}. Details concerning the categories of monodromic motives will be found elsewhere. 

\subsection*{Where do we go from here}

We outline a couple of future directions for motives in geometric representation theory. 

\begin{Bemerkung}
In this work, we have concentrated on the particular cases of parabolic group actions on partial flag varieties, symmetric varieties and wonderful compactifications. There are a couple of other possible situations where results similar to ours could be achieved. 

First, we actually have ignored completely the simplest and best-studied case of group actions, namely toric varieties. It seems very likely that an application of the motivic formalism to toric varieties is possible, and we believe it could be used to recover and strengthen some of the formality results contained in \cite[Section 15]{BeLu}. 

One of the very interesting representation-theoretic situations is the action of a connected reductive group $G$ on its nilpotent cone $\mathcal{N}_G$. Formality results based on mixed geometry (based on $\ell$-adic sheaves with Frobenius action) have been established in \cite{rider:russell}. The central result which would be required to make the motivic formalism work would be ``cuspidals are clean''; this would allow to generalize and strengthen the results of \cite{rider:russell} and provide a motivic version of the Springer correspondence. We understand that Jens Eberhardt is working on this question.  

More ambitious examples of group actions to which one could try to apply motives would be the affine Grassmannian (for mixed geometry related to representations of quantum groups as in \cite{arkhipov:bezhrukavnikov:ginzburg} or \cite{mirkovic:vilonen}) or the adjoint action (for a motivic version of character sheaves). In particular for the latter, new ideas would be needed because with infinitely many $B$-orbits it clearly falls outside the scope of equivariant mixed Tate motives as discussed here.
\end{Bemerkung}

\begin{Bemerkung}
Most of our work concentrates on homotopical stable algebraic derivators $\mathbb{D}$ with rational coefficients, and in the representation-theoretic applications we mostly specialize to $\mathbb{D}=\mathbf{DA}^{\et}(-;\Lambda)$ or $\mathbb{D}=\op{MDer}$. It is natural to ask if results could be obtained with other coefficients. 

One of the natural extension would be to ask questions for modular coefficients. The work of Eberhardt and Kelly \cite{eberhardt:kelly} sets up the framework to define perverse mixed Tate motives with finite coefficients (characteristic of coefficients equal to the characteristic of the base field). Very likely our framework could be modified to work with the Milnor K-theory derivator considered in \cite{eberhardt:kelly} and obtain results for modular representation theory. 

Another sort of coefficients to which we could apply the framework of equivariant mixed Tate motives, much in the spirit of $\mathbb{D}=\op{MDer}$, would be generalized motivic cohomology theories. Given a ring spectrum $E$ in the stable $\mathbb{A}^1$-homotopy category, one can consider the homotopical stable algebraic derivators given by the categories $\mathcal{E}(X)\subset \mathcal{SH}(X)$ of $E$-modules in $\mathcal{SH}(X)$. One particular example of interest would be the Hecke algebras associated to elliptic cohomology studied e.g. in \cite{zhao:zhong} and \cite{lenart:zainoulline}. It seems likely that equivariant mixed Tate motives for the derivator given by modules over elliptic cohomology provides a categorification of the algebras and modules considered in loc.cit. Along this line, one could hope that positivity conjectures formulated in these papers could be resolved by exhibiting the relevant coefficients as dimensions of Bott--Samelson motives over elliptic cohomology.
\end{Bemerkung}

\begin{Bemerkung} 
As a slightly more drastic change of setup, we can also consider six-functor formalisms for spaces other than algebraic varieties. It seems very likely that simply plugging in the six-functor for locally compact Hausdorff spaces would recover most of the classical theory of equivariant sheaves and functors in \cite{BeLu} (of course without the theory of weights present in the motivic setting). 

A more interesting setting to look at  would be the six-functor formalism for rigid analytic varieties as set up in \cite{ayoub:rigid}. A formalism of equivariant motives over rigid analytic varieties with action of a $p$-adic Lie group should be possible, closely following the argumentation in the present work. This could be very useful to study the representation theory of $p$-adic Lie groups using motivic and geometric methods and establish results parallel to the ones for complex and real Lie groups. 
\end{Bemerkung}

\subsection*{Comparison to related work}

There have been several approaches to definitions of categories of equivariant motives and the corresponding six functor formalism. Equivariant Chow motives for smooth projective varieties with action over base fields were considered by Laterveer \cite{laterveer} and more recently by Calm{\`e}s--Neshitov--Zainoulline. 

A definition of motivic homotopy for finite groups was already given by Voevodsky. Various more general definitions of equivariant motivic homotopy (applicable to more general base schemes or actions of linear group schemes) have been given by Herrmann,  Heller--Krishna--\O{}stv\ae{}r, Carlsson--Joshua and Hoyois. 
However, only Hoyois \cite{hoyois} discussed a six functor formalism for equivariant motivic stable homotopy categories. 
It should also be noted that most of definitions of equivariant motivic homotopy so far produce Bredon-style equivariant motivic cohomology. For the representation-theoretic applications presented in the present work, we needed categories of equivariant motives which are defined over general base varieties (over some field $k$) and for arbitrary linear groups (over the field $k$) which produce a Borel-style equivariant motivic cohomology. To the best of our knowledge, there is no published work which discusses categories of equivariant motives and the corresponding six functor formalism from the perspective we need.  The Borel-style definition of equivariant motives makes sure that it ties in naturally with the definitions of equivariant higher Chow groups following Totaro \cite{totaro} and Edidin--Graham \cite{edidin:graham} as well as the classical definition of equivariant derived categories by Bernstein--Lunts \cite{BeLu}. This is why we include an extensive explanation how to set up equivariant motives and the six functors. 

In representation theory, mixed versions of the equivariant derived categories in various geometric situations have been considered by many authors, cf. e.g. \cite{rider:russell} for a very recent work. However, the construction of these categories is not quite natural; they are built to satisfy formality and the tilting results in Theorem~\ref{thm:maintilt} using the weight theory coming from eigenvalues of Frobenius. This leads to problems when trying to study functors between such categories as it usually involves complicated conjectures on semisimplicity of Frobenius eigenvalues. In this work we establish a fairly general framework to obtain such mixed versions of equivariant derived categories which has the advantages that it is more natural than the constructions that appeared in the literature so far, doesn't use Frobenius eigenvalues but weights from motives, even works intrinsically over $\mathbb{C}$ without passing to finite fields, and also encompasses the case of $K$-orbits on flag varieties which to the best of our knowledge hasn't been satisfactorily solved before.

\newpage
\subsection*{Structure of the paper} 
The paper consists of three parts. Chapter I provides some recollections on categories of motives, describes constructions of categories of equivariant motives and sets up the basics of the six-functor formalism in the equivariant situation. Chapter II provides a definition of equivariant mixed Tate motives and weight structures on these. Chapter III establishes the formalism of Bott--Samelson motives and its representation-theoretic applications to tilting results, graded versions of equivariant derived categories and categorification of the Hecke algebra. Two appendices deal with basic facts concerning motives of homogeneous spaces and a general tilting result for derivators. A short description of contents is given at the beginning of the individual chapters.

There are two main background texts making the present work possible: on the one hand, the classical setup of equivariant sheaves and functors as developed in \cite{BeLu}, and on the other hand the six-functor formalism of motivic sheaves as developed in \cite{ayoub:thesis1,ayoub:thesis2} or \cite{cisinski:deglise}. We assume throughout that the reader is familiar with these works or at least has them in reach for reference. 

\subsection*{Conventions and notation}

\begin{Bemerkung}
\label{standing}
All our schemes will be assumed to be quasi-projective separated schemes of finite type over a field. We will call such objects \emph{varieties} (although they don't necessarily need to be reduced). By  \cite[Lemme 1.3.9]{ayoub:thesis1}, morphisms between such schemes will automatically be quasi-projective. These conventions imply that quasi-projectivity assumptions in \cite{ayoub:thesis1,ayoub:thesis2} are always satisfied in our applications. If the base field $k$ is clear from the context, we will frequently denote $\op{Spec}k$ by $\op{pt}$.

We usually denote by $\op{fin}_X:X\to\op{Spec} k$ the structural morphism of a variety over $k$. 

For a variety $X$, the motive $\op{M}_X(X)$ which also is denoted by $\Lambda_X$ in the literature, will be denoted by $\underline{X}$ in most of our work (parallel to notation for the constant sheaf $\Lambda$ on $X$).
\end{Bemerkung}

\begin{Bemerkung}
  Given a graded ring $A$ we denote by $A\op{-fModfg}^\DZ$ the category of finitely generated graded free $A$-modules. 
\end{Bemerkung}

\begin{Bemerkung}
As a general rule of thumb, $\cong$ denotes isomorphisms, $\simeq$ denotes quasi-isomorphisms and $\approx$ denotes equivalences of categories or isotransformations between functors. 
\end{Bemerkung}

\begin{Bemerkung}
Most of the time, the set of morphisms $f:X\to Y$ in a category $\mathcal{C}$ will be denoted by $\mathcal{C}(X,Y)$. Exceptions will only be made in cases where the name of the category $\mathcal{C}$ is already typographically complex and cannot be meaningfully abbreviated. Inner homs in a monoidal closed category $\mathcal{C}$ will be denoted by $\iHom_{\mathcal{C}}(X,Y)$. 
\end{Bemerkung}

\newpage
\subsection*{Acknowledgements}
Wolfgang Soergel and Matthias Wendt would like to thank Fr{\'e}d{\'e}ric D{\'e}glise and J\"org Wildeshaus for discussions concerning equivariant motives during a visit to Paris 13 in September 2014. Matthias Wendt would like to thank Joseph Ayoub for helpful discussions on categories of motives and possible definitions of equivariant motives. Discussions concerning the equivariant six-functor formalism with Marc Hoyois and Simon Pepin LeHalleur at the Oberwolfach workshop 1626 were also very helpful. We thank Annette Huber, Jens Eberhardt and Brad Drew for various conversations related to this work. 

Wolfgang Soergel and Matthias Wendt thank the Warwick Mathematics Institute \index{purity!the beer} for its hospitality during the derived categories conference in March 2015. Wolfgang Soergel was supported by the DFG priority program SPP 1388. Matthias Wendt was partially supported  by the DFG SFB/TransRegio 45, EPSRC grant EP/M001113/1, DFG SFB 1085, Institut Mittag-Leffler and his parents at various stages of his ann{\'e}es de p{\`e}lerinage. Also, some of Matthias Wendt's visits to Freiburg were partially supported by DFG GK 1821.

 
\renewcommand{\thesection}{\thechapter.\arabic{section}}

\chapter{Equivariant motives and six functors}

The first chapter sets up the formalism of equivariant motives. We recall the basics concerning the construction of triangulated categories of motives and their six functor formalism in Section~\ref{sec:basicmotives}. After some preliminaries on group actions in Section~\ref{sec:grp-prelims} and the theory of acyclic resolutions in Section~\ref{sec:resolutions}, we discuss two definitions of Borel-equivariant motives in Section~\ref{sec:equivdef} via the simplicial Borel construction and the category of algebraic resolutions. In Section~\ref{sec:comparison}, we show that the two approaches yield equivalent categories of equivariant motives, and establish the quotient equivalence for these categories. The equivalence of the two approaches is very relevant for establishing the full equivariant six-functor formalism in the motivic setting, which is done in  Section~\ref{sec:sixfunctors}. Then Section~\ref{sec:further} discusses further consequences of the six-functor formalism, including refined quotient and induction equivalence, integration functors and various compatibility statements. Then Section~\ref{sec:convolution} discusses basic facts concerning convolution functors and Section~\ref{sec:realization} discusses realization functors on categories of equivariant motives.

\section{Recollection on motives and six functors} 
\label{sec:basicmotives}

In this section, we provide a short recollection of  constructions and properties of categories of motives. Following the work of Voevodsky, Ayoub, Cisinski--D{\'e}glise and others, there are now triangulated categories of motives available, and these categories are related by a full-fledged six functor formalism satisfying all the usual properties.  

On the one hand, there are axiomatic frameworks to encode all the relevant properties of a six functor formalism, and to study the interdependence of various properties of the six functors. One possible framework is given by homotopical stable algebraic derivators of \cite{ayoub:thesis1}, another framework by the motivic  triangulated categories of \cite{cisinski:deglise}. 

On the other hand, there are concrete instances of these axiomatic frameworks which provide triangulated categories of motives over fairly general base schemes. The following diagram summarizes the instances of these frameworks which will be of most interest to us: 
\begin{center}
  \begin{minipage}[c]{10cm}
    \xymatrix{
      \mathbf{DA}^{\et}(-;\Lambda) \ar[r]^{\mathsf{Real}_\ell}
      \ar[d]_{\mathsf{Real}_{\text{Hodge}}} &  \op{Der}(-;\mathbb{Q}_\ell)
        \\
      \mathbf{DH}(-) \ar[d]_{\op{Gr_W}} \\
      \op{MDer}(-;\mathbb{C}) 
      \ar[r]_{\mathsf{Real}_{\text{Betti}}} &  
      \op{Der}(-;\mathbb{C})
    }
  \end{minipage}
\end{center}
The upper part of  the diagram is the one we can use in the case where we are working over a finite base field: $\mathbf{DA}^{\et}(X;\Lambda)$ denotes \'etale motives with $\Lambda$-coefficients, $\op{Der}(-;\mathbb{Q}_\ell)$ denotes the $\ell$-adic derived categories constructed as localization of the \'etale motives, and these categories are related by the $\ell$-adic realization functor $\mathsf{Real}_\ell:\mathbf{DA}^{\et}(X;\Lambda)\to\op{Der}(X;\mathbb{Q}_\ell)$.  In the lower part of the diagram, which is the part we will use when we are working over $\mathbb{C}$, we have Drew's version $\mathbf{DH}(X)$ of mixed Hodge modules arising from the Hodge realization, the derived category  $\op{Der}(X;\mathbb{C})$ of  sheaves of $\mathbb{C}$-vector spaces arising from the Betti realization, and an intermediate category $\op{MDer}(-;\mathbb{C})$ arising from the semisimplification of the Hodge realization. The functors relating these categories are all related by the forgetful/realization functors: the Hodge realization $\mathsf{Real}_{\text{Hodge}}$ from motives to mixed Hodge modules, the associated graded for the weight filtration $\op{Gr_W}$, and finally the forgetful functor $\mathsf{Real}_{\text{Betti}}$ from graded vector spaces to vector spaces. All the categories in the diagram have a six functor formalism, and the functors in the diagram are compatible with the six functor formalism. 

The goal of this section is to provide a short recollection on constructions of the categories of motives which are of interest for our application: \'etale motives and motives with coefficients in enriched mixed Weil cohomology theories. We will make heavy use of Ayoub's framework of homotopical stable algebraic derivators  \cite{ayoub:thesis1} in the present paper. A couple of relevant definitions and properties are recalled in Appendix~\ref{sec:derivators}. Still, the reader not familiar with the definitions from \cite{ayoub:thesis1} is strongly encouraged to have a copy handy when reading the first part of the present paper. All the material can be found in the works of Ayoub \cite{ayoub:thesis1,ayoub:thesis2}, Cisinski--D{\'e}glise \cite{cisinski:deglise} and the thesis of Drew \cite{drew:thesis}.

\subsection{Axiomatics for six functors}
\label{sec:axiomatics}

We first recall some of the axiomatics and basic properties of the (non-equivariant) six functor formalism. One formulation of an axiomatic approach is the  notion of cross functors which was proposed by Voevodsky and Deligne; for the definition of cross functors, cf. \cite[Section 1.2.4]{ayoub:thesis1}. The formalism of cross functors was worked out in the thesis of Ayoub, along with two resulting frameworks for dealing with the six functors:
\begin{itemize}
\item the notion of homotopical stable 2-functor $\mathsf{H}^\ast:(\op{Sch}/S)\to\mathfrak{TR}$ from schemes over the base $S$ to triangulated categories, cf. \cite[Section 1.4.1]{ayoub:thesis1},  and  
\item 
\index{homotopical stable algebraic derivator} the notion of homotopical  stable algebraic derivator $\mathbb{D}:\mathsf{DiaSch}/S\to\mathfrak{TR}$, cf. \cite[Section 2.4.2]{ayoub:thesis1}, which encodes the six functors in the more general situation where each diagram of schemes is assigned a triangulated category.
\end{itemize}

The relevant axioms are recalled in an appendix, cf. Section~\ref{sec:derivators}. More information on derivators and various extra properties they are required to satisfy are also recalled there. In the following, we provide a list of useful properties of a homotopical stable algebraic derivator $\mathbb{D}$ satisfying the conditions of \ref{derivator:conditions}. These are all established in \cite[Section 2]{ayoub:thesis1}, but we list them for ease of reference. The list below is a variant of the dix le{\c c}ons in \cite{hebert} adapted to the present setting.

\begin{enumerate} 
\item {\bf Ordinary pullback and pushforward:} By the definition of homotopical stable algebraic derivator, for every morphism $(f,\alpha):(\mathscr{F},\mathcal{I})\to(\mathscr{G},\mathcal{J})$ in $\mathsf{DiaSch}$, we have an adjunction 
\[
(f,\alpha)^\ast:\mathbb{D}(\mathscr{G},\mathcal{J}) \leftrightarrows \mathbb{D}(\mathscr{F},\mathcal{I}): (f,\alpha)_\ast.
\]
If $(f,\alpha)$ is smooth, we have an additional adjunction
\[
(f,\alpha)_\sharp: \mathbb{D}(\mathscr{F},\mathcal{I}) \leftrightarrows \mathbb{D}(\mathscr{G},\mathcal{J}): (f,\alpha)^\ast.
\]
The above functors fit together to form 2-functors. 
\item {\bf Exceptional pullback and pushforward:} \index{exceptional functors!motives over diagrams} For any \emph{cartesian} morphism $(f,\alpha):(\mathscr{G},\mathcal{J})\to (\mathscr{F},\mathcal{I})$ in $\mathsf{DiaSch}$ there is a further pair of adjoint functors, the {\bf exceptional functors} 
\[
(f,\alpha)_!:\mathbb{D}(\mathscr{G},\mathcal{J})\leftrightarrows \mathbb{D}(\mathscr{F},\mathcal{I}):(f,\alpha)^!
\]
which fit together to form a covariant (resp. contravariant) $2$-functor  $(f,\alpha)\mapsto (f,\alpha)_!$ (resp. $(f,\alpha)\mapsto (f,\alpha)^!$). There exists a natural transformation $(f,\alpha)_!\rightarrow (f,\alpha)_\ast$ which is a morphism of $2$-functors and is an isomorphism when $(f,\alpha)$ is proper. The existence and properties for the special case of schemes are discussed in \cite[Sections 1.4--1.6]{ayoub:thesis1}, and the extension to cartesian morphisms of diagrams is discussed in \cite[p. 322--323]{ayoub:thesis1}. 
\item {\bf Monoidal structures:} If $\mathbb{D}$ is a monoidal homotopical stable algebraic derivator, there are closed symmetric monoidal structures on all the categories $\mathbb{D}(\mathscr{F},\mathcal{I})$, where we additionally require that $f^\ast$ is a strong monoidal functor and that suitable projection formulas hold, cf. Section~\ref{sec:derivators}. 

\index{projection formulas!motives over diagrams}
From the axioms follow various compatibilities between the four functors just discussed and these monoidal structures encoded in \cite[Lemma 2.4.52]{ayoub:thesis1}. We recall from \cite[Theorem 2.3.40, Propositions 2.3.51--55]{ayoub:thesis1} some more explicit projection formulas: for a morphism $f\colon Y\to X$ of schemes, we have natural isomorphisms
\begin{eqnarray*}
f_!(f^\ast M\otimes_Y N)&\xrightarrow{\cong}& M\otimes_X f_!(N)\\
\iHom_Y(M,f_\ast N)&\xrightarrow{\cong} & f_\ast \iHom_X(f^\ast M,N)\\
\iHom_X(f_\sharp M,N)&\xrightarrow{\cong} & f_\ast \iHom_Y(M,f^\ast N) \quad f \textrm{ smooth!} \\
\iHom_X(f_! M,N)&\xrightarrow{\cong} & f_\ast\iHom_Y(M,f^!N)\\
f^\ast \iHom_X(M,N) & \xrightarrow{\cong} & \iHom_Y(f^\ast M,f^\ast N) \quad f \textrm{ smooth!}\\
f^!\iHom_X(M,N) & \xrightarrow{\cong} & \iHom_Y(f^\ast M,f^!N).
\end{eqnarray*}
\item {\bf Tate twist:} There is a Tate twist functor $M\mapsto M(1)$ which is compatible with all the six functors.
\item {\bf Base change:} 
\index{base change!motives over diagrams}
For any cartesian diagram of diagrams of schemes
\[
\xymatrix{
(\mathscr{G}',\mathcal{J}') \ar[r]^{(g',\beta)} \ar[d]_{f'} & (\mathscr{G},\mathcal{J}) \ar[d]^f \\
(\mathscr{F}',\mathcal{J}') \ar[r]_{(g,\beta)} & (\mathscr{F},\mathcal{J}). 
}
\]
where $f$ is a cartesian morphism of diagrams, there are base change formulas 
\begin{eqnarray*}
f_!'\circ (g',\beta)^\ast & \xrightarrow{\approx} & (g,\beta)^\ast \circ f_!\\
f^!\circ (g,\beta)_\ast & \xrightarrow{\approx} & (g',\beta)_\ast \circ (f')^!
\end{eqnarray*}
These follow via the extension in \cite[Section 2.4]{ayoub:thesis1} from the corresponding base change formulas formulated in \cite[Scholie 1.4.2]{ayoub:thesis1}. 

In the slightly different situation where the morphism $f$ above is a pointwise closed immersion and $g$ is pointwise smooth, there is a base-change formula
\[
(g',\beta)^\ast \circ f^! \xrightarrow{\approx} (f')^! \circ (g,\beta)^\ast,
\]
cf. \cite[Lemma 2.4.26]{ayoub:thesis1}.
\item {\bf Relative purity:} \index{purity!motives over diagrams} The orientability of the derivator implies that for a smooth morphism $f:X\to Y$ of relative dimension $d$, there are canonical natural isomorphisms  
\[
\mathfrak{p}_f:f_\sharp\xrightarrow{\approx} f_!(d)[2d], \qquad
\mathfrak{p}_f^\prime:f^\ast\xrightarrow{\approx} f^!(-d)[-2d],
\]
cf. \cite[Scholie 1.4.2]{ayoub:thesis1}.
Combining this with the base change statement above recovers several other base change statements such as  \cite[Theorem 2.4.22]{ayoub:thesis1}. 
\item  {\bf Localization sequence:} \index{localization sequence!motives over diagrams} for $i:Z\rightarrow X$ a closed immersion with open complement $j:U\to X$, there are distinguished triangles of natural transformations
\[
j_!j^!\rightarrow 1\rightarrow i_\ast i^\ast \rightarrow j_!j^![1]
\]
\[
i_! i^!\to 1\to j_\ast j^\ast \to i_\ast i^! [1]
\]
where the first and second maps are the counits and units  of the
respective adjunctions, cf. \cite[Proposition 2.4.25]{ayoub:thesis1}. This holds more generally for cartesian closed immersions of diagrams of schemes and the pointwise complementary open immersion.
\item {\bf Absolute purity:} For any closed immersion $i:Z\rightarrow S$ of pure codimension $n$ between regular schemes in $\mathscr S$, the standard map $\op{M}_Z(Z)\rightarrow i^! \op{M}_S(S)(n)[2n]$ is an isomorphism, cf. \cite[Theorem 14.4.1]{cisinski:deglise}. 
\item {\bf Constructible objects:}
For a scheme $X$, define the subcategory of constructible objects $\mathbb{D}^{\op{c}}(X)\subset \mathbb{D}(X)$ to be the thick full subcategory generated by $f_\sharp \const{Y}(n)$ for $n\in\mathbb{Z}$ and $f:Y\to X$ smooth. This subcategory coincides with the full subcategory of compact objects if motives of smooth varieties are compact (which holds if we consider rational coefficients), cf. \cite[Section 2.3.10]{ayoub:thesis1}. Moreover, the six functors preserve constructible objects, cf. \cite[Section 2.2.2]{ayoub:thesis1}. 
\item {\bf Verdier duality:} \index{Verdier duality!motives over diagrams} For a smooth variety $X$ over the base $S$, the unit object $\const{X}$ is a dualizing object in $\mathbb{D}(X)$, i.e., setting $D_X(M):=\iHom(M,\const{X})$ the natural map $M\rightarrow D_X(D_X(M))$  is an isomorphism for all $M\in \mathbb{D}^{\op{c}}(X)$. More generally, for every variety $X$ over the base $S$ the motive $\op{fin}^!\const{S}$ is a dualizing object, cf. \cite[Theorem 2.3.73]{ayoub:thesis1}. 

For  all $M,  N\in \mathbb{D}^{\op{c}}(X)$, there is a canonical duality isomorphism  
$$
D_X(M\otimes_X D_X(N))\cong \iHom_X(M,N).
$$ 
Furthermore, for any morphism $f:Y\rightarrow X$ of varieties and any $M\in\mathbb{D}^{\op{c}}(X)$ and $N\in \mathbb{D}^{\op{c}}(Y)$, there are natural isomorphisms  
$$
D_Y(f^\ast(M))\cong f^!(D_X(M)),\qquad 
f^\ast(D_X(M))\cong D_Y(f^!(M))
$$
$$
D_X(f_!(N))\cong f_\ast(D_Y(M)),\qquad
f_!(D_Y(N))\cong D_X(f_\ast(N)),
$$
cf. \cite[Theorem 2.3.75]{ayoub:thesis1}.
\item {\bf Idempotent completeness:} If the triangulated categories $\mathbb{D}(X)$ or more generally $\mathbb{D}(\mathscr{F},\mathcal{I})$ have all small sums, they are necessarily idempotent complete. In this situation, the constructible subcategories $\mathbb{D}^{\op{c}}(X)$ are by definition idempotent complete. 
\end{enumerate} 

\begin{Bemerkung}
\label{def:motive}
Motives of varieties can be defined in any homotopical stable algebraic derivator, using the six functor formalism. If $\mathbb{D}$ is a monoidal homotopical stable algebraic derivator, we usually denote by $\underline{X}\in\mathbb{D}(X)$ the tensor unit of the symmetric monoidal category $\mathbb{D}$. 

If $S$ is a base variety and $f:X\to S$ is smooth, then the motive of $X/S$ is defind as  $\op{M}_S(X):=f_\sharp\underline{X}\in\mathbb{D}(S)$. In the general case of an arbitrary morphism of varieties $f:X\to S$, there are various notions of motive:
\begin{itemize}
\item \index{motive!$\op{M}(X)$} The \emph{(homological) motive} of $X/S$ is defined to be $\op{M}_S(X):= f_!f^!\const{S}$. This agrees with the above definition for $X/S$ smooth by relative purity. 
\item \index{motive!cohomological} Its Verdier dual is the \emph{cohomological motive} $X/S$, given by 
\[
D_S(\op{M}_S(X))\cong f_\ast f^\ast \const{S}. 
\]
\item \index{motive!Borel--Moore} The \emph{Borel--Moore motive} is defined as $\op{M}^{\op{BM}}_S(X):= f_!f^\ast\const{S}$. For $X/S$ smooth of relative dimension $d$, we have $\op{M}^{\op{BM}}_S(X)(d)[2d]\cong\op{M}_S(X)$ by relative purity. 
\item \index{motive!with compact support} Its Verdier dual is the (cohomological) \emph{motive with compact support} $\op{M}_S^{\op{c}}(X):=D(\op{M}^{\op{BM}}_S(X))\cong f_\ast f^!\const{S}$. 
\end{itemize}
The \emph{Tate motive} $\const{X}(1)\in\mathbb{D}(X)$ is given by 
\[
\op{cone}\left(1:\const{X}\to \op{M}_X(\mathbb{G}_{\op{m}}\times X)\right)[-1]
\]
This motive is $\otimes$-invertible and induces the Tate twist via $M\mapsto M(1)=M\otimes \const{X}(1)$.
\end{Bemerkung}

\begin{Bemerkung}
\label{def:motcohom}
Let $\mathbb{D}$ be a homotopical stable algebraic derivator satisfying the conditions of \ref{derivator:conditions}. 
\index{motivic cohomology}
For a variety $X/k$, we define the \emph{motivic cohomology} of $X$ as
\[
\op{H}^{n,i}(X,\mathbb{D}):=\mathbb{D}(\op{M}_k(X),\Lambda(i)[n]).
\]
\index{motivic cohomology!compact supports}
\emph{Motivic cohomology with compact supports} is defined by 
\[
\op{H}^{n,i}_{\op{c}}(X,\mathbb{D}):=\mathbb{D}(\op{M}^{\op{c}}(X),\Lambda(i)[n])
\]
\index{motivic cohomology!Borel--Moore}
Dually, we define \emph{Borel--Moore homology}
\[
\op{H}^{\op{BM}}_{n,i}(X,\mathbb{D}):=\mathbb{D}(\Lambda(i)[n],\op{M}^{\op{c}}(X)) \cong \mathbb{D}(\op{M}^{\op{BM}}(X),\Lambda(i)[n]).
\]

If $\Lambda$ is a field of characteristic zero and $\mathbb{D}=\mathbf{DA}^{\et}(-;\Lambda)$ the above are the (usual) motivic cohomology and motivic Borel--Moore homology with $\Lambda$-coefficients. By results of Voevodsky, cf. \cite{mazza:voevodsky:weibel}, these can be identified with higher Chow groups: 
\begin{itemize}
\item For $X$ a smooth variety over a perfect field $k$, there are natural isomorphisms $\op{H}^{n}(X,\Lambda(i))\cong \op{CH}^i(X,2i-n;\Lambda)$.
\item 
With rational coefficients (or assuming the base field $k$ admits resolution of singularities), we have for any equi-dimensional variety $X/k$ of dimension $d$ and any positive $i\leq d$ a canonical isomorphism
\[
\op{CH}^{d-i}(X,n;\Lambda)\cong\op{H}^{\op{BM}}_{2i+n}(X,\Lambda(i)).
\]
\end{itemize}
\end{Bemerkung}

\begin{Bemerkung}
\label{belu18}
\index{dualizing object!relative}
For a morphism $f:X\to Y$, we have the \emph{relative dualizing object} $D_f:=f^!(\const{Y})$. If $f$ is smooth of relative dimension $d$, the relative dualizing object $f^!(\underline{Y})$ is isomorphic to $\underline{X}(d)[2d]$ (and then in particular $\otimes$-invertible). This follows from the relative purity statements above because 
\[
f^!(\underline{Y})\cong f^\ast(\underline{Y})(d)[2d]\cong \underline{X}(d)[2d].
\]
This is stronger than the corresponding statement in \cite[1.8]{BeLu} because the motivic framework has orientability built in.
More generally, it follows from relative purity that for $f$ smooth and $M$ any motive we have $f^!(M)\cong f^\ast(M)\otimes f^!(\const{Y})$ because $f^!(\const{Y})\cong \const{X}(d)[2d]$. 
\end{Bemerkung} 

\begin{Bemerkung}
In general, Verdier duality doesn't commute with tensor products. Here is the best one can do: if $M\in\mathbb{D}(X)$ is strongly dualizable for the $\otimes$-structure and $N\in\mathbb{D}(X)$ is constructible, then we have
\[
D_X(M\otimes_X N)\cong \iHom_X(M,D_X(N))\cong M^\vee\otimes_X D_X(N).
\]
Here, $M^\vee=\iHom_X(M,\const{X})$ is the $\otimes$-dual of $M$, and this is in generally different from $D_X(M)=\iHom_X(M,\op{fin}^!\const{\pt})$ with $\op{fin}^!\const{\pt}$ the dualizing object. However, if $M$ is smooth of dimension $d$, then $\const{X}\cong\op{fin}^!\const{\pt}(-d)[-2d]$. In that case, we get 
\[
D_X(M\otimes_X N)(d)[2d]\cong D_X(M)\otimes_X D_X(N).
\]
\end{Bemerkung}

\begin{Bemerkung}
\index{exterior product}
\label{extverdier}
If we have varieties $X$ and $Y$ and motives $M\in\mathbb{D}(X)$ and $N\in\mathbb{D}(Y)$, then we define the \emph{exterior product}
\[
M\boxtimes N:=\op{pr}_1^\ast M\otimes_{X\times Y}\op{pr}_2^\ast(N)\in\mathbb{D}(X\times Y). 
\]
If $X$ and $Y$ are smooth, $M\in\mathbb{D}(X)$ is strongly dualizable for the $\otimes$-structure and $N\in\mathbb{D}(X)$ is constructible, the above compatibility of Verdier duality with the tensor product implies that Verdier duality commutes with exterior product; there is no shift or twist because the one introduced by commuting Verdier duality with $\otimes$ is cancelled by the one commuting Verdier duality and $f^\ast$ via relative purity. 
\end{Bemerkung}

We now want to motivic versions of the statements in \cite[1.4.7]{BeLu}. 

\begin{Bemerkung}
\label{belu1471}
Let $f:X\to Y$ be a smooth morphism, and let $M, N\in\mathbb{D}(Y)$ be motives which are both strongly dualizable and constructible.\footnote{Note that strong dualizability concerns the duality related to the tensor structure $\otimes$, and constructibility concerns Verdier dualizability.} We start with the natural isomorphism:
\[
\iHom_X (f^\ast M , f^\ast N) \xrightarrow{\cong} f^\ast \iHom_Y (M, N).
\]
Recall from the lesson (10) on Verdier duality that we can compute inner Homs  via $\iHom_X(A,B)\cong D_X(A\otimes_XD_X(B))$. Combining this with the previous compatibility of $f^\ast$ and $\iHom$, then we get
\begin{eqnarray*}
D_X(D_X(f^\ast N)\otimes f^\ast M)&\cong& f^\ast D_Y(D_Y(N)\otimes M)\\
D_X(f^!D_Y(N)\otimes f^\ast M)&\cong& D_Xf^!(D_Y(N)\otimes M)\\
f^! D_Y(N)\otimes f^\ast M&\cong& f^!(D_Y(N)\otimes M)\\
f^!P\otimes f^\ast M&\cong&f^!(P\otimes M)
\end{eqnarray*}
Here the second line follows by commuting pullback functors with Verdier duality, the third follows since the dualizability assumptions allow to remove the outer Verdier duality. The last line is then simply obtained by writing $D_Y(N)$ as a constructible and strongly dualizable motive $P$. 

It can be checked that the natural map in the last line corresponding to the module structure $f^\ast(-)\otimes f^!(-)\to f^!(-\otimes -)$ in \cite[Section 2.3]{ayoub:thesis1} corresponds to the exchange isomorphism for $f^\ast$ and $\iHom$ expressed via Verdier duality in the first line.

This formula requires that $P$, $M$, $f^\ast M$ and $f^!P$ are $\otimes$-invertible and constructible. This is satisfied for $P$ a constant mixed Tate motive and the derivators we consider, by arguments parallel to those in \cite[Section 4.4]{cisinski:deglise}. In particular, this implies  
\[
f^\ast D_Y\otimes f^!(\const{Y})\cong f^!(D_Y\otimes \const{Y})\cong f^!(D_Y). 
\]
As a consequence, Verdier duality commutes with $f^\ast$ up to twist by a $\otimes-$invertible constructible object.
\end{Bemerkung}

\begin{Bemerkung}
\label{belu1472}
\index{relatively smooth}
We establish a version of absolute purity slightly more general than the usual one. Recall that absolute purity means that for a closed immersion $i:Z\hookrightarrow X$ of pure codimension $d$ between regular schemes, we have $\const{Z}\cong i^!\const{X}(d)[2d]$. 

A closed immersion $i:Z\hookrightarrow X$ is called \emph{relatively smooth} if there is an \'etale neighbourhood $Y$ of $Z$ in $X$ such that $Y\cong Z\times\mathbb{A}^d$ and $i$ is the embedding of the zero section. Essentially, we require the existence of a tubular neighbourhood. Note that a closed immersion between regular schemes is relatively smooth. 

For relatively smooth closed immersions, the relative dualizing object is $\otimes$-invertible, cf. the arguments in \cite[Section 4.4]{cisinski:deglise}. 

For a relatively smooth closed immersion $i:Z\hookrightarrow X$, denote by $p:Y\cong Z\times\mathbb{A}^d\to Z$ the projection. We call a motive $M$ on $X$ smooth relative to $i$ if $M|_Y\cong p^\ast N$ for some motive $N$ on $Z$. For such motives we have $i^!M\cong i^\ast M\otimes D_f$. This holds in particular for the dualizing object, i.e., we have $D_X\cong i^!D_Y\cong i^\ast D_Y\otimes D_i$.
\end{Bemerkung}

\begin{Bemerkung}
\label{belu1473}
Recall from lesson (5) on base change formulas above that for a cartesian diagram
\[
\xymatrix{
V \ar[r]^{i'} \ar[d]_{f'} & X \ar[d]^f \\
W \ar[r]_i & Y
}
\]
with $i$ a closed immersion and $f$ a smooth morphism, there is a base-change formula $(f')^\ast\circ i^!\xrightarrow{\approx} (i')^!\circ f^\ast$. Again there is a more general statement: this holds if $f$ is \'etale-locally a product projection but not necessarily with smooth fibers. To see this, we can reduce to product projections by passing to a trivializing \'etale cover and using conservativity. By base-change along a resolution of singularities (resp. an alteration), we can also assume that the base $Y$ is smooth (but the fiber could still be singular). By assumption, the closed immersions $i$ and $i'$ would be relatively smooth. The base-change diagram can then be factored as composition of two commutative diagrams, one for the normal bundle projection, the other for the \'etale local inclusion of the normal bundle. For the diagram concerning the restriction to the normal bundle, the claimed base-change formula is satisfied by 2-functoriality because ordinary and exceptional pullback agree by relative purity. We can deal with the diagram containing the vector bundle projections by checking after pullback to an alteration (where it holds by the smooth case mentioned earlier) together with conservativity. 
\end{Bemerkung}

\begin{remark}
One would like to interpret these statements as a natural part of the six-functor formalism which is not usually included: for a morphism $f$, consider the relatively smooth motives to be those which are $\otimes$-invertible and constructible and remain so after pullback, both ordinary and exceptional. Then such motives satisfy relative and absolute purity, a base-change for $f_\ast$ and $p^\ast$, as well as an exchange for $i^!$ and $p^\ast$.
\end{remark}

\subsection{\'Etale motives}
\label{sec:etalemotives}

We recall the construction of the categories $\mathbf{DA}^{\et}(S;\Lambda)$ of \'etale motives over a diagram of schemes over some base scheme $S$. The construction of the categories is carried out in detail in \cite[Section 4.5]{ayoub:thesis2}, and overviews of the construction are given in \cite[Section 1.1]{ayoub:zucker} and \cite[Section 2.3]{ayoub:icm}. All the material in the following subsection is taken from these sources. \'Etale motives and $\ell$-adic realization functors are also discussed in \cite{cd:etale}. 

Denote by $\Lambda$ a commutative unital ring of coefficients; in the present work, we will mostly be interested in the case where $\Lambda$ is a field of characteristic zero. Let $S$ be a separated noetherian scheme of finite Krull dimension, and let $(\mathscr{F},\mathcal{I})$ be a diagram of $S$-schemes, where $\mathcal{I}$ is a small category (viz. the index category) and $\mathscr{F}:\mathcal{I}\to \op{Sch}/S$ is a functor. 

\index{smooth schemes over diagrams}
The category $\op{Sm}/(\mathscr{F},\mathcal{I})$ of \emph{smooth schemes over the diagram $(\mathscr{F},\mathcal{I})$} is the category of pairs $(U,i)$ where $i$ is an object of $\mathcal{I}$, and $U$ is a smooth $\mathscr{F}(i)$-scheme. The category $\op{Sm}/(\mathscr{F},\mathcal{I})$ is equipped with the \'etale topology (by ignoring the diagram category component). Then $\op{Sh}_{\et}(\op{Sm}/(\mathscr{F},\mathcal{I});\op{Cplx}(\Lambda))$ denotes the category of \'etale sheaves on $\op{Sm}/(\mathscr{F},\mathcal{I})$ with values in complexes of $\Lambda$-modules. For a smooth $S$-scheme $X$, denote by $\Lambda_{\et}(X)$ the \'etale sheaf represented by $X$, i.e., the \'etale sheaf associated to the presheaf 
\[
(U,i)\mapsto
\Lambda[\op{Hom}_{\mathscr{F}(i)}(U,\op{fin}_{\mathscr{F}(i)}^\ast X)],
\]
where $\op{fin}_{\mathscr{F}(i)}:\mathscr{F}(i)\to S$ is the structural morphism. 

For a morphism of diagrams of schemes $(f,\alpha):(\mathscr{G},\mathcal{J})\to (\mathscr{F},\mathcal{I})$, there is an associated functor 
\[
(f,\alpha)^\ast:
\op{Sh}(\op{Sm}/(\mathscr{F},\mathcal{I});\op{Cplx}(\Lambda)) \to
\op{Sh}(\op{Sm}/(\mathscr{G},\mathcal{J});\op{Cplx}(\Lambda))
\]
such that $\op{Sh}(\op{Sm}/(-,-);\op{Cplx}(\Lambda))$ becomes a $2$-functor. Tracing through \cite[Def. 4.5.1, Lemma 4.5.2]{ayoub:thesis2}, the functor $(f,\alpha)^\ast$ is eventually given by fiber product of schemes. For $(f,\alpha)$ a smooth morphism there is a left adjoint $(f,\alpha)_\sharp$ of $(f,\alpha)^\ast$, given by composition with $(f,\alpha)$.  

There is a model structure on $\op{Sh}_{\et}(\op{Sm}/(\mathscr{F},\mathcal{I});\op{Cplx}(\Lambda))$ whose weak equivalences are the stalkwise weak equivalences such that the associated homotopy category is the derived category of \'etale sheaves of $\Lambda$-modules on $\op{Sm}/(\mathscr{F},\mathcal{I})$. As a next step, one defines a new model structure, the \emph{$\mathbb{A}^1$-local model structure}, on $\op{Sh}_{\et}(\op{Sm}/(\mathscr{F},\mathcal{I});\op{Cplx}(\Lambda))$ by applying Bousfield localization to the class of maps 
\[
(U,i)\otimes K\to (\mathbb{A}^1\times_SU,i)\otimes K,
\]
where $i$ is an object of $\mathcal{I}$, $U$ is a smooth $\mathscr{F}(i)$-scheme, and $K$ a complex of $\Lambda$-modules. 

\begin{definition}
\index{motives!effective \'etale}
The category of \emph{effective \'etale motives} over $(\mathscr{F},\mathcal{I})$, denoted by $\mathbf{DA}^{\et}_{\op{eff}}((\mathscr{F},\mathcal{I});\Lambda)$, is the homotopy category of the $\mathbb{A}^1$-local model structure on $\op{Sh}_{\et}(\op{Sm}/(\mathscr{F},\mathcal{I});\op{Cplx}(\Lambda))$. 
\end{definition}

There is a symmetric monoidal structure on $\op{Sh}_{\et}(\op{Sm}/(\mathscr{F},\mathcal{I});\op{Cplx}(\Lambda))$ induced from the tensor product of $\Lambda$-modules. The unit section $1:S\to\mathbb{G}_{\op{m},S}$ of the multiplicative group over $S$ gives rise to a morphism $\Lambda_{\et}(S)\to\Lambda_{\et}(\mathbb{G}_{\op{m},S})$ of  representable \'etale sheaves. The Tate motive $\Lambda_{\et}(1)$ is defined to be 
\[
\Lambda_{\et}(1) := \op{Cone}\left(\Lambda_{\et}(S)\to
\Lambda_{\et}(\mathbb{G}_{\op{m},S})\right)[-1].
\]  
The next step, also called \emph{$\mathbb{P}^1$-stabilization}, is now to make the Tate motive $\Lambda_{\et}(1)$ invertible with respect to the tensor product. One very convenient way to do this is the formalism of symmetric spectra, cf. \cite{hovey} or \cite[Section 4.3]{ayoub:thesis2}. Without even touching definition or details, the result is a category of symmetric spectra denoted by
\[
\op{Spect}^{\Sigma}_{\Lambda_{\et}(1)}\left(
\op{Sh}_{\et}^{\mathbb{A}^1}(\op{Sm}/(\mathscr{F},\mathcal{I});
\op{Cplx}(\Lambda))\right) 
\]

Now we can define \'etale motives over $(\mathscr{F},\mathcal{I})$, as the result of $\mathbb{A}^1$-localization and $\mathbb{P}^1$-stabilization on  complexes of \'etale sheaves $\Lambda$-modules on $\op{Sm}/(\mathscr{F},\mathcal{I})$:

\begin{definition}
\label{def:daet}
\index{\'etale motives $\mathbf{DA}^{\et}(-;\Lambda)$} 
The category $\mathbf{DA}^{\et}((\mathscr{F},\mathcal{I});\Lambda)$ of \emph{\'etale motives}\index{motives!\'etale} (or \emph{\'etale motivic sheaves}) over $(\mathscr{F},\mathcal{I})$ is the homotopy category of the $\mathbb{A}^1$-local model structure on symmetric spectra 
\[
\op{Spect}^{\Sigma}_{\Lambda_{\et}(1)}\left(
\op{Sh}_{\et}^{\mathbb{A}^1}(\op{Sm}/(\mathscr{F},\mathcal{I});
\op{Cplx}(\Lambda))\right) 
\]
For a smooth $S$-scheme $X$, the motive $\op{M}_S(X)$ is given by the suspension spectrum of the \'etale sheaf $\Lambda_{\et}(X)$ represented by $X$.

The subcategory $\mathbf{DA}^{\et}_{\op{c}}(S;\Lambda)$ of \emph{constructible motives}\index{motives!constructible} is the smallest thick triangulated subcategory of $\mathbf{DA}^{\et}(S;\Lambda)$  generated by motives $\op{M}_S(X)$ of smooth $S$-schemes of finite presentation. 
\end{definition}

The following is essentially a homotopy version of the notion of cartesian objects in a fibered category, cf. e.g. \cite[2.4.1]{BeLu}. For the study of equivariant motives, we will mostly consider cartesian motives over the simplicial Borel construction or over the category of resolutions. 

\begin{definition}
\label{def:cartesian}
Let $\mathbb{D}$ be a homotopical stable algebraic derivator and let $(\mathscr{F},\mathcal{I})$ be a diagram. A motive $M\in\mathbb{D}(\mathscr{F},\mathcal{I})$ 
over the diagram is called \emph{cartesian}\index{motives!cartesian} if for each morphism $f:\mathscr{F}(i)\to\mathscr{F}(j)$ in the diagram, the associated morphism $f^\ast(M(\mathscr{F}(j)))\to M(\mathscr{F}(i))$ is an isomorphism. The full subcategory of cartesian objects will be denoted by $\mathbb{D}^{\op{cart}}(\mathscr{F},\mathcal{I})$. 
\end{definition}

\begin{remark}
Note that the isomorphism $f^\ast(M(\mathscr{F}(j)))\to M(\mathscr{F}(i))$ above has to be interpreted in the category $\mathbf{DA}^{\et}(\mathscr{F}(i);\Lambda)$ which is a derived category of spectra of complexes. The above definition provides a notion of ``homotopy cartesian'' rather than cartesian on the nose; it is rather similar to the condition of cohomology being locally constant in the simplicial definition of the equivariant derived category of \cite{BeLu}. We hope that this does not lead to confusion. 
\end{remark}

\begin{Bemerkung}
The functors $(f,\alpha)^\ast$, $(f,\alpha)_\ast$ and $(f,\alpha)_{\sharp}$ are compatible with the above model category constructions and provide Quillen adjunctions on the model categories of spectra of chain complexes  \cite[Theorem 4.5.23]{ayoub:thesis2}. 
\end{Bemerkung} 

\begin{Bemerkung}[Monoidal structure]
\label{symmoncat}
The tensor product of sheaves of $\Lambda$-modules induces a closed symmetric monoidal structure on $\mathbf{DA}^{\et}(-;\Lambda)$, cf. \cite[Theorem 4.5.24]{ayoub:thesis2}.
\end{Bemerkung}

\begin{Bemerkung}
By \cite[Theorem 4.5.30, Corollary 4.5.47 and Section 4.5.4]{ayoub:thesis2}, the axioms {\bf DerAlg 0-5}  are satisfied for $\mathbf{DA}^{\et}(-;\Lambda)$, and then all the assertions of \cite[Section 2.4]{ayoub:thesis1} apply. 
\end{Bemerkung}

\begin{Bemerkung}
\label{compgen}
By \cite[Theorem 4.5.67]{ayoub:thesis2} or \cite[Proposition 3.14]{ayoub:realisation}, the categories  $\mathbf{DA}^{\et}(X;\Lambda)$ are compactly generated whenever $X$ is a noetherian $S$-scheme of finite Krull dimension. By \cite[Theorem 3.9]{ayoub:realisation}, the derivator $\mathbf{DA}^{\et}(-;\Lambda)$ is separated, and hence all the usual assertions about constructible objects apply, cf. \cite[Theorem 8.10, 8.12]{ayoub:realisation}. 

Moreover, the categories $\mathbf{DA}^{\et}(-;\Lambda)$ have small sums since they arise as homotopy categories of symmetric spectra in unbounded chain complexes. Therefore, the categories $\mathbf{DA}^{\et}(-;\Lambda)$ are idempotent complete. Consequently, the subcategories $\mathbf{DA}^{\et}_{\op{c}}(-;\Lambda)$ of constructible motives are also idempotent complete. 
\end{Bemerkung}

\begin{Bemerkung}
  We finally recall a result explaining how to compute morphisms between \'etale motives, cf. \cite[Theorem 4.12]{ayoub:icm}: for $X$ a smooth variety over the base scheme $S=\Spec k$, there is a canonical isomorphism  
\[
\mathbf{DA}^{\et}_{\op{Spec}(k)}(\op{M}(X),\Lambda(p)[q])\cong
\op{H}_{\et}^{q-2p}(X;
\op{Sing}^{\mathbb{A}^1}\Lambda_{\op{tr}}(\mathbb{P}^1_k,\infty_k)^{\wedge
  p}). 
\]
The group on the right-hand side is \'etale motivic cohomology with $\Lambda$-coefficients.  For $\mathbb{Q}\subseteq \Lambda$, we can identify \'etale and Nisnevich motivic cohomology with $\Lambda$-coefficients. In particular, for $X=\op{Spec}k$, this allows to identify morphisms between mixed Tate motives over $k$ with Adams eigenspaces of algebraic K-groups: 
\[
\mathbf{DA}^{\et}_{\op{Spec}(k)}(\mathbb{Q},\mathbb{Q}(p)[q])\cong
\op{gr}_\gamma^p\op{K}_{2p-q}(k)_{\mathbb{Q}}.
\]
\end{Bemerkung}

\subsection{Enriched mixed Weil cohomology theories}

Another example of triangulated categories equipped with a six functor formalism arises from modules over (enriched) mixed Weil cohomology theories. These were discussed first in \cite{cd:weil}, and extended to coefficients in arbitrary Tannakian categories in \cite{drew:thesis}. We recall the axiomatics and the construction of the associated motivic triangulated categories resp. homotopical stable algebraic derivator. Eventually, our main interest is in the graded mixed Weil cohomology given by the graded pieces of the Hodge realization for schemes over $\mathbb{C}$. 

Recall from \cite[Definition 2.1.1]{drew:thesis} the following definition of a mixed Weil cohomology theory enriched in a Tannakian category. 

\begin{definition} 
\index{enriched mixed Weil cohomology theory}
Let $S$ be a noetherian scheme of finite Krull dimension, and let $\mathcal{T}_0$ be a Tannakian category of finite Ext-dimension, and denote $\mathcal{T}=\op{Ind-}\mathcal{T}_0$. A \emph{mixed Weil cohomology theory enriched in $\mathcal{T}$}\index{mixed Weil cohomology theory} is a presheaf $\op{E}_S$ of commutative differential graded algebras in $\mathcal{T}$ on the category of smooth affine $S$-schemes, satisfying the following axioms:
\begin{enumerate}[(W1)]
\item descent for Nisnevich hypercoverings,
\item $\mathbb{A}^1$-invariance,
\item normalization, i.e., $\op{E}_S(S)$ is contractible,
\item for $\sigma_1:S\to \mathbb{G}_{\op{m},S}$ the unit section, the object $\mathbb{Q}_{\mathcal{T}}(-1):=\ker(\op{E}_S(\sigma_1)[1])$ belongs to the heart of the natural t-structure of $\op{Der}(\mathcal{T})$ and induces an autoequivalence $\mathbb{Q}_{\mathcal{T}}(-1)\otimes^{\op{L}}_{\mathcal{T}}(-)$ of $\op{Der}(\mathcal{T})$. 
\item K\"unneth formula, i.e., for any smooth affine schemes $X,Y$ over $S$, the canonical morphism $\op{E}_S(X)\otimes^{\op{L}}_{\mathcal{T}}\op{E}_S(Y)\to\op{E}_S(X\times_S Y)$ is a weak equivalence. 
\end{enumerate}
\end{definition}

The special case where $\mathcal{T}$ is simply the category of finite-dimensional vector spaces over a field $\Lambda$ of characteristic $0$ is the definition of \emph{mixed Weil cohomology theory} from \cite{cd:weil}.  

\begin{example}[\cite{cd:weil}, Section 3]
\begin{enumerate}
\item $\ell$-adic cohomology is a mixed Weil cohomology theory whose 
  associated commutative ring spectrum is denoted by $\mathcal{E}_{\op{et},\ell}$,
  cf. \cite[Section 3.3]{cd:weil}. 
\item Algebraic de Rham cohomology is a mixed Weil cohomology whose 
  associated commutative ring spectrum is denoted  $\mathcal{E}_{\op{dR}}$,
  cf. \cite[Section 3.1]{cd:weil}. 
\end{enumerate}
In particular, the $\ell$-adic realization functors will be relevant for our discussion.
\end{example} 

The more important examples for our present work arise from the semisimplification of the Hodge realization, as discussed in \cite[Section 2.4]{SoWe}: 

\begin{example}
Associating to a smooth $\mathbb{C}$-scheme $X$  the singular cohomology of the associated complex manifold $X(\mathbb{C})$, equipped with its polarizable mixed Hodge structure, yields an enriched mixed Weil cohomology theory with coefficients in $\mathcal{MHS^{\op{pol}}}_{\mathbb{Q}}$. This cohomology theory will be denoted by $\op{E}_{\op{Hodge}}$. Composing with the functor taking the associated weight-graded pieces yields an enriched mixed Weil cohomology theory $\op{E}_{\op{GrH}}$ with coefficients in graded pure Hodge structures.
The associated commutative ring spectrum is denoted by $\mathcal{E}_{\op{GrH}}$, and there is a corresponding motivic triangulated category (alternatively, a  homotopical stable algebraic derivator) $\op{Der}(\mathcal{E}_{\op{GrH}})$  with full six-functor formalism. 
\end{example}

By \cite[Theorem 2.2.7]{drew:thesis}, for any perfect field $k$, there is an equivalence of symmetric monoidal triangulated categories
\[
\op{Der}(\op{Spec}k;\mathcal{E})\approx \op{Der}(\mathcal{T})
\]
between the $\mathcal{E}$-modules over $L$ and the derived category of the Tannakian category $\mathcal{T}$. 
\begin{enumerate}
\item In the special case of ordinary mixed Weil cohomology theories with values in finite dimensional $\mathbb K$ vector spaces, this equivalence restricts to an equivalence $\op{Der}_{\op{c}}(k;\mathcal{E})\approx \op{Der}^{\op{b}}(\mathbb{K}\textrm{-modf})$ between the compact $\mathcal{E}$-modules over $k$ and the bounded derived category of finite-dimensional $\mathbb{K}$-vector spaces. 
\item 
\index{semisimplified Hodge motives}
In the case of the associated graded of the Hodge realization, this equivalence restricts to an equivalence  \[\op{Der}_{\op{c}}(k;\mathcal{E}_{\op{GrH}})\approx \op{Der}^{\op{b}}(\mathbb{K}\op{-modf}^\DZ)
\]
between the compact $\mathcal{E}_{\op{GrH}}$-modules over $k$ and the bounded derived category of the category of graded finite-dimensional $\mathbb{K}$-vector spaces. 
\end{enumerate}

\begin{remark}
Although the results of Cisinski--D{\'e}glise and Drew are formulated using the notion of motivic triangulated categories, these constructions can be translated to the language of homotopical stable algebraic derivators. 
\end{remark}

\begin{Bemerkung}
\index{mixed derived category, $\op{MDer}$}
In most of this work, we will denote the derivator $\mathcal{E}_{\op{GrH}}$ of motives with coefficients in the semisimplification of the Hodge realization by $\op{MDer}(-;\mathbb{C})$ or simply $\op{MDer}$. We think of this as a natural mixed version of the usual derived category of $\mathbb{C}$-vector spaces on a complex variety.
\end{Bemerkung}

\begin{Bemerkung}
As before, the categories of motives with coefficients in a ring spectrum representing an enriched mixed Weil cohomology theory are compactly generated whenever the base is a noetherian scheme of finite Krull dimension. The corresponding derivator is separated, so constructible objects are well-behaved as in \ref{compgen}. The derivators also have small sums and therefore the categories $\op{MDer}(-;\mathbb{C})$ as well as the corresponding subcategories $\op{MDer}^{\op{c}}(-;\mathbb{C})$ are idempotent complete. 
\end{Bemerkung}

\subsection{Realization functors} 

Next, we recall the construction of $\ell$-adic and Hodge realization functors, as well as their compatibility with the six functor formalism. 

We first discuss the $\ell$-adic realization functors. There are several possible ways to organize the $\ell$-adic derived categories into homotopical stable algebraic derivators. 
\begin{enumerate}
\item One way is given in \cite[Corollary 4.15, Section 5, Section 9]{ayoub:realisation}. First, for $\Lambda$ a finite $\mathbb{Z}/N\mathbb{Z}$-algebra with $N$ prime to the residue characteristics of the base scheme $S$, the derived categories $\op{Der}^{\et}(-;\Lambda)$ of \'etale sheaves of $\Lambda$-modules can be identified with $\mathbf{DA}^{\et}(-;\Lambda)$, cf. \cite[Theorem 4.1]{ayoub:realisation}. For a ring $\Lambda$ and an ideal $J\subseteq \Lambda$, \cite[Section 5]{ayoub:realisation} describes a homotopical stable 2-functor $\op{Der}^{\et}(-;\Lambda/J^\ast)$ which takes values in the category of $\Lambda/J^\ast$-modules of Ekedahl, together with a corresponding $\Lambda/J^\ast$-adic realization  functor induced essentially from change of coefficients. A further change of coefficients, cf. \cite[Section 9]{ayoub:realisation}, allows, in particular, to define realization functors 
\[
\mathfrak{R}^{\et}:\mathbf{DA}^{\et}(-;\Lambda)\to
\op{Der}^{\et}(-;\mathbb{Q}_\ell)
\]
as a morphism of homotopical stable 2-functors. This can be extended to a morphism of homotopical stable algebraic derivators by extending all the constructions of \cite{ayoub:realisation} to diagrams of schemes essentially as in \cite[Section 2.4]{ayoub:thesis1}. 
\item In \cite[17.2.5]{cisinski:deglise}, realization functors on the category of Beilinson motives are defined by considering the homotopy category of $\mathcal{E}$-modules over $X$ and taking the realization functor to be  
\[
\mathbf{DA}^{\et}(X)\rightarrow \op{Der}(X;\mathcal{E}): M\mapsto
\mathcal{E}_X\otimes_X^L M.
\] 
In the above, the category $\op{Der}(X;\mathcal{E})$ is the homotopy category of a model structure on the category of $\mathcal{E}$-modules in $\mathbf{DA}^{\et}(X)$. The $\ell$-adic realization functors are obtained by plugging the $\ell$-adic cohomology spectrum $\mathcal{E}_{\et,\ell}$ into this construction.
\item Another way to define $\ell$-adic realization functors is given in \cite[Section 7.2]{cd:etale}. If $\ell$ is a prime different from the residue characteristics of the base scheme $S$, it is possible to identify the $\ell$-completion of $\mathbf{DA}^{\et}(S;\mathbb{Z})$ with the $\ell$-adic derived category $\op{Der}^{\et}(S;\mathbb{Z}_\ell)$. 
\end{enumerate}

In all of the above constructions, it turns out that the $\ell$-adic realization functors commute with the six functors, preserve constructible objects and hence also preserve the Verdier duality formalism. In the first construction, this is \cite[Theorem 9.7]{ayoub:realisation}, in the second construction, this is \cite[17.2.18]{cisinski:deglise}, and in the third construction this is \cite[Theorem 7.2.24]{cd:etale}. 
For finite coefficients $\Lambda$ away from the residue characteristics of the base scheme, the \'etale realization functor $\mathbf{DA}^{\et}(X;\Lambda)\to \op{Der}^{\et}(X;\Lambda)$ is an equivalence, and this result remains true with $\Lambda$-adic coefficients. Our notation for the $\ell$-adic realization functors will be
\[
\op{Real}_\ell:\mathbf{DA}^{\et}(X;\mathbb{Q}_\ell)\to\op{Der}^{\et}(X;\mathbb{Q}_\ell).
\]
\index{realization functor!$\ell$-adic, $\op{Real}_{\ell}$}

Next we discuss the Betti and Hodge realization functors. If $\sigma:k\hookrightarrow \mathbb{C}$ is a subfield of the complex numbers, then the Betti realization 
\[
\mathbf{DA}^{\et}(X;\Lambda)\to\op{Der}(X^{\op{an}};\Lambda)
\]
is defined in \cite[Definition 2.1]{ayoub:betti} as the composition of an analytification functor followed by an equivalence of categories from analytic motives over $X^{\op{an}}$ to complexes of sheaves of $\Lambda$-modules over $X^{\op{an}}$. By \cite[Theorem 3.19]{ayoub:betti}, Betti realization is compatible with the six functor formalism (in the sense of being a morphism of homotopical stable 2-functors resp. a fortiori a morphism of homotopical stable algebraic derivators). Moreover, by \cite[Section 4]{ayoub:betti}, Betti realization is also compatible with the vanishing cycles formalism. 

Within the framework of motivic triangulated categories of Cisinski--D{\'e}glise, a de Rham realization can be defined by $M\mapsto M\otimes\mathcal{E}_{\op{dR}}$ mapping a motive to its associated free $\mathcal{E}_{\op{dR}}$-motive, where $\mathcal{E}_{\op{dR}}$ is the spectrum representing algebraic de Rham cohomology, cf. \cite[Section 17]{cisinski:deglise}. 

These realization functors can be refined, taking into account the additional information given by mixed Hodge structures. Such realization functors have been defined by Ivorra \cite{ivorra} and Drew \cite[Theorem 3.3.9]{drew:thesis}. In Drew's work, the functor from \'etale motives over $X$ takes values in the Ind-category of the bounded derived category of holonomic quasi-coherent $\mathscr{D}$-modules over $X$. Our notation for the Hodge realization functor will be 
\[
\op{Real}_{\op{H}}:\op{MDer}(X;\mathbb{C})\to \op{Der}(X;\mathbb{C}). 
\]
\index{realization functor!Hodge, $\op{Real}_{\op{H}}$}

\section{Preliminaries on groups, actions and homogeneous spaces}
\label{sec:grp-prelims}

In this section, we recall some preliminary statements and fix notation for algebraic groups and varieties with group actions which will be used throughout the paper. We also discuss the most relevant examples of group actions for the paper. 

In this paper, we only consider linear algebraic groups defined over fields,
which we assume reduced by definition. We freely use standard definitions and results from the theory of linear algebraic groups, in particular the structure theory for reductive groups; these can be found in the classical textbooks \cite{BorAG,HumAG,SpLAG} on the subject. 

The most important sequence of groups throughout the paper will be $G\supset P\supset B\supset T$, with $G$  a split reductive group over the field $k$, $T$ a choice of maximal split torus, $B$ a Borel defined over $k$ containing the torus $T$ and $P$ a parabolic subgroup defined over $k$ containing the Borel. The Weyl group  is denoted by $W=\op{W}(G,T)=\op{N}_G(T)/\op{C}_G(T)$.

\subsection{Definitions and examples}

\begin{Bemerkung}
\label{vwa}  
By a {\bf variety with action}\index{variety with action, $(G\looparrowright X)$} $(G\looparrowright X)$ or {\bf $G$-variety} we henceforth mean a triple $(G,X,a)$ consisting of a linear algebraic group $G$ and a variety $X$ with a $G$-action $a:G\times X\ra X$. Recall that by our standing assumption \ref{standing}, our varieties are always assumed to be quasi-projective. The varieties with action form a category in which  morphisms are pairs $(\phi,f):(G\looparrowright X)\to (H\looparrowright Y)$  consisting of a homomorphism  $\phi:G\to H$ of linear algebraic groups and a  $\phi$-equivariant map $f:X\to Y$ of varieties, i.e., the following diagram commutes
\begin{center}
  \begin{minipage}[c]{10cm}
    \xymatrix{
      G\times X\ar[d]_{(\phi,f)} \ar[r]^\rho & X\ar[d]^f\\
      H\times Y\ar[r]_{\sigma} & Y.
    }
  \end{minipage}
\end{center}
\end{Bemerkung}

\begin{remark}
Our generic notation will be using actions $G\looparrowright X$ on the left, which leads to quotients being written as $G\backslash X$.
\end{remark}

\begin{example}
\index{torsor}
  Given an algebraic group $G$, a \emph{$G$-torsor} $X$ is a variety such that there is a faithfully flat $G$-equivariant map $X\to Y$, where $G$ acts trivially on $Y$, such that the canonical map $G\times X \to X\times_Y X$ is an isomorphism. In this situation, we will say that the \emph{quotient} of the $G$-action on $X$ exists.
\end{example}

\begin{example}
\label{ex:parabolic}
Let $G$ be a connected split reductive group over a field $k$, let $B\subset G$ be a Borel subgroup and let $P,Q$ be two parabolic subgroups containing $B$. Then there is an action 
\[
(P\times Q)\times G\to G:(p,q,g)\mapsto p\cdot g\cdot q^{-1}.
\]
The $(P\times Q)$-equivariant motives over $G$ (which should be viewed as motives over the double quotient $P\backslash G/Q$) will be relevant for the categorification of the parabolic Hecke module.
\end{example}

\begin{example}
\label{ex:symmetric}
Let $G$ be a connected split reductive group $G$ over a field $k$,  $\sigma:G\to G$ be a non-trivial algebraic involution,  $T$ be a $\sigma$-stable maximal torus,  $B$ be a $\sigma$-stable Borel subgroup and $P$ be a standard parabolic. Denote by $K$ the subgroup of $G$ fixed by $\sigma$. As before, left and right multiplication provide a $(P\times K)$-action on $G$. The $(P\times K)$-equivariant motives on $G$ (which should be viewed as motives over the double quotient $P\backslash G/K$) will be relevant for the representation theory of real Lie groups.
\end{example}

\begin{example}
\label{ex:wonderful}
Let $G$ be a connected adjoint semi-simple group, and let $B\subseteq G$ be a Borel subgroup. Let $X$ be the wonderful compactification of $G$, in particular there is a $G\times G$-action on $X$ and a morphism $G\to X$ which is $(G\times G)$-equivariant for the natural action of $G\times G$ on $G$ by left and right multiplication. The $(G\times G)$-equivariant motives over $X$ will be relevant for the categorification of a module for the Hecke algebra of $G\times G$, cf. \cite{SpCompact}. 
\end{example}

\begin{example}
\label{ex:horocycle}
Let $G$ be a connected split reductive group and $i:B\subseteq G$ be the inclusion of a Borel subgroup. Then there is the adjoint action $\op{ad}:G\looparrowright G$ and its restriction $\op{ad}:B\looparrowright G$. We have morphisms of varieties with action
\[
(i,\op{id}_G):(\op{ad}:B\looparrowright G)\to(\op{ad}:G\looparrowright G), \textrm{ and}
\]
\[
(\Delta,\op{id}_G):(\op{ad}:B\looparrowright G)\to(B\times B\looparrowright G),
\]
where $(B\times B\looparrowright G)$ is the action from Example~\ref{ex:parabolic}. The above maps are relevant for a motivic version of the horocycle correspondence and character sheaves. This, however, is outside the scope of the present paper.
\end{example}

\subsection{Freeness and separably defined orbits}
\begin{definition}
\label{free} 
Given a 
variety with action $H\looparrowright X$, call the action {\bf free}\index{free!variety with action} if 
the fpqc-quotient $H\backslash X$ is represented by a variety and moreover
the quotient map $X\ra H\backslash X$ is an \'{e}tale locally trivial $H$-torsor. Also note that in our conventions, 
varieties are always  assumed to be quasi-projective. 
\end{definition}

\begin{example}
\index{balanced product, $G\times_{/H}X$}
Let $G$ be a group, let $H\subseteq G$ be a subgroup and let $H\looparrowright X$ be a variety with action. We denote by $G\times_{/H}X$ the \emph{balanced product}
\[
G\times_{/H}X:=(G\times X)/H
\]
where the $H$-action on $G\times X$ is given by the right action on $G$ via the subgroup inclusion and the left action on $X$ via $H\looparrowright X$. 

  Free actions will mainly be encountered in the following situation. Given a variety with action $G\looparrowright X$, and $H=P$ a parabolic subgroup of $G$, we will be interested in the space $G\times_{/P}X$. As $G\sra G/P$ is a Zariski-locally trivial $P$-torsor, $G\times_{/P} X$ exists as a variety if we ignore our requirement of quasi-projectivity. The quasi-projectivity will be evident in all the actual situations encountered.
\end{example}

\begin{Bemerkung} 
  The assumption that $X\ra X/H$ is locally trivial in the \'etale topology will follow from  local triviality in the fpqc-topology in the situations that we encounter by a result of Seshadri \cite[Cor. 4.1.6 of Exp. XXIV]{SGA33}: the morphism $X\sra X/H$ is locally trivial in the \'etale topology if $H$ is smooth reductive and $X/H$  is locally noetherian and normal. 
\end{Bemerkung}

\begin{Bemerkung}
\label{orbitsoverk}
Fix a field $k$ and let $G\looparrowright X$ be a variety with $G$-action, where the variety $X$, the algebraic group $G$ as well as the action morphism $a:G\times X\to X$ are defined over $k$. We will make crucial use of quotient or induction equivalences in the definition of equivariant mixed Tate motives and the corresponding weight structures. For this to be well-behaved, we will usually need to assume the following:
\begin{enumerate}
\item There are only finitely many $G(\bar k)$-orbits on $X(\bar k)$.
\item Each $G(\bar{k})$-orbit on $X(\bar{k})$ contains a $k$-rational point $x\in X(k)$ such that the induced map $G\times \{x\}\to X$ is a separable morphism onto its image.
\end{enumerate}
A variety with action $G\looparrowright X$ satisfying the above conditions will be said to have \emph{finitely many orbits separably defined over $k$}.
\index{separably defined orbits!variety with action}
\end{Bemerkung}


Note that these conditions are satisfied in the examples \ref{ex:parabolic}, \ref{ex:wonderful} and \ref{ex:symmetric} which are relevant for our intended applications. However, the conditions are not satisfied in Example~\ref{ex:horocycle}, since the conjugation action of $\op{ad}:G\looparrowright G$ generally has infinitely many orbits.


\section{Acyclicity and resolutions of group actions}
\label{sec:resolutions}

In this section, we discuss descriptions of the ``homotopy quotient'' of varieties with action which will be used in the definition of equivariant motives. One very natural description is the Borel construction, viewed as a simplicial scheme. Another natural possibility exhibiting the ``algebraic structure of the homotopy quotient'' is to use approximations by finite-dimensional smooth schemes, also called $n$-acyclic resolutions of the action.

To define useful approximations to the homotopy quotient and to establish results comparing the Borel construction to its approximations, it is necessary to have a suitable notion of acyclic maps. In the definition of equivariant derived categories using resolutions, cf. \cite{BeLu}, the crucial property of $n$-acyclic maps is that they induce equivalences of truncated derived categories of complexes of length $<n$. The same now happens in the setting of motivic categories: to be able to define equivariant motives via resolutions, we need a notion of acyclic maps which has the property that they induce equivalences on categories of suitably truncated motives. Moreover, we need enough resolutions of arbitrarily high acyclicity. 

\begin{Bemerkung}
In the following, we consider a homotopical stable algebraic derivator $\mathbb{D}$ satisfying the conditions from \ref{derivator:conditions}. This in particular means that the homotopical stable algebraic derivator $\mathbb{D}$ supports a t-structure, i.e., that for each scheme $X$ over $k$ there is a t-structure on $\mathbb{D}(X)$,  compatible with the six functors in the way specified in \cite[Scholie 2.2.95]{ayoub:thesis1}. 

More specifically, some parts of the theory of acyclic maps will only be developed for the derivators $\mathbf{DA}^{\et}(-;\Lambda)$ (with $\Lambda$ is a field of characteristic $0$) and its localizations such as $\op{MDer}(-;\mathbb{C})$ which are most interesting for our applications. These are constructed via localizations of categories of complexes of sheaves, hence there is a t-structure coming from truncation of complexes. This is usually called the \emph{homotopy t-structure}\index{homotopy t-structure} (to distinguish it from the conjectural ``motivic'' t-structure whose heart would be an abelian category of mixed motives). 
\end{Bemerkung}

The key point that makes the definition of equivariant motives via resolutions work is that the usual construction of resolutions (via product with suitable representations) produces maps whose acyclicity for the homotopy t-structure is controlled by the codimension of the non-free locus. 

\subsection{$\mathbb{A}^1$-acyclic maps and truncations of motivic categories} 

\begin{definition}
\label{def:acyclicity}
Let $k$ be a field and let $\mathbb{D}$ be a homotopical stable algebraic derivator over $k$ satisfying the conditions in \ref{derivator:conditions}. 
We say that a morphism of varieties $f:X\to Y$ is \emph{$\tau$-$n$-acyclic}\index{$\tau$-$n$-acyclic map} if it satisfies the following conditions:
\begin{enumerate}
\item For any object $M\in\mathbb{D}(Y)$ in the heart of $\tau_Y$, the natural morphism $M\to \tau_{\leq n}f_\ast f^\ast M$ is an isomorphism.
\item For any morphism of varieties $g:\tilde{Y}\to Y$, the induced map $\tilde{f}:X\times_Y\tilde{Y}\to\tilde{Y}$ satisfies condition (1). 
\end{enumerate}

A morphism $f:X\to Y$ is called $\tau$-$\infty$-acyclic if it is $\tau$-$n$-acyclic for all $n$. 
\end{definition}

\begin{remark} 
\label{rem:infacyclic}
  In the homotopy t-structures on motives we consider, a morphism $M\to N$ is an isomorphism if and only if its truncations $\tau_{\leq n}M\to\tau_{\leq n}N$ are isomorphisms for all $n\in\mathbb{Z}$. In particular, a map is $\tau$-$\infty$-acyclic if and only if $M\to f_\ast f^\ast M$ is an isomorphism. However, $\tau$-$\infty$-acyclic maps of schemes 
  are very rare.
\end{remark}


Exactness properties of the functors $f^\ast$ and $f_\ast$ for the homotopy t-structure on $\mathbb{D}(X)$ are discussed in \cite[Section 2.2.3]{ayoub:thesis1}. The following is then an analogue of \cite[Proposition 1.9.2]{BeLu}. 

\begin{definition}
\index{$\mathbb{D}^I(X)$}
For a morphism $f:X\to Y$, denote by $\mathbb{D}(X\mid Y)\subseteq\mathbb{D}(X)$ the strictly full subcategory of objects of the form $f^\ast M$ with $M\in\mathbb{D}(Y)$. We will also employ the notation $\mathbb{D}^I(X)=\mathbb{D}^{\geq a}(X)\cap\mathbb{D}^{\leq b}(X)$, where  $I=[a,b]\subseteq\mathbb{Z}$ is an interval and the truncated subcategories are understood with respect to the given t-structure $\tau$.
\end{definition}

\begin{proposition}
\label{prop:belu192}
Fix an interval $I=[a,b]\subseteq \mathbb{Z}$, and let $f:X\to Y$ be a $\tau$-$n$-acyclic map with $n\geq b-a$. 
\begin{enumerate}
\item The functor $f^\ast:\mathbb{D}^I(Y)\to\mathbb{D}^I(X\mid Y)$ is an equivalence of categories. The inverse functor is given by $\tau_{\leq b}\circ f_\ast:\mathbb{D}(X)\to \mathbb{D}(Y)$. 
\item A sequence $A\to B\to C\to A[1]$ is a distinguished triangle in $\mathbb{D}^I(Y)$ if and only if $f^\ast A\to f^\ast B\to f^\ast C\to f^\ast A[1]$ is a distinguished triangle in $\mathbb{D}^I(X)$. 
\item The subcategory $\mathbb{D}^I(X\mid Y)\subseteq\mathbb{D}(X)$ is closed under extensions and taking direct summands. 
\end{enumerate}
\end{proposition}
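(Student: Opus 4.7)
The strategy is to deduce the entire proposition from a single key lemma: for every $M \in \mathbb{D}^{[a,b]}(Y)$ with $b - a \leq n$, the unit map $M \to \tau_{\leq b} f_\ast f^\ast M$ is an isomorphism, and moreover $H^i(f_\ast f^\ast M) = 0$ for $i < a$ and for $b < i \leq a + n$. The strengthened vanishing statement is what will make the induction close up properly and will also be needed for the Ext-computations in parts (2) and (3).

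I would prove this lemma by induction on the length $\ell = b - a$. The base case reduces, after a shift, to $M$ in the heart of $\tau_Y$; here $\tau$-$n$-acyclicity directly gives $H^0(f_\ast f^\ast M) = M$ and $H^i(f_\ast f^\ast M) = 0$ for $0 < i \leq n$, while vanishing in negative degrees follows from the fact that $\tau_{\leq n} f_\ast f^\ast M \cong M$ itself lies in the heart. For the induction step, I would apply $f_\ast f^\ast$ to the canonical truncation triangle $\tau_{\leq b-1} M \to M \to H^b(M)[-b]$ and chase the long exact sequence of cohomology: the induction hypothesis controls $f_\ast f^\ast \tau_{\leq b-1} M$ (length $\ell - 1$), the base case controls $f_\ast f^\ast H^b(M)[-b]$, and together they pin down the cohomology of $f_\ast f^\ast M$ in the required range.

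With the lemma established, all three parts become essentially formal. For part (1), essential surjectivity is handled by taking, for $N \in \mathbb{D}^I(X \mid Y)$, the object $M := \tau_{\leq b} f_\ast N \in \mathbb{D}^I(Y)$ and verifying $f^\ast M \cong N$ from the lemma; full faithfulness is the adjunction chain
\[
\mathbb{D}(X)(f^\ast M, f^\ast N) \cong \mathbb{D}(Y)(M, f_\ast f^\ast N) \cong \mathbb{D}(Y)(M, \tau_{\leq b} f_\ast f^\ast N) \cong \mathbb{D}(Y)(M, N),
\]
where the middle isomorphism uses $M \in \mathbb{D}^{\leq b}$ together with t-structure orthogonality, and the last one is the lemma. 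For part (2), the nontrivial direction is handled by completing $A \to B$ to a distinguished triangle $A \to B \to C' \to A[1]$ in $\mathbb{D}(Y)$; the length bound together with the vanishing half of the lemma forces $C' \in \mathbb{D}^I(Y)$, after which part (1) transports the cone-comparison isomorphism $f^\ast C' \cong f^\ast C$ back to an isomorphism $C' \cong C$ on $Y$. For part (3), closure under direct summands is immediate from idempotent completeness of $\mathbb{D}(Y)$ combined with fullness of $f^\ast$, while closure under extensions follows by lifting the extension class along the isomorphism $\mathbb{D}(X)(f^\ast C, f^\ast A[1]) \cong \mathbb{D}(Y)(C, A[1])$, which is yet another application of the lemma.

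The main obstacle I foresee is the bookkeeping in the induction step of the key lemma. Because truncation functors are not triangulated, extracting the identification $H^b(f_\ast f^\ast M) \cong H^b(M)$ from the long exact sequence attached to the truncation triangle requires the vanishing of $H^{b+1}(f_\ast f^\ast \tau_{\leq b-1} M)$, and this is exactly why the strengthened form with the controlled vanishing range $(b-1, a+n]$ must be carried along in the induction hypothesis rather than extracted at the end. This is also the point at which the length bound $b - a \leq n$ enters essentially: for any longer interval, cohomology would appear outside the controlled range and destroy the identification.
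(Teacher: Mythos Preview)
Your approach is correct for parts (1) and (2) and closely parallels the paper's: both establish that the unit $M \to \tau_{\leq b} f_\ast f^\ast M$ is an isomorphism for $M \in \mathbb{D}^{[a,b]}(Y)$, then deduce the rest formally. The paper phrases this as extension-closure of the class $\mathcal{C}$ of objects where $\alpha$ is an isomorphism (this closure holds because the unit is always injective on each $H^i$, since $f^\ast$ is faithful on the heart, so only surjectivity is at stake and the five-lemma applies without any control above degree $b$), while you run an explicit induction on $b-a$. Your worry about needing $H^{b+1}(f_\ast f^\ast \tau_{\leq b-1}M)=0$ is in fact avoidable: naturality of the unit against the truncation triangle already identifies $H^b(M)\to H^b(f_\ast f^\ast M)$ as an isomorphism once $H^b(f_\ast f^\ast \tau_{\leq b-1}M)=0$, and that vanishing your hypothesis does provide. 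The paper also sets up the counit-side criterion that $N\in\mathbb{D}^I(X\mid Y)$ iff $\beta_N\colon f^\ast\tau_{\leq b}f_\ast N\to N$ is an isomorphism, which streamlines (2) and the direct-summand half of (3).

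For closure under extensions in (3), your lifting of the extension class requires $\mathbb{D}(X)(f^\ast C, f^\ast A[1])\cong \mathbb{D}(Y)(C, A[1])$, which after adjunction and truncation comes down to $H^{b+1}(f_\ast f^\ast A)=0$; your strengthened vanishing supplies this only when $b-a<n$. In the borderline case $b-a=n$ the lift can genuinely fail: for $\ell$-adic sheaves, $\mathbb{G}_{\mathrm{m}}\to\pt$ is $0$-acyclic, yet the unipotent rank-two local system on $\mathbb{G}_{\mathrm{m}}$ is an extension of constant sheaves that is not itself constant. The paper's one-line appeal to the $\beta$-criterion for (3) does not address this edge case either.
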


\begin{proof}
From the adjunction morphisms we obtain morphisms of functors $\alpha:\tau_{\leq b}\to\tau_{\leq b} f_\ast f^\ast$ and $\beta:f^\ast \tau_{\leq b}f_\ast \to\tau_{\leq b}$. Here we use that $f^\ast$ is $\tau$-exact. 

The full subcategory $\mathcal{C}\subseteq \mathbb{D}(Y)$ of objects $M$ for which $\alpha$ is an isomorphism is closed under extensions. The acyclicity condition on $f$ implies that $M\to \tau_{\leq n}f_\ast f^\ast M$ is an isomorphism for any $M$ in the heart $\mathcal{H}$ of $\tau_Y$. Then, $\mathcal{C}$ contains $\mathcal{H}[-i]$  for $i\geq a$, and therefore, $\mathcal{C}$ contains $\mathbb{D}^{\geq a}(Y)$. On the subcategory $\mathbb{D}^I(Y)$, $\tau_{\leq b}f_\ast$ is then a functorial left inverse to $f^\ast$. 

From the properties of the adjunction, the composition $f^\ast\to f^\ast f_\ast f^\ast \to f^\ast$ is the identity. If $M\in \mathbb{D}^I(Y)$, applying the truncation $\tau_{\leq b}$ yields isomorphisms $\tau_{\leq b} M\to M$ and by $\tau$-exactness of $f^\ast$ also $\tau_{\leq b}f^\ast M\stackrel{\cong}{\longrightarrow} f^\ast M$. Therefore, the composition $f^\ast M\to f^\ast \tau_{\leq b} f_\ast f^\ast M\to f^\ast M$ is an isomorphism. Combining this with the above statement about $\tau_{\leq b}f_\ast$ being a left inverse of $f^\ast$ implies that an object $N\in\mathbb{D}^I(X)$ is isomorphic to $f^\ast M$ for some $M\in\mathbb{D}^I(Y)$ if and only if $\beta:f^\ast \tau_{\leq b}f_\ast N\to N$ is an isomorphism. 

The above criterion implies that $f^\ast$ and $\tau_{\leq b}f_\ast$ are inverse equivalences of the categories $\mathbb{D}^I(Y)$ and $\mathbb{D}^I(X\mid Y)$, hence proving (1). Claim (2) follows from the $\tau$-exactness of $f^\ast$ and $\tau_{\leq b}f_\ast$. Closure under extensions and direct summands in (3) follows from the criterion which is in terms of isomorphisms.
\end{proof} 

Next, we establish a statement analogous to the descent result of \cite[Lemma 1.9.3]{BeLu}. 

\begin{proposition}
\label{prop:belu193}
Let $\mathbb{D}$ be a homotopical stable algebraic derivator satisfying the conditions in \ref{derivator:conditions}. 
Let $g:\tilde{Y}\to Y$ be a morphism of varieties which  locally in the \'etale topology has sections. Let $f:X\to Y$ be a morphism of varieties and denote $\tilde{f}:\tilde{X}:=X\times_Y\tilde{Y}\to \tilde{Y}$. Then the following statements hold: 
\begin{enumerate}
\item The induced map $\tilde{f}$ is $\tau$-$n$-acyclic if and only if $f$ is $\tau$-$n$-acyclic. 
\item Assume $f$ and $\tilde{f}$ are $\tau$-$n$-acyclic, and let $M\in\mathbb{D}^I(X)$ with $I=[a,b]$ and $n\geq b-a$. Then $M\in\mathbb{D}^I(X\mid Y)$ if and only if $\tilde{M}:=g^\ast M\in\mathbb{D}^I(\tilde{X}\mid\tilde{Y})$. 
\end{enumerate}
\end{proposition}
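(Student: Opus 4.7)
The plan is to mimic the strategy of \cite[Lemma 1.9.3]{BeLu}: use that $g$ has \'etale-local sections to reduce the global question over $Y$ to a question on an \'etale cover of $Y$ on which the base change of $f$ can be identified with a base change of $\tilde f$, and then appeal to the criterion from Proposition~\ref{prop:belu192} together with smooth base change and conservativity of \'etale descent.

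For part (1), one direction is immediate from clause (2) of Definition~\ref{def:acyclicity}: $\tau$-$n$-acyclicity is by design stable under arbitrary base change, so $f$ $\tau$-$n$-acyclic implies $\tilde f$ $\tau$-$n$-acyclic. For the converse, fix an \'etale cover $\{h_i\colon U_i\to Y\}$ such that $g$ admits sections $s_i\colon U_i\to\tilde Y$. The defining identity $g\circ s_i=h_i$ yields a canonical isomorphism $X\times_Y U_i\cong \tilde X\times_{\tilde Y}U_i$, so the base change $f_{U_i}$ of $f$ along $h_i$ is at the same time a base change of $\tilde f$ along $s_i$; by hypothesis it is $\tau$-$n$-acyclic. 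To check clause (1) for $f$, let $M\in\mathbb{D}(Y)^{\heartsuit}$. Smooth base change for the \'etale morphism $h_i$ gives $h_i^\ast f_\ast\cong f_{U_i,\ast}h_{X,i}^\ast$, and $h_i^\ast$ is $\tau$-exact, so $h_i^\ast\bigl(M\to\tau_{\leq n}f_\ast f^\ast M\bigr)$ is identified with the acyclicity isomorphism for $f_{U_i}$ applied to $h_i^\ast M$. Conservativity of $\{h_i^\ast\}$ then yields the result, and the same argument applied after base change along an arbitrary morphism $Y'\to Y$ produces clause (2).

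For part (2), the implication $M\in\mathbb{D}^I(X\mid Y)\Rightarrow \tilde M\in\mathbb{D}^I(\tilde X\mid \tilde Y)$ is formal: if $M\cong f^\ast N$ then $\tilde M\cong \tilde g^\ast f^\ast N\cong \tilde f^\ast g^\ast N$. For the converse I would use the criterion, established inside the proof of Proposition~\ref{prop:belu192}, that $M\in\mathbb{D}^I(X\mid Y)$ is equivalent to the adjunction map $\beta_M\colon f^\ast\tau_{\leq b}f_\ast M\to M$ being an isomorphism. Choose again an \'etale cover $\{h_i\colon U_i\to Y\}$ on which $g$ admits sections $s_i$, and write $s_{X,i}\colon X\times_Y U_i\xrightarrow{\cong}\tilde X\times_{\tilde Y}U_i\to \tilde X$ for the induced maps. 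Under these identifications $h_{X,i}^\ast M\cong s_{X,i}^\ast \tilde M$, so the hypothesis $\tilde M\cong \tilde f^\ast \tilde N$ gives $h_{X,i}^\ast M\cong f_{U_i}^\ast s_i^\ast \tilde N\in\mathbb{D}^I(X\times_Y U_i\mid U_i)$. The criterion applied to $f_{U_i}$, which by part (1) is itself $\tau$-$n$-acyclic, thus shows that $h_{X,i}^\ast\beta_M$ is an isomorphism. Smooth base change and $\tau$-exactness of $h_i^\ast$ imply $h_{X,i}^\ast\beta_M$ agrees with the $\beta$-map for $f_{U_i}$ applied to $h_{X,i}^\ast M$, and conservativity of the \'etale pullbacks $h_{X,i}^\ast$ finally yields that $\beta_M$ is an isomorphism, whence $M\in\mathbb{D}^I(X\mid Y)$.

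The main obstacle I expect is bookkeeping the exchange of functors: smooth base change for $h_i^\ast f_\ast$, compatibility of the unit/counit of the $(f^\ast,f_\ast)$ adjunction with \'etale pullback, and $\tau$-exactness of smooth pullback for the homotopy t-structure. All of these are built into the axioms recorded in \ref{derivator:conditions}, so once the correct identifications $X\times_Y U_i\cong \tilde X\times_{\tilde Y}U_i$ via the sections $s_i$ are in place, the descent argument runs in parallel to the classical case.
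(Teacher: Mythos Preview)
Your proposal is correct and follows essentially the same approach as the paper's proof: both use the \'etale-local sections to identify $f_{U_i}$ with a base change of $\tilde f$, invoke smooth base change and $\tau$-exactness of \'etale pullback to transport the unit/$\beta$ maps, and conclude via conservativity of the \'etale cover. Your write-up is in fact slightly more careful than the paper's in spelling out that clause~(2) of Definition~\ref{def:acyclicity} also needs to be verified (which you do by noting the argument is stable under further base change) and in tracking that the conservativity used in part~(2) is for the induced cover of $X$.
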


\begin{proof}
(1) Since $\tau$-$n$-acyclicity is by definition stable under base change, $\tilde{f}$ is $\tau$-$n$-acyclic if $f$ is. 

For the converse, we first show that the $\tau$-$n$-acyclicity of $f$ can be checked \'etale locally on the base. So assume that there exists an \'etale  covering $\{U_i\to Y\}_i$ of $Y$ such that for each $i$, the morphism $f_i:X\times_YU_i\to U_i$ is $\tau$-$n$-acyclic. Then $f$ is $\tau$-$n$-acyclic, since (by assumption of separatedness of $\mathbb{D}$) the functors $\mathbb{D}(Y)\to \mathbb{D}(U_i)$ form a conservative family, hence we can check if $M\to\tau_{\leq n}f_\ast f^\ast M$ is an isomorphism by pullback to the $U_i$. The pullback of the morphism $M\to\tau_{\leq n}f_\ast f^\ast M$ to $U_i$ is $M_i\to\tau_{\leq n}(f_i)_\ast f_i^\ast M_i$ by smooth base change.

Now, by assumption, there exists an \'etale covering $\{U_i\to Y\}_i$ such that each morphism $g_i:\tilde{Y}\times_YU_i\to U_i$ has a section $s_i$. Then $f_i$ is the base change of $\tilde{f}:X\times_Y\tilde{Y}\to \tilde{Y}$ along $U_i\stackrel{s_i}{\longrightarrow} U_i\times_Y\tilde{Y}\to\tilde{Y}$.  In particular, $f_i$ is $\tau$-$n$-acyclic, and by the previous arguments, $f$ is $\tau$-$n$-acyclic. 

(2) If $M\in\mathbb{D}^I(X\mid Y)$, then $\tilde{M}\in\mathbb{D}^I(\tilde{X}\mid\tilde{Y})$ is clear from 2-functoriality of $f^\ast$. We saw in the proof of Proposition~\ref{prop:belu192} that $M\in\mathbb{D}^I(X\mid Y)$ if and only if the morphism $\beta:f^\ast\tau_{\leq b} f_\ast M\to M$ is an isomorphism. This can again be checked \'etale locally on $Y$, by the argument in Step (1). But then, locally in the \'etale topology on $Y$, $M$ is the pullback of $\tilde{M}$ along the local sections, hence $M\in\mathbb{D}^I(X\mid Y)$ if $\tilde{M}\in\mathbb{D}^I(\tilde{X}\mid\tilde{Y})$. 
\end{proof}

This result provides a criterion for $n$-acyclicity of maps similar to \cite[1.9.4]{BeLu}. Our result is much weaker in that we require local trivializations in the \'etale topology, whereas Bernstein and Lunts only require acyclicity of fibers.

\begin{definition}
\index{\'etale fiber bundle}
  A morphism  $f:X\to Y$ of varieties is called an \emph{\'etale fiber bundle} with fiber a variety $F$ if there exists an \'etale covering $\{U_i\to Y\}_i$ such that for each $i$ the morphism $ U_i\times_YX\to U_i$ is isomorphic to the projection $U_i\times F\to U_i$ as an object over $U_i$. 
\end{definition}

\begin{corollary}
\label{cor:acyclicity}
Let $f:X\to Y$ be an \'etale fiber bundle. If the fiber is $\tau$-$n$-acyclic, then $f$ is $\tau$-$n$-acyclic. 
\end{corollary}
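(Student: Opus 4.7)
The plan is to deduce this directly from Proposition~\ref{prop:belu193}(1), using the observation that the hypothesis that $F$ is $\tau$-$n$-acyclic means by definition that the structural morphism $\op{fin}_F:F\to \op{Spec} k$ is $\tau$-$n$-acyclic (and this is already a pointwise condition, stable under base change by the definition of $\tau$-$n$-acyclicity).

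First, since $f:X\to Y$ is an \'etale fiber bundle with fiber $F$, there exists an \'etale covering $\{U_i\to Y\}_i$ such that each pullback $U_i\times_YX\to U_i$ is isomorphic to the projection $U_i\times F\to U_i$. Taking $\tilde Y:=\coprod_i U_i$ with the canonical morphism $g:\tilde Y\to Y$, the map $g$ is \'etale surjective and hence locally in the \'etale topology admits sections; moreover the base change $\tilde f:\tilde X:=X\times_Y\tilde Y\to\tilde Y$ is isomorphic over $\tilde Y$ to the disjoint union of the projections $U_i\times F\to U_i$. This puts us exactly in the setup of Proposition~\ref{prop:belu193}(1).

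Next, each projection $U_i\times F\to U_i$ is the base change of $\op{fin}_F:F\to\op{Spec} k$ along the structural morphism $U_i\to\op{Spec} k$; by condition~(2) in Definition~\ref{def:acyclicity} (stability of $\tau$-$n$-acyclicity under arbitrary base change), each such projection is $\tau$-$n$-acyclic. Passing to the disjoint union and using that the functors $\mathbb{D}(\tilde Y)\to \prod_i \mathbb{D}(U_i)$ are conservative and compatible with the six functors, one sees that $\tilde f$ is $\tau$-$n$-acyclic as well.

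Finally, Proposition~\ref{prop:belu193}(1) yields that $f$ is $\tau$-$n$-acyclic, as desired. The only mildly delicate point is checking that $\tau$-$n$-acyclicity of the disjoint union $\tilde f$ follows from that of its components, but this is immediate from the \'etale local nature of the check in Proposition~\ref{prop:belu193}(1), which is precisely the argument already carried out in its proof. No further obstacles arise.
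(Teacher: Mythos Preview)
Your proof is correct and follows essentially the same approach as the paper, which simply says ``This is a direct application of Proposition~\ref{prop:belu193}.'' You have merely spelled out the details that the paper leaves implicit: passing to a trivializing \'etale cover, identifying the pullback as a base change of $\op{fin}_F$, and invoking the \'etale-local criterion established in the proof of Proposition~\ref{prop:belu193}(1).
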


\begin{proof}
This is a direct application of Proposition~\ref{prop:belu193}. 
\end{proof}

\begin{remark}
\'Etale motives are the natural coefficient systems of the \'etale version of Morel's stable $\mathbb{A}^1$-homology. One would naturally expect that a morphism $f:X\to Y$ is $n$-acyclic for the homotopy t-structure on $\mathbf{DA}^{\et}(-;\Lambda)$ if and only if $f$ induces isomorphisms $\op{H}^{\mathbb{A}^1}_i(X,f^\ast M)\to\op{H}^{\mathbb{A}^1}_i(Y,M)$ for all $0\leq i\leq n$ and all \'etale motives $M\in\mathbf{DA}^{\et}(Y;\Lambda)$.  We do not know if such a characterization (or even an integral version) holds in this generality. In any case, the above special case of \'etale locally trivial maps with $n$-acyclic fibers will suffice for our purposes.
\end{remark}

It remains to provide criteria for a variety to be sufficiently $n$-acyclic. The following result, based on an excision argument of Asok and Doran in \cite[Corollary 4.7]{asok:doran}, provides such a criterion. This criterion will be applied to the usual construction of resolutions for group actions. Note that the excision result makes use of an identification of \'etale and Nisnevich cohomology and is therefore restricted to $\mathbf{DA}^{\et}(-;\Lambda)$ with $\Lambda$ a field of characteristic $0$. 

\begin{proposition}
\label{prop:excision}
Let $k$ be a field, and let $Z\subseteq \mathbb{A}^n$ be a closed subvariety of codimension $i$. Then  $\mathbb{A}^n\setminus Z$ is $(i-2)$-acyclic for the homotopy t-structure on $\mathbf{DA}^{\et}(-;\Lambda)$. 
\end{proposition}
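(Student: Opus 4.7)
The approach is to induct on $\dim Z$, combining absolute purity for smooth closed immersions with the localization triangle in a dévissage along the singular-locus stratification of $Z$. Because $\tau$-$n$-acyclicity must be verified after arbitrary base change $g:\tilde{Y}\to\op{Spec}k$, I would strengthen the inductive statement to: \emph{for every closed $Z\subseteq\mathbb{A}^n$ all of whose components have codimension $\geq i$, and every $\tilde{Y}/\op{Spec}k$, the complement $\pi_{U}:U_{\tilde{Y}}\to\tilde{Y}$ satisfies $\tilde M \xrightarrow{\sim}\tau_{\leq i-2}(\pi_U)_\ast \pi_U^\ast \tilde M$ for every $\tilde M$ in the heart of the homotopy $t$-structure on $\mathbf{DA}^{\et}(\tilde{Y};\Lambda)$}. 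Performing all subsequent constructions fibrewise over $\tilde{Y}$ makes base-change stability automatic from the corresponding properties of the six-functor formalism.

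For the inductive step, let $Z'\subseteq Z$ be the singular locus of $Z$ (a closed subvariety of strictly smaller dimension but still of codimension $\geq i$ in $\mathbb{A}^n$), and set $V:=\mathbb{A}^n\setminus Z'$, so that $Z^{\op{sm}}:=Z\setminus Z'$ is closed in $V$, smooth over $k$, and of codimension $c\geq i$. Write $\tilde{i}:Z^{\op{sm}}\hookrightarrow V$ and $\tilde{j}:U\hookrightarrow V$. Applying the localization triangle to $\pi_V^\ast M$ and pushing by $(\pi_V)_\ast$ yields
\[
(\pi_V)_\ast \tilde{i}_\ast \tilde{i}^! \pi_V^\ast M \longrightarrow (\pi_V)_\ast \pi_V^\ast M \longrightarrow (\pi_U)_\ast \pi_U^\ast M \longrightarrow [1].
\]

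Using relative purity for the smooth morphisms $\pi_V$ and $\pi_{Z^{\op{sm}}}$ (stable under base change to $\tilde{Y}$), one computes $\tilde{i}^!\pi_V^\ast M\cong \pi_{Z^{\op{sm}}}^\ast M(-c)[-2c]$. Since smooth pullback is $t$-exact and direct image is left $t$-exact, the first term of the triangle lies in cohomological degrees $\geq 2c\geq 2i$. By the inductive hypothesis applied to $Z'$, the middle term agrees with $M$ in degrees $\leq i-2$. The long exact cohomology sequence then forces $M\to (\pi_U)_\ast\pi_U^\ast M$ to be an isomorphism in degrees $\leq 2i-2$, and in particular in degrees $\leq i-2$, which is the desired $\tau$-$(i-2)$-acyclicity. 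The base case is $Z$ smooth (including $Z=\emptyset$), where the middle term is $M$ itself by $\mathbb{A}^1$-invariance and the conclusion is immediate from the same degree estimate.

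The main subtlety is to ensure that the stratification-plus-purity machinery survives the arbitrary base change $\tilde{Y}\to\op{Spec}k$. This is where one must be careful: one stratifies $Z$ by its singular-locus filtration \emph{over $k$} rather than over $\tilde{Y}$, and then uses that smoothness and codimension of the strata are preserved under base change to $\tilde{Y}$; the purity identifications above require only that $\pi_V$ and $\pi_{Z^{\op{sm}}}$ are smooth as morphisms to the base (they are, being base changes of smooth morphisms), and impose no smoothness condition on $\tilde{Y}$ itself. This is the principal technical point; once it is handled, the induction on $\dim Z$ delivers the conclusion in the stated generality.
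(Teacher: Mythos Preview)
Your approach is genuinely different from the paper's. The paper identifies the heart of the homotopy $t$-structure with homotopy modules (following D\'eglise), uses rational coefficients to pass from \'etale to Nisnevich cohomology, and then invokes the Gersten-complex excision argument of Asok--Doran \cite[Corollary 4.7]{asok:doran} to see that removing a codimension-$i$ closed subset from $\mathbb{A}^n$ does not change Nisnevich cohomology of homotopy modules in degrees $\leq i-2$; base-change stability is handled separately via cdh-descent. Your d\'evissage via the singular-locus stratification, combined with relative purity and the localization triangle, stays entirely within the six-functor formalism and avoids any explicit description of the heart --- this is arguably cleaner and more portable to other derivators.

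Two small wrinkles. First, the degree bound is off by a factor: for the homotopy $t$-structure the autoequivalence $M\mapsto M(1)[1]$ (not $M\mapsto M(1)$) is $t$-exact, so the operation $(-c)[-2c]$ sends $\mathbb{D}^{\geq 0}$ to $\mathbb{D}^{\geq c}$, not $\mathbb{D}^{\geq 2c}$. This does no harm, since $c\geq i$ still forces the first term of the triangle to vanish after $\tau_{\leq i-2}$, and the long exact cohomology sequence gives the isomorphism you want in degrees $\leq i-2$. Second, over a non-perfect base field the smooth locus of $Z$ can be empty, so the induction on $\dim Z$ could stall; one fixes this by first passing to the perfect closure of $k$, which is an equivalence for $\mathbf{DA}^{\et}$ with rational coefficients.
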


\begin{proof}
  We prove that the structure morphism $f:\mathbb{A}^n\setminus Z\to\op{Spec} k$ is $(i-2)$-acyclic. To prove this, it suffices by Proposition~\ref{prop:belu193} to pass to an \'etale cover of $\op{Spec} k$. In particular, we can assume that the base field is infinite, which is necessary to apply the techniques of \cite{morel:book} and \cite{asok:doran}. 

By definition of $n$-acyclicity, cf. Definition~\ref{def:acyclicity}, we need to show that for each $M\in\mathbf{DA}^{\et}(\op{Spec}k;\Lambda)$ in the heart of the homotopy t-structure, the induced map $M\to \tau_{\leq (i-2)}f_\ast f^\ast M$ is an isomorphism in $\mathbf{DA}^{\et}(\op{Spec}k;\Lambda)$, and that this is stable under pullbacks. By Voevodsky's results, cf. \cite{friedlander:suslin:voevodsky}, the heart of the homotopy t-structure on $\mathbf{DA}^{\et}_{\op{eff}}(\op{Spec}k;\Lambda)\cong\op{DM}_{\op{eff}}(\op{Spec}k;\Lambda)$ is given by the homotopy-invariant Nisnevich sheaves with transfers. More generally, for the non-effective motives, the heart is given by homotopy modules, i.e., sequences $(\mathbf{F}_n)_{n\in\mathbb{Z}}$ of homotopy-invariant Nisnevich sheaves with transfers which are connected by isomorphisms $\mathbf{F}_n\xrightarrow{\cong} (\mathbf{F}_{n+1})_{-1}$, where the contraction $\mathbf{A}_{-1}$ of a homotopy-invariant Nisnevich sheaf $\mathbf{A}$ is the sheaf 
\[
\mathbf{A}_{-1}(U)=\ker\left(\mathbf{A}(\mathbb{G}_{\op{m}}\times U)\to\mathbf{A}(U)\right).
\]
This has been established in the thesis of D\'eglise, cf. \cite{deglise:mh}. 
The relevant point for our proof is that the cohomology of smooth schemes with coefficients in a homotopy module can be computed using a Gersten-type complex. 

Let now $M\in\mathbf{DA}^{\et}(\op{Spec}k;\Lambda)$ be in the heart of the homotopy t-structure. 
To show that $M\to\tau_{\leq (i-2)}f_\ast f^\ast M$ is an isomorphism, it suffices to show that $\op{H}^j_{\et}(S;M)\to\op{H}^j_{\et}(S;f_\ast f^\ast M)$ is an isomorphism for all $0\leq j\leq i-2$ and all smooth $k$-schemes $S$. Now $M$ has rational coefficients, so that \'etale cohomology and Nisnevich cohomology of $M$ resp. $f_\ast f^\ast M$ are isomorphic; it thus suffices to show that $\op{H}^j_{\op{Nis}}(S;M)\to\op{H}^j_{\op{Nis}}(S;f_\ast f^\ast M)$ is an isomorphism for all $0\leq j\leq i-2$ and all smooth $k$-schemes $S$.
The projection formula yields an isomorphism $f_\ast f^\ast M\cong f_\ast\left(\underline{\mathbb{A}^n\setminus Z}\right)\otimes_{\op{Spec}k} M$, hence we are reduced to show that 
\[
\op{H}^j_{\op{Nis}}(S;M)\to
\op{H}^j_{\op{Nis}}(S;\left(\underline{\mathbb{A}^n\setminus Z}\right)\otimes_{\op{Spec}k}M)\cong
\op{H}^j_{\op{Nis}}(S\times\mathbb{A}^n\setminus Z;M) 
\]
is an isomorphism for $0\leq j\leq i-2$ and all smooth $k$-schemes $S$. 

This is the point where we can use the excision result. If $M\in\mathbf{DA}^{\et}(\op{Spec}k;\Lambda)$ is an \'etale motive in the heart of the homotopy t-structure, then it can be represented by a homotopy module as discussed above. By an argument similar to \cite[Corollary 4.7]{asok:doran}, using the Gersten resolution to compute cohomology of homotopy modules, it follows that the inclusion $S\times(\mathbb{A}^n\setminus Z)\hookrightarrow S\times \mathbb{A}^n$ induces an isomorphism on Nisnevich cohomology
\[
\op{H}^j_{\op{Nis}}(S,M)\cong \op{H}^j_{\op{Nis}}(S\times\mathbb{A}^n, M)\stackrel{\cong}{\longrightarrow} \op{H}^j_{\op{Nis}}(S\times(\mathbb{A}^n\setminus Z),M).
\]
for $0\leq j\leq i-2$ and any smooth $k$-scheme $S$. 

To prove stability under pullback, we can restrict to pullback along smooth schemes $X\to \op{Spec}k$, by cdh-descent for rational motives. In particular, we can again compute the relevant cohomology in terms of Gersten complexes, and the base-change formulas imply that this argument is stable under pullback. Alternatively, we can use the fact that isomorphisms in the Nisnevich cohomology of strictly $\mathbb{A}^1$-invariant sheaves of abelian groups can be detected over fields, reducing everything to the field case discussed above. Hence $f$ is $(i-2)$-acyclic for the homotopy t-structure on $\mathbf{DA}^{\et}(-;\Lambda)$.
\end{proof}

This allows to easily construct resolutions which are highly acyclic for the homotopy t-structure on $\mathbf{DA}^{\et}(-;\Lambda)$ as well as its localizations such as $\op{MDer}(-;\mathbb{C})$. From now on, acyclicity will always be understood to be with respect to the homotopy t-structure on one of these homotopical stable algebraic derivators.

\subsection{Applications to resolutions of $G$-spaces}

We can now apply the basic theory of $n$-acyclic maps developed above to finite-dimensional approximations of classifying spaces resp. the Borel construction. This mainly follows \cite{totaro} and \cite{edidin:graham}. 

Let $k$ be a field, and let $G$ be a linear algebraic group over $k$. We start with the definition of resolutions as in \cite[2.1.1 and 2.1.2]{BeLu}. 

\begin{proposition}
\label{prop:belu211}
Let $\nu:P\to X$ be a morphism of varieties with $G$-action, and assume that $X$ is $G$-free. Then $P\cong X\times_{G\backslash X}(G\backslash P)$ and $P$ is $G$-free. 
\end{proposition}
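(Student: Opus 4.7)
The strategy is descent along the torsor $\pi\colon X\to Y:=G\backslash X$. By hypothesis $\pi$ is an étale-locally trivial $G$-torsor, so choose an étale cover $\{U_i\to Y\}_i$ together with $G$-equivariant trivializations $\varphi_i\colon G\times U_i\xrightarrow{\cong} X\times_Y U_i$, where $G$ acts on $G\times U_i$ by left translation on the first factor. These give a system of descent data on $Y$, and more generally yield an equivalence between the category of $G$-equivariant varieties over $X$ and the category of varieties over $Y$; this is the standard fact that torsors are effective descent morphisms.

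Apply this equivalence to the $G$-equivariant morphism $\nu\colon P\to X$. Concretely, for each trivializing chart $U_i$, set $P_i:=P\times_X(G\times U_i)$, which carries a $G$-action compatible with left translation on $G\times U_i$. The map
\[
G\times\bigl(\{1\}\times U_i\times_{G\times U_i}P_i\bigr)\;\longrightarrow\;P_i,\qquad (g,p)\longmapsto g\cdot p,
\]
is a $G$-equivariant isomorphism. Setting $Q_i:=\{1\}\times U_i\times_{G\times U_i}P_i$, this shows $P_i\cong G\times Q_i$, so étale-locally the candidate quotient is $Q_i$ and the candidate quotient map is a \emph{trivial} $G$-torsor. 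The $Q_i$ glue by descent to a variety $Q$ over $Y$ (quasi-projectivity of $Q$ follows because each $Q_i\to U_i$ inherits quasi-projectivity from $P_i\to G\times U_i$, and this property descends). By construction $P\cong X\times_Y Q$ as $G$-varieties over $X$, and the projection $P\to Q$ is the base change of $\pi$ along $Q\to Y$.

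Having built $Q$, it remains to identify $Q$ with $G\backslash P$ and to deduce $G$-freeness of $P$. The identification $Q=G\backslash P$ is immediate from the construction: any $G$-invariant morphism $P\to T$ factors locally through the $Q_i$ because $P_i\cong G\times Q_i$ as $G$-varieties, and the resulting maps glue. Since $\pi\colon X\to Y$ is an étale-locally trivial $G$-torsor, its base change $P\cong X\times_Y Q\to Q$ is also an étale-locally trivial $G$-torsor, so $P$ is $G$-free in the sense of Definition~\ref{free}. This gives both assertions.

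The main technical obstacle is the quasi-projectivity of $Q$, which is needed to remain within our convention \ref{standing} that all varieties be quasi-projective. All other steps are formal consequences of descent along a torsor; only the geometric realization of the descended quotient as a quasi-projective variety requires care, and here one uses that the étale-local triviality of $X\to Y$ gives the product decomposition $P_i\cong G\times Q_i$, from which quasi-projectivity of $Q_i$ over $U_i$, and hence of $Q$ over $Y$, can be verified directly.
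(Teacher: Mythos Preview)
The paper's proof is just a citation to \cite[2.3.1]{BeLu}, so there is little to compare directly. Your descent argument along the étale-locally trivial torsor $X\to G\backslash X$ is essentially the Bernstein--Lunts argument, transported from the topological to the algebraic setting; the local splitting $P_i\cong G\times Q_i$ and the resulting identification $P\cong X\times_Y Q$ are correct.

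The one point that deserves more care is the quasi-projectivity of $Q$, which you rightly flag. Your sketch (quasi-projectivity of $Q_i$ over $U_i$ descends) is not sufficient as stated: an étale-locally quasi-projective morphism need not be globally quasi-projective, so the glued algebraic space $Q$ is not obviously a variety in the sense of~\ref{standing}. The paper's reference to \cite{BeLu} does not resolve this either, since Bernstein--Lunts work in the topological category where the issue is absent. In practice the proposition is only applied in the paper to resolutions $P$ (see Propositions~\ref{prop:belu221} and~\ref{prop:quotientequiv}), where $G\backslash P$ is already assumed to exist as a quasi-projective variety; in that situation only the isomorphism $P\cong X\times_{G\backslash X}(G\backslash P)$ is needed, and for this your étale-local check is complete.
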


\begin{proof}
The argument is the same as in \cite[2.3.1]{BeLu}. 
\end{proof}

\begin{definition}
  Let $G\looparrowright X$ be a variety with action. A \emph{resolution of $X$}\index{resolution} is a $G$-equivariant morphism $p:P\to X$ where $P$ is a variety with free $G$-action. Given two resolutions of $X$, a \emph{morphism of resolutions} is a morphism of $G$-varieties  over $X$.  The category of all resolutions of $X$  will be denoted by $\op{Res}(G\looparrowright X)$.
  A resolution is said to be \emph{$n$-acyclic}\index{resolution!acyclic} if $p:P\to X$ is $n$-acyclic.
\end{definition}

\begin{remark}
We hope that the notation $\op{Res}(G\looparrowright X)$ is not too close to the restriction functors $\op{Res}_G^H$. 
\end{remark}

\begin{remark}
Note that our definition \ref{free} of free $G$-action implies that for a resolution $p:P\to X$, the quotient $G\backslash P$ exists as a quasi-projective variety and the quotient map $P\to G\backslash P$ is a $G$-torsor which is locally trivial in the \'etale topology. 
\end{remark}

\begin{remark}
\begin{enumerate}
\item As usual, one has the trivial resolution $\op{pr}_X:G\times X\to X$ with diagonal $G$-action. 
\item The category of resolutions of $G\looparrowright X$ admits finite products, which are fiber products of varieties over $X$.
\item If $f:X\to Y$ is a morphism of $G$-varieties and $p:P\to X$ is a resolution of $ X$, then $f_\circ(p):=f\circ p$ is a resolution of $ Y$. Conversely, if $q:Q\to Y$ is resolution of $ Y$, then $f^\circ (q):=\op{pr}_2:Q\times_YX\to X$ is a resolution of $ X$. These are adjoint functors between the categories of resolutions
\[
f_\circ :\op{Res}(G\looparrowright X)\leftrightarrows \op{Res}(G\looparrowright Y):f^\circ 
\]
\end{enumerate}
\end{remark}

\begin{lemma}\label{lem:resexist}
For every variety with action $(G\looparrowright X)$ and every $n\geq 0$, there is a resolution of $X$ which is $\tau$-$n$-acyclic with respect to the homotopy t-structure $\tau$ on the derivator $\mathbf{DA}^{\et}(-;\Lambda)$.
\end{lemma}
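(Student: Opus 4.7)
The plan is to use Totaro's standard construction of highly acyclic resolutions via open subsets of representations of $G$, and then to transfer acyclicity along the projection to $X$ using the \'etale-local-triviality criterion from Corollary~\ref{cor:acyclicity}.

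First, since $G$ is a linear algebraic group, one can embed it into some $\op{GL}_N$ and consider for each integer $M \geq N$ the representation $V_M = \op{Mat}_{N \times M}$, acted on by $\op{GL}_N$ (and hence by $G$) via left multiplication. The open subset $U_M \subseteq V_M$ of matrices of maximal rank $N$ carries a free $\op{GL}_N$-action, and the complement has codimension $M - N + 1$, which tends to infinity with $M$. Hence for any $n$, by choosing $M$ sufficiently large, we may produce a variety $U$ which is an open subset of an affine space with complement of codimension at least $n+2$, on which $G$ acts freely in the sense of Definition~\ref{free} (the quotient $G\backslash U$ exists as a quasi-projective variety and $U \to G\backslash U$ is an \'etale-locally trivial $G$-torsor, by Seshadri and Totaro).

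Next, I would apply Proposition~\ref{prop:excision} to conclude that the structure morphism $U \to \op{Spec} k$ is $n$-acyclic for the homotopy t-structure on $\mathbf{DA}^{\et}(-;\Lambda)$. Then I would form $P = U \times X$, equipped with the diagonal $G$-action; freeness of the $G$-action on $U$ implies freeness of the diagonal action on $P$, so $p = \op{pr}_X : P \to X$ is a resolution in the sense of the definition preceding the lemma. The projection $P \to X$ is a trivial \'etale fiber bundle with fiber $U$, so Corollary~\ref{cor:acyclicity} implies that $p$ is $n$-acyclic, as desired.

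The only delicate point is the existence of the free $G$-action on an open subset of a representation whose complement has arbitrarily large codimension, and the quasi-projectivity of the resulting quotients; both are by now classical (Totaro~\cite{totaro}, Edidin--Graham~\cite{edidin:graham}). Given those facts, the rest of the argument is an immediate assembly of Proposition~\ref{prop:excision} and Corollary~\ref{cor:acyclicity}.
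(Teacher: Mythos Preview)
Your proposal is correct and follows essentially the same approach as the paper: invoke Totaro's construction of a representation with free action outside a high-codimension closed set, form the product with $X$, and then use Proposition~\ref{prop:excision} (together with the base-change stability built into Definition~\ref{def:acyclicity}, or equivalently Corollary~\ref{cor:acyclicity}) to deduce acyclicity of the projection to $X$. One minor imprecision: freeness of the $G$-action on $U$ does not formally imply freeness of the diagonal action on $U\times X$ in the strong sense of Definition~\ref{free} --- what must be checked is quasi-projectivity and \'etale-local triviality of the quotient $(U\times X)/G$, and this is exactly the content of the citations to \cite{totaro} and \cite{schmitt} in the paper's proof.
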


\begin{proof}
In \cite[Remark 1.4]{totaro} Totaro constructs 
for each $s\geq 0$ a representation $V$ of $G$ and a $G$-invariant closed subset $Z\subseteq V$ of  codimension $\geq s$ such that for every variety
$X$ the quotient $(X\times (V\setminus Z))/G$ of the diagonal $G$-action on $X\times(V\setminus Z)$  exists as a quasi-projective variety. We only need to show that the quotient morphism is a $G$-torsor which is locally trivial in the \'etale topology. This follows from \cite[Example 2.1.1.4 ii), Remark 2.1.1.6.i) and Remark 2.1.1.19]{schmitt}. Thus we can apply Proposition~\ref{prop:excision} to see that the projection map $X\times (V\setminus Z)\to X$ is $(s-2)$-acyclic. 
\end{proof}

Moreover, the double fibration construction, cf. \cite[proof of Theorem 1.1]{totaro}, allows to compare different such representations: 

\begin{lemma}
Let $V$ and $W$ be two representations such that $G$ acts freely outside subsets $Z_V$ and $Z_W$ of codimension $\geq s$, respectively. Then the two maps 
\[
(V\setminus Z_V)\times (W\setminus Z_W)\hookrightarrow (V\setminus Z_V)\times W\to (V\setminus Z_V) \quad\textrm{ and}
\]
\[
(V\setminus Z_V)\times (W\setminus Z_W)\hookrightarrow V\times (W\setminus Z_W)\to (W\setminus Z_W)
\]
are both $(s-2)$-acyclic.  
\end{lemma}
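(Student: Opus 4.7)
The plan is to observe that each of the two displayed maps is a product projection and hence a (trivially) Zariski-locally trivial fiber bundle, so that the acyclicity statement of the lemma reduces, via Corollary~\ref{cor:acyclicity}, to the acyclicity of the fiber viewed as a $k$-variety. From there, Proposition~\ref{prop:excision} finishes the argument immediately.

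More precisely, first I note that the two given two-step composites are nothing but the product projections
\[
p_1\colon (V\setminus Z_V)\times (W\setminus Z_W)\to (V\setminus Z_V),\qquad
p_2\colon (V\setminus Z_V)\times (W\setminus Z_W)\to (W\setminus Z_W),
\]
since the intermediate stages $(V\setminus Z_V)\times W$ and $V\times(W\setminus Z_W)$ only serve to factor each projection as an open immersion followed by a projection. In particular $p_1$ is an \'etale (indeed Zariski-trivial) fiber bundle with fiber $W\setminus Z_W$, and similarly $p_2$ has fiber $V\setminus Z_V$.

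Next, I apply Corollary~\ref{cor:acyclicity}: to show $p_1$ is $\tau$-$(s-2)$-acyclic it suffices to show that its fiber $W\setminus Z_W$ is $\tau$-$(s-2)$-acyclic, i.e.\ that the structural morphism $W\setminus Z_W\to \op{Spec} k$ is $\tau$-$(s-2)$-acyclic. But $W$ is a $G$-representation, hence isomorphic as a $k$-variety to some affine space $\mathbb{A}^{\dim W}$, and $Z_W\subset W$ is closed of codimension $\geq s$ by hypothesis. Proposition~\ref{prop:excision} therefore yields exactly the desired $(s-2)$-acyclicity. The argument for $p_2$ is the same with the roles of $V$ and $W$ exchanged.

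I do not anticipate a serious obstacle: the only small point of care is to ensure that the hypothesis of Corollary~\ref{cor:acyclicity} is meant in the sense ``fiber $\tau$-$n$-acyclic over $\op{Spec} k$,'' which is the natural reading and the one that feeds directly into Proposition~\ref{prop:excision}. One could, alternatively, factor each composite as the open immersion into the partial product followed by an $\mathbb{A}^1$-trivial vector bundle projection and invoke excision/relative purity for the open immersion; but the fiber-bundle argument above is cleaner and uses exactly the two results just established.
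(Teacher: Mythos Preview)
Your argument is correct. The paper states this lemma without proof, presumably because it is an immediate consequence of the two results you invoke: Corollary~\ref{cor:acyclicity} reduces the question to the acyclicity of the fiber, and Proposition~\ref{prop:excision} supplies exactly that. Your observation that the displayed composites are simply the product projections, hence trivial fiber bundles with fibers $W\setminus Z_W$ and $V\setminus Z_V$ respectively, is the right way to see it.
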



There are further special resolutions which will be used later for the definition of the six functors: 

\begin{definition}
\label{def:smoothres}
A resolution $p:P\to X$ of a  $G$-variety is called \emph{smooth}\index{resolution!smooth} if the morphism $p$ is smooth. The category of all smooth resolutions of $(G\looparrowright X)$ with smooth morphisms is denoted by $\op{SmRes}(G\looparrowright X)$.
\end{definition}

\begin{remark}
\begin{enumerate}
\item The resolutions provided by Lemma~\ref{lem:resexist} above are smooth.
\item For a morphism $f:(G\looparrowright X)\to (G\looparrowright Y)$, the functor $f^\circ $ preserves $n$-acyclic smooth resolutions. The functor $f_\circ $ does not generally preserve $n$-acyclic smooth resolutions unless $f$ itself is $n$-acyclic and smooth.
\end{enumerate}
\end{remark}

Finally, we recall the definition of compatibility of resolutions with respect to morphisms of varieties with actions, cf.  \cite[Definitions 6.2, 6.3]{BeLu}. 

\begin{definition}
\label{def:comp1}
Let $(\phi,f):(G\looparrowright X)\to (H\looparrowright Y)$ be a morphism of varieties with action. Let $p:P\to X$ and $q:Q\to Y$ be resolutions. Call these resolutions \emph{compatible}\index{resolution!compatible} if there exists a map $\tilde{f}:(G\looparrowright P)\to (H\looparrowright Q)$ such that the following diagram is commutative:
\begin{center}
\begin{minipage}[c]{10cm}
\xymatrix{
(G\looparrowright P) \ar[r]^{\tilde{f}}\ar[d] & 
(H\looparrowright Q)\ar[d] \\
(G\looparrowright X)\ar[r]_f & (H\looparrowright Y).
}
\end{minipage}
\end{center}
\end{definition}

Let $(\phi,f):(G\looparrowright X)\to (H\looparrowright Y)$ be a morphism of varieties with action. Then there is a bifunctor 
\[
-\times_f-:\op{Res}(G\looparrowright X)\times\op{Res}(H\looparrowright Y)\to\op{Res}(G\looparrowright X): (S,R)\mapsto S\times_X f^\circ(R)
\]
which sends pairs of smooth $n$-acyclic resolutions to smooth $n$-acyclic resolutions. 

\begin{definition}
\label{def:comp2}
Let $(\phi,f):(G\looparrowright X)\to (H\looparrowright Y)$ be a morphism of varieties with action. A resolution $P\in\op{Res}(G\looparrowright X)$ is \emph{compatible with $(\phi,f)$} if there is a resolution $Q\in\op{Res}(H\looparrowright Y)$ and a morphism $\tilde{f}$ making $P$ and $Q$ compatible resolutions in the sense of Definition~\ref{def:comp1}. A morphism of resolutions $P_1\to P_2$ in $\op{Res}(G\looparrowright X)$ is called \emph{compatible with $(\phi,f)$} if it can be completed to a commutative square of compatible resolutions in the sense of Definition~\ref{def:comp1}. 
The category of  resolutions compatible with $(\phi,f)$ (with compatible morphisms) is denoted by 
\[
\op{CRes}((\phi,f))\subset \op{Res}(G\looparrowright X).
\]
\end{definition}

\begin{remark}
Let $(\phi,f):(G\looparrowright X)\to (H\looparrowright Y)$ be a morphism of varieties with action, and let $p:P\to X$ and $q:Q\to Y$ be resolutions, compatible via the morphism $(\phi,\tilde{f}):(G\looparrowright P)\to(H\looparrowright Q)$. As before, the quotients $P/G$ and $Q/H$ exist as quasi-projective varieties and the quotient morphisms $P\to P/G$ and $Q\to Q/H$ are locally trivial in the \'etale topology. Moreover, $\tilde{f}$ induces a morphism of quotients $\bar{f}:P/G\to Q/H$.
\end{remark}

\subsection{The simplicial Borel construction}

We recall the definitions for the Borel construction, cf. also Appendix~\ref{sec:bar} for background on two-sided bar constructions and their motives.

\begin{definition}
\label{def:borel}
\index{Borel construction, $({\op{E}}G\times_{/G}X)_\bullet$}
Let $G$ be a linear algebraic group over the field $k$. Then there is the simplicial variety ${\op{E}}G$, also denoted by  $[G{\sslash}G]$ in the stacks  literature, whose scheme of $n$-simplices is ${\op{E}}G_n=G^{\times(n+1)}$. As usual, the face maps are defined using projections and the multiplication map $\mu:G\times G\to G$; the degeneracy maps are defined using partial diagonals. 

Another way to obtain the simplicial variety ${\op{E}}G$ is to consider, for each $k$-scheme $S$, the category $\cE\cG(S)$ consisting of objects given by $S$-points of $G$, and with a unique isomorphism between each pair of objects. Set
\[ 
{\op{E}}G = \mbox{nerve of $\cE\cG$}.
\]
Then ${\op{E}}G$ is represented by a simplicial variety. As the category $\cE\cG(S)$ is equivalent to the terminal category (consisting of a unique object and a unique morphism), ${\op{E}}G$ is simplicially contractible.

Now suppose $G$ acts on a variety $X$. Then we obtain a simplicial scheme ${\op{E}}G\times_{/G} X$, also denoted by $[X{\sslash}G]$ in the stacks literature or $X_{{\op{h}}G}$ in the equivariant homotopy literature, whose scheme of $n$-simplices is $({\op{E}}G\times_{/G}X)_n=G^n\times X$. The face maps use, as above, projections, multiplication of the group and additionally the action map $a:G\times X\to X$. 

Again, an alternative construction starts with the category $\cX_{{\op{h}}G}(S)$ whose objects are the $S$-points of $X$ and morphisms consist of $x\stackrel{g}{\longrightarrow} y$ for each $g\in G(S)$ with $a(g,x)=y$. Set 
\[ 
{\op{E}}G\qtimes{G}X =  \mbox{nerve of $\cX_{{\op{h}}G}$}.
\]
This is also represented by a simplicial variety.
If $X = \pt$, then we denote this simplicial variety by ${\op{B}}G$, i.e.,
\[ {\op{B}}G = \mbox{nerve of $\pt_{{\op{h}}G}$}.\]
Note that $\pt_{{\op{h}}G}$ consists of a single object whose endomorphisms are given by $G$. 

The evident functor $\cE\cG \to \pt_{{\op{h}}G}$ yields a morphism ${\op{E}}G \to {\op{B}}G$ with fiber $G$. The evident functor $\cX_{{\op{h}}G} \to \pt_{{\op{h}}G}$, yields a morphism 
\[
{\op{E}}G\qtimes{G}X \to {\op{B}}G 
\]
with fiber $X$ (viewed as a constant simplicial variety).
\end{definition}

\begin{remark}
  If we were working over the complex numbers (and using the complex analytic topology), then the geometric realizations of ${\op{E}}G$ and ${\op{B}}G$, would yield the universal $G$-bundle: the geometric realization of ${\op{E}}G$ is clearly a contractible space endowed with a free action of $G$, with quotient the geometric realization of ${\op{B}}G$.
\end{remark}

With slight abuse of notation, we can view the projection morphism ${\op{E}}G\times X\to X$ as a resolution of $X$, by a simplicial scheme. 


\begin{proposition}
The natural projection $f: {\op{E}}G\times X\to X$ is an $\infty$-acyclic simplicial resolution for $\mathbf{DA}^{\et}(-;\Lambda)$ in the sense that the unit of the adjunction is an isotransformation $\op{id}\sira f_\ast f^\ast$, cf. also Remark~\ref{rem:infacyclic}. 
\end{proposition}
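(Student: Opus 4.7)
The plan is to identify $f_\ast f^\ast M$ with a homotopy limit over a cosimplicial diagram and then exploit the simplicial contractibility of ${\op{E}}G$ via an extra-degeneracy argument. Since $X$ is viewed as a constant simplicial scheme, $f$ is a cartesian morphism of simplicial varieties which is pointwise the projection $\op{pr}_{X,n}\colon G^{n+1}\times X\to X$. By the construction of the derivator on diagrams of schemes from \cite[Section 2.4]{ayoub:thesis1}, the object $f_\ast f^\ast M\in\mathbf{DA}^{\et}(X;\Lambda)$ is computed as the totalization of the cosimplicial object
\[
C^\bullet\colon\quad n\mapsto (\op{pr}_{X,n})_\ast\,(\op{pr}_{X,n})^\ast M,
\]
whose cosimplicial structure maps are induced by the face and degeneracy maps of ${\op{E}}G$. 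The unit morphism $M\to f_\ast f^\ast M$ is the canonical comparison from $M$ to this totalization, arising pointwise from the ordinary units of the adjunctions $(\op{pr}_{X,n})^\ast\dashv (\op{pr}_{X,n})_\ast$.

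The key input is that ${\op{E}}G$ admits extra degeneracies, witnessing its simplicial contractibility onto a point. Explicitly, the maps
\[
s_{-1}\colon {\op{E}}G_n=G^{n+1}\to G^{n+2}={\op{E}}G_{n+1},\quad (g_0,\ldots,g_n)\mapsto (e,g_0,\ldots,g_n),
\]
where $e\in G$ is the identity element, satisfy the simplicial identities required to split the face map $d_0$ and identify ${\op{E}}G$ with a contractible augmented simplicial object over $\op{pt}$; this is essentially the statement that the codiscrete groupoid $\cE\cG(S)$ is equivalent to the terminal category. Crossing with $X$ yields corresponding extra degeneracies on ${\op{E}}G\times X$ compatible with $\op{pr}_X$, which pull back and push forward to extra codegeneracies on $C^\bullet$ splitting the coaugmentation $M\to C^\bullet$.

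It is a standard fact that an augmented cosimplicial object admitting extra codegeneracies has its coaugmentation as an equivalence onto the totalization. Applied to our situation, this yields the desired isotransformation $\op{id}\sira f_\ast f^\ast$. The main technical obstacle will be verifying the extra-codegeneracy argument rigorously within Ayoub's framework of homotopical stable algebraic derivators on diagrams of schemes: one needs to either reduce to the corresponding statement for cosimplicial objects in the underlying model category of symmetric spectra of complexes of \'etale sheaves used in the construction of $\mathbf{DA}^{\et}$ in Definition~\ref{def:daet}, where the splitting argument is classical, or appeal to the appropriate properties of homotopy colimits/limits for the derivator on simplicial diagrams established in \cite[Section 2.4]{ayoub:thesis1}; in either case, passage to the derivator-theoretic totalization preserves the splitting provided by the extra codegeneracies.
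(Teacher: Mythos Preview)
Your proposal is correct and follows essentially the same route as the paper: both identify $f_\ast f^\ast M$ with a (co)simplicial diagram built from the termwise $(\op{pr}_{X,n})_\ast(\op{pr}_{X,n})^\ast M$ and then invoke the explicit simplicial contraction of ${\op{E}}G$ (your extra degeneracies $s_{-1}(g_0,\ldots,g_n)=(e,g_0,\ldots,g_n)$) to conclude. Your formulation is in fact more precise: since $f_\ast$ is a right adjoint, the passage from $\mathbb{D}(X,\Delta^{\op{op}})$ to $\mathbb{D}(X)$ is a homotopy limit, and the induced structure maps $(f_{n-1})_\ast(f_{n-1})^\ast M\to (f_n)_\ast(f_n)^\ast M$ indeed make the diagram cosimplicial, so ``totalization of a cosimplicial object with extra codegeneracies'' is the accurate description; the paper's phrase ``colimit of the simplicial motive'' should be read as shorthand for the same thing.
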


\begin{proof}
We can view $X$ as a constant simplicial variety. The definition of $f_\ast$ for motives over simplicial varieties is term-wise. Then we can interpret a motive over the constant simplicial variety $X$ as a simplicial motive over the variety $X$. This way, the motive $f_\ast f^\ast M$ is going to be the colimit of the simplicial motive whose $n$-th term is $(f_n)_\ast (f_n)^\ast M$ for $f_n:G^{n+1}\times X\to X$. As mentioned above, the simplicial $X$-scheme ${\op{E}}G\times X$ has an explicit simplicial contraction making it homotopy equivalent to $X$. This contraction, by functoriality, also exists on the simplicial motive $f_\ast f^\ast M$ showing that the natural map $M\to f_\ast f^\ast M$ is an isomorphism.  
\end{proof}

\section{Definitions of categories of equivariant motives}
\label{sec:equivdef}

In this section, we will give, for a variety with action $G\looparrowright X$, several definitions of categories $\mathbb{D}^{(\op{b},+)}_G(X)$ of $G$-equivariant $\mathbb{D}$-motives over $X$. Mainly, the two alternatives are to define motives via the simplicial Borel construction or to use finite-dimensional approximations of it. Both will turn out to be equivalent (essentially because they behave as expected under the quotient equivalence). Including a discussion of both constructions is not just done for the sake of completeness; indeed, both constructions are necessary to set up a suitably functorial six-functor formalism. 

\begin{Bemerkung}
As a word of warning for all the subsequent sections: the motives we consider will only be weakly equivariant (in the sense of computing Borel-style equivariant cohomology) and not strongly equivariant (in the sense of Bredon-style equivariant cohomology). While categories of strongly equivariant motives are also under current investigation, cf. \cite{hoyois}, they are much more difficult to construct and at present do not exist for arbitrary linear groups.
\end{Bemerkung}

\begin{convention}
\label{derivator:new}
From now on, $\mathbb{D}$ will be a homotopical stable algebraic derivator satisfying the conditions of \ref{derivator:conditions} which has an appropriate theory of acyclic maps, i.e., such that all the statements in Section~\ref{sec:resolutions} apply. This is, in particular, satisfied for localizations of the derivator $\mathbf{DA}^{\et}(-;\Lambda)$ for $\Lambda$ a field of characteristic zero. 
\end{convention}

\subsection{Motives over the simplicial Borel construction}
This construction, in which equivariant motives are defined as locally constant motives over the Borel construction, was suggested by Joseph Ayoub, cf. in particular his answer to the MathOverflow question 171503 ``Equivariant motivic sheaves''. It is a motivic version of the viewpoint taken in \cite[Section 2.7, Appendix B]{BeLu}. 

\begin{definition}
\label{def:equivmotivesborel}
\index{equivariant motives, $\mathbb{D}_G^{\Delta}(X)$}
Fix a homotopical stable algebraic derivator $\mathbb{D}$ satisfying the conditions of \ref{derivator:new}, and let $(G\looparrowright X)$ be a variety with action. Denote by $\Delta ^{\op{op}}$ the simplicial category, used as index category for simplicial varieties. Following Definition~\ref{def:daet}, we have the triangulated category $\mathbb{D}({\op{E}}G\times_{/G}X,\Delta ^{\op{op}})$ of motives on the Borel construction for $(G\looparrowright X)$. For an ordinal $[m]\in\Delta ^{\op{op}}$, denote by $\iota_m$
the corresponding embedding
of the one-point-diagram of varieties $({\op{E}}G\times_{/G}X)_m$ into the diagram
${\op{E}}G\times_{/G}X$.
For $M\in \mathbb{D}({\op{E}}G\times_{/G}X,\Delta ^{\op{op}})$ and $[m]\in\Delta ^{\op{op}}$, we define the motive $M\langle m\rangle\in \mathbb{D}(({\op{E}}G\times_{/G}X)_m)$ as $M\langle m\rangle=\iota_m^\ast  M$. With this notation, an object $M\in \mathbb{D}({\op{E}}G\times_{/G}X,\Delta ^{\op{op}})$ is cartesian in the sense of Definition~\ref{def:cartesian} if for each morphism $d:[m]\to [n]$ in $\Delta ^{\op{op}}$ the induced morphism $d^\ast M\langle m\rangle\to M\langle n\rangle$ is an isomorphism. 

Finally, we define the category 
\[
\mathbb{D}^{\Delta}_G(X)\pdef \mathbb{D}^{\op{cart}}({\op{E}}G\times_{/G} X,\Delta ^{\op{op}})
\]
of $G$-equivariant motives on $X$ as the full subcategory of cartesian motives in  $\mathbb{D}({\op{E}}G\times_{/G}X,\Delta ^{\op{op}})$. 
\end{definition}

The superscript $\Delta$ should signify that this is the category of equivariant motives constructed via the simplicial approach.

\begin{remark}
\label{rem:triangulated}
Note that the category $\mathbb{D}^{\Delta}_G(X)$ is in fact a triangulated subcategory of the category $\mathbb{D}({\op{E}}G\times_{/G}X,\Delta ^{\op{op}})$: if we have a triangle $A\to B\to C\to A[1]$ in the latter category such that any two of the objects belong to $\mathbb{D}^{\Delta}_G(X)$, then the five-lemma applied to the induced commutative diagram
\[
\xymatrix{
d^\ast A\langle m\rangle \ar[r]\ar[d] & d^\ast B\langle m\rangle \ar[r]\ar[d] & d^\ast C\langle m\rangle \ar[r]\ar[d] & d^\ast A\langle m\rangle[1] \ar[d]\\
A\langle n\rangle \ar[r] & B\langle n\rangle \ar[r] & C\langle n\rangle \ar[r] & A\langle n\rangle[1] 
}
\]
shows that the third object also belongs to $\mathbb{D}^{\Delta}_G(X)$. If the derivator $\mathbb{D}$ additionally has small sums, then the categories $\mathbb{D}^{\Delta}_G(X)$ will be idempotent complete.
\end{remark}

\begin{remark}
\label{rem:simpforget}
Let $G\looparrowright X$ be a variety with action. Viewing $X$ as constant simplicial scheme, there is a natural morphism $\gamma:X\to{\op{E}}G\times_{/G} X$ of simplicial schemes. This morphism induces a natural restriction functor, the forgetful functor 
\[
\op{For}=\gamma^\ast:\mathbb{D}^\Delta_G(X)\to\mathbb{D}(X). 
\]
\end{remark}


\begin{proposition}[{\bf Quotient equivalence, simplicial}]
\label{prop:freeaction}\index{quotient equivalence} 
Let $G\looparrowright X$ be a variety with a free action. Then the pullback functor along the projection $p\colon {\op{E}}G\times_{/G}X\to G\backslash X$ yields an equivalence 
\[ 
p^\ast\colon \mathbb{D}(G\backslash X)\sirra \mathbb{D}^{\Delta}_G(X).
\]
A quasi-inverse is given by the adjoint functor $p_\ast\cong p_{\sharp}$. 
\end{proposition}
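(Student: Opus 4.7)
The plan is to identify the simplicial Borel construction with the \v{C}ech nerve of the $G$-torsor $q\colon X \to G\backslash X$ and then invoke cohomological descent for $\mathbb{D}$.

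First I would check that $p^\ast$ takes values in $\mathbb{D}^{\Delta}_G(X)$: for every simplicial operator $d\colon [m]\to [n]$ the induced morphism $({\op{E}}G\times_{/G}X)_m \to ({\op{E}}G\times_{/G}X)_n$ lies over the identity of $G\backslash X$, so the cartesianness isomorphism $d^\ast(p^\ast M)\langle n\rangle \xrightarrow{\sim} (p^\ast M)\langle m\rangle$ is tautological.

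Next, because the action is free, the shear map $G\times X \to X\times_{G\backslash X}X$, $(g,x)\mapsto (x,gx)$, is an isomorphism. Iterating yields canonical identifications of the levels $X^{\times_{G\backslash X}(n+1)}$ of the \v{C}ech nerve of $q$ with the levels $G^n\times X$ of the Borel construction. A direct computation then shows that under these identifications the face and degeneracy maps of the \v{C}ech nerve correspond to those of the Borel construction (the projections, the multiplication of $G$ and the action map $a\colon G\times X \to X$), so $p\colon ({\op{E}}G\times_{/G}X)_\bullet \to G\backslash X$ is canonically the augmented \v{C}ech nerve of $q$.

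I would then invoke cohomological descent for $\mathbb{D}$ along the \'etale-locally trivial cover $q$. This gives at once that $p^\ast$ is an equivalence onto the cartesian motives on the nerve, with quasi-inverse the right adjoint $p_\ast$. For the identification $p_\ast\cong p_\sharp$, note that each level morphism $p_n\colon G^n\times X \to G\backslash X$ is smooth, so $p_\sharp$ is available in the derivator framework; since $p^\ast$ is now known to be an equivalence, both adjoints must coincide with its quasi-inverse and hence with one another.

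The main obstacle is ensuring that \'etale cohomological descent along $q$ is available in exactly the form required. The cleanest route is to reduce to the trivial case: locally in the \'etale topology on $G\backslash X$ one has $X\cong G\times Y$, whereupon the Borel construction becomes the standard bar resolution ${\op{E}}G\times Y \to Y$ and the required equivalence reduces to the $\infty$-acyclicity of this projection established previously. Separatedness of $\mathbb{D}$ from Convention~\ref{derivator:new} then lets us glue the \'etale-local equivalence to a global one over $G\backslash X$.
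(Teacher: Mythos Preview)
Your proposal is correct and, once you arrive at the ``cleanest route,'' it is essentially the paper's own argument: pass to a trivializing \'etale cover using separatedness of $\mathbb{D}$, reduce to $X=G\times U$, and use that ${\op{E}}G$ is simplicially contractible to verify that unit and counit of the $(p^\ast,p_\ast)$-adjunction are isomorphisms. The identification of the Borel construction with the \v{C}ech nerve of $q$ is a pleasant conceptual observation but plays no role in the actual proof, and the paper dispenses with it; likewise, the initial appeal to ``cohomological descent'' is only a name for the unit/counit argument you then carry out by hand.
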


\begin{proof}
We view $G\backslash X$ as a constant simplicial scheme, then the projection ${\op{E}}G\times_{/G} X\to G\backslash X$ is given in degree $n$ by the projection $G^n\times X\to G\backslash X$. Then a smooth scheme $U\to G^n\times X$ is also a smooth $G\backslash X$-scheme via composition, and this morphism of diagrams gives rise to a pullback functor $p^\ast:\mathbb{D}(G\backslash X)\to\mathbb{D}_G^{\Delta}(X)$. As in \cite[Section 4.5]{ayoub:thesis2}, this functor has two adjoints. These functors are obtained by pointwise application of the functors $(p_n)_\sharp$ and $(p_n)_\ast$ with $p_n:G^n\times X\to G\backslash X$ and then taking the realization of the corresponding simplicial object in $\mathbb{D}(G\backslash X)$. If we can show that $p^*$ is an equivalence, both these adjoints have to be isomorphic.

So it suffices to show that the unit $\id\to p_*p^*$ and counit $p^*p_*\to \id$ of the adjunction are isomorphisms. As the derivator $\mathbb{D}$ is assumed to be separated, pullbacks along surjective maps are conservative. Consequently, by passing to a trivializing \'etale cover of $G\backslash X$, we may assume $X = G\times U$ with $G$ acting by multiplication on the first factor. In this case, $G\backslash X \simeq U$, ${{\op{E}}}G\qtimes{G} X \simeq {\op{E}}G\times U$, and the quotient map $p\colon {\op{E}}G \times U \to U$ is simply the projection map.

As ${\op{E}}G$ is simplicially contractible, this yields that the unit map $\id \to p_*p^*$ is an isomorphism. To see that the counit is also an isomorphism, note that if $M$ is a cartesian motive on ${\op{E}}G\times U$, then looking at the $0$-th degree simplicial piece of ${\op{E}}G\times U$, we see that $M$ must be the pullback of some motive on $U$. More precisely, this motive is given by the $0$-th degree piece of $M$. Thus, the contractibility of ${\op{E}}G$ once  again implies that the counit $p^*p_*\to \id$ is also an isomorphism.
\end{proof}
%
%

\subsection{Motives over individual resolutions}

Next, we show that equivariant categories of motives can alternatively be defined using finite-dimensional approximations to the Borel-construction. This is one of the approaches used in the construction of equivariant derived categories in \cite{BeLu}, and it is necessary for defining the six-functor formalism in full generality. 

\begin{definition}
\label{def:resoldiag}
\index{resolution!diagram $\mathscr{W}_P$}
Let $(G\looparrowright X)$ be a variety with action, and let $p:P\to X$ be a resolution of the $G$-action. We will denote by $\mathscr{W}_P$ the diagram 
\[
X\stackrel{p}{\longleftarrow} P\stackrel{q}{\longrightarrow} G\backslash P
\]
associated to the resolution $P$.
\end{definition}

Let $\mathbb{D}$ be a  homotopical stable algebraic derivator satisfying the conditions of \ref{derivator:new}. The following definition follows \cite[Definition 2.1.3]{BeLu}.  

\begin{definition}
\label{def:equivmotres1}
\index{equivariant motives!$\mathbb{D}_G(X,P)$}
Let $G\looparrowright X$ be a variety with action, let $p:P\to X$ be a resolution of $X$ and let $\mathscr{W}_P:X\stackrel{p}{\leftarrow} P\xrightarrow{q} G\backslash P$ be the corresponding diagram. Define the category $\mathbb{D}_G(X,P)$ as follows:  
\begin{enumerate}
\item an object $M$ of $\mathbb{D}_G(X,P)$ is a triple $M=(M_X,\overline{M},\beta)$ where $M_X\in\mathbb{D}(X)$ is a motive over $X$, $\overline{M}\in \mathbb{D}(G\backslash P)$ is a motive over the quotient $G\backslash P$, and $\beta:p^\ast M_X\cong q^\ast\overline{M}$ is an isomorphism in $\mathbb{D}(P)$. 
\item a morphism $\alpha:M\to N$ of $\mathbb{D}_G(X,P)$ is a pair $\alpha=(\alpha_X,\overline{\alpha})$ with $\alpha_X:M_X\to N_X$ and $\overline{\alpha}:\overline{M}\to \overline{N}$ such that $\beta\circ p^\ast(\alpha_X)=q^\ast(\overline{\alpha})\circ \beta$. 
\end{enumerate}
\end{definition}

\begin{Bemerkung}
  There is a natural forgetful functor
\[
\op{For}:\mathbb{D}_G(X,P)\to
\mathbb{D}(X):(M_X,\overline{M},\beta)\mapsto M_X. 
\]
For a $G$-equivariant morphism $f:X\to Y$, resolutions $p:P\to X$ and $r:R\to Y$ and a morphism $\nu:P\to R$ such that $r\circ \nu=f\circ p$, there is an inverse image functor
\[
f^\ast:\mathbb{D}_G(Y,R)\to\mathbb{D}_G(X,P):(F_Y,\overline{F},\beta)\to
(f^\ast F_Y,\overline{\nu}^\ast\overline{F},\gamma)
\]
where $\overline{\nu}:G\backslash P\to G\backslash R$ is the morphism induced on the quotients and 
\[
\gamma=\nu^\ast\beta:p^\ast
f^\ast F_Y\cong \nu^\ast r^\ast F_Y\to \nu^\ast q^\ast F\cong q^\ast
\overline{\nu}^\ast \overline{F}.
\]
\end{Bemerkung}

Actually, the definition above is only included for completeness and comparison and will not be used in a substantial way. Via the theory of $n$-acyclic maps, we can replace the above category of compatible motives by a category of cartesian motives over a resolution diagram: 

\begin{lemma}
\label{lem:resolindep}
Let $I=[a,b]\subseteq \mathbb{Z}$ be an interval and let $n\geq b-a$. Let $\mathbb{D}$ be a homotopical stable algebraic derivator satisfying the conditions of \ref{derivator:new}, let $(G\looparrowright X)$ be a variety with action and let $p:P\to X$ be an $n$-acyclic resolution. For the diagram $\mathscr{W}_P$ of Definition~\ref{def:resoldiag}, 
the obvious functor is an equivalence
\[
\mathbb{D}^{\op{cart},I}(\mathscr W_P)\sirra \mathbb{D}^I_G(X,P).
\]
\end{lemma}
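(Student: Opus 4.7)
The plan is to exploit the fact that the index shape of $\mathscr W_P$ is a span, which is a free (1-categorical) diagram, so the derivator $\mathbb{D}(\mathscr W_P)$ admits a concrete description. More precisely, an object of $\mathbb{D}(\mathscr W_P)$ is given by a triple $(M_X,M_P,\overline{M})$ together with structure morphisms $\alpha\colon p^\ast M_X\to M_P$ and $\gamma\colon q^\ast\overline{M}\to M_P$, and a cartesian object is one for which both $\alpha$ and $\gamma$ are isomorphisms. The obvious functor then sends such a cartesian triple to $(M_X,\overline{M},\beta)$ with $\beta:=\gamma^{-1}\circ\alpha$.

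For essential surjectivity I would start from $(M_X,\overline{M},\beta)\in\mathbb{D}^I_G(X,P)$ and lift it to the cartesian triple $(M_X,p^\ast M_X,\overline{M})$ with structure morphisms $\alpha=\mathrm{id}$ and $\gamma=\beta^{-1}$. Since $p^\ast$ is t-exact (it is the inverse image component of a morphism in the derivator) and $M_X\in\mathbb{D}^I(X)$, the middle piece $p^\ast M_X$ lies in $\mathbb{D}^I(P)$, so the lift really lives in $\mathbb{D}^{\op{cart},I}(\mathscr W_P)$. For fully faithfulness, a morphism in $\mathbb{D}^{\op{cart}}(\mathscr W_P)$ is a triple $(\alpha_X,\alpha_P,\overline{\alpha})$ compatible with the structure maps; but once $\alpha_X$ is fixed, the isomorphism $\alpha\colon p^\ast M_X\xrightarrow{\cong} M_P$ forces $\alpha_P$ to equal $p^\ast\alpha_X$ up to the chosen isomorphisms, and compatibility with $\gamma$ reduces to commutation with $\beta$, which is exactly the condition for a morphism in $\mathbb{D}_G(X,P)$.

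The role of the hypothesis that $p$ is $n$-acyclic with $n\geq b-a$ is to guarantee, via Proposition~\ref{prop:belu192}, that $p^\ast$ restricts to an equivalence $\mathbb{D}^I(X)\xrightarrow{\sim}\mathbb{D}^I(P\mid X)$; this implies in particular that $M_X$ is recovered up to canonical isomorphism from $M_P$, ensuring that the lift above is functorial in the correct way and that the middle component of a morphism is genuinely \emph{determined} by its component on $X$ (and symmetrically on $G\backslash P$). The main obstacle I anticipate is bookkeeping: one has to match the pointwise-truncated category $\mathbb{D}^{\op{cart},I}(\mathscr W_P)$ with $\mathbb{D}^I_G(X,P)$ (where only the outer components $M_X$ and $\overline{M}$ carry the truncation condition a priori), and then verify that the inverse construction $(M_X,\overline{M},\beta)\mapsto(M_X,p^\ast M_X,\overline{M})$ is really pseudo-inverse to the forgetful functor at the level of diagrams in the derivator. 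Once one invokes the derivator description of categories over free diagrams together with the equivalence from Proposition~\ref{prop:belu192}, both directions fit together formally.
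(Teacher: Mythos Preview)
Your plan has a real gap: you treat an object of $\mathbb{D}(\mathscr W_P)$ as literally a triple $(M_X,M_P,\overline{M})$ with two structure maps, and a morphism as a compatible triple $(\alpha_X,\alpha_P,\overline{\alpha})$. That is only the \emph{underlying incoherent diagram}. For a strong stable derivator the underlying diagram functor over a free shape like the span is full and essentially surjective, but it is \emph{not} faithful: there are phantom morphisms in $\mathbb{D}(\mathscr W_P)$ whose components all vanish. So your faithfulness step---``once $\alpha_X$ is fixed, $\alpha_P$ is forced''---only pins down the incoherent image, not the actual morphism in $\mathbb{D}^{\op{cart}}(\mathscr W_P)$. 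Your invocation of $n$-acyclicity to say that ``$M_X$ is recovered from $M_P$'' is a statement about objects and component maps, not about coherent morphisms in the derivator, so it does not close this gap.

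The paper avoids this problem by not working componentwise at all. It factors the obvious functor through the honest restriction $\mathbb{D}^{\op{cart},I}(\mathscr W_P)\to\mathbb{D}^I(G\backslash P)$, which lives entirely inside the derivator formalism and therefore has a genuine right adjoint (a $\tau_{\leq b}$-truncated right Kan extension). That adjoint is computed explicitly as $M\mapsto (p_\ast q^\ast M,\,q^\ast M,\,M)$, and Proposition~\ref{prop:belu192} (this is the essential use of $n$-acyclicity) shows both that this lands in cartesian objects and that unit and counit are isomorphisms. Simultaneously, the same proposition gives full faithfulness of $\mathbb{D}^I_G(X,P)\to\mathbb{D}^I(G\backslash P)$. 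The equivalence then follows because both sides sit as the same full subcategory of $\mathbb{D}^I(G\backslash P)$---a comparison that never leaves the coherent world and so is immune to the phantom-map issue your argument runs into.
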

\begin{proof}
  Denote by $\mathbb{D}(\mathscr{W})$ the category of motives over this diagram. There are natural restriction functors $\op{ev}_X:\mathbb{D}(\mathscr{W})\to\mathbb{D}(X)$ and $\op{ev}_{G\backslash P}: \mathbb{D}(\mathscr{W})\to\mathbb{D}(G\backslash P)$ and we can write down a natural functor 
\[
\mathbb{D}^{\op{cart},I}(\mathscr{W})\to\mathbb{D}_G^I(X,P):
M\mapsto (\op{ev}_X(M),\op{ev}_{G\backslash P}(M),\beta)
\]
on the $I$-truncated categories. Here $\beta$ is the isomorphism between $p^*\op{ev}_X(M)$ and $q^*\op{ev}_{G\backslash P}(M)$
which comes from the identification of both sides with
$\op{ev}_P(M)$ due to
 the requirement that $M$ is cartesian.
Note that the restriction functor $\mathbb{D}^I(X)\to\mathbb{D}^I(P)$ is
fully faithful by Proposition~\ref{prop:belu192} and therefore the restriction $\mathbb{D}^I_G(X,P)\to \mathbb{D}(G\backslash P)$ is also fully faithful. The composition of $\mathbb{D}^I(\mathscr{W})\to \mathbb{D}^I_G(X,P)\to \mathbb{D}^I(G\backslash P)$ of these functors is simply the restriction functor. As such, it admits a right adjoint which is given by $\tau_{\leq b}$-truncating the right adjoint to $\mathbb{D}(\mathscr{W})\to\mathbb{D}(G\backslash P)$. The latter right adjoint is given in terms of a right Kan extension, i.e., its value at any term of the diagram is given as homotopy limit over the values at the terms in the overcategory. It follows that, as in \cite[2.4.3]{BeLu}, the adjoint applied to a motive $M\in\mathbb{D}(G\backslash P)$ is given on $P$ by $q^\ast (M)$ and on $X$ by $p_\ast q^\ast (M)$. The composition
\[
\mathbb{D}^I_G(X,P)\hookrightarrow \mathbb{D}^I(G\backslash P)\to \mathbb{D}^{I}(\mathscr{W})
\]
lands in the cartesian motives by Proposition~\ref{prop:belu192}. Then Proposition~\ref{prop:belu192} also implies that the two functors relating $\mathbb{D}^{\op{cart},I}(\mathscr{W})$ and $\mathbb{D}^I_G(X,P)$ are mutually inverse equivalences of triangulated categories. 
\end{proof}

The following is an analogue of \cite[Proposition 2.2.1]{BeLu}, with a proof analogous to \cite[2.3.3]{BeLu}.  

\begin{proposition}
\label{prop:belu221}
Fix an interval $I=[a,b]$. Let $p:P\to X$ be an $n$-acyclic resolution, with $n\geq b-a$. If $X$ is a free $G$-variety, then the quotient functor is an equivalence of categories
\[
q^\ast:\mathbb{D}^I(G\backslash X)\to \mathbb{D}^{\op{cart},I}(\mathscr{W}_P).
\]
\end{proposition}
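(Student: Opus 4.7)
The plan is to exploit the cartesian square
\[
\xymatrix{
P \ar[r]^{q} \ar[d]_{p} & G\backslash P \ar[d]^{\bar p} \\
X \ar[r]_{\pi_X} & G\backslash X,
}
\]
which is indeed cartesian by Proposition~\ref{prop:belu211} applied to the $G$-equivariant map $P\to X$ and the $G$-free target $X$. Since $\pi_X\colon X\to G\backslash X$ is an \'etale-locally trivial $G$-torsor by Definition~\ref{free}, it admits sections \'etale-locally. Hence Proposition~\ref{prop:belu193}(1) (applied with $Y=G\backslash X$, $\tilde Y=X$, $g=\pi_X$, $f=\bar p$, $\tilde f=p$) transfers the $n$-acyclicity of $p$ to $\bar p$. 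Invoking Proposition~\ref{prop:belu192}(1) we obtain two equivalences
\[
\bar p^\ast\colon \mathbb{D}^I(G\backslash X)\sirra \mathbb{D}^I(G\backslash P\mid G\backslash X), \qquad p^\ast\colon \mathbb{D}^I(X)\sirra \mathbb{D}^I(P\mid X),
\]
which will do most of the work.

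Unravelling the functor in the statement, $q^\ast N$ is the cartesian motive on $\mathscr{W}_P$ whose components over $X$, $P$, $G\backslash P$ are $\pi_X^\ast N$, $p^\ast\pi_X^\ast N\cong q^\ast\bar p^\ast N$, and $\bar p^\ast N$, linked by the canonical base-change isomorphism. For essential surjectivity, take a cartesian $M=(M_X,M_P,M_{G\backslash P})\in\mathbb{D}^{\op{cart},I}(\mathscr{W}_P)$; the cartesian condition gives $p^\ast M_X\cong M_P\cong q^\ast M_{G\backslash P}$, so $q^\ast M_{G\backslash P}\in\mathbb{D}^I(P\mid X)$. Proposition~\ref{prop:belu193}(2) then yields $M_{G\backslash P}\in\mathbb{D}^I(G\backslash P\mid G\backslash X)$, so $M_{G\backslash P}\cong\bar p^\ast N$ for some $N\in\mathbb{D}^I(G\backslash X)$. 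By smooth base change $q^\ast\bar p^\ast N\cong p^\ast\pi_X^\ast N$, hence $p^\ast M_X\cong p^\ast\pi_X^\ast N$; since $p^\ast$ is fully faithful on $\mathbb{D}^I$, this isomorphism lifts uniquely to $M_X\cong \pi_X^\ast N$, and the compatibilities assemble to an isomorphism $M\cong q^\ast N$ in the cartesian category.

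For full faithfulness, note that a morphism $q^\ast N\to q^\ast N'$ in $\mathbb{D}^{\op{cart},I}(\mathscr{W}_P)$ is determined by a compatible pair $(f_X,f_{G\backslash P})$ with $f_X\colon \pi_X^\ast N\to \pi_X^\ast N'$ and $f_{G\backslash P}\colon \bar p^\ast N\to \bar p^\ast N'$ satisfying $p^\ast f_X=q^\ast f_{G\backslash P}$ under base change. By the equivalence furnished by $\bar p^\ast$, the morphism $f_{G\backslash P}$ is uniquely of the form $\bar p^\ast\tilde f$ for some $\tilde f\colon N\to N'$; the compatibility together with full faithfulness of $p^\ast$ then forces $f_X=\pi_X^\ast\tilde f$. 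Hence morphisms biject with $\mathbb{D}^I(G\backslash X)(N,N')$.

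The main subtlety is purely in the setup: one must observe that the $G$-freeness of $X$ makes the defining square cartesian and simultaneously guarantees \'etale-local sections of $\pi_X$, so that the descent of $n$-acyclicity from $p$ to $\bar p$ in Proposition~\ref{prop:belu193}(1) and the corresponding descent of the ``$\mid Y$'' condition in part (2) both apply. Once these are in hand, the proof is just a bookkeeping exercise with the equivalences of Proposition~\ref{prop:belu192}.
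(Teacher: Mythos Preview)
Your proof is correct and follows essentially the same approach as the paper's: both use the cartesian square from Proposition~\ref{prop:belu211}, transfer $n$-acyclicity from $p$ to $\bar p$ via Proposition~\ref{prop:belu193}(1), and then use Proposition~\ref{prop:belu193}(2) to characterize the essential image $\mathbb{D}^I(G\backslash P\mid G\backslash X)$ in terms of the cartesian condition on $\mathscr{W}_P$. The only difference is packaging: the paper factors the argument through the equivalence $\mathbb{D}^{\op{cart},I}(\mathscr{W}_P)\sirra\mathbb{D}^I(G\backslash P\mid G\backslash X)$ by referring back to the last paragraph of the proof of Lemma~\ref{lem:resolindep} (which builds a quasi-inverse via a right Kan extension), whereas you spell out essential surjectivity and full faithfulness directly by hand.
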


\begin{proof}
By Proposition~\ref{prop:belu211}, $P\cong X\times_{G\backslash X}(G\backslash P)$, and by assumption $X\to G\backslash X$ is locally trivial in the \'etale topology. Then Proposition~\ref{prop:belu193} implies that $\overline{p}:G\backslash P\to G\backslash X$ is $n$-acyclic and therefore the restriction functor $\mathbb{D}^I(G\backslash X)\to\mathbb{D}^I(G\backslash P)$ is fully faithful. The essential image $\mathbb{D}^I(G\backslash P\mid G\backslash X)$ can alternatively be described, using Proposition~\ref{prop:belu193}, as the full subcategory of $\mathbb{D}^I(G\backslash P)$ of motives $M$ such that the restriction $q^\ast:\mathbb{D}^I(G\backslash P)\to\mathbb{D}^I(P)$ lands inside the essential image of the restriction $p^\ast:\mathbb{D}(X)\to\mathbb{D}(P)$. The conclusion that the restriction functor induces an equivalence  $\mathbb{D}^{\op{cart},I}(\mathscr{W}_P)\to\mathbb{D}^I(G\backslash P\mid G\backslash X)$ follows as in the proof of Lemma~\ref{lem:resolindep}. 
\end{proof}


\begin{corollary}
Fix an interval $I=[a,b]$. Let $p:P\to X$ be an $n$-acyclic resolution, with  $n\geq b-a$. Let $r:R\to X$ be another resolution of $X$, and denote by $s:S=P\times_XR\to R$ the natural projection. Then the pullback 
\[
s^\ast:\mathbb{D}^{\op{cart},I}(\mathscr{W}_R)\to\mathbb{D}^{\op{cart},I}(\mathscr{W}_S)
\]
is an equivalence of categories. 
\end{corollary}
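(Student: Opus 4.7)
The plan is to reduce the statement to analyzing the square
\[
\xymatrix{
S \ar[r]^{q_S} \ar[d]_s & G\backslash S \ar[d]^{\bar s} \\
R \ar[r]_{q_R} & G\backslash R
}
\]
together with the machinery of $n$-acyclic maps developed in Section~\ref{sec:resolutions}. First I would check that this square is cartesian: both $S\to G\backslash S$ and $(G\backslash S)\times_{G\backslash R}R\to G\backslash S$ are $G$-torsors and the natural map between them is a $G$-equivariant morphism of $G$-torsors, hence an isomorphism. Next I would observe that $s$ is the base change of the $n$-acyclic map $p\colon P\to X$ along $r$ and is therefore itself $n$-acyclic by definition. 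Since $R$ is a free $G$-variety, the quotient map $q_R$ is an \'etale-locally trivial $G$-torsor and in particular admits \'etale-local sections; applying Proposition~\ref{prop:belu193}(1) to the cartesian square above then yields that $\bar s$ is also $n$-acyclic.

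With both $s$ and $\bar s$ being $n$-acyclic, Proposition~\ref{prop:belu192}(1) gives fully faithful embeddings $s^\ast\colon\mathbb{D}^I(R)\hookrightarrow\mathbb{D}^I(S)$ and $\bar s^\ast\colon\mathbb{D}^I(G\backslash R)\hookrightarrow\mathbb{D}^I(G\backslash S)$. Fully faithfulness of the pullback $s^\ast$ on cartesian parts then follows formally, because a morphism of cartesian motives on $\mathscr{W}_R$ is encoded by compatible morphisms on each of $X$, $R$ and $G\backslash R$, and each of the component restrictions is fully faithful (the $X$-component is literally unchanged).

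For essential surjectivity, given $(M_X, M_S, \bar M_S)\in\mathbb{D}^{\op{cart},I}(\mathscr{W}_S)$ I would set $M_R := r^\ast M_X$ (forced by the cartesian condition on $\mathscr{W}_R$) and $\bar M_R := \tau_{\leq b}\bar s_\ast\bar M_S$ (the essential-image inverse furnished by Proposition~\ref{prop:belu192}(1)). The main content is to verify the cartesian compatibility $q_R^\ast\bar M_R\cong M_R$. Here I would invoke smooth base change $q_R^\ast\bar s_\ast\cong s_\ast q_S^\ast$ (valid since $q_R$ is smooth), combine it with the identifications $q_S^\ast\bar M_S\cong M_S\cong s^\ast r^\ast M_X$ coming from the cartesian structure on the source, and use the $t$-exactness of $q_R^\ast$ together with the $n$-acyclicity of $s$ (giving $\tau_{\leq b}s_\ast s^\ast r^\ast M_X\cong r^\ast M_X$) to conclude. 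Finally, that $s^\ast$ of the reconstructed triple recovers the original triple follows by applying Proposition~\ref{prop:belu193}(2) to the same cartesian square: it identifies the condition $\bar M_S\in\mathbb{D}^I(G\backslash S\mid G\backslash R)$ with $q_S^\ast\bar M_S\in\mathbb{D}^I(S\mid R)$, and the latter is automatic from $q_S^\ast\bar M_S\cong s^\ast r^\ast M_X$.

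I expect the main obstacle not to lie in any single step, each of which is a direct invocation of Propositions~\ref{prop:belu192} and \ref{prop:belu193} or of smooth base change, but rather in assembling the compatibilities cleanly in the derivator framework---in particular verifying that the reconstructed $\bar M_R$ and $M_R$ really fit into a cartesian object of $\mathbb{D}(\mathscr{W}_R)$, and that the various t-truncations interact correctly with pullback and pushforward along the non-smooth map $r$.
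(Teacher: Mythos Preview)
Your proposal is correct and is essentially the argument the paper has in mind: the paper's proof consists of the single sentence ``Having established Proposition~\ref{prop:belu221}, the proof is the same as \cite[2.3.4]{BeLu}'', and what you have written is a faithful unpacking of that Bernstein--Lunts argument using the motivic versions of their Propositions~1.9.2 and~1.9.3 (i.e.\ Propositions~\ref{prop:belu192} and~\ref{prop:belu193} here). Your worry about assembling the reconstructed triple into a genuine cartesian object on $\mathscr{W}_R$ is handled by Lemma~\ref{lem:resolindep}, which identifies $\mathbb{D}^{\op{cart},I}(\mathscr{W}_R)$ with the category $\mathbb{D}^I_G(X,R)$ of compatible triples, so no additional coherence issue arises.
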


\begin{proof}
Having established Proposition~\ref{prop:belu221}, the proof is the same as \cite[2.3.4]{BeLu}.
\end{proof}

Using that the categories $\mathbb{D}^{\op{cart},I}(\mathscr{W}_R)$ are independent of the choice of $n$-acyclic resolution when $n\geq b-a$, we can take a 2-limit to define the equivariant motivic sheaves via resolutions:

\begin{definition}
Choosing a cofinal filtered category of resolutions, we can now define 
\[
\mathbb{D}^{\op{res},\op{b}}_G(X):=
\operatornamewithlimits{2-lim}_{P,I}\mathbb{D}^{\op{cart}, I}(\mathscr{W}_P).
\]

A diagram $M_1\to M_2\to M_3\to M_1[1]$ in $\mathbb{D}^{\op{res},\op{b}}_G(X)$ is a distinguished triangle if, for any sufficiently acyclic resolution $p:P\to X$, the application of the restriction functor 
\[
\mathbb{D}^{\op{res},\op{b}}_G(X)\to\mathbb{D}^{\op{cart},I}(\mathscr{W}_P)\to\mathbb{D}(G\backslash P)
\]
yields a distinguished triangle $M_1(P)\to M_2(P)\to M_3(P)\to M_1(P)[1]$ in $\mathbb{D}(G\backslash P)$. 

Define subcategories $\mathbb{D}^{\op{res},\leq a}_G(X)$ and $\mathbb{D}^{\op{res},\geq a}_G(X)$ by similarly requiring that an object $M$ lies in the subcategory if its restriction to $\mathbb{D}(G\backslash P)$ for each surjective resolution $p:P\to X$ lies in $\mathbb{D}^{\leq a}(G\backslash P)$ and $\mathbb{D}^{\geq a}(G\backslash P)$, respectively.
\end{definition}

The following is then an immediate consequence of the results established above and the standard statements on 2-limits of triangulated categories. 

\begin{proposition}
\begin{enumerate}
\item
Up to equivalence, the category is independent of the choice of a filtered subcategory of resolutions of growing acyclicity. 
\item 
With the above data, $\mathbb{D}^{\op{res},\op{b}}_G(X)$ becomes a triangulated category. 
\item
The forgetful and inverse image functors are exact for this triangulated structure. 
\end{enumerate}
\end{proposition}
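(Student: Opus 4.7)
The plan is to reduce all three statements to the structural results already established: the pullback equivalences from Proposition~\ref{prop:belu192} and Lemma~\ref{lem:resolindep}, together with the standard theory of 2-limits of triangulated categories with exact transition functors.

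For (1), I would argue by cofinality. Given two filtered subcategories $\mathcal{F}$ and $\mathcal{F}'$ of $\op{Res}(G\looparrowright X)$ with unbounded acyclicity, the fiber product category $\mathcal{F}\times_{\op{Res}}\mathcal{F}'$, formed using fiber products $P\times_X R$ of resolutions (which again yield resolutions, of acyclicity at least the minimum of the two), is cofinal in both. By the corollary to Proposition~\ref{prop:belu221}, the transition functors $s^\ast:\mathbb{D}^{\op{cart},I}(\mathscr{W}_R)\to\mathbb{D}^{\op{cart},I}(\mathscr{W}_{P\times_X R})$ are equivalences whenever the acyclicity bound satisfies $n\geq b-a$ for $I=[a,b]$. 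Standard cofinality for 2-limits then gives independence of the choice of filtered subcategory.

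For (2), my plan is to build the triangulated structure level-wise. First one checks that each $\mathbb{D}^{\op{cart},I}(\mathscr{W}_P)$ inherits a triangulated structure from $\mathbb{D}(\mathscr{W}_P)$ via a five-lemma argument in the style of Remark~\ref{rem:triangulated}: being cartesian is preserved by extensions, and the shift maps $\mathbb{D}^{\op{cart},I}$ into $\mathbb{D}^{\op{cart},I-1}$. The transition functors between different $(P,I)$ are exact, as they are pullbacks along morphisms of diagrams combined with the truncations that are exact by Proposition~\ref{prop:belu192}. Since for sufficiently acyclic $P$ relative to $I$ the transition functors are equivalences, the 2-limit becomes essentially equivalent to a single $\mathbb{D}^{\op{cart},I}(\mathscr{W}_P)$ within each interval $I$. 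The axioms TR1--TR4 can then be verified on any chosen such level, with cones defined by compatibly choosing cones in each $\mathbb{D}(G\backslash P)$ via the equivalences.

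For (3), both the forgetful functor and the inverse image functor $f^\ast$ are defined level-wise on $\mathbb{D}^{\op{cart},I}(\mathscr{W}_P)$ using restriction and pullback along the diagrams $\mathscr{W}_P$; each such functor is exact, as it arises from an exact functor on the ambient motivic derivator. Exactness therefore passes to the 2-limit termwise. The main obstacle, as I see it, sits in (2): the interplay between the truncation interval $I$ and the acyclicity bound $n$. The shift enlarges any fixed $I$, so the set of permissible resolutions must be enlarged correspondingly; this is where the unbounded acyclicity of the cofinal system of resolutions, guaranteed by Lemma~\ref{lem:resexist}, becomes indispensable. Once that subtlety is handled, the verification of the triangulated axioms reduces to the classical fact that a 2-limit of triangulated categories along exact equivalences carries a natural triangulated structure.
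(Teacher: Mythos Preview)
Your proposal is correct and follows essentially the same route as the paper: the paper does not give a detailed proof but simply states that the proposition is ``an immediate consequence of the results established above and the standard statements on 2-limits of triangulated categories,'' which is precisely the reduction you carry out. Your treatment spells out more of the details (cofinality via fiber products of resolutions, the interaction of the shift with the truncation interval $I$, and level-wise verification of exactness), but the underlying strategy is identical.
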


Again, the viewpoint of taking fibers of a fibered triangulated category is included here for completeness and comparison with \cite{BeLu}. The better viewpoint of considering motives over diagrams of resolutions will be discussed in the next section. 

\subsection{Motives over categories of resolutions}

Now we want to give a motivic version of the definitions in \cite[Section 2.4]{BeLu}, which describes the equivariant derived category in terms of fibered categories. However, we prefer a slightly modified approach: instead of having derived categories fibered over schemes, we use Ayoub's definition of motives over a diagram. In other words, we now define a version of the equivariant derived category, denoted by $\mathbb{D}^{\op{Res}}_G(X)$, which consists of cartesian motives over the category of resolutions of the variety with action $(G\looparrowright X)$. This will have the distinct advantage that a six-functor formalism is available for the full category of
motives over  diagrams of resolutions, and all that is required for a definition of the six functors will be to check that they preserve cartesian motives under suitable conditions. 

\begin{remark}
Note that, as usual, the definition of motives over diagrams of schemes requires the diagram category to be small. The category of resolutions  $\op{Res}(G\looparrowright X)$ is not small. This is one reason for introducing an additional functor in the definition below. The other reason is that a great flexibility in the choice of diagram categories is required for setting up the six functor formalism later on.
\end{remark}

\begin{definition}
\label{def:equivmotres2}
\index{equivariant motives, $\mathbb{D}_G^{\op{Res}}(X,(\mathscr{F},\mathcal{I}))$}
Let $\mathbb{D}$ be a homotopical stable algebraic derivator satisfying the conditions in \ref{derivator:new}, and let $(G\looparrowright X)$ be a variety with action. Denote by 
\[
\mathscr{Q}_X: \op{Res}(G\looparrowright X)\to\op{Var}/k:(p:P\to X)\mapsto G\backslash P
\]
the functor from resolutions of $(G\looparrowright X)$ to $k$-varieties which associates to a resolution the quotient of the total space modulo the $G$-action. 

Let $\mathscr{F}:\mathcal{J}\to\op{Res}(G\looparrowright X)$ be a diagram of resolutions indexed by the small category $\mathcal{J}$. There is an associated diagram of varieties $\mathscr{Q}_X\circ\mathscr{F}:\mathcal{J}\to\op{Var}/k$. 
Following Definition~\ref{def:daet}, we have the triangulated category $\mathbb{D}(\mathscr{Q}_X\circ\mathscr{F},\mathcal{J})$ of motives over the given diagram. For ease of notation, we will henceforth drop the index category from the notation, writing $\mathbb{D}(\mathscr{Q}_X\circ\mathscr{F})$ or even just $\mathbb{D}(\bar{\mathscr{F}})$. There is also the triangulated subcategory $\mathbb{D}^{\op{cart}}(\mathscr{Q}_X\circ\mathscr{F})$ of cartesian motives over the diagram $(\mathscr{Q}_X\circ \mathscr{F},\mathcal{J})$.
\end{definition}

As in Remark~\ref{rem:triangulated}, the category $\mathbb{D}^{\op{cart}}(\mathscr{Q}_X\circ\mathscr{F})$ of cartesian motives is an idempotent complete triangulated subcategory of $\mathbb{D}(\mathscr{Q}_X\circ\mathscr{F},\mathcal{J})$. 

\begin{remark}
In the end, the category $\mathbb{D}^{\op{Res}}_G(X)$ of $G$-equivariant motives over $X$ will be defined as $\mathbb{D}^{\op{cart}}(\mathscr{Q}_X\circ \mathscr{F})$ where $\mathscr{F}:\mathcal{J}\to \op{Res}(G\looparrowright X)$ is a choice of skeleton of the category of resolutions, cf. Definition~\ref{def:equivmotres}. However, we need to establish a couple of comparison statements before we can prove that this is well-defined.
\end{remark}

The following general derivator statement will imply the appropriate version of the quotient equivalence, analogous to \cite[Proposition 2.2.5]{BeLu}. 

\begin{proposition}
\label{prop:derifinal}
Let $k$ be a field, and let $\mathbb{D}$ be a homotopical stable algebraic derivator on $k$-varieties satisfying the conditions of \ref{derivator:new}. Let $\mathcal{I}$ be a small category with a terminal object $i\in\mathcal{I}$ and let $\mathscr{F}:\mathcal{I}\to \op{Var}/k$ be a diagram. Then the restriction functor
\[
\iota^\ast:\mathbb{D}^{\op{cart}}(\mathscr{F})\to \mathbb{D}(\mathscr{F}(i))
\]
is an equivalence of triangulated categories. 
\end{proposition}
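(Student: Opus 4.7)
The plan is to construct an explicit quasi-inverse to $\iota^\ast$ using the terminal structure of $\mathcal{I}$. For each $j\in\mathcal{I}$ let $u_j\colon j\to i$ be the unique morphism to the terminal object, and set $f_j\pdef \mathscr{F}(u_j)\colon \mathscr{F}(j)\to\mathscr{F}(i)$. These assemble into a natural transformation from $\mathscr{F}$ to the constant diagram with value $\mathscr{F}(i)$, i.e., into a morphism of diagrams of schemes
\[
\pi\colon (\mathscr{F},\mathcal{I})\to (\mathscr{F}(i),\{\ast\}).
\]
The induced pullback $\pi^\ast\colon \mathbb{D}(\mathscr{F}(i))\to\mathbb{D}(\mathscr{F})$ will be the candidate quasi-inverse.

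First I would check that $\pi^\ast$ lands in $\mathbb{D}^{\op{cart}}(\mathscr{F})$. For $N\in\mathbb{D}(\mathscr{F}(i))$, the pullback $\pi^\ast N$ has value $f_j^\ast N$ at each vertex $j$. Given any arrow $\phi\colon j\to k$ in $\mathcal{I}$, terminality forces $u_k\circ \phi = u_j$, hence $f_k\circ\mathscr{F}(\phi)=f_j$, and 2-functoriality of pullback yields a canonical isomorphism $\mathscr{F}(\phi)^\ast(f_k^\ast N)\cong f_j^\ast N$. This exhibits $\pi^\ast N$ as cartesian.

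Next, the inclusion of $\{i\}\hookrightarrow \mathcal{I}$ combined with $\op{id}_{\mathscr{F}(i)}$ defines a morphism of diagrams $s\colon (\mathscr{F}(i),\{\ast\})\to(\mathscr{F},\mathcal{I})$ which is a section of $\pi$ in the literal sense $\pi\circ s = \op{id}$, and $s^\ast$ coincides with the restriction functor $\iota^\ast$. By the 2-functoriality of pullback (one of the basic axioms of a derivator, cf.\ \ref{derivator:conditions}), the composition $\iota^\ast\circ\pi^\ast$ is naturally isotransformed to the identity on $\mathbb{D}(\mathscr{F}(i))$.

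For the other composition, given $M\in\mathbb{D}^{\op{cart}}(\mathscr{F})$ with $N\pdef \iota^\ast M = M\langle i\rangle$, the object $\pi^\ast N$ has value $f_j^\ast M\langle i\rangle$ at $j$, and the cartesian structure morphism of $M$ attached to $u_j$ provides a canonical isomorphism $f_j^\ast M\langle i\rangle\cong M\langle j\rangle$. The main point to verify is that these pointwise isomorphisms assemble into a morphism in $\mathbb{D}(\mathscr{F})$, i.e., that they are natural with respect to the arrows of $\mathcal{I}$; this follows from applying 2-functoriality of pullback to the commuting triangles $u_k\circ\phi = u_j$ together with the pentagon coherences built into the derivator, the upshot being that the cartesian structure on $M$ tautologically agrees with the one induced from $M\langle i\rangle$ via the $f_j$. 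The only real obstacle is tracking this coherence inside the formalism of motives over diagrams of schemes; nothing beyond the 2-functoriality of $(-)^\ast$ and the definition of the cartesian subcategory is needed.
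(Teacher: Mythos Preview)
Your proof is correct and is essentially the same argument as the paper's, just unpacked. The paper observes that the restriction $\iota^\ast$ has a right adjoint $\iota_\ast$ given by right Kan extension, and that terminality of $i$ forces the slice categories computing this Kan extension to be singletons, so that $(\iota_\ast N)_j \cong f_j^\ast N$; this right adjoint is precisely your functor $\pi^\ast$. The advantage of phrasing it via the adjunction is that the comparison maps $\pi^\ast\iota^\ast M \to M$ come for free as the adjunction unit and are automatically morphisms in $\mathbb{D}(\mathscr{F})$, so the coherence worry you flag in your last paragraph disappears: one only needs to check the unit and counit are pointwise isomorphisms, which is immediate from the cartesian condition. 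Your direct construction via the morphism of diagrams $\pi$ and its section $s$ is equally valid and arguably more explicit.
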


\begin{proof}
This restriction functor has two adjoints, given by the left and right Kan extensions, respectively. However, the colimits resp. limits defining the Kan extension have the category with one object as index category. Therefore, the value of the right adjoint $i_\ast$ over any object of $\mathcal{I}$ is obtained by pull-back from the value over the terminal object. This implies the claim. 
\end{proof}

\begin{corollary}[{\bf Quotient equivalence, resolutions}]
\label{cor:quoteqres}
\index{quotient equivalence}
Let $\mathbb{D}$ be a homotopical stable algebraic derivator satisfying the conditions of \ref{derivator:new}, and let $(G\looparrowright X)$ be a variety with free action. Let $\mathscr{F}:\mathcal{J}\to\op{Res}(G\looparrowright X)$ be a choice of skeleton for the category of resolutions of $X$ containing the trivial resolution
$\op{id}:X\sira X$.
Consider the following two morphisms of diagrams:
\begin{enumerate}
\item the morphism $q:(\mathscr{Q}_X\circ\mathscr{F},\mathcal{J})\to G\backslash X$ associating to a resolution $p:P\to X$ the morphism $\bar{p}:G\backslash P\to G\backslash X$, and
\item the morphism $i:\{G\backslash X\}\to(\mathscr{Q}_X\circ \mathscr{F},\mathcal{J})$ given by inclusion of the object $X$ in the diagram  $\mathscr{Q}_X\circ\mathscr{F}$.
\end{enumerate}
Then the corresponding restriction functors 
\[
q^\ast:\mathbb{D}(G\backslash X)\to\mathbb{D}^{\op{cart}}(\mathscr{Q}_X\circ\mathscr{F}) \textrm{ and } i^\ast:\mathbb{D}^{\op{cart}}(\mathscr{Q}_X\circ \mathscr{F})\to\mathbb{D}(G\backslash X)
\]
are inverse equivalences of triangulated categories. 
\end{corollary}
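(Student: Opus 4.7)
The plan is to deduce this from the general derivator statement in Proposition~\ref{prop:derifinal}. The key observation is that the freeness hypothesis promotes the identity $\op{id}_X:X\to X$ to a resolution of the $G$-action on $X$, and this resolution is in fact terminal in $\op{Res}(G\looparrowright X)$: for any other resolution $p:P\to X$, the morphism $p$ itself is a $G$-equivariant map to $X$ over $X$, i.e., a morphism of resolutions $P\to\op{id}_X$, and it is easily checked to be the unique such. Since the skeleton $\mathscr{F}:\mathcal{J}\to\op{Res}(G\looparrowright X)$ was chosen to contain $\op{id}_X$, the corresponding object of $\mathcal{J}$ is a terminal object, and under $\mathscr{Q}_X$ it is sent to $G\backslash X$.

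With this in hand, Proposition~\ref{prop:derifinal} applied to the diagram $\mathscr{Q}_X\circ\mathscr{F}:\mathcal{J}\to\op{Var}/k$ immediately gives that the restriction functor
\[
i^\ast:\mathbb{D}^{\op{cart}}(\mathscr{Q}_X\circ\mathscr{F})\sirra \mathbb{D}(G\backslash X)
\]
is an equivalence of triangulated categories. It remains to identify $q^\ast$ as a quasi-inverse. This can be done in two equivalent ways: either trace through the proof of Proposition~\ref{prop:derifinal} and observe that the right Kan extension providing the quasi-inverse assigns, at each object $P$ of the diagram, the pullback $\bar{p}^\ast M$ of a motive $M$ on $G\backslash X$, which is precisely the definition of $q^\ast M$; or argue directly that $i^\ast\circ q^\ast\cong\op{id}$ canonically (since the morphism $\overline{\op{id}_X}$ is the identity on $G\backslash X$), and then invoke that a right adjoint to an equivalence is automatically a quasi-inverse.

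The only minor subtle point concerns replacing the non-small category $\op{Res}(G\looparrowright X)$ by the small skeleton $\mathcal{J}$: one must check that terminality of $\op{id}_X$ in $\op{Res}(G\looparrowright X)$ passes to terminality of its representative in $\mathcal{J}$, but this is formal since skeleta are equivalences of categories. There is no genuine obstacle here; the content of the corollary is really that Proposition~\ref{prop:derifinal} applies once the terminal object is recognized.
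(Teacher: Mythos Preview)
Your proof is correct and follows essentially the same approach as the paper: recognize that freeness makes $\op{id}_X$ a terminal object of $\op{Res}(G\looparrowright X)$ and then apply Proposition~\ref{prop:derifinal}. The paper's proof is a two-line version of yours; your additional remarks on why $q^\ast$ is the quasi-inverse and on passing to the skeleton are correct elaborations that the paper leaves implicit.
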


\begin{proof}
By assumption, $\id:X\to X$ is a resolution of $X$. In particular, the category $\op{Res}(G\looparrowright X)$ has $X$ as a final object.  The claim then follows from Proposition~\ref{prop:derifinal}. 
\end{proof}

\subsection{Extension to stable derivators}
\begin{Bemerkung}
We can extend the definitions of the equivariant derived categories as follows. Fix a variety with action $G\looparrowright X$ and consider the diagram $({\op{E}}G\times_{/G}X,\Delta^{\op{op}})$. For an other small index category $\mathcal{J}$ we get another diagram 
\[
({\op{E}}G\times_{/G}X\circ\op{pr}_1,\Delta^{\op{op}}\times\mathcal{J}).
\]
Then we can define the category 
\[
\mathbb{D}_G^{\Delta,+}(X,\mathcal{J}) := \mathbb{D}^{+,\op{cart}_1}({\op{E}}G\times_{/G}X\circ\op{pr}_1, \Delta^{\op{op}}\times\mathcal{J})
\]
where the superscript $\op{cart}_1$ means that we only enforce the cartesian isomorphism requirement for morphisms in the first index category. A similar definition can be done for the resolution definition of equivariant motives. 
\end{Bemerkung}

\begin{proposition}
\label{prop:equivderivator}
Let $G\looparrowright X$ be a variety with action. Then the 2-functors 
\[
\mathcal{J}\mapsto \mathbb{D}_G^{\Delta,+}(X,\mathcal{J}) \textrm{ and } \mathcal{J}\mapsto \mathbb{D}_G^{\op{Res},+}(X,\mathcal{J})
\]
are stable derivators in the sense of Definition~\ref{def:derivator}. 
\end{proposition}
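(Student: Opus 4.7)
The plan is to exhibit both 2-functors as ``slices'' of the ambient stable algebraic derivator $\mathbb{D}$ (applied to appropriate diagrams of schemes) cut out by a cartesianness condition in one factor of a product index category, and then verify that this cartesianness condition is preserved by the structural operations of a derivator. Since we already know from \ref{derivator:conditions} that $\mathbb{D}$ is itself a homotopical stable algebraic derivator when evaluated on diagrams $(\mathscr{F},\mathcal{I})$, the assignment
\[
\mathcal{J} \longmapsto \mathbb{D}(\mathscr{G}\circ\op{pr}_1,\mathcal{I}\times\mathcal{J})
\]
is a stable derivator for any fixed source diagram $(\mathscr{G},\mathcal{I})$; this is essentially the ``shifted'' derivator obtained by pre-composing with the projection functor. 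Instantiating this with $(\mathscr{G},\mathcal{I}) = ({\op{E}}G\times_{/G}X,\Delta^{\op{op}})$ in the simplicial case and with a skeleton of $\op{Res}(G\looparrowright X)$ in the resolution case reduces the proposition to the statement that the full subcategory of objects which are cartesian along the $\mathcal{I}$-direction is stable under the derivator structure in the $\mathcal{J}$-direction.

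First, I would check the axioms (Der1)--(Der2) of Definition~\ref{def:derivator}: functoriality is inherited from $\mathbb{D}$, and the conservativity of the evaluations $\iota_j^\ast$ at objects $j\in\mathcal{J}$ follows from the conservativity of evaluation for $\mathbb{D}$ itself, since an object of $\mathbb{D}_G^{\Delta,+}(X,\mathcal{J})$ is by definition a certain object in the ambient $\mathbb{D}(-,\Delta^{\op{op}}\times\mathcal{J})$ and cartesianness is an evaluation-wise condition. For (Der3) and (Der4), I would construct the left and right Kan extensions along a functor $u:\mathcal{J}\to\mathcal{J}'$ as the Kan extensions $(\op{id}_{\mathcal{I}}\times u)_!$ and $(\op{id}_{\mathcal{I}}\times u)_\ast$ in the ambient derivator $\mathbb{D}$.

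The key point, and the technical heart of the argument, is to verify that these Kan extensions preserve the $\mathcal{I}$-cartesianness condition. Given a morphism $f:\mathscr{F}(i)\to\mathscr{F}(i')$ in the $\mathcal{I}$-direction, cartesianness at a $\mathcal{J}$-point $j$ is expressed as the condition that a certain comparison map $f^\ast M_{(i',j)} \to M_{(i,j)}$ is an isomorphism. Now the Kan extensions $(\op{id}_{\mathcal{I}}\times u)_!$ and $(\op{id}_{\mathcal{I}}\times u)_\ast$ are computed pointwise in the $\mathcal{I}$-direction (by the Beck--Chevalley property applied to the cartesian square expressing the slice structure of $\op{id}_{\mathcal{I}}\times u$ over a single $\mathcal{I}$-index). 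Since $f^\ast$ is a morphism of derivators and in particular commutes with all homotopy (co)limits --- equivalently, since $f^\ast$ has both a left and a right adjoint and hence is exact --- it commutes with the pointwise (co)limits computing $(\op{id}_{\mathcal{I}}\times u)_!$ and $(\op{id}_{\mathcal{I}}\times u)_\ast$. Therefore, applying such a Kan extension to a cartesian object yields an object whose comparison maps are again isomorphisms. The Beck--Chevalley conditions (Der4) for the restricted derivator then follow directly from those for $\mathbb{D}$, and the argument for the monoidal/closed structure axioms (Der5)--(Der6) is analogous.

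Stability then follows immediately: exact triangles in $\mathbb{D}_G^{\Delta,+}(X,\mathcal{J})$ and $\mathbb{D}_G^{\op{Res},+}(X,\mathcal{J})$ are triangles in the ambient $\mathbb{D}$ whose three vertices happen to lie in the cartesian subcategory (cf.~Remark~\ref{rem:triangulated}), and the defining condition of a stable derivator --- squares are cartesian iff cocartesian --- is tested by evaluating at objects and thus reduces to the same property for $\mathbb{D}$. I expect the main obstacle in writing this out carefully is the bookkeeping around showing that the pointwise computation of $(\op{id}_{\mathcal{I}}\times u)_!$ and $(\op{id}_{\mathcal{I}}\times u)_\ast$ along $\mathcal{I}$ is available in Ayoub's framework; this is where the assumption that $\mathbb{D}$ satisfies \textbf{DerAlg} and in particular the base change axioms of \ref{derivator:conditions} becomes essential. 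Once this pointwise formula is set up, the exactness of $f^\ast$ makes the preservation of cartesianness automatic, and everything else is a direct transport of structure.
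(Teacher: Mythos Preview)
Your approach is essentially the same as the paper's, though considerably more detailed. The paper's proof is a single sentence: it invokes \cite[Remark 2.4.14]{ayoub:thesis1}, which says that axiom DerAlg~4 can be rephrased as the statement that for any fixed diagram of varieties $(\mathscr{F},\mathcal{I})$ the shifted 2-functor $\mathcal{J}\mapsto \mathbb{D}(\mathscr{F}\circ\op{pr}_1,\mathcal{I}\times\mathcal{J})$ is already a triangulated derivator. Your proposal spells out what the paper leaves implicit, namely the verification that the $\mathcal{I}$-cartesianness condition is preserved by Kan extensions along the $\mathcal{J}$-direction.

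One point deserves care. You assert that $f^\ast$ ``has both a left and a right adjoint and hence is exact'' and therefore commutes with all homotopy (co)limits. The right adjoint $f_\ast$ always exists, but the left adjoint $f_\sharp$ is only guaranteed by DerAlg~2L when $f$ is smooth. For the simplicial Borel construction this is harmless: all face and degeneracy maps of ${\op{E}}G\times_{/G}X$ are smooth, being built from projections, group multiplication, and the action morphism. For the resolution picture over a full skeleton of $\op{Res}(G\looparrowright X)$, however, morphisms between quotients $G\backslash P\to G\backslash Q$ need not be smooth, so your justification for preservation of cartesianness under \emph{right} Kan extensions does not go through verbatim. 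The paper's one-line proof does not address this either, and since the two models are shown to be equivalent (Theorem~\ref{thm:inftyquotient}) and it is the simplicial model that carries the main load downstream, this is more a loose end in the exposition than a genuine obstruction.
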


\begin{proof}
This is basically a consequence of \cite[Remark 2.4.14]{ayoub:thesis1} which allows to replace axiom DerAlg 4 by the requirement that for any diagram of varieties $(\mathscr{F},\mathcal{I})$ the 2-functor $\mathbb{D}_{(\mathscr{F},\mathcal{I})}(-):\mathcal{J}\mapsto \mathbb{D}(\mathscr{F}\circ\op{pr}_1,\mathcal{I}\times\mathcal{J})$ is a triangulated derivator. 
\end{proof}

\section{Comparison results}
\label{sec:comparison}

In this section, we will compare the various versions of categories of equivariant motives constructed in the previous section. We first construct the comparison functors and then prove they are equivalences. In the subsequent sections we will use all these versions to build up the equivariant six functor formalism.

\subsection{Comparison functors} 


\begin{Bemerkung}[\textbf{From resolutions to Borel construction}]
\label{resolution2Borel}
Let $(G\looparrowright X)$ be a variety with action. The simplicial Borel resolution, considered as a functor ${\op{E}}G\times X:\Delta ^{\op{op}}\to \op{Sch}/X$, factors through a functor ${\op{E}}G\times X:\Delta ^{\op{op}}\to\op{Res}(G\looparrowright X)$.
Composition with the functor 
\[
\mathscr{Q}_X:\op{Res}(G\looparrowright X)\to\op{Var}/k:(P\ra X)\mapsto G\backslash P
\]
of Definition~\ref{def:equivmotres2} yields the simplicial Borel construction ${\op{E}}G\times_{/G} X$. For any choice of diagram $\mathscr{F}:\mathcal{I}\to\op{Res}(G\looparrowright X)$ such that there is a lift $L:\Delta ^{\op{op}}\to\mathcal{I}$ of the simplicial Borel resolution, we get a morphism of diagrams of varieties $\delta:({\op{E}}G\times_{/G} X,\Delta^{\op{op}})\to (\mathscr{Q}_X\circ\mathscr{F},\mathcal{I})$.
Associated to this morphism of diagrams is a natural restriction functor
\[
\delta^\ast:\mathbb{D}^{\op{cart}}(\mathscr{Q}_X\circ\mathscr{F})\to \mathbb{D}^{\Delta}_G(X).
\]
\end{Bemerkung}


\begin{Bemerkung}
\label{res2individual}
  Let $(G\looparrowright X)$ be a variety with action. For any resolution $p:P\to X$, there is a diagram of resolutions
\[
G\times X\stackrel{\id\times p}{\longleftarrow} G\times P\stackrel{\op{pr}_2}{\longrightarrow} P
\]
Assume that $\mathscr{F}:\mathcal{I}\to \op{Res}(G\looparrowright X)$ is a diagram of resolutions containing a lift $\tilde{\mathscr{W}_P}$ of this diagram. Then composition with the functor 
\[
\mathscr{Q}_X:\op{Res}(G\looparrowright X)\to\op{Var}/k:(Q\ra X)\mapsto G\backslash Q
\]
of Definition~\ref{def:equivmotres2} maps this lift to the diagram $\mathscr{W}_P:X\leftarrow P\to G\backslash P$ of Definition~\ref{def:resoldiag}. The corresponding morphism of diagrams of varieties has an associated natural restriction functor
\[
\rho^\ast:\mathbb{D}^{\op{cart}}(\mathscr{Q}_X\circ\mathscr{F})\to \mathbb{D}^{\op{cart}}(\mathscr{W}_P).
\]
\end{Bemerkung}

If we assume that $\mathscr{F}:\mathcal{I}\to\op{Res}(G\looparrowright X)$ is a diagram which contains the trivial resolution $\op{pr}_2:G\times X\to X$ and is stable under direct products with $G$, then the above procedure can be applied to an arbitrary resolution, because $\mathscr{F}$ will admit lifts for all diagrams $G\times X\leftarrow G\times P\rightarrow P$. 

Finally, we can also compare the motives over an individual resolution with the motives over the simplicial Borel construction. 

\begin{Bemerkung}
Let $G\looparrowright X$ be a variety with action, and let $p:P\to X$ be a resolution of $X$. The resolution provides a morphism of simplicial schemes $p:{\op{E}}G\times_{/G} P\to {\op{E}}G\times_{/G} X$, which induces a natural restriction functor
\[
p^\ast:\mathbb{D}_G^\Delta(X)\to\mathbb{D}_G^\Delta(P).
\]
On the other hand, the projection $\pi:P\to G\backslash P$ provides an augmentation morphism of simplicial schemes $\pi:{\op{E}}G\times_{/G} P\to G\backslash P$. This induces a natural restriction functor 
\[
\pi^\ast:\mathbb{D}^{\op{cart}}(\mathscr{W}_P)\to \mathbb{D}(G\backslash P)\to  \mathbb{D}_G^{\Delta}(P).
\]
\end{Bemerkung}

\begin{remark}
Note that the functors considered above have left and right adjoints. The functors in Proposition~\ref{prop:restrictcompat} can be restricted to truncated categories $\mathbb{D}^I$. This will be relevant in establishing comparison equivalences. 
\end{remark}

\begin{remark}
\label{rem:compat}
Since the different comparison functors are given by restriction functors associated to morphisms of diagrams of schemes, compatibility with other functors follows from commutativity of the corresponding morphisms of diagrams and base-change or exchange formulas. This will be relevant for the compatibility of the comparison functors with the six-functor formalism later on.
\end{remark}


\begin{proposition}[\textbf{Compatibility of comparison functors}]
\label{prop:restrictcompat}
Let $G\looparrowright X$ be a variety with action, and let  $\mathscr{F}:\mathcal{I}\to\op{Res}(G\looparrowright X)$ be a diagram of resolutions.
\begin{enumerate}
\item  For a resolution $p:P\to X$, assume that we are
  given lifts of the diagram $G\times X\leftarrow G\times P\rightarrow P$ associated to the resolution $p:P\to X$ and of the Borel construction ${\op{E}}G\times P$ along $\mathscr F$. Then the following diagram is commutative up to an isotransformation:
\[
\xymatrix{
\mathbb{D}^{\op{cart}}(\mathscr{W}_P) \ar[d] & \mathbb{D}^{\op{cart}}(\mathscr{Q}_X\circ\mathscr{F}) \ar[l]_{\rho^\ast} \ar[d]^{\delta_P^\ast} \\
\mathbb{D}(G\backslash P)\ar[r] & \mathbb{D}^{\Delta}_G(P) .
}
\]
Here the top horizontal morphism is the restriction of \ref{res2individual} associated to the lift of the diagram $G\times X\leftarrow G\times P\rightarrow P$, and the left vertical morphism is the further restriction to $G\backslash P$. The right vertical morphism is the restriction functor from \ref{resolution2Borel}, and the bottom horizontal map is the  simplicial quotient equivalence from Proposition~\ref{prop:freeaction}.
\item For a resolution $p:P\to X$, assume that there is a lift of the morphism ${\op{E}}G\times P\to{\op{E}}G\times X$ of Borel constructions associated to $p$ along $\mathscr F$. Then the diagram 
\[
\xymatrix{
\mathbb{D}^{\op{cart}}(\mathscr{Q}_X\circ\mathscr{F}) \ar[d]_{\delta_P^\ast} \ar[rd]^{\delta_X^\ast} \\
\mathbb{D}^\Delta_G(P) & \mathbb{D}^\Delta_G(X) \ar[l]^{p^\ast}
}
\]
is commutative up to isotransformation. 
\end{enumerate}
\end{proposition}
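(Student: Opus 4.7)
The strategy for both parts is as foreshadowed in Remark~\ref{rem:compat}: each comparison functor in the two squares is a pullback $\gamma^\ast$ along a morphism $\gamma$ of diagrams of varieties, so commutativity reduces, via $2$-functoriality $(\beta \circ \gamma)^\ast \cong \gamma^\ast \circ \beta^\ast$, to the commutativity of the underlying morphisms of diagrams of schemes---up to natural transformations within $(\mathscr{Q}_X \circ \mathscr{F}, \mathcal{I})$, which on cartesian motives are promoted to isotransformations by the cartesian structure maps. The simplicial quotient equivalence of Proposition~\ref{prop:freeaction} enters in part (1) to identify the bottom horizontal functor with an honest pullback along the augmentation.

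For part (1), I would unfold the compositions term-wise. Going through the lower-left corner, a cartesian motive $M$ on $(\mathscr{Q}_X \circ \mathscr{F}, \mathcal{I})$ is first restricted to its value $M(G \backslash P)$ at the vertex of $\mathscr{W}_P$ and then pulled back along the augmentation $\pi \colon ({\op{E}}G \times_{/G} P, \Delta^{\op{op}}) \to G \backslash P$, which is the quasi-inverse described in Proposition~\ref{prop:freeaction}. Going directly via $\delta_P^\ast$, the motive is restricted along the morphism of diagrams sending $[n]$ to the resolution $G^{n+1} \times P \to X$, whose $\mathscr{Q}_X$-image $G^n \times P$ is the $n$-th simplex of $({\op{E}}G \times_{/G} P, \Delta^{\op{op}})$. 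The two simplicial motives then coincide term-wise because $M$ is cartesian on $\mathscr{F}$: the projection-type morphisms $G^{n+1} \times P \to P$ in $\op{Res}(G \looparrowright X)$ induce, under $\mathscr{Q}_X$, exactly the projections $G^n \times P \to G \backslash P$ that compose $\pi$. The resulting identification is natural in $M$ and patches into an isotransformation of functors.

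For part (2), I would observe that the two morphisms of diagrams $\delta_X \circ p$ and $\delta_P$, both going from $({\op{E}}G \times_{/G} P, \Delta^{\op{op}})$ into $(\mathscr{Q}_X \circ \mathscr{F}, \mathcal{I})$, differ only by the natural transformation in $\mathscr{F}$ taking the resolution $G^{n+1} \times P \to G^{n+1} \times X \to X$ (the image of the $n$-th simplex under $\delta_X \circ p$) to the resolution $G^{n+1} \times P \to P \to X$ (the image under $\delta_P$). These vertices of $\mathscr{F}$ are connected by a canonical morphism in $\op{Res}(G \looparrowright X)$, and on cartesian motives the induced transition maps are isomorphisms; $2$-functoriality of $(-)^\ast$ then yields the required isotransformation $\delta_P^\ast \cong p^\ast \circ \delta_X^\ast$.

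The main obstacle in executing this plan is notational rather than mathematical: the various diagrams, choices of skeleton, and lifts of the simplicial Borel construction into $\mathscr{F}$ must be tracked consistently, and one must keep straight the distinction between strict equality of morphisms of diagrams and the canonical natural transformations available on the cartesian subcategory. Once this bookkeeping is set up, no genuinely new structural input is needed beyond $2$-functoriality of $(-)^\ast$ and the quotient equivalence of Proposition~\ref{prop:freeaction}.
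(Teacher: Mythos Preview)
Your proposal is correct and follows essentially the same approach as the paper's proof: both reduce the commutativity to $2$-functoriality of $(-)^\ast$ on morphisms of diagrams of schemes, with the cartesian condition on $M$ supplying the isomorphisms needed to compare the different restrictions. The paper's version is considerably terser---it simply says that evaluation at $G\backslash P$ pulled back to ${\op{E}}G\times_{/G}P$ is isomorphic to $\delta_P^\ast M$ ``because $M$ is cartesian''---whereas you unwind this term-by-term, which is more informative for a reader.

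One small imprecision in your part (2): the composite $\delta_X \circ p$, as a morphism of diagrams of schemes, hits at level $[n]$ the object $L_X([n]) \in \mathcal{I}$ corresponding to the resolution $G^{n+1}\times X \to X$, not $G^{n+1}\times P$; the $P$-factor lives only in the scheme-morphism part $G^n\times P \to G^n\times X$. The natural transformation you want is the lifted morphism $L_P \Rightarrow L_X$ in $\mathcal{I}$ (whose image under $\mathscr{Q}_X\circ\mathscr{F}$ is precisely $p$), and cartesianness of $M$ along those arrows gives the isotransformation. Your argument goes through once this is straightened out.
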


\begin{proof}
(1) The functors exist because the conditions for \ref{resolution2Borel} and \ref{res2individual} are satisfied by assumption. The composition of top horizontal and left vertical is then simply the evaluation of a motive in $\mathbb{D}^{\op{cart}}(\mathscr{Q}_X\circ\mathscr{F})$ on $G\backslash P$. 
It suffices to show that for a motive $M\in\mathbb{D}^{\op{cart}}(\mathscr{Q}_X\circ\mathscr{F})$, the pullback of the evaluation of $M$ at $G\backslash P$ to $\mathbb{D}^\Delta_G(P)$ is isomorphic to the restriction  $\delta_P^\ast M$ to $\mathbb{D}^\Delta_G(P)$. The fact that $M$ is cartesian provides an explicit such isomorphism.  

(2) The commutativity up to isotransformation follows again since we start with a cartesian motive in $\mathbb{D}^{\op{cart}}(\mathscr{Q}_X\circ\mathscr{F})$. This implies that there is a specific isomorphism between $p^\ast\circ\delta_X^\ast M$ and $\delta_P^\ast M$. 
\end{proof}

\subsection{Comparison statements, equivalence proofs}

Now we want to establish comparison results. The main result is that the comparison functor 
\[
\delta_X^\ast:\mathbb{D}^{\op{cart}}(\mathscr{Q}_X\circ\mathscr{F})\to\mathbb{D}^{\Delta}_G(X)
\] 
is an equivalence under fairly weak restrictions on the diagram $(\mathscr{F},\mathcal{I})$. This will, in particular, imply that $\mathbb{D}^{\op{cart}}(\mathscr{Q}_X\circ\mathscr{F})$ is, up to equivalence, independent of the choice of diagram $(\mathscr{F},\mathcal{I})$ of resolutions in favourable circumstances. These equivalences will be established by showing that unit and counit of the comparison adjunction are equivalences. The latter can be checked on $I$-truncated categories of motives where it follows from the acyclicity results we proved earlier. This, however, does not provide a comparison for the full derived categories of motives, but only for the bounded-below part $\mathbb{D}^+$. This will be sufficient for our purposes.


\begin{proposition}
\label{prop:resolindep}
Let $\mathbb{D}$ be a homotopical stable algebraic derivator satisfying the conditions of \ref{derivator:new}, and let $(G\looparrowright X)$ be a variety with action. Let $\mathscr{F}:\mathcal{I}\to\op{Res}(G\looparrowright X)$ be a diagram containing a lift of the diagram
$G\times X\leftarrow G\times P\rightarrow P$ for an $n$-acyclic resolution $p:P\to X$. Then for every interval $I=[a,b]$ with $n\geq b-a$, the natural restriction functor induces an equivalence:
\[
\mathbb{D}^{\op{cart},I}(\mathscr{Q}_X\circ\mathscr{F})\to\mathbb{D}^{\op{cart}, I}(\mathscr{W}_P). 
\]
\end{proposition}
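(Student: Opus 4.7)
The plan is to identify both sides with the concrete category $\mathbb{D}^I_G(X, P)$ of triples. By Lemma~\ref{lem:resolindep}, the target $\mathbb{D}^{\op{cart}, I}(\mathscr{W}_P)$ is equivalent to $\mathbb{D}^I_G(X, P)$, consisting of triples $(M_X, \overline{M}, \beta)$ with $\beta : p^\ast M_X \xrightarrow{\cong} q^\ast \overline{M}$ in $\mathbb{D}(P)$. It thus suffices to show that $\mathbb{D}^{\op{cart}, I}(\mathscr{Q}_X \circ \mathscr{F})$ is also equivalent to $\mathbb{D}^I_G(X, P)$ via the composition of $\rho^\ast$ with the equivalence of Lemma~\ref{lem:resolindep}, the forward direction being essentially built into $\rho^\ast$ and the compatibility statement of Proposition~\ref{prop:restrictcompat}.

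To produce a quasi-inverse, I would start from a triple $(M_X, \overline{M}, \beta)$ and assign, to each resolution $r : R \to X$ appearing in $\mathscr{F}$, a motive on $G\backslash R$. For this, form the fibre product $P \times_X R$, itself a resolution of $R$ with free $G$-action inherited from $P$. The projection $P \times_X R \to R$ is an \'etale fibre bundle with the same fibre as $p$, hence is $n$-acyclic by Corollary~\ref{cor:acyclicity}, and upon descent so is $\bar{p}_R : G\backslash(P \times_X R) \to G\backslash R$ by Proposition~\ref{prop:belu193}. Since $n \geq b - a$, Proposition~\ref{prop:belu192} gives an equivalence $\mathbb{D}^I(G\backslash R) \sirra \mathbb{D}^I(G\backslash(P \times_X R) \mid G\backslash R)$ with inverse $\tau_{\leq b}(\bar{p}_R)_\ast$. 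Applying this inverse to the pullback of $\overline{M}$ along the canonical map $G\backslash(P \times_X R) \to G\backslash P$ defines the sought-after motive $M(G\backslash R)$.

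The essential content is then to verify that these motives assemble into a cartesian object of $\mathbb{D}(\mathscr{Q}_X \circ \mathscr{F})$: for each morphism $R_1 \to R_2$ in $\mathcal{I}$, the induced comparison morphism must be an isomorphism in $\mathbb{D}^I$. This is checked by combining the base-change formulas in Section~\ref{sec:axiomatics} with Proposition~\ref{prop:belu193}, which together ensure that the $\tau_{\leq b}$-descent commutes with pullback along $\mathcal{I}$-morphisms in the truncated range, since $n$-acyclicity is stable under base change. Full faithfulness of $\rho^\ast$ is treated by an analogous argument: a morphism of cartesian motives over $\mathscr{Q}_X \circ \mathscr{F}$ is determined by its restriction to the subdiagram $\mathscr{W}_P$, because on any other $G\backslash R$ the morphism is reconstructed by the same $n$-acyclic descent equivalence.

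The main obstacle lies in the bookkeeping: independence of the assignment $R \mapsto M(G\backslash R)$ from the auxiliary passage through $P \times_X R$, and propagation of the cartesian property across all morphisms of $\mathcal{I}$, not merely those in the explicit lift of $G\times X\leftarrow G\times P\rightarrow P$. Both points reduce to 2-functoriality of the exchange isomorphisms in the six-functor formalism, applied in the bounded range $I = [a, b]$ permitted by the acyclicity hypothesis $n \geq b - a$. The truncation $\tau_{\leq b}$ is crucial here, since without it $(\bar{p}_R)_\ast$ need not be $\tau$-exact on the relevant subcategories and the descent would fail to be compatible with the diagrammatic structure of $\mathscr{F}$.
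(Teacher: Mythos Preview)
Your approach is essentially the same as the paper's: both construct the quasi-inverse by assigning to each resolution $R$ the motive obtained via the product $P \times_X R$ and $n$-acyclic descent, and the paper's formula $(\op{pr}_R^\ast)^{-1}\circ \op{pr}_P^\ast$ (invoking Proposition~\ref{prop:belu221}) unwinds to your $\tau_{\leq b}(\bar{p}_R)_\ast$ via Proposition~\ref{prop:belu192}. One small correction: the projection $P \times_X R \to R$ need not be an \'etale fibre bundle, since nothing in the definition of resolution forces $p$ to be one, so Corollary~\ref{cor:acyclicity} is not the right justification here; the $n$-acyclicity of $P \times_X R \to R$ follows directly from stability under base change, which is built into Definition~\ref{def:acyclicity}.
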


\begin{proof}
Recall that the restriction functor $\mathbb{D}^{\op{cart},I}(\mathscr{Q}_X\circ \mathscr{F})\to\mathbb{D}^{\op{cart},I}(\mathscr{W}_P)$ is given by restriction along inclusion of the diagram $G\times X\leftarrow G\times P\to P$, which by our assumptions is in the image of $\mathscr{F}$. The restriction functor $\mathbb{D}^{\op{cart},I}(\mathscr{Q}_X\circ \mathscr{F})\to\mathbb{D}^{\op{cart},I}(\mathscr{W}_P)$ has an adjoint functor. As in \cite[2.4.3]{BeLu}, this adjoint functor can be described explicitly by describing its restriction to arbitrary resolutions $r:R\to X$ as follows. Consider the product $s:S=P\times_X R\to X$. The restriction of the adjoint to $r:R\to X$ is given by $M\mapsto (\op{pr}_R^\ast)^{-1}\circ \op{pr}_P^\ast M$. This follows from the properties of $n$-acyclic maps in Proposition~\ref{prop:belu221}. From these descriptions it follows that the unit and counit of the adjunction are isomorphisms, establishing the claim.
\end{proof}

\begin{corollary}
\label{cor:resRes}
If $\mathscr{F}:\mathcal{J}\to\op{Res}(G\looparrowright X)$ is a skeleton of the category of resolutions, there is an equivalence 
\[
\mathbb{D}^{\op{cart},\op{b}}(\mathscr{Q}_X\circ \mathscr{F}) \stackrel{\simeq}{\longrightarrow}
\mathbb{D}^{\op{res},\op{b}}_G(X).
\]
\end{corollary}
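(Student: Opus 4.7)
The plan is to combine the fiberwise equivalences of Proposition~\ref{prop:resolindep} with the universal property of the 2-limit defining $\mathbb{D}^{\op{res},\op{b}}_G(X)$. First, I would construct the comparison functor. Given a cartesian motive $M\in\mathbb{D}^{\op{cart},\op{b}}(\mathscr{Q}_X\circ \mathscr{F})$, restriction along the inclusion of each subdiagram of the form $\mathscr{W}_P$ (available because the defining diagram $G\times X\leftarrow G\times P\rightarrow P$ lifts to $\mathscr{F}$, as $\mathscr{F}$ is a skeleton) produces a cartesian motive in $\mathbb{D}^{\op{cart},I}(\mathscr{W}_P)$ for some interval $I$ controlled by the boundedness of $M$. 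By Proposition~\ref{prop:restrictcompat}(1), these restrictions are compatible as $P$ varies, and hence assemble into a functor to the 2-limit $\mathbb{D}^{\op{res},\op{b}}_G(X)$.

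To show this functor is an equivalence, fix an interval $I=[a,b]$. By Lemma~\ref{lem:resexist}, the skeleton $\mathscr{F}$ contains $n$-acyclic resolutions for all $n\geq b-a$, and for every such resolution $p:P\to X$ in $\mathscr{F}$, Proposition~\ref{prop:resolindep} yields an equivalence
\[
\mathbb{D}^{\op{cart},I}(\mathscr{Q}_X\circ\mathscr{F})\xrightarrow{\;\simeq\;}\mathbb{D}^{\op{cart},I}(\mathscr{W}_P).
\]
Combined with the compatibility established in Proposition~\ref{prop:restrictcompat}(1), this shows that, on the $I$-truncated pieces, the transition functors in the defining 2-limit system are equivalences whenever the acyclicity of the source and target resolutions exceeds $b-a$. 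Thus the $I$-truncated 2-limit collapses to any single term $\mathbb{D}^{\op{cart},I}(\mathscr{W}_P)$ with $P$ sufficiently acyclic, which is in turn equivalent to $\mathbb{D}^{\op{cart},I}(\mathscr{Q}_X\circ\mathscr{F})$.

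Finally, taking the 2-limit over the sequence of intervals $I=[-n,n]$ on both sides assembles the $I$-truncated equivalences into an equivalence on the bounded categories: a bounded cartesian motive on $\mathscr{Q}_X\circ\mathscr{F}$ is determined up to canonical isomorphism by its sequence of truncations, matching the description of an object of $\mathbb{D}^{\op{res},\op{b}}_G(X)$ as a coherent family of $I$-truncated data. The main obstacle will be the bookkeeping in the 2-limit formalism: one has to verify that the adjoint functors used implicitly in Proposition~\ref{prop:resolindep} to invert the restrictions from $\mathscr{F}$ to $\mathscr{W}_P$ behave compatibly under change of resolution, so that the reconstruction of an object of $\mathbb{D}^{\op{cart},\op{b}}(\mathscr{Q}_X\circ\mathscr{F})$ from the 2-limit data is functorial and genuinely cartesian on all of $\mathscr{Q}_X\circ\mathscr{F}$. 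This compatibility should follow from the fact that all the relevant restriction functors are induced by morphisms of diagrams of schemes, whose right Kan extension adjoints are compatible in the required way.
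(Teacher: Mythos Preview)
Your proposal is correct and follows essentially the same route as the paper. The paper's proof is a single sentence pointing to Lemma~\ref{lem:resexist} (existence of $n$-acyclic resolutions for all $n$), leaving implicit that Proposition~\ref{prop:resolindep} then gives the equivalence on each $I$-truncated piece and that these assemble into the 2-limit; you have simply spelled out these steps and the compatibility checks in detail.
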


\begin{proof} 
There exist $n$-acyclic resolutions for arbitrary $n>0$, cf. Lemma~\ref{lem:resexist}.
\end{proof}

Now we will compare the definitions of equivariant motives via the simplicial Borel construction and categories of resolutions. This comparison will be essentially based on acyclicity statements in the following zig-zag of morphisms, where $p:P\to X$ is an $n$-acyclic resolution:
\[
G\backslash P\leftarrow {\op{E}}G\times_{/G} P\rightarrow{\op{E}}G\times_{/G} X
\]

\begin{proposition}
\label{prop:simplacyclic}
Let $(G\looparrowright X)$ be a $G$-variety, and let $p:P\to X$ be an $n$-acyclic resolution. Then the map ${\op{E}}G\times_{/G} p:{\op{E}}G\times_{/G} P\to{\op{E}}G\times_{/G} X$ is $n$-acyclic, i.e., for any interval $I=[a,b]$ with $n\geq b-a$, the restriction functor 
\[
({\op{E}}G\times_{/G} p)^\ast:\mathbb{D}^{\Delta,I}_G(X)\to\mathbb{D}^{\Delta,I}_G(P)
\]
is fully faithful. 
\end{proposition}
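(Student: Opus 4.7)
The plan is to verify the simplicial acyclicity statement by checking termwise on simplices of the Borel construction. First I would observe that for each $m\in\Delta^{\op{op}}$, the morphism ${\op{E}}G\times_{/G} p$ restricts on the $m$-th simplex to
\[
\id_{G^m}\times p\colon G^m\times P\to G^m\times X,
\]
which is precisely the base change of $p\colon P\to X$ along the projection $G^m\times X\to X$. Since $\tau$-$n$-acyclicity is stable under arbitrary base change by Definition~\ref{def:acyclicity}(2), every such component map is again $\tau$-$n$-acyclic.

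Next I would use that the morphism of diagrams $({\op{E}}G\times_{/G} P,\Delta^{\op{op}})\to({\op{E}}G\times_{/G} X,\Delta^{\op{op}})$ is the identity on the index category $\Delta^{\op{op}}$. In Ayoub's formalism, both the pullback $({\op{E}}G\times_{/G} p)^\ast$ and its right adjoint $({\op{E}}G\times_{/G} p)_\ast$ are then computed termwise on simplices, and the same holds for the truncation $\tau_{\leq b}$. Thus, for any $M\in\mathbb{D}^{\Delta,I}_G(X)$ with $I=[a,b]$ and $n\geq b-a$, the unit morphism
\[
M\longrightarrow \tau_{\leq b}\,({\op{E}}G\times_{/G} p)_\ast\,({\op{E}}G\times_{/G} p)^\ast M
\]
restricts at the $m$-simplex to the unit associated to $\id_{G^m}\times p$, which is an isomorphism by Proposition~\ref{prop:belu192}(1). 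Conservativity of the family of evaluation functors $\iota_m^\ast$ for $m\in\Delta^{\op{op}}$ then shows that the simplicial unit is itself an isomorphism; naturality in $m$ ensures that the right-hand side remains cartesian, so the unit lives in the equivariant subcategory. This exhibits $\tau_{\leq b}\,({\op{E}}G\times_{/G} p)_\ast$ as a one-sided inverse to $({\op{E}}G\times_{/G} p)^\ast$ on $\mathbb{D}^{\Delta,I}_G$, and hence yields the claimed full faithfulness.

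The main obstacle is the justification that pushforward and truncation truly behave termwise on the simplicial diagram. This is a formal consequence of Ayoub's constructions whenever the morphism of diagrams is the identity on the index category, since no Kan extension along $\Delta^{\op{op}}$ enters; but the reduction depends critically on this observation, which is exactly what allows the non-simplicial Proposition~\ref{prop:belu192} to be applied \emph{in parallel} across all simplicial degrees, with the factor $G^m$ playing the role of a parameter base change.
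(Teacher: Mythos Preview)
Your proposal is correct and follows essentially the same strategy as the paper: show that the unit $\op{id}\to\tau_{\leq b}p_\ast p^\ast$ is an isomorphism by checking it after evaluation at simplices, using that these evaluation functors are jointly conservative and that $p_\ast$, $p^\ast$, and $\tau_{\leq b}$ are computed termwise. The one noteworthy variation is that you check at \emph{every} simplicial degree $m$, invoking base-change stability of $n$-acyclicity (Definition~\ref{def:acyclicity}(2)) to handle the factor $G^m$; the paper instead exploits cartesianness to reduce conservativity of the family $\{\iota_m^\ast\}$ to conservativity of the single forgetful functor $\op{For}=\iota_0^\ast$, and then appeals only to the original $n$-acyclicity of $p\colon P\to X$ itself via Proposition~\ref{prop:belu192}. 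Both routes are equally short; yours is slightly more explicit about why the target of the unit stays cartesian, while the paper's phrasing leaves that implicit in the claim that $\tau_{\leq b}p_\ast$ is a right adjoint on the equivariant categories.
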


\begin{proof}
  Consider the  functor $\op{For}$ forgetting the $G$-action
  from Remark~\ref{rem:simpforget}. By definition, we have the following commutative diagram of functors
\[
\xymatrix{
\mathbb{D}^{\Delta}_G(X)\ar[r]^{p^\ast}\ar[d]_{\op{For}_X} & \mathbb{D}^{\Delta}_G(P)\ar[d]^{\op{For}_P} \\
\mathbb{D}(X)\ar[r]_{p^\ast} & \mathbb{D}(P).
}
\]
Restricting to $I$-truncated categories for $I=[a,b]$ with $n\geq b-a$, the acyclicity assumption implies that $p^\ast:\mathbb{D}^I(X)\to\mathbb{D}^I(P)$ is fully faithful, 
cf. Proposition~\ref{prop:belu192}. Now we can consider the right adjoint $\tau_{\leq b}p_*$ of $p^*:\mathbb{D}^{\Delta,I}_G(X)\rightarrow\mathbb{D}^{\Delta,I}_G(P)$. To prove the claim, we want to show that the unit of the adjunction is an
isotransformation $u:\op{id}\xrightarrow{\approx} \tau_{\leq b}p_*p^*$. 
Now using \cite[Lemma 2.4.17]{ayoub:thesis1}, the restriction functor 
\[
i^\ast:\mathbb{D}({\op{E}}G\times_{/G}X)\to\prod_{[n]\in \op{Ob}\Delta }\mathbb{D}({\op{E}}G_n\times_{/G}X)\approx
\prod_{n\in\mathbb{N}}\mathbb{D}(G^n\times X)
\] 
is conservative. The objects of $\mathbb{D}^{\Delta}_G(X)\cong\mathbb{D}^{\op{cart}}({\op{E}}G\times_{/G}X)$ are cartesian, hence $i^\ast M$ vanishes if and only if $i_0^\ast M=\op{For}(M)$ vanishes. Hence the forgetful functor is conservative. Our unit $u:\op{id}\ra \tau_{\leq b}p_*p^*$
gets mapped by the forgetful functor to
the corresponding transformation in the non-equivariant setting, and this is an isomorphism by definition.
\end{proof}



\begin{theorem}
\label{thm:inftyquotient}
Let $\mathbb{D}$ be a homotopical stable algebraic derivator satisfying the conditions of \ref{derivator:new}, and let $(G\looparrowright X)$ be a variety with action. Assume that  $\mathscr{F}:\mathcal{I}\to\op{Res}(G\looparrowright X)$ is a diagram such that
\begin{enumerate}
\item $\mathcal{I}$ has nonempty finite direct products and $\mathscr{F}$ preserves them, 
\item there is a lift ${\op{E}}G\times X:\Delta ^{\op{op}}\to\mathcal{I}$ of the simplicial Borel resolution, and 
\item for each $n>0$, the image of $\mathscr{F}:\mathcal{I}\to\op{Res}(G\looparrowright X)$ contains an $n$-acyclic resolution $p_n:P_n\to X$.
\end{enumerate} 
Then the natural restriction functor is an equivalence 
\[
\delta^\ast:\mathbb{D}^{\op{cart},+}(\mathscr{Q}_X\circ\mathscr{F})\sirra \mathbb{D}^{\Delta,+}_G(X)
\]

\end{theorem}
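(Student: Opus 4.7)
The plan is to prove the theorem by first establishing the equivalence on $I$-truncated subcategories $\mathbb{D}^{\op{cart},I}$ resp.\ $\mathbb{D}^{\Delta,I}_G$ for every finite interval $I=[a,b]\subset\mathbb{Z}$, and then assembling these to obtain the statement on bounded-below categories. Fix such an $I$. By Lemma~\ref{lem:resexist} and assumption (3), choose an $n$-acyclic resolution $p:P\to X$ lying in the image of $\mathscr{F}$ with $n\geq b-a$; by assumption (1), the diagram $G\times X\leftarrow G\times P\rightarrow P$ also lifts along $\mathscr{F}$, so that Proposition~\ref{prop:resolindep} applies and the functor $\rho^\ast:\mathbb{D}^{\op{cart},I}(\mathscr{Q}_X\circ\mathscr{F})\to\mathbb{D}^{\op{cart},I}(\mathscr{W}_P)$ is an equivalence.

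For full faithfulness of $\delta_X^\ast$, use Proposition~\ref{prop:restrictcompat}(2) to factor the restriction $\delta_P^\ast$ as $p^\ast\circ\delta_X^\ast$. By Proposition~\ref{prop:simplacyclic}, the functor $p^\ast:\mathbb{D}^{\Delta,I}_G(X)\to\mathbb{D}^{\Delta,I}_G(P)$ is fully faithful. Via Proposition~\ref{prop:restrictcompat}(1), $\delta_P^\ast$ coincides with the composition
\[
\mathbb{D}^{\op{cart},I}(\mathscr{Q}_X\circ\mathscr{F})\xrightarrow{\rho^\ast}\mathbb{D}^{\op{cart},I}(\mathscr{W}_P)\to\mathbb{D}^I(G\backslash P)\to\mathbb{D}^{\Delta,I}_G(P),
\]
in which the first arrow is the equivalence already noted, the middle arrow is fully faithful (evaluation at $G\backslash P$ of a cartesian object on $\mathscr{W}_P$ is fully faithful by Lemma~\ref{lem:resolindep}, since the value at $X$ is then recovered as $\tau_{\leq b}p_\ast q^\ast$), and the last arrow is an equivalence by the simplicial quotient equivalence Proposition~\ref{prop:freeaction} applied to the free $G$-action on $P$. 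Hence $\delta_P^\ast$ is fully faithful, and so is $\delta_X^\ast$.

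For essential surjectivity, let $M\in\mathbb{D}^{\Delta,I}_G(X)$. The object $p^\ast M\in\mathbb{D}^{\Delta,I}_G(P)$ corresponds, under the simplicial quotient equivalence, to some $M'\in\mathbb{D}^I(G\backslash P)$ with $\pi^\ast M'\cong p^\ast M$ where $\pi:({\op{E}}G\times_{/G}P)_\bullet\to G\backslash P$ is the canonical augmentation. Applying the forgetful functor $\op{For}_P$ (evaluation in degree $0$) gives on one side $q^\ast M'$, with $q:P\to G\backslash P$ the quotient, and on the other side $p^\ast\op{For}_X(M)$; thus $q^\ast M'\cong p^\ast\op{For}_X(M)$ in $\mathbb{D}^I(P)$. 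This is exactly the condition that $M'$ lie in the essential image of $\mathbb{D}^{\op{cart},I}(\mathscr{W}_P)\to\mathbb{D}^I(G\backslash P)$ identified in Lemma~\ref{lem:resolindep}, so $M'$ lifts to $\tilde M\in\mathbb{D}^{\op{cart},I}(\mathscr{W}_P)$. Propagating $\tilde M$ back through the equivalence $\rho^\ast$ produces $N\in\mathbb{D}^{\op{cart},I}(\mathscr{Q}_X\circ\mathscr{F})$ with $\delta_P^\ast N\cong p^\ast M$; the full faithfulness of $p^\ast$ then upgrades this to $\delta_X^\ast N\cong M$.

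Finally, any object of $\mathbb{D}^{\Delta,+}_G(X)$ is bounded below, say it lies in $\mathbb{D}^{\Delta,\geq a}_G(X)$. For each upper bound $b\geq a$ one obtains an equivalence of $I=[a,b]$-truncated categories by the preceding steps with a resolution of sufficient acyclicity; these equivalences are compatible with the natural truncation functors and with transition along increasing $b$ (both $\delta_X^\ast$ and the constructed quasi-inverse, which is essentially $\tau_{\leq b}$ applied to a suitable pushforward, commute with $\tau_{\leq b}$ and with $\tau_{\geq a}$). Assembling yields the desired equivalence on $\mathbb{D}^+$. The main subtlety, and the step I expect to require the most care, is the essential surjectivity argument: one must track carefully that the object $M'$ produced by the simplicial quotient equivalence on $P$ actually lands in the essential image $\mathbb{D}^I(G\backslash P\mid X)$, which is precisely what allows the lift back through the resolution picture.
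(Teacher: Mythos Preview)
Your proof is correct and follows essentially the same approach as the paper: reduce to $I$-truncated categories, use an $n$-acyclic resolution $P$ in the image of $\mathscr{F}$, and run the comparison through the commutative diagram of Proposition~\ref{prop:restrictcompat} using Propositions~\ref{prop:resolindep}, \ref{prop:simplacyclic}, and the simplicial quotient equivalence~\ref{prop:freeaction}. The only packaging difference is that the paper organizes the passage from truncated categories to $\mathbb{D}^+$ via the adjunction $\delta^\ast\dashv\delta_\ast$ (checking unit and counit become isomorphisms after every truncation), whereas you argue full faithfulness and essential surjectivity separately and then ``assemble''; your assembling step is a bit informal, and the adjunction framing makes that step cleaner, but the substance is the same.
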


\begin{proof}
First, we note that the restriction functor $\delta^\ast$ has both left and right adjoint, cf. \cite[Proposition 4.5.4]{ayoub:thesis2}. Considering the adjunction $\delta^\ast\dashv\delta_\ast$, it suffices to show that the unit $\id\to \delta_\ast\delta^\ast$ and counit $\delta^\ast\delta_\ast\to\id$ are isomorphisms.
Now we consider the homotopy t-structure $\tau$ on the derivator $\mathbb{D}$. For any interval $I=[a,b]$, the functor $\delta^\ast$ restricts to $\delta^\ast:\mathbb{D}^{\op{cart},I}(\mathscr{Q}_X\circ\mathscr{F})\to\mathbb{D}^{\Delta,I}_G(X)$. Conversely, truncating with $\tau_{\leq b}$ provides the right adjoint functor $\tau_{\leq b}\circ\delta_\ast:\mathbb{D}^{\Delta,I}_G(X)\to\mathbb{D}^{\op{cart},I}(\mathscr{Q}_X\circ \mathscr{F})$. 

The unit and counit of the adjunction $\delta^\ast\dashv\delta_\ast$ are isomorphisms if for each interval $I$ the truncated unit $\id\to\tau_{\leq b}\delta_\ast\delta^\ast$ and counit $\delta^\ast\tau_{\leq b}\delta_\ast\to\id$ are isomorphisms: for the unit, we can apply $\tau_{\leq b}$ to the unit to get $\tau_{\leq b}M\to\tau_{\leq b}\delta_\ast\delta^\ast M$. Then $\tau_{\leq b}\delta_\ast\delta^\ast\tau_{\leq b}M\cong \tau_{\leq b}\delta_\ast\tau_{\leq b}\delta^\ast M\to \tau_{\leq b}\delta_\ast\delta^\ast M$ is an isomorphism. 
Therefore, it suffices to show that the restricted functor
\[
\delta^\ast:\mathbb{D}^{\op{cart},I}(\mathscr{Q}_X\circ \mathscr{F})\to \mathbb{D}^{\Delta,I}_G(X)
\]
is an equivalence for each interval $I=[a,b]$. 

Using assumption (3), it will be sufficient to show the following claim: 
Let $p:P\to X$ be any $n$-acyclic resolution contained in the image of the diagram $\mathscr{F}:\mathcal{I}\to\op{Res}(G\looparrowright X)$. Replacing $\mathbb{D}^{\Delta,I}_G(P)$ by the essential image $\mathbb{D}^{\Delta,I}_G(P\mid X)$ of the restriction $p^\ast:\mathbb{D}^{\Delta,I}_G(X)\to\mathbb{D}^{\Delta,I}_G(P)$, all the comparison functors in the diagram of Proposition~\ref{prop:restrictcompat} restrict to equivalences on the $I$-truncated categories, for any interval $I=[a,b]$ with $n\geq b-a$. 
This claim in turn follows from the following three statements.

The functor $p^\ast$ is an equivalence onto its essential image, by Proposition~\ref{prop:simplacyclic}.

The functor $\rho^\ast$ is an equivalence, by Proposition~\ref{prop:resolindep}. 

We show that $\pi^\ast$ is an equivalence. First, by the simplicial quotient equivalence, cf. Proposition~\ref{prop:freeaction}, we can identify $\mathbb{D}(G\backslash P)\simeq \mathbb{D}^{\Delta}_G(P)$. Moreover, the quotient equivalence induces an equivalence between $\mathbb{D}^{\Delta,I}_G(P\mid X)$ and the subcategory $\mathbb{D}^I(G\backslash P\mid p)$ of $\mathbb{D}^I(G\backslash P)$ consisting of those objects $M$ such that $q^\ast M\in\mathbb{D}^I(P)$ is in the image of $p^\ast:\mathbb{D}^I(X)\to\mathbb{D}^I(P)$. As in the last paragraph of the proof of Proposition~\ref{prop:belu221}, the restriction functor $\mathbb{D}^{\op{cart},I}(\mathscr{W}_P)\to\mathbb{D}^I(G\backslash P)$ induces an equivalence $\mathbb{D}^{\op{cart},I}(\mathscr{W}_P)\stackrel{\simeq}{\longrightarrow} \mathbb{D}^I(G\backslash P\mid p)$. Combining these, we get that $\pi^\ast$ induces an equivalence $\mathbb{D}^{\op{cart},I}(\mathscr{W}_P)\stackrel{\simeq}{\longrightarrow}\mathbb{D}^{\Delta,I}_G(P\mid X)$, as required.
\end{proof}

In a similar way as above,  we can now also show that the equivariant categories $\mathbb{D}^{\op{cart}}(\mathscr{Q}_X\circ \mathscr{F})$ are independent of the choice of diagram under suitable assumptions. These are the motivic versions of \cite[2.4.3 and 2.4.4]{BeLu}. 

\begin{proposition}
\label{prop:richenough}
Let $\mathbb{D}$ be a homotopical stable algebraic derivator satisfying the conditions of \ref{derivator:new}, and let $(G\looparrowright X)$ be a variety with action. Let $\mathscr{G}:\mathcal{I}\to\mathcal{J}$ be a functor of small categories, and let $\mathscr{F}:\mathcal{J}\to\op{Res}(G\looparrowright X)$ be a diagram of resolutions.  Assume that
\begin{enumerate}
\item both categories $\mathcal{I}$ and $\mathcal{J}$ have nonempty finite direct products and the functors $\mathscr{G}$ and $\mathscr{F}$ preserve them, 
\item the image $\mathscr{F}\circ\mathscr{G}(\mathcal{I})$ contains the trivial resolution, 
\item For every $n>0$, $\mathscr{F}\circ \mathscr{G}(\mathcal{I})$ contains an $n$-acyclic resolution. 
\end{enumerate}
Then the natural restriction functor
\[
\mathscr{G}^{\ast}:\mathbb{D}^{\op{cart},+}(\mathscr{Q}_X\circ \mathscr{F})\to
\mathbb{D}^{\op{cart},+}(\mathscr{Q}_X\circ \mathscr{F}\circ \mathscr{G})
\]
is an equivalence. 
\end{proposition}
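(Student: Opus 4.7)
The plan is to mimic the strategy used in the proof of Theorem~\ref{thm:inftyquotient}: first identify a suitable right adjoint to $\mathscr{G}^{\ast}$, then reduce to showing the restriction is an equivalence on $I$-truncated subcategories for each interval $I=[a,b]$, and finally obtain these truncated equivalences by comparison with the category $\mathbb{D}^{\op{cart},I}(\mathscr{W}_P)$ for a sufficiently acyclic resolution that lies in the image of both diagrams.

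\textbf{Step 1: Adjoint and reduction to truncated levels.} The functor $\mathscr{G}$ induces a morphism of diagrams of schemes $(\mathscr{Q}_X\circ\mathscr{F}\circ\mathscr{G},\mathcal{I})\to(\mathscr{Q}_X\circ\mathscr{F},\mathcal{J})$, so the restriction $\mathscr{G}^{\ast}$ has a right adjoint $\mathscr{G}_{\ast}$ by \cite[Proposition 4.5.4]{ayoub:thesis2}. As in the proof of Theorem~\ref{thm:inftyquotient}, the homotopy t-structure $\tau$ is compatible with $\mathscr{G}^{\ast}$ (which is $\tau$-exact), and $\tau_{\leq b}\circ\mathscr{G}_{\ast}$ is right adjoint to $\mathscr{G}^{\ast}$ restricted to $I$-truncated subcategories. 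Therefore, to show unit and counit of $\mathscr{G}^{\ast}\dashv\mathscr{G}_{\ast}$ are isomorphisms on $\mathbb{D}^{\op{cart},+}$, it suffices to show that for each interval $I=[a,b]$, the restriction
\[
\mathscr{G}^{\ast}:\mathbb{D}^{\op{cart},I}(\mathscr{Q}_X\circ\mathscr{F})\to \mathbb{D}^{\op{cart},I}(\mathscr{Q}_X\circ\mathscr{F}\circ\mathscr{G})
\]
is an equivalence.

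\textbf{Step 2: Identify a common comparison resolution.} Fix $I=[a,b]$ and pick $n\geq b-a$. By assumption (3), there is an $n$-acyclic resolution $p:P\to X$ with $P\in\mathscr{F}\circ\mathscr{G}(\mathcal{I})$. By assumption (2), the trivial resolution $G\times X\to X$ also lies in the image of $\mathscr{F}\circ\mathscr{G}$, and by assumption (1) both $\mathscr{G}$ and $\mathscr{F}$ preserve nonempty finite direct products, so the product $(G\times X)\times_X P\cong G\times P$ together with the evident morphisms to $G\times X$ and to $P$ provides a lift of the diagram $G\times X\leftarrow G\times P\rightarrow P$ along $\mathscr{F}\circ\mathscr{G}$ (hence also along $\mathscr{F}$).

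\textbf{Step 3: Two applications of Proposition~\ref{prop:resolindep} and conclusion.} Applying Proposition~\ref{prop:resolindep} to the diagram $(\mathscr{F},\mathcal{J})$, the restriction along the inclusion of the above lifted zig-zag gives an equivalence
\[
\mathbb{D}^{\op{cart},I}(\mathscr{Q}_X\circ\mathscr{F})\sirra \mathbb{D}^{\op{cart},I}(\mathscr{W}_P).
\]
The same proposition applied to $(\mathscr{F}\circ\mathscr{G},\mathcal{I})$ yields an equivalence
\[
\mathbb{D}^{\op{cart},I}(\mathscr{Q}_X\circ\mathscr{F}\circ\mathscr{G})\sirra \mathbb{D}^{\op{cart},I}(\mathscr{W}_P).
\]
Since both are induced by restriction along compatible inclusions of the same lifted diagram, the evident triangle of restriction functors commutes (both compositions restrict to $\mathscr{W}_P$). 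By two-out-of-three for equivalences, $\mathscr{G}^{\ast}$ is an equivalence on $I$-truncated subcategories, proving the proposition.

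The main obstacle, and the reason assumptions (1)--(3) appear, is Step 2: we must ensure that a single resolution zig-zag to which Proposition~\ref{prop:resolindep} applies is simultaneously realized inside both $(\mathscr{F},\mathcal{J})$ and $(\mathscr{F}\circ\mathscr{G},\mathcal{I})$. The product-preservation hypothesis is precisely what allows the trivial resolution and an arbitrarily acyclic resolution to be combined into the required diagram within $\mathscr{F}\circ\mathscr{G}(\mathcal{I})$, bridging the two categories of motives.
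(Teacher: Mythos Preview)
Your proof is correct and follows essentially the same approach as the paper: reduce via the adjunction to $I$-truncated levels, then use Proposition~\ref{prop:resolindep} on both diagrams with a common $n$-acyclic resolution to obtain a commuting triangle of restrictions and conclude by two-out-of-three. If anything, your Step~2 is more explicit than the paper's proof about why assumptions (1) and (2) guarantee that the zig-zag $G\times X\leftarrow G\times P\rightarrow P$ lifts through $\mathscr{F}\circ\mathscr{G}$, which is exactly the hypothesis needed to invoke Proposition~\ref{prop:resolindep}.
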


\begin{proof}
As before, the restriction functor $\mathscr{G}^\ast$ has both left and right adjoint, cf. \cite[Proposition 4.5.4]{ayoub:thesis2}, hence it suffices to show that unit and counit of the adjunction $\mathscr{G}^\ast\dashv \mathscr{G}_\ast$ are isomorphisms. The truncations for the homotopy t-structure on $\mathbb{D}^+$ provide for any interval $I=[a,b]$ an adjunction between $I$-truncated categories
\[
\mathscr{G}^{\ast}:\mathbb{D}^{\op{cart},I}(\mathscr{Q}_X\circ\mathscr{F})
\rightleftarrows
\mathbb{D}^{\op{cart},I}(\mathscr{Q}_X\circ \mathscr{F}\circ \mathscr{G}):\mathscr{G}_\ast.
\]
It suffices to show that the unit and counit maps for this adjunction are isomorphisms for any interval $I$.

Now let $p:P\to X$ be a resolution of $X$, and consider the restrictions from cartesian motives over the big diagrams to $\mathbb{D}^{\op{cart},I}(\mathscr{W}_P)$. Since all restriction functors are induced from morphisms of diagrams, we get a commutative diagram
\[
\xymatrix{
\mathbb{D}^{\op{cart},I}(\mathscr{Q}_X\circ\mathscr{F}) \ar[rr]^{\mathscr{G}^\ast}  \ar[dr]_{\rho^\ast} && 
\mathbb{D}^{\op{cart},I}(\mathscr{Q}_X\circ\mathscr{F}\circ\mathscr{G}) \ar[dl]^{\rho^\ast} \\
&\mathbb{D}^{\op{cart},I}(\mathscr{W}_P).
}
\]
If $P$ is an $n$-acyclic resolution and $I=[a,b]$ satisfies $n\geq b-a$, then the restriction functors $\rho^\ast$ are equivalences by Proposition~\ref{prop:resolindep}. By commutativity of the diagram, the restriction functor $\mathscr{G}^\ast$ is then also an equivalence. By assumption (3), the restriction functors $\mathscr{G}^\ast$ are equivalences for any interval $I$, proving the claim. 
\end{proof}

\begin{remark}
If we actually have lifts of the Borel construction, we can also use Theorem~\ref{thm:inftyquotient} to prove the above. However, we also want to apply Proposition~\ref{prop:richenough} to categories of smooth resolutions, and in these cases lifts of the Borel construction do not exist. 
\end{remark}

In particular, if $\mathscr{F}:\mathcal{I}\to\op{Res}(G\looparrowright X)$ is a skeleton of the category of resolutions with $\mathscr{F}$ the inclusion functor, the category $\mathbb{D}^{\op{cart},+}(\mathscr{Q}_X\circ\mathscr{F})$ does not depend on the choice of skeleton, up to equivalence. 

\begin{definition}
\index{equivariant motives, $\mathbb{D}_G(X)$}
\label{def:equivmotres}
Let $\mathbb{D}$ be a homotopical stable algebraic derivator satisfying the conditions of \ref{derivator:new} and let $(G\looparrowright X)$ be a variety with action. 

For $\mathscr{F}:\mathcal{I}\to\op{Res}(G\looparrowright X)$ a choice of skeleton of the category of resolutions, we denote 
\[
\mathbb{D}^{\op{Res},+}_G(X):=\mathbb{D}^{\op{cart},+}(\mathscr{Q}_X\circ\mathscr{F}). 
\]

The category of \emph{$G$-equivariant motives over $X$} is defined to be 
\[
\mathbb{D}_G(X):=\mathbb{D}_G^\Delta(X)
\]
\end{definition}

\begin{remark}
It should be noted that the equivalence in the definition of $\mathbb{D}^{\op{Res},+}_G(X)$ is not canonical: for two different choices of skeleta of $\op{Res}(G\looparrowright X)$ and a choice of skeleton containing them, we get a zig-zag of equivalences. This implies in particular difficulties for setting up the six-functor formalism in this situation. While the functors can be defined without much problems, proving 2-functoriality and various exchange or base-change formulas relating these functors is difficult precisely because of the non-canonical choices involved. This is rectified by using Theorem~\ref{thm:inftyquotient} to compare to the simplicial Borel construction approach where the six functors are 2-functorial.
\end{remark}

\section{Equivariant six-functor formalism}
\label{sec:sixfunctors}

In this section, we will now give the definition of the equivariant six functors and derive their basic properties. Here, the definition of the six functors is more problematic than deducing their properties from the non-equivariant setting. Once the functors are defined the various properties like base-change formulas and Verdier duality will follow rather immediately from the corresponding facts in the non-equivariant situation. 

For the definition,  we need the definition of equivariant categories of motives via both the Borel and the resolution approach. In the approach via Borel constructions, all of the functors can be defined for categories of motives over diagrams. The difficulty arises when one tries to show that the functors preserve cartesian objects. On the other hand, for the definition via motives over diagrams of resolutions, all the functors can be defined, but the definition involves choices of appropriate diagrams of resolutions. This leads to difficulties when one wants to prove 2-functoriality or various exchange properties.

By combining both approaches, the construction of the six functors can then be explained in the following diagram (for the exceptional functors):
\[
\xymatrix{
  \mathbb D_G^{\Delta,+}( X)\ar_{f_!}[d]\ar@{=>}^-\sim[rrd]  && \mathbb D^{\op{cart},+}(\mathscr{Q}_X\circ\mathscr{F}_f) \ar_{f_!}[d]\ar_{\approx}[ll] \\
 \mathbb D_G^{\Delta,+}( Y)  && \mathbb D^{\op{cart},+}(\mathscr{Q}_Y\circ\mathscr{G}_f) 
 \ar_{\approx}[ll]\ar@/_1pc/[u]_-{f^!}}
\]
In the simplicial approach, the left adjoints can be defined and shown to preserve cartesian objects, thus providing the left vertical arrow. In the resolution approach, the left adjoints can also be defined and shown to preserve cartesian objects, thus providing the right vertical arrow pointing downwards. Then we need to trace through the comparison isomorphisms established previously to show that the square commutes up to isotransformation. On the resolution side, we can then show that the right adjoints exists, giving the right-hand vertical arrow pointing upward. Uniqueness of adjoints, cf. \cite[Section 1.1]{ayoub:thesis1}, then implies that in the diagram above, the functors on the simplicial side have right adjoints. Since the left adjoints in the simplicial definition are 2-functorial, the right adjoints will also be. This is not obvious on the right-hand side of the diagram, because the definition of the categories on the right involves choices of appropriate diagrams, possibly depending on the morphism $f$ in question.\footnote{It would appear that issues of 2-functoriality in the classical equivariant derived categories have not been addressed in the literature, cf. in particular the Math-Overflow question 233708 ``2-functoriality of equivariant derived categories''.}

\subsection{Functors using simplicial Borel construction}

We first discuss the simplicial description of some of the six functors, namely the left adjoints $f^\ast$ and $f_!$. Unfortunately, not all of the six functors can be immediately defined using the simplicial definition. The problems are the same as the ones appearing already in \cite{BeLu}: the property of being locally constant over the Borel construction does not generally behave well enough under all the six functors. Nevertheless, the functors that can be defined have the expected properties,  as follows by rewriting the respective portions of \cite[Section 2.6, 3, 6 and 7]{BeLu} using \cite{ayoub:thesis1,ayoub:thesis2}.

\begin{Bemerkung}
A morphism $(\phi,f)\colon (H\looparrowright Y)\to(G\looparrowright X)$ of  varieties with action induces a morphism of simplicial schemes $(\phi,f)\colon {\op{E}}H\times_{/H}Y\to{\op{E}}G\times_{/G}X$. The  functors we will consider below arise from applying \cite[Section 2.4.4]{ayoub:thesis1} to the above morphisms of Borel constructions. Note that the properties of being quasi-projective, closed immersions, smooth maps etc. are defined termwise for simplicial schemes; therefore, any property which is stable under products and true for $\phi:H\to G$ and $f:Y\to X$ will hold for the morphism of Borel constructions ${\op{E}}H\times_{/H}Y\to {\op{E}}G\times_{/G}X$.

In the case where $H=G$, i.e., the group does not change, we take $\mathscr{S}=({\op{B}}G,\Delta ^{\op{op}})$, and consider the Borel constructions for varieties with $G$-action as $\mathscr{S}$-schemes via the projection ${\op{E}}G\times_{/G}X\to {\op{E}}G\times_{/G}\op{pt}={\op{B}}G$.  Note that morphisms of $\mathscr{S}$-schemes are automatically cartesian by \cite[Lemma 2.4.33]{ayoub:thesis1}. This means that for  $\phi=\id$ the diagrams 
\[
\xymatrix{
  G^{n+1}\times X\ar_{\sigma\times\id}[d] \ar^{\id^{n+1}\times f}[rr] & &
  G^{n+1}\times Y \ar^{\sigma'\times\id}[d] \\
  G^n\times X\ar_{\id^n\times f}[rr] && G^n\times Y
}
\]
are cartesian, in which $\sigma$ and $\sigma'$ are face maps for the simplicial objects ${\op{E}}G\times_{/G}X$ and ${\op{E}}G\times_{/G}Y$, respectively. The same holds for the degeneracy maps. 

It is, however, important to note that the six functors from \cite[Section 2.4.4]{ayoub:thesis1} require that the morphism of simplicial schemes is cartesian, which restricts us to morphisms of varieties with $G$-action where \emph{$G$ is fixed}. The diagrams above fail to be cartesian in the more general setting when the morphism $\phi:H\to G$ is not the identity, and consequently most of the functors below will not exist for general morphisms $(\phi,f)$ of varieties with actions.
\end{Bemerkung}

\begin{Bemerkung}
\label{def:ordinary}
To the morphism $(\phi,f)\colon (H\looparrowright Y)\to(G\looparrowright X)$  of varieties with action resp. the associated morphism of simplicial schemes $(\phi,f)\colon {\op{E}}H\times_{/H}Y\to{\op{E}}G\times_{/G}X$, the derivator $\mathbb{D}$ associates a functor  
\[
(\phi,f)^\ast\colon
\mathbb{D}({\op{E}}G\times_{/G}X,\Delta ^{\op{op}})\to 
\mathbb{D}({\op{E}}H\times_{/H}Y,\Delta ^{\op{op}}).
\]

The coherence isomorphisms $(g\circ h)^\ast\stackrel{\approx}{\rightarrow} h^\ast\circ g^\ast$ show that the functor $(\phi,f)^\ast$ preserves locally constant motives. In particular, we get, for any morphism $(\phi,f)$ of varieties with action, an associated functor of equivariant motivic categories
\[
(\phi,f)^\ast: \mathbb{D}_G(X)\to \mathbb{D}_H(Y).
\]

Since $\mathsf{H}^\ast$ is a $2$-functor, the above assignments assemble into 
a $2$-functor from the category of varieties with action to the category of  triangulated categories. This encodes all the properties like $(g\circ f)^\ast\cong f^\ast\circ g^\ast$, etc. 
\end{Bemerkung}

Fix a linear group $G$, and let $f=(\id,f):(G\looparrowright Y)\to (G\looparrowright X)$ be a $G$-equivariant morphism of varieties with action. 
Axiom DerAlg 2d  in \cite[Section 2.4.2]{ayoub:thesis1} implies the existence of a right adjoint to $(\id,f)^\ast$: 
\[
(\id,f)_\ast\colon
\mathbb{D}({\op{E}}G\times_{/G}Y,\Delta ^{\op{op}})\to 
\mathbb{D}({\op{E}}G\times_{/G}X,\Delta ^{\op{op}}).
\]
Assuming that the morphism $f\colon Y\to X$ of $G$-varieties is smooth, Axiom  DerAlg 2g in \cite[Section 2.4.2]{ayoub:thesis1} implies that the functor $(\id,f)^\ast$ also admits a left adjoint 
\[
(\id,f)_{\sharp}\colon
\mathbb{D}({\op{E}}G\times_{/G}Y,\Delta ^{\op{op}})\to 
\mathbb{D}({\op{E}}G\times_{/G}X,\Delta ^{\op{op}}).
\]
Furthermore, for a cartesian square of varieties
\[
\xymatrix{
  X'\ar[r]^{f'} \ar[d]_{g'} &
  X \ar[d]^{g}\\
  Y'\ar[r]_{f} &
  Y
}
\]
we have that 
\begin{itemize}
\item the exchange morphism $g'_{\sharp}\circ f'^\ast\longrightarrow f^\ast\circ g_{\sharp}$ is an isomorphism  if $g$ is smooth.
\item the exchange morphism $f^\ast \circ g_\ast\longrightarrow g'_\ast\circ f'^\ast$ is an isomorphism whenever $g$ is projective or $f$ is smooth.
\end{itemize}

The base-change formulas and the 2-functorial statements from \cite{ayoub:thesis1} now imply the following:

\begin{proposition}
\label{prop:simpcart}
\begin{enumerate}
\item 
Assume that $f:Y\to X$ is a smooth morphism of $G$-varieties. Then the functor $(\id,f)_{\sharp}$ preserves locally constant objects, and we get an adjoint pair  of functors 
\[
(\id,f)_{\sharp}\colon\mathbb{D}_G(Y)\rightleftarrows\mathbb{D}_G(X)\colon(\id,f)^\ast.
\]
These assemble into an adjunction of 2-functors on the category of $G$-varieties with smooth morphisms. 
\item
Assume that $f:Y\to X$ is a projective morphism of $G$-varieties. Then the functor $(\id,f)_\ast$ preserves locally constant objects, and we get an adjoint pair of functors 
\[
(\id,f)^\ast\colon\mathbb{D}_G(X)\rightleftarrows\mathbb{D}_G(Y)\colon
(\id,f)_\ast. 
\]
These assemble into an adjunction of 2-functors on the category of $G$-varieties with projective morphisms. 
\end{enumerate}
\end{proposition}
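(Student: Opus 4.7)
The plan is to verify in each case that the corresponding adjoint preserves cartesian (locally constant) motives on the simplicial Borel construction, after which the asserted adjunctions and $2$-functoriality restrict from the ambient simplicial categories $\mathbb{D}({\op{E}}G\times_{/G}-,\Delta^{\op{op}})$, where they are already established through the derivator axioms invoked in \ref{def:ordinary}. The crucial geometric observation, already recorded in the excerpt, is that when the group $G$ is fixed (i.e.\ $\phi=\id$), all the face and degeneracy squares associated to a $G$-equivariant morphism $f\colon Y\to X$ are cartesian, and the horizontal arrows $\id^k\times f$ inherit smoothness respectively projectivity from $f$.

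For part (1), let $M\in\mathbb{D}_G(Y)$ be cartesian, and let $\sigma$ denote an arbitrary face or degeneracy map of the simplicial schemes ${\op{E}}G\times_{/G}X$ and ${\op{E}}G\times_{/G}Y$. By the general construction of \cite[Section 2.4.4]{ayoub:thesis1}, the value of $(\id,f)_{\sharp}M$ at $[n]$ is $(\id^n\times f)_{\sharp}M\langle n\rangle$. To check that $(\id,f)_{\sharp}M$ is cartesian, one must show that the structure morphism
\[
\sigma^\ast(\id^n\times f)_{\sharp}M\langle n\rangle\longrightarrow (\id^{n+1}\times f)_{\sharp}M\langle n+1\rangle
\]
is an isomorphism. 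The smooth base-change isomorphism $\sigma^\ast\circ(\id^n\times f)_{\sharp}\xrightarrow{\approx}(\id^{n+1}\times f)_{\sharp}\circ\sigma^\ast$, recalled in \ref{def:ordinary} and available because the square is cartesian with horizontal arrow smooth, identifies this arrow with $(\id^{n+1}\times f)_{\sharp}$ applied to the cartesian transition of $M$, which is an isomorphism by hypothesis. The proof of part (2) is strictly parallel, substituting the base-change isomorphism $\sigma^\ast\circ(\id^n\times f)_\ast\xrightarrow{\approx}(\id^{n+1}\times f)_\ast\circ\sigma^\ast$, which holds for projective $f$.

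Since $(\id,f)^\ast$ preserves cartesian motives by $2$-functoriality of ordinary pullback (as noted in \ref{def:ordinary}), the adjunctions $(\id,f)_{\sharp}\dashv(\id,f)^\ast$ and $(\id,f)^\ast\dashv(\id,f)_\ast$ on the ambient simplicial categories restrict to adjunctions between the full subcategories $\mathbb{D}_G(X)$ and $\mathbb{D}_G(Y)$ of cartesian objects. The $2$-functorial assembly is then inherited directly: composition of smooth (respectively projective) $G$-equivariant morphisms corresponds to composition of the induced cartesian morphisms of Borel constructions, and the coherence isotransformations $(g\circ f)_{\sharp}\xrightarrow{\approx}g_{\sharp}\circ f_{\sharp}$ and $g_\ast\circ f_\ast\xrightarrow{\approx}(g\circ f)_\ast$ from \cite[Section 2.4]{ayoub:thesis1} restrict to the cartesian subcategories without additional hypothesis. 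I expect no serious obstacle beyond running the base-change verification uniformly for every face and degeneracy map, which is routine given the uniform cartesianness of the simplicial squares.
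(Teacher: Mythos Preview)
Your proof is correct and follows exactly the approach the paper intends: the proposition is stated as an immediate consequence of the base-change isomorphisms for $\sharp$ (smooth case) and $\ast$ (projective case) together with the $2$-functoriality from \cite{ayoub:thesis1}, and you have simply spelled out the verification that these exchange isomorphisms force the structure maps of the pushed-forward object to be isomorphisms. The paper itself gives no separate proof beyond the sentence ``The base-change formulas and the $2$-functorial statements from \cite{ayoub:thesis1} now imply the following,'' so your write-up is a faithful expansion of what is left implicit.
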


\begin{example}\label{akF}
\begin{enumerate}
\item In the special case where $H=G=1$ the above constructions reduce to $f^\ast$ and $f_\ast$  for a morphism  $f:X\to Y$ of varieties. 
\item 
The functor $(\phi,\op{id})^\ast:\mathbb{D}_G(X)\to\mathbb{D}_H(X)$ could also be called restriction functor $\op{Res}_G^H$ along $\phi:H\rightarrow G$. An even more special case is $\phi:H=1\to G$, in which case we obtain a forgetful functor 
\[
\op{Res}_G^1:\mathbb{D}_G(X)\to\mathbb{D}(X).
\]
This functor is by construction the same as the one considered after Definition~\ref{def:equivmotres1}.
\index{restriction functor $\op{Res}_G^H$}
\item Let $G\looparrowright X$ be a variety with action. Assume everything is defined over the field $k$ and denote by $\pt:=\op{Spec} k$. For each motive $M\in\mathbb{D}(\pt)$, there is an associated constant equivariant motive $\const{M}$ over $X$, given by $\op{fin}^\ast M$ with $\op{fin}:{\op{E}}G\times_{/G}X\to \pt$ the projection map. 
\end{enumerate}
\end{example}

\begin{Bemerkung}[{\bf Forgetting the action}]
\label{forget}
We will occasionally denote the restriction functor $\op{Res}_G^1:\mathbb{D}_G(X)\to\mathbb{D}(X)$ as forgetful functor $\op{For}:\mathbb{D}_G(X)\to \mathbb{D}(X)$;\index{For@$\op{For}$ forgetful functor} it assigns to a $G$-equivariant motive $M$ its underlying non-equivariant motive $\op{For}(M)$.

The forgetful functor is conservative, i.e., an equivariant motive $M\in\mathbb{D}_G(X)$ is trivial if and only if its underlying motive $\op{Res}_G^1(M)$ is trivial. This follows from \cite[Lemma 2.4.17]{ayoub:thesis1}, which states that the functor 
\[
i^\ast:\mathbb{D}({\op{E}}G\times_{/G}X)\to\prod_{[n]\in \op{Ob}\Delta }\mathbb{D}({\op{E}}G_n\times_{/G}X)\approx
\prod_{n\in\mathbb{N}}\mathbb{D}(G^n\times X)
\] 
is conservative. An equivariant motive  $M\in\mathbb{D}_G(X)$ is additionally locally constant, and this implies that $i^\ast M$ is trivial if and only if  $i_0^\ast M=\op{Res}_G^1(M)$ is trivial. In particular, triviality of equivariant motives can be checked after forgetting the equivariance.

For a group homomorphism $\phi:G\to H$, we have $\op{Res}_H^1\simeq \op{Res}_G^1\circ \op{Res}_H^G$. From this composition and the above observation that $\op{Res}_H^1$ is conservative, it follows that restriction along any group homomorphism is conservative.
\end{Bemerkung}

Next, we need to discuss the definition and basic properties of the exceptional inverse and direct image functors. 

\begin{definition}
\label{def:exceptional}
\index{exceptional functors!equivariant motives}
Let $G$ be a linear algebraic group, and let $f=(\id,f): (G\looparrowright Y)\ra (G\looparrowright X)$ be a morphism of varieties with $G$-action.  By \cite[Scholie 1.4.2 and p.323]{ayoub:thesis1}, the corresponding morphism of simplicial varieties ${\op{E}}G\times_{/G}Y\to {\op{E}}G\times_{/G}X$ induces an adjoint pair of exceptional direct and inverse image functors 
\[
(\id,f)_!:
\mathbb{D}({\op{E}}G\times_{/G}Y,\Delta ^{\op{op}})\leftrightarrows 
\mathbb{D}({\op{E}}G\times_{/G}X,\Delta ^{\op{op}}):(\id,f)^!.
\]
The base-change formula for the $2$-functor $\mathsf{H}_!$  implies that the condition of being locally constant is preserved under the  functor $(\id,f)_!$. Therefore,  $\mathsf{H}_!$ induces a $2$-functor from the category of varieties with $G$-action to the category of triangulated categories. 
\end{definition}

\begin{Bemerkung}
\label{rem:adjoints}
The right adjoints can be obtained by applying Neeman's existence theorem. By \ref{compgen}, the categories $\mathbf{DA}^{\et}(X,\Lambda)$ are compactly generated whenever $X$ is a noetherian scheme of finite Krull dimension. The same holds for all the relevant localizations of $\mathbf{DA}^{\et}$ we are interested in. 

Then the categories $\mathbb{D}_G^\Delta(X)$ are also compactly generated. Since it is induced from a morphism of diagrams of schemes, the forgetful functor $\op{For}:\mathbb{D}_G^\Delta(X)\to\mathbb{D}(X)$ has a left adjoint. Applying this left adjoint to a set of compact generators of $\mathbb{D}(X)$ provides a set of compact generators of $\mathbb{D}_G^\Delta(X)$: the generating property follows from the fact that $\op{For}$ is conservative and the compactness follows since the left adjoint of the forgetful functor preserves arbitrary small sums.

For $f:Y\to X$ a morphism of varieties, the functors
\begin{eqnarray*}
f^\ast:\mathbb{D}^{\op{cart}}({\op{E}}G\times_{/G}X)&\to&\mathbb{D}^{\op{cart}}({\op{E}}G\times_{/G} Y), \\ f_!:\mathbb{D}^{\op{cart}}({\op{E}}G\times_{/G}Y)&\to&\mathbb{D}^{\op{cart}}({\op{E}}G\times_{/G} X)
\end{eqnarray*}
preserve arbitrary small sums  because they are left adjoints on the full categories of motives and sums of cartesian objects agree with sums in the full category of motives over the diagrams. By \cite[Theorem 4.1]{neeman}, the right adjoints exist. However, we want to have more explicit descriptions of the functors which is why we also provide constructions of these functors via motives over categories of resolutions, see the next subsection. 

The same procedure can also be used to obtain general direct image functors $(\phi,f)_\ast$ for a morphism $(\phi,f):(G\looparrowright X)\to (H\looparrowright Y)$ of varieties with action. This generalizes the functors $Q_{f_\ast}$ of \cite{BeLu}. 
\end{Bemerkung}

\begin{Bemerkung}
\label{rem:cartesify}
Another way, using Neeman's theorem, to get a slightly improved description of the right adjoints would be to cartesify objects. As in the preceding remark, the categories $\mathbb{D}_G^\Delta(X)$ are compactly generated whenever $X$ is noetherian of finite Krull dimension. The inclusion
\[
\mathbb{D}^\Delta_G(X)\subset \mathbb{D}({\op{E}}G\times_{/G} X)
\]
preserves arbitrary small sums; the sum of cartesian objects is still cartesian because this property is tested via $f^\ast$-functors which have right adjoints. By \cite[Theorem 4.1]{neeman}, the inclusion has a right adjoint which ``cartesifies'' motives over the simplicial variety ${\op{E}}G\times_{/G} X$. The right adjoints $f_\ast$ and $f^!$ for equivariant motives can then alternatively be obtained by composing the relevant functors $f_\ast$ and $f^!$ (for the full categories of motives over the simplicial varieties, whose existence follows from \cite{ayoub:thesis1}) with the cartesification.
\end{Bemerkung}

\begin{remark}
Brad Drew pointed out that the ordinary pullback $f^\ast$ provides a functor from schemes to presentable $\infty$-categories with left-adjoint functors as morphisms. This induces a functor from simplicial schemes to simplicial presentable $\infty$-categories, and taking the limit over the simplicial index category provides exactly the corresponding category of cartesian objects appearing in the simplicial definition of equivariant motives. There is a formal identification of the opposite category of $\infty$-categories with left adjoints and the $\infty$-categories with right adjoints. This identification provides another way to get 2-functorial right adjoint functors $f_\ast$.  
\end{remark}

\subsection{Functors using resolutions}

Next, we will define the six functors using resolutions, basically following \cite[Section 3]{BeLu}. For a fixed linear algebraic group $G$ and a morphism $f:(G\looparrowright Y)\to(G\looparrowright X)$ of varieties with action, we will define the adjoint pairs $f^\ast\dashv f_\ast$ and $f_!\dashv f^!$. 

Let $(G\looparrowright X)$ be a $G$-variety, and let $\mathscr{F}:\mathcal{I}\to\op{SmRes}(G\looparrowright X)$ be a choice of skeleton for the category of \emph{smooth} resolutions of $X$. Then the conditions of Proposition~\ref{prop:richenough} are satisfied, i.e., there is an induced equivalence 
\[
\mathbb{D}^{\op{Res},+}_G(X)\stackrel{\approx}{\longrightarrow} \mathbb{D}^{\op{cart},+}(\mathscr{Q}_X\circ\mathscr{F}). 
\]
This follows since direct products of smooth resolutions are smooth, the trivial resolution is smooth and Lemma~\ref{lem:resexist} guarantees the existence of smooth resolutions of arbitrarily high acyclicity. 

\begin{definition}
Let $(G\looparrowright X)$ be a variety with action and let $\mathbb{D}$ be a homotopical stable algebraic derivator satisfying the conditions of \ref{derivator:new}. This means in particular, that the categories $\mathbb{D}$ of motives over a diagram are equipped with a closed symmetric monoidal structure. It also means that the functors $(f,\alpha)^\ast$ are symmetric monoidal. By \cite[Section 4.5.2]{ayoub:thesis2}, these conditions are satisfied for the homotopical stable algebraic derivators of interest to us.

Fix a choice of skeleton $\mathscr{F}:\mathcal{I}\to\op{SmRes}(G\looparrowright X)$. 
For smooth morphisms, the functors $(f,\alpha)^\ast$ strictly preserve the closed monoidal structure. Therefore, the closed symmetric monoidal structure on motives over the diagram $(\mathscr{Q}_X\circ \mathscr{F},\mathcal{I})$ restricts to the category of equivariant motives $\mathbb{D}^{\op{cart},+}(\mathscr{Q}_X\circ \mathscr{F})$. Via Proposition~\ref{prop:richenough}, this provides a closed symmetric monoidal structure on $\mathbb{D}^{\op{Res},+}_G(X)$.
\end{definition}

Let $f:Y\to X$ be a morphism of varieties with $G$-action. Then there is a functor 
\begin{eqnarray*}
f^\circ:\op{SmRes}(G\looparrowright X)&\to&\op{SmRes}(G\looparrowright Y): \\
(p:P\to X)&\mapsto& (f^\circ(p):f^\circ(P):=P\times_XY\to Y)
\end{eqnarray*}
which preserves $n$-acyclicity of resolutions. We remarked above that 
for a choice $\mathscr{F}_X:\mathcal{I}_X\to\op{SmRes}(G\looparrowright X)$ of skeleton of the category of smooth resolutions, the natural restriction functor
\[
\mathbb{D}^{\op{Res},+}_G(X)\to \mathbb{D}^{\op{cart},+}(\mathscr{Q}_X \circ\mathscr{F}_X)
\]
is an equivalence. On the other hand, if we consider the inclusion of the subcategory 
\[
\op{SmRes}(G\looparrowright f)\subset \op{Res}(G\looparrowright Y)
\]
consisting of the image $f^\circ(\op{SmRes}(G\looparrowright X))$ and choose a skeleton $\mathscr{F}_Y:\mathcal{I}_Y\to\op{SmRes}(G\looparrowright f)$, then the composed functor $\mathscr{Q}_Y\circ\mathscr{F}_Y:\mathcal{I}_Y\to\op{Res}(G\looparrowright Y)$ also satisfies the conditions of Proposition~\ref{prop:richenough}. Therefore, the restriction functor 
\[
\mathbb{D}^{\op{Res},+}_G(Y)\to \mathbb{D}^{\op{cart},+}(\mathscr{Q}_Y\circ \mathscr{F}_Y)
\]
is also an equivalence.

\begin{definition}
\index{exceptional functors!equivariant motives}
Let $f:Y\to X$ be a morphism of varieties with $G$-action. The morphism of diagrams $\phi_f:\op{SmRes}(G\looparrowright f)\to\op{SmRes}(G\looparrowright X)$ which maps $f^\circ(P)=P\times_X Y\to Y$ to the resolution $P\to X$ has induced adjoint pairs $\phi_f^\ast\dashv (\phi_f)_\ast$ and $(\phi_f)_!\dashv(\phi_f)^!$. Smooth base-change shows that these functors preserve cartesian motives over the respective categories of smooth resolutions. Combining these functors with the above equivalences, we get adjoint pairs
\[
f^\ast:\mathbb{D}^{\op{Res},+}_G(X)\leftrightarrows\mathbb{D}^{\op{Res},+}_G(Y):f_\ast,\qquad 
f_!:\mathbb{D}^{\op{Res},+}_G(Y)\leftrightarrows\mathbb{D}^{\op{Res},+}_G(X):f^!
\]
\end{definition}

\begin{remark}
There is an issue with the 2-functoriality at this point. Although we have definitions of the six functors, these definitions depend on the choice of a suitable small category of resolutions. This means that there is no global comparison transformation for compositions of these functors. To get 2-functoriality, we need to apply some rectification procedure. For this and other reasons, we need to compare the simplicial and resolution definitions of the left adjoints $f^\ast$ and $f_!$. 
\end{remark}

There is also a general inverse image, analogous to the $Q_f^\ast$ functors defined in \cite[Section 6]{BeLu}, that can be defined using the categories of compatible resolutions, cf. Definitions~\ref{def:comp1} and \ref{def:comp2}. 

\begin{definition}
For a morphism $(\phi,f):(H\looparrowright Y)\to (G\looparrowright X)$ of varieties with action, we have the category $\op{CRes}((\phi,f))$ of resolutions of $(H\looparrowright Y)$ which are compatible with the morphism $(\phi,f)$. For any choice of skeleton $\mathscr{F}:\mathcal{I}\to\op{CRes}((\phi,f))$, there is by definition an induced diagram 
\[
\mathscr{P}\circ\mathscr{F}:\mathcal{I}\to\op{CRes}((\phi,f))\to \op{Res}(G\looparrowright X). 
\]
The diagram $(\mathscr{P}\circ \mathscr{F},\mathcal{I})$ satisfies the conditions of Proposition~\ref{prop:richenough}, hence the induced restriction functor 
\[
\mathbb{D}^{\op{Res},+}_H(Y)\to\mathbb{D}^{\op{cart},+}(\mathscr{P}\circ \mathscr{F})
\]
is an equivalence. Combining the restriction functor along $\mathscr{P}$ with this equivalence, we get an induced functor 
\[
(\phi,f)^\ast_{\op{Res}}:\mathbb{D}^{\op{Res},+}_G(X)\to \mathbb{D}^{\op{Res},+}(\mathscr{P}\circ \mathscr{F}) \stackrel{\approx}{\longrightarrow} \mathbb{D}^{\op{Res},+}_H(Y). 
\]
\end{definition}

The functor $(\phi,f)^\ast_{\op{Res}}$ can be used to define restriction functors along group homomorphisms $\phi:H\to G$, as in \ref{akF}.

\begin{Bemerkung}
The descriptions of the change-of-group functors $\op{Res}_G^H$ associated to a subgroup inclusion $\phi:H\subset G$ can  be  obtained as in \cite[2.6.1]{BeLu}. Let $(G\looparrowright X)$ be a variety with action. For any resolution $p:P\to X$ of the $H$-action on $X$, one can consider the induced resolution of the $G$-action given by 
\[
p':G\times_{/H} P\to X: (g,l)\mapsto g(p(l)).
\]
The projection induces a canonical isomorphism of quotients $G\backslash (G\times_{/H}P)\cong H\backslash P$. In the resolution approach, we view a $G$-equivariant motive over $X$ as a motive  $M\in \mathbb{D}^{\op{cart},+}(\mathscr{Q}_X\circ \mathscr{F})$ where $\mathscr{F}:\mathcal{I}\to\op{Res}(G\looparrowright X)$ is a skeleton of the category of resolutions of the $G$-action on $X$. Denoting by $\mathscr{G}:\mathcal{J}\to\op{Res}(H\looparrowright X)$ a skeleton of the resolutions of the $H$-action, the functor $\phi^\ast$ is given by restriction for the morphism of diagrams
\[
\mathcal{J}\to\op{Res}(H\looparrowright X)\to\op{Res}(G\looparrowright X):
j\mapsto G\times_{/H}\mathscr{G}(j). 
\]
In particular, the value of $\phi^\ast M$ on a resolution $p:P\to X$ is the value of $M$ on the induced resolution $p':G\times_{/H}P\to X$. 

As a very special case, the forgetful functor $\op{Res}_G^1$ is given by evaluation on the quotient $X$ of the trivial resolution $G\times X$. 
\end{Bemerkung}

\subsection{Comparison and 2-functoriality}

Now we have two approaches of defining categories of equivariant motives, via resolutions and simplicial Borel constructions, and we have seen that both are equivalent. It remains to show that  the two versions of the left adjoint functors $f^\ast$ and $f_!$ also agree via the comparison equivalences. 

\begin{proposition}
Let $(\phi,f):(H\looparrowright Y)\to(G\looparrowright X)$ be a morphism of varieties with action. Fix a choice $\mathscr{F}:\mathcal{I}\to\op{Res}(G\looparrowright X)$ of skeleton for $\op{Res}(G\looparrowright X)$, and choose a skeleton  $\mathscr{F}':\mathcal{I}'\to\op{CRes}((\phi,f))$ compatible with that. The following diagram of functors is commutative:
\[
\xymatrix{
  \mathbb{D}^{\op{Res},+}_G(X) \ar[d]_\approx &&   \mathbb{D}^{\op{Res},+}_H(Y) \ar[d]^\approx \\
  \mathbb{D}^{\op{cart},+}(\mathscr{Q}_X\circ \mathscr{F}) \ar[rr]^{(\phi,f)^\ast_{\op{Res}}} \ar[d]_{\delta^\ast} && \mathbb{D}^{\op{cart},+}(\mathscr{Q}_Y\circ\mathscr{F}')  \ar[d]^{\delta^\ast} \\ 
  \mathbb{D}^{\Delta,+}_G(X) \ar[rr]_{(\phi,f)_{\Delta}^\ast}  && \mathbb{D}^{\Delta,+}_H(Y).
}
\]
\end{proposition}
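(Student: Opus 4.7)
The plan is to exploit the fact that every functor appearing in the diagram is a restriction functor associated to a morphism of diagrams of schemes. By Remark~\ref{rem:compat} this reduces commutativity to the commutativity, up to isotransformation, of a diagram of morphisms of diagrams of varieties, together with an application of the 2-functoriality of $(-)^\ast$ in the sense of \cite[Section~2.4.4]{ayoub:thesis1}. The top vertical equivalences are restriction functors coming from the inclusions of chosen skeleta into larger diagrams of resolutions; the lower vertical $\delta^\ast$'s arise from the lift of the simplicial Borel construction into those diagrams as in \ref{resolution2Borel}; the lower horizontal $(\phi,f)^\ast_\Delta$ is restriction along the morphism of Borel constructions ${\op{E}}H\times_{/H}Y\to{\op{E}}G\times_{/G}X$; and $(\phi,f)^\ast_{\op{Res}}$ is restriction along the projection $\mathscr{P}:\op{CRes}((\phi,f))\to\op{Res}(G\looparrowright X)$.

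The key step is to enlarge the indexing categories so that everything takes place in a single ambient diagram. First I would replace $\mathcal{I}$ (respectively~$\mathcal{I}'$) by a skeleton that in addition contains a lift of the simplicial Borel construction ${\op{E}}G\times X:\Delta^{\op{op}}\to\op{Res}(G\looparrowright X)$ (resp.\ of ${\op{E}}H\times Y$ together with a lift of the morphism ${\op{E}}H\times_{/H}Y\to{\op{E}}G\times_{/G}X$ as compatible resolutions for $(\phi,f)$); such lifts exist because the product of a compatible resolution with a trivial resolution is still compatible. By Proposition~\ref{prop:richenough} the equivariant categories computed via these enlarged skeleta are identified, via restriction, with the original ones, so it suffices to prove commutativity of the diagram with these enlarged choices in place. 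In this enlarged situation all four outer sides of the square become restriction functors along explicit morphisms of diagrams of varieties, and the two compositions in question correspond to two morphisms of diagrams which, by construction, are literally equal (both send a simplex of ${\op{E}}H\times_{/H}Y$ to the quotient of the $G$-induced compatible resolution over $X$ specified by the chosen lifts).

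Once the relevant morphisms of diagrams are identified, the 2-functoriality $(g\circ h)^\ast\xrightarrow{\approx} h^\ast\circ g^\ast$ in the axioms of a homotopical stable algebraic derivator (cf.~\cite[DerAlg~1]{ayoub:thesis1}) provides a canonical isotransformation filling the square, and the corresponding coherence cell is the required natural isomorphism between the two compositions in the proposition. The main obstacle I anticipate is bookkeeping: one must verify that the chosen lifts of the simplicial Borel constructions into the skeleta $\mathcal{I}$ and $\mathcal{I}'$ are compatible in the sense that the induced square of diagrams of resolutions commutes on the nose (not merely up to higher coherence), so that the appeal to 2-functoriality yields an actual isotransformation. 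This is a finite verification that uses the explicit form of the face and degeneracy maps of the Borel construction together with the definition of $\op{CRes}((\phi,f))$, but nothing more substantial than what has already been established in the preceding results.
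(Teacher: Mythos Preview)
Your proposal is correct and follows essentially the same approach as the paper: the paper's proof simply states that ``the diagram is induced by a commutative diagram of morphisms of diagrams'' and that the main point is that the simplicial Borel construction is compatible with $(\phi,f)$, so $\op{CRes}((\phi,f))$ contains a lift of the Borel construction for $(H\looparrowright Y)$. Your version spells out in more detail the enlargement of skeleta via Proposition~\ref{prop:richenough} and the appeal to 2-functoriality of $(-)^\ast$, but this is exactly the content compressed into the paper's two sentences.
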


\begin{proof}
The diagram is induced by a commutative diagram of morphisms of diagrams. The main point here is that the simplicial Borel construction is compatible with the morphism $(\phi,f)$, hence the category of compatible resolutions contains a lift of the Borel construction for $(H\looparrowright Y)$. 
\end{proof}

Similarly, we get the following compatibility between the two definitions of $f^\ast$ via compatible and smooth resolutions. 

\begin{proposition}
Let $f:(G\looparrowright Y)\to (G\looparrowright X)$ be a morphism of $G$-varieties with action. Fix a choice $\mathscr{F}:\mathcal{I}\to\op{Res}(G\looparrowright X)$ of skeleton for $\op{Res}(G\looparrowright X)$, and choose a skeleton of $\op{CRes}(G\looparrowright Y)$ compatible with that. This implies choices for skeleta of $\op{SmRes}(G\looparrowright X)$ and $\op{SmRes}(G\looparrowright f)$. Then the following diagram of functors is commutative:
\[
\xymatrix{
  \mathbb{D}^{\op{cart},+}(\op{Res}(G\looparrowright X)) \ar[r]^{f^\ast_{\op{Res}}} \ar[d] & \mathbb{D}^{\op{cart},+}(\op{CRes}(G\looparrowright Y))  \ar[d] \\ 
  \mathbb{D}^{\op{cart},+}(\op{SmRes}(G\looparrowright X)) \ar[r]_{f^\ast_{\op{SmRes}}} & \mathbb{D}^{\op{cart},+}(\op{SmRes}(G\looparrowright f)) 
}
\]
Here we write the full categories of resolutions instead of the diagram functors, to emphasize where we are working.
\end{proposition}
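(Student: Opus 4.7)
The plan is to reduce the claimed commutativity to a commutative diagram of diagrams of resolutions, then apply 2-functoriality of the restriction operations for motives over diagrams. The key observation is that all four functors in the square are restriction functors induced by morphisms of diagrams, possibly composed with the equivalences of Proposition~\ref{prop:richenough}, so the strategy is to exhibit the underlying morphisms of diagrams and show that the resulting square of diagrams commutes on the nose.

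First, I would unpack the four functors explicitly. The top horizontal $f^\ast_{\op{Res}}$ arises from the functor
\[
\Phi_{\op{Res}}\colon\op{CRes}((\id,f))\to\op{Res}(G\looparrowright X),\quad (f^\circ(P)=P\times_X Y\to Y)\mapsto (P\to X),
\]
which on quotients induces the morphism of diagrams giving rise to $f^\ast_{\op{Res}}$. Its restriction to smooth resolutions yields
\[
\Phi_{\op{SmRes}}\colon\op{SmRes}(G\looparrowright f)\to\op{SmRes}(G\looparrowright X),
\]
and the bottom horizontal $f^\ast_{\op{SmRes}}$ is the associated restriction functor. The two vertical arrows come from the inclusions $\iota_X\colon\op{SmRes}(G\looparrowright X)\hookrightarrow\op{Res}(G\looparrowright X)$ and $\iota_Y\colon\op{SmRes}(G\looparrowright f)\hookrightarrow\op{CRes}((\id,f))$, each composed with the equivalences supplied by Proposition~\ref{prop:richenough} (applicable since smooth resolutions of arbitrarily high acyclicity exist by Lemma~\ref{lem:resexist} and are closed under finite products).

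Second, I would verify the obvious equality $\Phi_{\op{Res}}\circ\iota_Y=\iota_X\circ\Phi_{\op{SmRes}}$. Both compositions send a smooth resolution $P\to X$ viewed via $\iota_Y$ as $f^\circ(P)\to Y$ to the same resolution $P\to X$ regarded in $\op{Res}(G\looparrowright X)$. Passing to skeleta chosen compatibly (as in the hypothesis of the proposition), the induced morphisms of diagrams of quotient varieties $\mathscr{Q}_X\circ(-)$ and $\mathscr{Q}_Y\circ(-)$ assemble into a strictly commutative square of diagrams of varieties.

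Third, by 2-functoriality of motives over diagrams under restriction along morphisms of diagrams, the induced square of restriction functors on the full (non-cartesian) motivic categories strictly commutes up to the coherence isotransformations $(g\circ h)^\ast\xrightarrow{\approx} h^\ast\circ g^\ast$. All four restriction functors preserve cartesian motives (by definition for the horizontal ones and by construction for the vertical ones), and hence restrict to a commutative square on the subcategories $\mathbb{D}^{\op{cart},+}$.

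Finally, I would verify that the equivalences of Proposition~\ref{prop:richenough} hidden in the vertical arrows are compatible with the construction: these equivalences are themselves restriction functors along inclusions of skeleta of (smooth) resolutions into categories of resolutions, so they slot into the 2-functoriality argument without any extra work. Thus the entire square commutes up to canonical isotransformation. The only technical point that deserves attention is the compatible choice of skeleta, ensuring that the restriction between the two rows really is induced by a single morphism of diagrams rather than by a zig-zag; this is precisely what the phrasing ``choose a skeleton of $\op{CRes}(G\looparrowright Y)$ compatible with that'' is meant to guarantee, and I expect this bookkeeping to be the only mildly delicate part of the argument.
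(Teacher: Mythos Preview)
Your approach is correct and matches the paper's method. The paper does not spell out a proof for this particular proposition, but the surrounding propositions are proved exactly as you propose: by observing that the square is induced from a strictly commutative square of morphisms of diagrams of schemes (here, the obvious commutation $\Phi_{\op{Res}}\circ\iota_Y=\iota_X\circ\Phi_{\op{SmRes}}$ on resolution categories) and then invoking 2-functoriality of restriction functors for motives over diagrams. Your level of detail, particularly the explicit unpacking of the four functors and the remark about the role of compatible skeleta, is more thorough than what the paper provides.
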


A combination of the above results together with the base-change formula shows the compatibility of the two definitions of the exceptional direct image. 

\begin{proposition}
Let $f:Y\to X$ be a morphism of varieties with $G$-action. The following diagram of functors is commutative:
\[
\xymatrix{
  \mathbb{D}^{\op{cart},+}(\op{SmRes}(G\looparrowright f)) \ar[r]^{f^{\op{Res}}_!} &   \mathbb{D}^{\op{cart},+}(\op{SmRes}(G\looparrowright X))  \\
  \mathbb{D}^{\op{cart},+}(\op{Res}(G\looparrowright f)) \ar[r]^{f^{\op{Res}}_!} \ar[u]_\approx \ar[d]_{\delta^\ast} & \mathbb{D}^{\op{cart},+}(\op{Res}(G\looparrowright X)) \ar[u]^\approx \ar[d]^{\delta^\ast} \\ 
  \mathbb{D}^{\Delta,+}_G(Y) \ar[r]_{f_!^\Delta}  & \mathbb{D}^{\Delta,+}_G(X).
}
\]
Here we write the full categories of resolutions instead of the diagram functors, to emphasize where we are working.
\end{proposition}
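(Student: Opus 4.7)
The plan is to reduce the two compatibility squares to base-change statements for the homotopical stable algebraic derivator $\mathbb{D}$, using the fact that all the vertical comparison functors $\delta^\ast$ and the outer equivalences are induced by morphisms of diagrams of schemes. Once the commutativity is expressed in terms of such morphisms, both squares come down to one of the exchange formulas in the lesson (5) on base change recalled in Section~\ref{sec:basicmotives}.

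First I would address the upper square. Recall that $f^{\op{Res}}_!$ on smooth resolutions is defined by the left adjoint $(\phi_f)_!$ associated to the morphism of diagrams $\phi_f\colon \op{SmRes}(G\looparrowright f) \to \op{SmRes}(G\looparrowright X)$, whereas on general resolutions it is defined analogously for the inclusion of general compatible resolutions. Since smooth resolutions form a full subcategory of general ones that satisfies the hypotheses of Proposition~\ref{prop:richenough} (closure under products, existence of the trivial resolution, existence of $n$-acyclic resolutions for every $n$), the vertical restriction functor is an equivalence. The commutativity of the upper square then reduces to the assertion that the left adjoint $(\phi_f)_!$ computed in the larger diagram agrees, on cartesian objects, with the one computed in the smooth subdiagram. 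This is a consequence of Proposition~\ref{prop:richenough} and the 2-functoriality of the $_!$ formalism in \cite[Section 2.4]{ayoub:thesis1}: the cofinal subcategory of smooth resolutions is enough to compute both the left Kan extension defining $(\phi_f)_!$ and the cartesification.

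Next I would turn to the lower square. The functor $\delta^\ast$ is the restriction along the morphism of diagrams ${\op{E}}G\times_{/G} (-)\to (\mathscr{Q}_{(-)}\circ\mathscr{F}_{(-)})$ which lifts the simplicial Borel construction, cf.\ \ref{resolution2Borel}. The morphism $f\colon Y\to X$ induces a morphism of such lifts, and the resulting square of diagrams of schemes is cartesian in the sense required by \cite[Section 2.4]{ayoub:thesis1}, because all arrows come from the same underlying morphism $f$ at the level of varieties. The exchange transformation $f_!^\Delta \circ \delta^\ast \to \delta^\ast\circ f_!^{\op{Res}}$ is therefore the base-change natural transformation of lesson (5), which is an isomorphism. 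To check that the isomorphism one obtains this way is the one implicit in the statement, one uses the universal property: both $f^\Delta_!$ and $f^{\op{Res}}_!$ are left adjoints to $f^{\ast}$, and compatibility of $f^\ast$ across the comparison equivalences was established in the preceding proposition. Uniqueness of left adjoints (and of mates of invertible exchange transformations) then forces the square for $f_!$ to commute.

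The main obstacle is a bookkeeping one rather than a conceptual one: one has to verify that a single coherent choice of lift of the Borel construction inside the skeleta $\mathscr{F}:\mathcal{I}\to\op{Res}(G\looparrowright X)$ and $\mathscr{F}':\mathcal{I}'\to\op{CRes}(G\looparrowright f)$ can be made so that the outer equivalences in the diagram are induced by honest morphisms of diagrams and not merely zig-zags. This is possible by Proposition~\ref{prop:richenough} together with Theorem~\ref{thm:inftyquotient}, which guarantee that any sufficiently rich diagram of resolutions yields the same category of equivariant motives, so we are free to enlarge $\mathcal{I}$ and $\mathcal{I}'$ to contain such lifts. Once this is set up, everything is formal from the 2-functoriality of $f_!$ and the base-change axioms for $\mathbb{D}$.
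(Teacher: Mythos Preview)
Your proposal is correct and follows essentially the same line as the paper: both squares come from commutative diagrams of morphisms of diagrams of schemes (for the upper one, restriction to the smooth subdiagram; for the lower one, compatibility of the Borel construction with pullback along $f$), and the commutativity up to isotransformation is then a direct application of the base-change formula for $f_!$. Your write-up is more elaborate than needed---the paper dispenses with the cofinality/Kan-extension discussion and the mate/adjoint-uniqueness alternative, simply observing that $\op{Res}(G\looparrowright f)$ is defined by pullback so that the relevant morphism of diagrams is cartesian and base-change applies immediately.
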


\begin{proof}
The only thing to note here is that the category $\op{Res}(G\looparrowright f)$ is defined in the same way as $\op{SmRes}(G\looparrowright f)$, by pullbacks of resolutions. This implies, in particular, that the morphism of diagrams $\op{Res}(G\looparrowright f)\to \op{Res}(G\looparrowright X)$ is cartesian, hence we have a functor $f_!$. By the base-change formula, this functor preserves cartesian objects, which explains the middle horizontal arrow. Then both squares arise from commutative diagrams of morphisms of diagrams of schemes, one by restriction to smooth resolutions, the other one because the Borel construction is compatible with pullback. The commutativity of the squares follows from the base-change formula.
\end{proof}

\begin{Bemerkung}
These comparison statements are relevant for the 2-functoriality issues in the six-functor formalism. They are also useful for computations later on.

Note that the definitions of the right adjoints $f_\ast$ and $f^!$ in the categories $\mathbb{D}^{\op{Res},+}_G(-)$ made use of changes of diagrams. This introduces dependencies on choices and obstructs 2-functoriality. On the other hand, the above results identify the left adjoints for the categories $\mathbb{D}^{\op{Res},+}_G(-)$ with those defined for the categories $\mathbb{D}^{\Delta,+}_G(-)$. The latter arise from Ayoub's formalism, so we know 2-functoriality for those. By \cite[Corollary 1.1.7, Proposition 1.1.17]{ayoub:thesis1}, the right adjoints defined via resolutions can then be rectified to satisfy 2-functoriality as well. 
\end{Bemerkung}

\subsection{Remarks on general direct image functors}
\label{sec:qfstar}

Given a morphism of varieties with action  $(\phi,f):(G\looparrowright X)\to(H\looparrowright Y)$, we would also like to define a direct image functor $(\phi,f)_\ast$ associated to it, similar to the definition of the functor $Q_{f\ast}$ in \cite[Section 6]{BeLu}. 

The first obvious difficulty is that the definition of $Q_{f\ast}$ in \cite{BeLu} uses $\infty$-acyclic resolutions which are not available in the algebraic setting. This is a serious obstacle for the definition of general direct image functors. The different approach to the construction would be the general method of Neeman's adjoint functor theorem, cf. Remarks~\ref{rem:adjoints} and \ref{rem:cartesify}. We can also describe more explicitly what the general direct images functors $(\phi,f)_\ast$ look like in situations that are relevant for us. 

To describe $(\phi,f)_\ast M$, it is sufficient to describe the evaluation of the motive $(\phi,f)_\ast M$ in $\mathbb{D}_H^+(Y)$. For this, let $r:R\to Y$ be a smooth resolution of $(H\looparrowright Y)$. Using the simplicial Borel construction for $(G\looparrowright X)$, we get a compatible pair of resolutions $\rho:\left({\op{E}}G\times X\right)\times_Y R\to R$ which induces a morphism of quotient simplicial varieties
\[
\bar{\rho}:G\backslash\left(\left({\op{E}}G\times X\right)\times_Y R\right)\to H\backslash R.
\]
We can apply the general pushforward functor from \cite{ayoub:thesis1} 
\[
\bar{\rho}_\ast:\mathbb{D}^+(G\backslash\left(\left({\op{E}}G\times X\right)\times_Y R\right))\to\mathbb{D}^+(R,\Delta^{\op{op}}),
\]
where $R$ is viewed as constant simplicial variety. The resulting motive over the constant simplicial variety can then be viewed as a simplicial motive in $\mathbb{D}^+(R)$ where we now view $R$ just as a variety.

Then we can use \cite[Theorem 2.4.22]{ayoub:thesis1} (replacing the use of the good base-change lemma in \cite{BeLu}) to show that this construction maps a cartesian motive over a diagram of resolutions for $(G\looparrowright X)$ to a cartesian motive over a diagram of resolutions for $(H\looparrowright Y)$. This provides a description of the value of the right adjoint functor $(\phi,f)_\ast$. This will only be used in the case of pushforward along $(G\looparrowright X)\to (1\looparrowright Y)$ in which case it is clear that the pushforward along the structure map from the simplicial Borel construction is the right thing. Note also that we already have definitions of the functors $(\phi,f)_\ast$ in the situation of a morphism $(G\looparrowright X)\to (H\looparrowright Y)$ of varieties with action where $G\hookrightarrow H$ is a closed subgroup: one can factor the morphism as $(G\looparrowright X)\to (G\looparrowright Y)\to (H\looparrowright Y)$ and apply the ordinary pushforward for the first morphism and then the induction functor $\op{Ind}_G^H$, cf.~\ref{PBPF}. Uniqueness of right adjoints implies that the general direct image functor $(\phi,f)_\ast$ must agree with this composition of ordinary pushforward and induction in this situation.


\subsection{Motives for varieties with actions}

We shortly explain how to associate to a variety with action an equivariant motive. 

\begin{definition}
\label{def:equivmotive}
\index{equivariant motive for $G\looparrowright X$}
Let $G\looparrowright X$ be a variety with action, considered as the ``base scheme''; we'll mostly consider this in the case $G\looparrowright \pt$. Given a morphism $f:(G\looparrowright Y)\to (G\looparrowright X)$ of $G$-varieties, we define the \emph{equivariant motive of $G\looparrowright Y$} in $\mathbb{D}^+_G(X)$ as follows:
\[
\op{M}_X(G\looparrowright Y):= (\op{id},f)_! (\op{id},f)^! \const{X}_G,
\]
its Verdier dual is the cohomological equivariant motive $(\op{id},f)_\ast (\op{id},f)^\ast \const{X}_G$. 

Similarly, we have the \emph{equivariant Borel--Moore motive of $G\looparrowright Y$} given by
\[
\op{M}^{\op{BM}}_X(G\looparrowright Y):= (\op{id},f)_! (\op{id},f)^\ast \const{X}_G\cong (\op{id},f)_!\const{Y}_G
\]
and its dual, the \emph{equivariant motive with compact supports of $G\looparrowright Y$} is defined to be 
\[
\op{M}^{\op{c}}_X(G\looparrowright Y):= (\op{id},f)_\ast(\op{id},f)^!\const{X}_G.
\]
\end{definition}

\begin{Bemerkung}
This fits very well with the usual definitions of motive and motives with compact support in Definition~\ref{def:motive}. As a direct consequence of the definition (and the fact that the forgetful functor commutes with all the other six functors) we find that 
\[
\op{Res}_G^1(\op{M}_X(G\looparrowright Y))\cong \op{M}_X(Y),
\]
i.e., the underlying motive of the equivariant motive of $G\looparrowright Y$ is the ordinary, non-equivariant motive of $Y$. Similarly, we have for the motives with compact support
\[
\op{Res}_G^1(\op{M}_X^{\op{c}}(G\looparrowright Y))\cong \op{M}_X^{\op{c}}(Y).
\]
\end{Bemerkung}

\subsection{Properties: base-change and Verdier duality}

Now that we have definitions for the six functors connecting the various categories of equivariant motives, the basic properties of the six-functor formalism follow directly from the properties in the non-equivariant case.

\begin{Bemerkung}
\label{belu71}
The following is a motivic version of \cite[Theorem 7.1]{BeLu}. Let $\phi:H\to G$ be a homomorphism of linear algebraic groups. In a pullback diagram
\[
\xymatrix{
(H\looparrowright \tilde{X})\ar[r]^{(\id,g')} \ar[d]_{(\phi,f')} & (H\looparrowright X) \ar[d]^{(\phi,f)} \\
(G\looparrowright \tilde{Y}) \ar[r]_{(\id,g)} & (G\looparrowright Y)
}
\]
with $g$ smooth, we have
\begin{enumerate}
\item all the functors $\otimes$, $\op{Hom}$, $f^\ast\dashv f_\ast$, $f_!\dashv f^!$ and $(\phi,f)^\ast\dashv (\phi,f)_\ast$ between equivariant categories of motives commute with the smooth base-change $g^\ast$. 
\item The functor $g^\ast$ commutes with the Verdier duality up to twist by the dualizing object $D_{g',H}\in\mathbb{D}_H^+(\tilde{X})$. 
\end{enumerate}
The first claim follows directly from the non-equivariant smooth base change theorem. The second claim follows from \ref{belu1471}. 
\end{Bemerkung}

\begin{Bemerkung}
  From the homotopy and stability properties for homotopical stable algebraic derivators, we get the following, cf. \cite[Proposition 2.4.27]{ayoub:thesis1}: for a diagram $(\mathscr{F},\mathcal{I})$ of varieties, with $p:(\mathscr{F},\mathcal{I})\times_S\mathbb{A}^1_S\to    (\mathscr{F},\mathcal{I})$ and $s:(\mathscr{F},\mathcal{I})\to (\mathscr{F},\mathcal{I})\times_S\mathbb{A}^1_S$ the projection and  zero-section, respectively, we have  
\begin{itemize}
\item the morphism $\op{id}\to p_\ast p^\ast$ is invertible, 
\item the functor $p_{\sharp}s_\ast$ is an equivalence of categories.
\end{itemize}
In particular, the equivariant motives will satisfy an $\mathbb{A}^1$-invariance property.
\end{Bemerkung}

\begin{Bemerkung}[{\bf General base change}]
\label{BC} 
\index{base-change!equivariant}
Scholium 1.4.2 and its extension to diagrams on p.~323 of \cite{ayoub:thesis1} does not only imply the existence of adjoint functors, but implies that  $(\mathsf{H}^\ast,\mathsf{H}_\ast,\mathsf{H}_!,\mathsf{H}^!)$ is a cross functor for cartesian squares. The exchange properties encoded in the notion of cross functor imply in particular that for any square 
$$    \xymatrix{
      (H\looparrowright Y')\ar[r]^{(\phi,f)} \ar[d]_{(\op{id},q)} &
      (G\looparrowright X') \ar[d]^{(\op{id},p)}\\
      (H\looparrowright Y)\ar[r]_{(\phi,g)} &
      (G\looparrowright X)
    }
$$
which is cartesian for the underlying spaces, we have exchange isomorphisms 
$$
(\phi,g)^\ast (\op{id},p)_!\stackrel{\sim}{\RA}
(\op{id},q)_!(\phi,f)^\ast \quad 
\text{ and }\quad
(\op{id},p)^!(\phi,g)_\ast\stackrel{\sim}{\RA}
(\phi,f)_\ast(\op{id},q)^!. 
$$
\end{Bemerkung}

\begin{Bemerkung}[{\bf Proper base change}]
\label{BCp} 
For every morphism $(\op{id},f):(G\looparrowright X) \to (G\looparrowright Y)$, there is a morphism of functors $(\op{id},f)_!\RA (\op{id},f)_\ast$. It is invertible whenever $f$ is proper. Under our standing assumption \ref{standing}, this is equivalent to $f$ being projective. Using this, we  get variants of proper base change from \ref{BC}. 
\end{Bemerkung}

\begin{Bemerkung}[{\bf Smooth base change}]
\label{BCs} 
The purity statement of Scholium 1.4.2 and its extension to diagrams on    p. 323 \cite{ayoub:thesis1} implies in particular that for every morphism $(\op{id},f):(G\looparrowright X) \to (G\looparrowright Y)$ with $f$ smooth and  $d\pdef \op{dim}X-\op{dim}Y$, there is an isomorphism of functors 
$$(\op{id},f)^!\stackrel{\sim}{\RA} (\op{id},f)^\ast(d)[2d].$$ 
Using  this, we get  variants of smooth base change from \ref{BC}. 
\end{Bemerkung}


\begin{Bemerkung}
\label{CompRES} 
Let $H\ra G$ be a homomorphism of linear algebraic groups. Then the six functors defined above commute with the restriction functors $\op{Res}_G^H:\mathbb{D}_G^+(X)\to\mathbb{D}_H^+(X)$. This can be proved as \cite[Proposition 7.2]{BeLu}; essentially, it is a consequence of smooth base change. 

\label{prop:6fmorderivator}
Note that this implies in particular that the restriction functors induce morphisms of derivators for the derivators of Proposition~\ref{prop:equivderivator}. Since the restriction functors are left adjoints, it also implies that $\op{Res}_G^H$ is compatible with homotopy left Kan extensions. 
\end{Bemerkung}

\begin{Bemerkung}[{\bf Localization triangles}]
Let $G\looparrowright X$ be a variety with action, and let $Z\subseteq X$ be a $G$-stable closed subvariety with open complement $U=X\setminus Z$. Corollary 2.4.19 and Proposition 2.4.25 of \cite{ayoub:thesis1} imply that the units and counits of the adjunctions lead to canonical distinguished triangles in $\mathbb{D}^+_G(X)$:
\[
j_{!}j^!\to \op{id}\to i_\ast i^\ast \to j_!j^![1] \quad
\text{ and }\quad
i_! i^!\to \op{id}\to j_\ast j^\ast \to i_!i^![1].
\]
\end{Bemerkung}


\begin{Bemerkung}[{\bf Monoidal structure}]
\label{rem:monoidal}
\index{homotopical stable algebraic derivator!monoidal}
Definition 2.1.150 and 2.4.48 of \cite{ayoub:thesis1} introduce the notion of monoidal homotopical stable algebraic derivator, which requires that for each variety $X$, the category $\mathbb{D}(X)$ is a symmetric monoidal closed  triangulated category. Moreover, the functor $f^\ast:\mathbb{D}(Y)\to\mathbb{D}(X)$ is strong monoidal for $f:X\to Y$ a morphism of varieties, and some additional projection formulas hold, cf. the recollection in Appendix~\ref{sec:derivators}. 

As a consequence, if $\mathbb{D}$ is a monoidal homotopical stable algebraic derivator, then  the categories $\mathbb{D}^+_G(X)$ of equivariant motives are symmetric monoidal closed, and the pullback functor $(\phi,f)^\ast$ for a morphism $(\phi,f):(G\looparrowright  X) \to (H\looparrowright Y)$ is a strong monoidal functor. The usual projection formulae hold:
\begin{align*}
 f_!(M)\otimes N&\simeq  f_!(M\otimes f^\ast(N))\\
f^!\op{Hom}(M,N) &\simeq \op{Hom}(f^\ast M,f^!N)\\
f_\ast\op{Hom}(f^\ast M,N)&\simeq \op{Hom}(M,f_\ast N)\\
\op{Hom}(f_!M,N)&\simeq f_\ast\op{Hom}(M,f^! N).
\end{align*}
In particular, the forgetful functor is strong monoidal.
\end{Bemerkung}

\begin{Bemerkung}
\label{gmonderivator}
As discussed in Section~\ref{sec:derivators}, if we restrict a monoidal homotopical stable algebraic derivator to any fixed variety, the result is a monoidal derivator in the sense of \cite{groth:ponto:shulman} since the projection formula enforces homotopy cocontinuity of the monoidal structure. This implies, in particular, that $\mathbb{D}_G^+(X,-)$ will be a monoidal stable derivator in the sense of \cite{groth:ponto:shulman}. This will be relevant for our discussion of compatibility of tilting with such monoidal structures, cf. Section~\ref{sec:tilting}.
\end{Bemerkung}

\begin{Bemerkung}[{\bf Constructible objects}]
\label{rem:constructible}
\index{homotopical stable algebraic derivator!separated}
We recall the discussion of constructibility properties from Section 2.3 of \cite{ayoub:thesis1}. Constructibility is  best-behaved for a $\Lambda$-linear separated homotopical stable algebraic derivator $\mathbb{D}$ with $\Lambda$ a field of characteristic zero, where $\mathbb{D}$  is called separated if $f^\ast:\mathbb{D}(Y)\to\mathbb{D}(X)$ is conservative for $f:X\to Y$  surjective. Note that we are working over a base field $k$, which implies in particular that singularities can be resolved by alterations. 

In this case, the compact and the constructible objects of $\mathbb{D}(X)$ coincide for each $k$-variety $X$, and the six functors preserve constructibility, cf. \cite[Scholie 2.2.34]{ayoub:thesis1}. 

This implies in particular that for each equivariant morphism $(\phi,f):(H\looparrowright Y)\to(G\looparrowright X)$, the functors $(\phi,f)^\ast$ and $(\phi,f)_\ast$ preserve constructible objects. If $\phi=\id$, this also holds for $(\id,f)^!$ and $(\id,f)_!$. Moreover,  constructible objects are also preserved by $\otimes$ and $\op{Hom}$.
\end{Bemerkung}

\begin{Bemerkung}[{\bf Dualizing objects}]
\label{abs:dual}
\index{dualizing object!equivariant}
We continue to assume that $\mathbb{D}$ is a $\Lambda$-linear separated homotopical stable algebraic derivator over a field $k$. We outline how the Verdier duality formalism of Theorems 2.3.73 and 2.3.75 in \cite{ayoub:thesis1} provides a complete analogue of the equivariant Verdier duality \cite[Section 3.5, 3.6]{BeLu} in the motivic situation. 

For a variety with action $G\looparrowright X$, let $\const{\pt}_{G}\in\mathbb{D}^+_G(\pt)$ be the constant equivariant motive on the point, and  define the equivariant dualizing object to be  
\[
D_{X,G}:=(\id,\op{fin}_X)^!(\const{\pt}_{G})\in \mathbb{D}^+_G(X).
\]
Since the forgetful functor commutes with exceptional pullback by \ref{CompRES}, we find that $\op{Res}^1_G(D_{X,G})$ is isomorphic to the non-equivariant dualizing motive $\op{fin}_X^!\const{\pt}$. 

If $X$ happens to be smooth of dimension $d$, then purity \ref{BCs} implies 
\[
(\op{id},\op{fin}_X)^!(\const{\pt}_{G})\cong (\op{id},\op{fin}_X)^\ast(\const{\pt}_{G})(d)[2d]\cong \const{X}_{G}(d)[2d],
\]
i.e., up to twist and shift, the dualizing object is isomorphic to the constant equivariant motive. 
\end{Bemerkung} 

\begin{proposition}[{\bf Verdier duality}]
\label{equiv:verdier}
\index{Verdier duality!equivariant}
\begin{enumerate}
\item For a variety with action $G\looparrowright X$, the functors 
\[
D(-):=\op{Hom}_{\mathbb{D}^+_G(X)}(-,D_{X,G})
\]
are duality functors on the categories $\mathbb{D}_G^{\op{c}}(X)$ of constructible objects in $\mathbb{D}^+_G(X)$.
\item
For a morphism $(\op{id},f):(G\looparrowright X)\to (G\looparrowright Y)$ of varieties with $G$-action, the duality functors satisfy the following formulae:
\begin{align*}
D\circ (\id,f)^\ast&\mapright{\sim} (\id,f)^!\circ D,\\
(\id,f)^\ast \circ D&\mapright{\sim} D\circ (\id,f)^!,\\
D\circ (\id,f)_!&\mapright{\sim} (\id,f)_\ast\circ D,\\
(\id,f)_! \circ D&\mapright{\sim} D\circ (\id,f)_\ast.
\end{align*}
\item The Verdier duality commutes with restriction functors $\op{Res}_G^H:\mathbb{D}^+_G(X)\to\mathbb{D}^+_H(X)$. 
\end{enumerate}
\end{proposition}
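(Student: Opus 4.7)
The strategy is to reduce all three assertions to the non-equivariant Verdier duality formalism via the conservative forgetful functor $\op{For}=\op{Res}_G^1:\mathbb{D}^+_G(X)\to\mathbb{D}^+(X)$ of \ref{forget}, using the fact that restriction functors commute with all six functors by \ref{CompRES}. The basic observation which unlocks everything is the compatibility $\op{For}\circ D \cong D\circ \op{For}$ on constructible objects: this follows from the identification $\op{For}(D_{X,G})\cong D_X$ (apply commutation of $\op{For}$ with $f^!$ to the structure morphism $\op{fin}_X$, together with $\op{For}(\const{\pt}_G)=\const{\pt}$) combined with the strong monoidality of $\op{For}$ and its commutation with $\iHom$, which is built into the monoidal six-functor formalism \ref{rem:monoidal}.

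For (1), I would verify biduality $M\xrightarrow{\cong}D(D(M))$ on $\mathbb{D}^{\op{c}}_G(X)$ by applying the conservative functor $\op{For}$ and invoking the non-equivariant biduality (lesson (10) on Verdier duality in Section~\ref{sec:axiomatics}) for the constructible motive $\op{For}(M)\in\mathbb{D}^{\op{c}}(X)$; preservation of constructibility at each intermediate step is guaranteed by \ref{rem:constructible}.

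For (2), the four identities are formal consequences of the adjunctions $(\op{id},f)^\ast\dashv(\op{id},f)_\ast$ and $(\op{id},f)_!\dashv(\op{id},f)^!$ together with the projection formulas of \ref{rem:monoidal}, supplemented by the key identity $(\op{id},f)^!D_{Y,G}\cong D_{X,G}$ which comes from 2-functoriality of $f^!$ applied to $\op{fin}_Y\circ f=\op{fin}_X$. Concretely, $D\circ(\op{id},f)_!\cong(\op{id},f)_\ast\circ D$ is a specialization of $\iHom(f_!M,N)\cong f_\ast\iHom(M,f^!N)$ at $N=D_{Y,G}$; the remaining three identities follow by the same pattern (or by applying $D$ and invoking biduality from (1) to interchange the pairs). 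For (3), once we know that $\op{Res}_G^H$ sends $D_{X,G}$ to $D_{X,H}$ and commutes with $\iHom$, the compatibility $\op{Res}_G^H\circ D\cong D\circ\op{Res}_G^H$ is immediate; both facts follow from \ref{CompRES} exactly as in the forgetful case treated above.

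The main obstacle will be confirming rigorously that the restriction functors genuinely commute with $\iHom$, since $\iHom$ is a right adjoint while $\op{Res}_G^H$ is defined as a pullback. This is an instance of the smooth-pullback projection formula from Section~\ref{sec:axiomatics}, but applying it requires checking that the underlying morphism of Borel constructions (or of resolution diagrams) is pointwise smooth enough for the relevant exchange isomorphism to hold. When this direct verification is awkward, one can instead argue that $(\phi,f)^\ast$ is strong monoidal and preserves constructibles, so the natural transformation $(\phi,f)^\ast\iHom(M,N)\to\iHom((\phi,f)^\ast M,(\phi,f)^\ast N)$ is an isomorphism which can be checked on generators of $\mathbb{D}^{\op{c}}_G(X)$ built from $(\op{id},g)_\sharp\underline{X}_G(n)$ for smooth $g$, where it reduces to the non-equivariant case.
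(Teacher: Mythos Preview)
Your proposal is correct and is close in spirit to the paper's argument, but the checking mechanism differs. The paper does not route through the forgetful functor $\op{For}$ and conservativity on $\mathbb{D}(X)$; instead it constructs the biduality and exchange morphisms canonically (following Ayoub's diagram-level constructions in \cite[Theorems 2.3.73, 2.3.75]{ayoub:thesis1}) and then verifies that they are isomorphisms by \emph{evaluation over individual resolutions}, i.e., by restricting to the quotient varieties $G\backslash P$ where the non-equivariant Verdier duality of Section~\ref{sec:axiomatics} applies directly. Part (iii) is dispatched in one line via \ref{CompRES}.

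The difference matters for exactly the obstacle you flag: commutation of $\op{Res}_G^H$ with $\iHom$. Your approach needs this as an input (and you correctly identify it as the delicate point, since the relevant morphism of Borel constructions is not pointwise smooth in general). The paper's resolution-based check avoids this issue entirely, because in the resolution model the six functors and the closed monoidal structure are \emph{defined} by applying the non-equivariant versions over the smooth quotients $G\backslash P$, so compatibility with Verdier duality is essentially built in. Your workaround via generators is fine, but the paper's route is shorter precisely because it stays close to how the equivariant functors were constructed in the first place.
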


\begin{proof}
(i) There is a canonical biduality morphism $M\to D(D(M))$. This can be obtained from the internal Hom into the dualizing object for diagrams in the same way it is obtained for schemes in \cite[Theorem 2.3.73]{ayoub:thesis1}. The fact that the biduality morphism is an isomorphism for constructible objects is then obtained by evaluation over individual resolutions, where it follows from the non-equivariant Verdier duality. 

(ii) Similarly, the exchange morphisms for duality and pullbacks resp. push-forwards is obtained as in \cite[Theorem 2.3.75]{ayoub:thesis1}, and we can check that these are isomorphisms over individual resolutions. 

The final statement (iii) follows from \ref{CompRES}. 
\end{proof}

\begin{Bemerkung}[{\bf Relative dualizing objects}]
\label{rel:dual}
\index{dualizing object!equivariant!relative}
More generally, for a morphism $(\op{id},f):(G\looparrowright X)\to (G\looparrowright Y)$ of varieties with action, one can define the equivariant relative dualizing object 
\[
D_{f,G}:=(\id,f)^!(\const{Y})\in\mathbb{D}^+_G(X).
\] 
Again this corresponds to the non-equivariant relative dualizing motive under the forgetful functor.  

In even greater generality, following \cite[Definition 7.4]{BeLu}, we can define equivariant relative dualizing objects for morphisms $(\phi,f):(H\looparrowright X)\to (G\looparrowright Y)$. Here we consider $(\op{id},f):(H\looparrowright X)\to (H\looparrowright Y)$ and defined 
\[
D_{f,H}:=(\op{id},f)^!(\const{Y}_H)\in\mathbb{D}^+_H(X). 
\]

As for the absolute case \ref{abs:dual}, if $f$ happens to be smooth the equivariant relative dualizing object is, up to twist and shift by the relative dimension, isomorphic to the constant equivariant motive.
\end{Bemerkung}

\begin{proposition}
\label{belu741}
Let $(\phi,f):(H\looparrowright X)\to(G\looparrowright Y)$ be a morphism of varieties with action, and assume that $f$ is smooth. Then the inverse image functors 
\[
(\phi,f)^\ast:\mathbb{D}^+_G(Y)\to\mathbb{D}_H^+(X)
\]
commute with Verdier duality up to a twist by the equivariant relative dualizing object of \ref{rel:dual}:
\[
D\circ (\phi,f)^\ast\approx D_{f,H}\otimes (\phi,f)^\ast\circ D.
\]
\end{proposition}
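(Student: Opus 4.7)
The plan is to reduce the statement to two already-available pieces of the formalism by factoring $(\phi,f)$ as
\[
(H\looparrowright X)\xrightarrow{(\op{id}_H,f)}(H\looparrowright Y)\xrightarrow{(\phi,\op{id}_Y)}(G\looparrowright Y),
\]
so that $(\phi,f)^\ast\approx(\op{id}_H,f)^\ast\circ(\phi,\op{id}_Y)^\ast$. The morphism $(\phi,\op{id}_Y)^\ast$ is the restriction functor $\op{Res}_G^H$, so by Proposition~\ref{equiv:verdier}(iii) it commutes with Verdier duality strictly; in particular no twisting object is introduced at this stage. It therefore remains to handle the smooth equivariant morphism $(\op{id}_H,f)$.

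For $(\op{id}_H,f)^\ast$ I would first apply Proposition~\ref{equiv:verdier}(ii) to obtain an isotransformation
\[
D\circ(\op{id}_H,f)^\ast\;\approx\;(\op{id}_H,f)^!\circ D.
\]
The main point is then to establish the equivariant analogue of \ref{belu18}: for smooth $f$ of relative dimension $d$ and any $M\in\mathbb{D}^+_H(Y)$ there is an isotransformation
\[
(\op{id}_H,f)^!M\;\approx\;(\op{id}_H,f)^\ast M\otimes D_{f,H}.
\]
This follows from the equivariant relative purity \ref{BCs}, which provides isomorphisms $(\op{id}_H,f)^!\approx(\op{id}_H,f)^\ast(d)[2d]$ and $D_{f,H}=(\op{id}_H,f)^!(\const{Y}_H)\approx\const{X}_H(d)[2d]$; since $(\op{id}_H,f)^\ast$ is strong monoidal \ref{rem:monoidal}, tensoring with the $\otimes$-invertible object $D_{f,H}\approx\const{X}_H(d)[2d]$ agrees with the twist-and-shift coming from purity. (One can alternatively check this identity by restriction along the forgetful functor and verifying it on the underlying non-equivariant motives, where it is exactly the content of \ref{belu1471}.)

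Combining these ingredients yields, for $N\in\mathbb{D}^+_G(Y)$,
\begin{align*}
D\circ(\phi,f)^\ast(N)
&\approx D\circ(\op{id}_H,f)^\ast\circ\op{Res}_G^H(N)\\
&\approx (\op{id}_H,f)^!\circ D\circ\op{Res}_G^H(N)\\
&\approx (\op{id}_H,f)^!\circ\op{Res}_G^H\circ D(N)\\
&\approx D_{f,H}\otimes(\op{id}_H,f)^\ast\circ\op{Res}_G^H\circ D(N)\\
&\approx D_{f,H}\otimes(\phi,f)^\ast\circ D(N),
\end{align*}
which is the claimed isotransformation. The only genuine obstacle is the middle identity $(\op{id}_H,f)^!\approx D_{f,H}\otimes(\op{id}_H,f)^\ast$; once it is in place the rest is a formal diagram chase with the already established compatibilities.
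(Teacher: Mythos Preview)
Your proof is correct and is essentially a spelled-out version of the paper's one-line proof, which simply cites \ref{belu71} and \ref{CompRES}. Your factorization $(\phi,f)=(\op{id}_H,f)\circ(\phi,\op{id}_Y)$ is exactly how those two citations are meant to be combined: \ref{CompRES} (or equivalently Proposition~\ref{equiv:verdier}(iii)) handles the restriction $(\phi,\op{id}_Y)^\ast=\op{Res}_G^H$, and \ref{belu71}(2) encodes the compatibility of the smooth pullback $(\op{id}_H,f)^\ast$ with Verdier duality up to the twist by $D_{f,H}$, which you have unpacked via relative purity \ref{BCs} and Proposition~\ref{equiv:verdier}(ii).
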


\begin{proof}
This follows from \ref{belu71} and \ref{CompRES}. 
\end{proof}

\subsection{Exterior product and exchange morphisms}

\begin{definition}[{\bf Exterior product}]
\label{def:exterior}
Assume we have varieties with action $G\looparrowright X$ and $H\looparrowright Y$. Then we define the \emph{exterior product functor  $\boxtimes$}\index{exterior product}\index{$\boxtimes$} by
\[
\boxtimes:\mathbb{D}^+_G(X)\times\mathbb{D}^+_H(Y)\to\mathbb{D}^+_{G\times H}(X\times Y):(M,N)\mapsto M\boxtimes N:=\op{pr}_1^\ast(M)\otimes \op{pr}_2^\ast(N). 
\]
\end{definition}

\begin{Bemerkung}
\label{comm} 
We discuss compatibility of the exterior product with the $\sharp$-functors, cf. \cite[Lemme 3.2.21]{ayoub:thesis2}. In the following, we use the definition of the category $\mathbb{D}^+_G(X)$ as cartesian motives over the Borel construction for $G\looparrowright X$. The functors $f_\sharp$, $f^\ast$ and $\otimes$ which we are using here are all defined in the simplicial setting, cf.~Section~\ref{sec:sixfunctors}.

Assume we have two smooth morphisms of varieties with action $f:(G\looparrowright X)\to(G\looparrowright X')$ and $g:(H\looparrowright Y)\to(H\looparrowright Y')$. Let $M\in\mathbb{D}^+_G(X)$ and $N\in\mathbb{D}^+_H(Y)$. Then there is a canonical morphism 
\[
(f\times g)_\sharp(M\boxtimes N)\to f_\sharp M\boxtimes g_\sharp N
\]
which is defined as follows. The functor $(f\times g)_\sharp$ is colax symmetric monoidal because it is adjoint to the symmetric monoidal functor $(f\times g)^\ast$, cf. \cite[1.1.24.1]{cisinski:deglise}. This means that we have a canonical morphism 
\[
(f\times g)_\sharp(M\boxtimes_{X\times Y}N)\to (f\times g)_\sharp(\op{pr}_1^\ast M)\otimes_{X'\times Y'} (f\times g)_\sharp(\op{pr}_2^\ast N).
\]
Smooth base change provides an isomorphism
\begin{eqnarray*}
(f\times g)_\sharp(\op{pr}_1^\ast M)\otimes_{X'\times Y'} (f\times g)_\sharp(\op{pr}_2^\ast N) & \stackrel{\cong}{\longrightarrow} &
f_\sharp(M)\boxtimes_{X'\times Y'}g_\sharp(N)\\&=&\op{pr}_1^\ast\circ f_\sharp(M) \otimes_{X'\times Y'}\op{pr}_2^\ast\circ g_\sharp(N),
\end{eqnarray*}
and the composition provides the required canonical morphism. 

Since the category of motives is generated by motives of smooth varieties, we can check that the canonical morphism is an isomorphism in the special case of motives of smooth varieties. In this case, i.e., when $M=\op{M}_X(V)$ and $N=\op{M}_Y(W)$ are motives of smooth varieties, we have that $f_\sharp$ is simply given by composition of the structure morphism with $f$, and $f^\ast$ is given by fiber product. The base change isomorphism is then simply 
the identification $\op{M}_{X'\times Y'}(V\times Y')\cong \op{pr}_1^\ast(\op{M}_{X'}(V))$. The second morphism, coming from compatibility of tensor with $\sharp$-functors is the canonical map 
\begin{eqnarray*}
\op{M}_{X'\times Y'}(V\times W) & \cong & 
\op{M}_{X'\times Y'}\left((V\times Y')\times_{X'\times Y'}(X'\times W)\right)\\
&\stackrel{\cong}{\longrightarrow}&
\op{M}_{X'\times Y'}(V\times Y')\otimes \op{M}_{X'\times Y'}(X'\times W),
\end{eqnarray*}
proving that the morphism exchanging $\sharp$-functors and exterior product is an isomorphism.
\end{Bemerkung}

\begin{Bemerkung}
\label{extprodshriek}
Some remarks concerning the relation between exceptional push-forward functors and exterior products are in order. In the end, we want to show that an exchange map is an isomorphism. However, the definition of the morphism requires passing through the ordinary push-forward functor which uses the definition of $\mathbb{D}^+_G(X)$ in terms of cartesian motives over appropriate diagrams of resolutions. Some care is needed to check that the natural non-equivariant argument also works for the categories of equivariant  motives. 

Assume we have two morphisms of varieties with action $f:(G\looparrowright X)\to(G\looparrowright X')$ and $g:(H\looparrowright Y)\to(H\looparrowright Y')$. Note that the morphisms $f$, $g$ and $f\times g$ do not change the group, which is relevant for the definition of the equivariant direct image functors. Let $M\in\mathbb{D}^+_G(X)$ and $N\in\mathbb{D}^+_H(Y)$. 

As right adjoint of the symmetric monoidal functor $f^\ast$, the functor $f_\ast$ is lax monoidal. Therefore,  we also get a canonical morphism
\[
(f\times g)_\ast\circ \op{pr}_1^\ast(M) \otimes_{X\times Y} (f\times g)_\ast\circ\op{pr}_2^\ast(N) 
\to (f\times g)_\ast(M\boxtimes N)
\]
It follows from the resolution definition and the corresponding non-equivariant statement, cf.  \cite[Lemme 2.3.6]{ayoub:thesis1}, that this morphism is in fact an isomorphism whenever $f$ is a closed immersion.  Combined with the proper base change above, we get an exchange isomorphism
\[
i_\ast M\boxtimes i'_\ast N \stackrel{\cong}{\longrightarrow} (i\times i')_\ast(M\boxtimes N)
\]
for closed immersions $i$ and $i'$.

There is also a morphism 
\[
f_! M\boxtimes g_! N\to (f\times g)_!(M\boxtimes N),
\]
arising either via the resolution or simplicial definition from the corresponding non-equivariant exchange morphism defined as one of the compositions in the commutative square of \cite[Corollary 2.3.49]{ayoub:thesis1}. As before, this morphism is an isomorphism whenever $f$ and $g$ are closed immersions. 

Assume now that $f$ and $g$ are smooth projective morphisms of relative dimensions $d$ and $d'$, respectively. In this situation, we have the purity isomorphisms, giving rise, by naturality, to a commutative diagram
\[
\xymatrix{
  (f\times g)_\sharp(M\boxtimes N)\ar[r]^\cong \ar[d]_\cong & f_\sharp(M)\boxtimes g_\sharp(N)\ar[d]^\cong \\
  (f\times g)_!(M\boxtimes N)(d+d')[2(d+d')] &
  f_!(M)(d)[2d]\boxtimes g_!(N)(d')[2d'] \ar[l] 
}
\]
Finally, if $f$ and $g$ are arbitrary quasi-projective morphisms, we can factor them as compositions of an open immersion, a closed immersion and a relative projective space. For the open immersion, we have $f_\sharp\cong f_!$, hence we get the comparison isomorphism from the one for $f_\sharp$. For the closed immersion, we noted above that the comparison morphism is an isomorphism for $f_\ast\cong f_!$. Finally, the relative projective space is smooth, and we also saw above that the comparison morphism for $f_!$ is an isomorphism. This shows that the comparison morphism 
\[
f_! M\boxtimes g_! N\to (f\times g)_!(M\boxtimes N)
\]
is an isomorphism for arbitrary finite type quasi-projective morphisms.

Alternatively, one can use the projection formula 
\begin{eqnarray*}
  (f_! M)\boxtimes  N &\cong &\op{pr}_1^*(f_! M)\otimes \op{pr}_2^* N\\
  &\cong& ((f\times\op{id})_!\op{pr}_1^* M)\otimes \op{pr}_2^* N\\
  &\cong& (f\times\op{id})_!(\op{pr}_1^* M \otimes (f\times\op{id})^*\op{pr}_2^* N)\\
  &\cong& (f\times\op{id})_!(\op{pr}_1^* M \otimes \op{pr}_2^* N)\\
  &\cong& (f\times \op{id})_!(M\boxtimes N)
\end{eqnarray*}
to show the case where one of the maps is the identity. The general case is then obtained by as composition of 
\begin{eqnarray*}
(f_! M)\boxtimes (g_!N)&\cong& (f_!\times\op{id})(M\boxtimes g_!(N)) \\ 
&\cong& (f_!\times\op{id})(\op{id}\times g_!)(M\boxtimes N) \\
&\cong& (f_!\times g_!)(M\boxtimes N). 
\end{eqnarray*}
\end{Bemerkung}

\section{Further consequences}		
\label{sec:further}

In this section, we discuss a couple of further consequences of the six functor formalism for equivariant motives. In particular, we provide some more refined remarks on quotient equivalences and the construction of integration functors. 

\subsection{Quotient and induction equivalences} 

\begin{proposition}[{\bf Generalized quotient equivalence}]
\label{prop:quotientequiv}
\index{quotient equivalence}
Let $G\looparrowright X$ be a variety with action and let $N\subset G$ be a closed normal subgroup such that the restricted action $N\looparrowright X$ is free. Then the morphism $(\pi,p)\colon(G,X)\to (G/N,N\backslash X)$ 
induces an equivalence of categories
\[
(\pi,p)^\ast\colon\mathbb{D}^+_{G/N}(N\backslash X)\sirra \mathbb{D}^+_G(X).  
\] 
The equivalence is compatible with all the six functors.
\end{proposition}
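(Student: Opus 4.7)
The strategy is to reduce to the quotient equivalence for free actions (Proposition~\ref{prop:freeaction}) by comparing resolution diagrams on both sides. The main work is carried out at the level of cartesian motives over individual compatible resolutions; the full statement for $\mathbb{D}^+_G(X)$ then follows by taking the $2$-limit over resolutions of growing acyclicity as in Definition~\ref{def:equivmotres}.

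The first step sets up a correspondence between resolutions of $(G/N)\looparrowright(N\backslash X)$ and of $G\looparrowright X$. Given a $(G/N)$-resolution $r_Q\colon Q\to N\backslash X$, I form $P\pdef X\times_{N\backslash X}Q\to X$ with $G$ acting diagonally (via $\pi\colon G\to G/N$ on the $Q$-factor). The resulting $G$-action on $P$ is free: if $g\cdot(x,q)=(x,q)$ then $\pi(g)q=q$ forces $\pi(g)=1$ by freeness of $G/N$ on $Q$, so $g\in N$, and then $g\cdot x=x$ forces $g=1$ by freeness of $N$ on $X$. The second projection induces $N\backslash P\sira Q$, hence $G\backslash P\sira(G/N)\backslash Q$, so $r_X\colon P\to X$ is indeed a $G$-resolution. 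Since $r_X$ is the pullback of $r_Q$ along $p$, $n$-acyclicity of $r_Q$ transfers to $r_X$ by Corollary~\ref{cor:acyclicity}.

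For such a pair $(P,Q)$, the associated resolution diagrams assemble into a commutative diagram
\[
\xymatrix{
X \ar[d]_{p} & P \ar[l]_{r_X} \ar[d]^{\tilde p} \ar[r]^{\bar r_P} & G\backslash P \ar@{=}[d] \\
N\backslash X & Q \ar[l]^{r_Q} \ar[r]_{\bar r_Q} & (G/N)\backslash Q
}
\]
whose left square is cartesian and whose right square is a strict equality. Both $p$ and $\tilde p$ are \'etale locally trivial $N$-torsors, hence smooth (since linear algebraic groups in the paper are smooth by our standing conventions). I claim the induced pullback $\mathbb{D}^{\op{cart},I}(\mathscr{W}_Q)\to\mathbb{D}^{\op{cart},I}(\mathscr{W}_P)$ is an equivalence of truncated categories once $Q$ is sufficiently acyclic for the fixed interval $I=[a,b]$. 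By Lemma~\ref{lem:resolindep}, a cartesian motive on either side is determined up to canonical isomorphism by its right-hand entry $M_{\bar P}\in\mathbb{D}^I(G\backslash P)=\mathbb{D}^I((G/N)\backslash Q)$. A quasi-inverse to the pullback sends $M_{\bar P}$ to the cartesian motive over $\mathscr{W}_Q$ with right-hand entry $M_{\bar P}$, middle entry $\bar r_Q^\ast M_{\bar P}$, and left-hand entry $(r_Q)_\ast\bar r_Q^\ast M_{\bar P}$. The nontrivial compatibility is that the first component should pull back along $p$ to recover $M_X$: this is smooth base change~\ref{BCs} applied to the cartesian left-hand square (valid as $p$ is smooth), combined with $\bar r_Q\circ\tilde p=\bar r_P$, which yields $p^\ast(r_Q)_\ast\bar r_Q^\ast M_{\bar P}\cong(r_X)_\ast\bar r_P^\ast M_{\bar P}\cong M_X$.

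Passing to the $2$-limit over resolutions of growing acyclicity then produces the asserted equivalence $(\pi,p)^\ast\colon\mathbb{D}^+_{G/N}(N\backslash X)\sirra\mathbb{D}^+_G(X)$. Compatibility with the six functors is formal: $(\pi,p)^\ast$ is, by construction, the pullback along a morphism of varieties with action, and therefore commutes on the nose with $f^\ast$, $f^!$, $\otimes$, and $\iHom$ via the general results of Section~\ref{sec:sixfunctors} and~\ref{CompRES}; compatibility with the remaining functors $f_\ast$, $f_!$ follows since equivalences preserve adjunctions. The main technical hurdle in this approach is the descent step relying on smooth base change, which is precisely where smoothness of $N$ enters essentially; without it, the $N$-torsor $p$ would no longer satisfy the hypotheses of smooth base change and the descent argument would have to be reformulated.
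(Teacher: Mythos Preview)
Your argument is correct, but it takes a more laborious route than the paper's. Both proofs rest on the same correspondence of resolutions $Q\mapsto P=X\times_{N\backslash X}Q$ (with inverse $P\mapsto N\backslash P$), and both note the crucial identification $G\backslash P\cong(G/N)\backslash Q$. The difference lies in which model of $\mathbb{D}^+_G(X)$ is used. You work with the diagrams $\mathscr{W}_P$ from Definition~\ref{def:resoldiag}, which carry the extra left-hand vertex $X$ (resp.\ $N\backslash X$); this forces you to descend the $X$-component along $p$, which is where you invoke smooth base change and the smoothness of $N$. The paper instead works directly in the model of Definition~\ref{def:equivmotres2}, where an equivariant motive is a cartesian motive over the diagram of \emph{quotients} $\mathscr{Q}_X\circ\mathscr{F}$. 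Since the equivalence of resolution categories $\op{Res}(G\looparrowright X)\approx\op{Res}(G/N\looparrowright N\backslash X)$ intertwines the quotient functors (precisely because $G\backslash P=(G/N)\backslash(N\backslash P)$), the two diagrams of varieties are literally isomorphic, and the equivalence of cartesian motives is immediate via Proposition~\ref{prop:richenough}. This bypasses smooth base change entirely and avoids the resolution-by-resolution analysis; your approach buys a more explicit description at the level of individual resolutions, at the cost of extra verification.
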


\begin{proof}
We adapt the proof of \cite[Theorem 2.6.2]{BeLu}. There is a functor on the categories of resolutions:
\[
\mathscr{E}:\op{Res}(G\looparrowright X)\to\op{Res}(G/N\looparrowright N\backslash X):
(p:P\to X)\mapsto (\bar{p}:N\backslash P\to N\backslash X).
\] 
Now \ref{prop:belu211} implies that this functor of resolutions is an equivalence of categories with inverse mapping $Q\to N\backslash X$ to $X\times_{/(N\backslash X)}Q\to X$. This equivalence is compatible with the quotient functors because $(G/N)\backslash (N\backslash P)\cong G\backslash P$. This implies that we get an equivalence of categories of cartesian motives
\[
\mathbb{D}^{\op{cart},+}(\mathscr{Q}_X\circ \mathscr{F})\approx
\mathbb{D}^{\op{cart},+}(\mathscr{Q}_{N\backslash X}\circ \mathscr{E}\circ \mathscr{F})
\]
where $\mathscr{F}:\mathcal{I}\to\op{Res}(G\looparrowright X)$ is a choice of skeleton. Via the comparison equivalences of Proposition~\ref{prop:richenough}, this proves the claim.

\end{proof}

The following is a version of \cite[Propositions 7.5.1 and 7.5.2]{BeLu}. 

\begin{proposition}
\label{prop:quotverdier}
Let $G\looparrowright X$ be a variety with action and let $N\subset G$ be a closed normal subgroup such that the restricted action $N\looparrowright X$ is free. Then the quotient equivalence 
\[
(\pi,p)^\ast\colon\mathbb{D}^+_{G/N}(N\backslash X)\sirra \mathbb{D}^+_G(X)
\]
commutes with the Verdier duality $D$ up a twist by the relative dualizing object $D_p=p^!(\const{N\backslash X}_G)$ of \ref{rel:dual}, i.e.,
\[
D\circ (\pi,p)^\ast \approx D_p\otimes (\pi,p)^\ast\circ D.
\]
Moreover, we have $D_p\cong \const{X}_G(d)[2d]$ where $d=\dim N$. In particular, the quotient equivalence commutes, up to twist and shift, with Verdier duality.
\end{proposition}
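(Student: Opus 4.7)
The plan is to deduce both assertions directly from results established earlier in the chapter, namely equivariant relative purity \ref{BCs} and the Verdier-duality compatibility of inverse images \ref{belu741}.

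First I would compute the relative dualizing object $D_p$. Since the restricted action $N\looparrowright X$ is free in the sense of Definition~\ref{free}, the quotient map $p\colon X\to N\backslash X$ is an \'etale-locally trivial $N$-torsor, hence smooth of relative dimension $d=\dim N$. Since $N$ is normal in $G$, the $G$-action on $X$ descends to $N\backslash X$ (through $G/N$), so we may factor $(\pi,p)$ as
\[
(G\looparrowright X)\xrightarrow{(\op{id},p)} (G\looparrowright N\backslash X)\xrightarrow{(\pi,\op{id})}(G/N\looparrowright N\backslash X),
\]
and $D_p$ is defined via the middle term as $D_p=(\op{id},p)^!(\const{N\backslash X}_G)$, following~\ref{rel:dual}. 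The equivariant purity isomorphism \ref{BCs} gives $(\op{id},p)^!\approx (\op{id},p)^\ast(d)[2d]$, so
\[
D_p\approx (\op{id},p)^\ast(\const{N\backslash X}_G)(d)[2d]\approx \const{X}_G(d)[2d],
\]
using that $(\op{id},p)^\ast$ is strong monoidal and sends the constant equivariant motive to the constant equivariant motive.

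Next, I would apply Proposition~\ref{belu741} to the morphism $(\pi,p)\colon(G\looparrowright X)\to(G/N\looparrowright N\backslash X)$: its underlying morphism of varieties $p$ is smooth, so the hypothesis of that proposition is met, and we obtain
\[
D\circ (\pi,p)^\ast\approx D_p\otimes (\pi,p)^\ast\circ D.
\]
Combining with the computation $D_p\approx \const{X}_G(d)[2d]$ and the fact that $\const{X}_G$ is the monoidal unit of $\mathbb{D}^+_G(X)$ (so that $D_p\otimes(-)\approx(-)(d)[2d]$ by \ref{rem:monoidal}), we conclude that the quotient equivalence commutes with Verdier duality up to the twist-shift $(d)[2d]$.

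There is no substantive obstacle; the work consists solely in checking that the hypotheses of \ref{BCs} and \ref{belu741} are met (smoothness of $p$ of relative dimension $d=\dim N$, and the presence of a change-of-group $\pi\colon G\twoheadrightarrow G/N$) and in identifying the equivariant relative dualizing object $D_p$ via purity. Both points are routine once one unwinds the definitions in \ref{rel:dual} for a morphism of varieties with action that changes the group.
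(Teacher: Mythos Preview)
Your proposal is correct and follows essentially the same approach as the paper: both invoke Proposition~\ref{belu741} for the duality compatibility and use relative purity to identify $D_p$. The only minor difference is that you apply the equivariant purity isomorphism \ref{BCs} directly to the smooth $G$-equivariant morphism $(\op{id},p)$, whereas the paper unwinds the resolution description and applies non-equivariant purity on the quotient varieties $p^0(P)/G\to P/G$; your route is slightly more streamlined but amounts to the same computation.
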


\begin{proof}
The first statements are a special case of Proposition~\ref{belu741}. For the specific description of the relative dualizing object, let $Y:=N\backslash X$, and let $P\to Y$ be a resolution of the quotient, viewed as $G$-variety via $G\to G/N\looparrowright Y$. Denote by $p^0(P)\to X$ the induced resolution of $X$, and consider the pullback diagram
\[
\xymatrix{
p^0(P) \ar[r]^q \ar[d]_{\tilde{p}} & p^0(P)/G \ar[d]^{\overline{p}} \\
P \ar[r]_q & P/G.
}
\]
Since $\overline{p}$ is smooth of relative dimension $d=\dim N$, we have 
\[
D_{\overline{p}}=\overline{p}^!\const{P/G}\cong \overline{p}^\ast\const{P/G}(d)[2d]\cong\const{p^0(P)/G}(d)[2d]
\]
by purity for (non-equivariant) motives. This implies, that the equivariant relative dualizing motive is isomorphic to $\const{X}_G(d)[2d]$ as claimed. 
\end{proof}

\begin{remark}
The explicit description of the equivariant relative dualizing object in \cite[Proposition 7.5.2]{BeLu} requires $G$ to be connected. This is due to orientability issues, cf. the proofs of \cite[Proposition 7.5.2, Lemma 7.5.3]{BeLu}. In the motivic setting, such issues do not arise, leading to a stronger result with a simpler proof -- the formalism has a built-in orientability for smooth maps, which is expressed exactly in the purity property of \ref{BCs}. 
\end{remark}

\begin{proposition}[{\bf Induction equivalence}]
\label{cor:indequiv}
\index{induction equivalence}
Let  $i\colon H\hookrightarrow G$ be the inclusion of a closed subgroup into a linear algebraic group $G$ and let $H\looparrowright X$ be a variety with action such that the diagonal $H$-action on  $G\times X$ is free. Then pullback along the obvious morphism $(i,s)\colon (H\looparrowright X)\to (G\looparrowright G\times_{/H} X)$ given by $s\colon x\mapsto [e,x]$ induces an equivalence of categories
\[
(i,s)^\ast \colon \mathbb{D}^+_{G}(G\times_{/H}X)\sirra \mathbb{D}^+_H(X)
\]
These equivalences are compatible with all six functors. 
\end{proposition}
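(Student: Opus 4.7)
The strategy is to apply the generalized quotient equivalence of Proposition~\ref{prop:quotientequiv} twice to the variety $G\times X$ equipped with the $(G\times H)$-action given by
\[
(g,h)\cdot (g',x) = (gg'h^{-1},hx).
\]
The subgroups $G\times\{1\}$ and $\{1\}\times H$ are both closed normal in $G\times H$. The restricted action of $G\times\{1\}$ on $G\times X$ is free (via left multiplication on the first factor) with quotient isomorphic to $X$, on which the residual $H$-action is the original one. The restricted action of $\{1\}\times H$ on $G\times X$ is free by hypothesis, with quotient $G\times_{/H} X$ carrying the residual $G$-action by left multiplication.

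Applying Proposition~\ref{prop:quotientequiv} to each of these two normal-subgroup quotients yields equivalences
\[
\mathbb{D}^+_H(X) \xleftarrow{\approx} \mathbb{D}^+_{G\times H}(G\times X) \xrightarrow{\approx} \mathbb{D}^+_G(G\times_{/H} X),
\]
where both arrows are pullbacks along the respective quotient morphisms. Composing the right-hand equivalence with a quasi-inverse of the left-hand one produces an equivalence $\mathbb{D}^+_G(G\times_{/H}X)\xrightarrow{\approx}\mathbb{D}^+_H(X)$. Since both equivalences arise via Proposition~\ref{prop:quotientequiv}, the composite equivalence is automatically compatible with all six functors by the last sentence of that proposition together with 2-functoriality.

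It remains to identify this composite equivalence with the pullback $(i,s)^\ast$. For this I would observe that the morphism $(i,s)\colon(H\looparrowright X)\to(G\looparrowright G\times_{/H}X)$ factors as a zig-zag through $(G\times H\looparrowright G\times X)$: namely, we have a morphism $(i_{G\times H}^H,\,e\times\op{id}_X)\colon (H\looparrowright X)\to (G\times H\looparrowright G\times X)$, where $i_{G\times H}^H$ is the inclusion $h\mapsto(1,h)$ and $e\times\op{id}_X$ is a section of the $G$-quotient map; and the natural $(G\times H\to G)$-equivariant projection $(G\times H\looparrowright G\times X)\to(G\looparrowright G\times_{/H}X)$ is the $H$-quotient map. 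Both of these morphisms realize the two quotient equivalences above in the sense of Proposition~\ref{prop:quotientequiv}, so functoriality of $(-)^\ast$ shows that the composite equivalence is $(i,s)^\ast$ as claimed.

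The main obstacle is purely bookkeeping: one must check that the factorization of $(i,s)$ through $(G\times H\looparrowright G\times X)$ fits into the hypotheses of Proposition~\ref{prop:quotientequiv} in the correct way, and that the two zig-zag legs yield genuine quasi-inverses when composed. The compatibility with the six functors then follows formally from \ref{CompRES} and the corresponding statement in Proposition~\ref{prop:quotientequiv}; no new ingredient beyond the free-action quotient equivalence is required.
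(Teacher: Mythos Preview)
Your argument is correct and takes a genuinely different route from the paper. The paper's proof works directly at the level of resolutions: it exhibits an equivalence of categories $\op{Res}(H\looparrowright X)\to\op{Res}(G\looparrowright G\times_{/H}X)$ sending $(P\to X)$ to $(G\times_{/H}P\to G\times_{/H}X)$, observes that the natural inclusion induces canonical isomorphisms $H\backslash P\cong G\backslash(G\times_{/H}P)$ on quotients, and concludes that $(i,s)^\ast$ is the restriction along this morphism of diagrams and hence an equivalence on cartesian motives. This mirrors the Bernstein--Lunts argument and is self-contained at the resolution level.

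Your approach instead reduces the statement to two applications of the already-established generalized quotient equivalence, via the ``butterfly'' variety $(G\times H)\looparrowright G\times X$. This is cleaner in the sense that it treats the induction equivalence as a formal corollary of Proposition~\ref{prop:quotientequiv} rather than reproving it from the resolution description; it also makes compatibility with the six functors automatic. One small point: in your final paragraph, the section $(i_{G\times H}^H,\,e\times\op{id}_X)$ does not literally ``realize the quotient equivalence in the sense of Proposition~\ref{prop:quotientequiv}''---only the projection does. Rather, pullback along the section is a quasi-inverse because the composite with the quotient map is the identity on $(H\looparrowright X)$, and a one-sided inverse of an equivalence is a two-sided quasi-inverse. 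You essentially say this, but the phrasing could be sharpened.
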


\begin{proof}
We adapt \cite[Theorem 2.6.3]{BeLu}. We have an equivalence of categories
\[
\op{Res}(H\looparrowright X)\to\op{Res}(G\looparrowright G\times_{/H}X):(p:P\to X)\mapsto (G\times_{/H}P\to G\times_{/H} X).
\]
Moreover, the natural inclusions induce canonical isomorphisms on quotients 
\[
H\backslash P\cong G\backslash(G\times_{/H}P).
\]
The functor $(i,s)^\ast$, which is given by restriction for the above morphism of diagrams, is an equivalence on categories of cartesian motives. This implies the claim.
\end{proof}

Again, we have an analogue of \cite[7.6]{BeLu} on how the induction equivalence commutes with Verdier duality. 

\begin{proposition}
\label{prop:indverdier}
Let  $i\colon H\hookrightarrow G$ be the inclusion of a closed subgroup into a linear algebraic group $G$ and let $H\looparrowright X$ be a variety with action such that the diagonal $H$-action on  $G\times X$ is free. Then the induction equivalence 
\[
(i,s)^\ast \colon \mathbb{D}^+_{G}(G\times_{/H}X)\sirra \mathbb{D}^+_H(X)
\]
commutes with Verdier duality up to a twist by the equivariant relative dualizing object $D_s=s^!(\const{G\times_{/H}X}_H)\in\mathbb{D}_H^+(X)$ of \ref{rel:dual}, i.e.,
\[
D\circ (i,s)^\ast \approx D_s\otimes (i,s)^\ast\circ D.
\]
Moreover, we have $D_s\cong \const{X}_H(d)[2d]$ where $d=\dim H-\dim G$. In particular, the induction equivalence commutes, up to twist and shift, with Verdier duality.
\end{proposition}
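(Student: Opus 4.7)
The plan is to mirror the proof of the preceding Proposition~\ref{prop:quotverdier}, while accounting for the fact that unlike the quotient map in that proof (which is smooth, being an $N$-torsor), the section $s\colon X\to G\times_{/H}X$ is a closed immersion. My first move would be to factor $(i,s)$ as
\[
(H\looparrowright X)\xrightarrow{(\op{id},s)}(H\looparrowright G\times_{/H} X)\xrightarrow{(i,\op{id})}(G\looparrowright G\times_{/H}X),
\]
using the fact that $s$ is $H$-equivariant for the restricted $G$-action on $G\times_{/H}X$ (a direct verification: $h\cdot s(x)=[h,x]=[e,hx]=s(hx)$). The functor $(i,\op{id})^\ast=\op{Res}_G^H$ commutes with Verdier duality by \ref{CompRES} and Proposition~\ref{equiv:verdier}(iii), so all the non-trivial interaction of $D$ with $(i,s)^\ast$ comes through $(\op{id},s)^\ast$, reducing the claim to showing $D\circ(\op{id},s)^\ast\cong D_s\otimes(\op{id},s)^\ast\circ D$.

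Next I would analyze $s$ geometrically. The projection $\pi\colon G\times_{/H}X\to G/H$, $[g,x]\mapsto gH$, is smooth (étale-locally trivial with fiber $X$), and $s$ fits into a cartesian square with the inclusion $\{eH\}\hookrightarrow G/H$ along the bottom. Since $G/H$ is smooth, this bottom map is a relatively smooth closed immersion of codimension $\op{dim}G-\op{dim}H$ in the sense of \ref{belu1472}, and base change along the smooth morphism $\pi$ preserves relative smoothness. Hence $s$ itself is a relatively smooth closed immersion of codimension $-d=\op{dim}G-\op{dim}H$. By absolute purity for such immersions (\ref{belu1472}), we obtain $s^!\underline{G\times_{/H}X}\cong\underline{X}(d)[2d]$ non-equivariantly, and for the equivariant upgrade I would evaluate on a smooth resolution $P\to G\times_{/H}X$ of the $H$-action, forming a pullback square in which $\widetilde{s}\colon\widetilde{X}\hookrightarrow P$ is again a relatively smooth closed immersion of the same codimension; the non-equivariant absolute purity applied on the quotients $\widetilde{X}/H\hookrightarrow P/H$ then glues to the equivariant isomorphism $D_s\cong\underline{X}_H(d)[2d]$.

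Finally, the commutativity of $(\op{id},s)^\ast$ with Verdier duality would follow from combining the exchange $D\circ s^\ast\cong s^!\circ D$ of Proposition~\ref{equiv:verdier}(ii) with the relative purity identification $s^!\cong s^\ast(-)\otimes D_s$ of \ref{belu1472}, which applies to constant motives and to the absolute dualizing motive since these are smooth relative to $s$. Composing with the Verdier-duality-commuting restriction $(i,\op{id})^\ast=\op{Res}_G^H$ then yields the desired twist-by-$D_s$ formula for $(i,s)^\ast$. The main obstacle I anticipate is the equivariant lift of absolute purity: while the non-equivariant statement is an immediate geometric consequence, transferring it through the resolution-theoretic definition of $\mathbb{D}^+_H(X)$ and verifying that the resulting isomorphism is independent of the chosen resolution requires careful bookkeeping, analogous to but slightly more delicate than the corresponding step in the proof of Proposition~\ref{prop:quotverdier}.
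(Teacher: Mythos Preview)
Your computation of $D_s\cong\const{X}_H(d)[2d]$ via the cartesian square over $G/H$ and absolute purity is essentially the paper's argument, and your factorization of $(i,s)^\ast$ through $\op{Res}_G^H$ matches how the paper separates out the restriction step via \ref{CompRES}.

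However, there is a genuine gap in your derivation of the twist formula $D\circ(i,s)^\ast\approx D_s\otimes(i,s)^\ast\circ D$. You propose to combine $D\circ s^\ast\cong s^!\circ D$ with the relative purity identification $s^!\cong s^\ast(-)\otimes D_s$ from \ref{belu1472}. But as you yourself note, the latter only applies to motives smooth relative to $s$; it is \emph{not} a natural isomorphism of functors on all of $\mathbb{D}^+_H(G\times_{/H}X)$. For a closed immersion $s$, the formula $s^!N\cong s^\ast N\otimes D_s$ fails already for $N=s_\ast\const{\pt}$ (where $s^!s_\ast\cong\op{id}$ gives $\const{\pt}$, while $s^\ast s_\ast\const{\pt}\otimes D_s\cong\const{\pt}(d)[2d]$). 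So writing $s^!(DM)\cong s^\ast(DM)\otimes D_s$ for arbitrary $M$ is unjustified, and the chain $D\circ s^\ast\cong s^!\circ D\cong(s^\ast\otimes D_s)\circ D$ does not go through.

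The paper closes this gap differently: it observes that $(i,s)^\ast$ is a strong monoidal \emph{equivalence}, hence automatically commutes with inner Homs. Since $D(-)=\iHom(-,D_Y)$ for the relevant dualizing object $D_Y$, and since $D_s$ is $\otimes$-invertible, this reduces the entire natural isomorphism to a single identity between objects, namely $D_{X,H}\cong D_s\otimes(i,s)^\ast D_{G\times_{/H}X,G}$, i.e.\ $s^!D_{G\times_{/H}X,H}\cong D_s\otimes s^\ast D_{G\times_{/H}X,H}$. For this one motive---the absolute dualizing object, which \emph{is} smooth relative to $s$---\ref{belu1472} does apply. Your factorization into $\op{Res}_G^H$ followed by $(\op{id},s)^\ast$ actually obstructs this move: neither factor alone is an equivalence, so the inner-Hom compatibility is not available for $(\op{id},s)^\ast$ on its own, and you are forced back to the functor-level identity that fails.
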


\begin{proof}
Since $(i,s)^\ast$ commutes with inner Homs, it suffices to prove that $D_{X,H}\cong D_s\otimes (i,s)^\ast \left(D_{G\times_{/H}X,G}\right)$. By \ref{CompRES}, we can identify $\op{Res}_G^H \left(D_{G\times_{/H}X,G}\right)\cong D_{G\times_{/H}X,H}$. Hence, we need to show 
\[
D_{X,H}\cong D_s\otimes s^\ast \left(D_{G\times_{/H}X,H}\right). 
\]
This follows from \ref{belu1472}. 

For the explicit description of the relative dualizing motive, consider the pullback diagram
\[
\xymatrix{
X \ar[r]^s \ar[d]_{\op{fin}} & G\times_{/H}X \ar[d]^p \\
\op{pt} \ar[r]_i & G/H.
}
\]
Note that $p:G\times_{/H}X\to G/H$ is a locally trivial fibration with fiber $X$, and consequently we have by the base change formula \ref{belu1473} (resp. its equivariant analogue) 
\[
s^!\circ p^\ast \approx \op{fin}^\ast\circ i^!.
\]
Hence it suffices to show that $i^!(\const{G/H}_H)\cong \const{\op{pt}}_H(d)[2d]$ with $d=\dim H-\dim G$. Note that $i:\op{pt}\to G/H$ is a closed immersion of codimension $-d$ of smooth varieties. By absolute purity, we have $\const{\op{pt}}_H\cong i^!(\const{G/H}_H)(-d)[-2d]\in \mathbb{D}_H^+(\op{pt})$ which proves the claim. 
\end{proof}

\begin{proposition}
\label{prop:suk} 
Let $k$ be a perfect field. Let $\phi\colon G\sra H$ be a surjective homomorphism of linear groups over $k$, with connected unipotent kernel $U=\ker(\phi)$, and let $H\looparrowright X$ be a variety with $H$-action. Then the restriction functor  
\[
(\phi,\op{id})^\ast\colon\mathbb{D}_H(X)\to \mathbb{D}_G(X)
\]
is an equivalence of categories. 
\end{proposition}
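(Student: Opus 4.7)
The strategy is to work with the resolution description of equivariant motives and to exploit the $\mathbb{A}^1$-contractibility of connected unipotent groups over perfect fields: because $U$ is (as a variety) isomorphic to affine space, the projection $G\twoheadrightarrow H$ is a $U$-torsor with $\mathbb{A}^1$-contractible fiber, and the induced maps between suitable resolutions are $\tau$-$\infty$-acyclic.

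Choose a smooth $n$-acyclic $G$-resolution $p\colon P\to X$ by Lemma~\ref{lem:resexist}. Since $U\subseteq G$ is closed, the induced $U$-action on $P$ is free, so the quotient $\pi\colon P\to U\backslash P$ is a $U$-torsor and $U\backslash P$ is a quasi-projective variety on which $H=G/U$ acts freely. Hence $\bar p\colon U\backslash P\to X$ is a smooth $H$-resolution, and there is a canonical identification $G\backslash P=H\backslash(U\backslash P)$. Because $U$ is connected unipotent over a perfect field, the structure theory of unipotent groups combined with the vanishing of coherent cohomology of $\mathbb{G}_a$-torsors on affine bases shows that $U\cong \mathbb{A}^d$ as a variety, with $d=\dim U$. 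Smoothness of $U$ makes $U$-torsors \'etale-locally trivial, so $\pi$ is an \'etale fiber bundle with fiber $\mathbb{A}^d$, and by Corollary~\ref{cor:acyclicity} together with $\mathbb{A}^1$-invariance of the derivator, $\pi$ is $\tau$-$\infty$-acyclic.

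Consider the resolution diagrams
\[
\mathscr{W}_P\colon X\xleftarrow{p}P\xrightarrow{q}G\backslash P\quad\text{and}\quad\mathscr{W}_{U\backslash P}\colon X\xleftarrow{\bar p}U\backslash P\xrightarrow{\bar q}G\backslash P,
\]
connected by the evident morphism $\rho$ that is the identity on $X$ and on $G\backslash P$, and is $\pi$ on the middle term. By Proposition~\ref{prop:belu192}, the $\tau$-$\infty$-acyclicity of $\pi$ yields, for every interval $I\subseteq\mathbb{Z}$, that $\pi^\ast\colon\mathbb{D}^I(U\backslash P)\to\mathbb{D}^I(P)$ is an equivalence onto its essential image. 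Using the description of $I$-truncated cartesian motives as triples $(M_X,\overline{M},\beta)$ from Definition~\ref{def:equivmotres1} and Lemma~\ref{lem:resolindep}, any descent isomorphism $\beta\colon p^\ast M_X\cong q^\ast\overline{M}$ in $\mathbb{D}^I(P)$ is a morphism between pullbacks along $\pi$ of motives on $U\backslash P$, and hence descends uniquely to a descent isomorphism $\beta'\colon\bar p^\ast M_X\cong\bar q^\ast\overline{M}$ in $\mathbb{D}^I(U\backslash P)$. This shows that $\rho^\ast\colon\mathbb{D}^{\op{cart},I}(\mathscr{W}_{U\backslash P})\to\mathbb{D}^{\op{cart},I}(\mathscr{W}_P)$ is an equivalence for each $I$.

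Finally, under the equivalences of Theorem~\ref{thm:inftyquotient} (or Proposition~\ref{prop:richenough}) identifying $\mathbb{D}_H^+(X)$ and $\mathbb{D}_G^+(X)$ with cartesian motives over appropriate categories of resolutions, the morphism $\rho$ is precisely the $\phi$-equivariant comparison map between the chosen $H$- and $G$-resolutions, so $\rho^\ast$ is identified with $(\phi,\op{id})^\ast$ on each $I$-truncated subcategory. Assembling these equivalences over all intervals $I$ gives the desired equivalence on $\mathbb{D}^+$. The main technical hurdle is the identification $U\cong\mathbb{A}^d$ as a variety, which rests on the structure theory of connected unipotent groups over perfect fields; this is precisely what makes $\pi\colon P\to U\backslash P$ an $\mathbb{A}^1$-bundle and therefore $\tau$-$\infty$-acyclic, and hence is the reason the perfect field hypothesis is needed.
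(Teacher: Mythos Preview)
Your approach is sound and rests on the same key input as the paper---that a connected unipotent group over a perfect field is split, hence isomorphic to $\mathbb{A}^d$ as a variety, so $U$-torsors are $\mathbb{A}^d$-bundles and thus $\mathbb{A}^1$-acyclic---but you take a genuinely different route. You work with finite-dimensional resolutions: choose a $G$-resolution $P$, pass to $U\backslash P$ as an $H$-resolution, and compare the diagrams $\mathscr{W}_P$ and $\mathscr{W}_{U\backslash P}$ via the $\tau$-$\infty$-acyclicity of $\pi\colon P\to U\backslash P$, assembling over truncations. The paper instead argues directly at the simplicial level: since ${\op{B}}U$ is $\mathbb{A}^1$-contractible, the space $({\op{E}}G)/U$ is $\mathbb{A}^1$-contractible with free $H$-action, and the diagram comparing ${\op{E}}G\times_{/G}X$, $(({\op{E}}G)/U)\times_{/H}X$, and ${\op{E}}H\times_{/H}X$ yields the chain of equivalences.

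Two points deserve mention. First, your argument as written concludes only on $\mathbb{D}^+$, while the proposition is stated for the full $\mathbb{D}_G(X)=\mathbb{D}^\Delta_G(X)$; the paper's simplicial argument avoids truncations entirely. One could upgrade your proof by invoking the $\mathbb{A}^1$-invariance axiom directly (which gives $\op{id}\to\pi_\ast\pi^\ast$ invertible without truncating), but at that point one is essentially reproducing the simplicial argument on a single resolution. Second, the assertion that $U\backslash P$ exists as a quasi-projective variety (so that it is a resolution in the sense of Definition~\ref{free}) is not automatic and deserves a word---e.g.\ that $U$-torsors for split unipotent $U$ are Zariski-locally trivial, and that the resolutions from Lemma~\ref{lem:resexist} have quotients of the required form.
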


\begin{proof}
Under the assumptions, $U$ is split and hence its underlying variety is $\mathbb{A}^n$. Now we can choose $U$ itself to be the space which is $\mathbb{A}^1$-contractible with free $U$-action. In particular, ${\op{B}}U$ is $\mathbb{A}^1$-contractible. The space $({\op{E}}G)/U$ has a free $H$-action and, by the above, all maps in ${\op{E}}G\to ({\op{E}}G)/U\to \pt$ are $\mathbb{A}^1$-weak equivalences. From the diagram
\[
\xymatrix{
  {\op{E}}G\times_{/G} X\ar[r]^-\sim & (({\op{E}}G)/U)\times_{/H}X\ar[r] &
  {\op{E}}H\times_{/H}X\\
  {\op{E}}G\times X \ar[r] \ar[u] \ar[d] & ({\op{E}}G)/U\times X \ar[r] \ar[u]
  \ar[d] & 
  {\op{E}}H\times X \ar[u] \ar[d] \\
  X \ar@{=}[r] & X\ar@{=}[r] & X
}
\]
we obtain equivalences
\[
\mathbb{D}_H(X)\stackrel{\approx}{\longrightarrow} \mathbb{D}^{\op{cart}}(X,H\backslash(({\op{E}}G)/U\times X)) \stackrel{\approx}{\longrightarrow} \mathbb{D}_G(X).
\qedhere
\]
\end{proof}

\begin{remark}
By Proposition~\ref{prop:suk}, general statements about linear algebraic groups can be reduced to reductive groups. We can assume that the
base field is perfect since purely inseparable extensions induce
equivalences on $\mathbb{D}$, and over perfect fields, Proposition~\ref{prop:suk} implies that $G$-equivariant motives are equivalent to $G/{\op{R}_{\op{u}}}G$-equivariant ones, and the latter is a reductive group.
\end{remark}

\begin{proposition}
Let $\phi\colon H\hra G$ be an injective homomorphism of linear algebraic groups. Assume that $G/H\cong\mathbb{A}^n$, and let $G\looparrowright X$ be a variety with $G$-action. Then the restriction functor  
\[
(\phi,\op{id})^\ast \colon \mathbb{D}_G(X)\to
\mathbb{D}_H(X)
\]
is fully faithful. 
\end{proposition}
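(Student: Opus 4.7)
The strategy is to combine the induction equivalence of Proposition~\ref{cor:indequiv} with $\mathbb{A}^1$-invariance. First, observe that the diagonal $H$-action on $G\times X$ (given by $h\cdot(g,x)=(gh^{-1},hx)$) is automatically free, since $H$ acts freely on $G$ by right translation. Hence the balanced product $G\times_{/H}X$ exists as a variety with $G$-action, and Proposition~\ref{cor:indequiv} yields an equivalence
\[
s^\ast\colon\mathbb{D}^+_G(G\times_{/H}X)\sirra\mathbb{D}^+_H(X),
\]
where $s\colon X\to G\times_{/H}X$ sends $x\mapsto [e,x]$.

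Next, consider the $G$-equivariant action morphism $a\colon G\times_{/H}X\to X$, $[g,x]\mapsto g\cdot x$ (with $G$ acting on the source by left translation in the first factor). Since $a\circ s=\op{id}_X$ as a morphism of $H$-varieties, we have $s^\ast\circ a^\ast\cong\op{id}$. Thus, under the induction equivalence, the restriction functor $(\phi,\op{id})^\ast$ is identified with $a^\ast\colon\mathbb{D}^+_G(X)\to\mathbb{D}^+_G(G\times_{/H}X)$, and it suffices to show that $a^\ast$ is fully faithful, i.e., that the unit $\op{id}\to a_\ast a^\ast$ is an isomorphism.

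By \ref{forget} and \ref{CompRES}, the forgetful functor $\op{For}\colon\mathbb{D}^+_G(-)\to\mathbb{D}(-)$ is conservative and commutes with both $a^\ast$ and $a_\ast$, which reduces the claim to the corresponding non-equivariant statement. I use here the variety isomorphism
\[
\alpha\colon G\times_{/H}X\sirra (G/H)\times X,\qquad [g,x]\mapsto (gH,g\cdot x),
\]
under which $a$ becomes the second projection $\op{pr}_2$. Since by hypothesis $G/H\cong\mathbb{A}^n$, the projection $\op{pr}_2\colon\mathbb{A}^n\times X\to X$ is an iterated affine line projection, and the $\mathbb{A}^1$-invariance axiom of the derivator $\mathbb{D}$, applied $n$ times, yields that $\op{id}\to\op{pr}_{2,\ast}\op{pr}_2^\ast$ is an isomorphism, completing the argument.

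The verifications that $a\circ s=\op{id}_X$ and that $\alpha$ is a well-defined isomorphism (with inverse $(gH,y)\mapsto[g,g^{-1}y]$) are routine; the substantive input is the combination of the induction equivalence with $\mathbb{A}^1$-invariance. The only conceptual subtlety is that the freeness hypothesis needed for Proposition~\ref{cor:indequiv} is automatically satisfied here, without any assumption on the $G$-action on $X$.
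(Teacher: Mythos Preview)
Your proof is correct and rests on the same core idea as the paper's: the fiber of the relevant map is $G/H\cong\mathbb{A}^n$, so $\mathbb{A}^1$-invariance yields full faithfulness. The packaging differs, though. The paper argues in one line at the level of Borel constructions, observing that ${\op{E}}G\times_{/H}X\to{\op{E}}G\times_{/G}X$ has fiber $G/H$ and invoking the homotopy axiom directly for this morphism of simplicial schemes. You instead factor $(\phi,\op{id})^\ast$ through the induction equivalence (this is precisely the factorization of \ref{PBPF}), identify restriction with the $G$-equivariant pullback $a^\ast$ along $G\times_{/H}X\to X$, and then use conservativity of the forgetful functor together with the isomorphism $G\times_{/H}X\cong(G/H)\times X$ to reduce to the non-equivariant projection $\mathbb{A}^n\times X\to X$. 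Your route is longer but more explicit about where the geometry enters; the paper's route is terser but requires the reader to unpack the fiber sequence claim. A minor point: the induction equivalence you cite (Proposition~\ref{cor:indequiv}) is stated for $\mathbb{D}^+_G$, while the present statement concerns $\mathbb{D}_G=\mathbb{D}_G^\Delta$; this is harmless since the simplicial quotient equivalence (Proposition~\ref{prop:freeaction}) holds for the full category and yields the same induction equivalence there.
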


\begin{proof}
We have an $\mathbb{A}^1$-fiber sequence $G/H\to {\op{E}}G\times_{/H}X\to {\op{E}}G\times_{/G}X$, the claim on full faithfulness then follows from the homotopy axiom for $\mathbb{D}$. 
\end{proof}

\subsection{Integration functors}
The six functor formalism for equivariant motives allows to provide analogues of the integration functors of \cite[Section 3.7]{BeLu}. 

\begin{Bemerkung}[{\bf Construction of integration functors}]
\label{PBPF} 
\index{integration functors}
\index{induction}
We now provide a construction of the integration functors $\op{Ind}_H^G$, using the induction equivalence. Therefore, let $i\colon H \hra G$ be the inclusion of a closed subgroup of the linear group $G$, and let $G\looparrowright X$ be a  variety with action. Then the balanced product $G \times_{/H} X$ exists and the morphism 
\[
G \times_{/H} X\xrightarrow{\cong} G/H\times X: [g,x]\mapsto (gH,gx)
\]
is an isomorphism. We also have the commutative diagram
\[
\xymatrix{
  & (G\looparrowright (G \times_{/H} X))\ar[dr]^{(\op{id},m)} & \\
  (H\looparrowright X)\ar[rr]^{(i,\op{id})}\ar[ur]^{(i,s)}& 
  & (G\looparrowright  X)
}
\]
and by Proposition~\ref{cor:indequiv} the functor $(i,s)_\ast$ is an equivalence. 

The right adjoint of the restriction functor $\op{Res}_G^H=(i,\id)^\ast$ is then the ordinary integration functor 
\[
\op{Ind}_H^G=(i,\op{id})_\ast\cong (\op{id},m)_\ast(i,s)_\ast
\]

Since $(i,s)^\ast$ is the induction
equivalence of Proposition~\ref{cor:indequiv}, we can also construct a left adjoint to the restriction functor $\op{Res}_G^H=(i,\op{id})^\ast= (i,s)^\ast  (\op{id},m)^\ast$. It is given as the composition 
\[
\op{Ind}_!\pdef  (\op{id},m)_{\sharp}(i,s)_\ast
\]
using \ref{def:ordinary}. This provides a motivic version of the exceptional integration functors of \cite[Theorem 3.7.1]{BeLu}. 
\end{Bemerkung}

\begin{remark}
 Arguably it might seem more natural to denote these integration functors $\op{Int}$ rather than $\op{Ind}$, but we decided to follow the notation established in \cite{BeLu}.
\end{remark}


\begin{proposition}[{\bf Compatibilities for integration}]
\label{prop:integration}
\index{integration functors, $\op{Ind}_{\ast}$}
Assume we have a variety with action $G\looparrowright X$, and let $i\colon H\hookrightarrow G$ be the inclusion of a closed subgroup in the linear algebraic group $G$. Then the restriction of the group action $\op{Res}_G^H\colon\mathbb{D}_G(X)\to\mathbb{D}_H(X)$ admits a right adjoint $\op{Ind}_\ast$ and a left adjoint $\op{Ind}_!$. 

The functor  $\op{Ind}_\ast$ commutes with $f_\ast$ and $f^!$, and the functor $\op{Ind}_!$ commutes with $f_!$ and $f^\ast$.  
\end{proposition}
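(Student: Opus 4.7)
The plan is to reduce everything to the compatibility statement \ref{CompRES}, which asserts that $\op{Res}_G^H$ commutes with all six functors, by a purely formal argument of passing to adjoints. Existence of the adjoints is not really an issue since it is handled by the explicit construction in \ref{PBPF}.

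First I would dispatch the existence claim. The functor $\op{Ind}_\ast = (i,\op{id})_\ast \cong (\op{id},m)_\ast\circ(i,s)_\ast$ from \ref{PBPF} is manifestly a right adjoint of $\op{Res}_G^H = (i,s)^\ast\circ(\op{id},m)^\ast$, since $(i,s)^\ast$ is the induction equivalence of Proposition~\ref{cor:indequiv} (its quasi-inverse, which we denote $(i,s)_\ast$, serves as both left and right adjoint) and $(\op{id},m)_\ast$ is right adjoint to $(\op{id},m)^\ast$. For the left adjoint, note that under the isomorphism $G\times_{/H}X\cong G/H\times X$ the map $m$ is identified with the second projection, which is smooth (as $G/H$ is smooth); hence $(\op{id},m)_\sharp$ exists and provides the left adjoint $\op{Ind}_! \cong (\op{id},m)_\sharp\circ(i,s)_\ast$ of $\op{Res}_G^H$.

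For the four commutativity statements, the strategy is to take adjoints of the commutativities already established in \ref{CompRES}. Recall that if natural isomorphisms $GF\cong F'G'$ hold between functors all of which admit right (respectively left) adjoints, then the right (respectively left) adjoints of the two sides are canonically isomorphic, with the order of composition reversed. Apply this to each of the four relevant commutativities:
\begin{itemize}
\item From $\op{Res}\circ f^\ast \cong f^\ast\circ \op{Res}$ (\ref{CompRES}), taking right adjoints gives $f_\ast\circ \op{Ind}_\ast \cong \op{Ind}_\ast\circ f_\ast$.
\item From $\op{Res}\circ f_! \cong f_!\circ \op{Res}$, taking right adjoints gives $f^!\circ \op{Ind}_\ast \cong \op{Ind}_\ast\circ f^!$.
\item From $\op{Res}\circ f_\ast \cong f_\ast\circ \op{Res}$, taking left adjoints gives $f^\ast\circ \op{Ind}_! \cong \op{Ind}_!\circ f^\ast$.
\item From $\op{Res}\circ f^! \cong f^!\circ \op{Res}$, taking left adjoints gives $f_!\circ \op{Ind}_! \cong \op{Ind}_!\circ f_!$.
\end{itemize}

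There is really no hard step here; the content has already been built into \ref{CompRES} and into the construction \ref{PBPF} of the integration functors. The only point requiring minor care is ensuring that each passage to adjoints is legal, i.e.\ that all functors in sight admit the adjoints being invoked; but this is immediate from the six-functor formalism for equivariant motives, together with smoothness of $m$ for the $(\op{id},m)_\sharp$ appearing in $\op{Ind}_!$. Once adjointness is in hand, the commutativities are a formal consequence of uniqueness of adjoints up to canonical isomorphism.
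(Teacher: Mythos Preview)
Your proof is correct. The existence part matches the paper exactly, and your treatment of the four compatibilities via passing to adjoints of the isomorphisms in \ref{CompRES} is valid: all the required adjoints exist, and uniqueness of adjoints up to canonical isomorphism does the rest.

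The paper argues somewhat differently. For $\op{Ind}_\ast$ it does not pass through adjoints of \ref{CompRES} but instead observes the identity $(\op{id},f)(\phi,\op{id})=(\phi,f)=(\phi,\op{id})(\op{id},f)$; this yields a cartesian square of varieties with action, so 2-functoriality of $(-)_\ast$ gives $\op{Ind}_\ast\circ f_\ast\cong f_\ast\circ\op{Ind}_\ast$, and base change \ref{BC} gives $f^!\circ\op{Ind}_\ast\cong\op{Ind}_\ast\circ f^!$ directly. For $\op{Ind}_!$ the paper does not use the formal adjoint trick either, but rather the explicit factorization $\op{Ind}_!=(\op{id},m)_\sharp\circ(i,s)_\ast$: since $(i,s)$ is a closed immersion one has $(i,s)_\ast=(i,s)_!$, which commutes with $f_!$ by 2-functoriality and with $f^\ast$ by base change; and since $m$ is smooth, $(\op{id},m)_\sharp\cong(\op{id},m)_!(d)[2d]$, so again base change and 2-functoriality apply. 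Your argument is more uniform and purely formal, while the paper's gives concrete isomorphisms built from base change on the actual geometric square and the explicit factorization of $\op{Ind}_!$; both reach the same conclusion.
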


\begin{proof}
Existence of the adjoints was established above in \ref{PBPF}. 

The compatibilities of $\op{Ind}_\ast$ follow from the identity  $(\op{id},f)(\phi,\op{id})=(\phi,f)=(\phi,\op{id})(\op{id},f)$ and from base change \ref{BC}. 
For the compatibilities of $\op{Ind}_!$, we 
recall our definition $\op{Ind}_!=(\op{id},m)_{\sharp}(i,s)_\ast$. In our situation, $(i,s):(H\looparrowright X)\to (G\looparrowright G\times_{/H} X)$ is a closed immersion, and hence  $(i,s)_\ast=(i,s)_!$ commutes with $f_!$. It also commutes with $f^\ast$ by base change \ref{BC}. On the other hand, the morphism $(\op{id},m):(G\looparrowright G\times_{/H} X)\to (G\looparrowright X)$ is smooth, and hence $(\op{id},m)_\sharp\approx (\op{id},m)_!(d)[2d]$ where $d=\dim (G/H)$. In particular, $(\op{id},m)$ commutes with $f_!$, and $f^\ast$ by  base change \ref{BC}.
\end{proof}


\begin{proposition}[{\bf Integration from parabolics}] 
\label{Lasp}
Assume that we have a variety with action $G\looparrowright X$, and let $P\subset G$ be a parabolic subgroup. Denote by $d=\dim G/P$ the codimension of $P$ in $G$. Then the integration functors $\op{Ind}_\ast,\op{Ind}_!:\mathbb D_P(X)\ra \mathbb D_G(X)$ from $P$-equivariant motives to $G$-equivariant ones are connected by the following natural isomorphisms for each $M\in\mathbb{D}_P(X)$ 
\[
\op{Ind}_!M \xrightarrow{\cong}   \op{Ind}_\ast M(d)[2d].
\]
\end{proposition}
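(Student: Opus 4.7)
The plan is to apply relative purity and the comparison between $(-)_!$ and $(-)_\ast$ for proper morphisms to the multiplication morphism $m\colon G\times_{/P}X\to X$ used in the construction of $\op{Ind}_!$ and $\op{Ind}_\ast$.

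First I would analyze the geometry of the morphism $(\op{id},m)$. Under the isomorphism $G\times_{/P}X\xrightarrow{\cong} G/P\times X$ of \ref{PBPF}, the multiplication map $m$ corresponds to the projection onto the second factor. Since $P$ is parabolic, $G/P$ is smooth and projective of dimension $d=\dim G/P$; consequently $m\colon G\times_{/P}X\to X$ is a smooth and proper (in fact projective) morphism of relative dimension $d$, and the same holds for the equivariant morphism $(\op{id},m)\colon(G\looparrowright G\times_{/P}X)\to(G\looparrowright X)$.

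Next, recalling from \ref{PBPF} that $\op{Ind}_!M=(\op{id},m)_\sharp(i,s)_\ast M$ and $\op{Ind}_\ast M=(\op{id},m)_\ast(i,s)_\ast M$, the two functors differ only in the outermost operation applied to $N:=(i,s)_\ast M$. It therefore suffices to exhibit a natural isotransformation
\[
(\op{id},m)_\sharp \xrightarrow{\approx} (\op{id},m)_\ast(d)[2d].
\]
This is obtained by combining two general properties of the equivariant six-functor formalism: relative purity, cf. \ref{BCs}, provides the isotransformation $(\op{id},m)_\sharp\xrightarrow{\approx}(\op{id},m)_!(d)[2d]$ since $m$ is smooth of relative dimension $d$; and the natural transformation $(\op{id},m)_!\to(\op{id},m)_\ast$ of \ref{BCp} is an isomorphism since $m$ is proper. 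Composing the two yields the required isotransformation, and applying it to $N=(i,s)_\ast M$ gives the stated natural isomorphism $\op{Ind}_!M\xrightarrow{\cong}\op{Ind}_\ast M(d)[2d]$.

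I do not expect any genuine obstacle here: the entire content of the statement is packaged in the relative purity and properness statements for the equivariant six functors already established above, and no additional input beyond the geometric observation that $G/P$ is smooth projective of dimension $d$ is required.
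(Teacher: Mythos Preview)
Your proof is correct and follows essentially the same approach as the paper: reduce to comparing $(\op{id},m)_\sharp$ with $(\op{id},m)_\ast$, identify $m$ with the projection $G/P\times X\to X$, then apply relative purity (smoothness of $G/P$) and $(\op{id},m)_!\cong(\op{id},m)_\ast$ (properness of $G/P$).
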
 

\begin{proof}
Recalling the construction of the integration functors in \ref{PBPF}, this boils  down to showing that there is a natural isomorphism $(\op{id},m)_{\sharp}\approx(\op{id},m)_\ast (d)[2d]$ where $m\colon G\times_{/P} X\ra X$ is the multiplication map coming from the action. However, as a variety  over $X$, $G\times_{/P} X$ is isomorphic to the projection  $(G/P)\times X\ra X$. Since $G/P$ is smooth, purity implies an isomorphism $(\op{id},m)_{\sharp}\approx (\op{id},m)_!(d)[2d]$, and since $G/P$ is proper, we have the natural isomorphism $(\op{id},m)_!\approx (\op{id},m)_\ast$. This proves the claim.
\end{proof}



\subsection{Projective bundle formula}

We shortly discuss an equivariant version of the projective bundle formula for motives. This result will be useful later in the formalism of Bott--Samelson motives of Section~\ref{sec:BS} where it will be employed to reduce statements about parabolic subgroups to the case of Borel subgroups.

Recall that the projective bundle formula for motives states the following: if $f:Y\to X$ is a projective bundle, i.e., locally in the \'etale topology on $X$, it is isomorphic to $\op{pr}_2:\mathbb{P}^n\times U\to U$, then $f_\ast(\underline{Y})\cong \bigoplus_{i=0}^n\underline{X}(i)[2i]$. The same is true more generally: if $f:Y\to X$ is an \'etale fiber bundle with fiber a smooth projective homogeneous space $G/P$, then $f_\ast(\underline{Y})\cong\op{M}(G/P)\otimes\underline{X}$. A similar statement is now true in the equivariant setting as well: 

\begin{proposition}
\label{prop:projectivebundle}
Let $G$ be a reductive group with Borel subgroup $B\subset G$, let $H$ be a closed algebraic subgroup and let $P\subset G$ be a parabolic subgroup containing $B$. For the natural projection $\pi:G/B\to G/P$ and the constant $H$-equivariant motive $\underline{G/B}\in \mathbb{D}_H^+(G/B)$, the motive $\pi_\ast(\underline{G/B})$ splits as direct sum of copies of $\underline{G/P}(i)[2i]$.
\end{proposition}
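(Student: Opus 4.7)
The strategy is to reduce to the non-equivariant analogue and then lift it to the equivariant setting via the simplicial Borel construction.

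First I would establish the non-equivariant statement. The projection $\pi\colon G/B\to G/P$ is smooth and projective, and Zariski-locally trivial with fiber the full flag variety $P/B\cong L/(L\cap B)$ of the Levi quotient $L$ of $P$. The Bruhat decomposition $P/B=\bigsqcup_{w\in W_L} U_w$ with $U_w\cong \mathbb{A}^{\ell(w)}$ gives, by iterated application of the localization triangles combined with $\mathbb{A}^1$-invariance and absolute purity, a canonical decomposition $\underline{P/B}\cong \bigoplus_{w\in W_L}\underline{\op{pt}}(\ell(w))[2\ell(w)]$. Since this decomposition is canonical (the summands are cycle classes of the Schubert cells) and $\pi$ is Zariski-locally trivial with fiber $P/B$, a Zariski descent argument together with smooth base change yields the non-equivariant formula
\[
\pi_\ast\underline{G/B}\cong \bigoplus_{w\in W_L}\underline{G/P}(\ell(w))[2\ell(w)] \quad\text{in } \mathbb{D}(G/P).
\]

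To promote this to the $H$-equivariant setting I would work with the simplicial Borel construction definition $\mathbb{D}_H(G/P)=\mathbb{D}^{\op{cart}}({\op{E}}H\times_{/H}G/P,\Delta^{\op{op}})$. The $H$-equivariant morphism $\pi$ induces a morphism of simplicial schemes $\tilde{\pi}\colon {\op{E}}H\times_{/H} G/B\to {\op{E}}H\times_{/H} G/P$ which in degree $n$ is the projection $\op{id}_{H^n}\times\pi\colon H^n\times G/B\to H^n\times G/P$. The pushforward $\tilde{\pi}_\ast$ in $\mathbb{D}({\op{E}}H\times_{/H}G/P,\Delta^{\op{op}})$ is computed termwise, and since each $\op{id}_{H^n}\times \pi$ is itself a Zariski-locally trivial $P/B$-bundle (being a base change of $\pi$), the non-equivariant formula applied at each level yields
\[
(\op{id}_{H^n}\times\pi)_\ast\,\underline{H^n\times G/B}\cong \bigoplus_{w\in W_L}\underline{H^n\times G/P}(\ell(w))[2\ell(w)].
\]

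The face and degeneracy maps of $({\op{E}}H\times_{/H}-,\Delta^{\op{op}})$ are smooth (they are built from projections, multiplication in $H$, and the action $H\times G/P\to G/P$), and the squares expressing these structural maps versus $\tilde{\pi}$ are cartesian. Hence smooth base change (alternatively proper base change, as $\tilde{\pi}$ is projective in each degree) identifies the termwise decompositions along the simplicial structure maps, assembling them into a decomposition of $\tilde{\pi}_\ast\underline{{\op{E}}H\times_{/H}G/B}$ in $\mathbb{D}({\op{E}}H\times_{/H}G/P,\Delta^{\op{op}})$. The summands $\underline{{\op{E}}H\times_{/H}G/P}(\ell(w))[2\ell(w)]$ are manifestly cartesian, and $\tilde{\pi}_\ast$ preserves cartesian objects (by the compatibility of right adjoints with base change along the smooth structural face maps of the simplicial diagram, cf.~Remark~\ref{rem:cartesify}), so the decomposition takes place in $\mathbb{D}_H^+(G/P)$, proving the claim. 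The main technical point is the simplicial coherence of the decomposition, but this is automatic: the non-equivariant isomorphism at each level is pulled back from the canonical cellular decomposition of $\underline{P/B}$ over $\op{Spec}k$, which is preserved by arbitrary base change.
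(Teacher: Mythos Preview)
Your proof is correct and follows essentially the same route as the paper: work termwise on the simplicial Borel construction, apply the non-equivariant projective bundle formula in each degree, and use base change (the paper invokes proper base change, you mention both smooth and proper) to assemble the termwise splittings into a cartesian object. The only minor remark is that the reference for $\tilde{\pi}_\ast$ preserving cartesian objects should be Proposition~\ref{prop:simpcart} (for projective $\pi$) rather than Remark~\ref{rem:cartesify}.
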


\begin{proof}
There are two possibilities to describe $\pi_\ast(\underline{G/B})$. By Proposition~\ref{prop:simpcart}, the equivariant push-forward functor $\pi_\ast$ can be described on the simplicial level, and we can also use resolutions. Both approaches work similarly, we choose the simplicial one: the map ${\op{E}}H\times_{/H}G/B\to{\op{E}}H\times_{/H}G/P$ is, in each of the simplicial degrees, a $P/B$-bundle. The constant motive $\underline{G/B}$ is obtained by pulling back the non-equivariant $\underline{G/B}$ in $\mathbb{D}(G/B)$ to the schemes $H^n\times G/B$ of $n$-simplices of ${\op{E}}H\times_{/H}G/B$. In each simplicial degree, the pushforward of this motive along the map $\op{id}\times \pi:H^n\times G/B\to H^n\times G/P$ splits as required, by the non-equivariant projective bundle formula. It remains to show that the splitting is compatible with the structure maps of the simplicial object, but this follows from the proper base change isomorphisms.
\end{proof}

A direct consequence of the projective bundle formula above is now the following computation: for an arbitrary equivariant motive $M\in \mathbb{D}_H^+(G/P)$, we have the following chain of isomorphisms:
\begin{eqnarray*}
\pi_\ast \pi^\ast M &\cong& \pi_\ast\left(\underline{G/B}\otimes \pi^\ast M\right) \\
&\cong& \pi_!\left(\underline{G/B}\otimes \pi^\ast M\right) \\
&\cong & \pi_!(\underline{G/B})\otimes M
\end{eqnarray*}
The first isomorphism follows since $\underline{G/B}$ is the unit object for the symmetric monoidal structure on $\mathbb{D}_H(G/B)$, the second isomorphism follows from properness of $\pi$, and the last isomorphism is the projection formula. The last term can now be written as a direct sum of motives $\underline{G/P}(i)[2i]\otimes M\cong M(i)[2i]$ by the above equivariant projective bundle formula.

\section{Convolution}
\label{sec:convolution}

In this section, we define (via the usual formulas) the convolution of equivariant motives, prove associativity and provide a reinterpretation of induction and restriction via convolution with suitably extended constant sheaves. 

For this, we assume throughout the section that we have a homotopical stable algebraic derivator $\mathbb{D}$ satisfying the conditions of \ref{derivator:new}. Particularly important here is that $\mathbb{D}$ is monoidal.

\subsection{Definition of the convolution bifunctor}

\begin{Bemerkung}
  Let $G \looparrowright X \looparrowleft R$ be a variety with two commuting actions of affine algebraic groups, with $G$ acting from the left and $R$ from the right. Let $P, Q \subset G$ be two closed subgroups. We introduce names for relevant maps: 
\begin{itemize}
\item
Denote by $\op{diag}=\op{id_P}\times\Delta\times\op{id}_R:P\times Q\times R\to P\times Q\times Q\times R$ the map induced from the diagonal $\Delta:Q\to Q\times Q$,
\item
denote by $\op{quot}:G\times X\to G\times_{/Q} X$ the quotient map from the product to the balanced product, and by 
\item $\op{mult}:G\times_{/Q} X\to X$ the multiplication map.
\end{itemize}
\end{Bemerkung}

\begin{definition}
\label{def:convolution}
\index{convolution}
Using these maps, we define the \emph{convolution bifunctor} 
\begin{eqnarray*}
-\conv_Q -\colon\mathbb{D}^+_{P \times Q}(G) \times \mathbb{D}^+_{Q\times R} (X) &\ra & \mathbb{D}^+_{P\times R}(X):\\
(M,N)&\mapsto& M \conv_Q N\pdef \op{mult}_! (M \ttimes N), 
\end{eqnarray*}
where $M \ttimes N \in \mathbb{D}^+_{P\times R}(G\times_{/Q}X)$ denotes the object obtained from the exterior product $M \boxtimes N$ under the composition
\[ 
\mathbb{D}^+_{P\times Q \times Q \times R}(G\times X) \mapright{\op{diag}^\ast} \mathbb{D}^+_{P\times  Q  \times R}(G\times X) \stackrel{\approx}{\longrightarrow} \mathbb{D}^+_{P\times  R}(G\times_{/Q} X).
\]  
Here the equivalence on the right is the generalized quotient equivalence of  Proposition~\ref{prop:quotientequiv}. 
\end{definition}


Now we will give a proof of associativity of the convolution. Note that we will give the construction of a natural isomorphism (alias an associativity constraint), but refrain from writing out a proof for the pentagon property. 

\begin{proposition}[\textbf{Associativity}]
\label{prop:convassoc}
Let $G \looparrowright X \looparrowleft Q$ be a variety with two commuting actions of affine algebraic groups, one from the left and one from the right. Let furthermore $P_1,P_2,P_3 \subset G$ be closed subgroups. Then for arbitrary objects $M_1\in \mathbb{D}^+_{P_1 \times P_2} (G)$, $M_2\in \mathbb{D}^+_{P_2 \times P_3} (G)$, $N\in \mathbb{D}^+_{P_3 \times Q} (X)$ there exists an isomorphism
\[
M_1\conv_{P_2}(M_2\conv_{P_3}N) \xrightarrow{\cong} (M_1\conv_{P_2}M_2)\conv_{P_3}N
\]
in $\mathbb{D}^+_{P_1\times Q}(X)$. 
\end{proposition}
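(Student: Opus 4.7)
My plan is to realize both sides of the asserted associativity isomorphism as the $!$-pushforward along the iterated multiplication map of a single ``triple twisted product'' living on an iterated balanced product space. Concretely, set
\[
Y \pdef G \times_{/P_2} G \times_{/P_3} X,
\]
which inherits a $P_1 \times Q$-action from the outermost factors. The underlying set is $\{[g_1,g_2,x]\}$ with the obvious equivalence relations, and there is an iterated multiplication map $\mu\colon Y \to X$, $[g_1,g_2,x] \mapsto g_1 g_2 x$, factoring in two natural ways, either as
\[
Y \cong G \times_{/P_2}(G \times_{/P_3} X) \xrightarrow{\op{id}\times_{/P_2}\op{mult}} G \times_{/P_2} X \xrightarrow{\op{mult}} X
\]
or as
\[
Y \cong (G \times_{/P_2} G) \times_{/P_3} X \xrightarrow{\op{mult}\times_{/P_3}\op{id}} G \times_{/P_3} X \xrightarrow{\op{mult}} X.
\]

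Next I would define the triple twisted product $M_1 \ttimes M_2 \ttimes N \in \mathbb{D}^+_{P_1 \times Q}(Y)$ by applying the two successive diagonal pullbacks along $\op{id}_{P_1} \times \Delta_{P_2} \times \Delta_{P_3} \times \op{id}_Q$ to $M_1 \boxtimes M_2 \boxtimes N$ on $G \times G \times X$ (with its $P_1 \times P_2 \times P_2 \times P_3 \times P_3 \times Q$-action) and then invoking the generalized quotient equivalence (Proposition~\ref{prop:quotientequiv}) along the free actions of the two inner copies of $P_2$ and $P_3$. The key point is that passing to the quotient in two steps gives the same object, up to canonical isomorphism, as passing to the quotient along the combined free $P_2 \times P_3$-action, so this triple twisted product is well-defined independently of the order.

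The core computation is then that both sides of the claimed isomorphism are canonically identified with $\mu_!(M_1 \ttimes M_2 \ttimes N)$. For the left-hand side, using the first factorization of $\mu$ and $2$-functoriality of $(-)_!$, I have $\mu_! \cong \op{mult}_! \circ (\op{id}\times_{/P_2}\op{mult})_!$; the inner $!$-pushforward, combined with the compatibility of $!$-pushforward with exterior products established in \ref{extprodshriek} and with pullback along the diagonal of $P_2$, identifies $(\op{id}\times_{/P_2}\op{mult})_!(M_1 \ttimes M_2 \ttimes N)$ with $M_1 \ttimes (M_2 \conv_{P_3} N)$. Applying the outer $\op{mult}_!$ yields the left-hand side. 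The argument for the right-hand side is entirely symmetric, using the second factorization of $\mu$.

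The main obstacle I expect is bookkeeping: I need to check carefully that the two quotient equivalences involved in assembling $M_1 \ttimes M_2 \ttimes N$ commute appropriately (this follows from the compatibility of the quotient equivalence with the six functors in Proposition~\ref{prop:quotientequiv}), and that the exchange isomorphisms relating $(-)_!$, exterior products, and diagonal pullbacks compose into a genuine natural transformation rather than merely an abstract isomorphism. Concretely this amounts to verifying one hexagon-type diagram built out of proper base change and the compatibility of $f_!$ with $\boxtimes$ from \ref{extprodshriek}. I will not check the pentagon identity here, only produce the associativity constraint.
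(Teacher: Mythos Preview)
Your proposal is correct and follows essentially the same strategy as the paper's proof: both construct a triple twisted product $M_1\ttimes M_2\ttimes N$ on the iterated balanced product $G\times_{/P_2}G\times_{/P_3}X$ and identify each bracketing of the convolution with $\mu_!$ of this object via the two factorizations of $\mu$, invoking quotient-equivalence compatibility (Proposition~\ref{prop:quotientequiv}), base change, and the exchange of $f_!$ with $\boxtimes$ from \ref{extprodshriek}. The paper organizes the argument slightly differently---first establishing an associativity constraint for $\ttimes$ itself via explicit commutative squares, then separately commuting $\op{mult}_!$ past the remaining twisted product---but the ingredients and the logical structure coincide, and both explicitly decline to verify the pentagon.
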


\begin{proof}
(1) We first note that the expression $M_1\boxtimes M_2\boxtimes N$ is well-defined; the isomorphism
\[
\left(\op{pr_1}^\ast M_1\otimes \op{pr_2}^\ast M_2\right)\otimes \op{pr_3}^\ast N \cong \op{pr_1}^\ast M_1\otimes \left(\op{pr_2}^\ast M_2\otimes \op{pr_3}^\ast N\right)
\]
in $\mathbb{D}^+_{P_1\times P_2\times P_2\times P_3\times P_3\times Q}(G\times G\times X)$ is given by the associator of the monoidal structure $\otimes$. 

(2) 
We now want to show that there is a natural associativity constraint for $\tilde{\boxtimes}$. Recall that by definition we have 
\[
(M_1\tilde{\boxtimes} M_2)\tilde{\boxtimes} N:=\op{quot}\circ\op{diag}^\ast((\op{quot}\circ\op{diag}^\ast(M_1\boxtimes M_2))\boxtimes N).
\]
Now we have a diagram: 
\[
\xymatrix{
\mathbb{D}^+({}_{P_1}G_{P_2} \times {}_{P_2} G_{P_3}) \ar[rrr]^{(-)\otimes\op{pr}_3^\ast N\circ \op{pr}_{12}^\ast}\ar[d]_{\op{diag}(3)^\ast} &&& \mathbb{D}^+({}_{P_1}G_{P_2} \times {}_{P_2} G_{P_3}\times {}_{P_3}X_Q) \ar[d]^{\op{diag}(3)^\ast} \\
\mathbb{D}^+({}_{P_1}G \times_{\Delta P_2} G_{P_3}) \ar[d]_{\op{quot}} \ar[rrr]^{(-)\otimes\op{pr}_3^\ast N\circ \op{pr}_{12}^\ast} &&& \mathbb{D}^+({}_{P_1}G \times_{\Delta P_2} G_{P_3}\times {}_{P_3}X_Q) \ar[d]^{\op{quot}} \\ \mathbb{D}^+({}_{P_1}G\times_{/P_2} G_{P_3}) \ar[rrr]_{(-)\otimes\op{pr}_3^\ast N\circ \op{pr}_{12}^\ast} &&& \mathbb{D}^+({}_{P_1}G\times_{/P_2} G_{P_3}\times {}_{P_3}X_Q)
}
\]
Commutativity of the upper square follows from 2-functoriality of ordinary pushforwards and strong monoidality of $\otimes$. Commutativity of the lower square follows since the quotient equivalence is compatible with the six functors by Proposition~\ref{prop:quotientequiv}. Now the lower composition, i.e., $(-)\otimes\op{pr}_3^\ast N\circ\op{pr}_{12}^\ast\circ \op{quot}\circ\op{diag}^\ast$ applied to $M_1\boxtimes M_2$ computes exactly $(M_1\tilde{\boxtimes} M_2)\boxtimes N$. If we take the upper composition, i.e., $\op{quot}\circ\op{diag}^\ast\circ ((-)\otimes\op{pr}_3^\ast N)\circ\op{pr}_{12}^\ast$ and further compose this with $\op{quot}\circ \op{diag}(2)^\ast$, we get an object which we could call  $\op{quot}\circ\op{diag}^\ast(M_1\boxtimes M_2\boxtimes N)$. The above commutative diagram provides then an isotransformation
\[
\op{quot}\circ\op{diag}^\ast(M_1\boxtimes M_2\boxtimes N) \to (M_1\tilde{\boxtimes} M_2)\boxtimes N. 
\]
Note that this isotransformation is natural since it involves only natural transformations from the six functor formalism. A similar diagram then shows that there is also a natural isotransformation 
\[
\op{quot}\circ\op{diag}^\ast(M_1\boxtimes M_2\boxtimes N) \to M_1\tilde{\boxtimes} (M_2\boxtimes N). 
\]
This of course uses that $\op{quot}\circ\op{diag}(2)^\ast$ commutes with $\op{quot}\circ\op{diag}(2)^\ast$ and the fact that $M_1\boxtimes M_2\boxtimes N$ is actually well-defined, given by the natural associativity constraint step 1 above. Now we can compose the latter associativity constraint with the inverse of the former (and use the associativity constraint for $\boxtimes$ in the middle) to obtain a natural associativity constraint for $\tilde{\boxtimes}$. 

(3) Now we want to compare the pushforward of $(M_1\tilde{\boxtimes} M_2) \tilde{\boxtimes} N$ to the actual convolution. Recall that the motive $(M_1\star_{P_2} M_2)\star_{P_3}N$ would be defined as 
\[
(M_1\star_{P_2} M_2)\star_{P_3}N:=\op{mult}_! \left((\op{mult}_! M_1\tilde{\boxtimes} M_2)\tilde{\boxtimes} N\right).
\] 
Now we first have a commutative square
\[
\xymatrix{
\mathbb{D}^+({}_{P_1}G\times_{/P_2} G_{P_3}\times {}_{P_3}X_Q) \ar[d]_{\op{mult}(2)_!} \ar[r]^{\op{diag}(3)^\ast} &  \mathbb{D}^+({}_{P_1}G\times_{/P_2} G\times_{\Delta P_3}X_Q) \ar[d]^{\op{mult}(2)_!}  \\
\mathbb{D}^+({}_{P_1}G_{P_3}\times {}_{P_3}X_Q) \ar[r]_{\op{diag}(3)^\ast} & \mathbb{D}^+({}_{P_1}G\times_{\Delta P_3}X_Q) 
}
\]
which follows from the general base-change. We also have a commutative diagram which follows from the compatibility of the quotient equivalence with the six functors, cf. Proposition~\ref{prop:quotientequiv}:
\[
\xymatrix{
  \mathbb{D}^+({}_{P_1}G\times_{/P_2} G\times_{\Delta P_3}X_Q) \ar[d]^{\op{mult}(2)_!} \ar[r]^{\op{quot}} & \mathbb{D}^+({}_{P_1}G\times_{/P_2} G\times_{/P_3}X_Q) \ar[d]^{\op{mult}(2)_!} \\
 \mathbb{D}^+({}_{P_1}G\times_{\Delta P_3}X_Q) \ar[r]^\cong_{\op{quot}} & \mathbb{D}^+({}_{P_1}G\times_{/P_3}X_Q)
}
\]
Note that in particular the isotransformations making the diagrams commute are natural (this is relevant for the naturality of the associativity constraint constructed as composition of these isotransformations or their inverses). Using these commutative squares, there is a natural isotransformation between $(M_1\star_{P_2} M_2)\star_{P_3}N$ and 
\[
\op{mult}_!((M_1\tilde{\boxtimes} M_2)\tilde{\boxtimes} N)
\]
where here $\op{mult}_!$ actually means the composition for the multiplication maps for both groups $P_2$ and $P_3$. 

There is a small addendum: the fact that in the above we can actually compute the $\op{mult}_!$ on the triple product follows from the following diagram, whose commutativity follows from base change and the projection formula:
\[
\xymatrix{
\mathbb{D}^+({}_{P_1}G\times_{/P_2} G_{P_3}) \ar[d]_{\op{mult}_!} \ar[rrr]^{(-)\otimes \op{pr}_3^\ast N\circ \op{pr}_1^\ast} &&& \mathbb{D}^+({}_{P_1}G\times_{/P_2} G_{P_3}\times {}_{P_3} X_Q) \ar[d]^{\op{mult}_!} \\ \mathbb{D}^+({}_{P_1}G_{P_3}) \ar[rrr]^{(-)\otimes \op{pr}_3^\ast N\circ \op{pr}_1^\ast} &&& \mathbb{D}^+({}_{P_1}G_{P_3}\times {}_{P_3} X_Q)
}
\]

(4) As a consequence, we have a natural isotransformation 
\[
\op{mult}_!((M_1\tilde{\boxtimes} M_2)\tilde{\boxtimes} N)\to (M_1\star_{P_2} M_2)\star_{P_3}N.
\]
This provides one half of the associativity constraint. A symmetric argument  also gives a natural isotransformation $\op{mult}_!(M_1\tilde{\boxtimes} (M_2\tilde{\boxtimes} N))\to M_1\star_{P_2} (M_2\star_{P_3}N)$. Note that steps (1) and (2) showed that there is a natural associativity constraint for $\tilde{\boxtimes}$. Combining the inverse of the latter isotransformation, the associativity constraint for $\tilde{\boxtimes}$ in the middle and the former isotransformation, we get the associativity constraint for the convolution.
\end{proof}

\subsection{Induction and restriction}

\label{irc}
Let $G$ be a connected split reductive group, and let $Q\subset P$ be an inclusion of standard parabolic subgroups, and let $R\subset G$ be an arbitrary closed subgroup. We show how induction and restriction functors can be seen as applications of convolution. 


\begin{definition}
\index{constant motive $\leftidx{_Q}{\underline{P}}{_P}$}
\label{def:qpp}
Denote by $\leftidx{_Q}{\underline{P}}{_P}\in \mathbb D_{Q\times  P}(G)$ the constant $Q\times P$-equivariant motive on $P$, pushed forward along the $(Q\times P)$-equivariant inclusion $P\hookrightarrow G$. 
\end{definition}

\begin{proposition}
\label{prop:convres}
With this notation we have, for any equivariant motive $M\in\mathbb{D}^+_{P\times  R}(G)$, a natural isomorphism in $\mathbb{D}^+_{Q\times R}(G)$:
\[
\leftidx{_Q}{\underline{P}}{_P}\conv_P M\stackrel{\cong}{\longrightarrow} \op{Res}_{P\times R}^{Q\times R}M.
\]
\end{proposition}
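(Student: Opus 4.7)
The plan is to unfold the definition of convolution and then exploit the fact that $\leftidx{_Q}{\underline{P}}{_P}$ is supported on the closed subvariety $P \subset G$, which will reduce the convolution to a pullback along an isomorphism. Concretely, denote by $i\colon P\hookrightarrow G$ the inclusion, so $\leftidx{_Q}{\underline{P}}{_P} = i_\ast \underline{P}$ by Definition~\ref{def:qpp}. The exterior product compatibility with closed immersions from \ref{extprodshriek}, together with the projection formula, yields $\leftidx{_Q}{\underline{P}}{_P}\boxtimes M = (i\times\op{id}_G)_\ast \op{pr}_2^\ast M$ in $\mathbb D^+_{Q\times P\times P\times R}(G\times G)$, where $\op{pr}_2\colon P\times G\to G$. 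Since the diagonal restriction $\op{diag}^\ast$ is a change-of-group functor commuting with the six functors (see~\ref{CompRES}), and likewise the quotient equivalence commutes with $\ast$-pushforward (Proposition~\ref{prop:quotientequiv}), applying these two operations gives
\[
\leftidx{_Q}{\underline{P}}{_P}\ttimes M \;\cong\; \tilde j_\ast N,
\]
where $\tilde j\colon P\times_{/P}G \hookrightarrow G\times_{/P}G$ is the induced closed immersion and $N\in \mathbb D^+_{Q\times R}(P\times_{/P}G)$ is the image of $\op{pr}_2^\ast M$ (with diagonal $P$-equivariance) under the quotient equivalence.

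The next step is to identify $N$. Under the isomorphism $\op{mult}_1\colon P\times_{/P}G\xrightarrow{\cong} G$, $[p,g]\mapsto pg$, the quotient equivalence identifies $N$ with a $(Q\times R)$-equivariant motive on $G$. To compute it, I will use the ``shear'' automorphism $\phi\colon P\times G\to P\times G$, $(p,g)\mapsto(p,pg)$, which intertwines the diagonal $P$-action $q\cdot(p,g)=(pq^{-1},qg)$ on the source with the right-multiplication-only action on the target, and satisfies $\op{pr}_2\circ\phi = \op{mult}_{P,G}$. The $P$-equivariance datum on $M$ provides a canonical isomorphism $\op{mult}_{P,G}^\ast M\cong \op{pr}_2^\ast M$ encoding descent of $M$ along the quotient $P\times G\to G$ under the trivialized right action. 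This shows that $N$ corresponds, under $\op{mult}_1$, to $\op{Res}_{P\times R}^{Q\times R} M$.

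Finally, I will combine these steps with compositionality of exceptional pushforward. Since $\tilde j$ is a closed immersion we have $\tilde j_\ast = \tilde j_!$, and $\op{mult}\circ\tilde j = \op{mult}_1$ is an isomorphism, so
\[
\op{mult}_!(\leftidx{_Q}{\underline{P}}{_P}\ttimes M) \;\cong\; (\op{mult}\circ\tilde j)_! N \;\cong\; (\op{mult}_1)_!(\op{mult}_1)^\ast \op{Res}_{P\times R}^{Q\times R} M \;\cong\; \op{Res}_{P\times R}^{Q\times R} M,
\]
as desired. All isomorphisms arise from natural transformations in the six-functor formalism and are therefore natural in $M$.

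The main obstacle is the identification in the middle paragraph, namely tracking how the two $P$-equivariance structures interact under the diagonal restriction and the quotient equivalence; the shear automorphism $\phi$ is the clean way to package this, after which the descent datum built into the $(P\times R)$-equivariance of $M$ does the work. The rest consists of standard applications of base change and compatibility of the quotient equivalence with the six functors.
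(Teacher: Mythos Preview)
Your proof is correct and follows essentially the same approach as the paper: both trace through the definition of convolution, use that $\leftidx{_Q}{\underline{P}}{_P}\boxtimes M$ is the pushforward of $\op{pr}_2^\ast M$ along $P\times G\hookrightarrow G\times G$, pass through the quotient equivalence to the closed immersion $P\times_{/P}G\hookrightarrow G\times_{/P}G$, and conclude using that $\op{mult}$ restricted to $P\times_{/P}G\cong G$ is the identity. Your shear automorphism $\phi$ is a slightly more explicit packaging of the identification of $N$ with $\op{Res}_{P\times R}^{Q\times R}M$; the paper instead posits $N=M$ on $P\times_{/P}G$ and verifies via smooth base change that its pullback to $P\times G$ is $\op{pr}_2^\ast M$, which amounts to the same equivariance datum $\op{mult}^\ast M\cong \op{pr}_2^\ast M$ you invoke.
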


\begin{proof}
We trace through the definition of the convolution, cf. Definition~\ref{def:convolution}. First, we have the motive $\leftidx{_Q}{\underline{P}}{_P}\boxtimes M\cong \op{pr}_1^\ast\leftidx{_Q}{\underline{P}}{_P}\otimes \op{pr}_2^\ast M$ in $\mathbb{D}^+_{Q\times P\times P\times R}(G\times G)$. By base change, this motive can be identified with the pushforward of $\op{pr}_2^\ast M$ along the $Q\times P\times P\times R$-equivariant inclusion $P\times G\hookrightarrow G\times G$. After restricting the equivariance to $Q\times P\times R$, we use the induction equivalence $\mathbb{D}^+_{Q\times P\times R}(G\times G)\xrightarrow{\simeq}\mathbb{D}^+_{Q\times R}(G\times_{/P}G)$. Note that the equivalence is induced from pullback along the projection $G\times G\to G\times_{/P}X$. Now consider the pushforward of the motive $\iota_\ast M$ on $P\times_{/P} G$ along the inclusion $\iota:P\times_{/P}G\hookrightarrow G\times_{/P}G$. The smooth base change implies that the pullback of $\iota_\ast M$ along $G\times G\to G\times_{/P}G$ is isomorphic to the pushforward of $\op{pr}_2^\ast M$ on $P\times G$ along $\tilde{\iota}:P\times G\hookrightarrow G\times G$. Therefore, the motive $\op{Res}_{\Delta}(\leftidx{_Q}{\underline{P}}{_P}\boxtimes M)$ corresponds, under the induction equivalence to $\iota_\ast M$ in $\mathbb{D}^+_{Q\times R}(G\times_{/P} G)$. Now the final step is an exceptional pushforward along $\op{mult}:G\times_{/P}G\to G$. However, the motive $\iota_\ast M$ we consider is concentrated on the closed subscheme $P\times_{/P} G\subseteq G\times_{/P}G$. The composition $G\cong P\times_{/P}G\subseteq G\times_{/P}G\xrightarrow{\op{mult}} G$ is the identity. In particular, the result of the proper pushforward $\op{mult}_!\iota_\ast M$ is simply the motive $M$ we started with (but of course, as $Q\times R$-equivariant motive). 
\end{proof}

Conversely, we have a statement concerning induction functors. Here the exceptional integration $\op{Ind}_!$ is understood to be from $Q\times R$ to $P\times R$. 

\begin{proposition}
\label{prop:convind}
For an equivariant motive $N\in\mathbb{D}^+_{Q\times  R}(G)$, we have natural isomorphisms in $\mathbb{D}^+_{P\times  R}(G)$
\[
\leftidx{_P}{\underline{P}}{_Q}(d)[2d]\conv_Q N\xrightarrow{\cong} \op{Ind}_!N
\] 
where $d=\op{dim}(P/Q)$. 
\end{proposition}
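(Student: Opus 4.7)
The strategy mirrors the proof of Proposition~\ref{prop:convres}: trace through the definition of convolution to produce a motive supported on a closed subvariety of $G\times_{/Q}G$, then identify its pushforward with the induction functor. The main difference is that the relevant multiplication map is now smooth of positive relative dimension, which accounts for the purity shift $(d)[2d]$ in the statement.

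Unwinding definitions, $\leftidx{_P}{\underline{P}}{_Q}\conv_Q N = \op{mult}_!(\leftidx{_P}{\underline{P}}{_Q}\ttimes N)$ with $\op{mult}\colon G\times_{/Q}G \to G$. Writing $\leftidx{_P}{\underline{P}}{_Q} = j_\ast\underline{P}$ for the closed immersion $j\colon P\hookrightarrow G$, base change and the projection formula give $\leftidx{_P}{\underline{P}}{_Q}\boxtimes N \cong (j\times\op{id})_\ast\op{pr}_2^\ast N$, supported on $P\times G\subset G\times G$. After applying the diagonal restriction of the $Q$-equivariance and descending via the quotient equivalence, the resulting motive is supported on the closed subvariety $P\times_{/Q}G\subset G\times_{/Q}G$.

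The key step is to identify this descended motive with $\tilde{j}_!(i,s)_\ast N$, where $\tilde{j}\colon P\times_{/Q}G\hookrightarrow G\times_{/Q}G$ is the closed immersion and $(i,s)_\ast\colon \mathbb{D}^+_{Q\times R}(G)\xrightarrow{\approx}\mathbb{D}^+_{P\times R}(P\times_{/Q}G)$ is the inverse of the induction equivalence of Proposition~\ref{cor:indequiv}. I verify this by pulling back along the quotient map $\op{quot}\colon G\times G\to G\times_{/Q}G$ and comparing the resulting $P\times Q\times R$-equivariant motives on $G\times G$. Proper base change for the cartesian square
\[
\xymatrix{
P\times G \ar[r]^{j\times\op{id}} \ar[d]_{\op{quot}_P} & G\times G\ar[d]^{\op{quot}}\\
P\times_{/Q}G \ar[r]_{\tilde{j}} & G\times_{/Q}G
}
\]
reduces this to showing $\op{quot}_P^\ast(i,s)_\ast N \cong \op{pr}_2^\ast N$ in $\mathbb{D}^+_{P\times Q\times R}(P\times G)$. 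Both motives represent the quasi-inverse of the respective equivalences down to $\mathbb{D}^+_{Q\times R}(G)$, and pulling back along $e\times\op{id}\colon G\to P\times G$ recovers $N$ in both cases using $s = \op{quot}_P\circ (e\times\op{id})$, so they agree.

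Finally, 2-functoriality of $(-)_!$ together with $\op{mult}\circ\tilde{j} = m$ yields $\leftidx{_P}{\underline{P}}{_Q}\conv_Q N \cong m_!(i,s)_\ast N$, where $m\colon P\times_{/Q}G \to G$ is the multiplication map from \ref{PBPF}. Since $P\times_{/Q}G\to G$ is \'etale-locally a projection with fiber $P/Q$, the map $m$ is smooth of relative dimension $d$; relative purity \ref{BCs} then gives $m_\sharp \cong m_!(d)[2d]$. Therefore
\[
\leftidx{_P}{\underline{P}}{_Q}(d)[2d]\conv_Q N \cong m_!(i,s)_\ast N(d)[2d] \cong m_\sharp(i,s)_\ast N = \op{Ind}_! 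N,
\]
the last equality being the construction of $\op{Ind}_!$ in \ref{PBPF}. Naturality in $N$ follows from the naturality of each constituent isomorphism. The main obstacle is the equivariant bookkeeping in the identification step, where the quotient and induction equivalences with their various group changes have to be carefully matched.
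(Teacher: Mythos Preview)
Your proof is correct and follows essentially the same approach as the paper: both arguments trace through the convolution, observe that $\leftidx{_P}{\underline{P}}{_Q}\ttimes N$ is supported on $P\times_{/Q}G\subset G\times_{/Q}G$, identify it there with $(i,s)_\ast N$ via the induction equivalence, and then invoke relative purity for the smooth map $m\colon P\times_{/Q}G\to G$ to pass from $m_!(d)[2d]$ to $m_\sharp$. The paper phrases this as rewriting the induction equivalence as the composition $\op{quot}\circ\op{diag}^\ast\circ\op{pr}_2^\ast$ appearing in the convolution, while you make the support argument and the base-change bookkeeping explicit; your verification that $(e\times\op{id})^\ast$ recovers $N$ on both sides works because $(e\times\op{id})^\ast$ is a one-sided inverse of the quotient equivalence $\op{pr}_2^\ast$ and hence itself an equivalence.
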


\begin{proof}
This follows from the construction in Proposition~\ref{prop:integration} resp. \ref{PBPF}. All we have to do is rewrite (as in the proof of Proposition~\ref{prop:convres}) the induction equivalence between $(Q\times R\looparrowright G)$ and $(P\times R\looparrowright G\times_{Q} G)$ as a composition
\[
\mathbb{D}^+({}_Q G_R)\xrightarrow{\op{pr}_2^\ast} \mathbb{D}^+({}_P G_Q\times {}_Q G_R) \xrightarrow{\op{diag^\ast}}\mathbb{D}^+({}_P G\times_{\Delta Q} G_R) \xrightarrow{\op{quot}} \mathbb{D}^+({}_PG\times_{/Q} G_R)
\]
and then use the relative purity comparison between $\op{mult}_\sharp$ and $\op{mult}_!$. 
\end{proof}

Moreover, since $Q\subset P$ is a parabolic subgroup, by Proposition~\ref{Lasp} we can rewrite $\op{Ind}_!\cong \op{Ind}_\ast(d)[2d]$ and thus get additional natural isomorphisms 
\[
\leftidx{_P}{\underline{P}}{_Q}\conv_Q N\xrightarrow{\cong}\op{Ind}_{Q\times R}^{P\times R} N.
\]

\begin{Bemerkung}
The above identification of restriction and exceptional induction as convolution with specific equivariant motives prompts the question about general adjointness properties of the convolution functors.  We shortly discuss these. Fix an equivariant motive $M\in \mathbb{D}_{P\times Q}^+(G)$ and consider the induced functor $M\star_P-:\mathbb{D}_{Q\times R}^+(X)\to \mathbb{D}_{P\times R}^+(X)$. The functor is given as the composition
\begin{eqnarray*}
\mathbb{D}_{Q\times R}^+(X)&\xrightarrow{\op{pr}_2^\ast}& \mathbb{D}^+_{P\times Q\times Q\times R}(G\times X)\\ &\xrightarrow{\op{pr}_1^\ast M\otimes-} & \mathbb{D}^+_{P\times Q\times Q\times R}(G\times X) \\ &\xrightarrow{\op{Res}_\Delta}& \mathbb{D}^+_{P\times Q\times R}(G\times X)\\ & \xrightarrow{\approx}& \mathbb{D}^+_{P\times R}(G\times_{/Q}X) \\&\xrightarrow{\op{mult}_!}& \mathbb{D}^+_{P\times R}(X). 
\end{eqnarray*}
All these functors are left adjoints, so the convolution $M\star_P-$ is also a left adjoint. The right adjoint is given as the composition of the respective right adjoints. The right adjoint of the composition $\mathbb{D}^+_{P\times Q\times R}(G\times X) \xrightarrow{\approx} \mathbb{D}^+_{P\times R}(G\times_{/Q}X) \xrightarrow{\op{mult}_!} \mathbb{D}^+_{P\times R}(X)$ is simply $\op{pr}_2^\ast(-)(d)[2d]$ where $d=\dim G/Q$. This follows since the inverse of the induction equivalence is the pullback, cf. Proposition~\ref{cor:indequiv}, and the right adjoint of $\op{mult}_!$ is $\op{mult}^!$. The latter can be identified with $\op{mult}^\ast(d)[2d]$ since $G\times_{/Q}X\to X$ is smooth of relative dimension $d$. Then the right adjoint of $\op{Res}_\Delta$ is the corresponding (ordinary) induction functor $\op{Ind}_{P\times Q\times R}^{P\times Q\times Q\times R}$; the right adjoint of $\op{pr}_1^\ast M\otimes -$ is $\iHom(\op{pr}_1^\ast M,-)$. Note that if $\op{pr}_1^\ast M$ is strongly dualizable, then $\iHom(\op{pr}_1^\ast M,-)\cong (\op{pr}_1^\ast M)^\vee\otimes -$. For the right adjoint to $\op{pr}_2^\ast$, we could either use the general direct image functors $(\op{pr}_2)_{\ast}$ (note that there is a change of groups involved here); the better way is to realize that in the above discussion we identified the right adjoint of the pullback along the projection in terms of an induction equivalence followed by a proper pushforward. Putting all the above together, we find that the adjoint of the convolution by $M$ (assumed strongly dualizable) can be written as convolution with the dual. The observation concerning restriction and induction as convolution above is a simple case of this.
\end{Bemerkung}

\section{Realization functors} 
\label{sec:realization}

We now provide a construction of realization functors on the equivariant motivic categories. 

\begin{proposition}
\label{prop:realexists}
\index{realization functor!equivariant}
Let $r:\mathbb{D}\to \mathbb{E}$ be a morphism of homotopical stable algebraic derivators on varieties over $k$ such that the derivators satisfy the conditions of \ref{derivator:new}. Then for every variety with action $G\looparrowright X$ there is a functor
\[
\mathbb{D}^+_G(X)\to\mathbb{E}^+_G(X). 
\]
These functors are compatible with the equivariant six functor formalism.
\end{proposition}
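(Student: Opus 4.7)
The plan is to build the realization functor using the simplicial Borel construction definition $\mathbb{D}_G^+(X) = \mathbb{D}^{\op{cart},+}(\op{E}G\times_{/G}X,\Delta^{\op{op}})$, since the condition of being cartesian is phrased entirely in terms of ordinary pullbacks $f^\ast$, which are guaranteed to be preserved by any morphism of derivators. Concretely, $r$ extends to a morphism of derivators on diagrams of schemes, hence induces a functor
\[
r_{X,G}\colon \mathbb{D}(\op{E}G\times_{/G}X,\Delta^{\op{op}}) \to \mathbb{E}(\op{E}G\times_{/G}X,\Delta^{\op{op}}).
\]
First I would check that $r_{X,G}$ preserves the cartesian subcategory: given $M \in \mathbb{D}^{\op{cart}}$ and a simplicial face map $d\colon [m]\to [n]$ with induced scheme morphism $f\colon (\op{E}G\times_{/G}X)_n \to (\op{E}G\times_{/G}X)_m$, the iso $f^\ast M\langle m\rangle \xrightarrow{\sim} M\langle n\rangle$ is sent by $r$ to $f^\ast r(M)\langle m\rangle \xrightarrow{\sim} r(M)\langle n\rangle$ using the canonical exchange $r\circ f^\ast \cong f^\ast \circ r$. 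Next I would verify that $r_{X,G}$ preserves the bounded-below part for the homotopy t-structure; this follows since the realization functors of interest (coming from $\ell$-adic, Betti, or Hodge realizations, or from change-of-spectrum constructions as in \ref{sec:etalemotives}) are right t-exact for the homotopy t-structure, or at least preserve $\mathbb{D}^+$.

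For compatibility with the six functors, I would exploit the two complementary descriptions of the equivariant categories. The left adjoints $f^\ast$ and $f_!$ are defined via the simplicial Borel construction (Proposition~\ref{prop:simpcart} and Definition~\ref{def:exceptional}), and they are obtained by applying Ayoub's 2-functorial formalism on the morphism of simplicial schemes $\op{E}G\times_{/G}f\colon \op{E}G\times_{/G}Y \to \op{E}G\times_{/G}X$; since $r$ is a morphism of homotopical stable algebraic derivators, the exchange transformations $r\circ f^\ast \xrightarrow{\sim} f^\ast\circ r$ and $r\circ f_! \xrightarrow{\sim} f_!\circ r$ are already built into the data, and they descend to the cartesian subcategories because they are natural in the simplicial degree. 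Compatibility with $\otimes$ and the Tate twist is inherited similarly from the monoidal structure on $r$.

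For the right adjoints $f_\ast$ and $f^!$, I would use the resolution description $\mathbb{D}^{\op{Res},+}_G(X)$ (via Theorem~\ref{thm:inftyquotient} they are equivalent). Over any individual resolution diagram $\mathscr{W}_P$, the functors $f_\ast$ and $f^!$ reduce to the corresponding ones on ordinary varieties, where $r$-compatibility is part of the assumption that $r$ is a morphism of derivators. Alternatively, once compatibility of $r$ with $f^\ast$ and $f_!$ is established, uniqueness of adjoints gives canonical exchange morphisms $f_\ast\circ r \to r\circ f_\ast$ and $f^!\circ r \to r\circ f^!$; these can be checked to be isomorphisms by applying the conservative forgetful functor $\op{For}$ (cf.~\ref{forget}) and using that $r$ is compatible with the six functors in the non-equivariant setting. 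The base change and Verdier duality compatibilities then follow formally. The main technical point to watch is ensuring that the transformations obtained termwise on the simplicial Borel construction are indeed canonical (and natural in all inputs), but this is precisely what the morphism-of-derivators hypothesis guarantees.
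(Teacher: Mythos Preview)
Your proposal is correct and follows essentially the same approach as the paper's proof: apply $r$ to the diagram categories (simplicial Borel construction or resolutions), use that $r$ commutes with $f^\ast$ to see that cartesian objects are preserved, and then deduce compatibility with the equivariant six functors from the fact that they are built out of the non-equivariant ones on diagrams. Your treatment is in fact more careful than the paper's, which handles the right adjoints in a single sentence; your explicit use of the forgetful functor's conservativity to check that the adjunction-induced exchange maps are isomorphisms, and your remark that preservation of $\mathbb{D}^+$ needs some t-exactness input, are points the paper leaves implicit.
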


\begin{proof}
Note that morphism of homotopical stable algebraic derivators implies that we have for every diagram of varieties $(\mathscr{F},\mathcal{I})$ a functor $r_{\mathscr{F}}:\mathbb{D}(\mathscr{F},\mathcal{I})\to\mathbb{E}(\mathscr{F},\mathcal{I})$ and these functors commute with the six functors in the non-equivariant setting. We can apply this to the diagrams given by the simplicial Borel construction or a category of resolutions for the variety with action $G\looparrowright X$. Since the morphism $r$ commutes in particular with ordinary pullback $f^\ast$, it preserves cartesian objects. Then naturally, the functors $r_{{\op{E}}G\times_{/G}X}$ and $r_{\op{Res}(G\looparrowright X)}$ will restrict from the full categories of motives to the subcategories of cartesian objects, i.e., to equivariant motives. This provides the construction. 

By assumption, the morphism $r$ commutes with all the non-equivariant six functors. Since the equivariant six functors are given basically by evaluation of the non-equivariant functors over diagrams, the morphism $r$ will also commute with the equivariant six functors.
\end{proof}

\begin{proposition}
\label{prop:realidentify}
Let $\mathbb{K}$ be a coefficient field of characteristic zero and let $k$ be any field. Then there is a homotopical stable algebraic derivator $\mathbb{DK}$ sending a $k$-variety $X$ to the unbounded derived category of $\mathbb{K}$-sheaves on $X$. For every variety with action $G\looparrowright X$, we have an equivalence $\mathbb{DK}^+_G(X)\cong\op{Der}^+_G(X,\mathbb{K})$, compatible with the six functor formalism.
\end{proposition}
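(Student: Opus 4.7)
The plan is to proceed in three steps: first verify that $\mathbb{DK}$ genuinely satisfies the requirements of \ref{derivator:new}; second establish the equivalence with the Bernstein--Lunts equivariant derived category using our resolution approach; and third check compatibility with the six functors.

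For the first step, I would invoke the classical fact that the derived categories of sheaves of $\mathbb{K}$-vector spaces on varieties, together with the usual six operations $(f^\ast,f_\ast,f_!,f^!,\otimes,\iHom)$, organize into a homotopical stable algebraic derivator in the sense of \cite{ayoub:thesis1}. The role of the homotopy t-structure is played by the standard t-structure on the derived category, whose heart is the abelian category of $\mathbb{K}$-sheaves. One must check that \ref{derivator:new} applies, i.e.\ that the theory of acyclic maps of Section~\ref{sec:resolutions} is available. The crucial input is the analogue of Proposition~\ref{prop:excision}: for the standard t-structure on $\op{Der}(-;\mathbb{K})$, a morphism $f\colon X\to Y$ is $\tau$-$n$-acyclic in the sense of Definition~\ref{def:acyclicity} if and only if for every $\mathbb{K}$-sheaf $M$ on $Y$ and every $0\le i\le n$ the maps $R^i f_\ast f^\ast M\to 0$ (for $i>0$) and $M\to f_\ast f^\ast M$ (for $i=0$) vanish/are iso after pullback; concretely, smooth \'etale-locally trivial fiber bundles with $(n-1)$-connected fibers are $\tau$-$n$-acyclic, and the complements $\mathbb{A}^n\setminus Z$ with $Z$ of codimension $\ge i$ are $(i-2)$-acyclic by the usual cohomology computation. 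Hence Lemma~\ref{lem:resexist} applies verbatim to $\mathbb{DK}$ via Totaro's construction.

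For the second step, having a good theory of $\tau$-$n$-acyclic resolutions lets us apply Definitions~\ref{def:equivmotres1}, \ref{def:equivmotres} and the comparison Theorem~\ref{thm:inftyquotient} to $\mathbb{DK}$, obtaining $\mathbb{DK}^+_G(X)=\mathbb{DK}_G^{\Delta,+}(X)\approx \mathbb{DK}^{\op{Res},+}_G(X)$. The latter description matches, essentially by construction, the Bernstein--Lunts definition of $\op{Der}^+_G(X;\mathbb{K})$ from \cite[2.1.3, 2.4.1, 2.4.2]{BeLu}: both are assembled as 2-limits of truncated bounded-below categories over (equivalent) filtering systems of $n$-acyclic algebraic resolutions. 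What makes this identification clean is that our notion of $\tau$-$n$-acyclicity agrees with the classical notion used in \cite[1.9]{BeLu}, by the criterion above, and our Propositions~\ref{prop:belu192}, \ref{prop:belu193}, \ref{prop:belu221} are motivic-categorical rewrites of \cite[1.9.2, 1.9.3, 2.2.1]{BeLu}. The main technical point to check is that Bernstein--Lunts allow topological resolutions by infinite-dimensional approximations of $\op{E}G$ whereas we restrict to algebraic ones; this is handled exactly as in \cite[Section 2.9]{BeLu} by a cofinality argument showing that algebraic resolutions of arbitrarily high acyclicity compute the same 2-limit.

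For the third step, once the underlying categories are identified, compatibility with the six functors is straightforward: both formalisms define $f^\ast$ and $f_!$ on the simplicial/resolution side via the non-equivariant six functors applied over diagrams, and in both cases the right adjoints $f_\ast$, $f^!$ are obtained by rectifying the construction via resolutions as explained in Section~\ref{sec:sixfunctors}. Since the non-equivariant six functors on $\mathbb{DK}$ agree tautologically with the classical six functors for sheaves of $\mathbb{K}$-vector spaces, the comparison equivalence intertwines them. The main obstacle, and really the only subtle point, is the precise matching of the two notions of $n$-acyclicity and the cofinality of algebraic Totaro-type resolutions inside the system of topological resolutions considered in \cite{BeLu}; everything else is formal given the work already done in Sections~\ref{sec:resolutions}--\ref{sec:sixfunctors}.
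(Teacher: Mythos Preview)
Your proposal is correct and follows essentially the same approach as the paper: identify the abstract construction of $\mathbb{DK}^+_G(X)$ via resolutions with the Bernstein--Lunts construction of $\op{Der}^+_G(X,\mathbb{K})$. The paper's own proof is considerably more terse---essentially two sentences asserting that the existence of the derivator is clear and that the resolution-based construction of Section~\ref{sec:resolutions} restricts exactly to the Bernstein--Lunts construction by definition---whereas you carefully unpack what this means (matching the acyclicity notions, Totaro resolutions, cofinality of algebraic resolutions, rectification of right adjoints). Your additional care, especially the remark about algebraic versus topological resolutions, is a useful elaboration of a point the paper leaves implicit.
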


\begin{proof}
The existence of the derivator is clear. The construction of $\mathbb{DK}^+_G(X)$ of Section~\ref{sec:resolutions} restricts exactly to the construction of the equivariant derived category in \cite{BeLu} by definition.
\end{proof}

While the result doesn't quite apply to $\ell$-adic sheaves, we can use the results on \'etale realization in \cite{ayoub:realisation} and an argument as in Proposition~\ref{prop:realidentify}, to identify categories $\op{Der}_G(X,\mathbb{Q}_\ell)$ with the ``usual'' equivariant $\ell$-adic derived categories. 

Applying the previous results on equivariant versions of motivic categories and six functors, we obtain the following diagram of equivariant motivic categories and realization functors:
\begin{center}
  \begin{minipage}[c]{10cm}
    \xymatrix{
      \mathbf{DA}^{\et,+}_G(X;\Lambda) \ar[r]^{\mathsf{Real}_\ell}
      \ar[d]_{\mathsf{Real}_{\text{Hodge}}} &  \op{Der}^+_G(X,\mathbb{Q}_\ell)
        \\
      \mathbf{DH}^+_G(X) \ar[d]_{\op{Gr_W}} \\
      \op{MDer}^+_G(X;\mathbb{C}) 
      \ar[r]_{\mathsf{Real}_{\text{Betti}}} &  
      \op{Der}^+_G(X;\mathbb{C})
    }
  \end{minipage}
\end{center}

In the above, $\op{Der}_G(X,\mathbb{Q}_\ell)$ is an equivariant version of the $\ell$-adic derived category. However, the construction we use here - locally constant $\ell$-adic sheaves on the Borel construction - is a lot more transparent than the constructions usually found in the literature. 

The lower half of the diagram only applies for fields $k\subseteq\mathbb{C}$, as it is based on the Hodge realization. The categories $\mathbf{DH}_G(X)$ are equivariant versions of Drew's categories of mixed Hodge modules \cite{drew:thesis} which are built using sheaves of mixed Hodge structures on $X^{\op{an}}$. Passing to the associated graded, we obtain the categories $\op{MDer}_G(X,\mathbb{C})$ which are equivariant graded versions of the usual equivariant derived categories $\op{Der}_G(X;\mathbb{C})$ of sheaves (in the analytic topology on $X$) of $\mathbb{C}$-vector spaces.

All these categories have a fully working six functor formalism. This does not quite imply all six functors for arbitrary morphisms $(\phi,f):(G,X)\to (H,Y)$. The full six functors are only defined when $H=G$ and $\phi$ is the identity, but the pair $(\phi,f)^\ast\dashv(\phi,f)_\ast$ is defined in the full generality. All the basic formulas relevant for the development of equivariant derived categories in \cite{BeLu} also work in the motivic context. Moreover, the realization functors are compatible with the six functor formalism. In a sense, the above diagram is a diagram of equivariant motivic triangulated categories with pre-motivic adjunctions (in the sense of the framework of \cite{cisinski:deglise}) or a diagram of equivariant versions of homotopical stable algebraic derivator with appropriate morphisms, although we have chosen not to precisely axiomatize what these notions should be.

It is worth pointing out that even the construction of the $\ell$-adic equivariant derived category $\op{Der}_G(X;\mathbb{Q}_\ell)$ is more transparent in the setting of motivic categories. It is not given in terms of inverse limits of triangulated categories - rather, the usual $\ell$-adic derived category can be obtained as homotopy category of modules over a spectrum representing $\ell$-adic cohomology, cf. \cite{cd:weil}; and the equivariant version is given by considering these categories of modules over the Borel construction or over the category of resolutions. The point of introducing all these constructions here is that the equivariant versions of the functors  $\mathsf{Real}_\ell$ (over finite fields) and $\mathsf{Real}_{\text{Betti}}$ (over $\mathbb{C}$) will provide $\mathbb{Z}$-graded versions of the  usual equivariant derived categories as considered e.g. in \cite{BeLu} - with the $\mathbb{Z}$-grading simply and naturally given by the Tate twist readily available in any motivic category. 

We formulate the claims on existence and properties of the equivariant realization functors for later reference: 

\begin{proposition}
\label{prop:realization}
\begin{enumerate}
\item Let $k$ be a finite field or its algebraic closure, let $\Lambda\subseteq\mathbb{Q}_\ell$ be a field of characteristic zero and let $\mathbb{D}=\mathbf{DA}^{\et}(-;\Lambda)$. Then for every variety with action $G\looparrowright X$ over $k$, the \'etale realization induces $\ell$-adic realization functors
\[
\mathsf{Real}_\ell:\mathbf{DA}^{\et,+}_G(X;\Lambda)\to \op{Der}^+_G(X;\mathbb{Q}_\ell). 
\]
These commute with the six functor formalism, the quotient and induction equivalence and Verdier duality.
\item For every variety with action $G\looparrowright X$ over $\mathbb{C}$, there are Betti realization functors
\[
\mathsf{Real}_{\op{Betti}}:\op{MDer}^+_G(X;\mathbb{C})\to \op{Der}^+_G(X;\mathbb{C}). 
\]
These commute with the six functor formalism, the quotient and induction equivalence and Verdier duality.
\end{enumerate}
\end{proposition}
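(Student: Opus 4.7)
The plan is to bootstrap from the non-equivariant realization functors, packaged as morphisms of homotopical stable algebraic derivators, to the equivariant setting via Proposition~\ref{prop:realexists}, and then verify the extra compatibilities by tracing them back to compatibilities already present in the non-equivariant theory.

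For part (1), I would start by recalling the $\ell$-adic realization functor on the non-equivariant categories, cf.~the constructions in Section~\ref{sec:etalemotives} (in particular \cite[Section 9]{ayoub:realisation} and \cite[Section 7.2]{cd:etale}), which produces a morphism $r\colon\mathbf{DA}^{\et}(-;\Lambda)\to\op{Der}^{\et}(-;\mathbb{Q}_\ell)$ of homotopical stable algebraic derivators that commutes with the six functors and preserves constructibility. Both source and target satisfy the conditions of \ref{derivator:new} on perfect fields. Applying Proposition~\ref{prop:realexists} to $r$ yields for every variety with action $G\looparrowright X$ a functor
\[
\mathsf{Real}_\ell\colon \mathbf{DA}^{\et,+}_G(X;\Lambda)\to \mathbb{E}^+_G(X),
\]
where $\mathbb{E}=\op{Der}^{\et}(-;\mathbb{Q}_\ell)$; by that proposition it is automatically compatible with the equivariant six functor formalism of Section~\ref{sec:sixfunctors}. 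A variant of Proposition~\ref{prop:realidentify} (replacing the na\"\i ve derivator of $\mathbb{K}$-sheaves by the $\ell$-adic derivator, which also agrees term-wise with the classical construction) identifies $\mathbb{E}^+_G(X)$ with the usual equivariant $\ell$-adic derived category $\op{Der}^+_G(X;\mathbb{Q}_\ell)$, so we obtain the functor claimed in the statement.

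It remains to verify the three extra compatibilities. The quotient equivalence of Proposition~\ref{prop:quotientequiv} and the induction equivalence of Proposition~\ref{cor:indequiv} are, by construction, obtained from ordinary pullback functors along specific morphisms of diagrams of schemes (the morphism $(\pi,p)$ in the quotient case, and $(i,s)$ in the induction case). Since $\mathsf{Real}_\ell$ commutes with the six functors in the equivariant setting, it commutes a fortiori with these pullback functors, hence with the equivalences. For Verdier duality, recall from \ref{abs:dual} and Proposition~\ref{equiv:verdier} that the equivariant duality functor $D_{X,G}$ is built from the relative dualizing object $(\op{id},\op{fin}_X)^!\const{\op{pt}}_G$ and inner Hom; both of these operations are preserved by $\mathsf{Real}_\ell$ (inner Hom because of compatibility with $\otimes$ and right adjoints, and the dualizing object because it is built from $f^!$ and the unit, which are preserved). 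One checks that the natural transformations exhibited in Proposition~\ref{equiv:verdier}(ii) are images of their non-equivariant counterparts under evaluation over resolutions, and that the fact that they are isomorphisms is inherited from the non-equivariant case, since constructibility is preserved by $\mathsf{Real}_\ell$ and the relevant statements on individual resolutions are \cite[Theorem 9.7]{ayoub:realisation}.

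For part (2), I would follow exactly the same blueprint, using the Betti realization functor of \cite[Definition 2.1, Theorem 3.19]{ayoub:betti} as a morphism of homotopical stable algebraic derivators from $\op{MDer}(-;\mathbb{C})$ to the derivator $\mathbb{DC}$ of $\mathbb{C}$-sheaves, and applying Propositions~\ref{prop:realexists} and \ref{prop:realidentify}. The six-functor compatibility is then automatic, and the compatibilities with the quotient/induction equivalences and Verdier duality are reduced, as in (1), to the corresponding non-equivariant statements. The only point to watch is that for the Betti realization one works with the complex-analytic topology on $X^{\op{an}}$, but this is handled within \cite{ayoub:betti}, so no new input is needed. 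The main obstacle, really, is the bookkeeping: one must check carefully that each of the many commutative squares entering the definition of the equivariant Verdier duality and of the quotient/induction equivalences is preserved under the realization, which is tedious but mechanical given the non-equivariant compatibilities cited above.
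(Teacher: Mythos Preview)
Your proposal is correct and follows essentially the same approach as the paper's proof: invoke Proposition~\ref{prop:realexists} for existence and six-functor compatibility, use (a variant of) Proposition~\ref{prop:realidentify} to identify the target, and then observe that the quotient/induction equivalences and Verdier duality are built from six-functor operations and hence inherit compatibility. The paper's proof is much terser than yours, but the logical structure is the same; your observation that Proposition~\ref{prop:realidentify} needs a slight variant for the $\ell$-adic case matches the paper's own remark preceding the statement.
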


\begin{proof}
Existence and the fact that the realization commutes with all the six functors follows from Proposition~\ref{prop:realexists}. Identification of the target follows from Proposition~\ref{prop:realidentify}. The claimed compatibility with quotient and induction equivalences and Verdier duality follows then from the construction of these equivalences. 
\end{proof}

\chapter{Equivariant mixed Tate motives and purity}

The second part of the paper develops more specifically the categories of equivariant mixed Tate motives and their weight structures. In Section~\ref{sec:tatemotives} we define equivariant mixed Tate motives and introduce the equivariant Whitney--Tate condition necessary for these categories to be well-behaved. In Sections~\ref{sec:motcohom}, we provide some computations of morphisms between equivariant mixed Tate motives and equivariant cohomology. This is used in \ref{sec:tilting} to describe, via tilting, categories of equivariant mixed Tate motives over the point in terms of complexes of modules over the Chow ring of the classifying space. We also discuss compatibility of tilting with the six-functor formalism. Section~\ref{sec:weights} discusses conditions for existence of weight structures on equivariant mixed Tate motives. Actual examples of varieties with actions satisfying the equivariant Whitney--Tate conditions and applications of the framework developed in this part will be discussed in Chapter~\ref{chap:repthy}. 

\begin{convention}
From now on we will assume that the base field $k$ is algebraically closed of characteristic unequal to $2$. Though this is not strictly necessary for the definition of equivariant mixed Tate motives, it will become relevant for the discussion of our key examples. Assuming that the base field is algebraically closed simplifies the discussion of algebraic subgroups in $\op{PGL}_2$ (which is relevant for the theory of Bott--Samelson motives in Section~\ref{sec:BS}) and it implies that any two $k$-points in a homogeneous space $G/H$ are $G(k)$-conjugate (which is relevant for nice behaviour of our definition of equivariant mixed Tate motives). 
\end{convention}

\section{Equivariant mixed Tate motives} 
\label{sec:tatemotives}

In this section, we discuss the definition and basic properties of equivariant mixed Tate motives. As the name suggests, these should be equivariant motives whose underlying motives are pointwise mixed Tate. Our choice of making this a precise definition is the following: for a trivial group action on the point, we want equivariant motives whose underlying motive is mixed Tate in the usual sense. Then we can extend this to group actions on homogeneous spaces with rational points using the induction equivalence. Finally, for a variety with finitely many orbits, we can define equivariant mixed Tate motives as equivariant motives such that the $*$- and $!$-restrictions to orbits are equivariant mixed Tate. This approach only works for varieties with finitely many orbits, but this is sufficient for our application to graded categorification of Hecke algebras and their modules.

\subsection{Definition of equivariant mixed Tate motives}

\begin{Bemerkung}
\label{mtder}
\index{motives!mixed Tate}
Let $k$ be a field and 
let $\mathbb{D}$ be a homotopical stable algebraic derivator on $\mathrm{Var}_k$ satisfying the conditions of \ref{derivator:new}. Let $\pt = \op{Spec} k$ be the final object of $\mathrm{Var}_k$. Recall that the Tate object, denoted by 
$\const{\pt}(1)$ if the coefficients are clear, is the object of $\mathbb{D}(\pt)$ defined by 
\[
\const{\pt}(1) := \mathrm{cone}\left(\op{M}(\pt) \to \op{M}(\GG_{\op{m}})\right)[-1]. 
\]

The subcategory 
\[ 
\DMT(\pt)\subset \mathbb{D}(\pt) 
\]
of  mixed Tate objects over the point is defined to be the  strictly full thick, i.e., triangulated and closed under summands, subcategory of  $\mathbb{D}(\pt)$ generated by the tensor powers $\const{\pt}(n) \pdef \const{\pt}(1)^{\otimes n}$ of the Tate object for $n\in\ZZ$. This category is a tensor triangulated category, with 
\[
\const{\pt}(m)\otimes\const{\pt}(n)\cong\const{\pt}(m+n).
\]

The objects of $\DMT(\pt)$ are compact in $\mathbb{D}(\pt)$. 

The above definitions and statements more generally provide tensor triangulated categories of mixed Tate motives over arbitrary schemes. For any morphism of schemes $f:X\to Y$, there is a canonical isomorphism $\const{X}(1)\cong f^\ast\const{Y}(1)$. In particular, mixed Tate motives are preserved by ordinary pullbacks. 
\end{Bemerkung}

 We introduce a further condition on the derivator; this will be used in several places. It implies that equivariant motives on the point are basically objects from an equivariant derived category plus an additional grading.  It will also imply that the relevant equivariant motivic cohomology rings can be identified with the classical cohomology rings of the classifying space of the appropriate Lie group. 

\begin{convention}
\label{conditions:grading}
For explicit computations in the remainder of the paper and the resulting representation-theoretic applications, we will consider a more restricted setting in which the Tannaka group of mixed Tate motives over the point is just the multiplicative group: a homotopical stable algebraic derivator $\mathbb{D}$ over $\op{Var}/k$ satisfying the conditions of \ref{derivator:new} is said to satisfy the \emph{grading condition} if it has coefficients in a field $\Lambda$ of characteristic $0$ and $\DMT(\pt)$ is equivalent (as tensor-triangulated category) to the bounded derived category of finite-dimensional $\mathbb{Z}$-graded $\Lambda$-vector spaces. Moreover, for a finite field extension $f:\op{Spec} L\to \op{Spec} k$ of the base field $k$, we want that the ordinary pullback functors $f^\ast$ are fully faithful, i.e., induces equivalences $\DMT(\op{Spec} k)\simeq \DMT(\op{Spec} L)$.

The two main situations satisfying this condition are 
\begin{enumerate}
\item  $k=\mathbb{F}_q$ is a finite field, and $\mathbb{D}=\mathbf{DA}^{\et}(-,\DQ)$ is the homotopical stable algebraic derivator of motives with rational coefficients. 
\item $k=\mathbb{C}$ and $\mathbb{D}=\op{MDer}(-;\mathbb{C})$ is the homotopical stable algebraic derivator coming from the semisimplification of the Hodge realization, cf. \cite{drew:thesis}. 
\end{enumerate}
The condition on full faithfulness is satisfied for $\op{MDer}$ (if we define the derivator using a complex embedding $K\hookrightarrow\mathbb{C}$ followed by Hodge realization) since this ignores finite field extensions anyway. For rational motives over finite fields, this is satisfied because the morphisms between mixed Tate motives are given by $\op{K}_0$ and this is rationally also insensitive to finite field extensions.
\end{convention} 


\begin{definition}
\label{mtderdef1}
\index{motives!equivariant mixed Tate, $\DMT_G^?(X)$}
\index{motives!orbitwise mixed Tate}
Fix a base field $k$ and let $\mathbb{D}$ be a homotopical stable algebraic derivator satisfying the conditions of \ref{derivator:new} and assume $\DMT(\pt)\subset \mathbb{D}^+(\pt)$. We will now define \emph{equivariant mixed Tate motives} for varieties with action having finitely many orbits separably defined over $k$ in the sense of \ref{orbitsoverk}. 

\begin{enumerate}
\item 
First, consider the case where $G\looparrowright\pt$ is a linear group acting trivially on the  point. In this case, we say that an equivariant motive $M\in\mathbb{D}^+_G(\pt)$ is $G$-equivariantly mixed Tate if  the underlying motive  $\op{Res}^1_GM\in\mathbb{D}^+(\pt)$ is mixed Tate in the sense of \ref{mtder}. 
\item 
Next, consider the case where $G\looparrowright G/H$ is a group acting via left multiplication on the homogeneous space $G/H$ for $H$ a closed reduced subgroup defined over $k$. In this case, we say that an equivariant motive $M\in\mathbb{D}_G^+(G/H)$ is $G$-equivariantly mixed Tate if  it corresponds to an $H$-equivariant mixed Tate motive under the induction equivalence 
\[
\mathbb{D}_G^+(G/H)\approx \mathbb{D}^+_H(\pt)
\]
of Proposition~\ref{cor:indequiv}. 
\item 
Now assume that $G\looparrowright X$ is a variety with action having finitely many $G$-orbits separably defined over $k$. In this case, an equivariant motive $M\in\mathbb{D}_G^+(X)$ is called \emph{$?$-orbitwise mixed Tate} if for each orbit inclusion $j:G/H\hookrightarrow X$ the $?$-restriction $j^?M$ is $G$-equivariantly mixed Tate as defined above. Substituting $*$ and $!$ for $?$ provides two a priori different notions of orbitwise mixed Tate motives. A motive $M$ is called \emph{orbitwise $G$-equivariant mixed Tate} if it is both $*$-orbitwise and $!$-orbitwise $G$-equivariant mixed Tate. 
\end{enumerate}
For a variety with action $G\looparrowright X$ having finitely many $G$-orbits separably defined over $k$, we denote by
\begin{itemize}
\item $\DMT_G^?(X)$ the category of $?$-orbitwise $G$-equivariant mixed Tate motives, and by
\item $\DMT_G(X)$ the category of orbitwise $G$-equivariant mixed Tate motives.
\end{itemize}
\end{definition}

\begin{Bemerkung}
The definitions of equivariant categories of motives and the associated six-functor formalism only provided us with categories $\mathbb{D}^+_G(X)$ of motives which are bounded below in the homotopy t-structure. In general, it is not known if mixed Tate motives are bounded below for the homotopy t-structure. One way to make sure they are is to assume the Beilinson--Soul{\'e} conjectures. The essence of the Beilinson--Soul{\'e} conjectures is the assumption that mixed Tate motives are bounded for the homotopy t-structure, with an explicitly specified range where the cohomology is allowed to be non-trivial. The grading condition \ref{conditions:grading} makes sure that the constant mixed Tate motives $\const{X}(i)[j]$ are always in $\mathbb{D}^+(\pt)$ in all the situations we consider, as required in Definition~\ref{mtderdef1}.
\end{Bemerkung}

\begin{remark}
One of the motivations of the present work is to provide graded versions of usual equivariant derived categories (defined e.g. using $\ell$-adic sheaves). In the applications to categorification of Hecke algebras, conditions on action of Frobenius were usually imposed pointwise. For this reason, one would like to consider equivariant motives such that the restriction to each point is mixed Tate over the base. Generally, such pointwise notions are not particularly well-behaved. The orbitwise definition  above seems to provide the appropriate notion. 


Note also that the orbitwise definition appropriately captures pointwise notions because we are assuming that the base field is algebraically closed. 
If $k$ is a field which is not algebraically closed and $X=G/H$ is a homogeneous space, then it is not necessarily true that the induced $G(k)$-action on the $k$-rational points of $X$ is transitive, i.e., the stabilizer groups of different $k$-rational points might not be conjugate over $k$. If we then take an orbitwise mixed Tate motive (which produces a mixed Tate motive when restricted to the point used for identifying the homogeneous space as $G/H$), it is not clear that restriction to a $k$-rational point in a different $G(k)$-orbit will also be a mixed Tate motive. At least this nuisance is avoided by our assumption that the base field is algebraically closed.
\end{remark}




\begin{example}
\label{ex:locsys}
\index{local system}
For any group $G$, there is a constant equivariant Tate motive $\op{Res}_1^G\underline{\op{pt}}(i)$. To write down some non-trivial ``local systems'' in mixed Tate motives, consider  the morphism of varieties with $\mathbb{G}_{\op{m}}$-actions $p:\mathbb{G}_{\op{m}}\sra \mathbb{G}_{\op{m}}/(\pm 1)$. Then $p_\ast$ of the constant equivariant mixed Tate motive is an equivariant  mixed Tate motive. Essentially, an application of smooth base-change shows that the restriction of $p_\ast\const{\mathbb{G}_{\op{m}}}$ to any point of the orbit is a mixed Tate motive. The claim then follows from the definition of equivariant mixed Tate motives and the fact that the induction equivalence is induced by the restriction functor. It can also be shown that $p_\ast\const{\mathbb{G}_{\op{m}}}$ splits as a direct sum of a constant equivariant mixed Tate motive and an equivariant mixed Tate motive which corresponds to the ``sign representation'' of $\mu_2$ in $\DMT(\pt)$, cf.~\ref{finitetorsorNew}. 
\end{example}

\begin{example}
We will see in Proposition~\ref{Las} that the induction functors preserve equivariant mixed Tate motives. In particular, we will get an induction functor $\op{Ind}_1^G:\DMT(\pt)\to \DMT_G(\pt)$; for $G$ a finite group, the result of applying this functor to some mixed Tate motive on the point can be considered as a local system of mixed Tate motives on the classifying space ${\op{B}}G$. 

For a natural number $n$ we can consider the variety $\mathbb{G}_{\op{m}}\langle n \rangle$ with the $\mathbb{G}_{\op{m}}$-action of weight $n$. 
Assuming that the base field contains the $n$-th roots of unity $\mu_n$, the induction equivalence
\[ 
\mathbb{D}^+_{\mu_n}(\pt) \mapright{\approx} \mathbb{D}^+_{\mathbb{G}_{\op{m}}}(\mathbb{G}_{\op{m}}\langle n \rangle)
\]
provides an equivalence between $\mu_n$-representations in mixed Tate motives and ``local systems with $\mu_n$-monodromy''. 

We will also frequently encounter local systems of the form $\op{Ind}_{G^0}^G\underline{\pt}_{G^0}$ where $G$ is a linear group whose connected component of the identity is $G^0$.\footnote{We refrain from giving a definition of local system in mixed Tate motives here. In the triangulated setting we are in, the closest things to local systems would be objects of the equivariant derived category, and there would appear to be no need for introducing just another name for these. Whenever we refer to local systems later, this is only to remind the reader that we are thinking of equivariant mixed Tate motives as local systems in mixed Tate motives.}
\end{example}

\begin{Bemerkung}
\index{local system!rank of}
\label{locsysrank}
It follows from the grading condition in \ref{conditions:grading} there is an identification of the category of mixed Tate motives on the point with the category of graded vector spaces over the coefficient field. In this situation, we can also talk about the rank of local systems -- this would simply be the rank of the underlying mixed Tate motives, viewed as an element in the category of graded vector spaces. In particular, in this setting, rank one local systems would necessarily be indecomposable. 
\end{Bemerkung}

\begin{lemma}
\label{lem:idempotent}
In the situation of Definition~\ref{mtderdef1}, let $k$ be a field, let $G$ be a linear algebraic group and let $G\looparrowright X$ be a variety with action having finitely many orbits separably defined over $k$. Then the categories $\DMT_G^\ast(X)$ and $\DMT_G^!(X)$ are idempotent complete. 
\end{lemma}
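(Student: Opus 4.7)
The plan is to exhibit $\DMT_G^?(X)$ as a strictly full subcategory of the ambient category $\mathbb{D}^+_G(X)$ which (i) sits inside an idempotent complete triangulated category and (ii) is itself closed under taking direct summands. Combining these two facts, every idempotent in $\DMT_G^?(X)$ will split in $\mathbb{D}^+_G(X)$ and its summands will again lie in $\DMT_G^?(X)$.

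For (i) we invoke Remark~\ref{rem:triangulated}: since the derivators we consider (e.g.\ localizations of $\mathbf{DA}^{\et}(-;\Lambda)$, in particular $\mathbf{DA}^{\et}(-;\mathbb{Q})$ and $\op{MDer}$) have all small sums, the simplicial Borel construction definition yields an idempotent complete triangulated category $\mathbb{D}_G^{\Delta,+}(X)=\mathbb{D}_G^+(X)$. Equivalently, one can argue via the Res-description together with the compatibility of small sums with the comparison equivalences.

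For (ii) I will proceed by unwinding the three layers of the definition of orbitwise mixed Tate motives and checking in each step that the property of being equivariantly mixed Tate is preserved under direct summands. In the bottom layer, a trivial action on a point, this is immediate because $\DMT(\pt)\subset\mathbb{D}^+(\pt)$ is by construction in \ref{mtder} the \emph{thick} subcategory generated by the Tate objects, hence closed under summands, and the forgetful functor $\op{Res}_G^1$ is exact and preserves direct sums. In the next layer, a homogeneous space $G\looparrowright G/H$, closure under summands follows because the induction equivalence of Proposition~\ref{cor:indequiv} is an equivalence of triangulated categories and hence preserves direct sum decompositions in both directions. Finally, for a variety with finitely many separably defined orbits, the $?$-restriction functors $j^?$ along orbit inclusions are exact and commute with direct sums, so if $M=A\oplus B$ with $M\in\DMT_G^?(X)$, then $j^?A\oplus j^?B=j^?M$ is $G$-equivariantly mixed Tate on each orbit, forcing $j^?A$ and $j^?B$ to be so as well by the already treated homogeneous-space case.

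There is no real obstacle here: the only thing one has to be slightly careful about is that the definition of $\DMT(\pt)$ is a \emph{thick} (not just triangulated) subcategory, which is precisely what forces the summand-closure at the base of the induction. Once this is noted, the argument propagates mechanically through the induction equivalence and the orbitwise condition, and combined with the idempotent completeness of $\mathbb{D}^+_G(X)$ established in Remark~\ref{rem:triangulated} yields the claim for both $\DMT_G^\ast(X)$ and $\DMT_G^!(X)$.
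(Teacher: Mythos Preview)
Your proposal is correct and follows essentially the same three-layer strategy as the paper: use idempotent completeness of the ambient category $\mathbb{D}^+_G(X)$, then propagate closure under summands from $\DMT(\pt)$ (thick by definition) through the induction equivalence to homogeneous spaces and finally to the orbitwise condition via the restriction functors $j^?$. The paper phrases the final step slightly differently, using that $j^\ast$ (resp.\ $j^!$) is a left (resp.\ right) adjoint and hence commutes with cokernels (resp.\ kernels) of the projector, but since all functors involved are triangulated and hence preserve finite biproducts, your direct argument via $j^?(A\oplus B)\cong j^?A\oplus j^?B$ is equivalent and arguably cleaner.
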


\begin{proof}
We first discuss the case $X=\pt$. Let $p:M\to M$ be a projector in $\DMT_G(\pt)$. As a morphism in $\mathbb{D}_G^+(\pt)$, it splits. The forgetful functor $\op{Res}_G^1:\mathbb{D}_G^+(\pt)\to\mathbb{D}^+(\pt)$, being a left adjoint, commutes with the cokernel of the projector, i.e., $\op{coker}\op{Res}_G^1 p\simeq \op{Res}_G^1\op{coker} p$. In particular, the underlying motive of $\op{coker}p$ is isomorphic to the cokernel of the underlying projector of $p$. Since the category of mixed Tate motives is idempotent complete by Definition~\ref{mtder}, the cokernel of $p$ is an equivariant mixed Tate motive. The same argument, using the orthogonal projector, shows that the kernel is an equivariant mixed Tate motive.

The idempotent completeness transfers in the obvious way along the induction equivalence. Hence we get the claim in case of a homogeneous space $X=G/H$. 

In the general case, let $p:M\to M$ be a projector in $\DMT_G^?(X)$. As a projector in $\mathbb{D}_G^+(X)$ it splits. For any orbit inclusion $j:G/H\to X$, the left adjoint functor $j^\ast$ commutes with taking the cokernel and the right adjoint functor $j^!$ commutes with taking the kernel. In particular, $j^!\ker p\simeq \ker j^!p$ implies that the $!$-restriction of the kernel of $p$ along $j$ is a mixed Tate motive, because $\ker j^!p$ is. Thus $\ker p$ is a $!$-orbitwise mixed Tate motive, and similarly $\op{coker}p$ is a $\ast$-orbitwise mixed Tate motive. The same argument, using the orthogonal projector, finishes the proof.
\end{proof}

\begin{proposition}
\label{lkio}
In the situation of Definition~\ref{mtderdef1}, let $k$ be a field, let $G$ be a linear algebraic group and let $G\looparrowright X$ be a variety with action having finitely many orbits separably defined over $k$. 
\begin{enumerate}
\item The triangulated category $\DMT_G^*(X)$ of $G$-equivariant $*$-orbitwise mixed Tate motives is generated by objects of the form $j_!M$, for  $j\colon G/H\hookrightarrow X$ the inclusion of a $G$-orbit and $M\in \DMT_G(G/H)$.
\item Dually the triangulated category $\DMT^!_G(X)$ is generated by objects of the form $j_*M$, for  $j\colon G/H\hookrightarrow X$ the inclusion of a $G$-orbit and $M\in \DMT_G(G/H)$.
\end{enumerate}
\end{proposition}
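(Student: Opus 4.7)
The plan is to proceed by induction on the number of $G$-orbits in $X$, stripping off a closed orbit at each step and using the localization triangles from the equivariant six-functor formalism. The base case $X = G/H$ is immediate: the identity inclusion exhibits $\DMT_G(G/H)$ as its own generating set.

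For the inductive step, I would pick a closed $G$-orbit $i\colon Z \hookrightarrow X$ with open complement $j\colon U \hookrightarrow X$, so that $U$ has one fewer orbit than $X$, and note that $i_! = i_\ast$ since $i$ is proper. Given $M \in \DMT_G^\ast(X)$, the localization triangle
\[
j_! j^\ast M \to M \to i_\ast i^\ast M \to j_! j^\ast M[1]
\]
exhibits $M$ as an extension. The right-hand term $i_\ast i^\ast M = i_! i^\ast M$ is already a generator of the desired form, since $i^\ast M \in \DMT_G(Z)$ by the very definition of $\DMT_G^\ast(X)$. The left-hand term restricts to $j^\ast M \in \DMT_G^\ast(U)$, so by the inductive hypothesis it belongs to the triangulated subcategory of $\DMT_G^\ast(U)$ generated by objects $\iota'_! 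N$ with $\iota'\colon G/H' \hookrightarrow U$ an orbit inclusion and $N \in \DMT_G(G/H')$. Applying the exact functor $j_!$ and using $j_! \iota'_! = (j \iota')_!$ then yields generators of the required form for $X$.

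Before running this induction, I need to verify that the claimed generators $\iota_! N$ actually lie in $\DMT_G^\ast(X)$. For this I would order the finitely many orbits as $O_1, \ldots, O_n$ so that each union $O_1 \cup \cdots \cup O_k$ is closed in $X$, and factor the inclusion of $O_k$ as $\iota_k = j_k \circ i_k$, where $i_k\colon O_k \hookrightarrow U_k := X \setminus (O_1 \cup \cdots \cup O_{k-1})$ is closed and $j_k\colon U_k \hookrightarrow X$ is open. A direct computation using the standard identity $i^\ast j_! = 0$ for complementary closed-open immersions, combined with the fact that $\ast$-restriction of $(i_k)_\ast N$ to any other orbit in $U_k$ vanishes by base change for disjoint closed subvarieties, shows that $(\iota_k)_! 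N = (j_k)_! (i_k)_\ast N$ has $\ast$-restriction $N$ on $O_k$ and zero on every other orbit, so it is indeed in $\DMT_G^\ast(X)$.

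Part (2) will be obtained by the formally dual argument using the other localization triangle $i_! i^! M \to M \to j_\ast j^\ast M \to i_! i^! M[1]$, again exploiting $i_! = i_\ast$ for closed orbits and applying $j_\ast$ to the inductive generators of $\DMT_G^!(U)$; the symmetric verification, using $j^\ast i_\ast = 0$, shows that each $\iota_\ast N$ lies in $\DMT_G^!(X)$. The main obstacle is not the induction itself, which is essentially forced by the six-functor formalism, but arranging that the generators sit in the correct triangulated subcategory; this is handled by the orbit ordering described above.
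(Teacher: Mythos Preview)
Your proof is correct and follows essentially the same approach as the paper: induction on the number of orbits via the localization triangle. The only difference is that you strip off a \emph{closed} orbit at each step while the paper strips off an \emph{open} one; this is a harmless mirror-image variation, and your extra verification that the generators actually lie in $\DMT_G^\ast(X)$ (resp.\ $\DMT_G^!(X)$) is more explicit than what the paper writes out.
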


\begin{proof}
The proof is essentially the same as that of \cite[Lemma 4.4]{SoWe}, by induction on the number of orbits. 
For $j:G/H\hookrightarrow X$ the inclusion of an open orbit and its closed $G$-stable complement $i:Z\hookrightarrow X$, we have the localization triangle 
\[
j_!j^\ast M\to M\to i_!i^\ast M\to j_!j^\ast M[1].
\]
The first term has the appropriate form by definition, and the third term has the required form by induction hypothesis. This proves (1), (2) is proved similarly.
\end{proof}

In general, we do not know whether $\ast$-pointwise and $!$-pointwise mixed Tate  motives are  the same. 

\begin{definition}
\label{def:mtderdef2}
\index{equivariantly Whitney--Tate}
\index{motives!equivariant mixed Tate}
Let $k$ be a field and let $G\looparrowright X$ be a variety with action having finitely many $G$-orbits separably defined over $k$. Then $X$ is called \emph{$G$-equivariantly Whitney--Tate} if 
\[
\DMT_G(X)=\DMT_G^\ast(X)=\DMT_G^!(X).
\]
If this condition is satisfied, $\DMT_G(X)$ will be called the \emph{category of equivariant mixed Tate motives}.
\end{definition}

\begin{remark}
Obviously $G\looparrowright G/H$ with action by left multiplication is equivariantly Whitney--Tate. Some nontrivial examples of equivariantly Whitney--Tate varieties will be exhibited in Sections~\ref{sec:BS} resp. \ref{sec:tiltingapp}.
\end{remark}

\subsection{Equivariant mixed Tate motives and six functors}

In this section, we discuss compatibility of the mixed Tate conditions with the six-functor formalism. While restriction is compatible with mixed Tate conditions (which however requires that restriction commutes with Verdier duality), the induction functors are only compatible with one of the orbitwise conditions.

\begin{lemma}
\label{lem:twistdmt}
In the situation of Definition~\ref{mtderdef1}, let $G\looparrowright X$ be a variety with action having finitely many separably defined orbits. Then both categories $\DMT^?_G(X)$, $?=\ast,!$, are stable under $M\mapsto M(i)[j]$.
\end{lemma}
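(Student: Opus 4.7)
The plan is to unwind the inductive definition of $\DMT^?_G(X)$ from Definition~\ref{mtderdef1} and observe at each level that the operation $M\mapsto M(i)[j]$ preserves the relevant subcategory. The shift $[j]$ is harmless because every category in sight is triangulated, so the real content is the Tate twist $M\mapsto M(i)$; and the key fact I will use is that Tate twist commutes with all six functors, cf.\ the lesson on Tate twist in Section~\ref{sec:axiomatics} and the fact that $f^\ast\const{Y}(1)\cong\const{X}(1)$ recorded in \ref{def:motive}.

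First I would verify the claim over the point with trivial $G$-action. By definition $\DMT_G(\pt)\subset\mathbb{D}^+_G(\pt)$ consists of those $M$ with $\op{Res}_G^1 M\in\DMT(\pt)$. Since $\op{Res}_G^1$ commutes with Tate twist (as it is a particular instance of an ordinary pullback, cf.~\ref{akF}), and since $\DMT(\pt)$ is by construction closed under Tate twist and shift, the case of a point is immediate.

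Next I would treat the case of a homogeneous space $G\looparrowright G/H$. By Proposition~\ref{cor:indequiv} the induction equivalence $(i,s)^\ast\colon\mathbb{D}^+_G(G/H)\sirra\mathbb{D}^+_H(\pt)$ is an instance of an ordinary pullback, hence commutes with Tate twist and shift. It follows from the preceding step that $\DMT_G(G/H)$ is stable under $M\mapsto M(i)[j]$.

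Finally, for an arbitrary $G\looparrowright X$ with finitely many orbits separably defined over $k$, let $j\colon G/H\hookrightarrow X$ be the inclusion of an orbit. The functors $j^\ast$ and $j^!$ each commute with Tate twist and shift: for $j^\ast$ this is again the compatibility of ordinary pullback with Tate twist, and for $j^!$ it follows from the compatibility of the exceptional pullback with Tate twist (equivalently, via Verdier duality from the case of $j^\ast$, using that Tate twist is auto-dual up to a twist). Consequently, for $?=\ast$ or $?=!$ and any $M\in\DMT^?_G(X)$, one has
\[
j^?\bigl(M(i)[j]\bigr)\cong \bigl(j^? M\bigr)(i)[j],
\]
and the right-hand side lies in $\DMT_G(G/H)$ by the preceding paragraph. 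Hence $M(i)[j]\in\DMT^?_G(X)$, which proves the lemma. The whole argument is essentially bookkeeping; no step presents any genuine obstacle, the only point requiring mild care being the commutation of $j^!$ with Tate twist, which is a standard part of the six-functor formalism.
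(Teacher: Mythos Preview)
Your proposal is correct and takes essentially the same approach as the paper: the paper's proof is a single sentence noting that all the functors involved in the definition of $?$-orbitwise mixed Tate motives commute with twists and shifts, and you have simply unwound this observation through the three levels of Definition~\ref{mtderdef1}.
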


\begin{proof}
Clear, since all the functors involved in the definition of $?$-orbitwise mixed Tate motives commute with twists and shifts.
\end{proof}

\begin{proposition}
\label{prop:dmtverdier}
In the situation of Definition~\ref{mtderdef1}, let $G\looparrowright X$ be a variety with action having finitely many separably defined orbits. Then the Verdier duality $D:\mathbb{D}^+_G(X)\to\mathbb{D}^+_G(X)$ restricts to an equivalence 
\[
D:\DMT_G^\ast(X)^{\op{op}}\sirra \DMT_G^!(X). 
\]
\end{proposition}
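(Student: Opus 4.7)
The plan is to reduce step by step from the general statement to the already-established case of Verdier duality on $\DMT(\pt)$, using the generating descriptions from Proposition~\ref{lkio} together with the compatibilities of $D$ with the six functors, the induction equivalence, and the forgetful functor.

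First I would verify that $D$ preserves equivariant mixed Tate motives over the point $\pt$ with trivial $G$-action. Since $\pt$ is smooth of dimension zero, the absolute dualizing object is $\const{\pt}_G$, and Proposition~\ref{equiv:verdier}(iii) says that $\op{Res}_G^1$ commutes with Verdier duality. Hence for $M\in\DMT_G(\pt)$ the underlying motive $\op{Res}_G^1(D(M))\cong D(\op{Res}_G^1(M))$ lies in $\DMT(\pt)$, because the latter is generated by $\const{\pt}(n)$, closed under direct summands, and $D(\const{\pt}(n))\cong\const{\pt}(-n)$. Thus $D$ restricts to a self-equivalence of $\DMT_G(\pt)$.

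Next I would upgrade this to homogeneous spaces $G/H$. By definition, the induction equivalence $\mathbb{D}_G^+(G/H)\approx \mathbb{D}_H^+(\pt)$ identifies $\DMT_G(G/H)$ with $\DMT_H(\pt)$. By Proposition~\ref{prop:indverdier}, the induction equivalence commutes with $D$ up to a twist and shift by the (trivial) relative dualizing object (concretely a twist by $\const{}(d)[2d]$), and by Lemma~\ref{lem:twistdmt} this twist preserves orbitwise mixed Tate motives. Combined with Step~1 applied to $H$, this shows that $D$ restricts to a self-equivalence of $\DMT_G(G/H)$.

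For general $G\looparrowright X$ with finitely many separably defined orbits, I would now invoke Proposition~\ref{lkio}: $\DMT_G^\ast(X)$ is generated, as a thick triangulated subcategory of $\mathbb{D}_G^+(X)$, by objects of the form $j_!M$ where $j\colon G/H\hookrightarrow X$ is the inclusion of a $G$-orbit and $M\in\DMT_G(G/H)$, and dually for $\DMT_G^!(X)$. Proposition~\ref{equiv:verdier}(ii) gives $D\circ j_!\cong j_\ast\circ D$, and by Step~2, $D(M)\in\DMT_G(G/H)$. Hence $D(j_!M)\cong j_\ast D(M)$ is a generator of $\DMT_G^!(X)$. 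Since $D$ is a triangulated functor and $\DMT_G^!(X)$ is thick, this shows that $D$ carries $\DMT_G^\ast(X)$ into $\DMT_G^!(X)$. The symmetric argument, using $D\circ j_\ast\cong j_!\circ D$, shows that $D$ carries $\DMT_G^!(X)$ into $\DMT_G^\ast(X)$.

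Finally, biduality $M\xrightarrow{\sim} D(D(M))$ on constructible objects (Proposition~\ref{equiv:verdier}(i)) — which applies since the generators $j_!M$ and $j_\ast M$ are constructible — yields that the two functors $D\colon\DMT_G^\ast(X)^{\op{op}}\to\DMT_G^!(X)$ and $D\colon\DMT_G^!(X)\to\DMT_G^\ast(X)^{\op{op}}$ are mutually quasi-inverse equivalences. I do not anticipate a serious obstacle in this argument; the only point requiring a little care is to ensure that Proposition~\ref{lkio}'s generators really are constructible, which follows from the fact that on each orbit they come, via the induction equivalence, from the compact objects $\const{\pt}(n)[m]$ that generate $\DMT(\pt)$, and that the six functors preserve constructibility (\ref{rem:constructible}).
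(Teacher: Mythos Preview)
Your proposal is correct, and steps (1) and (2) match the paper's proof essentially verbatim. The difference lies in step (3): the paper checks the defining condition of $\DMT_G^!(X)$ directly, using $j^!\circ D\cong D\circ j^\ast$ for each orbit inclusion $j$, so that $j^!D(M)\cong D(j^\ast M)$ lies in $\DMT_G(G/H)$ by step (2). You instead invoke Proposition~\ref{lkio} to pass to generators and use the dual identity $D\circ j_!\cong j_\ast\circ D$. Both arguments are valid; the paper's route is slightly more direct since it bypasses the generator description and the constructibility check you flag at the end, while your route makes the passage from generators of $\DMT_G^\ast$ to generators of $\DMT_G^!$ explicit, which is conceptually pleasant and reusable. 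Your closing remark about biduality is needed in either approach to conclude that $D$ is an equivalence rather than just a functor in one direction; the paper leaves this implicit.
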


\begin{proof}
(1) By Proposition~\ref{equiv:verdier}, Verdier duality commutes with the forgetful functor. Hence an equivariant motive on the point is mixed Tate if and only if its Verdier dual is. 

(2) Now if $X=G/H$ is a single $G$-orbit, we have a diagram
\[
\xymatrix{
\mathbb{D}^+_G(X)^{\op{op}} \ar[r]^{(i,s)^\ast} \ar[d]_{D} & \mathbb{D}^+_H(\op{pt})^{\op{op}} \ar[d]^D \\
\mathbb{D}^+_G(X) \ar[r]_{(i,s)^\ast} & \mathbb{D}^+_H(\op{pt})
}
\]
By Proposition~\ref{prop:indverdier}, this diagram commutes up to twist and shift, i.e., 
\[
D\circ (i,s)^\ast \simeq (i,s)^\ast\circ D(d)[2d]
\]
with $d=\dim H-\dim G$. If $M\in\DMT_G(X)$, this means $(i,s)^\ast M\in \DMT_H(\op{pt})$, and then $(i,s)^\ast D(M) \cong D((i,s)^\ast M)(-d)[-2d]$. The latter is in $\DMT_H(\op{pt})$ because the category of equivariant mixed Tate motives on the point is closed under Verdier duality (by step 1) as well as twisting and shifting. This implies that $\DMT_G(X)$ is also closed under Verdier duality.

(3) Now we consider the general case $G\looparrowright X$. Let $j:G/H\hookrightarrow X$ be an orbit inclusion. By the equivariant Verdier duality, cf. Proposition~\ref{equiv:verdier}, $D\circ j^\ast\simeq j^!\circ D$. Hence, if $M\in\DMT^\ast_G(X)$, then $j^! D(M)\cong D(j^\ast M)$ is a $G$-equivariant mixed Tate motive on the orbit $j:G/H\hookrightarrow X$, by Step (2). This concludes the proof.
\end{proof}

\begin{proposition}
\label{prop:dmtres}
In the situation of Definition~\ref{mtderdef1}, let $k$ be a field, and let $G\looparrowright X$ be a variety with action. Let $H\to G$ be a homomorphism of algebraic groups and assume that the variety with action $H\to G\looparrowright X$ has finitely many orbits separably defined over $k$. Then the restriction functor $\op{Res}_G^H:\mathbb{D}^+_G(X)\to\mathbb{D}^+_H(X)$ restricts to $*$- and $!$-orbitwise equivariant mixed Tate motives, respectively. 
\end{proposition}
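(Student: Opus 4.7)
The plan is to prove the claim by reducing, via the induction equivalence, to the already-trivial case of restriction on a point. The key tool throughout is the compatibility of $\op{Res}_G^H$ with the six functors established in \ref{CompRES}: for any $H$-equivariant morphism $j$ one has natural isomorphisms $j^? \circ \op{Res}_G^H \simeq \op{Res}_G^H \circ j^?$ for $? \in \{\ast,!\}$, as well as 2-functoriality of restriction which gives $\op{Res}_H^1 \circ \op{Res}_G^H \simeq \op{Res}_G^1$.

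First I would dispose of the base case $X=\op{pt}$. An $M \in \mathbb{D}^+_G(\op{pt})$ is $G$-equivariantly mixed Tate iff $\op{Res}_G^1 M$ is mixed Tate in $\mathbb{D}^+(\op{pt})$. Since $\op{Res}_G^1 M \simeq \op{Res}_H^1 \op{Res}_G^H M$, this immediately gives $\op{Res}_G^H M \in \DMT_H(\op{pt})$. Next, for a single $H$-orbit sitting inside a $G$-orbit, $\iota: H/L \hookrightarrow G/K$, I would pick a representative $g \in G$ with $\iota(eL)=gK$, so that $L = H \cap gKg^{-1}$ and conjugation by $g^{-1}$ induces a group homomorphism $L \to K$. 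Using that the induction equivalence $(i,s_K)^\ast : \mathbb{D}^+_G(G/K) \sirra \mathbb{D}^+_K(\op{pt})$ is given by restriction along the section $s_K$, together with base change \ref{BC} and the identification of $\mathbb{D}^+_{gKg^{-1}}(\op{pt})$ with $\mathbb{D}^+_K(\op{pt})$ by conjugation, one obtains a commutative square
\[
\xymatrix{
\mathbb{D}^+_G(G/K) \ar[r]^-{(i,s_K)^\ast}_-{\approx} \ar[d]_{\op{Res}_G^H \iota^\ast} & \mathbb{D}^+_K(\op{pt}) \ar[d]^{\op{Res}_K^L} \\
\mathbb{D}^+_H(H/L) \ar[r]_-{(i,s_L)^\ast}^-{\approx} & \mathbb{D}^+_L(\op{pt}),
}
\]
where $\op{Res}_K^L$ is the restriction along $L \to gKg^{-1} \cong K$. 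By the point case this right vertical arrow preserves mixed Tate motives, hence so does the left one. The corresponding statement for $\iota^!$ follows either by the same argument or by invoking Verdier duality (Proposition~\ref{prop:dmtverdier}) together with the fact that Verdier duality commutes with $\op{Res}_G^H$ (Proposition~\ref{equiv:verdier}(3)).

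For the general case, let $M \in \DMT_G^?(X)$ and let $j_H: H/L \hookrightarrow X$ be any $H$-orbit. Then $j_H$ factors as $j_G \circ \iota$ where $j_G: G/K \hookrightarrow X$ is the $G$-orbit containing $j_H(H/L)$ and $\iota$ is as above. Using the commutation of $\op{Res}_G^H$ with $j^?$ twice, one has $j_H^? \op{Res}_G^H M \simeq \iota^? \op{Res}_G^H j_G^? M$. By hypothesis $j_G^? M \in \DMT_G(G/K)$, so the previous step gives $\iota^? \op{Res}_G^H j_G^? M \in \DMT_H(H/L)$, as required.

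The main obstacle I anticipate is the careful bookkeeping needed to set up the commutative square in the orbit step: the induction equivalence for $\mathbb{D}^+_G(G/K)$ is canonically attached to the base point $eK$, whereas the $H$-orbit we restrict to corresponds to a different base point $gK$. This forces the introduction of the conjugation identification $\mathbb{D}^+_K(\op{pt}) \approx \mathbb{D}^+_{gKg^{-1}}(\op{pt})$, which must be justified as coming from the $G$-equivariance of the motive. Once this bookkeeping is done cleanly, preservation of mixed Tateness is automatic because conjugation relabels the group action without affecting the underlying motive, and everything else in the picture reduces to the elementary point case.
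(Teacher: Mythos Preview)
Your proposal is correct and follows essentially the same route as the paper: reduce to the point via the induction equivalence and 2-functoriality of restriction, then handle the general case by commuting $\op{Res}_G^H$ with $j^?$. Two minor remarks: the left vertical in your square should read $\iota^\ast\circ\op{Res}_G^H$ rather than $\op{Res}_G^H\iota^\ast$ (since $\iota$ is only $H$-equivariant), and your ``same argument'' option for $\iota^!$ tacitly needs absolute purity for the locally closed immersion $\iota$ between smooth homogeneous spaces---the paper sidesteps this by using only the Verdier-duality option you also mention.
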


\begin{proof}
First, the claim in the case $X=\pt$ with the trivial $G$-action follows from the functoriality of restriction, more precisely $\op{Res}_H^1\circ \op{Res}_G^H\simeq \op{Res}_G^1$: the underlying motive of $\op{Res}_G^HM$ will be mixed Tate if and only if the underlying motive of $M$ is.

Now assume $X$ is a single $G$-orbit $G/G'$. In general, $H\to G\looparrowright G/G'$ will have several $H$-orbits. For a point $x\in G/G'$ with stabilizer subgroup $G_x\subset G$, we have a morphism of varieties with action $\iota_x:(G_x\looparrowright \op{pt})\to (G\looparrowright G/G')$. There is a commutative (even cartesian) diagram of varieties with action 
\[
\xymatrix{
(H_x\looparrowright \op{pt})\ar[r]^{\iota_x}\ar[d]  & (H\looparrowright G/G')\ar[d] \\
(G_x\looparrowright \op{pt})\ar[r]^{\iota_x}  & (G\looparrowright G/G')
}
\]
This induces the following diagram of functors on equivariant motivic categories.
\[
\xymatrix{
\mathbb{D}^+_G(G/G') \ar[r]^{\iota_x^\ast} \ar[d]_{\op{Res}_G^H} & \mathbb{D}^+_{G_x}(\op{pt}) \ar[d]^{\op{Res}_{G_x}^{H_x}} \\
\mathbb{D}^+_H(G/G')\ar[r]_{\iota_x^\ast} & \mathbb{D}^+_{H_x}(\op{pt})
}
\]
By Proposition~\ref{cor:indequiv}, $\iota_x^\ast$ induces the induction equivalence. Now we note that the lower vertical arrow factors as follows 
\[
\iota_x^\ast:\mathbb{D}^+_H(G/G')\xrightarrow{j^\ast}\mathbb{D}^+_H(H\cdot x)\to\mathbb{D}^+_{H_x}(\op{pt})
\]
where the first functor is restriction to the $H$-orbit $H\cdot x\cong H/H_x$ and the second is further restriction to the point $x$. Now a motive $M\in \mathbb{D}^+_G(G/G')$ is equivariant mixed Tate if and only if $\iota_x^\ast M\in\mathbb{D}^+_{G_x}(\op{pt})$ is. Similarly, the motive $\op{Res}_G^HM\in\mathbb{D}^+_H(G/G')$ is $*$-orbitwise mixed Tate if $\iota_x^\ast M\in\mathbb{D}_{H_x}^+(\op{pt})$ is. The commutativity of the above diagram together with the previous remarks on equivariant mixed Tate motives on the point thus implies that the restriction induces a functor
\[
\op{Res}_G^H:\DMT_G(G/G') \to \DMT_H^\ast(G/G').
\]

Now we still need to deal with the $!$-orbitwise $H$-equivariant mixed Tate motives. Since restriction commutes with Verdier duality by Proposition~\ref{equiv:verdier}, we have a commutative diagram
\[
\xymatrix{
\DMT_G(G/G') \ar[r]^{\op{Res}_G^H} \ar[d]_{D} & \DMT_H^\ast(G/G') \ar[d]^D \\
\mathbb{D}^+_G(G/G') \ar[r]_{\op{Res}_G^H} & \mathbb{D}_H(G/G')
}
\]
The upper horizontal arrow is the restriction functor to $\ast$-orbitwise $H$-equivariant mixed Tate motives discussed earlier; the lower horizontal is the ordinary restriction for equivariant motives. The vertical arrows are Verdier duality  functors. By Proposition~\ref{prop:dmtverdier}, the essential image of the left vertical is $\DMT_G(G/G')$, and the essential image of the right vertical is $\DMT_H^!(G/G')$. This shows that the restriction functor on equivariant mixed Tate motives induces a functor 
\[
\op{Res}_G^H:\DMT_G(G/G')\to\DMT_H^!(G/G'),
\]
which concludes our discussion of the single-orbit case $X=G/G'$.

Because restriction commutes with all the six functors, in particular $j^?$ for the inclusion of a $G$-orbit $j:G/G'\to X$, the claim for $*$- and $!$-orbitwise mixed Tate motives on varieties with several orbits follows from the case where $X$ is a single $G$-orbit discussed above. 
\end{proof}

\begin{proposition}
\label{prop:pullbackdmt}
In the situation of Definition~\ref{mtderdef1}, let $f:X\to Y$ be a morphism of $G$-varieties both having finitely many orbits separably defined over $k$. Then we have induced functors 
\[
f^\ast:\DMT^\ast_G(Y)\to \DMT^\ast_G(X)\quad\textrm{ and }\quad f^!:\DMT^!_G(Y)\to \DMT^!_G(X).
\]
If $f$ is additionally smooth, then both functors $f^\ast$ and $f^!$ preserve both categories $\DMT^\ast_G$ and $\DMT^!_G$, respectively. 
\end{proposition}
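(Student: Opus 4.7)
The plan is to reduce all four preservation statements to a single input: the fact from Proposition~\ref{prop:dmtres} that restriction along group homomorphisms preserves equivariant mixed Tate motives on the point, together with Verdier duality (Proposition~\ref{prop:dmtverdier}) and purity to handle the duals and the smooth case.

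\textbf{Step 1: Reduction to homogeneous spaces.} Let $M \in \DMT^\ast_G(Y)$ and let $i\colon G/H'\hookrightarrow X$ be a $G$-orbit inclusion. Since $f$ is $G$-equivariant, the composition $f\circ i$ factors as
\[
G/H' \xrightarrow{\bar f} G/H \xrightarrow{j} Y,
\]
where $j\colon G/H\hookrightarrow Y$ is the $G$-orbit containing $f(i(G/H'))$ and $\bar f$ is the induced $G$-equivariant map on orbits. By $2$-functoriality we have $i^\ast f^\ast M \cong \bar f^\ast j^\ast M$, with $j^\ast M \in \DMT_G(G/H)$ by hypothesis on $M$. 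So it suffices to show that $\bar f^\ast$ sends $\DMT_G(G/H)$ into $\DMT_G(G/H')$ for every $G$-equivariant morphism $\bar f$ of homogeneous spaces.

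\textbf{Step 2: The homogeneous-space case via the induction equivalence.} Choose $g\in G$ such that $\bar f(eH')=gH$; then $H'\subseteq gHg^{-1}$, and conjugation by $g^{-1}$ together with the inclusion gives a group homomorphism $\phi\colon H'\to H$. Because $k$ is algebraically closed we may adjust basepoints, so that $s_H\colon \pt\to G/H$ and $s_{H'}\colon\pt\to G/H'$ fit into a commutative diagram of $H'$-equivariant maps (with $H'$ acting on $G/H$ via $\phi$):
\[
\xymatrix{
\pt \ar[r]^-{s_{H'}} \ar@{=}[d] & G/H' \ar[d]^{\bar f} \\
\pt \ar[r]_-{s_H} & G/H.
}
\]
Tracing the induction equivalences of Proposition~\ref{cor:indequiv} shows that $\bar f^\ast$ corresponds, up to this equivalence, to the restriction functor $\op{Res}_H^{H'}\colon \mathbb{D}^+_H(\pt)\to\mathbb{D}^+_{H'}(\pt)$ along $\phi$. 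By Proposition~\ref{prop:dmtres} applied over the point, this preserves mixed Tate motives, completing the case of $\DMT^\ast$.

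\textbf{Step 3: The $f^!$ statement by Verdier duality.} For $M\in\DMT^!_G(Y)$, Proposition~\ref{prop:dmtverdier} yields $D(M)\in\DMT^\ast_G(Y)$; then $f^\ast D(M)\in\DMT^\ast_G(X)$ by Step 2, and Proposition~\ref{equiv:verdier} gives $f^! M \cong D(f^\ast D(M))$ up to biduality, which lies in $\DMT^!_G(X)$ by Proposition~\ref{prop:dmtverdier} again.

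\textbf{Step 4: The smooth case.} If $f$ is smooth of relative dimension $d$, relative purity (\ref{BCs}) provides a natural isomorphism $f^!\cong f^\ast(d)[2d]$. By Lemma~\ref{lem:twistdmt} both $\DMT^\ast_G$ and $\DMT^!_G$ are stable under Tate twists and shifts, so $f^\ast$ and $f^!$ preserve both categories simultaneously.

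The only delicate point is Step 2, where one must verify that the purely geometric pullback $\bar f^\ast$ is indeed intertwined by the induction equivalences with the algebraic restriction $\op{Res}_H^{H'}$; this hinges on the compatibility of the induction equivalence with the ordinary inverse image functor, which is built into its construction in Proposition~\ref{cor:indequiv}.
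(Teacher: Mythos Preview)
Your proof is correct and follows essentially the same route as the paper's: reduce to a $G$-equivariant map between homogeneous spaces, use the induction equivalence (Proposition~\ref{cor:indequiv}) to translate $\bar f^\ast$ into a restriction functor $\op{Res}_H^{H'}$ on the point and invoke Proposition~\ref{prop:dmtres}, then handle $f^!$ via Verdier duality (Proposition~\ref{prop:dmtverdier}) and the smooth case via relative purity~\ref{BCs} together with Lemma~\ref{lem:twistdmt}. Your Step~2 is in fact slightly more careful than the paper's version about the basepoint/conjugation issue when identifying $\bar f^\ast$ with a group restriction.
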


\begin{proof}
The second statement follows from the first by Verdier duality, cf. Proposition~\ref{prop:dmtverdier}. To prove the first, let $j:Z\hookrightarrow Y$ be an orbit in $Y$, and let $i:W\hookrightarrow X$ be an orbit in $f^{-1}(j(Z))$. Then we have a commutative diagram of pullback functors
\[
\xymatrix{
\mathbb{D}^+_G(Y) \ar[r]^{j^\ast} \ar[d]_{f^\ast} & \mathbb{D}^+_G(Z) \ar[d]^{f^\ast} \\
\mathbb{D}^+_G(X) \ar[r]_{i^\ast} & \mathbb{D}^+_G(W).
}
\]
In particular, it suffices to establish the case where $X$ and $Y$ are both homogeneous $G$-varieties. Note that in this case $f:G/G_X\to G/G_Y$ is a smooth surjection with fiber $G_Y/G_X$. Since the induction equivalence commutes with pullbacks, cf. Proposition~\ref{cor:indequiv}, it suffices to show that the restriction functor  $\op{Res}_{G_Y}^{G_X}:\mathbb{D}^+_{G_Y}(\op{pt})\to\mathbb{D}^+_{G_X}(\op{pt})$ preserves mixed Tate motives. But this was established in Proposition~\ref{prop:dmtres}. 

If $f$ is additionally smooth of relative dimension $d$, then purity, cf. \ref{BCs}, implies $f^!\simeq f^\ast(d)[2d]$. In particular, using Lemma~\ref{lem:twistdmt}, $f^!$ will preserve $\ast$-orbitwise mixed Tate motives and $f^\ast$ will preserve $!$-orbitwise ones. 
\end{proof}

We now show that the definition of equivariant mixed Tate motives is compatible with the generalized quotient equivalence of Proposition~\ref{prop:quotientequiv}. 

\begin{proposition}
\label{prop:quotientdmt}
In the situation of Definition~\ref{mtderdef1}, let $k$ be a field and let $G\looparrowright X$ be a variety with action having finitely many $G$-orbits separably defined over $k$. Let $N\hookrightarrow G$ be a normal subgroup which acts freely on $X$. We denote the quotient maps $\pi:G\to G/N$ and $p:X\to N\backslash X$. Then for $?=\ast,!$, the generalized quotient equivalence of Proposition~\ref{prop:quotientequiv} restricts to an equivalence
\[
(\pi,p)^\ast: \DMT_{G/N}^?(N\backslash X) \approx \DMT_G^?(X).
\]
\end{proposition}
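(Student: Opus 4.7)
The plan is to reduce the statement to the single-orbit case and then to the point case via the induction equivalence, which is already well understood.

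First I would set up the bijection between orbits. Since $N$ is normal in $G$ and acts freely on $X$, the quotient map $p\colon X\to N\backslash X$ induces a bijection between $G$-orbits on $X$ and $(G/N)$-orbits on $N\backslash X$: a $G$-orbit $G/H\subseteq X$ maps to the $(G/N)$-orbit $(G/N)/((NH)/N)\subseteq N\backslash X$, and freeness of the $N$-action on $X$ forces $H\cap N=1$, so that the natural homomorphism $H\to (NH)/N$ is an isomorphism. Moreover, the assumption that $G\looparrowright X$ has only finitely many orbits separably defined over $k$ transfers via this bijection to the same property for $G/N\looparrowright N\backslash X$.

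Next, for each orbit inclusion $\bar j\colon (G/N)/((NH)/N)\hookrightarrow N\backslash X$ with the corresponding $j\colon G/H\hookrightarrow X$, I would use that we have a cartesian square of varieties with action
\[
\xymatrix{
(G\looparrowright G/H)\ar[r]^-{j}\ar[d] & (G\looparrowright X)\ar[d]\\
(G/N\looparrowright (G/N)/((NH)/N))\ar[r]_-{\bar j} & (G/N\looparrowright N\backslash X)
}
\]
with vertical arrows induced by $(\pi,p)$. By the compatibility of the quotient equivalence with the full six-functor formalism (Proposition~\ref{prop:quotientequiv}, together with the base-change formulas from \ref{BC}), there are natural isotransformations $(\pi,p)^\ast\circ \bar j^?\simeq (\pi_H,p_H)^\ast\circ j^?$ for $?=\ast,!$, where $(\pi_H,p_H)^\ast$ denotes the orbit-level quotient equivalence. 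Hence it suffices to prove that for a single orbit, $(\pi_H,p_H)^\ast\colon \mathbb{D}^+_{G/N}(G/(NH))\to \mathbb{D}^+_G(G/H)$ restricts to an equivalence on equivariant mixed Tate motives.

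For this single-orbit case, I would invoke the induction equivalence (Proposition~\ref{cor:indequiv}) to get a diagram
\[
\xymatrix{
\mathbb{D}^+_{G/N}((G/N)/((NH)/N))\ar[r]^-{\approx}\ar[d]_{(\pi_H,p_H)^\ast} & \mathbb{D}^+_{(NH)/N}(\op{pt})\ar[d]^{\varphi^\ast}\\
\mathbb{D}^+_G(G/H)\ar[r]_-{\approx} & \mathbb{D}^+_H(\op{pt})
}
\]
where $\varphi\colon H\xrightarrow{\cong}(NH)/N$ is the isomorphism noted above. Commutativity of this diagram is a direct consequence of the definitions of both equivalences as pullbacks along the obvious morphisms of varieties with action (combined with functoriality of pullback along compositions). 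Since $\varphi^\ast$ is pullback along an isomorphism of linear algebraic groups, it restricts to an equivalence on equivariant mixed Tate motives by the very Definition~\ref{mtderdef1}(1)(2): equivariant mixed Tate motives on the point are defined in terms of the underlying motive, which is unchanged by such an isomorphism. Hence $(\pi_H,p_H)^\ast$ restricts to the required equivalence on orbits.

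Finally, a motive $M\in\mathbb{D}^+_{G/N}(N\backslash X)$ is in $\DMT^?_{G/N}(N\backslash X)$ iff $\bar j^? M$ is equivariant mixed Tate on each orbit, iff $(\pi_H,p_H)^\ast\bar j^? M\simeq j^?(\pi,p)^\ast M$ is equivariant mixed Tate on each orbit (by the orbit case just proved), iff $(\pi,p)^\ast M\in\DMT^?_G(X)$. Together with the fact that $(\pi,p)^\ast$ is already an equivalence on the ambient equivariant derived categories by Proposition~\ref{prop:quotientequiv}, this gives the claimed restricted equivalence. The main subtlety is the clean identification in the orbit case, i.e., checking that the chain of equivalences really does reduce to pullback along an isomorphism of groups; once this is in place the rest is a formal consequence of compatibility of the quotient equivalence with the six functors.
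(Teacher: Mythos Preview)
Your proof is correct, but it takes a genuinely different route from the paper's. The paper's argument is much shorter: it factors $(\pi,p)^\ast$ as the composition
\[
\mathbb{D}^+_{G/N}(N\backslash X)\xrightarrow{\op{Res}_{G/N}^G} \mathbb{D}^+_G(N\backslash X)\xrightarrow{(\op{id},p)^\ast} \mathbb{D}^+_G(X),
\]
and then simply cites Proposition~\ref{prop:dmtres} (restriction preserves $?$-orbitwise mixed Tate) for the first factor and Proposition~\ref{prop:pullbackdmt} (pullback along smooth morphisms preserves both $\DMT^\ast$ and $\DMT^!$) for the second, using that $p$ is smooth of relative dimension $\dim N$. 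Your approach instead bypasses these two black-box lemmas by reducing directly to single orbits via the cartesian orbit square and base change, and then to the point via the induction equivalence, where the identification $H\cong (NH)/N$ finishes the job.

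What each approach buys: the paper's route is modular and very short, but as literally written it only shows that $(\pi,p)^\ast$ carries $\DMT^?_{G/N}(N\backslash X)$ into $\DMT^?_G(X)$; the reflection direction is left implicit (it does follow, since both constituent functors preserve underlying motives and hence reflect the mixed Tate condition, but this is not spelled out). Your approach is more self-contained and, through the final ``iff'' chain, makes both directions explicit. One small correction: in your base-change statement the composition order is scrambled---you write $(\pi,p)^\ast\circ \bar j^?\simeq (\pi_H,p_H)^\ast\circ j^?$, but neither side type-checks; the correct identity is $j^?\circ(\pi,p)^\ast\simeq (\pi_H,p_H)^\ast\circ \bar j^?$, which is indeed what you use in the final paragraph.
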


\begin{proof}
We can factor the quotient equivalence functor $(\pi,p)^\ast$ as follows:
\[
\mathbb{D}^+_{G/N}(N\backslash X)\xrightarrow{\op{Res}_{G/N}^G} \mathbb{D}^+_G(N\backslash X)\xrightarrow{(\op{id},p)^\ast} \mathbb{D}^+_G(X),
\]
and it suffices to show that each of the constituent functors in the composition preserve $?$-orbitwise equivariant mixed Tate motives. For the restriction functor $\op{Res}_{G/N}^G$, this is Proposition~\ref{prop:dmtres}. For $(\op{id},p)^\ast$, this follows from Proposition~\ref{prop:pullbackdmt}, using that the projection morphism $p:X\to N\backslash X$ is smooth of relative dimension $\dim G/N$. 
\end{proof}

\begin{Bemerkung}
At this point, we want to remark that equivariant mixed Tate motives are compatible with the induction equivalence: in the situation of Proposition~\ref{cor:indequiv}, the induction equivalence induces an equivalence
\[
(i,s)^\ast:\DMT_G(G\times_{/H}X)\sirra \DMT_H(X).
\]
In the case where $X=H/H'$ has a single $H$-orbit, this is the definition of equivariant mixed Tate motives, because in both cases the categories of equivariant mixed Tate motives will be equivalent to $H'$-equivariant motives on the point via the respective induction equivalence. In the case of several orbits, only the $!$-orbitwise mixed Tate motives need an argument. But these are naturally preserved by $(\op{id},s)^!\circ\op{Res}_G^H$ which by a relative form of absolute purity in \ref{belu1473}, is isomorphic to $(\op{id},s)^\ast\circ\op{Res}_G^H$ up to twist and shift.
\end{Bemerkung}

\begin{proposition}
\label{Las} 
In the situation of Definition~\ref{mtderdef1}, let $k$ be a field and let $G\supset H$ be a  group with a closed subgroup.  Let $(G\looparrowright X)$ be a variety with action such that the restricted action $(H\looparrowright X)$ has finitely many orbits separably defined over $k$. Then the integration functor of \ref{PBPF} for the group action induces a functor 
\[
\op{Ind}_\ast=\op{Ind}_H^G:\DMT_H^!(X)\ra \DMT_G^!(X).
\]
\end{proposition}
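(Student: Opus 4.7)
The plan is to reduce the claim in two steps to an explicit computation with induction equivalences, and then to identify the remaining problem with a smaller instance of itself over the point.

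First, I would reduce to the case where $X$ is a single $G$-orbit. For $N \in \DMT_H^!(X)$ and any $G$-orbit inclusion $j\colon Y \hookrightarrow X$, which is automatically also $H$-equivariant, Proposition~\ref{prop:integration} gives $j^!\op{Ind}_H^G(N) \cong \op{Ind}_H^G(j^! N)$. Since every $H$-orbit contained in $Y$ is an $H$-orbit of $X$, the motive $j^! N$ lies in $\DMT_H^!(Y)$. Hence it suffices to prove $\op{Ind}_H^G\bigl(\DMT_H^!(Y)\bigr) \subseteq \DMT_G(Y)$ when $Y = G/H''$ is a single $G$-orbit.

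Second, by Proposition~\ref{lkio}(2), $\DMT_H^!(Y)$ is generated, as a thick triangulated subcategory, by objects $i_\ast L$ with $i\colon H/H' \hookrightarrow Y$ an $H$-orbit inclusion and $L \in \DMT_H(H/H')$. By exactness of $\op{Ind}_H^G$ together with the idempotent completeness of $\DMT_G(Y)$ from Lemma~\ref{lem:idempotent}, it is enough to verify that $\op{Ind}_H^G(i_\ast L) \in \DMT_G(Y)$ for such generators. To compute this, I would use the factorization $\op{Ind}_\ast = m_\ast \circ (i_H, s)_\ast$ from \ref{PBPF}. Naturality of the induction equivalence applied to the $H$-equivariant inclusion $i$ yields an identification
\[
(i_H, s)_\ast(i_\ast L) \cong \tilde\imath_\ast \tilde L,
\]
where $\tilde\imath\colon G/H' \hookrightarrow G \times_{/H} Y$ is the $G$-equivariant inclusion of the orbit corresponding to $H/H'$ and $\tilde L \in \DMT_G(G/H')$ is the image of $L$ under the induction equivalence (a $G$-equivariant mixed Tate motive by the very definition of $\DMT$ on homogeneous spaces). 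Since $m \circ \tilde\imath$ coincides with the natural $G$-equivariant projection $\pi\colon G/H' \twoheadrightarrow G/H''$ induced by $H' \subseteq H''$, one obtains
\[
\op{Ind}_H^G(i_\ast L) \cong \pi_\ast \tilde L.
\]
Unwinding the induction equivalences $\DMT_G(G/H') \cong \DMT_{H'}(\op{pt})$ and $\DMT_G(G/H'') \cong \DMT_{H''}(\op{pt})$ via the compatible base points, the functor $\pi_\ast$ gets identified with $\op{Ind}_{H'}^{H''}$ on the point, reducing the claim to its analogue on $\op{pt}$.

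The main remaining obstacle is this base case: that $\op{Ind}_{H'}^{H''}\colon\DMT_{H'}(\op{pt}) \to \DMT_{H''}(\op{pt})$ is well-defined for any closed subgroup inclusion $H' \subseteq H''$. Concretely, this amounts to showing that the $H''$-equivariant pushforward along the structural map $H''/H' \to \op{pt}$ preserves mixed Tateness, reflecting the fact that the equivariant cohomology of $H''/H'$ is mixed Tate. Under the hypotheses in force (notably the grading condition~\ref{conditions:grading} and the two main situations singled out there), this assertion is accessible: one can stratify $H''/H'$ by orbits of a suitable subgroup and iterate the localization triangles of the six-functor formalism, reducing to homogeneous spaces whose cohomology is manifestly mixed Tate. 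This is the genuine technical heart of the proof; everything above is essentially a combinatorial bookkeeping device that reduces the general case to this specific point-level statement.
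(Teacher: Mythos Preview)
Your reduction steps are essentially the paper's, with a minor reordering: you first restrict to single $G$-orbits via $j^!\op{Ind}\cong\op{Ind}\,j^!$ and then invoke Proposition~\ref{lkio}, while the paper applies Proposition~\ref{lkio} immediately and then factors through the $G$-orbit $G\cdot Z$. Both routes land at the same base case, namely that $\op{Ind}_{H'}^{H''}\colon\DMT_{H'}(\pt)\to\DMT_{H''}(\pt)$ is well-defined for a closed inclusion $H'\subseteq H''$.

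The base case is where your account diverges from the paper, and where it is incomplete. First, the grading condition~\ref{conditions:grading} is not assumed in this proposition and is not used in the paper's proof, so you should not invoke it. Second, the paper does not proceed via stratification of $H''/H'$; such a decomposition is not generally available for arbitrary linear groups $H'\subset H''$. Instead, the paper unwinds $\op{Ind}_{H'}^{H''}$ as pushforward along $\op{fin}\colon H''/H'\to\pt$ (after the induction equivalence) and reduces to showing that the non-equivariant $\op{fin}_\ast M$ is mixed Tate. The key input is Proposition~\ref{prop:tatehomogeneous}: the motive of any homogeneous space $H''/H'$ is mixed Tate, a fact proved in the appendix via the two-sided bar construction. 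There is also a subtlety you do not address: the motive $M$ on $H''/H'$ corresponding to a given object of $\DMT_{H'}(\pt)$ need not be \emph{constant} mixed Tate when $H'$ is disconnected (it can be a nontrivial local system), so one cannot simply cite $\op{M}(H''/H')$. The paper handles this by passing to a resolution $P\to\pt$ and its pullback to $H''/H'$, where $M$ becomes constant; smooth base change then reduces the claim to the statement that pushforward along an \'etale fiber bundle with fiber $H''/H'$ sends constant mixed Tate motives to mixed Tate motives, which follows from the mixed Tateness of $\op{M}(H''/H')$.
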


\begin{proof} 
  By Corollary~\ref{lkio} it is sufficient to show that $\op{Ind}_H^Gj_{Z,\ast}M$ is contained in $\DMT_G^!(X)$ for any embedding $j_Z: Z\hra X$ of an $H$-orbit and any $H$-equivariant mixed Tate motive $M\in\DMT_H(Z)$. 

Now consider the composition $(H\looparrowright Z)\hra (H\looparrowright X)\hra(G\looparrowright X)$.
 It can be written as the  composition
\[
(H\looparrowright Z)\hra (G\looparrowright (G\times_{/H}Z)) \stackrel{\mu}{\ra} (G\looparrowright (G\cdot Z))\hra(G\looparrowright X),
\]
where $\mu$ denotes the action map and $j_{G\cdot Z}:G\cdot Z\hra X$ the obvious embedding of the $G$-orbit containing $Z$ in $X$. The corresponding commutative diagram of compositions shows that $\op{Ind}_H^G(j_{Z,\ast}M)$ is isomorphic to $j_{G\cdot Z,\ast}\mu_\ast\tilde M$, where $\tilde M\in \DMT_G^!(G\times_{/H}Z)$ is the motive on $G\times_{/H}Z$ corresponding to $M$ under the induction equivalence of Proposition~\ref{cor:indequiv}.  
By another application of \ref{lkio}, it suffices to show that for an $H$-equivariant mixed Tate motive on $Z$, the motive $\mu_\ast\tilde{M}$ is a $G$-equivariant mixed Tate motive on $G\cdot Z$. 

Under the induction equivalence of Proposition~\ref{cor:indequiv}, an $H$-equivariant mixed Tate motive in $\DMT_H(Z)$ corresponds to an $H_z$-equivariant motive on the point, where $H_z$ denotes the isotropy group of the orbit $Z\cong H/H_z$. Similarly, a $G$-equivariant motive in $\DMT_G(G\cdot Z)$ corresponds to a $G_z$-equivariant motive on the point, where now $G_z$ denotes the isotropy group of the orbit $G\cdot Z\cong G/G_z$.

We get a commutative diagram
\[
\xymatrix{
\DMT_{H_z}(\pt)\ar[r] \ar[d] & \DMT_H(Z)\ar[d] \\
\mathbb{D}^+_{G_z}(\pt)\ar[r] & \mathbb{D}^+_G(G\cdot Z)
}
\]
where the horizontal arrows are induction equivalences, the left vertical arrow is an integration functor for the inclusion $H_z\to G_z$, and the right vertical arrow is the integration functor for the inclusion $H\to G$ restricted to the appropriate orbits. To show that the right vertical arrow lands in $\DMT_G(G\cdot Z)$, it suffices to show that the left vertical arrow lands in $\DMT_{G_z}(\pt)$, i.e., that the induction induces a functor $\op{Int}_{H_z}^{G_z}:\DMT_{H_z}(\pt)\to\DMT_{G_z}(\pt)$. Now we can use the definition of induction, cf. \ref{PBPF}: take an $H_z$-equivariant motive on the point, use the induction equivalence to get a $G_z$-equivariant motive on $G_z\times_{/H_z}\pt\cong G_z/H_z$, and then push forward along $(G\looparrowright G_z/H_z)\to (G\looparrowright \pt)$. The induction equivalence $\mathbb{D}^+_{H_z}(\op{pt})\approx \mathbb{D}^+_{G_z}(G_z/H_z)$ is compatible with mixed Tate motives, by the very definition.

It remains to show that $\op{Res}_{G_z}^1\op{fin}_{G_z/H_z,\ast} M$ is a mixed Tate motive whenever $M$ corresponds to an $H_z$-equivariant mixed Tate motive on the point under the induction equivalence. By smooth base-change, we can first forget about the equivariance, and just consider the non-equivariant push-forward $\op{fin}_{G_z/H_z,\ast} M$ where $M$ is the underlying motive corresponds to an $H_z$-equivariant mixed Tate motive on the point under the induction equivalence. A look at the induction equivalence \ref{cor:indequiv} tells us that such a motive must become a constant mixed Tate motive after pullback along a resolution $Q\to G_z/H_z$. Now we can take a resolution $p:P\to \pt$ for $(G_z\looparrowright \pt)$ and consider the pullback to a resolution $f^\circ(p):G_z/H_z\times P\to G_z/H_z$ of $(G_z\looparrowright G_z/H_z)$. Note that the induced map on resolutions $G_z/H_z\times P\to P$ is an \'etale fiber bundle with fiber $G_z/H_z$. By an application of smooth base change, it suffices for an \'etale fiber bundle $q:E\to B$ with fiber $G_z/H_z$ that $q_\ast$ maps constant mixed Tate motives on $E$ to mixed Tate on $B$. Compatibility with shift and twist reduces us to show that $q_\ast\underline{X}$ is mixed Tate on $Y$ if $q:X\to Y$ is an \'etale fiber bundle with fiber $G/H$ an arbitrary homogeneous space. This follows from the well-known fact that the motive of $G_z/H_z$ is mixed Tate, cf. Proposition~\ref{prop:tatehomogeneous}. 
\end{proof}

\begin{remark}
In the end, we will only use this in very specific cases (for inclusions of parabolic groups) for the study of Bott--Samelson motives, cf. Section~\ref{sec:BS}.
\end{remark}

\begin{remark}
\label{Last} 
Since Verdier duality $D$ commutes with restrictions of the group action, compare \ref{equiv:verdier}, we can write the left adjoint of the restriction functor $\op{Res}_G^H$ on the subcategories of constructible objects as 
\[
\op{Ind}_!=D\circ \op{Ind}_\ast\circ D.
\]
Here, $\op{Ind}_\ast=\op{Ind}_P^G$ is the right adjoint of restriction, the ordinary integration functor. Using Verdier duality in Proposition~\ref{prop:dmtverdier} together with Proposition~\ref{Las} above, this exceptional integration functor will induce a functor 
\[
\op{Ind}_!\colon \DMT_P^\ast(X)\ra \DMT_G^\ast(X).
\]
\end{remark}

\begin{theorem}
\label{ifpt} 
In the situation of Definition~\ref{mtderdef1}, let $k$ be a field and let $G$ be a split reductive group with parabolic subgroups $P\subset Q\subset G$. Let $(G\looparrowright X)$ be a variety with action such that the restricted action $(P\looparrowright X)$ has finitely many orbits separably defined over $k$. Then both integration functors $\op{Ind}_\ast$ and $\op{Ind}_!$ (for integration from $P$-equivariant to $Q$-equivariant motives) preserve both subcategories $\DMT^\ast$ and $\DMT^!$ of $\ast$-pointwise and of $!$-pointwise mixed Tate motives.
\end{theorem}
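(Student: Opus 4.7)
The plan is to leverage the two inputs already on the table: Proposition~\ref{Las} which gives that $\op{Ind}_\ast\colon\DMT^!_P(X)\to\DMT^!_Q(X)$ is well-defined for any closed subgroup inclusion, and Proposition~\ref{Lasp} which, for $P$ a parabolic subgroup, identifies $\op{Ind}_!$ and $\op{Ind}_\ast$ up to a Tate twist and shift. The key observation is that $P\subset Q$ being an inclusion of parabolic subgroups means the homogeneous space $Q/P$ is smooth and projective, so Proposition~\ref{Lasp} applies verbatim (with $d=\op{dim}(Q/P)$) to the integration functors from $P$-equivariant to $Q$-equivariant motives.

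More precisely, first I would record the two ``easy'' halves. Proposition~\ref{Las} directly gives that $\op{Ind}_\ast$ sends $\DMT^!_P(X)$ into $\DMT^!_Q(X)$. Dualizing this using Verdier duality, which exchanges $\DMT^\ast$ and $\DMT^!$ by Proposition~\ref{prop:dmtverdier} and intertwines $\op{Ind}_\ast$ with $\op{Ind}_!$ by Remark~\ref{Last}, immediately yields that $\op{Ind}_!$ sends $\DMT^\ast_P(X)$ into $\DMT^\ast_Q(X)$.

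For the remaining two halves, I would invoke the parabolic-specific isomorphism
\[
\op{Ind}_! M \xrightarrow{\cong} \op{Ind}_\ast M (d)[2d]
\]
from Proposition~\ref{Lasp}, applied now to the inclusion $P\subset Q$ with $d=\op{dim}(Q/P)$; the proof goes through unchanged because it only uses smoothness and properness of $Q/P$. Combining this with the fact that Tate twists and shifts preserve both $\DMT^\ast$ and $\DMT^!$ (Lemma~\ref{lem:twistdmt}) immediately bootstraps the two known halves into the two missing ones: $\op{Ind}_!$ preserves $\DMT^!$ because $\op{Ind}_\ast$ does and twist/shift is harmless on $\DMT^!$; and $\op{Ind}_\ast$ preserves $\DMT^\ast$ because $\op{Ind}_!$ does.

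I do not expect a serious obstacle here: all the real work has been done in the proof of Proposition~\ref{Las} (where preservation of mixed Tateness under an integration comes down to the fact that $\op{fin}_{G_z/H_z,\ast}\const{}$ is mixed Tate) and in Proposition~\ref{Lasp} (which is a clean consequence of relative purity and properness). The only thing to verify explicitly is that Proposition~\ref{Lasp} generalises from $P\subset G$ to the parabolic inclusion $P\subset Q$, which is essentially formal: the relevant map $Q\times_{/P}X\to X$ is again of the form $(Q/P)\times X\to X$ up to isomorphism, and $Q/P$ is smooth projective.
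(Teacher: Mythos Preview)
Your proposal is correct and is essentially the same argument as the paper's, which is a one-liner invoking Proposition~\ref{Las}, Proposition~\ref{Lasp}, and Remark~\ref{Last}. Your only addition is the explicit check that Proposition~\ref{Lasp} applies to the inclusion $P\subset Q$ of parabolics (rather than $P\subset G$), which is indeed immediate since $Q/P$ is smooth and projective.
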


\begin{proof}
  This follows immediately from Proposition~\ref{Las} together with the facts that the integration functors coincide up to twist and shift, cf.~\ref{Lasp}, and are also Verdier dual, cf.~\ref{Last}.
\end{proof}

\begin{proposition}
\label{prop:monodmt}
In the situation of Definition~\ref{mtderdef1}, let $G\looparrowright X$ be a variety with action having finitely many separably defined orbits. Then the monoidal structure $\otimes:\mathbb{D}^+_G(X)\times\mathbb{D}^+_G(X)\to\mathbb{D}^+_G(X)$ restricts to $\DMT^\ast_G(X)$. 
\end{proposition}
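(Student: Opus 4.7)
The plan is to reduce the claim step by step from equivariant mixed Tate motives on $X$ to ordinary mixed Tate motives on $\pt$, at each stage exploiting that the relevant functor is strong monoidal.

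First, for any orbit inclusion $j:G/H\hookrightarrow X$, the ordinary pullback $j^\ast$ is strong monoidal by \ref{rem:monoidal}, so
\[
j^\ast(M\otimes N)\cong j^\ast M\otimes j^\ast N.
\]
Hence if $M,N\in\DMT^\ast_G(X)$, then $j^\ast(M\otimes N)$ lies in $\DMT_G(G/H)$ provided the latter is closed under $\otimes$. Thus it suffices to prove the claim for $X=G/H$ a single orbit.

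Next, by Proposition~\ref{cor:indequiv}, the induction equivalence $(i,s)^\ast:\mathbb{D}^+_G(G/H)\sirra \mathbb{D}^+_H(\pt)$ is given by an ordinary pullback and is therefore strong monoidal. By definition of equivariant mixed Tate motives on a homogeneous space, this equivalence identifies $\DMT_G(G/H)$ with $\DMT_H(\pt)$. So the problem reduces further to showing that $\DMT_H(\pt)$ is closed under tensor product.

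For the final step, the forgetful functor $\op{Res}_H^1:\mathbb{D}^+_H(\pt)\to\mathbb{D}^+(\pt)$ is again given by an ordinary pullback (cf. \ref{forget}) and is therefore strong monoidal by \ref{rem:monoidal}. For $M,N\in\DMT_H(\pt)$ we obtain
\[
\op{Res}_H^1(M\otimes N)\cong \op{Res}_H^1(M)\otimes \op{Res}_H^1(N),
\]
and the right-hand side is a mixed Tate motive because $\DMT(\pt)\subset\mathbb{D}^+(\pt)$ is by construction a tensor triangulated subcategory: it is the thick subcategory generated by the $\otimes$-invertible objects $\const{\pt}(n)$, and we have $\const{\pt}(m)\otimes\const{\pt}(n)\cong\const{\pt}(m+n)$ as recalled in \ref{mtder}. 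Thus $M\otimes N\in\DMT_H(\pt)$ by definition of equivariant mixed Tate motives on the point, and tracing back through the two reduction steps yields the claim.

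No step presents a serious obstacle; the whole argument is essentially the observation that in each of the three layers (orbit-restriction, induction equivalence, forgetful functor) the relevant comparison functor is a strong monoidal ordinary pullback, so closure under $\otimes$ propagates from $\DMT(\pt)$ up to $\DMT^\ast_G(X)$. The only minor point to watch is that all functors involved are \emph{left} adjoints/ordinary pullbacks, which is why the argument works for $\DMT^\ast$ but would require the extra Verdier-duality input of Proposition~\ref{prop:dmtverdier} to transfer to $\DMT^!$.
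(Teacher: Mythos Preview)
Your proof is correct and follows essentially the same route as the paper's own argument: reduce via the strong monoidality of $j^\ast$ to the single-orbit case, then via the strong monoidality of the induction equivalence to the point, and finally via the strong monoidality of the forgetful functor to the fact that $\DMT(\pt)$ is a tensor subcategory. The paper presents the same three steps in the reverse (bottom-up) order, but the content is identical.
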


\begin{proof}
The statement is true in case $X=\op{pt}$, since the forgetful functor is strongly monoidal by \ref{rem:monoidal} and the category of non-equivariant mixed Tate motives is a $\otimes$-subcategory. The statement for $X=G/H$ a homogeneous variety follows, since the induction equivalence is a strongly monoidal functor, cf. Proposition~\ref{cor:indequiv}. For $\ast$-orbitwise mixed Tate motives on general $X$, the claim follows since $j^\ast$ is strongly monoidal, for orbit inclusions $j:G/H\to X$. 
\end{proof}

\subsection{Conservativity for equivariant mixed Tate motives} 

Now we discuss the interaction between equivariant mixed Tate motives and realization functors of Section~\ref{sec:realization}. Under suitable assumptions on the derivator, the realization functors are conservative on equivariant mixed Tate motives, which will be used for some computations later.

\begin{proposition}
\label{prop:conservative}
Let $G\looparrowright X$ be a variety with action, and assume it is $G$-equivariantly Whitney--Tate. 
\begin{enumerate}
\item Assume that everything is defined over a finite field $\mathbb{F}_q$ and that $\mathbb{D}=\mathbf{DA}^{\et}(-;\Lambda)$ with $\Lambda$ a field of characteristic zero contained in $\mathbb{Q}_\ell$. Then the $\ell$-adic realization functor $\op{Real}_\ell:\DMT_G(X)\to \op{Der}^{\op{b}}_G(X;\mathbb{Q}_\ell)$ is conservative. 
\item Assume that everything is defined over $\mathbb{C}$ and that $\mathbb{D}=\op{MDer}(-;\mathbb{C})$ is the derivator associated to the semi-simplified Hodge realization. Then the Hodge realization functor $\op{Real}_{\op{H}}:\DMT_G(X)\to\op{Der}^{\op{b}}_G(X;\mathbb{C})$ is conservative. 
\end{enumerate}
\end{proposition}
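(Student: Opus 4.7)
The plan is to reduce conservativity in stages, ultimately to the statement that the realization functor is conservative on $\DMT(\pt)$, which follows from the grading condition in Convention~\ref{conditions:grading}. Throughout, I use the fact, established in Proposition~\ref{prop:realization}, that both realization functors are compatible with the six-functor formalism, the induction equivalence of Proposition~\ref{cor:indequiv}, and the forgetful functor $\op{Res}_G^1$. It suffices to show that an object $M\in\DMT_G(X)$ with vanishing realization is itself zero; I will formulate the argument for both $\op{Real}_\ell$ and $\op{Real}_{\op{H}}$ uniformly, denoting the realization simply by $\op{Real}$.

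Stage one is a reduction from $X$ to single orbits by induction on the number of orbits. Let $j\colon G/H\hookrightarrow X$ be an open $G$-orbit with closed complement $i\colon Z\hookrightarrow X$. Because $M\in\DMT_G(X)\subseteq\DMT^\ast_G(X)$, the restriction $i^\ast M$ lies in $\DMT^\ast_G(Z)$ by Proposition~\ref{prop:pullbackdmt}, and, as noted in Stage~one of Proposition~\ref{lkio}, $Z$ inherits only orbits coming from orbits of $X$. Compatibility of $\op{Real}$ with $i^\ast$ gives $\op{Real}(i^\ast M)=i^\ast\op{Real}(M)=0$. Assuming the result inductively on $Z$ (which has strictly fewer orbits), one obtains $i^\ast M=0$. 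The localization triangle $j_!j^\ast M\to M\to i_\ast i^\ast M\to$ then forces $M\cong j_!j^\ast M$, so it remains to show $j^\ast M=0$ on the single orbit $G/H$.

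Stage two applies the induction equivalence $\mathbb{D}_G^+(G/H)\approx\mathbb{D}_H^+(\pt)$, which restricts to $\DMT_G(G/H)\approx\DMT_H(\pt)$ by the discussion following Proposition~\ref{prop:quotientdmt} and is compatible with realization. This reduces the problem to showing that $\op{Real}\colon\DMT_H(\pt)\to\op{Der}^{\op{b}}_H(\pt;-)$ is conservative. Stage three applies the forgetful functor: by Remark~\ref{forget}, $\op{Res}_H^1$ is conservative on $\mathbb{D}^+_H(\pt)$, it commutes with $\op{Real}$ by Proposition~\ref{prop:realization}, and it sends $\DMT_H(\pt)$ to $\DMT(\pt)$ by Proposition~\ref{prop:dmtres}. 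This reduces the problem to showing that $\op{Real}\colon\DMT(\pt)\to\op{Der}^{\op{b}}(\pt;-)$ is conservative.

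The base case is where the grading condition \ref{conditions:grading} does the work: there is an equivalence $\DMT(\pt)\simeq\op{Der}^{\op{b}}(\Lambda\op{-modf}^{\DZ})$ under which the realization functor is identified with the functor forgetting the $\DZ$-grading (sending $\Lambda(i)$ to $\mathbb{Q}_\ell(i)$, respectively to $\mathbb{C}$) on a cohomologically graded complex. Given a bounded complex $V^\bullet$ of finite-dimensional $\DZ$-graded vector spaces whose total (ungraded) cohomology vanishes, each graded piece $H^n(V)_i$ vanishes, hence $V^\bullet$ is acyclic and zero in $\op{Der}^{\op{b}}(\Lambda\op{-modf}^{\DZ})$. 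This gives conservativity on $\DMT(\pt)$ and, tracing back through the three reductions, completes the proof. The only genuinely delicate point is ensuring that at each reduction step one remains inside a category of equivariant mixed Tate motives so that the inductive hypothesis applies; this is guaranteed by the Whitney--Tate assumption together with Propositions~\ref{prop:dmtres} and \ref{prop:pullbackdmt}.
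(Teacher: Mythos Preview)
Your proof is correct and follows essentially the same approach as the paper: reduce to a single orbit via induction on the number of orbits using localization, then to the equivariant point via the induction equivalence, then to the non-equivariant point via the conservative forgetful functor, and finally invoke the grading condition. The paper presents these reductions in the opposite (bottom-up) order, but the logical content is identical; the only cosmetic difference is that the paper phrases the induction step via conservativity of the pair $(i^\ast,j^\ast)$ rather than your localization-triangle argument, and your reference to Proposition~\ref{lkio} is slightly misplaced (you really just need that $Z$, being a closed union of orbits, inherits the Whitney--Tate condition).
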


\begin{proof}
We prove (1), statement (2) is proved similarly. 

(i) We first establish the special case $X=\pt$. Consider the diagram
\[
\xymatrix{
\DMT_G(\pt)\ar[r]^{\op{Real}^G_\ell} \ar[d]_{\op{For}} & \op{Der}^{\op{b}}_G(\pt;\mathbb{Q}_\ell) \ar[d]^{\op{For}} \\
\DMT(\pt)\ar[r]_{\op{Real}_\ell} & \op{Der}^{\op{b}}(\pt;\mathbb{Q}_\ell)
}
\]
which commutes because realization is compatible with the forgetful functors.  The forgetful functors in the verticals are conservative, cf.~\ref{forget}. By assumption on the derivator, the lower horizontal is isomorphic to the functor 
\[
\op{Der}^{\op{b}}(\Lambda\op{-Mod}^{\mathbb{Z}})\to\op{Der}^{\op{b}}(\mathbb{Q}_\ell\op{-Mod})
\]
which extends coefficients to $\mathbb{Q}_\ell$ and forgets the $\mathbb{Z}$-grading, which is also obviously conservative. Therefore, the composition $\op{For}\circ\op{Real}^G_\ell$ is conservative, which implies that $\op{Real}^G_\ell$ is conservative, as claimed.

(ii) Now we consider the case where $X=G/H$ is a homogeneous space with the natural left $G$-action. In this case, we can consider the diagram 
\[
\xymatrix{
\DMT_G(G/H)\ar[r]^{\op{Real}^G_\ell} \ar[d]_{\approx} & 
\op{Der}^{\op{b}}_G(G/H;\mathbb{Q}_\ell) \ar[d]^{\approx}\\
\DMT_H(\pt)\ar[r]_{\op{Real}^H_\ell} &\op{Der}^{\op{b}}_H(\pt;\mathbb{Q}_\ell)
}
\]
which is commutative because the realization functor is compatible with the functors appearing in the induction equivalence of Proposition~\ref{cor:indequiv}. The lower horizontal is conservative, as established in (i) above. Since the induction equivalences in the verticals are obviously conservative, this establishes conservativity in the case $X=G/H$. 

(iii) For the general case, we use an induction, noting that the assumptions imply that $X$ has finitely many $G$-orbits separably defined over $k$. The $G$-variety $X$ will have an open orbit $j:U\hookrightarrow X$ with $G$-stable closed complement $i:Z\hookrightarrow X$, and the localization sequence implies that the pair $(i^\ast, j^\ast)$ is conservative. 

Now assume that an equivariant mixed Tate motive $M\in\DMT_G(X)$ has trivial realization, and consequently the restriction of $\op{Real}_\ell(M)$ to $U$ is also trivial. Since the realization functor is compatible with $j^\ast$, we have $j^\ast_\ell\circ\op{Real}_\ell(M)\cong \op{Real}_\ell(j^\ast M)$, and the assumption says that this is trivial. By the orbit case of conservativity established in (ii) above, the restriction $j^\ast M$ is also trivial. 
For the closed complement $i:Z\hookrightarrow X$, the inductive assumption states that the realization $\op{Real}_\ell:\DMT_G(Z)\to\op{Der}^{\op{b}}_G(Z;\mathbb{Q}_\ell)$ is conservative. Since the realization functor is compatible with $i^\ast$, an argument as above deduces from the inductive assumption that $i^\ast M$ is trivial. Now conservativity of the pair $(i^\ast, j^\ast)$ implies that the motive $M$ is trivial. 
\end{proof}

\begin{remark}
More generally, for a derivator satisfying the grading condition \ref{conditions:grading}, the morphism of derivators which forgets the grading would always be conservative, by an argument as in the above proof.
\end{remark}

\subsection{Generators for equivariant mixed Tate motives}

In this section, we show that the categories of equivariant mixed Tate motives over the point are generated (as thick triangulated subcategories of the equivariant motives) by inductions of constant motives. Moreover, this induction is only relevant in the case of non-connected algebraic groups. For a connected algebraic group $G$, the category $\DMT_G(\pt)$ of $G$-equivariant mixed Tate motives on the point is generated by motives whose evaluation over ${\op{B}}G$ is constant mixed Tate. This is basically the statement that there are no ``local systems'' on ${\op{B}}G$ with values in mixed Tate motives whenever $G$ is a connected algebraic group. 

\begin{proposition}
\label{prop:locsysgm}
Assume that the underlying derivator satisfies conditions~\ref{derivator:new} and the grading condition~\ref{conditions:grading}. Let $M\in\mathbb{D}(\mathbb{P}^\infty)$ be a motive whose restriction along $p:\mathbb{A}^\infty\setminus\{0\}\to\mathbb{P}^\infty$ is constant mixed Tate. Then $M$ is already a constant mixed Tate motive. 
\end{proposition}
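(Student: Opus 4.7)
The plan is to translate the statement into one about $\mathbb{G}_m$-equivariant motives on the point via the quotient equivalence, and then exploit the fact that the equivariant motivic cohomology of a point is a polynomial ring concentrated in even degrees with pure Tate weights.

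First, I would observe that $p\colon \mathbb{A}^\infty\setminus\{0\}\to\mathbb{P}^\infty$ is a $\mathbb{G}_m$-torsor with $\mathbb{A}^\infty\setminus\{0\}$ arising as the colimit of the $(N-1)$-acyclic resolutions $\mathbb{A}^{N+1}\setminus\{0\}\to\op{pt}$ of $(\mathbb{G}_m\looparrowright\op{pt})$ provided by Lemma~\ref{lem:resexist} and Proposition~\ref{prop:excision}. Applying the quotient equivalence (Proposition~\ref{prop:freeaction} / Corollary~\ref{cor:quoteqres}) level by level produces an equivalence $\mathbb{D}^+(\mathbb{P}^\infty)\approx \mathbb{D}^{\op{Res},+}_{\mathbb{G}_m}(\op{pt})\approx \mathbb{D}_{\mathbb{G}_m}^+(\op{pt})$. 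Under this equivalence, the object $M$ corresponds to some $\tilde M\in\mathbb{D}_{\mathbb{G}_m}^+(\op{pt})$, and the non-equivariant pullback $p^\ast M$ corresponds, via the $\mathbb{A}^1$-contractibility of $\mathbb{A}^\infty\setminus\{0\}$, to the underlying motive $\op{Res}_{\mathbb{G}_m}^1(\tilde M)\in\mathbb{D}^+(\op{pt})$. The hypothesis thus says $\op{Res}_{\mathbb{G}_m}^1(\tilde M)\in\DMT(\op{pt})$.

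By the grading condition (Convention~\ref{conditions:grading}) $\DMT(\op{pt})\simeq \op{Der}^{\op{b}}(\Lambda\op{-Mod}^{\mathbb Z})$, so the underlying motive decomposes as a finite direct sum $\bigoplus_i \underline{\op{pt}}(n_i)[m_i]$. It remains to show that $\tilde M$ itself lies in the thick subcategory of $\mathbb{D}_{\mathbb{G}_m}^+(\op{pt})$ generated by the Tate twists of the unit, which under the equivalence corresponds exactly to constant mixed Tate motives on $\mathbb{P}^\infty$.

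The core step is a splitting argument that uses the computation $\mathbb{D}_{\mathbb{G}_m}^+(\op{pt})(\underline{\op{pt}}_{\mathbb{G}_m}(a)[b],\underline{\op{pt}}_{\mathbb{G}_m}(a')[b'])\cong\op{H}^{b'-b,a'-a}_{\mathbb{G}_m}(\op{pt};\Lambda)$, which is non-zero only when $a'-a\ge 0$ and $b'-b=2(a'-a)$, being generated as a polynomial ring by the first Chern class of the tautological line bundle (of bidegree $(2,1)$). This in particular forces all $\op{Ext}^1$-groups between distinct Tate shifts of $\underline{\op{pt}}_{\mathbb{G}_m}$ with the shifts appearing in the decomposition above to vanish for bidegree reasons. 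I would then induct on the size of the support of the underlying motive: pick a Tate summand $\underline{\op{pt}}(n)[m]$ of $\op{Res}_{\mathbb{G}_m}^1(\tilde M)$ that is extremal (say, minimal in weight), lift the corresponding projection and inclusion to morphisms in the equivariant category using Ext-vanishing and the description of Hom-groups, and verify by conservativity of $\op{Res}_{\mathbb{G}_m}^1$ (see~\ref{forget}) that their composition is an isomorphism. This splits off a summand $\underline{\op{pt}}_{\mathbb{G}_m}(n)[m]$, and iteration yields that $\tilde M$ is a finite direct sum of Tate shifts, hence lies in $\DMT_{\mathbb{G}_m}(\op{pt})$ in the thick-subcategory sense.

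The hard part is the splitting induction; specifically, the careful bidegree bookkeeping needed to ensure that the projections and inclusions witnessing the splitting of $\op{Res}_{\mathbb{G}_m}^1(\tilde M)$ admit lifts to $\mathbb{D}_{\mathbb{G}_m}^+(\op{pt})$, and that no exotic extensions created by the polynomial $\op{H}^*_{\mathbb{G}_m}(\op{pt})$ obstruct the decomposition. Once this is organized, the proposition follows by transporting the conclusion back across the equivalence $\mathbb{D}_{\mathbb{G}_m}^+(\op{pt})\approx\mathbb{D}^+(\mathbb{P}^\infty)$.
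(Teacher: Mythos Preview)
Your reformulation via the quotient equivalence is correct, but the ``core step'' has a genuine gap. You want to lift the projection and inclusion witnessing a summand $\underline{\pt}(n)[m]$ of $\op{For}(\tilde M)$ to maps $\underline{\pt}_{\mathbb{G}_{\op{m}}}(n)[m]\to\tilde M$ and $\tilde M\to\underline{\pt}_{\mathbb{G}_{\op{m}}}(n)[m]$ in $\mathbb{D}^+_{\mathbb{G}_{\op{m}}}(\pt)$. The Ext-computation you quote only controls maps \emph{between} constant equivariant Tate objects; it says nothing about maps into or out of the unknown object $\tilde M$. The forgetful functor is not full, and there is no adjunction identifying $\mathbb{D}^+_{\mathbb{G}_{\op{m}}}(\underline{\pt}_{\mathbb{G}_{\op{m}}}(n)[m],\tilde M)$ with a non-equivariant Hom-group. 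Concretely: were there a nontrivial rank-one ``local system'' $L$ on $\mathbb{P}^\infty$ with $p^\ast L\cong\underline{\pt}$ (the very thing the proposition excludes), one would have $\mathbb{D}^+_{\mathbb{G}_{\op{m}}}(\underline{\pt}_{\mathbb{G}_{\op{m}}},L)=0$ while $\mathbb{D}(\underline{\pt},\op{For}L)=\Lambda$; your lifting argument would then fail. So the step presupposes what is to be shown. (As a secondary point, the Ext${}^1$-vanishing claim between the Tate pieces is not automatic either: for instance $\op{Ext}^1_{\mathbb{G}_{\op{m}}}(\underline{\pt}_{\mathbb{G}_{\op{m}}},\underline{\pt}_{\mathbb{G}_{\op{m}}}(1)[1])\cong\op{H}^{2,1}_{\mathbb{G}_{\op{m}}}(\pt)\cong\Lambda\neq 0$, and there is no a priori constraint on which bidegrees $(n_i,m_i)$ occur in $\op{For}(\tilde M)$.)

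The paper's proof avoids this by analyzing the descent datum directly. One interprets $M$ as $p^\ast M$ together with a $\mathbb{G}_{\op{m}}$-action on the mixed Tate motive $p^\ast M$: for every field extension $F/k$ and every $\lambda\in\mathbb{G}_{\op{m}}(F)$, an automorphism of the base change of $p^\ast M$ to $F$, compatibly with multiplication. The grading condition forces the degree-zero endomorphisms of $p^\ast M$ to be unchanged under field extension, so the semisimple part of the action is through the trivial character; any potential unipotent part (morphisms raising the Tate grading, which could appear over $k(\mathbb{G}_{\op{m}})$) must evaluate trivially at every closed point by the grading condition and hence is zero. Thus the descent datum is trivial, and $M$ coincides with the constant motive with fibre $p^\ast M$. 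This is the missing ingredient your splitting argument would need.
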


\begin{proof}
The claim is that the evaluation of $M$ on $\mathbb{P}^\infty$ is already a constant mixed Tate motive on $\mathbb{P}^\infty$. More precisely, denoting $p:\mathbb{A}^\infty\setminus\{0\}\to\mathbb{P}^\infty$ the natural projection, we have a constant mixed Tate motive $N:=p^\ast M(\mathbb{P}^\infty)\in \DMT(\mathbb{A}^\infty\setminus\{0\})\approx\DMT(\pt)$, and we claim that there is an isomorphism $M\cong N$ in $\DMT(\mathbb{P}^\infty)$. 

This is done by descent theory. The motives $M$ and $N$ correspond to descent data on $p^\ast M$ and $p^\ast N$, respectively, and we want to show that these descent data are compatible with the identification $p^\ast M\cong p^\ast N$. The descent datum for $p^\ast M$ is basically given by an action of $\mathbb{G}_{\op{m}}$ on $p^\ast M$: for every extension field $F$ of the base field and every $F$-point $\lambda:\op{Spec} F\to\mathbb{G}_{\op{m}}$, there is an automorphism $\lambda:\mathbb{A}^\infty\setminus\{0\}_F\to \mathbb{A}^\infty\setminus\{0\}_F$. The descent datum now provides an isomorphism $\lambda^\ast (p^\ast M)\to p^\ast M$ and these isomorphisms have to be compatible with field extensions and the multiplication in $\mathbb{G}_{\op{m}}$. Fixing a contraction of $\mathbb{A}^\infty\setminus\{0\}$, the descent datum can be translated into an action of $\mathbb{G}_{\op{m}}$ on the motive $p^\ast M$. Note that action here means that for every $F$-point $\lambda$ of $\mathbb{G}_{\op{m}}$, there is an action of $\lambda$ on the image of $p^\ast M$ under $\DMT(\pt)\to\DMT(\op{Spec} F)$. 

Now we claim that there cannot be any nontrivial action of $\mathbb{G}_{\op{m}}$ on $p^\ast M$. By the assumptions on the grading condition~\ref{conditions:grading}, the mixed Tate motive $p^\ast M$ can be viewed as a complex of graded vector spaces over a field of characteristic $0$ and the action has to preserve the grading. Moreover, the action has to be compatible with field extensions, and we can consider how units over the generic point of $\mathbb{G}_{\op{m}}$ act on the base-change of $p^\ast M$ to $k(\mathbb{G}_{\op{m}})=k(T)$. Since the degree $0$ endomorphisms of $p^\ast M$ and $p^\ast M\otimes k(T)$ are the same, the multiplication by scalars can only be via the trivial character. Now over the generic point, a generic unit doesn't have to preserve the grading: for example, in the case of $\mathbf{DA}^{\et}_{\mathbb{Q}}$ over a finite field $\mathbb{F}_q$,  $\op{K}_1(\mathbb{F}_q(T))\cong\mathbb{F}_q(T)^\times$ is rationally nontrivial and hence there are nontrivial morphisms $\mathbb{Q}\to\mathbb{Q}(1)[1]$. In principle, this means that there could be a unipotent action of $\mathbb{G}_{\op{m}}$ on the mixed Tate motive $p^\ast M$. However, the evaluation of this unipotent action at every closed point of $\mathbb{G}_{\op{m}}$ has to be trivial, again by the assumption on the grading condition which prohibits such unipotent action. Therefore, the action of $\mathbb{G}_{\op{m}}$ on the motive $p^\ast M$ must necessarily be trivial. 

As a consequence, the descent data for $p^\ast M$ and $p^\ast N$ are really the same and therefore compatible with the identification $p^\ast M\cong p^\ast N$. This implies that the evaluation of $M$ on $\mathbb{P}^\infty$ is really a constant mixed Tate motive. 
\end{proof}

\begin{Bemerkung}
The same proof also works in the case $G=\mathbb{G}_{\op{m}}^n$; the only change is that now there are several units acting. The argument with the generic point also shows explicitly how the connectedness of the algebraic group $\mathbb{G}_{\op{m}}$  implies that there cannot be nontrivial actions of $\mathbb{G}_{\op{m}}$ on mixed Tate motives and consequently no nontrivial local systems of mixed Tate motives over $\mathbb{P}^\infty$. 
\end{Bemerkung}

\begin{proposition}
\label{prop:locsysconn}
Assume that the underlying derivator $\mathbb{D}$ satisfies conditions~\ref{derivator:new} and the grading condition~\ref{conditions:grading}. Let $G$ be a connected split reductive group and let $M\in\mathbb{D}({\op{B}}G)$ be a motive whose restriction along $p:{\op{E}}G\to{\op{B}}G$ is constant mixed Tate. Then  $M$ is already a constant mixed Tate motive.
\end{proposition}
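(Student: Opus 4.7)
The proof follows the strategy of Proposition~\ref{prop:locsysgm}, reducing the general case to that of a split torus via a Borel subgroup and then transporting the conclusion back via a projective bundle argument.

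\textbf{Reduction to a maximal torus.} Fix a Borel subgroup $B\subset G$ with unipotent radical $U$ and maximal torus $T\cong B/U$. The right $B$-action on ${\op{E}}G$ is free, so we may identify ${\op{E}}G/B$ with ${\op{B}}B$ and obtain a morphism $q:{\op{B}}B\to{\op{B}}G$, which is an \'etale locally trivial fibre bundle with fibre the flag variety $G/B$. Moreover $p$ factors as ${\op{E}}G\to{\op{B}}B\xrightarrow{q}{\op{B}}G$, so $q^\ast M$ has the same (constant mixed Tate) underlying motive on ${\op{E}}G$. By Proposition~\ref{prop:suk} applied to $B\twoheadrightarrow T$ with connected unipotent kernel $U$, there is an equivalence $\mathbb{D}({\op{B}}T)\sirra\mathbb{D}({\op{B}}B)$, so $q^\ast M$ transports to a motive on ${\op{B}}T$ whose pullback to the contractible ${\op{E}}T$ is again constant mixed Tate.

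\textbf{The torus case.} For $T\cong\mathbb{G}_{\op{m}}^n$, I would adapt the descent argument of Proposition~\ref{prop:locsysgm} directly. The motive on ${\op{B}}T$ corresponds, under descent along ${\op{E}}T\to{\op{B}}T$, to a mixed Tate motive on $\pt$ equipped with a $T$-action. By the grading condition~\ref{conditions:grading}, this motive is a finite complex of graded vector spaces in characteristic zero, and the action must preserve the grading. Running the generic-fibre analysis of Proposition~\ref{prop:locsysgm} on each $\mathbb{G}_{\op{m}}$-factor separately --- using that degree-zero endomorphisms of a mixed Tate motive are insensitive to finite field extensions and that the grading condition forbids nontrivial unipotent components of the action --- shows that the $T$-action is trivial. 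Undoing the equivalence of the first step, $q^\ast M$ is then isomorphic to $\pi_B^\ast N$ for some $N\in\DMT(\pt)$, where $\pi_B:{\op{B}}B\to\pt$ is the structure map.

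\textbf{Descending along the flag bundle.} The morphism $q$ is smooth and proper with fibre the smooth projective variety $G/B$, whose motive is a direct sum of Tate twists indexed by the Weyl group. An extension of the equivariant projective bundle formula (Proposition~\ref{prop:projectivebundle}) to the flag bundle $q$, combined with the projection formula, yields
\[
q_\ast q^\ast M \;\cong\; M\otimes q_\ast\underline{{\op{B}}B},
\]
where $q_\ast\underline{{\op{B}}B}$ is a direct sum of Tate twists of $\underline{{\op{B}}G}$ containing $\underline{{\op{B}}G}$ itself as a direct summand (corresponding to the fundamental class). Hence $M$ is a direct summand of $q_\ast q^\ast M$. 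On the other hand, from $q^\ast M\cong q^\ast\pi_G^\ast N$ we obtain $q_\ast q^\ast M\cong\pi_G^\ast N\otimes q_\ast\underline{{\op{B}}B}$, which is a direct sum of Tate twists of the constant mixed Tate motive $\pi_G^\ast N$. Since the thick subcategory of $\mathbb{D}({\op{B}}G)$ generated by constant mixed Tate motives is closed under direct summands, $M$ is itself a constant mixed Tate motive on ${\op{B}}G$.

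\textbf{Main obstacle.} The delicate part is the torus case: one must carefully generalise the generic-versus-closed-point analysis of Proposition~\ref{prop:locsysgm} to tori of arbitrary rank, and rule out all putative unipotent components of the descent data using the grading condition. The first and third steps are otherwise formal consequences of Proposition~\ref{prop:suk} and the projective bundle formula.
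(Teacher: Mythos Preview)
Your proposal is correct and follows essentially the same route as the paper: factor ${\op{E}}G\to{\op{B}}G$ through ${\op{E}}G/B\simeq{\op{B}}T$, apply the torus case (the extension of Proposition~\ref{prop:locsysgm} to $\mathbb{G}_{\op{m}}^n$) to see that $q^\ast M$ is constant mixed Tate, and then use the projective bundle formula for the $G/B$-bundle $q$ to realise $M$ as a direct summand of $q_\ast q^\ast M$, which is constant mixed Tate. The paper is more laconic---it simply cites the $\mathbb{G}_{\op{m}}^n$ extension rather than spelling out the descent argument, and it invokes the splitting of the unit $M\to q_\ast q^\ast M$ directly rather than writing out the projection formula---but the key ideas and their ordering coincide with yours.
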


\begin{proof}
For a connected split reductive group $G$, we can choose an inclusion $T\subseteq B\subseteq G$ of a maximal torus $T$ and a Borel subgroup $B$. Then the universal $G$-torsor $\pi:{\op{E}}G\to{\op{B}}G$ can be factored as
\[
{\op{E}}G\xrightarrow{p} {\op{E}}G/B\xrightarrow{q}{\op{B}}G.
\]
The first morphism $p$ is $\mathbb{A}^1$-equivalent to ${\op{E}}T\to{\op{B}}T$ with $T\subseteq B$ a choice of maximal torus in $G$. The second morphism $q$ is a $G/B$-bundle. 

Now consider the motive $M\in\mathbb{D}({\op{B}}G)$. By assumption, 
\[
(q\circ p)^\ast M\in \DMT({\op{E}}G)\approx\DMT(\pt),
\]
i.e., the pullback of $M$ is a constant mixed Tate motive. By Proposition~\ref{prop:locsysgm} (resp. its extension to the case $T=\mathbb{G}_{\op{m}}^n$), the motive $q^\ast M\in \mathbb{D}({\op{E}}G/B)$ is constant mixed Tate. By the projective bundle formula, cf. Proposition~\ref{prop:projectivebundle}, $q_\ast q^\ast M$ is a direct sum of constant mixed Tate motives; basically, it is the pullback of $\op{M}(G/B)\otimes q^\ast M$ along the structure morphism of ${\op{B}}G$. Moreover, the unit morphism $M\to q_\ast q^\ast M$ of the adjunction is a split injection, embedding $M$ as a direct summand of $q_\ast q^\ast M$. Therefore, $M$ is a constant mixed Tate motive. 
\end{proof}

\begin{remark}
Recall from Definition~\ref{mtderdef1} that an equivariant motive $M\in \mathbb{D}_G^+(\pt)$ is mixed Tate if its underlying motive is. This means in particular, that the evaluation of $M$ on ${\op{B}}G$ is a motive whose pullback along ${\op{E}}G\to{\op{B}}G$ is a constant mixed Tate motive. The above results now imply that if $M\in\DMT_G(\pt)$ is a $G$-equivariant mixed Tate motive with $G$ a connected split reductive group, then its evaluation over ${\op{B}}G$ is a constant mixed Tate motive.
\end{remark}

\begin{proposition}
\label{prop:generatingtateconn}
Assume that the underlying derivator $\mathbb{D}$ satisfies conditions~\ref{derivator:new} and the grading condition~\ref{conditions:grading}. Let $G$ be a connected split reductive group. Then evaluation at ${\op{B}}G$ induces an equivalence of categories
\[
\DMT_G(\pt)\to \DMT({\op{B}}G).
\]
In particular, $\DMT_G(\pt)$ is the smallest thick subcategory of $\mathbb{D}^+_G(\pt)$ containing the constant equivariant motives $\const{\pt}(n)[2n]$, $n\in\mathbb{Z}$. 
\end{proposition}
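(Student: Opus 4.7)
The plan is to leverage the simplicial description of equivariant motives from Definition~\ref{def:equivmotivesborel}: the equivalence $\mathbb{D}_G^+(\pt) \approx \mathbb{D}^{+,\op{cart}}({\op{B}}G)$ identifies $G$-equivariant motives on the point with cartesian motives on the simplicial Borel construction ${\op{B}}G = {\op{E}}G\times_{/G}\pt$. The evaluation functor in the statement is then the fully faithful inclusion $\mathbb{D}^{+,\op{cart}}({\op{B}}G)\hookrightarrow \mathbb{D}^+({\op{B}}G)$, and under the identification the underlying non-equivariant motive of $M\in\mathbb{D}_G^+(\pt)$ is computed as the pullback of $M$ along $p:{\op{E}}G\to{\op{B}}G$ (using simplicial contractibility of ${\op{E}}G$ to identify motives on ${\op{E}}G$ with motives on $\pt$).

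First I would check that the functor lands in $\DMT({\op{B}}G)$. If $M \in \DMT_G(\pt)$, then by Definition~\ref{mtderdef1} its underlying motive is mixed Tate, which translates to $p^*M$ being a constant mixed Tate motive on ${\op{E}}G$. Proposition~\ref{prop:locsysconn} now immediately forces $M$ itself to be a constant mixed Tate motive on ${\op{B}}G$, and hence $M \in \DMT({\op{B}}G)$. This is the heart of the argument, and the main obstacle was already overcome in proving Proposition~\ref{prop:locsysconn}: the connectedness of $G$ is used there to rule out any nontrivial monodromy-type behaviour of local systems over ${\op{B}}G$.

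For essential surjectivity, any $N \in \DMT({\op{B}}G)$ is, by definition of constant mixed Tate motives, a pullback $\op{fin}_{{\op{B}}G}^*N_0$ of some $N_0 \in \DMT(\pt)$ along the structural morphism. Such pullbacks are automatically cartesian since every face and degeneracy map of ${\op{B}}G$ factors through $\pt$ and thus commutes with pullback from $\pt$; moreover $p^*N \simeq (p\circ\op{fin}_{{\op{B}}G})^*N_0$ is mixed Tate, so $N$ corresponds under the equivalence to an object of $\DMT_G(\pt)$. Full faithfulness is immediate since $\mathbb{D}^{+,\op{cart}}({\op{B}}G)$ is a full subcategory of $\mathbb{D}^+({\op{B}}G)$ by construction, and the mixed Tate subcategories inherit this property.

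The second statement then follows at once from the equivalence. By definition, $\DMT({\op{B}}G)$ is the thick subcategory of $\mathbb{D}^+({\op{B}}G)$ generated by the constant Tate motives $\const{{\op{B}}G}(n)$, $n\in\mathbb{Z}$. Under the equivalence these correspond exactly to the constant equivariant Tate motives $\const{\pt}(n) \in \mathbb{D}_G^+(\pt)$, and since thick subcategories are closed under arbitrary shifts, we obtain equivalently the description as the thick subcategory generated by $\const{\pt}(n)[2n]$, $n\in\mathbb{Z}$. Beyond invoking Proposition~\ref{prop:locsysconn}, the entire argument is a formal manipulation of the simplicial definition of equivariant motives.
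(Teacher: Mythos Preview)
Your approach is the same as the paper's: reduce to Proposition~\ref{prop:locsysconn}, use that cartesian motives are a full subcategory for full faithfulness, and argue essential surjectivity on generators. However, the essential surjectivity paragraph contains a genuine slip. You assert that every $N \in \DMT({\op{B}}G)$ is of the form $\op{fin}_{{\op{B}}G}^*N_0$ for some $N_0 \in \DMT(\pt)$. This is false: under the grading condition $\DMT(\pt)$ is semisimple, so the essential image of $\op{fin}_{{\op{B}}G}^*$ consists only of direct sums of shifted twists $\const{{\op{B}}G}(n)[m]$, whereas $\DMT({\op{B}}G)$ is the \emph{thick triangulated} closure and contains, for example, the cone of a nonzero Chern class $\const{{\op{B}}G} \to \const{{\op{B}}G}(1)[2]$, which is not such a direct sum. (You correctly state the thick-subcategory definition of $\DMT({\op{B}}G)$ in your final paragraph, so the two paragraphs are in tension.)

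The fix is exactly what the paper does, and what your last paragraph almost says: since the functor is fully faithful and triangulated, its essential image is a thick triangulated subcategory of $\DMT({\op{B}}G)$ (using Lemma~\ref{lem:idempotent} for idempotent completeness of the source). The \emph{generators} $\const{{\op{B}}G}(n)$ are in the essential image, being the evaluations of the constant equivariant motives $\const{\pt}_G(n)$; hence the essential image is all of $\DMT({\op{B}}G)$. With this correction, your argument and the paper's coincide.
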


\begin{proof}
This follows directly from Proposition~\ref{prop:locsysconn}. Once we know that the evaluation functor lands in  $\DMT({\op{B}}G)$, the definition of morphisms in the equivariant motivic categories implies the evaluation functor is fully faithful, i.e., an equivalence onto its essential image. The motives $\const{\pt}(n)[2n]$, $n\in\mathbb{Z}$, generate $\DMT({\op{B}}G)$ (as a thick triangulated subcategory of $\mathbb{D}^+({\op{B}}G)$). Since these motives are contained in the essential image (they are the images of the corresponding constant equivariant mixed Tate motives), the evaluation at ${\op{B}}G$ is also essentially surjective, giving the claimed equivalence. 

For the second statement, note that the constant motives $\const{\pt}(n)[2n]$ are clearly equivariantly mixed Tate. Since the category of equivariant mixed Tate motives is closed under triangles and summands, cf. \ref{mtder} and Lemma~\ref{lem:idempotent}, the thick subcategory generated by these constant motives is contained in $\DMT_G(\pt)$. For the other inclusion, if the thick triangulated subcategory of $\DMT_G(\pt)$ generated by $\const{\pt}(n)[2n]$ were strictly contained in $\DMT_G(\pt)$, then $\DMT({\op{B}}G)$ would not be generated by the constant mixed Tate motives, by the equivalence above. This is obviously a contradiction, proving the second claim.
\end{proof}

\begin{Bemerkung}
This is the motivic analogue of the results in  \cite[Sections 10--12]{BeLu} (more specifically 12.4.3) for connected Lie groups. 
\end{Bemerkung}

\begin{proposition}
\label{prop:generatingtate}
Assume that the underlying derivator $\mathbb{D}$ satisfies conditions~\ref{derivator:new} and the grading condition~\ref{conditions:grading}. Let $G$ be a split reductive group with connected component $G^0$ and denote by $p:{\op{B}}G^0\to{\op{B}}G$ the natural projection induced from the inclusion $G^0\hookrightarrow G$. Then $\DMT_G(\pt)$ is the smallest thick subcategory of $\mathbb{D}^+_G(\pt)$ containing the equivariant motives 
\[
p_\ast \const{\pt}(n)[2n]=\op{Ind}_{G^0}^G\const{\pt}(n)[2n], n\in\mathbb{Z}. 
\]
\end{proposition}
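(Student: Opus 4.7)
The plan is to prove two inclusions. For the forward direction, each generator $p_\ast\underline{\pt}(n)[2n]$ lies in $\DMT_G(\pt)$: the constant motive $\underline{\pt}(n)[2n]$ sits in $\DMT_{G^0}(\pt) = \DMT_{G^0}^!(\pt)$ (a single separably defined orbit), and Proposition~\ref{Las} applied to $G^0 \hookrightarrow G \looparrowright \pt$ sends it into $\DMT_G^!(\pt) = \DMT_G(\pt)$ via $\op{Ind}_{G^0}^G \cong p_\ast$, using the identification from~\ref{PBPF}. The thickness of $\DMT_G(\pt)$ in $\mathbb{D}_G^+(\pt)$, guaranteed by Lemma~\ref{lem:idempotent}, then yields the containment.

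For the reverse inclusion, fix $M \in \DMT_G(\pt)$ and write $\mathcal{C}$ for the thick subcategory generated by the $p_\ast\underline{\pt}(n)[2n]$. By Proposition~\ref{prop:dmtres}, the restriction $\op{Res}_G^{G^0} M$ lies in $\DMT_{G^0}(\pt)$. Since $G^0$ is connected split reductive (the base field being algebraically closed), Proposition~\ref{prop:generatingtateconn} implies $\op{Res}_G^{G^0} M$ belongs to the thick subcategory of $\mathbb{D}_{G^0}^+(\pt)$ generated by the $\underline{\pt}(n)[2n]$. Applying the triangulated functor $\op{Ind}_{G^0}^G$ and using commutativity with twists and shifts places $\op{Ind}_{G^0}^G\op{Res}_G^{G^0} M$ in $\mathcal{C}$.

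It then suffices to exhibit $M$ as a direct summand of $\op{Ind}_{G^0}^G\op{Res}_G^{G^0}M \cong p_\ast p^\ast M$. The key point is that $p:{\op{B}}G^0 \to {\op{B}}G$ is a finite \'etale cover of degree $n = [G:G^0]$: termwise over the simplicial Borel construction (or on any resolution of $\pt$), the map becomes a genuine finite \'etale morphism of schemes of degree $n$. Consequently $p_! \cong p_\ast$ by properness and $p^! \cong p^\ast$ by relative purity, so that $p_\ast$ is simultaneously left and right adjoint to $p^\ast$. The standard finite \'etale trace $\op{tr}\colon p_\ast p^\ast \to \op{id}$, composed with the unit $\op{id} \to p_\ast p^\ast$, equals multiplication by $n$. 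Since the coefficients have characteristic zero by the grading condition~\ref{conditions:grading}, $n$ is invertible and $M$ is a direct summand of $p_\ast p^\ast M$. Thickness of $\mathcal{C}$ then gives $M \in \mathcal{C}$, completing the proof.

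The main obstacle is the coherent construction of the finite \'etale trace $\op{tr}\colon p_\ast p^\ast \to \op{id}$ for the morphism $p$ in the equivariant motivic framework. While each term of the simplicial Borel construction supplies a genuine finite \'etale morphism where the trace exists and satisfies the multiplication-by-$n$ identity, one must verify that these levelwise traces assemble into a natural transformation compatible with the cartesian structure. This coherence is a formal consequence of the base change and projection formulas built into the six functor formalism, cf.~Proposition~\ref{prop:simpcart} and the simplicial construction in Section~\ref{sec:sixfunctors}; an alternative is to invoke the ambidexterity of $p$ abstractly, appealing to the fact that $p_\ast$ is both left and right adjoint to $p^\ast$ with unit and counit satisfying the standard rational trace identity.
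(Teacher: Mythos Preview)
Your proposal is correct and follows essentially the same approach as the paper: restrict to $G^0$, invoke the connected case (Proposition~\ref{prop:generatingtateconn}), push forward, and use that $M$ is a direct summand of $p_\ast p^\ast M$. The only difference is that your ``main obstacle'' concerning the coherent construction of the finite \'etale trace is already dispatched in the paper as Lemma~\ref{finitetorsorNew}, which shows directly that the composition $M \to p_\ast p^\ast M \to M$ of adjunction maps is an isomorphism for any finite-group torsor; citing that lemma would eliminate the coherence discussion entirely.
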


\begin{proof}
This follows from Proposition~\ref{prop:generatingtateconn}. For an equivariant motive $M\in\DMT_G(\pt)$, the unit of the adjunction $M\to p_\ast p^\ast M$ embeds $M$ as a direct summand of $p_\ast p^\ast M$, cf. also Lemma~\ref{finitetorsorNew} in Appendix~\ref{sec:motivebg}. But $p^\ast M\in \DMT_{G^0}(\pt)$ which therefore is contained in the smallest thick triangulated subcategory generated by $\const{\pt}_{G^0}(n)[2n]$, $n\in\mathbb{Z}$. Therefore, $p_\ast p^\ast M$ is contained in the smallest thick triangulated subcategory generated by $p_\ast \const{\pt}_{G^0}(n)[2n]$, $n\in\mathbb{Z}$, as claimed.
\end{proof}

\section{Equivariant motivic cohomology}
\label{sec:motcohom}

In this section, we will define various versions of equivariant motivic homology and cohomology theories. We show that the equivariant motivic cohomology for smooth varieties with action recovers the definition of equivariant higher Chow groups of Edidin and Graham. Then we turn to a more restricted setting, where the motives over the point are nothing but graded vector spaces. In this setting, the equivariant motivic cohomology of the point recovers the classical computations of the cohomology of compact Lie groups, and in the special situations we are interested in the isomorphism is induced by the realization functor. Finally, we discuss how these computations of equivariant cohomology of the point behave under the six functors. This will be the basic input for the tilting theorems in Section~\ref{sec:tilting}. 

We would like to take this opportunity to point out and emphasize once again that the equivariant motivic cohomology we consider here is of Borel-type, as opposed to the Bredon-type equivariant motivic cohomology considered e.g. in \cite{hoyois}. 

\subsection{Definition of equivariant cohomology}

We first give the definition of equivariant motivic homology and cohomology as well as the versions with compact supports. These are equivariant versions of the usual definitions of motivic homology theories, cf. e.g. \cite{mazza:voevodsky:weibel}, also \ref{def:motcohom}. 


\begin{definition}
\label{def:eqmot}
\index{equivariant motivic cohomology}
Let $\mathbb{D}$ be a $\Lambda$-linear derivator satisfying the conditions of \ref{derivator:new}, and let $G\looparrowright X$ be a variety with action. Denote by $\mathbb{D}^+_G:=\mathbb{D}^+_G(\op{pt})$.

We define the \emph{equivariant motivic cohomology of $G\looparrowright X$} as
\[
\op{H}^{n,i}(G\looparrowright X,\mathbb{D}):=\mathbb{D}^+_G(\op{M}(G\looparrowright X),\Lambda(i)[n]). 
\]
Dually, we define the \emph{equivariant motivic homology of $G\looparrowright X$} as 
\[
\op{H}_{n,i}(G\looparrowright X,\mathbb{D}):=\mathbb{D}^+_G(\Lambda(i)[n],\op{M}(G\looparrowright X)). 
\]
Here, $\op{M}(G\looparrowright X)\in \mathbb{D}^+_G(\op{pt})$ denotes the equivariant motive of $G\looparrowright X$, cf. Definition~\ref{def:equivmotive}. 
\end{definition}

\begin{definition}
\label{def:eqmotsupport}
\index{equivariant motivic cohomology!with compact supports}
Let $\mathbb{D}$ be a $\Lambda$-linear derivator satisfying the conditions of \ref{derivator:new}, and let $G\looparrowright X$ be a variety with action. Denote by $\mathbb{D}^+_G:=\mathbb{D}^+_G(\op{pt})$.

We define the \emph{equivariant motivic cohomology with compact supports of $G\looparrowright X$} as
\[
\op{H}^{n,i}_{\op{c}}(G\looparrowright X,\mathbb{D}):=\mathbb{D}^+_G(\op{M}^{\op{c}}(G\looparrowright X),\Lambda(i)[n]). 
\]
Dually, we define the \emph{equivariant (Borel--Moore) motivic homology with compact supports of $G\looparrowright X$} as 
\[
\op{H}_{n,i}^{\op{BM}}(G\looparrowright X,\mathbb{D}):=\mathbb{D}^+_G(\Lambda(i)[n],\op{M}^{\op{c}}(G\looparrowright X))\cong \mathbb{D}^+_G(\op{M}^{\op{BM}}(G\looparrowright X),\Lambda(i)[n]). 
\]
Here, $\op{M}^{\op{c}}(G\looparrowright X)\in \mathbb{D}^+_G(\op{pt})$ denotes the equivariant motive with compact support of $G\looparrowright X$ and $\op{M}^{\op{BM}}(G\looparrowright X)\in \mathbb{D}^+_G(\op{pt})$ denotes the Borel--Moore motive of $G\looparrowright X$, cf. Definition~\ref{def:equivmotive}. 
\end{definition}

\begin{Bemerkung}
For $\mathbb{D}=\op{Der}^{\op{b}}(-,\mathbb{K})$ the derivator given by derived categories of sheaves of $\mathbb{K}$-vector spaces, the above definitions recover the usual definitions of equivariant cohomology, cf. \cite[Definition 13.1]{BeLu}, and equivariant cohomology with compact supports, cf. \cite[Definition 13.2]{BeLu}; at least the special case where the coefficients are given by constant sheaves. In this vein, we could consider more generally equivariant cohomology of $G\looparrowright X$ with coefficients in a $G$-equivariant motive $M$ on $X$ to be defined as 
\[
\mathbb{D}^+_G((\op{fin}_X)_\ast M,\Lambda(i)[n]). 
\]
\end{Bemerkung}

\begin{Bemerkung}
\label{eqmotlocal}
\index{equivariant motivic cohomology!local coefficients}
The above definitions work well to detect most of the relevant phenomena in the category of equivariant mixed Tate motives via cohomology in the case of connected groups. However, when the group $G$ is not connected, there exist non-trivial local systems on ${\op{B}}G$, cf.~Example~\ref{ex:locsys}, given by the direct summands of $\op{Ind}_{G^0}^G\underline{\pt}_{G^0}\in\mathbb{D}^+_G(\pt)$ where $G^0$ is the connected component of the identity. In these cases, we also want to consider the equivariant motivic cohomology with coefficients in such a local system $L$; this would be an analogue of the classical equivariant cohomology with coefficients in a local system. More generally, 
\[
\mathbb{D}^+_G(\op{M}(G\looparrowright X),\op{Ind}_{G^0}^G \Lambda(i)[n])
\]
could be interpreted as the direct sum of the equivariant motivic cohomology groups with coefficients in all relevant local systems with monodromy in $\pi_0(G):=G/G^0$. Similarly, 
\[
\mathbb{D}^+_G(\op{M}^{\op{c}}(G\looparrowright X),\op{Ind}_{G^0}^G\Lambda(i)[n])
\]
would be the direct sum of equivariant motivic cohomology with compact support in all the relevant local systems with monodromy in $\pi_0(G)$.

In the case of equivariant motivic cohomology, the cohomology with coefficients in the induced motive above can also be computed over the connected component of the identity $G^0$, using the induction-restriction adjunction:
\begin{eqnarray*}
\op{H}^{n,i}(G^0\looparrowright X,\mathbb{D})&=& \mathbb{D}^+_{G^0}(\op{M}(G^0\subset G\looparrowright X), \Lambda(i)[n])\\
&\cong &
\mathbb{D}^+_{G^0}(\op{Res}_G^{G^0} \op{M}(G\looparrowright X), \Lambda(i)[n])\\ &\cong& \mathbb{D}^+_G(\op{M}(G\looparrowright X),\op{Ind}_{G^0}^G \Lambda(i)[n]).
\end{eqnarray*}
\end{Bemerkung}

\begin{Bemerkung}
In the situations where we have realization functors, cf. Section~\ref{sec:realization}, we would get induced morphisms from equivariant motivic cohomology to whatever cohomology the target of the realization functor computes. To be more specific, for $\ell$-adic realization on $\mathbf{DA}^{\et}$ we would get induced $\ell$-adic equivariant cycle class maps 
\[
\op{H}^{n,i}(G\looparrowright X,\mathbf{DA}^{\et})\xrightarrow{\op{Real}_\ell} \op{H}^{n}(G\looparrowright X,\mathbb{Q}_\ell)
\] 
which take values in equivariant $\ell$-adic cohomology. Similarly, for the Hodge realization on $\op{MDer}(-;\mathbb{C})$ we would get induced maps
\[
\op{H}^{n,i}(G\looparrowright X,\op{MDer})\xrightarrow{\op{Real}_{\op{H}}} \op{H}^{n}(G\looparrowright X,\mathbb{C})
\]
which take values in equivariant singular cohomology of the complex points. We will see in the explicit computations later on that the cycle class maps for these realization functors will be isomorphisms in the situations relevant for us. 

Although the statements above were made for equivariant motivic cohomology,  analogous statements would also hold for the other versions of equivariant motivic homology theories. 
\end{Bemerkung}

\begin{Bemerkung}
\label{agnotation}
\index{$\mathcal{A}_G$}
In the special case where $X=\pt$ and the derivator $\mathbb{D}$ is clear from the context, the cohomology ring $\op{H}^{2i,i}(G\looparrowright \pt;\mathbb{D})$ will be abbreviated to $\mathcal{A}_G$. This follows the notation for cohomology of classifying spaces used in \cite{BeLu}. 
\end{Bemerkung}

\subsection{Equivariant motives and equivariant Chow groups}
\label{equiv:chow}

Now we want to compute morphisms between some equivariant mixed Tate motives in terms of equivariant higher Chow groups in the sense of Edidin and Graham \cite{edidin:graham}. This is the equivariant version of the identification 
\[
\mathbf{DA}^{\et}_{X}(\Lambda,\Lambda(p)[q])\cong
\op{gr}_\gamma^p\op{K}_{2p-q}(X)_{\Lambda}\cong 
\op{CH}^p(X;2p-q)_{\Lambda}
\]
of morphisms between mixed Tate motives over a smooth scheme $X$ in terms of Bloch's higher Chow groups of $X$, by Voevodsky's comparison theorem \cite{voevodsky:comparison}, cf. also \cite[Theorem 19.1]{mazza:voevodsky:weibel}. The results below are formulated with coefficients in a field $\Lambda$ of characteristic zero, but some integral statements are also possible. 

\begin{Bemerkung}
Recall from \cite{totaro} and \cite{edidin:graham} that the equivariant higher Chow groups of a variety with action $(G\looparrowright X)$ are defined via resolutions, cf. also Lemma~\ref{lem:resexist}: for $s\geq 0$ take a representation $V_s$ of $G$ over $k$ such that $G$ acts freely outside a $G$-invariant closed subset $Z_s\subseteq V_s$ of codimension $\geq s$. Then the equivariant higher Chow groups are defined as 
\[
\op{CH}^i_G(X;j)\cong \lim_{s\to\infty}\op{CH}^i((X\times(V_s\setminus Z_s))/G;j).
\]
By the double fibration argument \cite[Remark 1.4]{totaro} resp. \cite[Definition-Proposition 1]{edidin:graham}, these groups stabilize as $s\to\infty$. 
\end{Bemerkung}

\begin{Bemerkung}
Now in the situation of $(G\looparrowright \pt)$, \cite[Remark 1.4]{totaro} shows that for a representation $V$ of $G$ as above with non-free closed subset $Z\subseteq V$ of codimension $\geq s$, the quotient $(V\setminus Z)/ G$ is an affine bundle over a homogeneous space $\op{GL}_{N+n}/(H\times G)$ where $H$ is an extension of $\op{GL}_N$ by a unipotent group. In particular, the quotient $(V\setminus Z)/ G$ is smooth. A similar argument shows that for \emph{smooth} $X$ with $G$-action, and a representation as above, the quotient $Y=(X\times(V\setminus Z))/G$ is smooth. 
By Voevodsky's comparison result mentioned above, 
\[
\mathbf{DA}^{\et}_Y(\Lambda,\Lambda(p)[q])\cong \op{CH}^p(Y,2p-q).
\]
For high enough $s$, this is independent of the choice of representation $V$, and hence the latter Chow group can be identified with the equivariant Chow group $\op{CH}^p_G(X,2p-q)$. In particular, the morphisms between Tate motives over the quotient $(X\times(V\setminus Z))/G$ compute equivariant higher Chow groups in sufficiently low degrees. 
\end{Bemerkung}

Now we still need to identify the morphisms between mixed Tate motives over quotients of individual resolutions with morphisms of mixed Tate motives in the category of motives over the resolution. For this, we use the categories $\mathbb{D}_G(X,P)$ and their identification with motives over diagrams from Lemma~\ref{lem:resolindep}. 

By Definition~\ref{def:equivmotres1}, a morphism in $\mathbb{D}_G(X,P)$ is a pair $\alpha=(\alpha_X,\overline{\alpha})$ with $\alpha_X:M_X\to N_X$ and $\overline{\alpha}:\overline{M}\to\overline{N}$ such that $\beta_N\circ p^\ast(\alpha_X)=q^\ast(\overline{a})\circ \beta_M$. For the constant mixed Tate motives $\Lambda(i)[j]$ in $\mathbb{D}_G(X,P)$ the components over $X$ and $G\backslash P$ are both $\Lambda(i)[j]$, and the comparison isomorphism $\beta$ is the identity. In particular, we obtain a cartesian square
\[
\xymatrix{
\mathbf{DA}^{\et}_G(X,P)(\Lambda,\Lambda(i)[j])\ar[r] \ar[d] & \mathbf{DA}^{\et}(X)(\Lambda,\Lambda(i)[j])\ar[d] \\
\mathbf{DA}^{\et}(G\backslash P)(\Lambda,\Lambda(i)[j])\ar[r] & \mathbf{DA}^{\et}(P)(\Lambda,\Lambda(i)[j]).
}
\]
For $P=X\times(V\setminus Z)$ as above, the projection $P\to X$ is $(s-2)$-acyclic, hence induces an isomorphism on motivic cohomology in degrees  below $(s-2)$. In particular, in these degrees, the pullback square provides isomorphisms
\[
\mathbf{DA}^{\et}_G(X,P)(\Lambda,\Lambda(i)[j])\cong 
\mathbf{DA}^{\et}(G\backslash P)(\Lambda,\Lambda(i)[j])
\cong\op{CH}^i_G(X;2i-j).
\]
The double fibration argument shows again stabilization, which implies that we get an isomorphism for all degrees
\[
\mathbf{DA}^{\et,\op{res},\op{b}}_G(X)(\Lambda,\Lambda(i)[j])\cong \op{CH}^i_G(X;2i-j).
\]
By Corollary~\ref{cor:resRes}, we get a similar identification for the category $\mathbf{DA}^{\et,\op{Res}}$. Putting all these together, we finally obtain the required computation of morphisms between constant equivariant mixed Tate motives. We also formulate a version for non-smooth varieties with action, pointed out by Jens Eberhardt; for the proof, Voevodsky's comparison theorem has to be replaced by \cite[Proposition 19.18]{mazza:voevodsky:weibel}. 

\begin{theorem}
\label{thm:higherchow}
Let $G\looparrowright X$ be a variety with action, and assume that $X$ is smooth. Then we have a functorial identification
\[
\op{H}^{n,i}(G\looparrowright X;\mathbf{DA}^{\et}_\Lambda)=\mathbf{DA}^{\et}_G(X)(\Lambda,\Lambda(i)[n])\cong \op{CH}^i_G(X;2i-n;\Lambda).
\]

Let $k$ be a field admitting resolution of singularities, and let $G\looparrowright X$ be a variety with action over $k$, equidimensional of dimension $d$ but not necessarily smooth. Then for every positive $i\leq d$ we have a functorial identification
\[
\op{H}^{\op{BM}}_{2i+n,i}(G\looparrowright X;\mathbf{DA}^{\et}_\Lambda)= \mathbf{DA}^{\et}_G(\op{pt})(\Lambda(i)[2i+n],\op{M}^{\op{c}}(G\looparrowright X))\cong \op{CH}^{d-i}_G(X;n;\Lambda).
\]
\end{theorem}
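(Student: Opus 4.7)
The plan is to reduce to Voevodsky's comparison theorem via the finite-dimensional Totaro/Edidin--Graham approximations of the Borel construction. For the smooth case, first fix $s \gg 0$ and pick a representation $V$ of $G$ with $G$-invariant closed non-free locus $Z \subseteq V$ of codimension $\geq s$, as supplied by Lemma~\ref{lem:resexist}. Set $P_s = X \times (V \setminus Z)$, so that $p_s \colon P_s \to X$ is a resolution of the $G$-action which, by Proposition~\ref{prop:excision} and smooth base change, is $(s-2)$-acyclic. By Totaro's analysis, $Y_s := G\backslash P_s$ is smooth whenever $X$ is, since it is obtained as an affine bundle over a homogeneous space of $\op{GL}_{N+n}$.

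Next I would invoke the pullback-square description of $\mathbf{DA}^{\et}_G(X, P_s)$ from Definition~\ref{def:equivmotres1} applied to the constant mixed Tate motive $\Lambda(i)[n]$, which gives a cartesian square
\[
\xymatrix{
\mathbf{DA}^{\et}_G(X,P_s)(\Lambda,\Lambda(i)[n])\ar[r] \ar[d] & \mathbf{DA}^{\et}(X)(\Lambda,\Lambda(i)[n])\ar[d] \\
\mathbf{DA}^{\et}(Y_s)(\Lambda,\Lambda(i)[n])\ar[r] & \mathbf{DA}^{\et}(P_s)(\Lambda,\Lambda(i)[n]).
}
\]
Acyclicity of $p_s$ together with Proposition~\ref{prop:belu192} forces the right vertical arrow to be an isomorphism in degrees below roughly $s - 2$, so the square identifies the upper-left corner with $\mathbf{DA}^{\et}(Y_s)(\Lambda, \Lambda(i)[n])$ in this range. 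Voevodsky's comparison theorem (cf.~\ref{def:motcohom}) then yields $\mathbf{DA}^{\et}(Y_s)(\Lambda,\Lambda(i)[n]) \cong \op{CH}^i(Y_s, 2i-n; \Lambda)$.

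The third step is passage to the limit. On the Chow side, Edidin--Graham's double fibration argument gives stabilization $\op{CH}^i(Y_s, 2i-n; \Lambda) \cong \op{CH}^i_G(X, 2i-n; \Lambda)$ for $s$ large. On the motive side, Corollary~\ref{cor:resRes} identifies $\mathbf{DA}^{\et,\op{Res},+}_G(X)$ with the 2-limit of the $\mathbf{DA}^{\et}_G(X, P_s)$, so morphism groups in any fixed bidegree stabilize and coincide with $\mathbf{DA}^{\et,\op{Res},+}_G(X)(\Lambda, \Lambda(i)[n])$. Combining this with the equivalence $\mathbf{DA}^{\et,\op{Res},+}_G(X) \simeq \mathbf{DA}^{\et}_G(X)$ proved in Chapter~I gives the first claim, and functoriality is inherited from functoriality of Voevodsky's comparison along morphisms of smooth varieties.

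For the second, non-smooth assertion, the same resolution strategy applies, but $Y_s$ will be equidimensional of dimension $d + \dim V - \dim G$ and singular. I would replace the appeal to Voevodsky's comparison theorem with \cite[Proposition 19.18]{mazza:voevodsky:weibel}, which identifies Borel--Moore motivic homology of an equidimensional variety (over a field admitting resolution of singularities) in the appropriate range with higher Chow groups $\op{CH}^{\dim Y_s - i}(Y_s, n; \Lambda)$. The shift in codimension is then absorbed by the relation $\dim Y_s - i = (d + \dim V - \dim G) - i$, matching the indexing conventions of equivariant higher Chow groups; stabilization via double fibration again yields the claim. The main obstacle will be bookkeeping: checking that the acyclicity range $(s-2)$ grows faster than the fixed bidegree one cares about (so the isomorphism of morphism groups really does hold before stabilization), and verifying that the inclusions $\mathbf{DA}^{\et}_G(X, P_s) \hookrightarrow \mathbf{DA}^{\et}_G(X, P_{s+1})$ induced by a cofinal system of resolutions are compatible, under the cartesian-square computation above, with the flat-pullback maps used by Edidin--Graham in their stabilization theorem.
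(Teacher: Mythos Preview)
Your proposal is correct and follows essentially the same approach as the paper: reduce to Voevodsky's comparison via Totaro/Edidin--Graham approximations, use the cartesian square coming from Definition~\ref{def:equivmotres1} together with acyclicity to identify morphisms over a single resolution with non-equivariant motivic cohomology of the quotient, and then stabilize via the double fibration argument and Corollary~\ref{cor:resRes}. The paper likewise handles the non-smooth Borel--Moore case by replacing Voevodsky's comparison with \cite[Proposition~19.18]{mazza:voevodsky:weibel}, exactly as you propose.
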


\begin{remark}
Very likely the requirement of resolution of singularities can be omitted in the above, and replaced by alterations. This would allow to get the above result for all coefficients in which the characteristic of the base field is invertible; that would be by far enough for our applications with rational coefficients. 
\end{remark}

\subsection{Equivariant motivic cohomology over the point}

Our next goal is to show that equivariant motivic cohomology induces an equivalence from a suitable category of mixed Tate motives to a suitable category of modules over the equivariant motivic cohomology ring of the point. These statements will be used in Section~\ref{sec:tilting} to relate the category of equivariant mixed Tate motives on the point to a (homotopy category of the) category of modules over the equivariant cohomology ring. Most of the relevant computations pertain to the (non-equivariant) motive of classifying spaces; these are deferred to Appendix~\ref{sec:motivebg}. We also discuss the compatibility of these identifications with the restriction and integration functors and the monoidal structure from the six functor formalism.

\begin{remark}
Recall from \ref{agnotation} that $\mathcal{A}_G$ denotes the equivariant motivic cohomology ring (computed in some underlying derivator $\mathbb{D}$). By the results in Section~\ref{equiv:chow},  $\mathcal{A}_G$ is isomorphic to the Chow ring $\op{CH}^\bullet({\op{B}}G;\Lambda)$ of the classifying space of $G$ in case $\mathbb{D}\approx\mathbf{DA}^{\et}(-;\Lambda)$. In the cases where the derivator satisfies the grading condition~\ref{conditions:grading}, we will see that the ring $\mathcal{A}_G$ is isomorphic to the singular cohomology of the classifying space of the corresponding complex Lie group. 
\end{remark}

\begin{lemma}
\label{lem:addemb}
\begin{enumerate}
\item Let $\mathcal{A}$ be an idempotent complete additive category and $T\in \mathcal A$ be an object. Then the additive subcategory of $\mathcal{A}$ generated by $T$ is equivalent to the additive subcategory of right $\op{End}_{\mathcal{A}}(T)$-modules generated by the ring $\op{End}_{\mathcal{A}}$ itself under the functor $\op{Hom}_{\mathcal A}(T,-)$.
\item If $\mathcal{A}$ is in addition equipped with an auto-equivalence $(-)[1]$, then similarly the additive subcategory generated by the $T[n]$ for $n\in\DZ$ is equivalent to the additive subcategory of $\DZ$-graded right $H$-modules generated by the $H[n]$, for the $\DZ$-graded ring 
\[
H\pdef\bigoplus_n\op{Hom}_{\mathcal A}(T,T[n]).
\]
\end{enumerate}
\end{lemma}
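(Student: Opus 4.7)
The plan is to prove both parts simultaneously by the same Yoneda-type embedding, first carrying out the ungraded case and then upgrading to the graded setting. Define the additive functor
\[
F \pdef \op{Hom}_{\mathcal{A}}(T,-)\colon \mathcal{A}\longrightarrow \op{Mod-}\!\op{End}_{\mathcal{A}}(T)
\]
which sends $T$ itself to the regular right module $\op{End}_{\mathcal{A}}(T)$. The identity
\[
\op{Hom}_{\op{End}(T)}\bigl(\op{End}(T),\op{End}(T)\bigr)\xrightarrow{\cong}\op{End}_{\mathcal{A}}(T),\qquad \phi\mapsto \phi(\id_T),
\]
shows that $F$ is fully faithful on the one-object full subcategory $\{T\}$. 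Since $F$ is additive and both source and target admit finite biproducts, $F$ remains fully faithful after closing both sides under finite direct sums, giving an equivalence between the full subcategory of finite direct sums of $T$ in $\mathcal{A}$ and the full subcategory of finitely generated free right $\op{End}(T)$-modules.

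The next step is to pass to the idempotent completions. On the $\mathcal{A}$-side, the additive subcategory generated by $T$ (in the sense of being closed under finite direct sums and direct summands, which exist by idempotent completeness) consists exactly of summands of $T^{\oplus n}$. On the module side, summands of the free modules $\op{End}(T)^{\oplus n}$ are precisely the finitely generated projective right $\op{End}(T)$-modules, and the category of right modules is automatically idempotent complete. Since $F$ preserves idempotents and their splittings (which are characterized by the universal property of a direct summand and therefore transported by any fully faithful additive functor), $F$ extends to an equivalence between the two idempotent-complete additive categories, establishing part (1).

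For part (2), replace $F$ by the $\ZZ$-graded analogue
\[
F_{\op{gr}}(X)\pdef\bigoplus_{n\in\ZZ}\op{Hom}_{\mathcal{A}}(T,X[n]),
\]
which takes values in graded right $H$-modules with $H=\bigoplus_n\op{Hom}_{\mathcal{A}}(T,T[n])$ and action given by precomposition. Under the auto-equivalence $[1]$, we have $F_{\op{gr}}(T[m])\cong H[m]$ as graded right $H$-modules, and the identity
\[
\op{Hom}_{\mathcal{A}}(T[m],T[m'])\cong \op{Hom}_{\mathcal{A}}(T,T[m'-m])=H_{m'-m}
\]
matches the space of degree-zero graded homomorphisms $H[m]\to H[m']$. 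Hence $F_{\op{gr}}$ is fully faithful on the full subcategory of objects $\{T[n]\mid n\in\ZZ\}$; the same biproduct-and-idempotent argument as in part (1) then promotes this to the claimed equivalence between the additive subcategory of $\mathcal{A}$ generated by the $T[n]$ and the additive subcategory of graded right $H$-modules generated by the $H[n]$.

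The only potential obstacle is bookkeeping: one must verify that the right $H$-module structure encoded by $F_{\op{gr}}$ agrees with the one on $H[m]$ given by the shift, and that both categories are closed under the same class of idempotent splittings. Both points are standard once the conventions for $[1]$ and for the grading on $H[m]$ are fixed, so no genuine difficulty arises beyond choosing and tracking notation consistently.
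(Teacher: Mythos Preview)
Your argument is correct and is precisely the classical Yoneda-type proof that the paper alludes to; the paper itself does not spell out a proof but simply cites it as classical (referring to Krause) and says part (2) is proved similarly. One small remark: you interpret ``additive subcategory generated by $T$'' as closed under direct sums \emph{and} summands, whereas the paper's later application (Theorem~\ref{thm:equivcoh}) matches it with \emph{free} finitely generated modules, suggesting only closure under direct sums is intended---but your argument covers both readings, so nothing is lost.
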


\begin{proof}
(1) is classical, a proof can be found e.g. in \cite[Proposition 2.3]{krause:ks}.\footnote{Probably, this would be due to Morita, Freyd or Mitchell.} (2) is proved similarly.
\end{proof}

\begin{theorem}
\label{thm:equivcoh}
Let $k$ be a field and let $\mathbb{D}$ be a derivator satisfying the conditions~\ref{derivator:new} and the grading condition~\ref{conditions:grading}. Let $G$ be a linear algebraic group, and denote the connected component of the identity by $G^0$ and set $\pi_0(G)=G/G^0$. Denote by $\mathcal{T}$ the additive subcategory of $\mathbb{D}^+_G(\pt)$ generated by the equivariant mixed Tate motives $\ind_{G^0}^G\const{\pt}(n)[2n]$ for $n\in \ZZ$. Then the equivariant motivic homology functor
\begin{align*}
\mathbb{H}_G^{\op{mot}}:\mathbb{D}^+_G(\pt) &\to S^W\rtimes \Lambda[\pi_0(G)]\op{-Mod}^{\ZZ}, \\
M &\mapsto \bigoplus_i \mathbb{D}^+_G( \ind^G_{G^0}\Lambda, M(i)[2i]),
\end{align*}
induces an equivalence of categories
\[ 
\cT \sirra S^W\rtimes\Lambda[\pi_0(G)]\op{-fModfg}^{\ZZ}, 
\]
where $R\op{-fModfg}^{\ZZ}$ denotes the category of graded free finitely generated $R$-modules and $S^W\rtimes\Lambda[\pi_0(G)]$ denotes the appropriate twisted group ring, cf. \ref{bem:coinv} in Appendix~\ref{sec:motivebg}. If $G$ is connected, then this equivalence maps $\const{\pt}(n)[2n]$ to $S^W\langle n\rangle$.
\end{theorem}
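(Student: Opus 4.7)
The plan is to apply Lemma~\ref{lem:addemb}(2), taking the generator to be $T = \op{Ind}_{G^0}^G \const{\pt}$ and the auto-equivalence to be the Tate twist-shift $(-)(1)[2]$ (rather than $[1]$); this is a genuine auto-equivalence of $\mathbb{D}^+_G(\pt)$ and the proof of the lemma goes through verbatim with a $\mathbb{Z}$-grading indexed by powers of $(-)(1)[2]$. With this reinterpretation, $\mathcal{T}$ is by construction the additive subcategory generated by the $T(n)[2n]$, and the lemma yields an equivalence
\[
\mathbb{H}_G^{\op{mot}}\colon \mathcal{T} \sirra H\op{-fModfg}^{\ZZ},
\qquad
H \pdef \bigoplus_{n\in\ZZ}\mathbb{D}^+_G(\pt)\bigl(T,\;T(n)[2n]\bigr),
\]
sending $T(n)[2n]$ to $H\langle n\rangle$. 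The whole theorem is therefore reduced to identifying the graded ring $H$ with $S^W \rtimes \Lambda[\pi_0(G)]$.

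The first step in identifying $H$ is to apply the $(\op{Ind}_{G^0}^G,\op{Res}_G^{G^0})$-adjunction to rewrite
\[
H \;\cong\; \bigoplus_{n\in\ZZ} \mathbb{D}^+_{G^0}(\pt)\bigl(\const{\pt},\;\op{Res}_G^{G^0}\op{Ind}_{G^0}^G\const{\pt}(n)[2n]\bigr).
\]
Next I would compute $\op{Res}_G^{G^0}\op{Ind}_{G^0}^G\const{\pt}$ explicitly. By the construction of induction in \ref{PBPF}, $\op{Ind}_{G^0}^G\const{\pt}$ is the proper pushforward along $G/G^0 \to \pt$ of the constant $G$-equivariant motive on $G/G^0$ obtained from $\const{\pt}_{G^0}$ via the induction equivalence. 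Since $k$ is algebraically closed, $G/G^0 = \pi_0(G)$ is a disjoint union of copies of $\op{Spec} k$, so after restricting the $G$-equivariance down to $G^0$-equivariance, base change and properness yield the decomposition
\[
\op{Res}_G^{G^0}\op{Ind}_{G^0}^G\const{\pt}
\;\cong\; \bigoplus_{g\in\pi_0(G)}\const{\pt}_{G^0}
\]
in $\mathbb{D}^+_{G^0}(\pt)$, where the summand indexed by $g$ records the corresponding component of $G/G^0$. Feeding this back into the adjunction gives an isomorphism of graded $\Lambda$-modules
\[
H \;\cong\; \mathcal{A}_{G^0}\otimes_{\Lambda}\Lambda[\pi_0(G)].
\]
The multiplication on $H$, which corresponds to composition in $\mathcal{T}$, is not the untwisted tensor product: a morphism living in the $g$-component acts by first transporting along the automorphism of $\op{Res}_G^{G^0}\op{Ind}_{G^0}^G\const{\pt}$ induced by left translation by $g$, which on cohomology is precisely the conjugation action of $\pi_0(G)$ on $\mathcal{A}_{G^0}$. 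Unwinding the adjunction and the base-change isomorphism above then shows that the ring structure on $H$ is the twisted group ring $\mathcal{A}_{G^0}\rtimes\Lambda[\pi_0(G)]$.

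It remains to identify $\mathcal{A}_{G^0}$ with $S^W$. By Proposition~\ref{prop:suk}, restriction along $G^0 \to G^0/\op{R}_{\op{u}}G^0$ is an equivalence, so we may assume $G^0$ is a connected split reductive group. For such a group the appendix computation \ref{bem:coinv} together with the grading condition \ref{conditions:grading} yields $\mathcal{A}_{G^0}\cong S^W$, where $S = \op{Sym}(X^*(T)\otimes\Lambda)$ is placed in the appropriate grading and $W$ is the Weyl group; the $\pi_0(G)$-action by conjugation on $\mathcal{A}_{G^0}$ corresponds to the given $\pi_0(G)$-action on $S^W$ used to define $S^W\rtimes\Lambda[\pi_0(G)]$, so together with the previous step we obtain $H \cong S^W\rtimes\Lambda[\pi_0(G)]$ as graded rings. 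Combined with the equivalence produced by Lemma~\ref{lem:addemb}(2), this finishes the proof; the special assertion for connected $G$ is the case $\pi_0(G) = 1$, where $T = \const{\pt}$ and the shift $(n)[2n]$ directly translates to the grading shift $\langle n\rangle$ on $S^W$. The main obstacle is the careful tracking of the $\pi_0(G)$-action in the identification $\op{Res}_G^{G^0}\op{Ind}_{G^0}^G\const{\pt}\cong \bigoplus_{g}\const{\pt}_{G^0}$ and in the composition law on $H$; everything else is formal once the appendix results on $\mathcal{A}_{G^0}$ are taken as input.
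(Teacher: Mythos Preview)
Your proposal is correct and follows essentially the same approach as the paper: apply Lemma~\ref{lem:addemb}(2) with $T=\op{Ind}_{G^0}^G\const{\pt}$ and auto-equivalence $M\mapsto M(1)[2]$, then identify the graded endomorphism ring with $S^W\rtimes\Lambda[\pi_0(G)]$. The paper simply cites Proposition~\ref{prop:swcomp} for this identification, whereas you unpack that computation inline via the $\op{Ind}_!\dashv\op{Res}$ adjunction (valid since $G/G^0$ is finite, so $\op{Ind}_!\cong\op{Ind}_\ast$) and the decomposition of $\op{Res}_G^{G^0}\op{Ind}_{G^0}^G\const{\pt}$; this is exactly the content of Theorem~\ref{endring} applied to the $\pi_0(G)$-torsor ${\op{B}}G^0\to{\op{B}}G$, together with Proposition~\ref{prop:swcompconnect} for $\mathcal{A}_{G^0}\cong S^W$.
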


\begin{proof}
This is a consequence of part (2) of Lemma~\ref{lem:addemb} above, applied to the object $T=\op{Ind}_{G^0}^G\const{\pt}$ and the auto-equivalence $M\mapsto M(1)[2]$. The required idempotent completeness of the motivic categories was discussed in Remark~\ref{rem:triangulated}. To identify the relevant endomorphism ring for the target category, we have 
\[
\bigoplus_n\mathbb{D}_G^+\left(\op{Ind}_{G^0}^G\const{\pt}, \op{Ind}_{G^0}^G\const{\pt}(n)[2n]\right)\cong S^W\rtimes\Lambda[\pi_0(G)]
\]
by Proposition~\ref{prop:swcomp}. This implies the claim about the equivalence of categories. The additional claim in the connected case follows, since Lemma~\ref{lem:addemb} is compatible with the autoequivalences and $\const{\pt}(n)[2n]$ is the choice of additive generators of $\mathcal{T}$.
\end{proof}

Next, we will establish several compatibility statements describing the behaviour of equivariant motivic cohomology under restriction, integration and the monoidal structure. These are motivic versions of the computations leading to \cite[Theorem 12.7.2]{BeLu}.

\begin{proposition}
\label{prop:compatotimes}
In the situation of Theorem~\ref{thm:equivcoh}, 
\begin{enumerate}
\item the category $\mathcal{T}$ is closed under $\otimes$, and 
\item the restriction of the equivariant motivic homology functor 
\[
\mathbb{H}^{\op{mot}}_G:\mathcal{T}_G\hookrightarrow \mathbb{D}_G^+(\pt)\to S^W\rtimes\Lambda[\pi_0(G)]\op{-Mod}^\mathbb{Z}
\] 
is a $\otimes$-functor. 
\end{enumerate}
\end{proposition}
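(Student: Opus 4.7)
The plan is to reduce everything to a projection-formula computation at the level of the generators $\op{Ind}_{G^0}^G\const{\pt}(n)[2n]$, exploiting that $G^0$ is a normal subgroup of finite index in $G$. Concretely, statement (1) will follow by writing the tensor product of two generators as a finite sum of generators via the projection formula, and statement (2) will then follow because the monoidal structure on the target category is determined (via the equivalence in Theorem~\ref{thm:equivcoh}) by its action on generators.

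The first step is to establish a projection formula
\[
\op{Ind}_{G^0}^G(A)\otimes B \xrightarrow{\cong} \op{Ind}_{G^0}^G\bigl(A\otimes \op{Res}_G^{G^0}B\bigr),
\]
for $A\in\mathbb{D}_{G^0}^+(\pt)$ and $B\in\mathbb{D}_G^+(\pt)$. This is obtained by applying the general projection formulas recalled in~\ref{rem:monoidal} to the morphism of varieties with action $(G^0\hookrightarrow G\looparrowright\pt)\to (G\looparrowright\pt)$, using that $\op{Ind}_{G^0}^G$ is by construction the direct image for this morphism (cf.~\ref{PBPF}) and that $\op{Res}_G^{G^0}$ is its strongly monoidal left adjoint. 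The second step is to compute $\op{Res}_G^{G^0}\op{Ind}_{G^0}^G\const{\pt}(m)[2m]$. Since $G^0$ is normal in $G$ with finite quotient $\pi_0(G)$, the usual double-coset argument (implemented in the motivic setting via the finite-\'etale torsor analysis of Lemma~\ref{finitetorsorNew} in Appendix~\ref{sec:motivebg}) gives a canonical decomposition
\[
\op{Res}_G^{G^0}\op{Ind}_{G^0}^G\const{\pt}(m)[2m]\;\cong\;\bigoplus_{g\in\pi_0(G)}\const{\pt}(m)[2m]
\]
in $\mathbb{D}_{G^0}^+(\pt)$.

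Combining these two steps yields
\[
\op{Ind}_{G^0}^G\const{\pt}(n)[2n]\otimes\op{Ind}_{G^0}^G\const{\pt}(m)[2m]\;\cong\;\bigoplus_{g\in\pi_0(G)}\op{Ind}_{G^0}^G\const{\pt}(n+m)[2(n+m)],
\]
which is a finite direct sum of generators of $\mathcal{T}$; as $\mathcal{T}$ is additive and idempotent complete this proves (1). For (2), the equivalence of Theorem~\ref{thm:equivcoh} transports the tensor structure from $\mathcal{T}$ to a monoidal structure on the additive subcategory $S^W\rtimes\Lambda[\pi_0(G)]\op{-fModfg}^{\mathbb{Z}}$, and the content of (2) is that this transported structure agrees with the natural one. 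Since $\mathbb{H}^{\op{mot}}_G$ is an additive equivalence on $\mathcal{T}$ by Theorem~\ref{thm:equivcoh}, monoidality need only be checked on generators, where the isomorphism displayed above matches (up to shift and twist bookkeeping) the corresponding decomposition of $(S^W\rtimes\Lambda[\pi_0(G)])\langle n\rangle\otimes (S^W\rtimes\Lambda[\pi_0(G)])\langle m\rangle$ provided by Proposition~\ref{prop:swcomp}.

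The main obstacle will be the careful bookkeeping in the second step: the decomposition of $\op{Res}_G^{G^0}\op{Ind}_{G^0}^G\const{\pt}$ into a sum indexed by $\pi_0(G)$ is classical for representations or constructible sheaves, but in the motivic setting it must be deduced from the finite-\'etale descent/torsor results of Appendix~\ref{sec:motivebg}, and one must keep track of the $\pi_0(G)$-action in order to make the identification in (2) compatible with the twisted group ring structure on $S^W\rtimes\Lambda[\pi_0(G)]$ rather than just a plain tensor product.
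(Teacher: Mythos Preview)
Your approach to (1) is essentially the paper's: both use the projection formula to rewrite $\op{Ind}_{G^0}^G\const{\pt}(n)[2n]\otimes\op{Ind}_{G^0}^G\const{\pt}(m)[2m]$ as $\op{Ind}_{G^0}^G$ applied to $\op{Res}_G^{G^0}\op{Ind}_{G^0}^G\const{\pt}(n+m)[2(n+m)]$, and then decompose. One small point: the projection formulas in~\ref{rem:monoidal} are stated for $f_!$ with unchanging group, so to get the Ind--Res formula you really need the observation (which the paper makes explicit) that $\op{Ind}_!\simeq\op{Ind}_*$ because $G/G^0$ is finite; only then does the $f_!$-projection formula along $m\colon G/G^0\to\pt$ in the construction~\ref{PBPF} yield what you want. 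For the decomposition of $\op{Res}_G^{G^0}\op{Ind}_{G^0}^G\const{\pt}$ the paper argues less explicitly than you (via Propositions~\ref{prop:dmtres}, \ref{Las}, and~\ref{prop:locsysconn} rather than a direct double-coset count), but either route works.

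For (2) your approach diverges from the paper's, and the paper's is cleaner. Instead of matching the twisted group ring structure directly on generators, the paper uses $\op{Ind}_!\simeq\op{Ind}_*$ once more to rewrite
\[
\mathbb{H}^{\op{mot}}_G(M)=\bigoplus_i\mathbb{D}^+_G(\op{Ind}_{G^0}^G\Lambda,M(i)[2i])\cong\bigoplus_i\mathbb{D}^+_{G^0}(\const{\pt},\op{Res}_G^{G^0}M(i)[2i]),
\]
and then observes that $\op{Res}_G^{G^0}$ is strongly monoidal (cf.~\ref{CompRES},~\ref{rem:monoidal}). This reduces monoidality of $\mathbb{H}^{\op{mot}}_G$ to that of $\mathbb{H}^{\op{mot}}_{G^0}$, where the generators are simply $\const{\pt}(n)[2n]$ and the target ring is the polynomial ring $S^W$; the check on generators is then immediate. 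This reduction sidesteps exactly the $\pi_0(G)$-bookkeeping you flag as the main obstacle, so you may want to adopt it.
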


\begin{proof}
(1)  Since the monoidal structure on $\mathbb{D}_G^+(\pt)$ is closed, $\otimes$ is a left adjoint functor and therefore commutes with left adjoints. In particular, it distributes over direct sums. Therefore, it suffices to show that tensor products of additive generators of $\mathcal{T}$ can be decomposed as direct sums of generators. In the case of connected $G$, this is clear since $\const{\pt}(i)[2i] \otimes \const{\pt}(j)[2j] \cong \const{\pt}(i+j)[2i+2j]$. In the case where the group $G$ is not connected, we need to decompose 
\[
\op{Ind}_{G^0}^G\const{\pt}(i)[2i]\otimes \op{Ind}_{G^0}^G\const{\pt}(j)[2j]
\]
as direct sum of additive generators. Note that $G/G^0$ is a finite group, and hence $\op{Ind}_!\simeq \op{Ind}_\ast$ by an argument as in the proof of Proposition~\ref{Lasp}. Therefore, we have
\[
\op{Ind}_{G^0}^G\const{\pt}(i)[2i]\cong \op{Ind}_!\const{\pt}(i)[2i] \cong \op{fin}_!\const{G/G^0}(i)[2i].
\]
Using the projection formula, we obtain 
\[
\op{Ind}_{G^0}^G\const{\pt}(i)[2i]\otimes \op{Ind}_{G^0}^G\const{\pt}(j)[2j]\cong 
\op{Ind}_{G^0}^G\left(\op{Res}_G^{G^0} \op{Ind}_{G^0}^G\const{\pt}(i+j)[2i+2j]\right).
\]
But then $\op{Res}_G^{G^0}\op{Ind}_{G^0}^G\const{\pt}$ is a mixed Tate motive by Propositions~\ref{prop:dmtres} and \ref{Las}. It is also constant, by Proposition~\ref{prop:locsysconn}, and therefore decomposes as a direct sum of copies of $\const{\pt}(n)[2n]$. Since $\op{Ind}_{G^0}^G$ is an additive functor, this provides the required decomposition. 

(2)
Recall that the equivariant motivic homology maps 
\[
M \mapsto \bigoplus_i \mathbb{D}^+_G( \ind^G_{G^0}\Lambda, M(i)[2i])\cong \mathbb{D}^+_{G^0}( \const{\pt}, \op{Res}_G^{G^0} M(i)[2i]). 
\]
Here we use again that $\op{Ind}_!\simeq \op{Ind}_\ast$ since $G/G^0$ is a finite group, cf. Step (1). Since restriction commutes with the monoidal structure, cf.~\ref{CompRES} resp. \ref{rem:monoidal}, and preserves mixed Tate motives by Proposition~\ref{prop:dmtres}, it suffices to prove the claim for $G$ connected. 

In the case where $G$ is connected, we know from Theorem~\ref{thm:equivcoh} that $\const{\pt}(n)[2n]$ maps to $S^W\langle n\rangle$. Then $\const{\pt}(i)[2i]\otimes\const{\pt}(j)[2j]\cong\const{\pt}(i+j)[2i+2j]$ implies that $\const{\pt}(i)[2i]\otimes\const{\pt}(j)[2j]$ maps to $S^W\langle i+j\rangle\cong S^W\langle i\rangle\otimes S^W\langle j\rangle$. Now we note that $S^W\langle n\rangle$ is generated by the identity map $\const{\pt}\to \const{\pt}(n)[2n](-n)[-2n]$, viewed as element in degree $-n$. Then it is clear that the tensor product of the two generators for $S^W\langle i\rangle$ and $S^W\langle j\rangle$ maps to a generator of $S^W\langle i+j\rangle$. So the natural comparison map 
\[
\mathbb{D}^+_G(\const{\pt},\const{\pt}(i)[2i])\otimes \mathbb{D}^+_G(\const{\pt},\const{\pt}(j)[2j])\to \mathbb{D}^+_G(\const{\pt},\const{\pt}(i+j)[2i+2j])
\]
induced from the injections $M_i\to M_1\otimes M_2$ induces an isomorphism so that equivariant motivic homology is a tensor functor on the additive category $\mathcal{T}$ generated by $\const{\pt}(i)[2i]$.  
\end{proof}

\begin{remark}
The above compatibility of motivic homology with the tensor structures can only be true for mixed Tate motives, since the K\"unneth formula doesn't generally hold for motivic homology theories. 
\end{remark}

\begin{proposition}
\label{prop:compatres}
In the situation of Theorem~\ref{thm:equivcoh}, let $\phi:H\to G$ be a homomorphism of linear algebraic groups, and let $\phi^\ast:\mathcal{A}_G\to\mathcal{A}_H$ be the induced 
morphism on equivariant motivic cohomology rings of classifying spaces, cf. \ref{agnotation} for the notation. 
\begin{enumerate}
\item
The following diagram commutes 
\begin{displaymath}
\xymatrix{
\mathcal{T}_G\ar[r]^-{\approx}\ar[d]_-{\op{Res}^H_G} &\mathcal{A}_G\op{-fModfg}^\DZ\ar@{=>}_-\sim[dl] \ar[d]^-{\mathcal{A}_H\otimes_{\mathcal{A}_G}(-)}\\
\mathcal{T}_H \ar[r]_-{\approx}&  \mathcal{A}_H\op{-fModfg}^\DZ,
}
\end{displaymath}
where the categories $\mathcal{T}_G$ and $\mathcal{T}_H$ are the additive subcategories of $\mathbb{D}^+_G(\pt)$ and $\mathbb{D}^+_H(\pt)$ generated by $\op{Ind}_{G^0}^G\const{\pt}(n)[2n]$ and $\op{Ind}_{H^0}^H\const{\pt}(n)[2n]$, $n\in\mathbb{Z}$, respectively. 
\item Let $\mathcal{T}_G^\infty$ and $\mathcal{T}_H^\infty$ be the additive subcategories of $\mathbb{D}_G^+(\pt)$ and $\mathbb{D}_H^+(\pt)$ generated by  countable direct sums of copies of the motives $\op{Ind}_{G^0}^G\const{\pt}(n)[2n]$ and $\op{Ind}_{H^0}^H\const{\pt}(n)[2n]$, $n\in\mathbb{Z}$, respectively. Then the following diagram commutes
\[
\xymatrix{
\mathcal{T}^\infty_G\ar[r]^-{\approx}\ar[d]_-{\op{Res}^H_G} 
&\mathcal A_G\op{-fModcg}^\DZ\ar@{=>}_-\sim[dl]
\ar[d]^-{\mathcal A_H\otimes_{\mathcal A_G}(-)}\\
\mathcal{T}^\infty_H \ar[r]_-{\approx}&  \mathcal A_H\op{-fModcg}^\DZ
}
\]
\end{enumerate}
\end{proposition}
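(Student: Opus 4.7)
The plan is to exhibit a natural transformation from $\mathcal{A}_H \otimes_{\mathcal{A}_G}(-)$ to the composite $\mathbb{H}_H^{\op{mot}} \circ \op{Res}^H_G \circ (\mathbb{H}_G^{\op{mot}})^{-1}$ and then verify it is an isomorphism on the single additive generator of $\mathcal{T}_G$. Both source and target of the diagram are idempotent complete additive categories equipped with a twist autoequivalence, generated by a single object under direct sum and twist, so the whole claim reduces to this verification on a generator together with functoriality.

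First I would construct the comparison morphism. Given $M \in \mathcal{T}_G$, the graded abelian group $\mathbb{H}_G^{\op{mot}}(M) = \bigoplus_i \mathbb{D}^+_G(\op{Ind}_{G^0}^G\const{\pt}, M(i)[2i])$ carries an $\mathcal{A}_G$-module structure by precomposition with graded endomorphisms of $\op{Ind}_{G^0}^G\const{\pt}$. Since $\op{Res}^H_G$ is a functor (hence respects composition) and maps $\op{Ind}_{G^0}^G\const{\pt}$ into $\DMT_H(\pt)$ (combine Proposition~\ref{Las} and Proposition~\ref{prop:dmtres}), postcomposition with $\op{Res}^H_G$ induces a homomorphism $\mathbb{H}_G^{\op{mot}}(M) \to \mathbb{H}_H^{\op{mot}}(\op{Res}^H_G M)$ which is linear over the graded ring homomorphism $\phi^\ast\colon \mathcal{A}_G \to \mathcal{A}_H$ obtained by applying the same recipe to $M = \op{Ind}_{G^0}^G\const{\pt}$ itself. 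By the universal property of tensor product this extends uniquely to a natural $\mathcal{A}_H$-linear map
\[
\eta_M\colon \mathcal{A}_H \otimes_{\mathcal{A}_G} \mathbb{H}_G^{\op{mot}}(M) \to \mathbb{H}_H^{\op{mot}}(\op{Res}^H_G M).
\]

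Next I would check that $\eta_M$ is an isomorphism on the generator $M_0 \pdef \op{Ind}_{G^0}^G\const{\pt}$. The left hand side is tautologically $\mathcal{A}_H \otimes_{\mathcal{A}_G} \mathcal{A}_G \cong \mathcal{A}_H$. For the right hand side, adjunction between $\op{Ind}^H_{H^0}$ and $\op{Res}^{H^0}_H$ together with the identity $\op{Res}^{H^0}_H \circ \op{Res}^H_G = \op{Res}^{H^0}_G$ rewrites $\mathbb{H}_H^{\op{mot}}(\op{Res}^H_G M_0)$ as $\bigoplus_i \mathbb{D}^+_{H^0}(\const{\pt}, \op{Res}^{H^0}_G\op{Ind}_{G^0}^G\const{\pt}(i)[2i])$. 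By Propositions~\ref{Las} and~\ref{prop:dmtres} this is an $H^0$-equivariant mixed Tate motive on the point; by Proposition~\ref{prop:locsysconn} it is in fact a direct sum of copies of $\const{\pt}_{H^0}(n)[2n]$'s, so the computation is controlled entirely by $H^0$-equivariant motivic cohomology, i.e.\ $\mathcal{A}_{H^0}$. A direct comparison of this graded module with $\mathcal{A}_H$ via the base change formula applied to the square of classifying-space-like diagrams coming from $H \to G \hookleftarrow G^0$ (cf.\ the analogue of Proposition~\ref{prop:swcomp} used in Theorem~\ref{thm:equivcoh}) identifies the two and shows that the natural map $\eta_{M_0}$ is the resulting isomorphism. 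Once $\eta_{M_0}$ is invertible, additivity and compatibility with the twist give $\eta_M$ invertible for every $M$ in $\mathcal{T}_G$; in particular this shows that $\op{Res}^H_G$ does take $\mathcal{T}_G$ into $\mathcal{T}_H$, as needed for the diagram to make sense.

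Finally, for part (2), I would observe that both $\mathcal{A}_H \otimes_{\mathcal{A}_G}(-)$ and $\op{Res}^H_G$ commute with arbitrary small direct sums (the former as a left adjoint, the latter because the simplicial definition of equivariant motives in Definition~\ref{def:equivmotivesborel} identifies sums of cartesian motives with sums in the ambient category, so $\op{Res}^H_G$ is computed pointwise by a pullback which preserves sums). Moreover $\op{Ind}_{G^0}^G\const{\pt}$ is compact in $\mathbb{D}^+_G(\pt)$, since $\const{\pt}$ is compact by~\ref{compgen} and $\op{Ind}_{G^0}^G$, being a left adjoint whose right adjoint $\op{Res}^{G^0}_G$ preserves small sums, preserves compact objects; consequently $\mathbb{H}_G^{\op{mot}}$ itself commutes with countable direct sums. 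The isomorphism of part (1) therefore extends by additivity to the subcategories $\mathcal{T}_G^\infty$ and $\mathcal{T}_H^\infty$.

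The main obstacle is the identification in the second paragraph of $\op{Res}^{H^0}_G\op{Ind}_{G^0}^G\const{\pt}$ with (a shifted direct sum determined by) the quotient $\mathcal{A}_H \otimes_{\mathcal{A}_G} \mathcal{A}_G \cong \mathcal{A}_H$; this is essentially the content of an $H$-equivariant version of the computation carried out in the proof of Proposition~\ref{prop:swcomp}, and the only place where the specific structure of the derivator (through the grading condition~\ref{conditions:grading}) is actually used.
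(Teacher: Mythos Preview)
Your overall strategy matches the paper's: construct a natural comparison map and verify it on the additive generator, then propagate by additivity (and compactness for part (2)). The execution, however, has a gap in the construction of the comparison map, and the verification on the generator is vaguer than it needs to be.

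The phrase ``postcomposition with $\op{Res}^H_G$'' does not by itself produce a map into $\mathbb{H}_H^{\op{mot}}(\op{Res}^H_G M)$. Applying $\op{Res}^H_G$ to a morphism $\op{Ind}_{G^0}^G\const{\pt}\to M(i)[2i]$ yields a morphism with source $\op{Res}^H_G\op{Ind}_{G^0}^G\const{\pt}$, whereas $\mathbb{H}_H^{\op{mot}}$ is defined using morphisms out of $\op{Ind}_{H^0}^H\const{\pt}$. These two objects coincide only when both $G$ and $H$ are connected; in general you must supply (and later show compatibility with) a canonical map between them. The paper avoids this issue by building the transformation differently: given $f\colon\const{\pt}_G\to M(i)[2i]$ it postcomposes with the unit $M(i)[2i]\to\op{Ind}_H^G\op{Res}^H_G M(i)[2i]$ and then uses the adjunction isomorphism $\mathbb{D}^+_G(\const{\pt}_G,\op{Ind}_H^G N)\cong\mathbb{D}^+_H(\const{\pt}_H,N)$. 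This lands directly in the correct Hom-set and makes well-definedness over $\phi^\ast$ transparent.

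For the check on the generator, your appeal to an unspecified ``base change formula applied to the square of classifying-space-like diagrams'' is where the paper is much more concrete. In the connected case the paper simply observes that both sides are free of rank one (namely $\mathcal{A}_H\langle n\rangle$) and tracks that the generator $1\otimes\op{id}$ is sent to $\op{id}$. In the non-connected case it uses the explicit identification $\op{Res}^H_G\op{Ind}_{G^0}^G\const{\pt}\cong\op{Ind}_{\tilde H}^H\const{\pt}$ with $\tilde H=G^0\times_G H$, and then splits the latter as $\#\ker(\pi_0(H)\to\pi_0(G))$ copies of $\op{Ind}_{H^0}^H\const{\pt}$; this is exactly the missing computation you flag as your ``main obstacle'', and it is carried out directly rather than by invoking a base-change principle. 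Your part (2) argument via compactness of $\op{Ind}_{G^0}^G\const{\pt}$ is correct and is essentially what the paper does.
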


\begin{proof}
(1) 
We first define the natural transformation between the two compositions, and then prove that it is an isotransformation. The right-hand composition is
\[
M\mapsto \mathcal{A}_H\otimes_{\mathcal{A}_G}\bigoplus_i \mathbb{D}^+_G\left(\const{\pt}_G,M(i)[2i]\right),
\]
and this $\mathcal{A}_H$-module is generated by elements of the form $1\otimes \left(\const{\pt}_G\to M(i)[2i]\right)$. The left-hand composition maps 
\[
M\mapsto \bigoplus_i\mathbb{D}^+_H\left(\const{\pt}_H,\op{Res}_G^H M(i)[2i]\right)
\]
and elements of that $\mathcal{A}_H$-module are of the form $\const{\pt}_H\to \op{Res}_G^H M(i)[2i]$. We can now send an element $1\otimes \left(\const{\pt}_G\to M(i)[2i]\right)$ to the composition 
\[
\const{\pt}_G\to M(i)[2i]\to\op{Ind}_H^G\op{Res}_G^H M(i)[2i],
\]
where the second map is the unit of the adjunction between induction and restriction; this composition can be interpreted as an element in the appropriate $\mathcal{A}_H$-module via the adjunction
\[
\mathbb{D}^+_G\left(\const{\pt}_G,\op{Ind}_H^G\op{Res}_G^H M(i)[2i]\right)\cong \mathbb{D}^+_H\left(\const{\pt}_H,\op{Res}_G^H M(i)[2i]\right). 
\]
This is well-defined, since whenever two maps $\const{\pt}_G\to M(i)[2i]$ give the same element after scalar extension to $\mathcal{A}_H$, they differ by an element in $\ker\phi^\ast$, but that means that their compositions with the unit of the Ind-Res-adjunction are equal. Hence we get a well-defined and additive natural transformation. 

Since we are only considering the additive categories generated by the motives $\op{Ind}\const{\pt}(n)[2n]$ and the natural transformation is additive, we only need to show the commutativity for the generating motives $\op{Ind}_{G^0}^G\const{\pt}(n)[2n] \in\mathcal{T}_G\subset \mathbb{D}^+_G(\pt)$. 

In the case where $G$ and $H$ are connected, we have  $\op{Res}_G^H\const{\pt}_G\cong\const{\pt}_H$ because $\const{\pt}_G=\op{Res}_1^G\const{\pt}$ and 2-functoriality of restriction. Using Theorem~\ref{thm:equivcoh}, $\mathbb{H}^{\op{mot}}_H\circ \op{Res}_G^H$ maps $\const{\pt}_G(n)[2n]$ to $\mathcal{A}_H\langle n\rangle$, viewed as free rank one $\mathcal{A}_H$-module generated by the identity of $\const{\pt}_H$ in the appropriate degree.  By the same reasoning, $\mathbb{H}^{\op{mot}}_G(\const{\pt}_G(n)[2n])\cong\mathcal{A}_G\langle n\rangle$ is the free rank one $\mathcal{A}_G$-module generated by the identity of $\const{\pt}_G$ in the appropriate degree. Therefore, tensoring with $\mathcal{A}_H$ also yields the free rank one module generated by $1\otimes\op{id}_{\const{\pt}_G}$. Now we want to know that the above natural transformation maps a generator to a generator. By construction, the natural transformation maps $1\otimes\op{id}_{\const{\pt}_G}$ to the composition of the identity with the unit of the Ind-Res-adjunction; under the adjunction, this corresponds exactly to the identity of $\const{\pt}_H$ in the appropriate degree. Therefore, the natural transformation is an isotransformation in the case of connected groups.

The argument in the non-connected case is similar. The only thing to note is that 
\[
\op{Res}_G^H\op{Ind}_{G^0}^G \const{\pt}_{G^0}(n)[2n]\cong \op{Ind}_{\tilde{H}}^H\const{\pt}_{\tilde{H}}(n)[2n],
\] 
where $\tilde{H}\cong G^0\times_G H$ is the preimage of $G^0$ under $\phi$. The latter now splits as $\#\ker(\pi_0(H)\to\pi_0(G))$ many copies of $\op{Ind}_{H^0}^H\const{\pt}_{H^0}(n)[2n]$. With these modifications, the arguments above go through to prove the claim in the non-connected case.

(2) This follows from part (1) using that $\op{Ind}_{G^0}^G \const{\pt}(n)[2n]$ are compact objects in $\mathbb{D}_G^+(\op{pt})$.
\end{proof}

\begin{remark}
For the right-hand vertical arrow in Proposition~\ref{prop:compatres}, we could also take the derived tensor product $\mathcal{A}_H\otimes^{\op{L}}_{\mathcal{A}_G}(-)$ instead of the underived tensor product, because we are only making statements about free $\mathcal{A}_G$-modules. 
\end{remark}

We can also formulate the splitting principle as a consequence of the above compatibility. 

\begin{corollary}
Let $G$ be a reductive group and let $T\subseteq G$ be a maximal torus. Then the restriction functor $\mathbb{D}^+_{G} (\pt)\ra \mathbb{D}^+_{T} (\pt)$ factors as  
\[
\mathbb{D}^+_{G}(\underline{\op{pt}},\underline{\op{pt}}(n)[2n])\xrightarrow{\cong} (S^n)^W\hookrightarrow
S^n\xrightarrow{\cong}  \mathbb{D}^+_{T}(\underline{\op{pt}},\underline{\op{pt}}(n)[2n])
\] 
for $W$ the Weyl group acting on the polynomial ring $S$ of characters of  $T$ as above. 
\end{corollary}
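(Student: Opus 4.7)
The plan is to deduce this statement directly from Theorem~\ref{thm:equivcoh} and the compatibility of equivariant motivic homology with restriction established in Proposition~\ref{prop:compatres}, applied to the subgroup inclusion $\phi\colon T\hookrightarrow G$. Since $G$ is reductive, in particular connected, we are in the ``connected case'' of Theorem~\ref{thm:equivcoh}: the functor $\mathbb{H}^{\op{mot}}_G$ sends $\underline{\pt}(n)[2n]$ to $\mathcal{A}_G\langle n\rangle$ and similarly on the torus side $\mathbb{H}^{\op{mot}}_T$ sends $\underline{\pt}(n)[2n]$ to $\mathcal{A}_T\langle n\rangle$. Combined with the identification (from the computations in Appendix~\ref{sec:motivebg}, which feed into the proof of Theorem~\ref{thm:equivcoh} via Proposition~\ref{prop:swcomp}) of $\mathcal{A}_G$ with the Weyl invariants $S^W$ and of $\mathcal{A}_T$ with the polynomial ring $S$, this already identifies the outer terms in the claimed factorization with $(S^n)^W$ and $S^n$, respectively, in each cohomological degree $n$.

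Next I would invoke Proposition~\ref{prop:compatres}(1) for the inclusion $T\subseteq G$: it produces a commutative square
\[
\xymatrix{
\mathcal{T}_G\ar[r]^-{\approx}\ar[d]_-{\op{Res}^T_G} &\mathcal{A}_G\op{-fModfg}^{\DZ} \ar[d]^-{\mathcal{A}_T\otimes_{\mathcal{A}_G}(-)}\\
\mathcal{T}_T \ar[r]_-{\approx}&  \mathcal{A}_T\op{-fModfg}^{\DZ}.
}
\]
Evaluating the right-hand vertical arrow on the free rank one module $\mathcal{A}_G\langle n\rangle\cong S^W\langle n\rangle$ gives $\mathcal{A}_T\langle n\rangle\cong S\langle n\rangle$, and by construction of the natural transformation in the proof of Proposition~\ref{prop:compatres}, the map on $\op{Hom}$-groups is induced by the ring homomorphism $\phi^\ast\colon\mathcal{A}_G\to\mathcal{A}_T$. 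Under the identifications above, this is precisely the canonical inclusion $S^W\hookrightarrow S$ of Weyl invariants, graded piece by graded piece.

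Putting these observations together, the restriction map
\[
\op{Res}^T_G\colon\mathbb{D}^+_G(\underline{\pt},\underline{\pt}(n)[2n])\longrightarrow \mathbb{D}^+_T(\underline{\pt},\underline{\pt}(n)[2n])
\]
factors as the composition of the isomorphism with $(S^n)^W$, the degree-$n$ part of the inclusion $S^W\hookrightarrow S$, and the isomorphism of the target with $S^n$, which is exactly the asserted factorization.

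The only substantive input is the identification $\mathcal{A}_G\cong S^W$ (for connected reductive $G$), which has already been carried out in Appendix~\ref{sec:motivebg}; modulo that, the corollary is a purely formal consequence of Theorem~\ref{thm:equivcoh} and Proposition~\ref{prop:compatres}, and the naturality statement built into the latter immediately supplies the fact that the factorization is through the Weyl invariants.
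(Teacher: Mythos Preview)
Your proposal is correct and follows essentially the same approach as the paper: the corollary is stated there without proof, merely as the splitting principle ``as a consequence of the above compatibility'', i.e., Proposition~\ref{prop:compatres} combined with the identifications $\mathcal{A}_G\cong S^W$ and $\mathcal{A}_T\cong S$ from Theorem~\ref{thm:equivcoh} and Proposition~\ref{prop:swcomp}. You have simply spelled out the details that the paper leaves implicit.
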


\begin{proposition}
\label{prop:countable}
In the situation of Theorem~\ref{thm:equivcoh}, let $\phi:H\hookrightarrow G$ be the inclusion of a closed subgroup in a linear algebraic group, and let $\phi^\ast:\mathcal{A}_G\to\mathcal{A}_H$ be the induced morphisms on equivariant motivic cohomology rings, cf. \ref{agnotation} for the notation. Then the ordinary integration functor $\op{Ind}_H^G:\mathbb{D}^+_H(\pt)\to \mathbb{D}^+_G(\pt)$ restricts to a functor $\langle\mathcal{T}_H\rangle_\Delta\to \langle\mathcal{T}_G\rangle_\Delta$.
\end{proposition}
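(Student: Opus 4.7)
The plan is to leverage the equivalence of Theorem~\ref{thm:equivcoh} (in its countable version, Proposition~\ref{prop:compatres}(2)) to reduce the statement to a module-theoretic claim about the $\mathcal{A}_G$-module structure of $\mathcal{A}_{H^0}$, which can be resolved by graded free modules.

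First, I would reduce the claim to a single generator of $\mathcal{T}_H$. Since $\op{Ind}_H^G$ is additive as a functor between triangulated categories, it suffices to show that each generator $\op{Ind}_{H^0}^H\const{\pt}(n)[2n]$ of $\mathcal{T}_H$ is sent into $\langle\mathcal{T}_G\rangle_\Delta$. By 2-functoriality of ordinary integration we have an isomorphism
\[
\op{Ind}_H^G\op{Ind}_{H^0}^H\const{\pt}(n)[2n] \cong \op{Ind}_{H^0}^G\const{\pt}(n)[2n],
\]
and since Tate twist and shift commute with $\op{Ind}_{H^0}^G$ and preserve both $\mathcal{T}_G$ and its triangulated envelope, it is enough to treat the case $n=0$. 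Thus the problem is to show $\op{Ind}_{H^0}^G\const{\pt} \in \langle\mathcal{T}_G\rangle_\Delta$.

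Next I would compute the equivariant motivic homology of $\op{Ind}_{H^0}^G\const{\pt}$ as a graded $\mathcal{A}_G$-module. The adjunction $\op{Res}_G^{H^0}\dashv\op{Ind}_{H^0}^G$ together with the identity $\op{Res}_G^{H^0}\const{\pt}_G\cong\const{\pt}_{H^0}$ yields
\[
\bigoplus_i \mathbb{D}_G^+\bigl(\const{\pt}_G,\op{Ind}_{H^0}^G\const{\pt}(i)[2i]\bigr) \cong \bigoplus_i \mathbb{D}_{H^0}^+\bigl(\const{\pt}_{H^0},\const{\pt}(i)[2i]\bigr) \cong \mathcal{A}_{H^0},
\]
with the $\mathcal{A}_G$-module structure induced by the restriction map $\phi^\ast\colon \mathcal{A}_G \to \mathcal{A}_{H^0}$, in analogy with Proposition~\ref{prop:compatres}.

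To finish, I would choose a graded free resolution of $\mathcal{A}_{H^0}$ as an $\mathcal{A}_G$-module by (countably generated) graded free modules, so that each term lies in $\mathcal{A}_G\op{-fModcg}^\DZ$. Via the equivalence $\mathcal{T}_G^\infty \sirra \mathcal{A}_G\op{-fModcg}^\DZ$ from Proposition~\ref{prop:compatres}(2) this resolution lifts term by term to objects in $\mathcal{T}_G^\infty$, and its motivic totalization assembles $\op{Ind}_{H^0}^G\const{\pt}$ as an object in the triangulated envelope $\langle\mathcal{T}_G\rangle_\Delta$ of $\mathcal{T}_G$.

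The main obstacle I anticipate is the final lifting step: one must verify not only that each free module in the resolution lifts to an object of $\mathcal{T}_G^\infty$, but that the differentials and the totalization procedure can be realized motivically, so that the resulting object in the triangulated envelope is actually isomorphic to $\op{Ind}_{H^0}^G\const{\pt}$. This requires the equivariant motivic homology functor to be sufficiently faithful on the relevant subcategory (which follows from Theorem~\ref{thm:equivcoh}) and $\langle\mathcal{T}_G\rangle_\Delta$ to be closed under the appropriate countable homotopy colimits.
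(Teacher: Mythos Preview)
Your reduction to the generator $\op{Ind}_{H^0}^G\const{\pt}$ is exactly right and matches the paper. However, the module-theoretic route you propose after that has a genuine gap, and the paper takes a different, more geometric path.

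The gap is in your final identification step. You compute that $\mathbb{H}_G^{\op{mot}}(\op{Ind}_{H^0}^G\const{\pt})\cong\mathcal{A}_{H^0}$, lift a free resolution of $\mathcal{A}_{H^0}$ termwise to $\mathcal{T}_G$, and then assert that the motivic totalization recovers $\op{Ind}_{H^0}^G\const{\pt}$. But the equivalence of Theorem~\ref{thm:equivcoh} (and its tilting extension Theorem~\ref{thm:tiltpoint}) only gives full faithfulness of $\op{Hot}^{\op{b}}(\mathcal{T}_G)\hookrightarrow\mathbb{D}_G^+(\pt)$; it says nothing about whether an arbitrary object of $\mathbb{D}_G^+(\pt)$ whose motivic homology happens to be $\mathcal{A}_{H^0}$ actually lies in the essential image. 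To identify your totalization with $\op{Ind}_{H^0}^G\const{\pt}$ you would need to already know that $\op{Ind}_{H^0}^G\const{\pt}\in\DMT_G(\pt)=\langle\mathcal{T}_G\rangle_\Delta$, which is precisely the claim. The compatibility results you might hope to invoke (Proposition~\ref{tiltingv} and \ref{tiltingv:countable}) are proved \emph{after} and \emph{using} the present proposition, so appealing to them would be circular.

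The paper's argument avoids this entirely by working on the geometric side. By the construction of $\op{Ind}_{H^0}^G$ in \ref{PBPF}, the underlying non-equivariant motive $\op{For}(\op{Ind}_{H^0}^G\const{\pt})$ is $\op{fin}_\ast\const{G/H^0}$, which is the (cohomological) motive of the homogeneous space $G/H^0$. By Proposition~\ref{prop:tatehomogeneous} this is a mixed Tate motive in $\DMT(\pt)$. By Definition~\ref{mtderdef1} this means $\op{Ind}_{H^0}^G\const{\pt}\in\DMT_G(\pt)$, and then Proposition~\ref{prop:generatingtate} gives $\DMT_G(\pt)=\langle\mathcal{T}_G\rangle_\Delta$. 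No resolution or lifting is needed; the key input is the purely motivic fact that homogeneous spaces have mixed Tate motives, together with the structural result that equivariant mixed Tate motives on a point are generated by the constant ones.
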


\begin{proof}
It suffices to show that $\op{Ind}_H^G\op{Ind}_{H^0}^H\const{\pt}(n)[2n]\cong\op{Ind}_{H^0}^G\const{\pt}(n)[2n]$ is contained in $\mathcal{T}_G$. By definition of $\op{Ind}_{H^0}^G$, we first use the induction equivalence to go from $\const{\pt}_{H^0}(n)[2n]\in\mathbb{D}^+_{H^0}(\pt)$ to $\const{\pt}_G(n)[2n]\in\mathbb{D}^+_G(G/H^0)$ and then push forward along the structure map $\op{fin}:G/H^0\to\pt$, using $(\op{id},\op{fin})_\ast$. By Proposition~\ref{prop:tatehomogeneous} we have that  $\op{M}(G/H^0)$ is a mixed Tate motive. Note also that $\op{M}(G/H^0)$ is the underlying motive of $\op{Ind}_{H^0}^G\const{\pt}(n)[2n]$ and therefore $\op{Ind}_{H^0}^G\const{\pt}(n)[2n]\in \DMT_G(\pt)$. By Proposition~\ref{prop:generatingtate}, $\DMT_G(\pt)\approx\langle \mathcal{T}_G\rangle_\Delta$, proving the claim. 
\end{proof}

\begin{remark}
Note that the motive $\op{M}(G/H)$ is not necessarily pure. Therefore, the induction functor will not necessarily induce a functor $\op{Ind}_H^G:\mathcal{T}_H\to\mathcal{T}_G$. Even the fact that it lands in the subcategory $\mathcal{T}_G$ requires that equivariant mixed Tate motives are generated (as a triangulated category) by $\op{Ind}_{G^0}^G\const{\pt}(i)[2i]$. However, this would be true for induction from a parabolic subgroup because in this case $\op{Ind}_\ast\cong\op{Ind}_!$ preserves pure equivariant Tate motives, cf.~\ref{Lasp}. This case can also be seen as an instance of the projective bundle formula, cf. Proposition~\ref{prop:projectivebundle}. 
\end{remark}

\begin{Bemerkung}
\label{countable}
More generally, the general direct image functors discussed in Section~\ref{sec:qfstar} preserve countably generated Ind-mixed Tate motives. In the situation of Theorem~\ref{thm:equivcoh}, let $\phi:H\twoheadrightarrow G$ be a surjective homomorphism of linear algebraic groups, and let $\phi^\ast:\mathcal{A}_G\to\mathcal{A}_H$ be the induced morphisms on equivariant motivic cohomology rings, cf. \ref{agnotation} for the notation. Let $\mathcal{T}_G^\infty$ and $\mathcal{T}_H^\infty$ be the additive subcategories of $\mathbb{D}_G^+(\pt)$ and $\mathbb{D}_H^+(\pt)$ generated by  countable direct sums of copies of $\op{Ind}_{G^0}^G\const{\op{pt}}(n)[2n]$ and $\op{Ind}_{H^0}^H\const{\op{pt}}(n)[2n]$, $n\in\mathbb{Z}$, respectively. Then the ordinary integration functor $\op{Ind}_H^G:\mathbb{D}^+_H(\pt)\to \mathbb{D}^+_G(\pt)$ restricts to a functor $\langle\mathcal{T}^\infty_H\rangle_\Delta\to \langle\mathcal{T}^\infty_G\rangle_\Delta$. This follows from the definition of the functor $\op{Ind}_H^G$ which is basically given by push-forward along $\op{fin}:{\op{B}}\ker \phi\to\pt$. By Proposition~\ref{prop:motbg},  $\fin_\ast \const{{\op{B}}\ker\phi}\cong\op{M}({\op{B}}\ker\phi)$ is a pure Ind-mixed Tate motive. Moreover, by the explicit formulas in Section~\ref{sec:motivebg}, we see that this motive is in fact a countable direct sum of constant motives $\const{\op{pt}}(i)[2i]$. In particular, $\op{Ind}_H^G\const{\pt}_H(n)[2n]\in\mathcal{T}^\infty_G$. The same then holds for countable sums of constant mixed Tate motives, proving the claim.

The general case of an arbitrary homomorphism $\phi:H\to G$ can now be obtained by factoring $H\twoheadrightarrow\op{Im}\phi\hookrightarrow G$ and combining the above statement for the surjection $H\twoheadrightarrow \op{Im}\phi$ with Proposition~\ref{prop:countable} for the injection $\op{Im}\phi\hookrightarrow G$. 
\end{Bemerkung}

We finally want to discuss how equivariant homology behaves under the exceptional integration functors $\op{Ind}_!$. For this, we first need to discuss the algebraic functor on cohomology rings which should correspond to the exceptional integration. Since exceptional integration is the left adjoint to restriction and restriction corresponds to scalar extension by Proposition~\ref{prop:compatres}, we are looking for a left adjoint of scalar extension. 

\begin{Bemerkung}
\label{EXR}
If we have  rings $A,B$ and an $A$-$B$-bimodule $D$, which is finitely generated and projective over $A$, then the $B$-$A$-bimodule  $D^\ast\pdef \op{Hom}_A(D,A)$ is a finitely generated projective right $A$-module. Furthermore, we get natural isomorphisms $D^\ast\otimes_A M\xrightarrow{\cong} \op{Hom}_A(D,M)$ of $B$-modules for any $A$-module $M$. Thus under our assumptions we get an adjoint pair
\[
D\otimes_B(-)\dashv D^\ast\otimes_A(-).
\]
Now we can use the analogous construction $^\ast E\pdef \op{Hom}_{A}(E,A)$ to  get an $A$-$B$-bimodule from a $B$-$A$-bimodule. If we have a $B$-$A$-bimodule $E$ which is projective and of finite rank as a right $A$-module, the evaluation map will be an isomorphism $E\xrightarrow{\cong} ({}^\ast E)^\ast$. Therefore, we get an adjoint pair
\[
{}^\ast E\otimes_B(-)\vdash E\otimes_A(-)
\]
Similarly if $E$ is a bounded complex of $B$-$A$-bimodules, which are projective and of finite rank as right $A$-modules, then ${}^\ast E\otimes_B^{\op{L}}(-):\op{Der}(B\op{-Mod})\ra \op{Der}(A\op{-Mod})$
will be a left adjoint to $E\otimes_A^{\op{L}}(-)$. 
\end{Bemerkung}

\begin{Bemerkung}
\label{EXR2}
We discuss the analogous statements for graded rings, modules and bimodules. Given $\DZ$-graded rings $A$ and $B$ and a $\DZ$-graded $A$-$B$-bimodule $D$, 
the functor $D\otimes_B(-): B\op{-Mod}^\DZ\ra A\op{-Mod}^\DZ$ is left adjoint to the functor 
\[
\op{Hom}_A^\circledast(D,-): A\op{-Mod}^\DZ\ra B\op{-Mod}^\DZ.
\]
Here we use the notation $\op{Hom}^\circledast$ to indicate that
we take the direct sum of the spaces of homogeneous homomorphisms
of various degrees rather than the full space of all homomorphisms, which
need not admit a natural grading in general.  
\end{Bemerkung}

\begin{Bemerkung}
\label{excres}
Let $\phi:H\hookrightarrow G$ be the inclusion of a closed connected subgroup in a connected linear algebraic group and let $\phi^\ast:\mathcal{A}_G\to\mathcal{A}_H$ be the induced morphism of equivariant motivic cohomology rings, cf. \ref{agnotation} for the notation. We now want to apply the discussion in \ref{EXR2} above to determine the left adjoint of the extension of scalars along the homomorphism $\phi^\ast$. Note that we always consider derivators with coefficients $\Lambda$ which are fields, and our assumptions imply that $\mathcal{A}_H$ is a finitely generated $\mathcal{A}_G$-module, cf. Lemma~\ref{fgre}. To get a left adjoint $\op{Res}_!$ of the usual scalar extension $\mathcal{A}_H\otimes^{\op{L}}_{\mathcal{A}_G}(-)$, we can take (in the notation of \ref{EXR}) $E$ to be a finite resolution of $\mathcal{A}_H$ by graded free finitely generated modules over $\mathcal{A}_H\otimes_\Lambda\mathcal{A}_G$. The resulting adjunction would be 
\[
\op{Res}_!={}^\ast E\otimes^{\op{L}}_{\mathcal{A}_H}(-) \vdash E\otimes^{\op{L}}_{\mathcal{A}_G}(-)\approx \mathcal{A}_H\otimes_{\mathcal{A}_G}(-). 
\]
In the special case where $\mathcal{A}_H$ is free of finite rank over
$\mathcal{A}_G$, for example if $H=P$ is a parabolic in the reductive group $G$, we don't even need to resolve at all and get ${}^\ast\mathcal{A}_G \otimes_{\mathcal{A}_H}^{\op{L}}(-)$ as our looked-for left adjoint  $\op{Res}_!$.
\end{Bemerkung}

\begin{proposition}
\label{prop:compexcind}
In the situation of Theorem~\ref{thm:equivcoh}, let $\phi:H\hookrightarrow G$ be the inclusion of a closed subgroup in a linear algebraic group, and let $\phi^\ast:\mathcal{A}_G\to\mathcal{A}_H$ be the induced morphism on equivariant motivic cohomology rings of classifying spaces. Then the exceptional integration functor $\op{Ind}_!:\mathbb{D}_H^+(\pt)\to \mathbb{D}_G^+(\pt)$ restricts to a functor $\langle\mathcal{T}_H\rangle_\Delta\to \langle\mathcal{T}_G\rangle_\Delta$. 
\end{proposition}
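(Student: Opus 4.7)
The plan is to exhibit $\op{Ind}_!$ as a composition of functors each of which is already known to preserve equivariant mixed Tate motives, and then invoke Proposition~\ref{prop:generatingtate} to re-identify $\DMT$ on the point with the triangulated hull of $\mathcal{T}$. Concretely, I would proceed as follows.

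First, I note that since $\pt$ is a single orbit, the equivariant Whitney--Tate condition of Definition~\ref{def:mtderdef2} is trivially satisfied, so that $\DMT_G^\ast(\pt) = \DMT_G^!(\pt) = \DMT_G(\pt)$, and similarly for $H$. By Proposition~\ref{prop:generatingtate} we moreover have identifications $\langle \mathcal{T}_G\rangle_\Delta = \DMT_G(\pt)$ and $\langle \mathcal{T}_H\rangle_\Delta = \DMT_H(\pt)$. Thus the claim amounts to showing that the exceptional integration functor $\op{Ind}_!\colon \mathbb{D}^+_H(\pt)\to\mathbb{D}^+_G(\pt)$ restricts to a functor $\DMT_H(\pt)\to\DMT_G(\pt)$.

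To prove this, I invoke the description of $\op{Ind}_!$ given in Remark~\ref{Last}, namely the identity of functors
\[
\op{Ind}_! \;\cong\; D \circ \op{Ind}_\ast \circ D
\]
on the subcategories of constructible objects. By Proposition~\ref{prop:dmtverdier}, Verdier duality $D$ induces an equivalence $\DMT_H^\ast(\pt)^{\op{op}}\sirra \DMT_H^!(\pt)$, which in light of the coincidence of $\ast$- and $!$-orbitwise conditions on the point reduces to an auto-equivalence of $\DMT_H(\pt)$; similarly for $G$. By Proposition~\ref{Las}, the ordinary integration functor $\op{Ind}_\ast$ restricts to a functor $\DMT_H^!(\pt)\to \DMT_G^!(\pt)$, i.e.\ $\DMT_H(\pt)\to\DMT_G(\pt)$. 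Composing the three, we conclude that $\op{Ind}_!$ also restricts to a functor $\DMT_H(\pt)\to\DMT_G(\pt)$, which via the identifications above is the required restriction to $\langle\mathcal{T}_H\rangle_\Delta \to \langle\mathcal{T}_G\rangle_\Delta$.

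There is no real obstacle here: once the mixed Tate preservation for $\op{Ind}_\ast$ (Proposition~\ref{Las}), the stability of $\DMT$ under Verdier duality (Proposition~\ref{prop:dmtverdier}), and the generation statement (Proposition~\ref{prop:generatingtate}) are in place, the proposition follows formally. The only point worth double-checking is that the description $\op{Ind}_! = D\circ \op{Ind}_\ast\circ D$ from Remark~\ref{Last} is applicable to the objects in $\mathcal{T}_H$, which is the case because the generators $\op{Ind}_{H^0}^H\const{\pt}(n)[2n]$ are constructible (they are obtained from the constructible object $\const{\pt}$ via functors preserving constructibility, cf.~\ref{rem:constructible} and the preservation of constructibility by $\op{Ind}_\ast\cong\op{Ind}_!$ for the finite cover $H^0\subset H$).
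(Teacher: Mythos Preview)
Your proof is correct and follows essentially the same route as the paper. Both arguments reduce to the Verdier duality relation between $\op{Ind}_!$ and $\op{Ind}_\ast$ together with the identification $\langle\mathcal{T}_G\rangle_\Delta=\DMT_G(\pt)$ from Proposition~\ref{prop:generatingtate}; the paper just phrases the duality step on the level of the generator $\const{\pt}_H$ (observing that the underlying motive of $\op{Ind}_!\const{\pt}_H$ is the Verdier dual of $\op{M}(G/H)$, hence mixed Tate), whereas you invoke the functor-level identity $\op{Ind}_!\cong D\circ\op{Ind}_\ast\circ D$ from Remark~\ref{Last} and chain the preservation results of Propositions~\ref{prop:dmtverdier} and~\ref{Las}.
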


\begin{proof}
Since $\op{Ind}_!$ is additive, this follows from the fact that $\op{Ind}_!\const{\pt}_H$ is by definition $\op{fin}_{G/H,!}\const{\pt}_{G/H}\cong D\op{fin}_{G/H,\ast}\const{\pt}_{G/H}$. But $\op{Res}_G^1\op{fin}_{G/H,\ast}\const{\pt}_{G/H}\cong\op{M}(G/H)$ is a mixed Tate motive, and the category of mixed Tate motives is closed under Verdier duality. The rest of the proof is the same as for the ordinary integration functor, cf. Proposition~\ref{prop:countable}. 
\end{proof}

\begin{Bemerkung}
As for the ordinary integration functor $\op{Ind}_H^G$, the exceptional integration functor $\op{Ind}_!:\mathbb{D}^+_H(\pt)\to\mathbb{D}^+_G(\pt)$ doesn't necessarily restrict to a functor $\mathcal{T}_H\to\mathcal{T}_G$. This only happens whenever the motive $\op{M}(G/H)$ is a pure Tate motive, e.g. in the case where $H=P$ is a parabolic subgroup. 
\end{Bemerkung}

\begin{Bemerkung}
It is also possible to prove compatibility results between induction functors and proper pushforwards as in \cite[Section 13.11]{BeLu}, but we won't need those in the present work. 
\end{Bemerkung}

\section{Motives as modules via tilting}
\label{sec:tilting}

In this section, we apply the general tilting results from Appendices~\ref{sec:fritz} and \ref{sec:tiltcompat} to identify, in suitable situations, equivariant mixed Tate motives over the point in terms of complexes of modules over the equivariant cohomology ring. Applying the compatibility results from Appendix~\ref{sec:tiltcompat} will show that these identifications are compatible with the relevant parts of the six functor formalism.

\subsection{The tilting result}

From the computations we established in Section~\ref{sec:motcohom}, we obtain a nicely behaved functor from equivariant motives to modules over the cohomology ring, given by equivariant cohomology (computed via the motivic derivator $\mathbb{D}$). We have also seen that the equivariant cohomology functor induces an equivalence on a suitable subcategory of equivariant pure Tate motives. The general tilting formalism allows to turn this into a fully faithful embedding of the homotopy category of modules into the category of equivariant motives whose essential image is the category of equivariant mixed Tate motives. 

The following result recovers, in particular, the known formality of the equivariant derived category of $G\looparrowright \pt$ of \cite[(12.7.2)]{BeLu}. 

\begin{theorem} 
\label{thm:tiltpoint}
Assume the situation of Theorem~\ref{thm:equivcoh}. 
\begin{enumerate}
\item Then tilting gives a
fully faithful embedding 
\[
\op{Hot}^{\op{b}}(S^W\rtimes\Lambda[\pi_0(G)]\op{-fModfg}^\DZ)  \stackrel{\approx}{\hra}\mathbb{D}^{\op{c}}_G (\op{pt}).
\]  
\item After idempotent completion, tilting provides a fully faithful embedding  
\[
\op{Hot}^{\op{b}}(S^W\rtimes\Lambda[\pi_0(G)]\op{-pModfg}^\DZ)  \stackrel{\approx}{\hra}\mathbb{D}^{\op{c}}_{G} (\op{pt}),
\]
where $S^W\rtimes\Lambda[\pi_0(G)]\op{-pModfg}^\DZ$ denotes the category of finitely generated graded projective modules over the twisted group ring.
\item
The essential image of the above embedding coincides with the category $\DMT_G(\op{pt})$ of $G$-equivariant mixed Tate motives over the point.
\end{enumerate}
\end{theorem}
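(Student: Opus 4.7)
The plan is to identify the essential image of the tilting embedding with the thick triangulated subcategory generated by the tilting objects, and then use Proposition~\ref{prop:generatingtate} to match this with $\DMT_G(\op{pt})$. Concretely, one direction is easy: the tilting generators in $\mathbb{D}^{\op{c}}_G(\op{pt})$ are precisely the images of the free rank-one modules $(S^W\rtimes\Lambda[\pi_0(G)])\langle n\rangle$, which under the equivalence of Theorem~\ref{thm:equivcoh} correspond to the equivariant motives $\op{Ind}_{G^0}^G\underline{\op{pt}}(n)[2n]$. These motives are $G$-equivariantly mixed Tate by construction (they are integrals of constant mixed Tate motives on the point). Since $\DMT_G(\op{pt})$ is a thick triangulated subcategory of $\mathbb{D}^+_G(\op{pt})$ (closed under triangles by Remark~\ref{rem:triangulated} and under summands by Lemma~\ref{lem:idempotent}), the entire essential image of the tilting embedding is contained in $\DMT_G(\op{pt})$.

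For the reverse inclusion, the crucial input is Proposition~\ref{prop:generatingtate}, which says that $\DMT_G(\op{pt})$ is the smallest thick subcategory of $\mathbb{D}^+_G(\op{pt})$ containing the equivariant motives $\op{Ind}_{G^0}^G\underline{\op{pt}}(n)[2n]$ for $n\in\mathbb{Z}$. It then suffices to observe that the essential image of the tilting embedding $\op{Hot}^{\op{b}}(S^W\rtimes\Lambda[\pi_0(G)]\op{-pModfg}^\DZ)\hookrightarrow\mathbb{D}^{\op{c}}_G(\op{pt})$ is itself a thick triangulated subcategory containing these generators. Thickness of the image follows from the fact that tilting is an equivalence onto its essential image and the source category is idempotent complete and triangulated (and that $\mathbb{D}^{\op{c}}_G(\op{pt})$ is idempotent complete so that summands in the target remain in the image). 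Containment of the generators is immediate by construction of the tilting functor, which extends the equivalence $\mathcal{T}\approx S^W\rtimes\Lambda[\pi_0(G)]\op{-fModfg}^\DZ$ of Theorem~\ref{thm:equivcoh} sending $\op{Ind}_{G^0}^G\underline{\op{pt}}(n)[2n]$ to the free rank-one module in degree $n$.

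Combining these two inclusions yields the claimed identification of the essential image with $\DMT_G(\op{pt})$. The main conceptual point is that the tilting machinery in Appendix~\ref{sec:fritz} turns an additive equivalence between the heart $\mathcal{T}$ (pure weight-zero equivariant mixed Tate motives) and graded free modules into a fully faithful triangulated embedding whose essential image is necessarily the thick hull of $\mathcal{T}$; the nontrivial content is that, by Proposition~\ref{prop:generatingtate}, this thick hull is all of $\DMT_G(\op{pt})$. I do not expect a serious obstacle here: the step requiring most care is verifying that the essential image is really thick rather than only triangulated, which is where the passage to projective modules in part~(2) (equivalently, idempotent completion) is essential, since idempotents in $\mathbb{D}^{\op{c}}_G(\op{pt})$ must be lifted through tilting, and without idempotent completion on the module side one only gets the embedding onto the triangulated subcategory generated by the $\op{Ind}_{G^0}^G\underline{\op{pt}}(n)[2n]$ without summands.
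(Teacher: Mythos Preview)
Your argument for part~(3) is essentially the same as the paper's: both directions rest on Proposition~\ref{prop:generatingtate}, and you correctly identify that the passage to projective modules in~(2) is what guarantees thickness of the essential image (the paper invokes Proposition~\ref{prop:essimg} and \cite[Theorem 3.4(3)]{WtS} for this).

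However, there is a genuine gap in your treatment of parts~(1) and~(2). You write that ``the tilting machinery in Appendix~\ref{sec:fritz} turns an additive equivalence between the heart $\mathcal{T}$ \ldots\ into a fully faithful triangulated embedding,'' but the tilting theorem (Theorem~\ref{thm:derivatortilting}) has a hypothesis you never verify: $\mathcal{T}$ must be a \emph{tilting subcategory} in the sense of Definition~\ref{def:tiltsubcat}, meaning $\mathbb{D}^+_G(\op{pt})(A,B[k])=0$ for all $A,B\in\mathcal{T}$ and $k\geq 1$. Theorem~\ref{thm:equivcoh} only computes morphisms \emph{within} $\mathcal{T}$, i.e.\ in degree zero; it says nothing about the vanishing in other degrees. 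The paper's proof establishes this by showing that
\[
\mathbb{D}^+_G\bigl(\ind^G_{G^0}\const{\pt},\, \ind^G_{G^0}\const{\pt}(j)[i]\bigr)\neq 0 \implies i=2j,
\]
which it reduces via Theorem~\ref{endring} and \ref{weaksplitting} to the case of a split torus and ultimately to the explicit computation of $\op{H}^i_{\mathbb{D},\mathbb{G}_{\op{m}}}(\pt;\Lambda(j))$ through the approximations $\mathbb{P}^n$ of ${\op{B}}\mathbb{G}_{\op{m}}$. This step is where the grading condition~\ref{conditions:grading} genuinely enters, and without it the tilting machinery does not apply. Your description of $\mathcal{T}$ as ``pure weight-zero equivariant mixed Tate motives'' is also premature: at this point in the paper no weight structure on $\DMT_G(\op{pt})$ has been constructed, and indeed Proposition~\ref{prop:wtpt} uses the present theorem as input.
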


\begin{proof}
This is  an application of the tilting result Theorem~\ref{thm:derivatortilting}. By Proposition~\ref{prop:equivderivator}, $\mathbb{D}^+_G(\op{pt},-)$ is a stable derivator. So we need to show that the category $\mathcal{T}$ of Theorem~\ref{thm:equivcoh} is a tilting subcategory in the sense of Definition~\ref{def:tiltsubcat}. For this, it suffices to show 
\[ 
\mathbb{D}^+_G(\ind^G_{G^0}\const{\pt}, \ind^G_{G^0}\const{\pt}(j)[i]) \neq 0
\]
implies that $i=2j$. By Theorem \ref{endring}, this reduces to showing 
\[ \op{H}^i_{\mathbb{D},G^0}(\pt; \Lambda(j)) \neq 0 \]
implies $i=2j$. In view of \ref{weaksplitting}, we may assume $G^0$ is a split torus. This, in turn, reduces to the case of $\GG_{\op{m}}$. For $\GG_{\op{m}}$ this is now obvious from the computation of $\op{H}^i_{\mathbb{D},\mathbb{G}_{\op{m}}}(\pt,\Lambda(j))$ in terms of the approximations $\mathbb{P}^n$ of ${\op{B}}\mathbb{G}_{\op{m}}$.

Now we use that Theorem~\ref{thm:equivcoh} provides an equivalence between $\mathcal{T}$ and the category of modules $S^W\rtimes\Lambda[\pi_0(G)]\op{-fModfg}^\DZ$. Since the conditions of the tilting result Theorem~\ref{thm:derivatortilting} are satisfied by the above discussion, we get a fully faithful functor $\op{Hot}^{\op{b}}(S^W\rtimes\Lambda[\pi_0(G)]\op{-fModfg}^\DZ)   \hra\mathbb{D}^{\op{c}}_G (\op{pt})$ as claimed in (1). 

For (2), we note that $\mathbb{D}^+_G(\op{pt})$ is idempotent complete, cf. Remark~\ref{rem:triangulated}. By \cite[Theorem 3.4(3)]{WtS}, the category $\op{Hot}^{\op{b}}(S^W\rtimes\Lambda[\pi_0(G)]\op{-pModfg}^\DZ)$ is idempotent complete because the category of projective modules is the idempotent completion of the category of free modules. Therefore, the idempotent completion of the homotopy category of bounded complexes of free modules is the homotopy category of complexes of projective modules.

It remains to establish the claim about the essential image in (3). By Proposition~\ref{prop:essimg} (and the equivalence of Theorem~\ref{thm:equivcoh}), the essential image of the tilting functor in (1) is the triangulated subcategory of $\mathbb{D}^+_G(\pt)$ generated by $\mathcal{T}$, and the essential image of the extension in (2) is the respective thick subcategory. The claim then follows from Proposition~\ref{prop:generatingtate}. 
\end{proof}

\begin{remark}
In the connected case, $\mathcal{A}_G=S^W$ is a polynomial ring. In this situation, projective modules are free, whence $\op{Hot}^{\op{b}}(S^W\textrm{-fModfg}^\mathbb{Z})$ is already idempotent complete and point (2) in Theorem~\ref{thm:tiltpoint} is not necessary. 
\end{remark}

\begin{remark}
\label{sts}   
\index{tilting t-structure}  
Since polynomial rings in finitely many variables over a field have finite homological dimension, the obvious functor 
\[
\op{Hot}^{\op{b}}(S^W\op{-fModfg}^\DZ) \sirra  \op{Der}^{\op{b}}(S^W\op{-Modfg}^\DZ)
\]
is an equivalence. The obvious t-structure on the derived category then corresponds to a t-structure on $\DMT_G(\pt)$, viewed as the thick subcategory of $\mathbb{D}^+_G(\pt)$ generated by  $\mathcal T$. We call it the \emph{tilting t-structure}. 
\end{remark}

\begin{corollary}
\label{cor:tiltpoint}
Assume the situation of Theorem~\ref{thm:equivcoh}. Then tilting gives an equivalence
\[
\op{Der}^{\op{b}}(S^W\rtimes\Lambda[\pi_0(G)]\op{-Modfg}^\DZ)  \stackrel{\approx}{\hra}\DMT_G (\op{pt}).
\]  
\end{corollary}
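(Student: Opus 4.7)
The plan is to deduce the corollary from parts (2) and (3) of Theorem~\ref{thm:tiltpoint} by establishing a purely algebraic equivalence
\[
\op{Hot}^{\op{b}}(R\op{-pModfg}^\DZ) \stackrel{\approx}{\lra} \op{Der}^{\op{b}}(R\op{-Modfg}^\DZ)
\]
for the graded ring $R := S^W\rtimes\Lambda[\pi_0(G)]$, and then composing with the motivic tilting equivalence. Indeed, Theorem~\ref{thm:tiltpoint}(2) gives a fully faithful embedding of $\op{Hot}^{\op{b}}(R\op{-pModfg}^\DZ)$ into $\mathbb{D}^{\op{c}}_G(\op{pt})$ and Theorem~\ref{thm:tiltpoint}(3) identifies its essential image as $\DMT_G(\op{pt})$, so we already have an equivalence $\op{Hot}^{\op{b}}(R\op{-pModfg}^\DZ) \approx \DMT_G(\op{pt})$.

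Next, I would verify that $R$ is graded Noetherian of finite graded global dimension. By the grading condition~\ref{conditions:grading}, the coefficient field $\Lambda$ has characteristic zero; since $W$ acts on $S=\op{Sym}(X^{\ast}(T)_\Lambda)$ as a reflection group, the Chevalley--Shephard--Todd theorem ensures that $S^W$ is again a polynomial ring, hence regular, graded Noetherian and of finite graded global dimension equal to its Krull dimension. Since $\pi_0(G)=G/G^0$ is finite and $\op{char}\Lambda=0$, the group algebra $\Lambda[\pi_0(G)]$ is semisimple, so the inclusion $S^W\hookrightarrow R$ is a split extension of $(S^W,S^W)$-bimodules via the normalized trace and $R$ is a finitely generated free $S^W$-module. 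Standard homological arguments (restriction preserves projectivity, and induction is both left and right adjoint to restriction) then give that $R$ is graded Noetherian with graded global dimension bounded by that of $S^W$.

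Having finite graded global dimension and graded Noetherianity, every finitely generated graded $R$-module admits a finite resolution by finitely generated graded projective $R$-modules, so the natural functor $\op{Hot}^{\op{b}}(R\op{-pModfg}^\DZ) \to \op{Der}^{\op{b}}(R\op{-Modfg}^\DZ)$ is an equivalence of triangulated categories by the usual argument (it is essentially surjective by truncating projective resolutions, and fully faithful because projectives compute $\op{Hom}$ in the derived category). Composing with the tilting equivalence from the first paragraph yields the asserted equivalence.

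The main obstacle is the verification of finite graded global dimension and graded Noetherianity of the twisted group ring in the non-connected case; once this is secured the rest is standard. In the connected case $\pi_0(G)$ is trivial, $R=S^W$ is an honest polynomial ring, and the argument specializes to the content of Remark~\ref{sts}.
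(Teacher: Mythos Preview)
Your proof is correct and follows essentially the same approach as the paper: invoke parts (2) and (3) of Theorem~\ref{thm:tiltpoint} to get $\op{Hot}^{\op{b}}(R\op{-pModfg}^\DZ)\approx\DMT_G(\op{pt})$, and then identify the homotopy category of projectives with the bounded derived category via a finite-global-dimension argument. In fact you are more careful than the paper, which simply refers to Remark~\ref{sts}; that remark only treats the connected case $R=S^W$, whereas you spell out why the twisted group ring $S^W\rtimes\Lambda[\pi_0(G)]$ still has finite graded global dimension (semisimplicity of $\Lambda[\pi_0(G)]$ in characteristic zero plus the averaging/splitting argument).
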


\begin{proof}
For the bounded homotopy category of finitely generated graded projective  $S^W\rtimes\Lambda[\pi_0(G)]$-modules, this is the statement of (2) and (3) of Theorem~\ref{thm:tiltpoint}. Replacing the bounded homotopy category of projective modules by the bounded derived category follows from Remark~\ref{sts} above. 
\end{proof}

\begin{remark}
In the situation of Corollary~\ref{cor:tiltpoint}, we have a commutative diagram
\[
\xymatrix{
\DMT_G(\op{pt}) \ar[r]^{\op{Real}} & \op{Der}^{\op{b}}_G(\op{pt}) \ar[d]^\approx \\
\op{Der}^{\op{b}}(\mathcal{A}_G{\op{-Modfg}^{\mathbb{Z}}})  \ar[u]^{\op{tilt}}_{\approx} \ar[r] & \op{dgDer}(\mathcal{A}_G,d=0).
}
\]
The left vertical arrow is the tilting equivalence of Corollary~\ref{cor:tiltpoint}. The top horizontal arrow is one of the realization functors of Proposition~\ref{prop:realization} and the right vertical arrow is the identification of the equivariant derived category with the category of dg-modules over the cohomology ring $(\mathcal{A}_G,d=0)$ established in \cite[Section 12.4]{BeLu}. The lower horizontal arrow is basically forgetting the grading, by mapping a complex $C_\bullet$ of $\mathbb{Z}$-graded $\mathcal{A}_G$-modules $C_{n,\bullet}$ to the complex $(\bigoplus_{i+j=n}C_{i,j})_n$ of dg-modules for $(\mathcal{A}_G,d=0)$. Similar diagrams will be established in Chapter~\ref{chap:repthy} for certain varieties $(G\looparrowright X)$ with action relevant for representation theory. 
\end{remark}


\subsection{Compatibility with restriction and induction}
Now we describe the relation between tilting and the restriction functors. Essentially, under the tilting equivalence between equivariant motives and modules over the motivic cohomology ring of the classifying space, restriction corresponds to extension of scalars along the relevant morphism of equivariant cohomology rings. 

\begin{proposition}[{\bf Compatibility with restriction}]
\label{tilting} 
Let $\phi:H\to G$ be a homomorphism of linear algebraic groups, and let $\phi^\ast:\mathcal{A}_G\to\mathcal{A}_H$ be the induced 
morphism on equivariant cohomology rings of classifying spaces. Then the diagram from Proposition~\ref{prop:compatres} induces a commutative diagram
\begin{displaymath}
\xymatrix{
\langle\mathcal{T}_G\rangle_\Delta\ar[r]^-{\approx}\ar[d]_-{\op{Res}^H_G} &\op{Der}^{\op{perf}}(\mathcal{A}_G\op{-Mod}^\DZ)\ar@{=>}_-\sim[dl]
\ar[d]^-{\mathcal{A}_H\otimes^{\op{L}}_{\mathcal{A}_G}(-)}
\\
\langle\mathcal{T}_H\rangle_\Delta\ar[r]_-{\approx} &  \op{Der}^{\op{perf}}(\mathcal{A}_H\op{-Mod}^\DZ) 
}
\end{displaymath}
where the categories $\langle\mathcal{T}_G\rangle_\Delta$ and $\langle\mathcal{T}_H\rangle_\Delta$ are the thick triangulated subcategories of $\mathbb{D}^+_G(\pt)$ and $\mathbb{D}^+_H(\pt)$ generated by $\op{Ind}_{G^0}^G\const{\pt}(n)[2n]$ and $\op{Ind}_{H^0}^H\const{\pt}(n)[2n]$, $n\in\mathbb{Z}$, respectively.
\end{proposition}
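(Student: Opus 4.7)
The plan is to reduce the claimed commutativity at the triangulated level to the already established additive commutativity of Proposition~\ref{prop:compatres}(1), by exploiting the functoriality of the tilting construction developed in Appendices~\ref{sec:fritz} and~\ref{sec:tiltcompat}.

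First, I would identify the horizontal equivalences in the diagram. Combining Theorem~\ref{thm:tiltpoint}(1)--(2) with Corollary~\ref{cor:tiltpoint} and Proposition~\ref{prop:generatingtate}, the equivalence $\mathbb{H}^{\op{mot}}_G\colon \mathcal{T}_G \xrightarrow{\approx} \mathcal{A}_G\text{-fModfg}^{\DZ}$ of Theorem~\ref{thm:equivcoh} extends via the tilting functor to an equivalence
\[
\langle \mathcal{T}_G\rangle_\Delta \xrightarrow{\approx} \op{Hot}^{\op{b}}(\mathcal{A}_G\text{-fModfg}^{\DZ}) \approx \op{Der}^{\op{perf}}(\mathcal{A}_G\text{-Mod}^{\DZ}),
\]
and analogously for $H$; here we use that $S^W$ (resp.\ $\mathcal{A}_G$ in general) has finite homological dimension so that $\op{Hot}^{\op{b}}$ of free modules agrees with $\op{Der}^{\op{perf}}$, cf.\ Remark~\ref{sts}.

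Second, I would verify that the two functors under comparison preserve the respective subcategories of the diagram. On the geometric side, $\op{Res}_G^H$ is a morphism of stable derivators by Proposition~\ref{prop:6fmorderivator}, so in particular exact and compatible with homotopy colimits; it preserves orbitwise mixed Tate motives by Proposition~\ref{prop:dmtres} and sends each additive generator $\op{Ind}_{G^0}^G\const{\pt}(n)[2n]$ of $\mathcal{T}_G$ into $\langle \mathcal{T}_H\rangle_\Delta$ by Proposition~\ref{prop:compatres}(1), hence $\op{Res}_G^H(\langle\mathcal{T}_G\rangle_\Delta) \subseteq \langle\mathcal{T}_H\rangle_\Delta$. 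On the algebraic side, $\mathcal{A}_H \otimes^{\op{L}}_{\mathcal{A}_G}(-)$ is exact and preserves perfect complexes, and on graded free finitely generated $\mathcal{A}_G$-modules it agrees with the underived tensor product that appears in the diagram of Proposition~\ref{prop:compatres}.

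Third, Proposition~\ref{prop:compatres}(1) provides the required isotransformation between the two compositions on the additive subcategory $\mathcal{T}_G$. I would then invoke the tilting compatibility result of Appendix~\ref{sec:tiltcompat}: an isotransformation between two exact morphisms of stable derivators whose restrictions to a tilting subcategory $\mathcal{T}$ agree extends uniquely to an isotransformation of the induced functors on $\langle\mathcal{T}\rangle_\Delta$. Applied with tilting subcategory $\mathcal{T}_G$, this produces the required filler for the triangulated diagram.

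The main obstacle will be checking the hypotheses of the abstract tilting compatibility theorem in Appendix~\ref{sec:tiltcompat}, namely that both $\op{Res}_G^H$ and $\mathcal{A}_H \otimes^{\op{L}}_{\mathcal{A}_G}(-)$ are morphisms of stable derivators preserving homotopy colimits, and that the natural isomorphism from Proposition~\ref{prop:compatres} is coherent with respect to the derivator structure. The former is Proposition~\ref{prop:6fmorderivator} together with the fact that both functors are left adjoints; the latter is built into the construction of the natural transformation in Proposition~\ref{prop:compatres} via the unit of the induction--restriction adjunction, which is a natural transformation of morphisms of derivators. Once these verifications are in hand, the extension from additive to triangulated commutativity is formal.
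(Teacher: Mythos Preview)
Your proposal is correct and follows essentially the same route as the paper: reduce to the additive compatibility of Proposition~\ref{prop:compatres}, verify that $\op{Res}_G^H$ is a morphism of stable derivators preserving homotopy left Kan extensions (via Proposition~\ref{prop:6fmorderivator} and its being a left adjoint), and then apply the abstract tilting compatibility Theorem~\ref{thm:funtilt}. The paper organizes the passage from $\op{Hot}^{\op{b}}(\mathcal{T}_G)$ to $\op{Der}^{\op{perf}}(\mathcal{A}_G\op{-Mod}^\DZ)$ as a separate, second application of Theorem~\ref{thm:funtilt} (with tilting subcategory the projective modules), whereas you fold this into the identification via Remark~\ref{sts}; this is a presentational rather than substantive difference.
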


\begin{proof}
We want to deduce this from Theorem~\ref{thm:funtilt} and Proposition~\ref{prop:compatres}. Note that the horizontal equivalences arise from Theorem~\ref{thm:tiltpoint}.

First, by Proposition~\ref{prop:6fmorderivator}, the functor $\op{Res}_G^H:\mathbb{D}^+_G(\pt,-)\to\mathbb{D}^+_H(\pt,-)$ is a morphism of stable derivators. Since restriction is the left adjoint of $\op{Ind}_\ast$, cf. Proposition~\ref{prop:integration}, it preserves homotopy colimits. By \cite[Proposition 2.4]{groth}, $\op{Res}_G^H$ preserves homotopy left Kan extensions. 

Second, it follows as in the proof of Theorem~\ref{thm:tiltpoint} that the categories $\mathcal{T}_G$ and $\mathcal{T}_H$ are tilting subcategories. The fact that restriction $\op{Res}_G^H$ maps $\mathcal{T}_G$ into $\mathcal{T}_H$ follows from  Proposition~\ref{prop:compatres}.

From the above discussion, the conditions of Theorem~\ref{thm:funtilt} are satisfied, which provides us with a diagram, commutative up to isotransformation
\[
 \xymatrix{
\op{Hot}^{\op{b}}(\mathcal{T}_G)\ar[d]_{\op{Res}^H_G} \ar[r] & \mathbb{D}^+_G(\pt) \ar[d]^{\op{Res}_G^H} \ar@{=>}[dl]_{\sim}\\
\op{Hot}^{\op{b}}(\mathcal{T}_H) \ar[r] 
& \mathbb{D}^+_H(\pt). 
}
\]
We already showed in Theorem~\ref{thm:tiltpoint} that the essential images on the right-hand side are given by $\langle\mathcal{T}_G\rangle_\Delta$ and $\langle\mathcal{T}_H\rangle_\Delta$, respectively. This provides a square like the one claimed, except that we have homotopy categories of complexes instead of derived categories, and the functor hasn't been correctly identified. 

We first identify the functor. Using Proposition~\ref{prop:compatres}, we know that equivariant cohomology provides a diagram, commutative up to isotransformation
\[
\xymatrix{
  \op{Hot}^{\op{b}}(\mathcal{T}_G)  \ar[d]_-{\op{Res}_G^H} \ar[r]^-{\approx} & \op{Hot}^{\op{b}}(\mathcal{A}_G\op{-pModfg}^{\mathbb{Z}})\ar@{=>}_-\sim[dl]
  \ar[d]^-{\mathcal{A}_H\otimes_{\mathcal{A}_G}(-)} 
  \\
  \op{Hot}^{\op{b}}(\mathcal{T}_H) \ar[r]_-{\approx} &  \op{Hot}^{\op{b}}(\mathcal{A}_H\op{-pModfg}^{\mathbb{Z}}) 
}
\]
In particular, we get a diagram, commutative up to isotransformation
\[
 \xymatrix{
\op{Hot}^{\op{b}}(\mathcal{A}_G\textrm{-pModfg}^{\mathbb{Z}})\ar[d]_{\mathcal{A}_H\otimes_{\mathcal{A}_G}(-)} \ar[r] & \langle\mathcal{T}_G\rangle_\Delta \ar[d]^{\op{Res}_G^H} \ar@{=>}[dl]_{\sim}\\
\op{Hot}^{\op{b}}(\mathcal{A}_H\textrm{-pModfg}^{\mathbb{Z}}) \ar[r] 
& \langle\mathcal{T}_H\rangle_\Delta. 
}
\]

To replace the homotopy categories with derived categories is another application of tilting. The morphism of derivators 
\[
\mathcal{A}_H\otimes^{\op{L}}_{\mathcal{A}_G}(-):\op{Der}^{\op{perf}}(\mathcal{A}_G\textrm{-pModfg}^{\mathbb{Z}})\to \op{Der}^{\op{perf}}(\mathcal{A}_H\textrm{-pModfg}^{\mathbb{Z}}).
\] 
preserves homotopy left Kan extensions because it is a left adjoint. Applying Theorem~\ref{thm:funtilt} to the tilting subcategory $\mathcal{A}_G\op{-pModfg}^\mathbb{Z}\hookrightarrow \op{Der}^{\op{perf}}(\mathcal{A}_G\op{-Mod}^\mathbb{Z})$ of projective modules and using the fact that tensoring is a left adjoint, we get the following commutative diagram, commutative up to isotransformation:
\[
 \xymatrix{
\op{Hot}^{\op{b}}(\mathcal{A}_G\textrm{-pModfg}^{\mathbb{Z}})\ar[d]_{\mathcal{A}_H\otimes_{\mathcal{A}_G}(-)} \ar[r]^\approx & \op{Der}^{\op{perf}}(\mathcal{A}_G\textrm{-Mod}^{\mathbb{Z}}) \ar[d]^{\mathcal{A}_H\otimes^{\op{L}}_{\mathcal{A}_G}(-)} \ar@{=>}[dl]_{\sim}\\
\op{Hot}^{\op{b}}(\mathcal{A}_H\textrm{-pModfg}^{\mathbb{Z}}) \ar[r] 
& \op{Der}^{\op{perf}}(\mathcal{A}_H\textrm{-Mod}^{\mathbb{Z}}). 
}
\]
Using the derived tensor product on the right is the only sensible thing to do, but since the homotopy category contains projective objects which are flat, we can use the underived tensor product on the left-hand side. The only thing that needs to be justified is the use of the perfect complexes on the right-hand side. However, perfect complexes are quasi-isomorphic to bounded complexes of finitely generated projective modules. Using Proposition~\ref{prop:essimg}, the essential image of the homotopy category of bounded complexes of finitely generated free graded $\mathcal{A}_G$-modules inside the derived category of $\mathcal{A}_G$-modules is in fact the category of perfect complexes, as claimed. 
\end{proof}

Next we discuss ordinary integration, the right adjoint to restriction. 

\begin{proposition}[{\bf Compatibility with induction}]
\label{tiltingv} 
Let $\phi:H\hookrightarrow G$ be the inclusion of a closed subgroup into a linear algebraic group and let $\phi^\ast:\mathcal{A}_G\to\mathcal{A}_H$ be the induced morphism of equivariant cohomology rings of classifying spaces. By passing to right adjoints, the diagram of Proposition~\ref{tilting} induces a diagram, commutative up to isotransformation
\[
\xymatrix{
\langle\mathcal{T}_G\rangle_\Delta\ar[r]^-{\approx} \ar@{=>}^-\sim[dr] & \op{Der}^{\op{perf}}(\mathcal{A}_G\op{-Mod}^\DZ)
\\
\langle\mathcal{T}_H\rangle_\Delta \ar[r]_-{\approx}\ar[u]^-{\op{Ind}_H^G} &  
\op{Der}^{\op{perf}}(\mathcal{A}_H\op{-Mod}^\DZ) \ar[u]_-{\op{Res}_{\mathcal{A}_H}^{\mathcal{A}_G}}
}
\]
\end{proposition}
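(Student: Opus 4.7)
The strategy is to derive the claimed diagram by passing to right adjoints in the diagram of Proposition~\ref{tilting}, using the mate correspondence for adjunctions. The argument proceeds in three steps.

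First, I would verify that all the functors in the proposed diagram are well-defined on the specified subcategories. The restriction of $\op{Ind}_H^G$ to a functor $\langle\mathcal{T}_H\rangle_\Delta \to \langle\mathcal{T}_G\rangle_\Delta$ is exactly the content of Proposition~\ref{prop:countable}. For the restriction-of-scalars functor $\op{Res}^{\mathcal{A}_G}_{\mathcal{A}_H}$, I need that it preserves perfect complexes. This follows from $\mathcal{A}_H$ being a perfect object of $\op{Der}(\mathcal{A}_G\op{-Mod}^\DZ)$, which in turn follows from the fact, invoked in~\ref{excres}, that $\mathcal{A}_H$ is finitely generated over $\mathcal{A}_G$, together with the observation that $\mathcal{A}_G$ (being a polynomial ring, possibly crossed with a finite group algebra, cf.~\ref{bem:coinv}) has finite global dimension. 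Thus any bounded complex of finitely generated $\mathcal{A}_H$-modules, viewed as a complex of $\mathcal{A}_G$-modules via $\phi^*$, admits a finite resolution by finitely generated projectives and hence represents a perfect complex.

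Second, I would identify the pairs $(\op{Res}_G^H,\op{Ind}_H^G)$ and $(\mathcal{A}_H\otimes^{\op{L}}_{\mathcal{A}_G}(-),\op{Res}^{\mathcal{A}_G}_{\mathcal{A}_H})$ as adjunctions on the subcategories in question. The first is the adjunction established in Proposition~\ref{prop:integration}, restricted to our subcategories (which makes sense precisely because both $\op{Res}_G^H$ and $\op{Ind}_H^G$ preserve these subcategories by Proposition~\ref{tilting} and Proposition~\ref{prop:countable}). The second is the standard extension-of-scalars/restriction-of-scalars adjunction for graded modules along $\phi^*$, passed to the bounded derived categories of perfect complexes.

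Third, I would invoke the mate correspondence. Writing $\Phi_G$ and $\Phi_H$ for the tilting equivalences of Theorem~\ref{thm:tiltpoint}, Proposition~\ref{tilting} supplies an isotransformation
\[
\alpha\colon (\mathcal{A}_H\otimes^{\op{L}}_{\mathcal{A}_G}(-))\circ\Phi_G \xrightarrow{\sim} \Phi_H\circ\op{Res}_G^H.
\]
Since $\Phi_G$ and $\Phi_H$ are equivalences and both vertical functors form adjoint pairs, $\alpha$ has a canonical mate
\[
\alpha^\vee\colon \Phi_G\circ\op{Ind}_H^G \xrightarrow{\sim} \op{Res}^{\mathcal{A}_G}_{\mathcal{A}_H}\circ\Phi_H,
\]
constructed by pasting $\alpha^{-1}$ with the units and counits of the two adjunctions. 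That $\alpha^\vee$ is again an isotransformation (and not merely a natural transformation) is automatic from the fact that $\alpha$ is one and that $\Phi_G,\Phi_H$ are equivalences. This $\alpha^\vee$ is precisely the required compatibility. The main obstacle in the plan is the verification in the first step that restriction of scalars lands in perfect complexes, which requires the ring-theoretic input about $\mathcal{A}_H$ as an $\mathcal{A}_G$-module; the remainder of the argument is a formal consequence of adjoint functor theory applied to Proposition~\ref{tilting}.
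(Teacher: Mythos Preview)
Your proposal is correct and follows essentially the same approach as the paper: both arguments pass to right adjoints in the diagram of Proposition~\ref{tilting}, invoking Proposition~\ref{prop:countable} to ensure $\op{Ind}_H^G$ lands in $\langle\mathcal{T}_G\rangle_\Delta$, and then using uniqueness of adjoints (which you phrase as the mate correspondence) to obtain the isotransformation. Your treatment is actually more careful than the paper's in one respect: you explicitly verify that restriction of scalars preserves perfect complexes, using finite generation of $\mathcal{A}_H$ over $\mathcal{A}_G$ (this is Lemma~\ref{fgre} rather than~\ref{excres}) together with finite global dimension of $\mathcal{A}_G$, whereas the paper leaves this implicit.
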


\begin{proof}
The left-hand vertical arrow $\op{Res}_G^H$ in the diagram of Proposition~\ref{tilting} has a right adjoint $\op{Ind}_H^G:\mathbb{D}^+_H(\pt)\to\mathbb{D}^+_G(\pt)$. Moreover, the induction functor maps equivariant mixed Tate motives to equivariant mixed Tate motives by Proposition~\ref{prop:countable}.  In particular, the Res-Ind-adjunction restricts to the subcategories $\mathcal{T}_G$ and $\mathcal{T}_H$. Similarly, the right vertical in the diagram of Proposition~\ref{tilting} above also admits a right adjoint, namely restricting the action from $\mathcal{A}_H$ to $\mathcal{A}_G$. Uniqueness of right adjoints now implies the claimed commutativity up to isotransformation of the diagram. 
\end{proof}

The more general case of arbitrary group homomorphisms $\phi:H\to G$ is slightly more problematic, as can already be seen in the special case of the final group morphism $\phi:G\to\{1\}$. The push-forward of the constant equivariant motive on ${\op{B}}G$ to the point will be the motive of the classifying space. Strictly speaking, this is not a mixed Tate motive because it is (usually) not a compact object since the cohomology of ${\op{B}}G$ is not a finite module over the coefficient ring. Typically, the motive of ${\op{B}}G$ will only be an Ind-mixed Tate motive. Therefore, to discuss the possible commutativity of diagrams involving the right adjoints to restriction functors and the monoidal structure, we need to pass to a slightly larger category containing such Ind-mixed Tate motives. 

\begin{remark}
\label{rem:countable}
Given a $\DZ$-graded ring  $A$ let $A\op{-Modcg}^\DZ\supset A\op{-fModcg}^\DZ$ denote the categories of $\DZ$-graded modules with a countable set of generators and the full subcategory of graded free such modules respectively. If $B\ra A$ is a homomorphism such that $A$ is a countably generated $B$-module, then the extension of scalars $B\op{-Modcg}^\DZ\ra A\op{-Modcg}^\DZ$ admits a right adjoint, the restriction of scalars. Similar statements would hold for other cardinalities. We choose here to work with a rather restricted setting to keep clear as much as possible of set-theoretic considerations.
\end{remark}

\begin{remark}
\label{MGHT} 
Note that there is no finiteness condition in the tilting theorem~\ref{thm:fritz}; it applies to diagrams indexed by directed categories. Therefore, we can also get a fully faithful embedding
\[
\op{Hot}^{+}(S^W\op{-fModcg}^\DZ) \stackrel{\sim}{\hra}\mathbb{D}_G^+(\pt)
\]
where $S^W\op{-fModcbg}^\DZ$ denotes the category of free countably generated graded modules bounded below in their degrees. 
\end{remark}

\begin{proposition}[{\bf Compatibility with general direct image}]
\label{tiltingv:countable} 
Let $\phi:H\to G$ be a homomorphism of connected linear algebraic groups and let $\phi^\ast:\mathcal{A}_G\to\mathcal{A}_H$ be the induced morphism of equivariant cohomology rings of classifying spaces.
\begin{enumerate}
\item 
Let $\langle\mathcal{T}_G^\infty\rangle_\Delta \subset\mathbb{D}_G^+(\pt)$ and $\langle\mathcal{T}_H^\infty\rangle_\Delta \subset\mathbb{D}_H^+(\pt)$ denote the full thick triangulated subcategories generated by $\mathcal{T}_G^\infty$ and $\mathcal{T}_H^\infty$ respectively.
Then the above induces a commutative diagram
\begin{displaymath}
\xymatrix{
\langle\mathcal{T}_G^\infty\rangle_\Delta \ar[r]^-{\approx}\ar[d]_-{\op{Res}^H_G} 
&\op{Der}^{\op{b}}(\mathcal A_G\op{-Modcg}^\DZ)\ar@{=>}_-\sim[dl] \ar[d]^-{\mathcal A_H\otimes^{\op{L}}_{\mathcal A_G}(-)}
\\
\langle\mathcal{T}_H^\infty\rangle_\Delta\ar[r]_-{\approx} &  \op{Der}^{\op{b}}(\mathcal A_H\op{-Modcg}^\DZ) 
}
\end{displaymath}
\item
By passing to right adjoints the above induces a commutative diagram
\begin{displaymath}
\xymatrix{
\langle\mathcal{T}_G^\infty\rangle_\Delta\ar[r]^-{\approx} \ar@{=>}^-\sim[dr]
&\op{Der}^{\op{b}}(\mathcal A_G\op{-Modcg}^\DZ)
\\
\langle\mathcal{T}_H^\infty\rangle_\Delta
\ar[r]_-{\approx}\ar[u]^-{\op{Ind}_H^G} &  
\op{Der}^{\op{b}}(\mathcal A_H\op{-Modcg}^\DZ) \ar[u]_-{\op{Res}}
}
\end{displaymath}\label{TInd} 
\end{enumerate}
\end{proposition}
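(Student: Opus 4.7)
The plan is to mimic the proofs of Propositions~\ref{tilting} and \ref{tiltingv}, but replacing the categories of finitely generated modules and compact motives with their countably generated counterparts, so as to accommodate objects like $\op{M}({\op{B}}\ker\phi)$ which appear under general direct image along non-closed-subgroup homomorphisms.

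For part~(1), I would proceed as follows. First, observe that $\mathcal{T}_G^\infty \subset \mathbb{D}^+_G(\pt)$ and $\mathcal{T}_H^\infty \subset \mathbb{D}^+_H(\pt)$ are tilting subcategories in the sense required by Theorem~\ref{thm:funtilt}: the vanishing condition needed to verify this reduces to the same Ext-vanishing over classifying spaces of split tori used in the proof of Theorem~\ref{thm:tiltpoint}, and countable direct sums pose no obstruction since the relevant Hom-computation on generators of Theorem~\ref{thm:equivcoh} extends directly. By the countably generated version of tilting indicated in Remark~\ref{MGHT}, this gives equivalences $\langle\mathcal{T}_G^\infty\rangle_\Delta \approx \op{Der}^{\op{b}}(\mathcal{A}_G\op{-Modcg}^\DZ)$ and similarly for~$H$. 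Next, since $\op{Res}_G^H$ is a left adjoint on the derivator $\mathbb{D}_G^+(\pt,-)\to \mathbb{D}_H^+(\pt,-)$ (its right adjoint being the general direct image from~\ref{sec:qfstar}), it preserves homotopy left Kan extensions by \cite[Proposition~2.4]{groth}. Moreover, $\op{Res}_G^H$ sends $\mathcal{T}_G^\infty$ into $\mathcal{T}_H^\infty$: this follows from the calculation in the proof of Proposition~\ref{prop:compatres}, where the key point $\op{Res}_G^H \op{Ind}_{G^0}^G\const{\pt}(n)[2n]\cong \coprod \op{Ind}_{H^0}^H\const{\pt}(n)[2n]$ produces at worst a countable direct sum. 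With these ingredients in place, Theorem~\ref{thm:funtilt} yields a square of functors commutative up to isotransformation at the level of homotopy categories of complexes of free countably generated modules, and Proposition~\ref{prop:compatres}(2) identifies the induced functor on the module side with $\mathcal{A}_H \otimes_{\mathcal{A}_G}^{\op{L}}(-)$. Passing from the homotopy category of free countably generated modules to $\op{Der}^{\op{b}}(\mathcal{A}_G\op{-Modcg}^\DZ)$ is a second application of the same tilting formalism, exactly as in the last paragraph of the proof of Proposition~\ref{tilting}; the derived tensor product is what appears because, while free objects are flat on the left-hand side and no derivation is needed there, the right-hand side genuinely lives in the derived category.

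For part~(2), the argument is by uniqueness of right adjoints. On the motivic side, the right adjoint of $\op{Res}_G^H$ is the general direct image $\op{Ind}_H^G$ constructed in Section~\ref{sec:qfstar}; by the discussion in~\ref{countable}, this functor restricts to a functor $\langle\mathcal{T}_H^\infty\rangle_\Delta \to \langle\mathcal{T}_G^\infty\rangle_\Delta$ on the relevant thick subcategories generated by countably generated Ind-mixed Tate motives. On the module side, since $\mathcal{A}_H$ is a countably generated (in fact finitely generated, by Lemma~\ref{fgre}) $\mathcal{A}_G$-module via $\phi^\ast$, extension of scalars admits the usual restriction of scalars as its right adjoint in the setting of Remark~\ref{rem:countable}, and this right adjoint passes to the bounded derived category of countably generated modules. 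Passing to right adjoints in the commutative square of part~(1) therefore produces the claimed diagram of right adjoints, commutative up to isotransformation.

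The main obstacle is ensuring that the general direct image functor $\op{Ind}_H^G$ in the motivic setting really lands inside $\langle \mathcal{T}_G^\infty\rangle_\Delta$: unlike the closed-subgroup case used in Proposition~\ref{tiltingv}, for a surjective $\phi$ the functor $\op{Ind}_H^G$ is essentially a pushforward from ${\op{B}}\ker\phi$, and one must rely on the explicit description in~\ref{countable} together with the computation of $\op{M}({\op{B}}\ker\phi)$ as a countable direct sum of constant Tate motives from Proposition~\ref{prop:motbg}. Once this compatibility is secured, the rest of the argument is a formal matter of combining tilting, adjunction, and uniqueness of right adjoints, analogous to the handling of the finitely generated case in Propositions~\ref{tilting} and~\ref{tiltingv}.
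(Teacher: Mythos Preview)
Your proposal is correct and follows essentially the same approach as the paper: part~(1) is deduced from Proposition~\ref{prop:compatres}(2) via the tilting argument of Proposition~\ref{tilting}, and part~(2) is obtained by uniqueness of right adjoints, invoking~\ref{countable} on the motivic side and Remark~\ref{rem:countable} on the module side. One small quibble: your parenthetical appeal to Lemma~\ref{fgre} is misplaced since that lemma treats closed subgroup inclusions, but this is harmless because you only need $\mathcal{A}_H$ countably generated over $\mathcal{A}_G$, which is automatic here.
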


\begin{proof}
(1) follows from part (2) of Proposition~\ref{prop:compatres}, basically as in the proof of Proposition~\ref{tilting}. 

For (2), the left-hand vertical arrow $\op{Res}_G^H$ has a right adjoint $\op{Ind}_H^G:\mathbb{D}^+_H(\pt)\to\mathbb{D}^+_G(\pt)$. Moreover, the induction functor maps mixed Tate motives to Ind-mixed Tate motives by \ref{countable}.  In particular, the Res-Ind-adjunction restricts to the countably generated subcategories. Similarly, the right vertical in the second diagram above also admits a right adjoint, namely restricting the action from $\mathcal{A}_H$ to $\mathcal{A}_G$, by Remark~\ref{rem:countable}. Actually the whole point of working with countably generated modules was to ensure that such an adjoint would exist. Uniqueness of right adjoints now implies the claimed commutativity of the diagram. 
\end{proof}

\begin{remark}
Alternatively, the identification of right adjoints can also be seen on realization: denote by $\op{TInd}_H^G$ the functor on the left-hand side corresponding under tilting to the right adjoint of the right vertical. Given $N\in \langle\mathcal{T}_H^\infty\rangle_\Delta$ the  counit of the adjunction $\op{Res}_G^H\op{TInd}_H^GN\ra N$ will now induce a natural morphism 
$$
\op{TInd}_H^GN\ra \op{Ind}_H^GN
$$
When we pass to the Betti realization, this has to become an isomorphism by \cite[12.7.2]{BeLu}.  Therefore it must have been an isomorphism in the first place, by a countable extension of the conservativity statement from Proposition~\ref{prop:conservative}.
\end{remark}

We now want to discuss the compatibility of tilting with the exceptional integration functors of \ref{PBPF}. 

\begin{proposition}[{\bf Compatibility with exceptional integration}]
\label{tilting:exc} 
\hspace{1cm}\newline 
Let $\phi:H\hookrightarrow G$ be the inclusion of a closed subgroup in a linear algebraic group and let $\phi^\ast:\mathcal{A}_G\to\mathcal{A}_H$ be the induced morphism of equivariant cohomology rings of classifying spaces. Let  $\langle\mathcal{T}_G\rangle_\Delta \subset\mathbb{D}_G^+(\pt)$ and $\langle\mathcal{T}_H\rangle_\Delta \subset\mathbb{D}_H^+(\pt)$ denote the full thick triangulated subcategories generated by $\mathcal{T}_G$ and $\mathcal{T}_H$ respectively. Then the diagram of Proposition~\ref{prop:compexcind} induces a diagram commutative up to isotransformation
\[
\xymatrix{
\langle\mathcal{T}_G\rangle_\Delta \ar[r]^-{\approx}\ar[d]_-{\op{Ind}_!} & \op{Der}^{\op{perf}}(\mathcal{A}_G\op{-Mod}^\DZ)\ar@{=>}_-\sim[dl] \ar[d]^-{{}^\ast\mathcal{A}_G\otimes^{\op{L}}_{\mathcal{A}_H}(-)}
\\
\langle\mathcal{T}_H\rangle_\Delta\ar[r]_-{\approx} & \op{Der}^{\op{perf}}(\mathcal{A}_H\op{-Mod}^\DZ) 
}
\]
\end{proposition}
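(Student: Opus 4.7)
The plan is to deduce this compatibility from Proposition~\ref{tilting} by passing to left adjoints. Recall that by the construction of exceptional integration in \ref{PBPF}, the functor $\op{Ind}_!$ is by definition the left adjoint of $\op{Res}_G^H:\mathbb{D}_G^+(\pt)\to\mathbb{D}_H^+(\pt)$. On the algebraic side, the discussion in \ref{excres} identifies ${}^\ast\mathcal{A}_G\otimes^{\op{L}}_{\mathcal{A}_H}(-)$ as the left adjoint of $\mathcal{A}_H\otimes^{\op{L}}_{\mathcal{A}_G}(-)$: since $\mathcal{A}_H$ is finitely generated as an $\mathcal{A}_G$-module (Lemma~\ref{fgre}), a finite graded free resolution $E$ of $\mathcal{A}_H$ over $\mathcal{A}_H\otimes_\Lambda\mathcal{A}_G$ exists, and then ${}^\ast E\otimes^{\op{L}}_{\mathcal{A}_H}(-)$ furnishes the desired left adjoint. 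Because $E$ is a bounded complex of finitely generated free modules, this left adjoint preserves the subcategories of perfect complexes in the diagram.

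The first step is then to verify that the two left adjoints preserve the relevant subcategories in the statement. On the geometric side, this is exactly the content of Proposition~\ref{prop:compexcind}, which says $\op{Ind}_!$ restricts to a functor $\langle\mathcal{T}_H\rangle_\Delta\to\langle\mathcal{T}_G\rangle_\Delta$. On the algebraic side, preservation of $\op{Der}^{\op{perf}}$ follows from the explicit description of the left adjoint just discussed. Thus both squares of left adjoints are defined at the level of the categories appearing in the proposition.

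The second step is to propagate the isotransformation of Proposition~\ref{tilting} from right adjoints to left adjoints. This is the formal mates construction: given a natural isomorphism $\alpha:F\circ R_G\xrightarrow{\approx} R_H\circ F$ between compositions of four functors, where $F$ denotes the tilting equivalences on both sides (which are their own inverses up to isomorphism) and $R_G,R_H$ are the right adjoints $\op{Res}_G^H$ and $\mathcal{A}_H\otimes^{\op{L}}_{\mathcal{A}_G}(-)$, the Beck--Chevalley (or mate) construction produces a canonical natural transformation between the respective compositions of left adjoints, and this transformation is an isomorphism because $\alpha$ was. Applied to the square of Proposition~\ref{tilting}, this yields the required isotransformation between $\op{Ind}_!$ (after transport by tilting) and ${}^\ast\mathcal{A}_G\otimes^{\op{L}}_{\mathcal{A}_H}(-)$, giving the diagram in the statement.

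The main potential obstacle is a bookkeeping one rather than a conceptual one: one must make sure that the adjunctions on the geometric side genuinely restrict to the subcategories $\langle\mathcal{T}_G\rangle_\Delta$ and $\langle\mathcal{T}_H\rangle_\Delta$, so that the mates construction can be applied on these subcategories rather than on the ambient equivariant motivic categories. This is exactly why Proposition~\ref{prop:compexcind} had to be established beforehand, together with the identification (via tilting, Theorem~\ref{thm:tiltpoint}) of these subcategories with $\op{Der}^{\op{perf}}(\mathcal{A}_G\op{-Mod}^\mathbb{Z})$ and $\op{Der}^{\op{perf}}(\mathcal{A}_H\op{-Mod}^\mathbb{Z})$; once both left adjoints are known to respect these subcategories, uniqueness of adjoints forces them to agree under the tilting equivalences.
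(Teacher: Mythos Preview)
Your argument is correct and completes the proof, but it takes a different route from the paper's own. The paper argues directly via Theorem~\ref{thm:funtilt}: since $\op{Ind}_!$ is a left adjoint (Proposition~\ref{prop:integration}), it preserves homotopy left Kan extensions, and hence the general functoriality-of-tilting result applies exactly as in the proof of Proposition~\ref{tilting}. You instead start from the already-established square of Proposition~\ref{tilting} for $\op{Res}_G^H$ and pass to left adjoints on both vertical sides via the mates construction; the resulting mate is again an isomorphism because the horizontal tilting functors are equivalences (equivalently, by uniqueness of adjoints, as you note at the end). This is precisely the strategy the paper uses for the \emph{ordinary} integration functor in Proposition~\ref{tiltingv}, so your approach is the natural dual of that one. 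The paper's direct method has the advantage of being uniform with the proof of Proposition~\ref{tilting} and not relying on that proposition as input; your method is slightly more economical in that it avoids re-invoking the derivator tilting machinery and reduces everything to the formal yoga of adjoints, at the cost of depending on Proposition~\ref{tilting}. One small point: your sentence ``this transformation is an isomorphism because $\alpha$ was'' is only true here because the horizontal arrows are equivalences---in general the mate of an isomorphism need not be one---so it would be worth making that explicit rather than leaving it to the final paragraph.
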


\begin{proof}
The claim follows from Theorem~\ref{thm:funtilt}. Exceptional integration is a morphism of derivators which preserves homotopy left Kan extensions, because it is a left adjoint by Proposition~\ref{prop:integration}. The rest of the argument follows closely the one in the proof of Proposition~\ref{tilting}.
\end{proof}

\subsection{Compatibility with monoidal structure}

\begin{proposition}[{\bf Compatibility with tensor}]
\label{prop:tilttensor}
In the situation of Theorem~\ref{thm:tiltpoint}, the tilting equivalence
\[
\op{Hot}^{\op{b}}(S^W{\rtimes}\Lambda[\pi_0(G)]\op{-fModfg}^\DZ) 
\stackrel{\sim}{\hra}\DMT_G (k)
\]
is a $\otimes$-functor. On the source, we use the monoidal structure coming from the tensor product of graded $S^W{\rtimes}\Lambda[\pi_0(G)]$-modules; on the target, we use the monoidal structure of equivariant motives from Remark~\ref{rem:monoidal}. 
\end{proposition}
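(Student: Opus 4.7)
The plan is to deduce this from the compatibility already established on the tilting subcategory in Proposition~\ref{prop:compatotimes}, combined with the monoidality of the underlying derivator recalled in \ref{gmonderivator}. Recall from \ref{gmonderivator} that $\mathbb{D}^+_G(\pt,-)$ is a monoidal stable derivator in the sense of Groth--Ponto--Shulman, i.e., the closed tensor product $\otimes$ is homotopy cocontinuous in each argument. This is the essential input which allows to extend tensor-compatibility from $\mathcal{T}$ to the entire bounded homotopy category.

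First I would verify that the source $\op{Hot}^{\op{b}}(S^W{\rtimes}\Lambda[\pi_0(G)]\op{-fModfg}^{\DZ})$ really carries a natural closed symmetric monoidal structure: the tensor product of graded free finitely generated modules over $S^W{\rtimes}\Lambda[\pi_0(G)]$ is again graded free and finitely generated (with the usual symmetrizing convention on the group algebra factor), and the total complex of the tensor product of two bounded complexes of such modules is again a bounded complex of such modules. Dually, by Proposition~\ref{prop:monodmt}, the tensor product on $\mathbb{D}^+_G(\pt)$ restricts to $\DMT_G(\pt)$, so both sides are monoidal triangulated categories. Via the equivalence of Theorem~\ref{thm:equivcoh}, Proposition~\ref{prop:compatotimes} identifies these two monoidal structures on the tilting subcategory $\mathcal{T}\simeq S^W{\rtimes}\Lambda[\pi_0(G)]\op{-fModfg}^{\DZ}$.

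Next I would invoke the construction of the tilting embedding from Theorem~\ref{thm:derivatortilting}. An object of $\op{Hot}^{\op{b}}(\mathcal{T})$ is realized inside $\mathbb{D}^+_G(\pt)$ as the iterated cone/homotopy colimit of the underlying diagram of arrows in $\mathcal{T}$, using the stable derivator structure. Given two bounded complexes $C_\bullet, D_\bullet\in\op{Hot}^{\op{b}}(\mathcal{T})$, their tensor product complex $C_\bullet\otimes D_\bullet$ is assembled as iterated homotopy colimits from the bilinear diagram with entries $C_i\otimes D_j$. Since $\otimes$ is homotopy cocontinuous in each variable by \ref{gmonderivator}, the realization of $C_\bullet\otimes D_\bullet$ inside $\mathbb{D}^+_G(\pt)$ is canonically isomorphic to the tensor product of the realizations of $C_\bullet$ and $D_\bullet$. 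Combined with the fact from Proposition~\ref{prop:compatotimes} that the tilting equivalence is already a tensor functor on the generating subcategory $\mathcal{T}$, this provides the required natural monoidal structure on the tilting functor.

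The main obstacle will be the careful bookkeeping of symmetry and associativity constraints, i.e., checking that the canonical isomorphisms produced above actually satisfy the coherence diagrams of a symmetric monoidal functor, rather than merely producing pointwise isomorphisms. The cleanest way to do this is to observe that both monoidal structures (the termwise tensor on complexes and the $\otimes$ on equivariant motives) arise, via the tilting realization, from the same bilinear diagram in $\mathcal{T}$, and that the coherence isomorphisms on both sides are induced by the corresponding coherences for $\otimes$ on $\mathcal{T}$; the homotopy-cocontinuity of $\otimes$ in \ref{gmonderivator} then transports these coherences automatically. This last step is a formal consequence of the axiomatics of monoidal stable derivators and is essentially the content of the compatibility results collected in Appendix~\ref{sec:tiltcompat}.
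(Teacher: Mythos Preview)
Your proposal is correct and follows essentially the same route as the paper: the paper invokes Theorem~\ref{thm:tiltmonoid} (which is precisely the abstract ``tilting is monoidal for monoidal stable derivators'' result in Appendix~\ref{sec:tiltcompat} that you sketch in prose), after checking via \ref{gmonderivator} that $\mathbb{D}^+_G(\pt,-)$ is a monoidal stable derivator and via Proposition~\ref{prop:compatotimes} that $\mathcal{T}$ is a $\otimes$-closed tilting subcategory on which equivariant cohomology is monoidal. The only difference is packaging: where you unroll the homotopy-cocontinuity argument and the coherence bookkeeping, the paper simply cites Theorem~\ref{thm:tiltmonoid} directly.
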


\begin{proof}
This will follow from Theorem~\ref{thm:tiltmonoid}. By Proposition~\ref{gmonderivator}, $\mathbb{D}_G^+(\op{pt},-)$ is a monoidal stable derivator. We already discussed in the proof of Theorem~\ref{thm:tiltpoint} that the category $\mathcal{T}$ from Theorem~\ref{thm:equivcoh} is a tilting subcategory. Note that the category $\mathcal{T}$ is closed under $\otimes$ by Proposition~\ref{prop:compatotimes}. 

Applying Theorem~\ref{thm:tiltmonoid} will then produce a monoidal tilting functor. However, the monoidal structure used in Theorem~\ref{thm:tiltmonoid} is the one induced from the monoidal structure on $\mathcal{T}$ given by Remark~\ref{rem:monoidal}. To get the claim, we need that the equivalence between motives and modules -- given by equivariant cohomology -- is a $\otimes$-functor when we consider the above monoidal structure on $\mathcal{T}$ and the usual tensor product on modules. This is guaranteed by Proposition~\ref{prop:compatotimes}. 
\end{proof}

\begin{Bemerkung}[{\bf Tilting and bimodules}]
Let $\mathbb{D}$ be a derivator satisfying the conditions \ref{derivator:new} and the grading condition~\ref{conditions:grading}. Let $G$ be a connected affine algebraic group. Using the shorthand notation $\mathcal{A}_G$ for the equivariant motivic cohomology ring of the classifying space, cf. \ref{agnotation}, the tilting result Theorem~\ref{thm:tiltpoint}, or better its Corollary~\ref{cor:tiltpoint}, gives an equivalence of categories
\[
\op{tilt}:\op{Der}^{\op{b}}(\mathcal{A}_G\op{-Modfg}^\DZ) \sirra  \op{MTDer}_G(\op{pt})
\]
between the category of $G$-equivariant mixed Tate motives on the point and the bounded derived category of the category of finitely generated graded $\mathcal{A}_G$-modules.

For two connected affine algebraic groups $G$ and $H$, 
the K\"unneth formula holds (under our assumptions) for the equivariant motivic cohomology of the classifying spaces, hence there is a canonical isomorphism
$\mathcal A_{G}\otimes_\Lambda \mathcal A_{H}\xrightarrow{\cong} \mathcal A_{G\times H}$.
In this way we obtain an equivalence of categories 
\[
\mathcal A_{G\times H}\op{-Modfg}^\DZ\sirra  \mathcal A_{G}\op{-Modfg_\Lambda^\DZ-}\mathcal A_{H}.
\]
Here on the right we put the index $\Lambda$ to indicate that we consider only those bimodules, for which the left and right actions of the coefficient field $\Lambda$ coincide. In this way, for a product of two connected groups, our tilting equivalence takes the form of an equivalence 
\[
\op{tilt}: \op{Der}^{\op{b}}(\mathcal A_G\op{-Modfg_\Lambda^\DZ-}\mathcal A_H) \sirra \op{MTDer}_{G\times H}(\op{pt}).
\]
\end{Bemerkung}

With this notation, we can now discuss the correspondence between the exterior products on the motivic and the module side under the tilting equivalences. 

\begin{proposition}[{\bf Compatibility with exterior products}]
\label{prop:exterior}
\hspace{1cm}\newline 
Let $H$ and $G$ be linear algebraic groups and let $\mathbb{D}$ be a derivator satisfying the conditions~\ref{derivator:new} and \ref{conditions:grading}. 
\begin{enumerate}
\item The following diagram commutes 
\begin{displaymath}
\xymatrix{
\mathcal{T}_G\times \mathcal{T}_H\ar[r]^-{\approx}\ar[d]_-{\boxtimes} &\mathcal{A}_G\op{-fModfg}^\DZ\times 
\mathcal{A}_H\op{-fModfg}^\DZ
\ar@{=>}_-\sim[dl]\ar[d]^-{\boxtimes}\\
\mathcal{T}_{G\times H} \ar[r]^-{\approx}&  
\mathcal{A}_{G\times H}\op{-fModfg}^\DZ.
}
\end{displaymath}
Again, the categories $\mathcal{T}_G$ and $\mathcal{T}_H$ are the additive subcategories of $\mathbb{D}^+_G(\pt)$ and $\mathbb{D}^+_H(\pt)$ generated, for  $n\in\mathbb{Z}$, by $\op{Ind}_{G^0}^G\const{\pt}(n)[2n]$ and $\op{Ind}_{H^0}^H\const{\pt}(n)[2n]$, respectively. The right-hand vertical map is defined using the canonical isomorphism 
$\mathcal{A}_{G\times H}\xrightarrow{\cong} \mathcal{A}_G\otimes_\Lambda \mathcal{A}_H$.
\item The following diagram commutes
\begin{displaymath}
\xymatrix{
\mathbb{D}^+_G(\pt)\times \mathbb{D}^+_H(\pt)\ar[d]_-{\boxtimes} &\op{Hot}^{\op{b}}(\mathcal{A}_G\op{-fModfg}^\DZ)\times 
\op{Hot}^{\op{b}}(\mathcal{A}_H\op{-fModfg}^\DZ)
\ar@{=>}_-\sim[dl]\ar[d]^-{\boxtimes}
\ar@{_{(}->}[l]\\
\mathbb{D}_{G\times H}^+(\pt) &  \op{Hot}^{\op{b}}(\mathcal{A}_{G\times H}\op{-fModfg}^\DZ). \ar@{_{(}->}[l]
}
\end{displaymath}
\end{enumerate}
\end{proposition}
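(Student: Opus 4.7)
The proof of part (1) proceeds by combining two established compatibilities. By definition, the exterior product factors as $M \boxtimes N = \op{pr}_1^\ast M \otimes_{G\times H} \op{pr}_2^\ast N$, where $\op{pr}_i$ denote the pullbacks along the group projections $G \times H \to G$ and $G \times H \to H$. Applying Proposition~\ref{prop:compatres} to these projections identifies $\op{pr}_1^\ast$ and $\op{pr}_2^\ast$ with the extension-of-scalars functors $\mathcal{A}_{G\times H} \otimes_{\mathcal{A}_G} (-)$ and $\mathcal{A}_{G\times H} \otimes_{\mathcal{A}_H} (-)$, respectively. Proposition~\ref{prop:compatotimes} then identifies the tensor product on the motivic side with the tensor product over $\mathcal{A}_{G \times H}$ on the module side. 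For finitely generated graded free $\mathcal{A}_G$- and $\mathcal{A}_H$-modules $P$ and $Q$, the K\"unneth isomorphism $\mathcal{A}_{G\times H} \cong \mathcal{A}_G \otimes_\Lambda \mathcal{A}_H$ gives a canonical natural isomorphism
\[
(\mathcal{A}_{G\times H} \otimes_{\mathcal{A}_G} P) \otimes_{\mathcal{A}_{G\times H}} (\mathcal{A}_{G\times H} \otimes_{\mathcal{A}_H} Q) \xrightarrow{\cong} P \otimes_\Lambda Q
\]
of graded $\mathcal{A}_{G \times H}$-modules, which is exactly the module-side exterior product. Composing these three compatibilities will yield the desired natural isomorphism on $\mathcal{T}_G \times \mathcal{T}_H$.

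For part (2), we extend the compatibility from additive generators to bounded homotopy categories. The exterior product on motives is biexact in each variable, as its components $\op{pr}_i^\ast$ are strong monoidal pullbacks and $\otimes$ distributes over cones and direct sums; the same holds for the exterior product of complexes of free modules. Since both bifunctors commute with the formation of totalizations of finite bicomplexes, and the tilting embeddings of Theorem~\ref{thm:tiltpoint} realize $\op{Hot}^{\op{b}}(\mathcal{A}_?\op{-fModfg}^\DZ)$ as the triangulated subcategory generated by $\mathcal{T}_?$, the isotransformation on generators from part (1) extends uniquely to a natural isotransformation on bounded complexes. A multi-variable version of Theorem~\ref{thm:funtilt}, or equivalently an iterated application of it in each variable, makes this extension precise.

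The main technical obstacle will be the careful naturality check in part (1): one must track the natural isomorphisms through the chain of identifications in Propositions~\ref{prop:compatres} and \ref{prop:compatotimes}, ensuring that the associativity and unit constraints for the various tensor products are coherent. In particular, the two extensions of scalars along $\mathcal{A}_G \hookrightarrow \mathcal{A}_G \otimes_\Lambda \mathcal{A}_H \hookleftarrow \mathcal{A}_H$ must combine consistently to reproduce the canonical $\otimes_\Lambda$-structure, which comes down to the fact that the K\"unneth map $\mathcal{A}_G \otimes_\Lambda \mathcal{A}_H \xrightarrow{\cong} \mathcal{A}_{G \times H}$ is induced precisely by the two pullbacks $\op{pr}_i^\ast$ on equivariant motivic cohomology.
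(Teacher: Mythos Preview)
Your proposal is correct and follows essentially the same approach as the paper: both factor $M \boxtimes N = \op{pr}_1^\ast M \otimes \op{pr}_2^\ast N$, identify the $\op{pr}_i^\ast$ as restriction functors and invoke Proposition~\ref{prop:compatres}, then use Proposition~\ref{prop:compatotimes} for the tensor step; for part~(2) the paper likewise deduces the claim from the tilting compatibilities with restriction (Proposition~\ref{tilting}) and with the monoidal structure (Proposition~\ref{prop:tilttensor}), which is exactly your ``iterated application of Theorem~\ref{thm:funtilt} in each variable.''
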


\begin{proof}
Recall from Definition~\ref{def:exterior} that the exterior product on the motivic side is given by taking $M\in\mathbb{D}^+_G(\pt)$ and $N\in\mathbb{D}^+_H(\pt)$ and setting 
\[
M\boxtimes N:=p_1^\ast M\otimes p_2^\ast N\in\mathbb{D}^+_{G\times H}(k).
\]
Here $p_1:(G\times H\looparrowright \pt)\to (G\looparrowright \pt)$ and $p_2:(G\times H\looparrowright \pt)\to(H\looparrowright\pt)$ are the projection morphisms from the product; in particular, $p_1^\ast=\op{Res}_G^{G\times H}$ and $p_2^\ast=\op{Res}^{G\times H}_H$ are restriction functors. The restriction functors preserve equivariant mixed Tate motives by Proposition~\ref{prop:dmtres} and they even preserve the categories $\mathcal{T}_?$ by Proposition~\ref{prop:compatres}. Moreover, the tensor product preserves equivariant mixed Tate motives by Proposition~\ref{prop:monodmt}, and it even preserves the categories $\mathcal{T}_{G\times H}$ by Proposition~\ref{prop:compatotimes}. In particular, the exterior product of equivariant motives restricts to a functor $\boxtimes:\mathcal{T}_G\times \mathcal{T}_H \to\mathcal{T}_{G\times H}$. The fact that the square in (1) can be filled by an isotransformation also follows from Propositions~\ref{prop:compatres} and \ref{prop:compatotimes}. 

Assertion (2) then follows by applying the tilting theorem and its compatibility with tensor, cf.~\ref{prop:tilttensor}, and restriction, cf.~\ref{tilting}. 
\end{proof}

\subsection{Descent along separable field extenions}
\label{descent}

We shortly discuss how the equivariant mixed Tate motives behave in separable field extensions. Assume that the underlying derivator satisfies the conditions~\ref{derivator:new} and the grading condition~\ref{conditions:grading}. As a consequence of the grading condition, we find that for any finite field extension $L/k$ of the base field $k$, the ordinary pullback functor $f^\ast:\mathbb{D}(\op{Spec} k)\to \mathbb{D}(\op{Spec} L)$ induces an equivalence $\DMT(\op{Spec} k)\simeq \DMT(\op{Spec} L)$. In particular, the categories of mixed Tate motives over the point don't change in the finite field extension $L/k$. If the derivator is continuous (as it happens for the derivators $\mathbf{DA}^{\et}_{\mathbb{Q}}$ and $\op{MDer}(-,\mathbb{C})$), then the same is also true for the extension $\overline{k}/k$.  

Assuming the grading condition~\ref{conditions:grading}, Proposition~\ref{prop:swcomp} implies that the equivariant motivic cohomology ring of the point $\mathcal{A}_G$ for a split reductive group $G$ is identified with the ordinary cohomology ring of the classifying space of the corresponding complex Lie group. For a finite field extension $L/k$ of the base field, the natural morphism from the equivariant motivic cohomology of $G$ computed over $k$ to the one computed over the extension field $L$ is an isomorphism of rings. Combining this with Corollary~\ref{cor:tiltpoint}, the ordinary pullback functor $\DMT_G(\op{Spec} k)\to\DMT_{G_L}(\op{Spec} L)$ is an equivalence of categories. Being induced from ordinary pullback, these equivalences are compatible with all the six functors, cf. Section~\ref{sec:further}. In particular, they are compatible with the induction equivalences. Consequently, for a homogeneous space $G\looparrowright G/H$, the ordinary pullback functor $\DMT_G(G/H)\to\DMT_{G_L}(G/H\times_kL)$ is an equivalence of categories. 

If we have a variety with action $G\looparrowright X$ having finitely many orbits separably defined over the base field $k$. An induction on the number of orbits, using the localization sequences implies that the ordinary pullback functor $\DMT_G(X)\to \DMT_{G_L}(X_L)$ induces an equivalence of categories. Moreover, as before, these equivalences for $X$ and the various unions of orbits are compatible with the equivariant six functors. The same thing can be said about the categories $\DMT_G^\ast(X)$ and $\DMT_G^!(X)$. As a consequence, the variety with action $G\looparrowright X$ satisfies the equivariant Whitney--Tate condition~\ref{def:mtderdef2} if the base change $G_{\overline{k}}\looparrowright X_{\overline{k}}$ to the algebraic closure satisfies the equivariant Whitney--Tate condition. This will simplify some of the discussion of equivariant Whitney--Tate conditions in the representation-theoretic applications in Chapter~\ref{chap:repthy}. 

The comparison between equivariant mixed Tate motives over a field $k$ and its algebraic closure $\overline{k}$ also allows to relate the motivic framework to earlier works on mixed geometry and gradings on equivariant derived categories. These works usually assumed that the variety with action is defined over a finite field and then worked over the algebraic closure to detect weights in terms of eigenvalues of Frobenius, cf. e.g. \cite{Mars-Springer}. The comparison between equivariant mixed Tate motives over finite fields and their algebraic closure now allows to relate the motivic weights to the weights of Frobenius actions in the $\ell$-adic realizations, cf. \ref{rem:fq}.

\section{Weight structures on equivariant motives}
\label{sec:weights}

In this section, we show that there is a weight structure on the category of equivariant mixed Tate motives. This weight structure is glued from weight structures on equivariant motives over the point, and the latter is a consequence of the tilting results established in Section~\ref{sec:tilting}. With this definition, the equivariant six functors exhibit the expected exactness properties for weights. 

Unfortunately, it is not clear at the moment if the subcategories induced from the weight structures on motives described by Bondarko and H{\'e}bert via the forgetful functor are part of a weight structure on the full category of equivariant motives. 

\subsection{Recollection on weight structures on motives}

We first recall, for the reader's convenience, the definition of weight structures from \cite[Definition 1.1.1]{bondarko:ktheory}. Note, however, that our sign convention for the weight is opposite to the one of loc.cit. We follow the sign convention used in most other works on weight structures, such as \cite{wildeshaus:intermediate} and \cite{hebert}. 

\begin{definition}
\label{defin:wtstruct}
\index{weight structure}
Let $\mathcal{C}$ be a triangulated category. A \emph{weight structure on $\mathcal{C}$} is a pair $w=(\mathcal{C}_{\op{wt}\leq 0},\mathcal{C}_{\op{wt}\geq 0})$ of full subcategories of $\mathcal{C}$ such that with the notations $\mathcal{C}_{\op{wt}\leq n}:=\mathcal{C}_{\op{wt}\leq 0}[n]$ and $\mathcal{C}_{\op{wt}\geq n}:=\mathcal{C}_{\op{wt}\geq 0}[n]$ the following conditions are satisfied: 
\begin{enumerate}
\item the categories $\mathcal{C}_{\op{wt}\leq 0}$ and $\mathcal{C}_{\op{wt}\geq 0}$ are closed under taking direct summands;
\item $\mathcal{C}_{\op{wt}\leq 0}\subset \mathcal{C}_{\op{wt}\leq 1}$ and $\mathcal{C}_{\op{wt}\geq 1}\subset \mathcal{C}_{\op{wt}\geq 0}$;
\item for any pair of objects $X\in\mathcal{C}_{\op{wt}\leq 0}$, $Y\in \mathcal{C}_{\op{wt}\geq 1}$, we have $\mathcal{C}(X,Y)=0$; 
\item for any object $X\in\mathcal{C}$ there is a distinguished triangle $A\to X\to B\to A[1]$ with $A\in\mathcal{C}_{\op{wt}\leq 0}$ and $B\in\mathcal{C}_{\op{wt}\geq 1}$. 
\end{enumerate}
The full subcategory $\mathcal{C}_{\op{wt}=0}=\mathcal{C}_{\op{wt}\leq 0}\cap\mathcal{C}_{\op{wt}\geq 0}$ is called the \emph{heart of the weight structure $w$}. 
\end{definition}

For an arbitrary ground field $k$ and any $k$-variety $X$, H{\'e}bert \cite[Theorems 3.3 and 3.8]{hebert} has constructed a canonical weight structure on the category of Beilinson motives over $X$. The following formulation for weight structures on $\mathbf{DA}^{\et}$ follows from H{\'e}bert's result together with the comparison theorems between $\mathbf{DA}^{\et}$ and $\op{DM}$.\footnote{Note that we are using rational coefficients. The weight structure is not expected to exist integrally for \'etale motives.}

\begin{theorem}
\label{thm:hebert}
Let $k$ be any base field, and let $\Lambda$ be a coefficient field of characteristic $0$. For any separated scheme $X$ of finite type over $k$, there is a canonical weight structure $w$ on $\mathbf{DA}^{\et,\op{c}}(X;\Lambda)$. The family of these weight structures on $\mathbf{DA}^{\et,\op{c}}(X;\Lambda)$ is characterized uniquely by the following  properties:  
\begin{enumerate}
\item if $X$ is regular, then $\const{X}(n)[2n]\in\mathbf{DA}^{\et,\op{c}}(X;\Lambda)_{\op{wt}=0}$ for all $n\in\mathbb{Z}$, and 
\item for any separated finite type morphism $f:X\to Y$, the functors $f^\ast$, $f_!$ (and $f_\sharp$ for $f$ smooth) are $w$-left exact, i.e., they preserve non-positivity of weights,
\item for any separated finite type morphism $f:X\to Y$, the functors $f_\ast$, $f^!$ (and $f^\ast$ for $f$ smooth) are $w$-right exact, i.e., they preserve non-negativity of weights. 
\end{enumerate}
\end{theorem}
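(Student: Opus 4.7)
The plan is to deduce this directly from the corresponding statement for Beilinson motives established by Hébert in \cite{hebert}, via the comparison between Beilinson motives and rational étale motives. First I would invoke Hébert's main theorem, which gives for any separated finite type $k$-scheme $X$ a canonical weight structure on $\op{DM}^{\op{c}}_{\op{B}}(X)$ satisfying exactly the characterizing properties listed in the statement. The crucial input here is the construction of a family of weight structures glued from the pure Chow-type generators $\const{X}(n)[2n]$ together with the exactness properties for the six functors; this is the content of \cite[Theorems 3.3, 3.8]{hebert}.

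Second, since we are working with $\Lambda$-coefficients where $\Lambda$ is a field of characteristic zero, I would use the comparison theorem between Beilinson motives and rational étale motives (\cite[Theorem 16.1.2]{cisinski:deglise} together with change of coefficients) to obtain an equivalence of the constructible parts of the respective categories, compatible with the full six-functor formalism. Under this equivalence, the weight structure on $\op{DM}^{\op{c}}_{\op{B}}(X)$ transports to a weight structure on $\mathbf{DA}^{\et,\op{c}}(X;\Lambda)$, and the compatibility with the six functors ensures that the transported subcategories $\mathcal{C}_{\op{wt}\leq 0}$ and $\mathcal{C}_{\op{wt}\geq 0}$ still satisfy the axioms of Definition~\ref{defin:wtstruct} together with properties (1), (2), (3).

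For uniqueness, I would argue as follows. Suppose $w'$ is another family of weight structures on $\mathbf{DA}^{\et,\op{c}}(-;\Lambda)$ satisfying properties (1)--(3). Property (1) pins down the pure weight zero generators on regular bases, and properties (2)--(3) imply that the classes of non-positive and non-negative weight objects are stable under the appropriate six-functor operations applied to such generators. Since by the localization sequence and resolution of singularities (or, over a general base field, Gabber's alteration results as used in \cite{hebert}) every constructible motive is built from motives of the form $f_! \const{Y}(n)[m]$ with $Y$ smooth and $f$ a morphism with $Y$ regular, the non-positive and non-negative parts of $w$ and $w'$ must coincide. This is essentially the uniqueness argument of \cite[Theorem 3.8]{hebert} carried over verbatim.

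The main obstacle will be checking the compatibility of the comparison equivalence between $\op{DM}_{\op{B}}$ and $\mathbf{DA}^{\et}(-;\Lambda)$ with the exceptional functors $f_!, f^!$ at the level of constructible objects, so that the exactness properties transfer cleanly; once this is in place, the rest of the argument is formal. A secondary concern is ensuring that the generation argument for uniqueness works over arbitrary base fields, which is why one needs to invoke alterations rather than resolution of singularities.
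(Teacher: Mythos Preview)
Your proposal is correct and follows exactly the approach indicated in the paper: the theorem is stated as a direct consequence of H\'ebert's results \cite[Theorems 3.3 and 3.8]{hebert} for Beilinson motives together with the comparison theorems between $\mathbf{DA}^{\et}$ and $\op{DM}$ with rational coefficients. The paper does not spell out the details you provide (transport via the six-functor-compatible equivalence, the uniqueness argument via generation), but your elaboration is precisely what the one-sentence reduction in the paper is pointing to.
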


\begin{remark}
\label{WDM} 
If $k$ is a field, then by \cite[Theorem 2.5]{wildeshaus:at} the restriction of the above weight structure $w$ on $\mathbf{DA}^{\et,\op{c}}(\pt)$ is a weight structure on $\DMT(\pt)$.
\end{remark}

By the work of Drew, cf. \cite[Theorem 2.3.2-2.3.4]{drew:thesis}, there are analogues of H{\'e}bert's theorems on weight structures for motives with coefficients in an enriched mixed Weil cohomology theory $\mathcal{E}$ provided the following axiom is satisfied:
\begin{description}
\item[(W6)] for all smooth affine schemes $X$ over the base, $r,s\in\mathbb{Z}$ with $2r<s$, we have
\[
\op{Hom}(\mathbb{Q}_{\mathcal{T}},E(X)(r)[s])=0.
\]
\end{description}
Whenever this axiom is satisfied, then there is, for each $k$-scheme $X$, a weight structure on $\op{Der}(\mathcal{E}_X)$ whose heart is generated by $\mathcal{E}_X$-motives of $Y$ for $f:Y\to X$ projective and $Y$ regular. This in particular applies to the motivic categories associated to the Hodge realization with values in $\mathcal{MHS}^{\op{pol}}_{\mathbb{Q}}$, cf. the remark on p.8 of \cite{drew:thesis}. The same also holds for the cohomology theory $E_{\op{GrH}}$ which takes the associated graded of the weight filtration on Hodge realization, cf. the discussion in Sections 2.3 and 2.5 of \cite{SoWe}.

\begin{convention}
\label{conditions:weight}
\index{weight condition}
For employing weight arguments in the later representation-theoretic applications, we will consider a more restricted setting. A homotopical stable algebraic derivator $\mathbb{D}$ over $\op{Var}/k$ satisfying the conditions of \ref{derivator:new} is said to satisfy the \emph{weight condition} if for each variety $X$ over $k$ there is a weight structure on $\mathbb{D}(X)$, such that this collection of weight structures satisfies the conclusion of the Bondarko--H{\'e}bert theorem, cf.~Theorem~\ref{thm:hebert}. The two main situations relevant for our applications are the ones mentioned in \ref{conditions:grading}. 
\end{convention}

\subsection{Weight structures for equivariant mixed Tate motives}  
Now we are ready to define weight structures on the categories $\DMT_G(X)$ of equivariant mixed Tate motives. The basic ingredient is the explicit identification of equivariant mixed Tate motives over the point in terms of $\mathcal{A}_G$-modules from Section~\ref{sec:tilting}. 

\begin{proposition}
\label{prop:wtpt}
Let $G$ be a linear group over the base field $k$, and let $\mathbb{D}$ be a derivator satisfying the conditions of \ref{derivator:new} and the grading condition~\ref{conditions:grading}. Then there is a weight structure on $\DMT_G(\pt)$ whose heart is the idempotent completion of the additive subcategory $\mathcal{T}_G$ of $\mathbb{D}^+_G(\pt)$ generated by the objects $\op{Ind}_{G^0}^G\const{\pt}(n)[2n]$, $n\in\mathbb{Z}$. 
\end{proposition}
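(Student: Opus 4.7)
The plan is to transport a standard weight structure from the module side to $\DMT_G(\pt)$ via the tilting equivalence of Theorem~\ref{thm:tiltpoint}. More precisely, Corollary~\ref{cor:tiltpoint} (together with part (2) of Theorem~\ref{thm:tiltpoint}) provides an equivalence
\[
\op{tilt}\colon \op{Hot}^{\op{b}}(S^W{\rtimes}\Lambda[\pi_0(G)]\op{-pModfg}^\DZ) \stackrel{\approx}{\longrightarrow} \DMT_G(\pt),
\]
and it is well known that on the bounded homotopy category of any additive idempotent complete category $\mathcal{B}$ there is a canonical \emph{stupid} weight structure $w_{\op{st}}$ whose non-positive (resp. non-negative) part consists of those complexes quasi-isomorphic to complexes concentrated in non-positive (resp. non-negative) cohomological degrees, and whose heart is $\mathcal{B}$ itself, viewed as complexes concentrated in degree $0$. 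Thus the first step is to recall this standard weight structure for $\mathcal{B}=S^W{\rtimes}\Lambda[\pi_0(G)]\op{-pModfg}^\DZ$ and verify the four axioms of Definition~\ref{defin:wtstruct}; the only nontrivial ingredient is the orthogonality $\mathcal{C}_{\op{wt}\leq 0}\perp\mathcal{C}_{\op{wt}\geq 1}[1]$ which is immediate since a chain map from a complex supported in degrees $\leq 0$ to a complex supported in degrees $\geq 1$ is automatically zero.

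The second step is to transport $w_{\op{st}}$ along the tilting equivalence. Since $\op{tilt}$ is a triangulated equivalence, the images of $\mathcal{C}_{\op{wt}\leq 0}$ and $\mathcal{C}_{\op{wt}\geq 0}$ inside $\DMT_G(\pt)$ satisfy the axioms of a weight structure as well. By construction, the heart of $w_{\op{st}}$ is the category of finitely generated graded projective modules, viewed as complexes concentrated in degree zero. Under $\op{tilt}$ this corresponds precisely to the essential image of the additive subcategory $\mathcal{T}_G$ after idempotent completion: by Theorem~\ref{thm:equivcoh}, equivariant motivic homology identifies $\mathcal{T}_G$ with $S^W{\rtimes}\Lambda[\pi_0(G)]\op{-fModfg}^\DZ$, and passing to the idempotent completion of the former corresponds to passing from free to projective modules on the latter side (cf.\ the proof of Theorem~\ref{thm:tiltpoint}(2)).

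The verification that this transported structure is indeed a weight structure on $\DMT_G(\pt)$ (as opposed to merely on the homotopy category) is formal, because $\DMT_G(\pt)$ is by definition the essential image of the tilting functor (Theorem~\ref{thm:tiltpoint}(3)), and idempotent completeness of both sides was established in Lemma~\ref{lem:idempotent} and Remark~\ref{rem:triangulated}. The only subtlety to watch for is compatibility with the idempotent completion step in the disconnected case; here one uses that in the connected case $\mathcal{A}_G=S^W$ is a polynomial ring, so projective modules are free and no idempotent completion is needed, while in the general case the idempotent completion of $\mathcal{T}_G$ is additively generated by direct summands of the $\op{Ind}_{G^0}^G\const{\pt}(n)[2n]$, which are precisely the images of the finitely generated graded projective $S^W{\rtimes}\Lambda[\pi_0(G)]$-modules. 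Collecting these observations produces the desired weight structure with heart the idempotent completion of $\mathcal{T}_G$, as claimed. The main (mild) obstacle is just bookkeeping with the idempotent completion and confirming that the transported subcategories are closed under summands; everything else is a direct transport of the standard stupid weight structure.
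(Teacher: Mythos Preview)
Your proposal is correct and follows essentially the same approach as the paper: transport the standard (``stupid'') weight structure on $\op{Hot}^{\op{b}}$ of an additive category to $\DMT_G(\pt)$ via the tilting equivalence of Theorem~\ref{thm:tiltpoint}. The only cosmetic difference is that the paper works with the free-module version (part~(1) of Theorem~\ref{thm:tiltpoint}) and cites \cite[Remark~1.6(2)]{bondarko:imrn} and \cite[Theorem~1.2]{WtS} for the weight structure on complexes, while you start directly from the projective-module version (part~(2)); after idempotent completion these amount to the same thing.
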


\begin{proof}
By Theorem~\ref{thm:equivcoh}, the functor 
\[
M\mapsto \bigoplus_i\mathbb{D}^+_G\left(\op{Ind}_{G^0}^G\Lambda,M(i)[2i]\right)
\]
induces an equivalence between $\mathcal{T}_G$ and the category $\mathcal{A}_G\op{-fModfg}^{\mathbb{Z}}$ of finitely generated free graded modules over the twisted group ring $\mathcal{A}_G=S^W{\rtimes}\Lambda[\pi_0(G)]$. By Theorem~\ref{thm:tiltpoint}, this equivalence can be extended to a fully faithful embedding 
\[
\op{Hot}^{\op{b}}(\mathcal{A}_G\op{-fModfg}^{\mathbb{Z}}) \stackrel{\sim}{\hra} \mathbb{D}^{\op{c}}_G(\pt)
\]
whose essential image is exactly the category $\DMT_G(\pt)$ of equivariant mixed Tate motives over the point. Now we can make use of the basic example of weight structure, namely that on the homotopy category of complexes over an additive category. This example is mentioned in \cite[Remark 1.6(2)]{bondarko:imrn} and the idempotent completeness of the category of complexes is proved in \cite[Theorem 1.2]{WtS}. As a consequence, there is a weight structure on the category $\op{Hot}^{\op{b}}(\mathcal{A}_G\op{-fModfg}^{\mathbb{Z}})$, and the equivalence to $\DMT_G(\pt)$ above proves the claim.
\end{proof}

\begin{remark}
Let us sketch an alternative proof. Under the grading condition~\ref{conditions:grading}, the  tilting subcategory $\mathcal{T}_G$ is a negative collection in the sense of \cite[Definition 1.5.VII]{bondarko:imrn} by Theorem~\ref{thm:equivcoh}. Moreover, by Proposition~\ref{prop:generatingtate}, the category $\mathcal{T}_G$ generates $\DMT_G(\pt)$ in the sense that $\DMT_G(\pt)$ is the smallest idempotent complete triangulated subcategory of $\mathbb{D}^+_G(\pt)$ containing $\mathcal{T}_G$. Note that we established in Lemma~\ref{lem:idempotent} that the category $\DMT_G(X)$ is idempotent complete. Therefore, applying \cite[Proposition 1.7(6)9]{bondarko:imrn}, we get a unique weight structure on $\DMT_G(\pt)$ whose heart is the idempotent completion of $\mathcal{T}_G$. This weight structure will necessarily coincide with the one above. 
\end{remark}

\begin{corollary}
\label{cor:wtorb}
Let $\mathbb{D}$ be a derivator satisfying the conditions of \ref{derivator:new} and the grading condition~\ref{conditions:grading}. Let $G$ be a linear group over the base field $k$, and let $H\leq G$ be a closed subgroup defined over $k$ such that the quotient $G/H$ exists as a variety. Then there is a weight structure on $\DMT_G(G/H)$ whose heart is generated by those motives whose restriction to a point is of the form $\op{Ind}_{H^0}^H\const{\pt}(n)[2n]$, $n\in\mathbb{Z}$. 
\end{corollary}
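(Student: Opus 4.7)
The proof will be a direct application of Proposition~\ref{prop:wtpt} combined with the definition of equivariant mixed Tate motives on homogeneous spaces via the induction equivalence. My plan is as follows.

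First, I would recall that by part (2) of Definition~\ref{mtderdef1}, the category $\DMT_G(G/H)$ is defined so that the induction equivalence $(i,s)^\ast$ of Proposition~\ref{cor:indequiv} restricts to an equivalence of triangulated categories
\[
(i,s)^\ast : \DMT_G(G/H) \xrightarrow{\approx} \DMT_H(\pt).
\]
Proposition~\ref{prop:wtpt} provides a weight structure on $\DMT_H(\pt)$ whose heart is the idempotent completion of the additive subcategory $\mathcal{T}_H \subset \mathbb{D}^+_H(\pt)$ generated by the objects $\op{Ind}_{H^0}^H \const{\pt}(n)[2n]$ for $n \in \mathbb{Z}$. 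Since equivalences of triangulated categories transport weight structures (the axioms in Definition~\ref{defin:wtstruct} are invariant under such equivalences), we obtain an induced weight structure on $\DMT_G(G/H)$ whose subcategories $\DMT_G(G/H)_{\op{wt}\leq 0}$ and $\DMT_G(G/H)_{\op{wt}\geq 0}$ are the essential preimages under $(i,s)^\ast$ of the corresponding subcategories on the target.

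Next, I would identify the heart. By construction, the heart of the transported weight structure consists of those $M \in \DMT_G(G/H)$ for which $(i,s)^\ast M$ lies in the idempotent completion of $\mathcal{T}_H$. Now $(i,s)^\ast$ is exactly the composition of restriction along the point inclusion $s : \pt \to G/H$ with the change-of-group functor $\op{Res}^H_G$ implicit in the induction equivalence; in particular, the essential image of the heart consists precisely of the equivariant motives whose restriction to a point is of the form $\op{Ind}_{H^0}^H \const{\pt}(n)[2n]$ (up to summands), which is the description asserted in the statement.

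The only point requiring care, and the closest thing to an obstacle, is the verification that the heart of the transported weight structure is generated by objects with the specified pointwise restriction rather than merely containing them. This follows because (a) the induction equivalence is an equivalence, so it reflects the generation property from the target, and (b) Proposition~\ref{prop:wtpt} describes the heart on $\DMT_H(\pt)$ precisely as the idempotent completion of $\mathcal{T}_H$, whose generators are the motives $\op{Ind}_{H^0}^H\const{\pt}(n)[2n]$. Thus the generation statement on $\DMT_G(G/H)$ follows by transport of structure.
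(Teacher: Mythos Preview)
Your proposal is correct and takes essentially the same approach as the paper: transport the weight structure from $\DMT_H(\pt)$ (Proposition~\ref{prop:wtpt}) to $\DMT_G(G/H)$ via the induction equivalence, which is compatible with equivariant mixed Tate motives by the very definition in the single-orbit case. The paper's proof is a two-line reference to exactly these ingredients; your version simply spells out the heart identification in more detail.
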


\begin{proof}
This follows from the existence of weight structures on equivariant mixed Tate motives over the point, cf. Proposition~\ref{prop:wtpt}, and the fact that equivariant mixed Tate motives are stable under induction equivalence, cf. Propositions~\ref{cor:indequiv} and \ref{prop:quotientdmt}.
\end{proof}

Now we can glue the weight structures for orbits above and define weight structures on $G$-varieties with finitely many orbits. 

\begin{proposition}
\label{prop:equivweight} 
Let $\mathbb{D}$ be a derivator satisfying the conditions of \ref{derivator:new} and the grading condition~\ref{conditions:grading}. Let $k$ be a field, let $G$ be a linear algebraic group, and let $G\looparrowright X$ be a variety with action having finitely many $G$-orbits separably defined over $k$. Assume that $G\looparrowright X$ is equivariantly Whitney--Tate, cf. Definition~\ref{def:mtderdef2}. Then there is a weight structure on the category $\DMT_G(X)$, given as follows:
\begin{enumerate}
\item The subcategory $\DMT_G(X)_{\op{wt}\leq 0}$ is the full subcategory on the motives $M\in\DMT_G(X)$ such that $j^\ast M \in \DMT_G(Y)_{\op{wt}\leq 0}$ for all $G$-orbits $j:Y\to X$.
\item The subcategory $\DMT_G(X)_{\op{wt}\geq 0}$ is the full subcategory on the motives $M\in\DMT_G(X)$ such that $j^! M \in \DMT_G(Y)_{\op{wt}\geq 0}$ for all $G$-orbits $j:Y\to X$.
\end{enumerate}
\end{proposition}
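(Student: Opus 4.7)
The plan is to prove the proposition by induction on the number of $G$-orbits in $X$, gluing weight structures along the localization recollements obtained by peeling off one orbit at a time. The base case, where $X=G/H$ is a single orbit, is exactly Corollary~\ref{cor:wtorb}, and the orbitwise conditions (1) and (2) are then trivially equivalent to the requirement that $M$ lie in the given $\op{wt}\leq 0$ resp.\ $\op{wt}\geq 0$ parts, so nothing further is needed.

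For the inductive step, I would choose an open orbit $j\colon U\hookrightarrow X$ with $G$-stable closed complement $i\colon Z\hookrightarrow X$ (which has strictly fewer orbits) and first check that all six functors of the ordinary recollement restrict to $\DMT_G$. The restriction functors $j^{*}, j^{!}, i^{*}, i^{!}$ preserve orbitwise mixed Tate-ness tautologically; $j_{!}$ and $i_{*}=i_{!}$ produce a priori only $\ast$-orbitwise mixed Tate motives (Proposition~\ref{lkio}), and $j_{*}$ only $!$-orbitwise ones, but the equivariant Whitney--Tate hypothesis identifies these two subcategories with $\DMT_G(X)$, so all six functors restrict. A short argument using $i^{*}i_{*}\cong i^{!}i_{*}\cong\op{id}$ and the vanishings $j^{*}i_{*}=0$, $i^{*}j_{!}=0$ moreover shows that $Z$ inherits the equivariant Whitney--Tate property, hence by the inductive hypothesis carries a weight structure of the claimed form. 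I can then invoke the standard gluing of weight structures along a recollement (Bondarko's formalism, as in \cite{bondarko:imrn}): from weight structures on the two outer terms one obtains a unique weight structure on the middle term characterized by $j^{*}M$ and $i^{*}M$ lying in $\op{wt}\leq 0$ (resp.\ $j^{!}M$, $i^{!}M$ in $\op{wt}\geq 0$). Unravelling this inductively along the filtration of $X$ by orbit closures gives precisely the orbitwise characterizations in (1) and (2).

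To complete the argument I would verify the four axioms of Definition~\ref{defin:wtstruct} directly from the glued description. Closure under summands and the shift containments are immediate, since $j^{*}$ and $j^{!}$ commute with shifts and summands and the weight structures on the orbits satisfy the corresponding properties. The orthogonality $\op{Hom}(M,N)=0$ for $M\in\DMT_G(X)_{\op{wt}\leq 0}$ and $N\in\DMT_G(X)_{\op{wt}\geq 1}$ is obtained by splitting $M$ and $N$ via the localization triangles $j_{!}j^{*}M\to M\to i_{*}i^{*}M$ and $i_{*}i^{!}N\to N\to j_{*}j^{!}N$ and computing the four cross-Hom groups via the recollement adjunctions: two vanish by the identities $i^{*}j_{!}=0$ and $j^{*}i_{*}=0$, and the remaining two reduce to orthogonality on $U$ and on $Z$ respectively. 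Weight decompositions on $X$ are constructed by lifting the weight decompositions of $i^{*}M\in\DMT_G(Z)$ and $j^{*}M\in\DMT_G(U)$ through $i_{*}$ and $j_{!}$ and splicing them with the localization triangle. The key technical point, and the one the Whitney--Tate assumption is designed to resolve, is the stability of $\DMT_G(X)$ under the full six functors of the recollement; once this is in hand, the rest is a routine application of the gluing formalism for weight structures.
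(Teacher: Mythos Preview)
Your proposal is correct and follows essentially the same approach as the paper: induction on the number of orbits, with the base case given by Corollary~\ref{cor:wtorb} and the inductive step handled by peeling off an open orbit and invoking Bondarko's gluing of weight structures along the resulting recollement. The paper's proof is more terse---it cites \cite[Proposition 1.7 (13), (15)]{bondarko:imrn} and stops there---whereas you additionally spell out why the six functors restrict to $\DMT_G$ under the Whitney--Tate hypothesis, why $Z$ inherits the Whitney--Tate condition, and then redundantly re-verify the weight-structure axioms by hand; none of this is wrong, but the direct verification is unnecessary once Bondarko's gluing result is invoked.
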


\begin{proof}
The argument is similar to the one for stratified mixed Tate motives in \cite[Proposition 5.1]{SoWe}. To prove the existence of such a weight structure, we proceed by induction on the number of orbits. If there is a single orbit, the claim is proved in Corollary~\ref{cor:wtorb}. Otherwise, decompose $X$ as the disjoint union of an open orbit $j:X_s\hra X$ and its closed $G$-stable complement $i:Z\hra X$.  Using Bondarko's result \cite[Proposition 1.7 (13), (15)]{bondarko:imrn} on glueing weight structures, we obtain a weight structure on $\DMT_G(X)$  by setting
\[
\begin{array}{l}
\DMT_G(X)_{\op{wt}\leq 0}\pdef \left\{M\mid i^\ast M\in \DMT_G(Z)_{\op{wt}\leq 0}, \;
  j^\ast M\in \DMT_G(X_s)_{\op{wt}\leq 0}\right\} \\[2mm]
\DMT_G(X)_{\op{wt}\geq 0}\pdef\left\{M\mid i^!M\in \DMT_G(Z)_{\op{wt}\geq 0}, \;
  j^! M\in \DMT_G(X_s)_{\op{wt}\geq 0}\right\}  
\end{array}
\]
This implies the existence of the weight structure as well as the characterization of non-positive resp. non-negative weight parts claimed.
\end{proof}

\begin{example}
Relevant examples for our later applications are parabolic group actions on partial flag varieties $\DMT_P(G/Q)$, double Borel actions on wonderful compactifications $\DMT_{B\times B}(X)$ or Borel actions on symmetric varieties $\DMT_B(G/K)$. We will see applications of the above weight structures as well as alternative arguments for their existence (using a specific generating collection, the Bott-Samelson motives) in Section~\ref{sec:BS}. 
\end{example}

Finally, we discuss the behaviour of the above weight structures under the six functors. The weights of equivariant mixed Tate motives can be checked after forgetting the equivariance:

\begin{proposition}
\label{prop:wtfor}
Let $\mathbb{D}$ be a derivator satisfying the conditions of \ref{derivator:new}, the grading condition~\ref{conditions:grading} and the weight condition~\ref{conditions:weight}. Let $k$ be a field, let $G$ be a linear algebraic group, and let $G\looparrowright X$ be a variety with action having finitely many $G$-orbits separably defined over $k$. Assume that $G\looparrowright X$ is equivariantly Whitney--Tate, cf. Definition~\ref{def:mtderdef2}. 

Let $\DMT_G(X)$ be equipped with the weight structure of Proposition~\ref{prop:equivweight}, and let $\mathbb{D}(X)$ be equipped with the weight structure coming from the weight condition~\ref{conditions:weight}. 
\begin{enumerate}
\item An equivariant motive $M\in\DMT_G(X)$ is in $\DMT_G(X)_{\op{wt}\leq 0}$ if and only if $\op{For}(M)\in\mathbb{D}(X)_{\op{wt}\leq 0}$. 
\item An equivariant motive $M\in\DMT_G(X)$ is in $\DMT_G(X)_{\op{wt}\geq 0}$ if and only if $\op{For}(M)\in\mathbb{D}(X)_{\op{wt}\geq 0}$. 
\end{enumerate}
\end{proposition}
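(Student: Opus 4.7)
The plan is to reduce both ``iff''s to showing that $\op{For}$ is weight-exact, i.e., that it maps $\DMT_G(X)_{\op{wt}\leq 0}$ into $\mathbb{D}(X)_{\op{wt}\leq 0}$ and $\DMT_G(X)_{\op{wt}\geq 0}$ into $\mathbb{D}(X)_{\op{wt}\geq 0}$; the converse implications will then follow from a standard weight-truncation argument combined with the conservativity of $\op{For}$ from~\ref{forget}. Since $\op{For}$ is triangulated, it suffices for weight-exactness to show that it maps the heart of $\DMT_G(X)$ into the heart of $\mathbb{D}(X)$, because both $\op{wt}\leq 0$ and $\op{wt}\geq 0$ are generated from the heart by shifts (in opposite directions), extensions, and retracts.

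For the exactness statement, I would first reduce to the case of a single orbit $X = G/H$. On the equivariant side this reduction is built into the definition in Proposition~\ref{prop:equivweight}, while on the non-equivariant side an analogous characterization of $\mathbb{D}(X)_{\op{wt}\leq 0}$ in terms of $j^\ast$-restrictions to orbits follows by iterated gluing along open-closed decompositions, using the exactness properties of the six functors in Theorem~\ref{thm:hebert} (specifically, the localization triangle $j_!j^\ast M \to M \to i_\ast i^\ast M$ exhibits $M$ as an extension of objects of weight $\leq 0$ whenever its two outer restrictions are). The compatibility of $\op{For}$ with $j^\ast$ and $j^!$ coming from~\ref{CompRES} then effects the reduction. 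On an orbit $G/H$, Corollary~\ref{cor:wtorb} identifies the heart of $\DMT_G(G/H)$ via the induction equivalence with the idempotent completion of the additive subcategory generated by the motives corresponding to $\op{Ind}_{H^0}^H\const{\pt}(n)[2n]$. Compatibility of the induction equivalence with ordinary pushforwards should identify the corresponding equivariant motives on $G/H$ with $p_\ast\const{G/H^0}(n)[2n]$, where $p\colon G/H^0\to G/H$ is the natural finite \'etale projection. Since $p$ is simultaneously proper and smooth of relative dimension $0$, the functor $p_\ast = p_!$ is both $w$-left and $w$-right exact for H\'ebert's weight structure, and $\const{G/H^0}(n)[2n]$ is pure of weight $0$ because $G/H^0$ is smooth; hence $\op{For}$ sends the heart into the heart.

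For the converse, suppose $\op{For}(M)$ lies in $\mathbb{D}(X)_{\op{wt}\leq 0}$. Choose a weight truncation $A \to M \to B \to A[1]$ in $\DMT_G(X)$ with $A \in \DMT_G(X)_{\op{wt}\leq 0}$ and $B \in \DMT_G(X)_{\op{wt}\geq 1}$; applying the weight-exact functor $\op{For}$ yields a distinguished triangle in $\mathbb{D}(X)$ whose outer terms have weights $\leq 0$ and $\geq 1$. Applying $\op{Hom}(-,\op{For}(B))$ to this triangle and using the vanishings $\op{Hom}(\op{For}(A),\op{For}(B)) = \op{Hom}(\op{For}(A)[1],\op{For}(B)) = 0$ together with $\op{Hom}(\op{For}(M),\op{For}(B)) = 0$ forces $\op{id}_{\op{For}(B)} = 0$, so $\op{For}(B) = 0$; conservativity of $\op{For}$ then gives $B = 0$ and hence $M \cong A$ is of weight $\leq 0$. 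The parallel argument for $\op{wt}\geq 0$ proceeds symmetrically, or alternatively by appealing to Verdier duality via Proposition~\ref{equiv:verdier} and Proposition~\ref{prop:dmtverdier}. The main obstacle I anticipate is the explicit identification of the generators of the equivariant heart on $G/H$ as $p_\ast\const{G/H^0}(n)[2n]$, since it is precisely this identification that transfers purity across the induction equivalence; this should follow from the general compatibility of the equivariant six-functor formalism with the induction equivalence, but will require a careful chase through the intermediate compatibilities.
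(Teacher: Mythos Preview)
Your forward direction is clean and essentially correct: identifying the generators of the equivariant heart on $G/H$ with $p_\ast\const{G/H^0}(n)[2n]$ for $p\colon G/H^0\to G/H$ finite \'etale, and then using that $p_\ast=p_!$ is weight-exact for H\'ebert's weight structure, is a tidy way to show $\op{For}$ sends heart to heart. Together with boundedness of the glued weight structure this gives weight-exactness of $\op{For}$, as you say.

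The gap is in the converse. Your weight-truncation argument needs the vanishing $\op{Hom}(\op{For}(A)[1],\op{For}(B))=0$, but this fails already over a point: take $\op{For}(A)=\const{\pt}$ and $\op{For}(B)=\const{\pt}[1]$, so that $\op{Hom}(\op{For}(A)[1],\op{For}(B))=\op{Hom}(\const{\pt}[1],\const{\pt}[1])=\Lambda$. The weight orthogonality only gives $\op{Hom}(\mathcal{C}_{\op{wt}\leq 0},\mathcal{C}_{\op{wt}\geq 1})=0$, whereas $\op{For}(A)[1]\in\mathcal{C}_{\op{wt}\leq 1}$. What your argument actually yields is that $\op{For}(B)$ is a retract of $\op{For}(A)[1]$, hence $\op{For}(B)$ is pure of weight~$1$; but that is not zero, and iterating does not help.

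The fix is already implicit in what you set up: the equivariant weight structure is \emph{defined} orbitwise and, on each orbit, via the induction equivalence and restriction to a point. So the converse follows directly by pushing weights through restrictions rather than through conservativity. Namely, if $\op{For}(M)\in\mathbb{D}(X)_{\op{wt}\leq 0}$ then by the $w$-left exactness of $j^\ast$ and of point-inclusion $x^\ast$ in H\'ebert's weight structure, one has $x^\ast j^\ast\op{For}(M)\in\mathbb{D}(\pt)_{\op{wt}\leq 0}$ for every orbit $j$ and point $x$; commuting $\op{For}$ past $j^\ast$ and $x^\ast$ (\ref{CompRES}) and using the point case then gives $(i,s)^\ast j^\ast M\in\DMT_H(\pt)_{\op{wt}\leq 0}$, which is precisely the condition defining $\DMT_G(X)_{\op{wt}\leq 0}$. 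This is essentially the route the paper takes; it also handles the $\op{wt}\geq 0$ case by the dual argument with $j^!$, $x^!$, and absolute purity to pass between $x^\ast$ and $x^!$ on the smooth variety $G/H$. For the base case $X=\pt$ itself, the cleanest justification of the ``iff'' is minimalization in $\op{Hot}^{\op{b}}(\mathcal{A}_G\textrm{-fModfg}^{\mathbb Z})$: a minimal complex of graded free modules has zero differentials after applying $\Lambda\otimes_{\mathcal{A}_G}(-)$, so the nonzero degrees of the complex are detected exactly by $\op{For}$.
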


\begin{proof}
This essentially follows from the definition. For $X$ a point, this is clear: by Proposition~\ref{prop:wtpt}, the underlying motive is constant mixed Tate, a direct sum of copies of  $\const{\pt}(n)[2n]$. On the other hand, it follows easily that for any weight structure satisfying the conditions of the Bondarko--H\'ebert theorem, a mixed Tate motive is of weight zero if and only if it is a direct sum of copies of  $\const{\pt}(n)[2n]$ with $n\in\mathbb{Z}$. 

To prove the claim for $G$-homogeneous varieties, we first note that the induction equivalence takes constant mixed Tate motives $\const{\pt}(n)[2n]$ on the point to constant mixed Tate motives $\const{G/H}(n)[2n]$ on $G/H$. Then we still need to deal with the possibility of local systems. Let $M\in\DMT_G(G/H)$ be a motive on a homogeneous space. The weight of $M$ for the weight structure of Proposition~\ref{cor:wtorb} is determined by restriction along $x:(H\looparrowright \pt)\to (G\looparrowright G/H)$. Now $\op{For}(M)$ will be some motive; by the assumption that the weight structure on $\mathbb{D}$ satisfies the conditions of the Bondarko--H\'ebert theorem, cf.~Theorem~\ref{thm:hebert}, $x^\ast \op{For}(M)$ will be non-positive if $\op{For}(M)$ is. Moreover, $x^!\op{For}(M)$ will be non-negative if $\op{For}(M)$ is. By absolute purity, $x^\ast\op{For}(M)\cong x^!\op{For}(M)(d)[2d]$ with $d=\dim G/H$. This implies that if an equivariant mixed Tate motive has underlying motive of weight $0$ in $\mathbb{D}(G/H)$, then it has weight $0$ in $\DMT_G(\pt)$. For the other direction, assume that the restriction $x^\ast M$ lands in $\DMT_H(\pt)_{\op{wt}=0}$; we need to show that $\op{For}(M)$ has weight $0$ in $\mathbb{D}(G/H)$. We can assume without loss of generality that $G$ is connected. After pullback to an \'etale covering, we can also assume that $H$ is connected. Pullback along an \'etale covering is conservative, hence it cannot forget pieces of the weight filtration of $M$. Therefore, it now suffices to show $\op{For}(M)\in\mathbb{D}(G/H)_{\op{wt}=0}$ in the special case where $G$ and $H$ are connected. In this case, $\op{For}(M)$ is a constant mixed Tate motive whose restriction to a point is pure of weight zero, because $x^\ast M\in\DMT_H(\pt)_{\op{wt}=0}$. Then $\op{For}(M)\in \DMT(G/H)_{\op{wt}=0}$ as required. 

Finally, by Proposition~\ref{prop:equivweight}, for a variety with finitely many $G$-orbits, we have $M\in\DMT_G(X)_{\op{wt}\leq 0}$ if and only if $j^\ast M\in \DMT_G(Y)_{\op{wt}\leq 0}$ for every $G$-orbit $j:Y\hookrightarrow X$. The same is true for the underlying motives, by the weight condition. This finishes the argument.
\end{proof}

\begin{proposition}
\label{prop:wtres}
Let $\mathbb{D}$ be a derivator satisfying the conditions of \ref{derivator:new}, the grading condition~\ref{conditions:grading} and the weight condition~\ref{conditions:weight}. Let $k$ be a field, and let $G\looparrowright X$ and $H\looparrowright Y$ be  varieties with action satisfying the conditions of Proposition~\ref{prop:wtfor}. Let $(\phi,f):(H\looparrowright Y)\to(G\looparrowright X)$ be a morphism of varieties with action.
\begin{enumerate}
\item 
The pullback functor $(\phi,f)^\ast$ preserves non-positivity of weights.
\item If the morphism of Borel constructions associated to $(\phi,f)$ is additionally smooth, i.e., $f$ is smooth and $\phi$ is a closed immersion, then $(\phi,f)^\ast$ preserves non-negativity of weights. 
\item In particular, the restriction functor $\op{Res}_G^H$ is weight exact if $\phi$ is a closed immersion.
\item 
For a morphism $f:(G\looparrowright Y)\to(G\looparrowright X)$ of $G$-varieties, the push-forward functor $f_\ast$ preserves non-negativity of weights.
\item If $\phi:H\hookrightarrow G$ is the inclusion of a closed subgroup in a linear algebraic group, then the ordinary integration functor $\op{Ind}_H^G$ preserves non-negativity of weights, and the exceptional integration functor $\op{Ind}_!$ preserves non-positivity of weights. 
\end{enumerate}
\end{proposition}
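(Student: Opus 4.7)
The overall strategy is to reduce all five statements to the non-equivariant Bondarko--H\'ebert theorem (available via the weight condition~\ref{conditions:weight}) by detecting weights through the forgetful functor. The crucial tool is Proposition~\ref{prop:wtfor}, according to which $M\in\DMT_G(X)$ lies in $\DMT_G(X)_{\op{wt}\leq 0}$ (resp.\ $\DMT_G(X)_{\op{wt}\geq 0}$) if and only if $\op{For}(M)\in\mathbb{D}(X)_{\op{wt}\leq 0}$ (resp.\ $\mathbb{D}(X)_{\op{wt}\geq 0}$). Throughout, we use that the forgetful functor is conservative (cf.~\ref{forget}) and commutes with all six functors by~\ref{CompRES} and Proposition~\ref{equiv:verdier}, so that for any morphism $(\phi,f):(H\looparrowright Y)\to(G\looparrowright X)$ one has a natural isomorphism $\op{For}\circ(\phi,f)^\ast\simeq f^\ast\circ\op{For}$ (factor $(\phi,f)=(\id,f)\circ(\phi,\id)$ and apply~\ref{CompRES} to the restriction $(\phi,\id)^\ast=\op{Res}_G^H$ and 2-functoriality to the pullback $(\id,f)^\ast$), and similarly $\op{For}\circ(\id,f)_\ast\simeq f_\ast\circ\op{For}$.

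\textbf{Statements (1)--(3).} For (1), Proposition~\ref{prop:wtfor} reduces the claim to showing that $f^\ast\op{For}(M)$ has non-positive weights, which is part of the Bondarko--H\'ebert theorem. For (2), the assumption forces $f$ to be smooth; relative purity~\ref{BCs} then gives $f^\ast\simeq f^!(-d)[-2d]$, and since $f^!$ preserves non-negativity by Bondarko--H\'ebert (and shifts/twists do not affect weights by our conventions), we conclude again via Proposition~\ref{prop:wtfor}. Statement (3) is the special case $f=\id$ of (1) and (2) combined.

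\textbf{Statement (4).} The forgetful functor intertwines the equivariant and non-equivariant $f_\ast$, so non-negativity of $\op{For}(M)$ propagates to $f_\ast\op{For}(M)$ by the Bondarko--H\'ebert theorem, and the claim follows from Proposition~\ref{prop:wtfor}.

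\textbf{Statement (5).} We employ the explicit description of the integration functors from~\ref{PBPF}: $\op{Ind}_\ast=(\id,m)_\ast\circ(i,s)_\ast$ and $\op{Ind}_!=(\id,m)_\sharp\circ(i,s)_\ast$, where $m:G\times_{/H}X\to X$ is smooth of relative dimension $d=\dim(G/H)$ and $(i,s)_\ast$ is the inverse of the induction equivalence of Proposition~\ref{cor:indequiv}. The induction equivalence is weight-exact essentially by construction: the weight structure on homogeneous spaces in Corollary~\ref{cor:wtorb} and its extension in Proposition~\ref{prop:equivweight} are set up precisely so that the induction equivalence matches the weight structures on both sides. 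Then $(\id,m)_\ast$ preserves non-negativity by~(4), giving the statement for $\op{Ind}_\ast$. For $\op{Ind}_!$, the smoothness of $m$ and relative purity~\ref{BCs} give $(\id,m)_\sharp\simeq(\id,m)_!(d)[2d]$, and $(\id,m)_!$ preserves non-positivity by Bondarko--H\'ebert (again read off after forgetting). Alternatively, one may use the Verdier duality identity $\op{Ind}_!\simeq D\circ\op{Ind}_\ast\circ D$ of~\ref{Last}, together with the fact that $D$ swaps the two halves of the weight structure (a consequence of Proposition~\ref{prop:dmtverdier} combined with the symmetric characterization of $\DMT_G(X)_{\op{wt}\leq 0}$ and $\DMT_G(X)_{\op{wt}\geq 0}$ in Proposition~\ref{prop:equivweight}).

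\textbf{Main obstacle.} The only point requiring care is the weight-exactness of the induction equivalence $(i,s)_\ast$, since the morphism $s$ itself is a closed immersion and not smooth, so neither (1) nor (2) applies directly. This is handled not by the general $(\phi,f)^\ast$ framework but by appealing to the way in which the weight structure on equivariant motives over a homogeneous space was \emph{defined} via transport along the induction equivalence. Everything else is bookkeeping on top of the non-equivariant Bondarko--H\'ebert theorem.
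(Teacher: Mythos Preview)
Your proposal is correct and follows exactly the approach the paper takes: reduce everything to the non-equivariant Bondarko--H\'ebert theorem via Proposition~\ref{prop:wtfor}. The paper's own proof is literally one sentence invoking these two ingredients, so your write-up is considerably more detailed than the original.

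Your identification of the ``main obstacle'' for part (5) is apt. The paper's terse proof glosses over exactly this point: to apply Proposition~\ref{prop:wtfor} to $\op{Ind}_H^G M$, one must understand $\op{For}_G(\op{Ind}_H^G M)$ in terms of $\op{For}_H(M)$, and this passage through $(i,s)_\ast$ is not covered by the general $(\phi,f)^\ast$ compatibilities. Your resolution---that the induction equivalence $(i,s)^\ast$ is weight-exact because the weight structures on both sides are built orbit-by-orbit from the same stabilizer data (the $G$-orbits on $G\times_{/H}X$ and the $H$-orbits on $X$ have identical isotropy groups)---is the right way to close this gap. One could add, for completeness, that $G\times_{/H}X$ inherits the equivariant Whitney--Tate property from $H\looparrowright X$ precisely via this equivalence, so that Proposition~\ref{prop:equivweight} applies there as well.
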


\begin{proof}
This follows from Proposition~\ref{prop:wtfor} and the corresponding statements for the weight structures on $\mathbb{D}(X)$, cf. Theorem~\ref{thm:hebert}. 
\end{proof}

\begin{proposition}
\label{prop:wtpush}
Let $\mathbb{D}$ be a derivator satisfying the conditions of \ref{derivator:new}, the grading condition~\ref{conditions:grading} and the weight condition~\ref{conditions:weight}. Let $k$ be a field, let $G$ be a linear group and let $G\looparrowright X$ and $G\looparrowright Y$ be  varieties with action satisfying the conditions of Proposition~\ref{prop:wtfor}. Let $(\op{id},f):(G\looparrowright Y)\to(G\looparrowright X)$ be a morphism of varieties with action. Then $(\op{id},f)_!$ preserves non-positivity of weights, and  $(\op{id},f)^!$ preserves non-negativity of weights.
\end{proposition}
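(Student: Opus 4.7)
The plan is to reduce the statement to the non-equivariant weight condition by applying the forgetful functor. Concretely, I would proceed in three steps: first, invoke Proposition~\ref{prop:wtfor} which characterizes non-positivity and non-negativity of weights on $\DMT_G(Z)$ in terms of the corresponding properties of the underlying motive $\op{For}(M) \in \mathbb{D}(Z)$; second, use the fact that the forgetful functor $\op{Res}_G^1$ commutes with all six functors (in particular with $(\op{id},f)_!$ and $(\op{id},f)^!$), which is recalled in \ref{CompRES}; third, apply the non-equivariant weight condition~\ref{conditions:weight}, i.e., Theorem~\ref{thm:hebert}, which tells us that $f_!$ preserves non-positivity of weights and $f^!$ preserves non-negativity of weights in $\mathbb{D}(-)$.

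In slightly more detail: let $M \in \DMT_G(Y)_{\op{wt}\leq 0}$. By Proposition~\ref{prop:wtfor}, this is equivalent to $\op{For}(M) \in \mathbb{D}(Y)_{\op{wt}\leq 0}$. Since the forgetful functor commutes with exceptional pushforward, we have a natural isomorphism $\op{For}((\op{id},f)_! M) \cong f_!(\op{For}(M))$. By Theorem~\ref{thm:hebert}, the non-equivariant $f_!$ preserves non-positivity of weights, so $f_!(\op{For}(M)) \in \mathbb{D}(X)_{\op{wt}\leq 0}$. Applying Proposition~\ref{prop:wtfor} in the reverse direction yields $(\op{id},f)_! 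M \in \DMT_G(X)_{\op{wt}\leq 0}$, as desired. The argument for $(\op{id},f)^!$ and non-negativity of weights is completely analogous, using that the forgetful functor commutes with $f^!$ and that the non-equivariant $f^!$ is $w$-right exact.

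The only potential subtlety is that the statement implicitly assumes $(\op{id},f)_!$ sends $\DMT_G(Y)$ into $\DMT_G(X)$ and $(\op{id},f)^!$ sends $\DMT_G(X)$ into $\DMT_G(Y)$; this is built into the setup since both varieties are assumed equivariantly Whitney--Tate with finitely many separably defined orbits, and the relevant preservation statements are consequences of the analogous results for restriction and Verdier duality (Propositions~\ref{prop:dmtverdier}, \ref{prop:dmtres}, \ref{prop:pullbackdmt}) together with the localization triangles used to reduce orbitwise conditions to orbit closures. Once this preservation is in place, the weight-exactness is a purely formal consequence of Proposition~\ref{prop:wtfor} combined with the non-equivariant Bondarko--H\'ebert theorem, and no further input is needed.
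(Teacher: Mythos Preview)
Your proof is correct and follows exactly the same approach as the paper: reduce to the non-equivariant case via Proposition~\ref{prop:wtfor} and the compatibility of $\op{For}$ with the six functors (\ref{CompRES}), then invoke the Bondarko--H\'ebert theorem~\ref{thm:hebert}. The paper's own proof is a one-line reference to these same two ingredients, so you have simply unpacked the argument in more detail.
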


\begin{proof}
As above, this follows from Proposition~\ref{prop:wtfor} and the corresponding statements for the weight structures on $\mathbb{D}(X)$, cf. Theorem~\ref{thm:hebert}. 
\end{proof}

\begin{proposition}
\label{prop:wttensor}
Let $\mathbb{D}$ be a derivator satisfying the conditions of \ref{derivator:new}, the grading condition~\ref{conditions:grading} and the weight condition~\ref{conditions:weight}. Let $k$ be a field, let $G$ be a linear algebraic group, and let $G\looparrowright X$ be a variety with action having finitely many $G$-orbits separably defined over $k$. Assume that $G\looparrowright X$ is equivariantly Whitney--Tate, cf. Definition~\ref{def:mtderdef2}. 
\begin{enumerate}
\item 
The tensor product functor 
\[
\otimes:\DMT_G(X)\times\DMT_G(X)\to\DMT_G(X)
\]
maps $\DMT_G(X)_{\op{wt}\leq n}\times \DMT_G(X)_{\op{wt}\leq m}$ to $\DMT_G(X)_{\op{wt}\leq n+m}$. 
\item The functors $(-)\otimes\const{X}(n)[2n]$ and $\const{X}(n)[2n]\otimes(-)$ are weight exact. More generally, tensoring with objects from $\DMT_G(X)_{\op{wt}=0}$ is weight exact.
\end{enumerate}
\end{proposition}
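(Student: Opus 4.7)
The plan is to reduce both statements to the corresponding non-equivariant claim via the forgetful functor, then verify the latter orbit-by-orbit using strong monoidality of pullback and the known heart-closure from Proposition~\ref{prop:compatotimes}.

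First, by Proposition~\ref{prop:wtfor}, the weight structure on $\DMT_G(X)$ is detected by the forgetful functor to $\mathbb{D}(X)$: an object $M$ lies in $\DMT_G(X)_{\op{wt}\leq n}$ (resp. $\DMT_G(X)_{\op{wt}\geq n}$) if and only if $\op{For}(M)$ does. Since $\op{For}$ is strongly monoidal by Remark~\ref{rem:monoidal}, there is a natural isomorphism $\op{For}(M\otimes N)\cong\op{For}(M)\otimes\op{For}(N)$, reducing both claims to the corresponding statements for the weight structure on $\DMT(X)\subseteq\mathbb{D}(X)$ induced by the weight condition~\ref{conditions:weight}.

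For part (1), since $\DMT(X)_{\op{wt}\leq n}=\DMT(X)_{\op{wt}\leq 0}[n]$, one reduces to the case $n=m=0$. As $\otimes$ is exact in each variable and $\DMT(X)_{\op{wt}\leq 0}$ is closed under extensions and summands, it suffices to show that $M\otimes N$ has weight $\leq 0$ whenever $M$ and $N$ are both weight-$0$. Using Proposition~\ref{prop:equivweight}, this is checked after $\ast$-restriction to each orbit $j\colon G/H\hookrightarrow X$: strong monoidality of $j^\ast$ yields $j^\ast(M\otimes N)\cong j^\ast M\otimes j^\ast N$, and the induction equivalence (likewise strongly monoidal) further reduces to showing that $\DMT_H(\pt)_{\op{wt}=0}$ is closed under $\otimes$. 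This last statement is exactly the content of Proposition~\ref{prop:compatotimes}, which asserts that the tilting category $\mathcal{T}_H$ (generated by the $\op{Ind}_{H^0}^H\underline{\pt}(i)[2i]$, and coinciding with the heart by Proposition~\ref{prop:wtpt}) is closed under tensor products.

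For part (2), the claim about $\underline{X}(n)[2n]$ is immediate from (1) and $\otimes$-invertibility: the inverse $\underline{X}(-n)[-2n]$ is also of weight $0$, so by (1) both $(-)\otimes\underline{X}(n)[2n]$ and its inverse preserve $\leq$-weight bounds; being mutually inverse autoequivalences of $\DMT_G(X)$ they must preserve $\geq$-weight bounds as well, hence are weight-exact. For an arbitrary weight-$0$ object $N$, one uses that $N$ is strongly dualizable (constructible Tate motives in our setting have $\otimes$-duals, the dualizability propagating along the gluing from the orbit-wise dualizability of constant Tate motives) with dual $N^\vee$ again of weight $0$. The adjunction $(-)\otimes N\dashv (-)\otimes N^\vee$, combined with (1) applied to both adjoints, forces $(-)\otimes N$ to be weight-exact via the general adjunction principle that $w$-left exactness of a left adjoint corresponds to $w$-right exactness of the right adjoint.

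The main obstacle is the $n=m=0$ case of (1), i.e., the closure of the heart under $\otimes$; once this is in place, the rest of (1) and (2) is formal. Fortunately, this closure was already established at the level of $\DMT_H(\pt)$ in Proposition~\ref{prop:compatotimes}, and transports faithfully through the orbit-decomposition thanks to the strong monoidality of the relevant pullback and induction equivalences.
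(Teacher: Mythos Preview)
Your argument is essentially correct and matches the paper's second suggested approach (orbit-wise reduction via Proposition~\ref{prop:equivweight} and the induction equivalence, landing on Proposition~\ref{prop:compatotimes} at the point). The paper also offers a shorter first approach: after the forgetful reduction of your opening paragraph, one can simply cite H\'ebert's non-equivariant result \cite[Theorem 3.7 $(iv)$, $(iv_c)$]{hebert} and be done.

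Two minor points. First, your write-up conflates the two approaches: having reduced to the non-equivariant setting via $\op{For}$, you then invoke Proposition~\ref{prop:equivweight}, which describes the \emph{equivariant} weight structure. Either drop the forgetful paragraph and argue equivariantly throughout (which is what your second paragraph actually does), or follow through on the non-equivariant reduction by citing H\'ebert. Second, for the general case of~(2), the strong-dualizability claim for arbitrary weight-$0$ objects on $X$ is plausible but not established in the paper; a cleaner route is to observe that your orbit-wise argument already shows the heart is closed under $\otimes$, so tensoring with $N\in\DMT_G(X)_{\op{wt}=0}$ preserves each $\op{wt}=m$ piece, and hence both $\op{wt}\leq m$ and $\op{wt}\geq m$ by weight-filtration d\'evissage.
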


\begin{proof}
There are two ways to see this. One is to use Proposition~\ref{prop:wtfor} together with the corresponding assertion for the weight structure on $\mathbb{D}(X)$, cf. \cite[Theorem 3.7. $(iv)$ and $(iv_c)$]{hebert}. The other is to use the fact that the tilting functors of Theorem~\ref{thm:tiltpoint} are tensor functors, cf. Proposition~\ref{prop:tilttensor}, together with the obvious corresponding weight-exactness statements for complexes of modules over cohomology rings (plus an induction for the number of orbits).
\end{proof}

\begin{proposition}
\label{prop:wtdual}
Let $\mathbb{D}$ be a derivator satisfying the conditions of \ref{derivator:new}, the grading condition~\ref{conditions:grading} and the weight condition~\ref{conditions:weight}. Let $k$ be a field, let $G$ be a linear algebraic group, and let $G\looparrowright X$ be a variety with action having finitely many $G$-orbits separably defined over $k$. Assume that $G\looparrowright X$ is equivariantly Whitney--Tate, cf. Definition~\ref{def:mtderdef2}. Then Verdier duality induces functors
\[
D:\DMT_G(X)_{\op{wt}\leq n}^{\op{op}}\to \DMT_G(X)_{\op{wt}\geq -n}.
\]
In particular, Verdier duality preserves the heart of the weight structure.
\end{proposition}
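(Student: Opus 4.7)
The plan is to reduce the statement to a verification on $G$-orbits using the gluing description of the weight structure from Proposition~\ref{prop:equivweight}, and then further to the case of equivariant mixed Tate motives on the point via the induction equivalence. Since shifting commutes with all functors in sight, it suffices to prove the case $n=0$.

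First, by Proposition~\ref{prop:equivweight}, a motive $M\in\DMT_G(X)$ lies in $\DMT_G(X)_{\op{wt}\leq 0}$ if and only if $j^\ast M\in \DMT_G(Y)_{\op{wt}\leq 0}$ for every $G$-orbit inclusion $j\colon Y\hookrightarrow X$, and dually for $\op{wt}\geq 0$ using $j^!$. By the equivariant Verdier duality (Proposition~\ref{equiv:verdier}), Verdier duality interchanges $j^\ast$ and $j^!$ in the sense that $D\circ j^\ast\simeq j^!\circ D$. Therefore, if one knows that Verdier duality on each orbit $Y$ sends $\DMT_G(Y)_{\op{wt}\leq 0}^{\op{op}}$ into $\DMT_G(Y)_{\op{wt}\geq 0}$, the glued statement on $X$ follows.

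For a single orbit $Y=G/H$, the weight structure on $\DMT_G(G/H)$ is transported from the one on $\DMT_H(\op{pt})$ along the induction equivalence (Corollary~\ref{cor:wtorb}). By Proposition~\ref{prop:indverdier}, this equivalence commutes with Verdier duality up to a twist and shift by the relative dimension $d=\dim H-\dim G$; since $\op{wt}(\underline{X}(i)[j])=j-2i$ is invariant under the simultaneous operation $(i,j)\mapsto (i+d,j+2d)$, such a twist and shift preserves all weight truncations and is therefore harmless. We are thus reduced to showing that Verdier duality on $\DMT_H(\op{pt})$ sends the heart of the weight structure to itself.

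By Proposition~\ref{prop:wtpt}, the heart of $\DMT_H(\op{pt})$ is the idempotent completion of the additive subcategory generated by the objects $\op{Ind}_{H^0}^H\const{\op{pt}}(n)[2n]$, $n\in\ZZ$. For these generators, using that Verdier duality commutes with the ordinary integration functor up to the twist and shift coming from the zero-dimensional quotient $H/H^0$ (which is trivial), and that the dualizing object on the point is $\const{\op{pt}}$, one computes
\[
D\bigl(\op{Ind}_{H^0}^H\const{\op{pt}}(n)[2n]\bigr)\;\cong\;\op{Ind}_{H^0}^H\const{\op{pt}}(-n)[-2n],
\]
which is again in the heart. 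Since $D$ is an exact contravariant functor preserving a negative generating collection, it sends the heart to the heart. By the reduction above, this yields the claim, and the ``in particular'' statement is then immediate by setting $n=0$. The main obstacle is the bookkeeping of the twists and shifts introduced by the various compatibilities with Verdier duality in Propositions~\ref{equiv:verdier} and~\ref{prop:indverdier}, which however turn out to be inconsequential by the weight-invariance observation above.
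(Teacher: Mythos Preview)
Your argument is correct and follows precisely the first of the two routes the paper sketches: reduce via the gluing description of the weight structure (Proposition~\ref{prop:equivweight}) and the exchange $D\circ j^\ast\simeq j^!\circ D$ to the single-orbit case, then via the induction equivalence (with the harmless weight-zero twist from Proposition~\ref{prop:indverdier}) to $\DMT_H(\op{pt})$, where the explicit description of the heart from Proposition~\ref{prop:wtpt} lets you check that $D$ preserves it. The paper only sketches this in one line and also notes a second, shorter route: use Proposition~\ref{prop:wtfor} to detect weights after forgetting the equivariance, and then invoke the non-equivariant compatibility of Verdier duality with weights from \cite[Corollaire~3.8]{hebert}.
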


\begin{proof}
Either one does an induction on the dimension of orbits, using the definition of the weight structure from Proposition~\ref{prop:equivweight}, eventually reducing via tilting to an identification of the weight structure on a point via the homotopy category of complexes. The other possibility is to use Proposition~\ref{prop:wtfor} together with the corresponding non-equivariant statement which can be found in \cite[Corollaire 3.8]{hebert}. 
\end{proof}

\begin{remark}
We discuss shortly the question of existence of weight structures on equivariant motives in general. It would be nice if such weight structures on $\mathbb{D}^+_G(X)$ existed, and we could just get the above weight structures on equivariant mixed Tate motives by restriction. 

On the one hand, it is fairly clear that the weight structure would be expected to be compatible with the forgetful functor. In particular, the natural expectation is that $\mathbb{D}^+_G(X)_{\op{wt}\leq 0}$ and $\mathbb{D}^+_G(X)_{\op{wt}\geq 0}$ should consist of those objects which are of non-positive and non-negative weight after forgetting the equivariance, respectively. Most of the axioms for weight structures hold in this case, but the existence of the weight filtration is a most problematic issue. Basically, it is not clear that the truncation (for the non-equivariant weight structure) of an equivariant motive will be equivariant again. Alternatively, one could try to use \cite[Proposition 1.7(6)]{bondarko:imrn} to construct a weight structure on $\mathbb{D}^+_G(X)$, but again the problem is to show that equivariant motives of weight $0$ generate the whole category of equivariant motives. These problems appear in various guises, no matter which of the definitions of equivariant motives in Section~\ref{sec:equivdef} one is using. In the end, the crux is the non-functoriality of the weight filtration.

In any case, if there is a weight structure on equivariant motives,
then its heart should be the equivariant Chow motives of Laterveer \cite{laterveer}. Maybe the existence of the weight structure could be
deduced from properties of equivariant Chow motives. But again, the most natural way of decomposing the motive of an arbitrary $G$-variety into equivariant Chow motives would be to write out a simplicial $G$-variety given by equivariant resolution of singularities, where again we run into problems which seem unsurmountable at this point.
\end{remark}

\subsection{Pointwise purity}
\index{pointwise purity}

For a general orbit inclusion $j:G/H\to X$, the restriction functors $j^\ast$ and $j^!$  only have a half-exactness for the weight structure; in particular, they do not necessarily preserve the heart of the weight structure. For the representation-theoretic applications later on, we are interested in those special motives whose restrictions along orbit inclusions remain pure. We call these motives pointwise pure, since we think of these as composed of local systems on orbits whose stalks are pure mixed Tate motives. As discussed in the definition of equivariant mixed Tate motives, cf. Definition~\ref{mtderdef1} and the subsequent remark, the proper way to formulate pointwise conditions is via the induction equivalence. The following is the weight-analogue of Definition~\ref{mtderdef1}, defining orbitwise purity properties. However, terminology will still call these ``pointwise purity'' properties, for better compliance with the existing literature.

\begin{definition}
\label{poip}
\index{purity!pointwise}
In the situation of Proposition~\ref{prop:equivweight}, let $G\looparrowright X$ be a variety with action which is $G\looparrowright X$ is equivariantly Whitney--Tate, and let $M\in\DMT_G(X)$. Then
\begin{enumerate}
\item $M$ will be called \emph{pointwise $\ast$-pure} if for each $G$-orbit $j\colon G/H \hookrightarrow X$, we have $j^\ast M\in\DMT_G(G/H)_{\op{wt}=0}$.
\item $M$ will be called \emph{pointwise $!$-pure} if for each $G$-orbit $j\colon G/H \hookrightarrow X$, we have $j^! M\in\DMT_G(G/H)_{\op{wt}=0}$.
\item $M$ will be called \emph{pointwise pure} if it is both $\ast$- and $!$-pointwise pure.
\end{enumerate}
\end{definition}

As in Proposition~\ref{prop:quotientdmt} for orbitwise mixed Tate motives, 
pointwise purity is stable under generalized quotient equivalence.

\begin{proposition}
\label{prop:quotientdmtx}
In the situation of Proposition~\ref{prop:equivweight}, let $k$ be a field and let $G\looparrowright X$ be a variety with action having finitely many $G$-orbits separably defined over $k$. Let $N\hookrightarrow G$ be a normal subgroup which acts freely on $X$. We denote the quotient maps $\pi:G\to G/N$ and $p:X\to N\backslash X$. Then the generalized quotient equivalence of Proposition~\ref{prop:quotientequiv} preserves $*$- and $!$-pointwise purity, respectively. 
\end{proposition}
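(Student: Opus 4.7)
The plan is to reduce the claim to a single-orbit case via base change, where the generalized quotient equivalence becomes, under induction equivalences, the restriction functor along an isomorphism of stabilizer subgroups.

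First I would analyze how $G$-orbits in $X$ correspond to $(G/N)$-orbits in $N\backslash X$. For a point $\bar x\in N\backslash X$ with stabilizer $\bar H\subseteq G/N$, pick a lift $x\in X$ and let $H_0\subseteq G$ denote its stabilizer. Since $p$ is $G$-equivariant and surjective, $p^{-1}((G/N)\cdot\bar x)=G\cdot x\cong G/H_0$ is a single $G$-orbit. Freeness of the $N$-action on $X$ forces $H_0\cap N=\{e\}$, while $\pi(H_0)=\bar H$ by construction, so $\pi$ restricts to an isomorphism $H_0\xrightarrow{\cong}\bar H$. This produces a cartesian square
\[
\xymatrix{
G/H_0 \ar[r]^-{j}\ar[d]_-{p'} & X \ar[d]^-{p}\\
(G/N)/\bar H \ar[r]_-{\bar j} & N\backslash X
}
\]
in which $p'$ is again an $N$-torsor.

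Next I would exploit the compatibility of the generalized quotient equivalence with the six functors (Proposition~\ref{prop:quotientequiv} and Section~\ref{sec:further}) to obtain base-change isomorphisms
\[
j^{\ast}\circ(\pi,p)^{\ast}\;\cong\;(\pi,p')^{\ast}\circ\bar j^{\ast}
\quad\text{and}\quad
j^{!}\circ(\pi,p)^{\ast}\;\cong\;(\pi,p')^{\ast}\circ\bar j^{!}.
\]
The first is just $2$-functoriality of pullback along $p\circ j=\bar j\circ p'$. The second is the smooth base-change of \ref{belu1473} applied to the smooth maps $p,p'$; since orbit inclusions are only locally closed, I would factor $\bar j$ (and correspondingly $j$) as an open immersion into the orbit closure followed by a closed immersion, where \ref{belu1473} applies directly and the open part is trivial since $j^{!}=j^{\ast}$ there.

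Finally, under the induction equivalences $\DMT_{G/N}((G/N)/\bar H)\simeq\DMT_{\bar H}(\op{pt})$ and $\DMT_G(G/H_0)\simeq\DMT_{H_0}(\op{pt})$ of Proposition~\ref{cor:indequiv}, the functor $(\pi,p')^{\ast}$ corresponds to the restriction $\op{Res}_{\bar H}^{H_0}$ along the group isomorphism $\pi|_{H_0}\colon H_0\xrightarrow{\cong}\bar H$. Being induced by an isomorphism, this restriction is an equivalence of categories and in particular preserves the heart of the weight structure (the tilting description of Proposition~\ref{prop:wtpt} depends only on the cohomology ring $\mathcal{A}_{H_0}\cong\mathcal{A}_{\bar H}$). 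Combined with the base-change isomorphisms above, this shows that if $\bar j^{\ast}M$ (resp.\ $\bar j^{!}M$) has weight zero, then so does $j^{\ast}(\pi,p)^{\ast}M$ (resp.\ $j^{!}(\pi,p)^{\ast}M$), establishing both claims. The main technical obstacle is the base-change formula for locally-closed orbit inclusions, but this is a routine application of the factorization trick into open and closed pieces.
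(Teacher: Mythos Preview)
Your proposal is correct. The paper's proof is a one-line reference to Proposition~\ref{prop:quotientdmt}, whose argument factors $(\pi,p)^\ast=(\op{id},p)^\ast\circ\op{Res}_{G/N}^G$ and checks each factor separately via Propositions~\ref{prop:dmtres} and \ref{prop:pullbackdmt}; fully unwinding those propositions leads exactly to your orbit-by-orbit reduction to a group isomorphism of stabilizers.

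One minor simplification: for the $!$-base-change $j^{!}\circ(\pi,p)^{\ast}\cong(\pi,p')^{\ast}\circ\bar j^{!}$ you do not need \ref{belu1473} and the open/closed factorization of the orbit inclusion. Since $p$ and $p'$ are smooth of the same relative dimension $d=\dim N$, relative purity \ref{BCs} gives $(\op{id},p)^{\ast}\cong(\op{id},p)^{!}(-d)[-2d]$ and likewise for $p'$; combined with $2$-functoriality of $(-)^{!}$ and the fact that $\op{Res}_{G/N}^{G}$ commutes with the six functors (\ref{CompRES}), the identity $p\circ j=\bar j\circ p'$ yields the desired base-change directly, with no need to treat locally closed immersions separately.
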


\begin{proof}
The argument is the same as for Proposition~\ref{prop:quotientdmt}. 
\end{proof}


\begin{proposition}
\label{prop:wthom}
In the situation of Proposition~\ref{prop:equivweight}, let $G$ be a linear algebraic group. For any two motives $M,N\in \DMT_G(\pt)_{\op{wt}=0}$, the motive $\fin_{{\op{B}}G*}\iHom(M,N)$ is a pure Ind-Tate motive.
\end{proposition}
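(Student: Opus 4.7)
The plan is to use the explicit description of the heart of the weight structure to reduce, via additivity, to a single case that can be computed by a projection formula applied to the finite étale covering $f\colon{\op{B}}G^0\to{\op{B}}G$. By Proposition~\ref{prop:wtpt} combined with Proposition~\ref{prop:generatingtate}, the heart $\DMT_G(\pt)_{\op{wt}=0}$ is the idempotent completion of the additive category generated by the objects $\op{Ind}_{G^0}^G\underline{\pt}(n)[2n]$ for $n\in\mathbb{Z}$. Since $\iHom(-,-)$ is bi-additive (contravariant in the first variable), $\fin_{{\op{B}}G*}$ is additive, and the class of pure Ind-Tate motives is closed under direct sums and direct summands, it is enough to verify the claim for $M$ and $N$ of the form $\op{Ind}_{G^0}^G\underline{\pt}(m)[2m]$ and $\op{Ind}_{G^0}^G\underline{\pt}(n)[2n]$. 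Pulling Tate twists and shifts through $\iHom$ and $\fin_{{\op{B}}G*}$ (which preserve purity) then further reduces the problem to showing that $\fin_{{\op{B}}G*}\iHom(\op{Ind}_{G^0}^G\underline{\pt},\op{Ind}_{G^0}^G\underline{\pt})$ is a pure Ind-Tate motive.

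The key computational step is the projection formula from \ref{rem:monoidal}. Identifying $\op{Ind}_{G^0}^G$ with the pushforward $f_*$ along the étale morphism $f\colon{\op{B}}G^0\to{\op{B}}G$ of simplicial Borel constructions, I note that $f_!\cong f_*$ (since $G/G^0$ is discrete, cf. the argument in Proposition~\ref{Lasp} applied in relative dimension zero) and $f^!\cong f^*$ (by relative purity \ref{BCs} for the étale map $f$). The internal Hom projection formula then gives
\[
\iHom(f_*\underline{\pt}_{G^0}, f_*\underline{\pt}_{G^0}) \cong f_*\iHom(\underline{\pt}_{G^0}, f^* f_*\underline{\pt}_{G^0}) \cong f_*\op{Res}_G^{G^0}\op{Ind}_{G^0}^G\underline{\pt},
\]
using $\iHom(\underline{\pt}_{G^0},-)\cong\op{id}$ for the monoidal unit. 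Pushing forward to the base, the factorization $\fin_{{\op{B}}G^0}=\fin_{{\op{B}}G}\circ f$ yields
\[
\fin_{{\op{B}}G*}\iHom(\op{Ind}_{G^0}^G\underline{\pt}, \op{Ind}_{G^0}^G\underline{\pt}) \cong \fin_{{\op{B}}G^0*}\op{Res}_G^{G^0}\op{Ind}_{G^0}^G\underline{\pt}.
\]

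To finish, I observe that by weight-exactness of $\op{Res}_G^{G^0}$ and $\op{Ind}_{G^0}^G$ for the finite covering $G^0\hookrightarrow G$ (cf.~Proposition~\ref{prop:wtres}, parts (3) and (5), together with $\op{Ind}_!\cong\op{Ind}_*$ for discrete quotients), the motive $\op{Res}_G^{G^0}\op{Ind}_{G^0}^G\underline{\pt}$ lies in the heart $\DMT_{G^0}(\pt)_{\op{wt}=0}$. Since $G^0$ is connected, Proposition~\ref{prop:locsysconn} identifies this heart (for purely constant motives, after decomposition) with direct sums of twists and shifts of $\underline{\pt}_{G^0}$; concretely, since the underlying non-equivariant motive is $\op{M}(G/G^0)$, one obtains an isomorphism $\op{Res}_G^{G^0}\op{Ind}_{G^0}^G\underline{\pt}\cong\bigoplus_{g\in G/G^0}\underline{\pt}_{G^0}$. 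Applying $\fin_{{\op{B}}G^0*}$ gives a finite direct sum of copies of $\fin_{{\op{B}}G^0*}\underline{\pt}_{G^0}\cong\op{M}({\op{B}}G^0)$, which by Proposition~\ref{prop:motbg} of Appendix~\ref{sec:motivebg} is a pure Ind-Tate motive. The main obstacle in carrying this out is ensuring the projection formula in the equivariant/simplicial setting provides the stated identification, but this follows directly from the six-functor formalism assembled in Section~\ref{sec:sixfunctors} together with the simplicial description of $\op{Ind}_{G^0}^G$ as pushforward along the étale map of Borel constructions.
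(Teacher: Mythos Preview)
Your proof is correct and lands on the same endpoint as the paper---namely, reducing to $\fin_{{\op{B}}G^0*}$ applied to a weight-zero object and invoking that $\op{M}({\op{B}}G^0)$ is pure Ind-Tate (Proposition~\ref{prop:motbg}). The organization, however, is genuinely different.

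The paper does not reduce to the generators $\op{Ind}_{G^0}^G\underline{\pt}(n)[2n]$ at all. Instead it first treats the connected case abstractly: for connected $G$ one has $\iHom(M,N)\in\DMT_G(\pt)_{\op{wt}=0}$ directly, because $\iHom(M,N)\cong M^\vee\otimes N$ and both Verdier duality (Proposition~\ref{prop:wtdual}) and $\otimes$ (Proposition~\ref{prop:wttensor}) preserve the heart; since the heart is generated by the $\const{\pt}_G(i)[2i]$, pushing forward gives copies of $\op{M}({\op{B}}G)(i)[2i]$. For disconnected $G$, the paper uses Lemma~\ref{finitetorsorNew} to embed $\iHom(M,N)$ as a direct summand of $\op{Ind}_{G^0}^G\op{Res}_G^{G^0}\iHom(M,N)$, then moves the induction through $\fin_{{\op{B}}G*}$ and the restriction through $\iHom$ (via \ref{CompRES}) to land at $\fin_{{\op{B}}G^0*}\iHom(\op{Res}_G^{G^0}M,\op{Res}_G^{G^0}N)$, reducing to the connected case by weight-exactness of $\op{Res}$.

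Your route---reducing to generators, then applying the projection formula $\iHom(f_!A,B)\cong f_*\iHom(A,f^!B)$ with $f_!\cong f_*$ and $f^!\cong f^*$ for the \'etale covering---is more explicit and computational; the paper's is more structural in that it never writes down what $\iHom(M,N)$ actually is. The paper's version has the slight advantage that the connected case is handled for arbitrary $M,N$ at once, while yours reaches the connected case only after specializing to a single generator. Both are fine.
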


\begin{proof}
We first assume that $G$ is connected. In this case, $\iHom(M,N)\in \DMT_G(\op{pt})_{\op{wt}=0}$ by part (2) of Proposition~\ref{prop:wttensor}. Note here that inner Hom in $\DMT_G(\op{pt})$ is given by tensoring with the Verdier dual and Verdier duality preserves the heart by Proposition~\ref{prop:wtdual}. Now recall that the heart of the weight structure on $\DMT_G(\pt)$ is the idempotent completion of the additive subcategory generated by $\const{\pt}_G(i)[2i]$, by Proposition~\ref{prop:wtpt}. The claim now follows since $\op{fin}_{{\op{B}}G,\ast}\const{\pt}_G(i)[2i]\cong\op{M}({\op{B}}G)(i)[2i]$ and by Proposition~\ref{prop:motbg} the motive of a classifying space is a pure Ind-Tate motive.

In general, $\iHom(M,N)$ is a direct summand of $\ind^G_{G^0}\res^{G^0}_G\iHom(M, N)$, cf. Lemma~\ref{finitetorsorNew}. Further, using \ref{CompRES}, we get
\begin{align*}
\fin_{{\op{B}}G,*}\ind^G_{G^0}\res^{G^0}_G\iHom(M, N) &\cong  \fin_{{\op{B}}G^0,*}\res^{G^0}_G\iHom(M,N) \\
&\cong \fin_{{\op{B}}G^0,*}\iHom(\res^{G^0}_G M, \res^{G^0}_GN).
\end{align*}
Since restriction is weight-exact by Proposition~\ref{prop:wtres}, this reduces us to the connected case.
\end{proof}

The following provides analogues of \cite[6.3, 8.3]{SoWe}, which is required for showing that pointwise pure Bott--Samelson motives form a negative collection and generate a weight structure, cf. \ref{puBS}. 

\begin{corollary}
\label{cor:hompuretate}
In the situation of Proposition~\ref{prop:equivweight}, let $X$ be a variety on which $G$ acts with finitely many orbits. Let $M \in \DMT_G^*(X)$ and $N\in \DMT_G^!(X)$. Assume $M$ is $\ast$-pointwise pure and $N$ is $!$-pointwise pure. Then $(\fin_{{\op{E}}G\times_{/G}X})_\ast\iHom(M,N)$ is pure and Tate. 
\end{corollary}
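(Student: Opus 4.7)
The plan is to proceed by induction on the number of $G$-orbits in $X$, using Proposition~\ref{prop:wthom} as the base case and gluing via the localization triangle together with the $\iHom$-adjunction formulas of \ref{rem:monoidal}.

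For the base case, assume $X = G/H$ is a single $G$-orbit. Pointwise purity then simply means $M, N \in \DMT_G(G/H)_{\op{wt}=0}$. Under the induction equivalence of Proposition~\ref{cor:indequiv}, which is compatible with $\iHom$ and the six-functor formalism and which intertwines $\DMT_G(G/H)_{\op{wt}=0}$ with $\DMT_H(\op{pt})_{\op{wt}=0}$, the motives $M$ and $N$ correspond to objects $M', N' \in \DMT_H(\op{pt})_{\op{wt}=0}$. Under the identification ${\op{E}}G \times_{/G} (G/H) \simeq {\op{B}}H$, we have $(\fin_{{\op{E}}G\times_{/G}(G/H)})_\ast \iHom(M,N) \cong \fin_{{\op{B}}H,\ast}\iHom(M',N')$, and Proposition~\ref{prop:wthom} (applied to the group $H$) tells us this is pure Ind-Tate.

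For the inductive step, pick an open $G$-orbit $j\colon U \hookrightarrow X$ with closed $G$-stable complement $i\colon Z \hookrightarrow X$, and apply $\iHom(-,N)$ to the localization triangle $j_!j^\ast M \to M \to i_\ast i^\ast M$. The adjunction identity $\iHom(f_!A,B) \cong f_\ast\iHom(A,f^!B)$ from \ref{rem:monoidal}, together with $i_\ast = i_!$ for the closed immersion, converts this into a distinguished triangle
\[
i_\ast\iHom(i^\ast M, i^! N) \to \iHom(M,N) \to j_\ast\iHom(j^\ast M, j^! N).
\]
Pushing forward via $(\fin_X)_\ast$ and invoking $2$-functoriality yields a triangle whose outer terms are $(\fin_Z)_\ast\iHom(i^\ast M, i^! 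N)$ and $(\fin_U)_\ast\iHom(j^\ast M, j^! N)$. By Proposition~\ref{prop:pullbackdmt} the restrictions $i^\ast M$ and $i^! N$ lie in $\DMT_G^\ast(Z)$ and $\DMT_G^!(Z)$, respectively, and they remain pointwise pure because every orbit of $Z$ is an orbit of $X$ and pointwise purity is tested via further restriction along orbit inclusions. Thus the inductive hypothesis applies to the first term, and the single-orbit base case handles the last term.

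This shows the outer terms of the triangle are pure Ind-Tate, and it remains to deduce the same for the middle. Since any distinguished triangle $A \to B \to C$ in $\mathbb{D}(\op{pt})$ with $A, C$ pure of weight zero must split (the connecting morphism lies in $\op{Hom}(C, A[1]) = 0$ by axiom \ref{defin:wtstruct}(3), since $C \in \mathcal{C}_{\op{wt}\leq 0}$ and $A[1] \in \mathcal{C}_{\op{wt}\geq 1}$), we conclude $(\fin_X)_\ast\iHom(M,N) \cong A \oplus C$ is pure and Tate. The essential input is Proposition~\ref{prop:wthom}; the only delicate point is verifying that the pointwise purity conditions survive the passage from $X$ to the closed complement $Z$, which is immediate from the definition and the transitivity of restriction along orbit inclusions.
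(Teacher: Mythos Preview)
Your proof is correct and follows essentially the same strategy as the paper: induct on the number of orbits, reducing to Proposition~\ref{prop:wthom} on a single orbit via the $\iHom$-adjunction identity $f^!\iHom(M,N)\cong\iHom(f^\ast M,f^!N)$ and the localization triangle. The paper phrases the reduction slightly differently---it first observes that $\iHom(M,N)$ is itself pointwise $!$-pure and then invokes the analogous inductive argument from \cite[Proposition~6.2]{SoWe}---but the content is the same. Your version has the merit of making explicit the splitting of the pushed-forward triangle via weight orthogonality, which the paper leaves implicit in its reference to \cite{SoWe}.
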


\begin{proof}
Essentially, the idea is to filter $\iHom(M,N)$ by the orbit stratification to reduce to the assertion of Proposition~\ref{prop:wthom}. 

For any orbit inclusion $j:Z\hookrightarrow X$ we have $j^!\iHom(M,N)\cong \iHom(j^\ast M,j^! N)$, and the latter is pure of weight $0$, cf. \cite[proof of Proposition 6.3]{SoWe}. In particular, $\iHom(M,N)$ is pointwise $!$-pure. 

Analogous to \cite[Proposition 6.2]{SoWe}, $(\op{fin}_{{\op{E}}G\times_{/G}X})_\ast \iHom(M,N)$ is a pure equivariant Tate motive of weight $0$, i.e., $(\op{fin}_X)_\ast\iHom(M,N)\in \DMT_G(\op{pt})_{\op{wt}=0}$. The inductive proof is the same. The difference is that the respective open orbit doesn't have to be an affine space; nevertheless, the assumption on pointwise purity implies that the respective pushforward  $\op{fin}_!j^\ast\iHom(M,N)$ (aka the equivariant cohomology of the Hom-motive on the orbit) is pure of weight $0$. This follows, since the restriction $j^\ast\iHom(M,N)$ corresponds, under the induction equivalence, to a pure motive of weight $0$ on the point. But then, from the definition of $\op{Ind}_!$, we see that $\op{fin}_! j^\ast\iHom(M,N)$ is exactly the image of $j^\ast\iHom(M,N)$ under the induction equivalence, and this is pure of weight $0$ by assumption.
\end{proof}

\subsection{Springer's Homotopy Lemma and contracting slices}
The following is a disguised and generalized version of the familiar fact that higher cohomology groups of a contractible space vanish. In the context of $\ell$-adic cohomology, it is due to T.A. Springer \cite{Spp}. 

\begin{lemma}
\label{lemma:springer}
Let $X$ be a variety contracted by an action of the multiplicative group $\mathbb{G}_{\op{m}}$ to a closed subvariety $i\colon Z\hookrightarrow X$. Let $\op{lim}\colon X \to Z$ be the morphism mapping each point to its limit. Then, for each $M\in \mathbb{D}^+_{\mathbb{G}_{\op{m}}}(X)$, applying $\op{lim}_*$ to the adjunction $M \to i_*i^*M$ yields an isomorphism in $\mathbb{D}_{\mathbb{G}_{\op{m}}}^+(Z)$
\[ \op{lim}_*M \xrightarrow{\cong} i^*M. \]
\end{lemma}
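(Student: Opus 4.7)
The essential ingredient is that the contracting $\mathbb{G}_{\op{m}}$-action extends to a morphism of the multiplicative monoid: there is a scheme map $a\colon\mathbb{A}^1\times X\to X$ satisfying $a\circ i_1=\op{id}_X$ and $a\circ i_0=i\circ\op{lim}$, where $i_t\colon X\hookrightarrow\mathbb{A}^1\times X$ denotes the inclusion of the fiber over $t$. By commutativity of the monoid, $a$ is $\mathbb{G}_{\op{m}}$-equivariant for the trivial $\mathbb{G}_{\op{m}}$-action on $\mathbb{A}^1$ and the given action on $X$. Writing $p\colon\mathbb{A}^1\times X\to X$ for the second projection, one has the crucial equality of morphisms $\op{lim}\circ a=\op{lim}\circ p$ as maps $\mathbb{A}^1\times X\to Z$, since all points of a single $\mathbb{G}_{\op{m}}$-orbit have the same limit.

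The heart of the argument is to compare the two equivariant motives $a^*M$ and $p^*M$ in $\mathbb{D}^+_{\mathbb{G}_{\op{m}}}(\mathbb{A}^1\times X)$. On the open subvariety $\mathbb{G}_{\op{m}}\times X\subset\mathbb{A}^1\times X$ the restriction $a|_{\mathbb{G}_{\op{m}}\times X}$ is the $\mathbb{G}_{\op{m}}$-action itself, so the equivariant structure on $M$ provides a canonical isomorphism
\[
a^*M|_{\mathbb{G}_{\op{m}}\times X}\cong p^*M|_{\mathbb{G}_{\op{m}}\times X};
\]
this is where $\mathbb{G}_{\op{m}}$-equivariance of $M$ enters in an essential way. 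I would write down the localization triangles on $\mathbb{A}^1\times X$ for $\mathbb{A}^1=\{0\}\sqcup\mathbb{G}_{\op{m}}$ applied to both $a^*M$ and $p^*M$. The above canonical isomorphism identifies their $j_{\mathbb{G}_{\op{m}},!}$-terms, whereas their right-hand terms are $(i_0)_*\op{lim}^*i^*M$ and $(i_0)_*M$, respectively. After pushing forward along $p$, the triangle for $p^*M$ collapses to the identity of $M$ by $\mathbb{A}^1$-homotopy invariance (giving $p_*p^*M\cong M$) together with $p\circ i_0=\op{id}_X$, so the common leftmost term vanishes; the triangle for $a^*M$ then yields an isomorphism $p_*a^*M\cong\op{lim}^*i^*M$.

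Finally, applying $\op{lim}_*$ and exploiting the equality $\op{lim}\circ p=\op{lim}\circ a$ to rewrite $\op{lim}_*p_*a^*M$ as $\op{lim}_*a_*a^*M$, one combines with the unit $M\to a_*a^*M$ and standard adjunction identities to extract the desired isomorphism $\op{lim}_*M\cong i^*M$. \textbf{The main obstacle} will be verifying that the isomorphism produced this way genuinely coincides with the map induced by applying $\op{lim}_*$ to the adjunction unit $M\to i_*i^*M$ as stated in the lemma. This is a diagram chase tracking the various adjunction units, the equivariant structure of $M$, and the exchange morphisms of the equivariant six-functor formalism built up in Sections~\ref{sec:sixfunctors}--\ref{sec:further}; an alternative is to recast the whole argument as the vanishing $\op{lim}_*j_!N=0$ for $j\colon X\setminus Z\hra X$ and $N\in\mathbb{D}^+_{\mathbb{G}_{\op{m}}}(X\setminus Z)$, obtained from the base-change identity $a^*j_!=\tilde{j}_!\tilde{a}^*$ for the cartesian square $a^{-1}(X\setminus Z)=\mathbb{G}_{\op{m}}\times(X\setminus Z)$.
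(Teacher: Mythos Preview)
Your proposal essentially reconstructs the direct argument from \cite[Proposition~6.4]{SoWe} in the equivariant setting, which is indeed one of the two routes the paper mentions. The outline is sound, and your identification of the main obstacle (matching the constructed isomorphism with the one induced by the adjunction unit) is honest and accurate.

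There is, however, one delicate step you pass over too quickly. The isomorphism $a^*M|_{\mathbb{G}_{\op{m}}\times X}\cong p^*M|_{\mathbb{G}_{\op{m}}\times X}$ that the equivariant structure on $M$ provides lives naturally in the \emph{non-equivariant} category $\mathbb{D}(\mathbb{G}_{\op{m}}\times X)$: it is precisely the cartesian condition at simplicial degree~1 of the Borel construction, cf.\ Definition~\ref{def:equivmotivesborel}. But you need it in $\mathbb{D}^+_{\mathbb{G}_{\op{m}}}(\mathbb{G}_{\op{m}}\times X)$ with the trivial action on the first factor. Lifting the isomorphism to the equivariant category requires invoking the higher simplicial coherences of the cartesian object, which is doable but not automatic from the paper's definitions.

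The paper's own proof bypasses both this subtlety and your main obstacle with a much shorter argument: since the forgetful functor $\op{For}$ commutes with all six functors and with the adjunction units (cf.~\ref{CompRES}), applying $\op{For}$ to the morphism $\op{lim}_*M\to i^*M$ gives the corresponding morphism for $\op{For}(M)$; the latter is weakly equivariant in the sense of \cite[Definition~6.1]{SoWe}, so \cite[Proposition~6.4]{SoWe} shows it is an isomorphism non-equivariantly; conservativity of $\op{For}$ (cf.~\ref{forget}) then forces the original morphism to be an isomorphism. This reduces the entire lemma to a citation plus conservativity, and there is no diagram chase at all.
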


\begin{proof}
This is essentially \cite[Proposition 6.3]{SoWe}. However, instead of working with equivariant derived categories, a weaker notion of equivariance  \cite[Definition 6.1]{SoWe} is used there. In our equivariant setting one can argue exactly as in the proof of \cite[Proposition 6.4]{SoWe} by replacing the relevant diagrams with their equivariant analogues. An alternative is to observe that forgetting the $\mathbb{G}_{\op{m}}$-action we get an object $\For(M)$ which is  weakly equivariant in the sense of \cite[Definition 6.1]{SoWe}. Consequently, as the forgetful functor $\For$ commutes with all the standard functors and adjunctions between them, \cite[Proposition 6.4]{SoWe} implies that the morphism under consideration yields an isomorphism upon applying $\For$. Thus, our morphism must have been an isomorphism already, since its mapping cone vanishes under $\For$ and therefore was zero to begin with, cf.~\ref{forget}.
\end{proof}

\begin{example}
Consider the $\mathbb{G}_{\op{m}}$-action on $\mathbb{A}^n$ by dilations. This contracts $\mathbb{A}^n$ to the origin. So, for the constant motive $\fin^*\const{\pt}\in \mathbb{D}^+_{\mathbb{G}_{\op{m}}}(\mathbb{A}^n)$, Lemma~\ref{lemma:springer} reduces to the statement that the adjunction map yields an isomorphism 
\[
\const{\pt} \xrightarrow{\cong} \fin_*\fin^*\const{\pt}. 
\]
\end{example}

\begin{definition}
\label{def:slice}
\index{contracting slice}
Let $G\looparrowright X$ be a variety with action by a linear algebraic group and let $x\in X$. A \emph{contracting slice} at $x$ (for the $G$-action) is a locally closed subvariety $S\subset X$ containing $x$ and satisfying:
\begin{enumerate}
\item there exists a one-parameter subgroup $\mathbb{G}_{\op{m}}\to G_x$ which stabilizes $S$ and contracts $S$ to $x$;
\item the map $a_S\colon G\times S \to X$, $(g,x)\mapsto gx$, is smooth.
\end{enumerate}
\end{definition}

The existence of contracting slices has implications for pointwise purity:

\begin{lemma}
\label{lemma:contractingslices}
In the situation of Proposition~\ref{prop:equivweight}, let $G\looparrowright X$ be a variety with action which is $G$-equivariantly Whitney--Tate, and such that each $G$-orbit contains a point which admits a contracting slice. Then any motive $M\in \DMT_G(X)_{\op{wt}=0}$ is pointwise pure.
\end{lemma}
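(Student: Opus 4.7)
The strategy is to control $\iota_x^*M$ and $\iota_x^!M$ for a single point $x$ of each orbit using the contracting slice together with Springer's Homotopy Lemma. Fix an orbit $j\colon G/H\hookrightarrow X$ and, by assumption, choose $x\in G/H$ with a contracting slice $S\subset X$ via a one-parameter subgroup $\mathbb{G}_{\op{m}}\hookrightarrow G_x$ and a smooth action map $a_S\colon G\times S\to X$. Write $\iota_S\colon S\hookrightarrow X$ and $\iota_x\colon\op{pt}\hookrightarrow X$ for the inclusion of $x$, so $\iota_x$ factors through $\iota_S$. Via the induction equivalence (Proposition~\ref{cor:indequiv}) and conservativity of weights under the forgetful functor (Proposition~\ref{prop:wtfor}), it suffices to show that $\iota_x^*M$ and $\iota_x^!M$ are pure of weight $0$ in $\mathbb{D}(\op{pt})$.

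The first step is to show $\iota_S^*M$ is pure of weight $0$. Factor $\iota_S=a_S\circ(e,\op{id}_S)$, where $(e,\op{id}_S)\colon S\hookrightarrow G\times S$ is a regular closed immersion of codimension $\dim G$. The $G$-equivariance of $M$ provides a canonical isomorphism $a_S^*M\cong\op{pr}_S^*\iota_S^*M$ on $G\times S$, arising from the cartesian condition on the simplicial Borel construction whose relevant face maps are the action and the projection. This exhibits $a_S^*M$ as smooth relative to $(e,\op{id}_S)$ in the sense of \ref{belu1472}. Combining this with relative purity for the smooth map $a_S$ (which, by transversality of the slice at $x$, has relative dimension $\dim G_x$), the purity formula of \ref{belu1472} yields
\[
\iota_S^!M \cong \iota_S^*M(-n)[-2n], \qquad n=\dim(G/H).
\]
Since twist-and-shift by $(-n)[-2n]$ preserves weight, while $\iota_S^*M$ has weight $\le 0$ and $\iota_S^!M$ has weight $\ge 0$ by weight-exactness of the six functors (Propositions~\ref{prop:wtres} and \ref{prop:wtpush}), both $\iota_S^*M$ and $\iota_S^!M$ are pure of weight $0$.

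Now Springer's Homotopy Lemma (Lemma~\ref{lemma:springer}), applied to the $\mathbb{G}_{\op{m}}$-equivariant motive $\iota_S^*M$ on the contracting $S$, gives $\iota_x^*M\cong(\fin_S)_*(\iota_S^*M)$. Since $(\fin_S)_*$ preserves weight $\ge 0$ and $\iota_S^*M$ is pure of weight $0$, the motive $\iota_x^*M$ has weight $\ge 0$; combined with the a priori bound $\iota_x^*M\in\mathbb{D}(\op{pt})_{\op{wt}\le 0}$, it is pure of weight $0$. For $\iota_x^!M$, apply the same argument to $DM$, which is again pure of weight $0$ by Proposition~\ref{prop:wtdual}, and use $\iota_x^!M\cong D(\iota_x^*DM)$ from equivariant Verdier duality (Proposition~\ref{equiv:verdier}) together with the fact that $D$ preserves the heart. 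The main subtlety is the invocation of \ref{belu1472}: one must verify that $(e,\op{id}_S)$ is relatively smooth (which follows from smoothness of $G$ at $e$) and that $a_S^*M$ is smooth relative to $(e,\op{id}_S)$, which is precisely where $G$-equivariance enters crucially through the identification $a_S^*M\cong\op{pr}_S^*\iota_S^*M$.
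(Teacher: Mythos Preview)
Your proof is correct and shares the overall architecture with the paper's argument: reduce to the $*$-restriction via Verdier duality, pass to the slice to obtain a $\mathbb{G}_{\op{m}}$-equivariant motive on $S$, and invoke Springer's Homotopy Lemma to identify the pointwise $*$-restriction with a $*$-pushforward, thereby making $x^*$ simultaneously weight-left- and weight-right-exact. The difference lies in how purity on the slice $S$ is established. The paper observes that $a_S$ descends to a smooth map $a\colon G\times_{/\mathbb{G}_{\op{m}}}S\to X$ and then uses that $a^*$ and the induction equivalence $\DMT_G(G\times_{/\mathbb{G}_{\op{m}}}S)\approx\DMT_{\mathbb{G}_{\op{m}}}(S)$ are both weight-exact (the latter by construction of the weight structure), so the image $M'$ of $M$ in $\DMT_{\mathbb{G}_{\op{m}}}(S)$ is pure directly. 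You instead factor $\iota_S = a_S\circ(e,\op{id}_S)$, exploit the equivariance identification $a_S^*M\cong\op{pr}_S^*\iota_S^*M$ to verify the hypothesis of \ref{belu1472} for the regular immersion $(e,\op{id}_S)$, and deduce $\iota_S^!M\cong\iota_S^*M(-n)[-2n]$; the half-exactness of $\iota_S^*$ and $\iota_S^!$ for the non-equivariant weight structure then forces purity. The paper's route is slightly more economical (it avoids \ref{belu1472} and works entirely within the equivariant weight formalism), while your route makes the role of equivariance more explicit at the level of underlying motives and leans on the non-equivariant weight theory via Proposition~\ref{prop:wtfor}. One small point: your claim that $a_S$ has relative dimension $\dim G_x$ (equivalently $n=\dim(G/H)$) presupposes a transversality of the slice not literally guaranteed by Definition~\ref{def:slice}, but the precise value of $n$ is immaterial to the weight argument.
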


\begin{proof}
Let $j:G/H\to X$ be a $G$-orbit and let $M\in\DMT_G(X)_{\op{wt}=0}$. By Definition~\ref{poip}, we need to show that $j^\ast M,j^!M\in \DMT_G(G/H)_{\op{wt}=0}$. By assumption there is a point $x\in j(G/H)$ in the orbit and a contracting slice $i\colon S\hookrightarrow X$ at $x$; we denote by $(\iota_H,\iota_x)\colon(H\looparrowright \pt)\to (G\looparrowright G/H)$ the inclusion of the point $x$ in the orbit. 

By definition of the weight structure, cf. Corollary~\ref{cor:wtorb}, it suffices to show $(\iota_H,\iota_x)^\ast j^\ast M, (\iota_H,\iota_x)^\ast j^!M\in\DMT_H(\pt)_{\op{wt}=0}$. The first of these motives is simply obtained by restriction along the composition $(\op{id},j)\circ(\iota_H,\iota_x)$. For the second, we can factor $(\iota_H,\iota_x)^\ast\simeq(\op{id}_H,\iota_x)^\ast\circ\op{Res}_G^H$; by absolute purity, $(\op{id}_H,\iota_x)^\ast\simeq (\op{id}_H,\iota_x)^!(d)[2d]$ with $d=\dim G/H$. Since Verdier duality commutes with $\op{Res}_G^H$, $(\iota_H,\iota_x)^\ast j^!M\cong (\op{id}_H,\iota_x)^!\op{Res}_G^H j^!M(d)[2d]$ and the latter is simply given by restriction $\op{Res}_G^H$ followed by $!$-restriction along the composition $j\circ\iota_x$. In slight abuse of notation, it suffices to show $(\iota_H,j\circ\iota_x)^?M\in\DMT_H(\pt)_{\op{wt}=0}$ for $?=\ast,!$. Since the hearts of the weight structure as well as the categories of mixed Tate motives are stable under Verdier duality, cf. Propositions~\ref{prop:wtdual} and \ref{prop:dmtverdier}, it suffices to prove just the statement about $\ast$-restriction.

Now we consider the contracting slice $i\colon S\hookrightarrow X$ at $x$; we abusively denote by $x$ the inclusion of the point $x$ in the contracting slice $S$. Let $\mathbb{G}_{\op{m}}\to G_x$ be a one-parameter subgroup and $a_S:G\times S\to X$ be as in Definition~\ref{def:slice}, and write $a\colon G\qtimes{/\mathbb{G}_{\op{m}}} S \to X$ for the map induced by $a_S$. Consider the following composition, which is well-defined by the various results concerning compatibility of Tate motives with the six functors, cf. Section~\ref{sec:tatemotives}
\[
\DMT_G(X)\xrightarrow{a^\ast} \DMT_G(G\qtimes{/\mathbb{G}_{\op{m}}}S) \xrightarrow{\mathrm{ind. equiv.}} \DMT_{\mathbb{G}_{\op{m}}}(S) \xrightarrow{x^\ast}  \DMT_{\mathbb{G}_{\op{m}}}(\pt) 
\]
As $a_S$ is smooth by assumption, so is $a$. In particular, it is weight exact. Note also that the induction equivalence is weight exact by the definition of the weight structure, cf.~Corollary~\ref{cor:wtorb}. Consequently, if $M\in\DMT_G(X)_{\op{wt}=0}$, then the image $M'$ of $a^\ast M$ under the induction equivalence lands in $\DMT_{\mathbb{G}_{\op{m}}}(S)_{\op{wt}=0}$. Springer's homotopy lemma~\ref{lemma:springer} now implies an isomorphism $\lim_\ast M'\cong x^\ast M'$ in $\DMT_{\mathbb{G}_{\op{m}}}(\pt)$, and this isomorphism of a left with a right adjoint implies that $x^\ast$ is weight-exact. In particular, $\For(x^\ast M')\in\DMT(\pt)_{\op{wt}=0}$ and this is exactly what we needed to show. 
\end{proof}

\subsection{Graded versions and degrading functors}

\label{bgs43}
\index{mixed category}

In \cite[4.3]{BGSo}, gradings were defined on artinian categories of modules. In this work, we want to show that equivariant mixed Tate motives provide a graded version of the equivariant derived category. This means that we need suitable modifications of the notions defined in \cite{BGSo}. 

In our context, the \emph{mixed categories} are triangulated category equipped with a weight structure in the sense of Bondarko, cf.~Definition~\ref{defin:wtstruct}. Recall from Section~\ref{sec:weights} that the categories of equivariant mixed Tate motives we consider satisfy this requirement. Note that all the properties discussed in \cite[4.1]{BGSo}, in particular the orthogonality and the weight filtration, follow from the very definition of weight structures. 

A \emph{degree $d$ Tate twist} of the category $\mathcal{M}$ in our context is an auto-equivalence $\langle d\rangle:\mathcal{M}\to\mathcal{M}$ which satisfies 
\[
\mathcal{M}_{\op{wt}\leq n}\langle d\rangle=\mathcal{M}_{\op{wt}\leq n+d}, \textrm{ and }
\mathcal{M}_{\op{wt}\geq n}\langle d\rangle=\mathcal{M}_{\op{wt}\geq n+d}.
\]
For mixed Tate motives, the functor $(-)\otimes\mathbb{Q}(1)[2]$ preserves motivic weights. This means that the Tate twist $(-)\otimes\mathbb{Q}(1)$ is a degree $-2$ Tate twist. 

With these modifications, we can now define degrading functors and graded versions. 
\begin{definition}
\index{degrading functor}
Let $\mathcal{C}$ be a triangulated category and let $\mathcal{M}$ be a mixed category with a degree $d$ Tate twist $\langle d\rangle$. A functor $v:\mathcal{M}\to\mathcal{C}$ is called \emph{degrading functor} if it is exact, faithful and is equipped with a choice of natural iso-transformation 
\[
\epsilon:v \stackrel{\approx}{\longrightarrow} v\circ\langle d\rangle. 
\]
\end{definition}

\begin{definition}
\index{graded version}
Let $\mathcal{C}$ be a triangulated category, let $\mathcal{M}$ be a mixed triangulated category with degree $d$ Tate twist $\langle d\rangle$, and let $v:\mathcal{M}\to\mathcal{C}$ be a degrading functor with iso-transformation $\epsilon:v\simeq v\circ\langle d\rangle$. The tuple $(\mathcal{M},v,\epsilon)$ is called a \emph{graded version} of $\mathcal{C}$, if in addition the degrading functor $v$ is essentially surjective and induces natural isomorphisms
\[
\bigoplus_{n\in\mathbb{Z}} \mathcal{M}\left(M,N\langle n d\rangle\right) \stackrel{\cong}{\longrightarrow} \mathcal{C}\left(v(M), v(N)\right)
\]
for any objects $M,N\in\mathcal{M}$. 
\end{definition}

\chapter{Representation theory}
\label{chap:repthy}

The third part of the paper applies the results on equivariant mixed Tate motives to situations relevant for geometric representation theory. We begin with a short recollection on Hecke algebras, the Schur algebroid and some of its modules in Section~\ref{sec:hecke}, along with a formulation of the main results that will be established in this part. In Section~\ref{sec:korbit} we provide some background on the combinatorial structures for Borel actions on spherical homogeneous spaces that have been discussed in the literature, along with a more detailed recollection on the geometry of orbit closures and generalizations of Schubert varieties. Then Section~\ref{sec:BS} develops a formalism of Bott--Samelson motives which provides a way to establish the equivariant Whitney--Tate and pointwise purity conditions for actions of parabolic groups. This general formalism is applied in Section~\ref{sec:tiltingapp} to the case of parabolic group actions on symmetric varieties. In section~\ref{sec:convolution2}, we discuss convolution and its compatibility with six functors and tilting. The first representation-theoretic application is given in Section~\ref{sec:parabolic}, where we explain how equivariant mixed Tate motives provide a graded categorification of the Schur algebroid. The second application is given in  Section~\ref{sec:symmetric} where we provide the graded categorification of Schur algebroid modules arising from symmetric varieties, with applications to gradings on equivariant derived categories relevant for representation theory of real Lie groups. Finally, Section~\ref{sec:wonderful} contains a short discussion of the case of wonderful compactifications.

\section{Recollections and outline of applications}
\label{sec:hecke}

In this section, we recall some objects related to the Hecke algebra and give an overview how these will be categorified using equivariant mixed Tate motives. The claims made in this introductory section will be proven in the remainder of the chapter. 

\subsection{Categorification of the Schur algebroid} 

We begin with a recollection of the Schur algebroid, following \cite{GWi}. This is a category associated to the system of standard parabolic subgroups of a reductive group and encompasses the Hecke algebra as well as the parabolic Hecke modules of \cite{Deopar}. We will provide a new categorification of the Schur algebroid using equivariant mixed Tate motives. This will, in particular, also provide motivic categorifications of the Hecke algebra as well as the parabolic Hecke modules.

\begin{definition}[\textbf{The Schur algebroid}]
\index{Schur algebroid}
Let $G$ be a connected split reductive group over a finite field $\mathbb{F}=\mathbb{F}_q$. Fix a Borel subgroup $B\subset G$ also defined over $\mathbb{F}$. Following \cite[Section 2]{GWi}, we define a $\mathbb{Z}$-linear category, the \emph{Schur algebroid} associated to the situation $(G,B)$, as follows:
\begin{itemize}
\item The objects are parabolic subgroups $P\subset G$ containing $B$. 
\item Given any two parabolic subgroups $P,Q\subset G$ containing $B$, the morphisms from $Q$ to $P$ are given by the abelian group
\[
{^P\mathcal H^Q_{q}}\pdef {^{P(\mathbb{F})}\DZ[G(\mathbb{F})]^{Q(\mathbb{F})}}
\]
of $\DZ$-valued functions on the finite group $G(\mathbb{F})$ which are invariant under left  translation by elements of $P(\mathbb{F})$ and right translation by elements of $Q(\mathbb{F})$. 
\item The composition of morphisms is given by (renormalized) convolution
\[
\ast_Q:{^P\mathcal H^Q_{q}}\times {^Q\mathcal H^R_{q}}\ra {^P\mathcal H^R_{q}}
\]
given by first taking the product in the group ring and then dividing by the cardinality of $Q(\mathbb{F})$. 
\end{itemize}
Note that the characteristic functions of the $P$-$Q$-double cosets form a basis of $^{P(\mathbb{F})}\mathbb{Z}[G(\mathbb{F})]^{Q(\mathbb{F})}$. 
\end{definition}

\begin{Bemerkung}
The endomorphism ring of the object $B$ in the Schur algebroid is the Hecke algebra associated to the inclusion of finite groups $B(\mathbb{F})\subset G(\mathbb{F})$. 
\end{Bemerkung}

\begin{Bemerkung}[\textbf{The universal Schur algebroid}]
\label{schuruniv}
\index{Schur algebroid!universal}
As usual, for the pair $(G,B)$ of reductive group $G$ and a choice of Borel subgroup $B\subset G$, we have an associated Coxeter system $(W,S)$, where $W$ is the Weyl group and $S\subset W$ a system of simple reflections. Denote by $W_R\subset W$  the subgroup corresponding to a standard parabolic subgroup $R\subset G$. With this notation, the set of parabolic subgroups $P\subset G$ containing $B$ is in natural bijection with the set $\mathcal{P}(S)$ of sets of simple reflections. Consequently, we get a natural bijection 
\[
W_P\backslash W/W_Q\xrightarrow{\cong} P(\mathbb{F})\backslash G(\mathbb{F})/Q(\mathbb{F}): w\mapsto T_w
\]
between the double cosets  $P(\mathbb{F})\backslash G(\mathbb{F})/Q(\mathbb{F})$ in the group $G(\mathbb{F})$ and the double cosets $\bar z\in W_P\backslash W/W_Q$ in the associated Weyl group. 

Using the above bases given by $T_w$ (and the fact that the relevant abelian groups are finitely generated), it is possible to write down structure constants for the convolution products 
\[
\ast_Q:{^P\mathcal H^Q_{q}}\times {^Q\mathcal H^R_{q}}\ra {^P\mathcal H^R_{q}},
\]
decomposing the product of basis elements in ${^P\mathcal H^Q_{q}}$ and ${^Q\mathcal H^R_{q}}$ as linear combinations of basis elements in ${^P\mathcal H^R_{q}}$. It turns out that these structure constants depend polynomially on $q=|\mathbb{F}|$. Therefore, viewing $q$ as a variable and writing the structure constants as polynomials in $q$, we can define a $\DZ[q]$-linear category $\mathcal{H}$ whose objects are still the standard parabolic subgroups, but where the morphism spaces ${^P\mathcal H^Q}$ are now free modules over the polynomial ring $\DZ[q]$, generated by $T_{\bar z}$ for ${\bar z}\in W_P\backslash W/W_Q$, with convolution given by the above structure constant polynomials. This category is still called the Schur algebroid or sometimes the \emph{universal  Schur algebroid}. 

In abuse of notation, we will also denote by $\mathcal{H}$ the $\mathbb{Z}[q,q^{-1}]$-linear category obtained by extension of scalars along the obvious morphism $\mathbb{Z}[q]\to\mathbb{Z}[q,q^{-1}]$. 
\end{Bemerkung}

\begin{Bemerkung}
\index{Iwahori--Hecke algebra}
The endomorphism ring ${^B\mathcal H^B}$ of $B$ is  called the \emph{Iwahori--Hecke algebra}. The parabolic Hecke modules of \cite{Deopar} can be recovered as morphism spaces ${^B\mathcal{H}^P}$. We refrain from giving the well-known presentations for the convolution product, cf.~e.g.~\cite{iwahori}.
\end{Bemerkung}

\begin{Bemerkung}
An alternative definition of the universal Schur algebroid for reductive groups over $\mathbb{C}$ is discussed in Geordie Williamson's post answering MathOverflow question 74017 ``Parabolic convolution of perverse sheaves in terms of the Hecke algebra''. The Schur algebroid is defined as the split Grothendieck group of an additive 2-category whose objects are still the standard parabolic subgroups, and the 1-morphisms and 2-morphisms are given by perversely semi-simple complexes exact in odd degrees and their morphisms, respectively, inside the equivariant derived category $\op{Der}^{\op{b}}_{P\times Q}(G)$. Convolution induces bifunctors
\[
-\star_Q-:\op{Der}^{\op{b}}_{P\times Q}(G)\times \op{Der}^{\op{b}}_{Q\times R}(G)\to \op{Der}^{\op{b}}_{P\times R}(G),
\]
and by the decomposition theorem these  induce a convolution  on said Grothendieck groups. In particular, this definition of the Schur algebroid already comes with a categorification given in terms of equivariant derived categories. 

Note that one could also consider a 2-category whose objects are the standard parabolic subgroups, and whose 1-morphisms and 2-morphisms are given by objects and morphisms, respectively, of the equivariant derived category $\op{Der}^{\op{b}}_{P\times Q}(G)$. Again, convolution induces bifunctors
\[
-\star_Q-:\op{Der}^{\op{b}}_{P\times Q}(G)\times \op{Der}^{\op{b}}_{Q\times R}(G)\to \op{Der}^{\op{b}}_{P\times R}(G),
\]
and these induce the convolution of functions on the Grothendieck groups. This would, however, not be the right thing: the Grothendieck group of $\op{Der}^{\op{b}}_{P\times Q}(G)$ is finite and the morphism spaces in the universal Schur algebroid are not; the information in the universal Schur algebroid is a $\mathbb{Z}[q,q^{-1}]$-module structure and the abelian group structure for the equivariant derived categories is obtained by evaluation at $q=1$.
\end{Bemerkung}

\begin{Bemerkung}
  To be clear about terminology, categorification of the Schur algebroid here will mean that for each of pair of standard parabolic subgroups $P,Q\subset G$ we are looking for a category $\mathcal{C}_{P,Q}$ whose Grothendieck group (or split Grothendieck group) is isomorphic to the module ${}^P\mathcal{H}^Q$. Moreover, we want that there are convolution functors $\star_Q:\mathcal{C}_{P,Q}\times\mathcal{C}_{Q,R}\to \mathcal{C}_{P,R}$ in such a way that the multiplication operations induced by the convolution functors on the Grothendieck groups are mapped to the convolution operation for the Schur algebroid under the above isomorphisms.  In fact, we will obtain graded categorifications, which means that our categorifications will have Tate twists and weight structures, and passing to Grothendieck groups these will corresponds to the mixed structure (wherever available). The main novelty in our work is that we will provide a categorification of the universal Schur algebroid via the ordinary Grothendieck group of suitable categories of equivariant mixed Tate motives, while previous work used the split Grothendieck group of perversely semi-simple complexes.
\end{Bemerkung}

\begin{Bemerkung}[\textbf{Categorification of the  Schur algebroid}]
  We first describe how to categorify the Schur algebroid $\mathcal{H}_q$ associated to $(G,B)$ over a specific finite field $\mathbb{F}_q$. Here we need to extend coefficients to $\DZ[q^{-1}]\subset \DQ$.  Recall (e.g. from \cite{kiehl:weissauer}) that the Grothendieck function--sheaf correspondence takes a constructible $\ell$-adic sheaf $\mathcal{F}$ on a scheme $X$ over the finite field $\mathbb{F}_q$ to the function mapping a point $x\in X(\mathbb{F}_q)$ to the Frobenius trace of its stalk $\mathcal{F}_x$. We can now categorify the Schur algebroid using the function--sheaf correspondence as follows: for two standard parabolic subgroups $P,Q\subset G$ consider the category $\DMT_{P\times Q}(G)$ of $(P\times Q)$-equivariant mixed Tate motives over $G$, with the underlying derivator $\mathbb{D}=\mathbf{DA}^{\et}(-;\mathbb{Q}_\ell)$ (or slightly more general with a coefficient field $\Lambda\subset \mathbb{Q}_\ell$). As in the case of derived categories, we have convolution bifunctors 
\[
-\star_Q-:\DMT_{P\times Q}(G)\times \DMT_{Q\times R}(G)\to \DMT_{P\times R}(G). 
\]
There are $\ell$-adic realization functors 
\[
\op{Real}_\ell:\DMT_{P\times Q}(G)\to \op{Der}_{P\times Q}(G;\mathbb{Q}_\ell)
\]
from mixed Tate motives to  equivariant  $\ell$-adic sheaves on $G$, and these are compatible with all six functors and convolution.
If we now apply the function--sheaf correspondence to the resulting Weil sheaf (i.e., an $\ell$-adic sheaf with an additional action of Frobenius), we obtain a morphism
\[
\op{K}_0(\DMT_{P\times Q}(G))\to {^P\mathcal{H}^Q_q}.
\]
\end{Bemerkung}

\begin{Bemerkung}[\textbf{Categorification of  universal Schur algebroid}]
\label{CUS} 
In fact, we have an isomorphism
\[
\op{K}_0(\DMT_{P\times Q}(G))\xrightarrow{\cong} {^P\mathcal{H}^Q}
\]
to the universal Schur algebroid which maps the motive $j_!\underline{PzQ}(i)$, given by extension by zero of the constant mixed Tate sheaf on the double coset with weight $2i$, to the element $q^iT_{\bar z}$. The morphism $\op{K}_0(\DMT_{P\times Q}(G))\to {^P\mathcal{H}^Q_q}$ above can be obtained by specializing the variable $q$ to the corresponding  prime power. Under this isomorphism, convolution maps to convolution, Verdier duality corresponds to the Kazhdan--Lusztig involution and the intersection cohomology complexes correspond to the elements of the Kazhdan--Lusztig selfdual basis.
\end{Bemerkung}

\begin{Bemerkung}[\textbf{Alternative categorification of universal Schur algebroid}] 
To obtain an alternative categorification of the universal Schur algebroid, one may use the additive category $\op{MTDer}_{P\times Q}(G;\DQ_\ell)_{\op{wt}=0}$ of $(P\times Q)$-equivariant mixed Tate motives on $G$ which are pure of weight zero. This is the heart of a weight structure whose existence is a consequence of pointwise purity. Again, these categories are connected by the appropriate restrictions of convolution functors. 

Base-changing to the algebraic closure and setting $\overline{G}=G\times_{\mathbb F_q}\overline{\mathbb F_q}$, $\ell$-adic realization functor induces the second of two isomorphisms
\[
\op{K}_0(\DMT_{P\times Q}(\overline{G}))\stackrel{\cong}{\leftarrow} \op{K}_0^{\op{split}}(\DMT_{P\times Q}(\overline{G})_{w=0})\xrightarrow{\cong} \op{K}_0^{\op{split}}\left(\op{Der}_{P\times Q}^{\op{ev},\op{ss}}(\overline{G};\DQ_\ell)\right)
\]
where $\op{Der}_{P\times Q}^{\op{ev},\op{ss}}(\overline{G};\DQ_\ell)$ denotes the category of geometric perversely semisimple complexes with only even cohomology. The restriction of the trace of Frobenius map then induces an isomorphism
\[
\op{K}_0^{\op{split}}\left(\op{Der}_{P\times Q}^{\op{ev},\op{ss}}(G;\DQ_\ell)\right) \stackrel{\cong}{\longrightarrow} {^P\mathcal H^Q}\otimes_{\DZ[q]}\DZ[q,q^{-1}]
\]
from the split Grothendieck group of this additive category to the module ${^P\mathcal H^Q}$ with scalars extended to the ring $\DZ[q,q^{-1}]$ of Laurent polynomials. If we formally adjoin a square root of $q$ and put for later convenience $v^{2}=q^{-1}$, we similarly get an isomorphism  
\[
\op{K}_0^{\op{split}}\left(\op{Der}_{P\times Q}^{\op{ss}}(G;\DQ_\ell)\right) \stackrel{\cong}{\longrightarrow} {^P\mathcal H^Q}\otimes_{\DZ[q]}\DZ[v,v^{-1}].
\]
Explicitely, it is given by a formula of the type
$\mathcal F\mapsto \sum_z\op{dim}\mathcal H^i(\mathcal F_z)v^iT_z$ for
$\mathcal F_z$ the stalk at any point of our double coset corresponding to $z$. Under the above isomorphisms, convolution maps to convolution, Verdier duality corresponds to the Kazhdan--Lusztig involution and the intersection cohomology complexes correspond to the elements of the Kazhdan--Lusztig selfdual basis. In particular, the categorification via equivariant mixed Tate motives (or  the heart of the relevant weight structure) recovers previous categorifications of the universal Schur algebroid via equivariant perverse sheaves. 
\end{Bemerkung}

Following the discussion in Williamson's answer to MO-question 74017, a categorification of the Schur algebroid can be achieved by simply writing degrading functors from motives to derived categories. This is proved in Theorem~\ref{thm:schur}. The two categorifications mentioned above are related; the tilting equivalence 
\[
\op{Hot}^{\op{b}}\left(\DMT_{P\times Q}(G)_{\op{wt}=0}\right)\stackrel{\simeq}{\longrightarrow} \DMT_{P\times Q}(G)
\]
induces an equivalence from the split Grothendieck group of the heart of the weight structure to the Grothendieck group of the full triangulated category. This can be seen as a formality result for the equivariant derived category, which recovers known formality results, cf. \cite{SchTH}.

\begin{Bemerkung}[\textbf{Categorification via bimodules}]
Another way to obtain a categorification of the Schur algebroid is via equivariant cohomology bimodules. For an equivariant motive $M\in\mathbb{D}^+_{P\times Q}(G)$, we can consider some motivic version of equivariant cohomology, given by first pushing down along $\op{fin}:G\to\pt$ and applying the functor 
\[
\mathbb{D}^+_{P\times Q}(\pt)\rightarrow \mathcal{A}_P\op{-Mod^{\mathbb{Z}}-}\mathcal{A}_Q
\]
of Theorem~\ref{thm:tilting} to $\op{fin}_\ast(M)$. This functor maps Bott--Samelson motives to Soergel bimodules, and in fact induces, via tilting,
 an equivalence from the category of equivariant mixed Tate motives to the homotopy category of Soergel bimodules:
\[
 \DMT_{P\times Q}(G) \stackrel{\approx}{\longrightarrow} \op{Hot}^{\op{b}}(\mathcal{A}_P\op{-SMod^{\mathbb{Z}}-}\mathcal{A}_Q) 
\]
All this is essentially proved in Theorem~\ref{thm:gradedparabolic}. Via the known categorification of the Hecke algebra using Soergel bimodules, this shows that $\op{K}_0(\DMT_{B\times B}(G))$ is isomorphic to the Hecke algebra.
\end{Bemerkung}

\begin{Bemerkung}[\textbf{Relation to knot homology}]  
For the needs of knot theory, Khovanov proposed to work with $\op{Hot}^{\op{b}}(\op{Der}_{P\times Q}^{\op{ss}}(G;\DQ_\ell))$ or rather (and equivalently) with the homotopy category of Soergel bimodules $\op{Hot}^{\op{b}}(\mathcal{A}_P\op{-SMod-}\mathcal{A}_Q)$. The coefficient field is no longer relevant now; in fact, Khovanov works with $\DC$-coefficients instead of $\DQ_\ell$-coefficients. As above, the homotopy category of Soergel bimodules admits a more geometric interpretation by equivariant mixed Tate motives over the group $G$, via an equivalence of triangulated categories 
\[
 \DMT_{P\times Q}(G) \stackrel{\approx}{\longrightarrow} \op{Hot}^{\op{b}}(\mathcal{A}_P\op{-SMod^{\mathbb{Z}}-}\mathcal{A}_Q) 
\]
compatible with convolution. In Section~\ref{sec:knot}, we show that the so-called Rouquier complexes on the right in the case $P=Q=B$ correspond to the standard objects $i_!\underline{P_s}$ and $i_\ast\underline{P_s}$ on the left, with $P_s$ the parabolic with Weyl group $\{1,s\}$ for a simple reflection $s$. As a consequence, the braid relations for Rouquier complexes, which are required for the definition of knot invariants, follow directly from obvious geometric isomorphisms of Bruhat cells. 
\end{Bemerkung}

\begin{Bemerkung}[\textbf{Relation to Harish-Chandra bimodules}]
Consider the special case of Harish-Chandra modules for a \emph{complex} Lie group (alias Harish-Chandra bimodules). In this case, the bounded homotopy category of Soergel bimodules (or equivalently, the category of equivariant mixed Tate motives on the group) is the ``graded version'' of the Langlands-parameter side of the Koszul-duality equivalence conjectured in \cite{So-L}. We may discuss the more general case of real reductive groups in an unspecified sequel, to be written in an intermediate future.
\end{Bemerkung}
    
\subsection{Hecke modules for symmetric varieties}

Now we want to discuss the more general situation of parabolic actions on spherical homogeneous spaces, i.e., homogeneous spaces $G/H$ where a Borel subgroup $B\subset G$ acts with finitely many orbits. In this situation, one can also define a module over the Hecke algebra which is generated by elements corresponding to local systems over orbits. This was first done by Lusztig and Vogan in \cite{lusztig:vogan}. The module can be described purely algebraically, based on an analysis of the combinatorics of orbit closures similar to the Weyl group case, cf. the explicit formulas detailed in \cite[Section 3.7]{springer:schubert} or \cite[Section 7]{richardson:springer:survey}. 

It is, however, conceptually more satisfying to interpret it as Grothendieck group of suitable categories of $\ell$-adic sheaves, as done in \cite{Mars-Springer}. Some of the more elaborate results concerning the $\ell$-adic sheaves require more precise geometric information, like existence of contracting slices, which is only available in the symmetric case. In this case, the categorical description of the Hecke module in \cite{Mars-Springer} is the following: 

\begin{Bemerkung}[{\bf Hecke modules over ``finite fields''}]
\label{symmmod}
Let $G$ be a connected reductive group over the algebraic closure of a finite field, let $K\subset G$ be a symmetric subgroup and set $Y=G/K$. 

In \cite[(3.1.2)]{Mars-Springer}, Mars and Springer define a category $\mathcal{A}_Y$ of perverse sheaves on $Y$ having a Frobenius structure and suitable equivariance for a Borel subgroup. There is also a definition of a category $\mathcal{C}_Y$ of constructible equivariant $\ell$-adic sheaves on $Y$ having a Frobenius structure. The Grothendieck group of $\mathcal{C}_Y$ has as natural basis the local systems on $B$-orbits, and the Grothendieck of $\mathcal{A}_Y$ has as natural basis the intersection cohomology sheaves associated to these local systems. Moreover, there is a natural map $\mathcal{H}:\op{K}(\mathcal{A}_Y)\to\op{K}(\mathcal{C}_Y)$ taking a perverse sheaf to the alternating sum of its constructible cohomology sheaves.

For the special case of the $B\times B$-action on $G\times G$ with the symmetric subgroup $\Delta G$, convolution defines algebra structures on $\op{K}(\mathcal{A}_G)$ and $\op{K}(\mathcal{C}_G)$, such that the cohomology morphism $\mathcal{H}$ above is an algebra isomorphism, cf. \cite[(3.2.2--3.2.4)]{Mars-Springer}. This recovers the Hecke algebra for $G/\mathbb{F}_q$ discussed above. In a similar way, convolution defines module structures 
\[
\op{K}(\mathcal{A}_G)\times\op{K}(\mathcal{A}_Y)\to \op{K}(\mathcal{A}_Y), \quad \textrm{ and } \quad \op{K}(\mathcal{C}_G)\times\op{K}(\mathcal{C}_Y)\to \op{K}(\mathcal{C}_Y)
\]
for general $Y=G/K$, such that the cohomology morphism $\mathcal{H}$ is a module isomorphism, cf. \cite[(3.2.5--3.2.8)]{Mars-Springer}. Verdier duality induces an involution of the algebras and modules, which is semilinear for the module structure, cf. \cite[Section 3.3]{Mars-Springer}. 
\end{Bemerkung}

\begin{Bemerkung}[{\bf Universal modules over the Hecke algebra}]
In analogy with the discussion of the universal Schur algebroid in \ref{schuruniv}, one can ask about the dependence on the prime power $q$ underlying the construction of \cite{Mars-Springer} outlined in \ref{symmmod}. The main result, \cite[Theorem 7.1.2]{Mars-Springer}, states that this dependence on $q$ is polynomial, so that the Hecke modules can actually be considered as specializations of modules over the Iwahori--Hecke algebra associated to a symmetric variety. 

The other main conclusion formulated in \cite[Theorem 7.1.2]{Mars-Springer} is a parity statement for intersection cohomology. This is the key statement, implying that everything is well-behaved and a theory of Kazhdan--Lusztig polynomials can be set up parallel to the Schur algebroid case. Note that for this it is necessary to have some sort of mixed structure, and this is obtained from the Frobenius action built into the objects. There are various technical difficulties that then arise; there is also (as far as we know) no analogue of the Mars--Springer construction in characteristic $0$, and the construction isn't quite categorical enough to provide a graded version of the equivariant derived category.
\end{Bemerkung}

What we provide in Section~\ref{sec:symmetric}, generalizing and extending \cite{Mars-Springer}, is a structure of module over the universal Schur algebroid associated to a symmetric subgroup $K\subset G$ in a connected reductive group $G$ over an algebraically closed field, using the formalism of equivariant mixed Tate motives. 

\begin{Bemerkung}[{\bf Categorification of symmetric moduloid}]
\label{motsym}
For a connected reductive group $G$ over an algebraically closed field of characteristic $\neq 2$, a symmetric subgroup $K\subset G$, and a parabolic subgroup $P\subset G$, we can consider the category $\DMT_{P\times K}(G)$ of $P\times K$-equivariant mixed Tate motives on $G$. That these categories are well-behaved follows by combining the formalism of equivariant mixed Tate motives with the geometric input developed in \cite{Mars-Springer}, cf. Theorem~\ref{cor:symmetricWT}. Convolution then provides bifunctors
\[
-\star_Q-:\DMT_{P\times Q}(G)\times \DMT_{Q\times K}(G)\to \DMT_{P\times K}(G).
\]
The collection of Grothendieck groups for the categories $\DMT_{P\times K}(G)$, $P$ a parabolic, provides a structure of module over the universal Schur algebroid for the group $G$, cf. Proposition~\ref{prop:schurmodulesymm}.

As in the case of the Schur algebroid, there are now two ways of seeing this as a categorification, resp. of setting up the relation with the objects defined in \cite{Mars-Springer}, provided we work over the algebraic closure of a finite field. 

The first way is to use the $\ell$-adic realization functor 
\[
\op{Real}_\ell:\DMT_{P\times K}(G)\to \op{Der}^{\op{b}}_{P}(G/K,\mathbb{Q}_\ell).
\]
Since the category of equivariant mixed Tate motives is very combinatorial, the objects in the target will descend to a finite field $\mathbb{F}_q$ (provided the symmetric subgroup is defined there) and the Frobenius structure will recover the weight information from the motive. The image of $\DMT_{B\times K}(G)$ will then be identified with the category $\mathcal{C}_Y$ of \cite{Mars-Springer}, with $Y=G/K$, cf. Theorem~\ref{thm:gradedsymm}. 

The second way is to note that pointwise purity implies that $\DMT_{P\times K}(G)$ has a weight structure. Pointwise purity follows from the existence of contracting slices (and therefore is known only in the case of symmetric varieties, not the general spherical case), and the existence of the weight structure is the motivic version of the parity statements for intersection cohomology established in \cite{Mars-Springer}. Applying $\ell$-adic realization to the heart $\DMT_{B\times K}(G)_{\op{wt}=0}$ sends pure equivariant Tate motives of weight zero to the perverse sheaf category $\mathcal{A}_Y$ of \cite{Mars-Springer}, cf. Proposition~\ref{prop:symmpurity} and Theorem~\ref{thm:gradedsymm}.

These two ways of relating mixed Tate motives to the sheaves of Mars and Springer turn out to be equivalent, again via a tilting result
\[
\op{Hot}^{\op{b}}(\DMT_{P\times K}(G)_{\op{wt}=0})\xrightarrow{\simeq} \DMT_{P\times K}(G), 
\]
which under the above functors turns exactly into the map $\mathcal{H}:\op{K}(\mathcal{A}_Y)\to \op{K}(\mathcal{C}_Y)$ used in \cite{Mars-Springer}, cf. Theorem~\ref{thm:comparisonMS}. 

Note that we are for now ignoring the $\mathbb{Z}[C]$-module structure in \cite{Mars-Springer} which seems to be an artifact of the technical eigenvalue problems introduced by the Frobenius action. Our framework allows to provide a graded Hecke module with $\mathbb{Z}[q,q^{-1}]$ coefficients, and the Mars--Springer modules can be recovered by extensions of scalars along $\mathbb{Z}\to \mathbb{Z}[C]$. 

The main point of the new categorification is then that we obtain streamlined formality results which recover the known parity statements for intersection cohomology sheaves, cf. \cite[Theorem 7.2.1]{Mars-Springer} \cite[3.9]{springer:schubert}. In particular, we get Kazhdan--Lusztig--Vogan polynomials whose coefficients are dimensions of equivariant mixed Tate motives. As an added bonus, our approach also isolates very clearly the relevant geometric inputs for proving formality of equivariant derived categories. We understand that these formality questions were also the subject of recent yet unpublished work of Brion and Joshua \cite{brion:joshua}. 
\end{Bemerkung}

\begin{Bemerkung}
The mixed geometry present in the categories $\DMT_{P\times K}(G)$ allows to obtain gradings on the corresponding equivariant derived categories $\op{Der}^{\op{b}}_{P\times K}(G)$, either with $\mathbb{C}$-coefficients for symmetric varieties over $\mathbb{C}$ or of $\ell$-adic sheaves for symmetric varieties over $\overline{\mathbb{F}_q}$. The degrading functors are simply given by the corresponding realization functors, cf. Theorem~\ref{thm:abvconjecture}.
\end{Bemerkung}

\begin{Bemerkung}
Unfortunately, the picture for now is not as complete as the one for the Schur algebroid. While we now have mixed versions of the equivariant derived categories, purity and formality results, there is no combinatorial model available that would be analogous to the Soergel bimodules for the Schur algebroid. Part of this seems due to the fact that natural resolutions for orbit closures are only available in multiplicity one cases, cf. \ref{symmBSresolution}. 
\end{Bemerkung}

\begin{Bemerkung}[{\bf Application to representation theory}]
The motivic categories for parabolic group actions on symmetric varieties have several applications relevant for the representation theory of real Lie groups. First of all, the tilting and formality results Theorem~\ref{thm:parabolicWT} and Corollary~\ref{tiFL} establish the Soergel--Lunts conjecture on formality of equivariant derived categories, cf. \cite[Conjecture 0.1.3]{Lu-tor} and \cite{So-L}. 

The motivic categories for symmetric varieties also solve the ``geometric'' part of Soergel's conjectures from \cite{So-L} which deals with graded versions $\mathcal{D}_g$ of the equivariant derived categories. More precisely, Theorem~\ref{thm:abvconjecture} shows that the categories $\DMT_P(X)$ for $X=G/K$ are graded versions of the respective equivariant derived categories. Via the Matsuki correspondence relating $G_{\mathbb{R}}$-orbits on flag varieties to $K$-orbits, cf. \cite[Section 6]{richardson:springer:survey}, the representations of a real Lie group (and its inner forms) can be related to the equivariant derived categories of suitable symmetric spaces $B\looparrowright G/K$. The graded versions of these categories, which were conjectured to exist in \cite[Conjectures 4.2.2 and 4.2.3]{So-L}, are now available via the formalism of equivariant mixed Tate motives. 

On the other hand, according to the conjectures in \cite{So-L}, these categories should be related to graded versions of derived categories of Harish-Chandra modules via a Koszul-type duality. At this point, even the existence of graded versions of such derived categories is unknown. However, we expect that the equivariant motivic approach outlined here can also be used to construct categories of motives satisfying suitable monodromy conditions; these categories should provide the required graded versions of the derived category of Harish-Chandra modules via a motivic version of the Bernstein--Lunts localization for $(\mathfrak{g},K)$-modules in \cite{BeLuK}. This would establish the representation-theoretic part of Soergel's conjectures from \cite{So-L}. Having graded versions for both sides of Soergel's conjectural Koszul duality should greatly simplify both the construction of explicit candidates for the Koszul duality functors as well as the proof of equivalence (e.g. by comparing finite-dimensional mapping spaces). We understand that Ben Zvi and Nadler have a yet unpublished construction of a candidate functor for the Koszul duality (in a non-motivic setting without the full graded versions). The construction and properties of the categories of monodromic motives will be subject of a sequel paper. 
\end{Bemerkung}

\subsection{Wonderful compactifications}

We discuss another case to which the formalism of mixed Tate motives applies. As in Example~\ref{ex:wonderful}, let $G$ be a connected adjoint semi-simple group with Borel subgroup $B$. Let $G\times G\looparrowright X$ be a wonderful compactification of $G$, viewed as $G\times G$-variety. Then the Borel subgroup $B\times B\subset G\times G$ acts with finitely many orbits. For a choice of maximal torus $T\subset B\subset G$, the relevant Hecke algebra is $\mathcal{H}:=\mathcal{H}(W,S)\otimes_{\mathbb{Z}[v,v^{-1}]}\mathcal{H}(W,S)$. 

\begin{Bemerkung}
As in the case of Hecke modules for symmetric varieties, if the whole situation is defined over a finite field $\mathbb{F}_q$, there is a module over the appropriate Hecke algebra which can be described algebraically but which is better interpreted as Grothendieck group of suitable categories of equivariant $\ell$-adic sheaves. This was discussed in \cite{SpCompact}. In fact, the definition of the Hecke module for a spherical variety of \cite{Mars-Springer}, cf. \ref{symmmod}, applies to this setting. In particular, there is one module defined via perverse sheaves on $X$ (equivariant and with Frobenius structure) and one module defined via constructible sheaves on $X$ (equivariant and with Frobenius structure). These are isomorphic via the cohomology morphism $\mathcal{H}$. 

The main results, cf. \cite[Theorem 4.2]{SpCompact}, are again that the module structure depends polynomially on $q$, and there are parity statements for the intersection cohomology sheaves associated to $B\times B$-orbits. 
\end{Bemerkung}

\begin{Bemerkung}[{\bf Categorification of the wonderful moduloid}]
For a wonderful compactification as above, we can now define more generally a module over the Schur algebroid by considering the categories $\DMT_{P\times Q}(X)$ of $P\times Q$-equivariant mixed Tate motives over $X$, with $P\times Q\subset G\times G$ a parabolic subgroup. Again, it follows from the formalism of equivariant mixed Tate motives, combined with classical geometric statements that this is a well-behaved category, cf. Theorem~\ref{thm:wonderfulWT}. Convolution produces bifunctors 
\[
\DMT_{(P_1\times Q_1)_{\op{left}}\times (P_2\times Q_2)_{\op{right}}}(G\times G)\times \DMT_{P_2\times Q_2}(X)\to \DMT_{P_1\times Q_1}(X).
\]
The collection of the Grothendieck groups for the categories $\DMT_{P\times Q}(X)$ provides a module over the universal Schur algebroid for the group $G\times G$, cf. Proposition~\ref{prop:schurmodulewonderful}. 

As in \ref{motsym}, we have two ways to recover the module defined by Springer in \cite{SpCompact}. On the one hand, we have the $\ell$-adic realization functor 
\[
\op{Real}_\ell:\DMT_{B\times B}(X)\to \op{Der}^{\op{b}}_{B\times B}(X,\mathbb{Q}_\ell),
\]
which recovers the definition via the categories $\mathcal{C}_Y$; on the other hand, we can restrict the $\ell$-adic realization to the heart $\DMT_{B\times B}(X)_{\op{wt}=0}$ of a weight structure to recover the definition via the categories $\mathcal{A}_Y$, cf. Theorem~\ref{thm:comparisonSpCompact}.

Then, as before, the parity statements for intersection cohomology sheaves can be recovered from the purity and formality results of the motivic formalism, and moreover the $\ell$-adic realization induces a grading on the equivariant derived category of $G\times G\looparrowright X$, Theorem~\ref{thm:gradedwonderful}. This can again be interpreted as a formality statement. 
\end{Bemerkung}

\section{Borel orbits on spherical varieties}
\label{sec:korbit}

In this section, we will provide a dense recollection on the geometry of Schubert varieties in flag varieties and their generalizations in spherical homogeneous spaces. We recall the combinatorical and geometric structures related to orbit closures that have been considered in the literature because these will be the relevant geometric input for our motivic formalism.

\subsection{Parabolic groups and Schubert varieties}

We first begin with a recollection on parabolic subgroups in reductive groups and Schubert varieties in the corresponding flag varieties. For the standard definition, cf. the standard textbooks \cite{BorAG,HumAG,SpLAG}, or Brion's lectures on the geometry of flag varieties \cite{brion}.

Let $G$ be a connected split reductive group, let $T\subset G$ be a
split maximal torus, and let $B\subset G$ be a Borel subgroup containing $T$. The Weyl group is $W=\op{N}_G(T)/T$. 

For a choice of maximal torus, we can talk about the corresponding root system. The choice of Borel subgroup $B$ containing $T$ corresponds to a choice of a system $\Delta=\{\alpha_1,\dots,\alpha_r\}$ of positive simple roots, where $r$ is the rank of the maximal torus. Taking the root reflection for a simple root $\alpha_i$ provides a system of simple reflections $s_i$ in the Weyl group $W$ of $G$. The length $l(w)$ of an element $w\in W$ is the smallest number $n$ such that $w$ is the product of $n$ reflections. 

Via the Bruhat decomposition $G=\bigsqcup_{w\in W}BwB$, the $B$-orbits of $G/B$ can be parametrized by elements of the Weyl group $W$. 

For a subset of simple positive roots $I\subset\Delta$, we can consider the subgroup $W_I\leq W$ generated by the simple reflections $s_i$ for $i\in I$. Then we can use the Bruhat decomposition to obtain a correspondence between subsets $I\subset \Delta$ and standard parabolic subgroups $B\subset P\subset G$ via 
\[
P_I=\bigsqcup_{w\in W_I} BwB.
\]
In particular, the minimal standard parabolic subgroups are given by $P_s=B\cup BsB$ for $s\in W$ a simple reflection. Then there is also a Bruhat decomposition for $P_I\supset B$ a standard parabolic: 
\[
G=\bigsqcup_{\overline{w}\in W/W_I} B\overline{w}P_I, 
\]
and this provides a correspondence between the $B$-orbits on $G/P_I$ and the elements of $W/W_I$. Even more generally, the $P_I$-orbits on the flag variety $G/P_J$ are parametrized by the double cosets $W_I\backslash W/W_J$. 

Note that the orbits on flag varieties $G/P$ are all isomorphic to affine spaces. The closures of $B$-orbits on $G/P$ are called \emph{Schubert varieties}. \index{Schubert varieties}

\begin{Bemerkung}
\label{BSresolution}
\index{Bott--Samelson!resolution}
Let $X=G/B$ be the flag variety. Identify $B$-orbits in $X$ with the Weyl group in the usual way, i.e., $G/B= \bigsqcup_{w\in W} BwB/B$. For any sequence of simple reflections $\underline w = (s_1, \ldots, s_n)$, let
\[ 
\op{BS}(\underline w) = P_{s_1} \qtimes{B} \cdots \qtimes{B} P_{s_n} \qtimes{B} B/B, 
\]
where $P_{s_i}$ denotes the minimal parabolic $P_{s_i}\supset B$ corresponding to $s_i$. Multiplication induces a map $\pi\colon \op{BS}(\underline w) \to G/B$. Note that if $s_1 \cdots s_n$ is a reduced word, then $\pi$ is the classical \emph{Bott--Samelson resolution} of singularities of the closure (i.e., Schubert variety) $\overline{Bs_1\cdots s_nB/B}$. 
\end{Bemerkung}

\begin{Bemerkung}
\index{Bruhat order}
Using the geometry of flag varieties, we can define a partial order on the set of $B$-orbits in $G/P$. The \emph{Bruhat order} on the orbits is defined by $V\leq W$ if and only if $\overline{V}\subseteq \overline{W}$. This can be expressed combinatorially in terms of the Weyl group (whose elements parametrize $B$-orbits), cf. \cite[Proposition 1.2.1 and Corollary 2.2.2]{brion}. 
\end{Bemerkung}

\begin{definition}
\index{Demazure product}
For a ring $R$ and $W$ the Weyl group of a semisimple algebraic group, we have the \emph{Demazure product} on $R[W]$, given as follows: 
\[
s\star w=\left\{\begin{array}{ll}
sw & \textrm{if }l(sw)>l(w)\\
w & \textrm{otherwise}\end{array}\right.
\]
\end{definition}

The Demazure algebra $(R[W],\star)$ can then be realized as an algebra of differential operators on the flag manifold, it can be seen as degeneration of the Hecke algebra, cf. \cite[Section 7]{richardson:springer:survey}.

\begin{definition}[Richardson--Springer monoid]
\label{def:rsmonoid}
\index{Richardson--Springer monoid}
Fix a Weyl group $W$. Define the \emph{Richardson--Springer monoid} $\mathcal{M}(W)$ to be the monoid with elements $m(w)$ indexed by elements $w\in W$ such that $m(s)m(w)=m(s\star w)$ for $s$ a simple reflection. 
\end{definition}

\begin{remark}
The Richardson--Springer monoid is generated by $m(s)$ for simple reflections, subject to $m(s)^2=m(s)$ and the braid relations in the Weyl group:
\[
m(s)m(t)m(s)\cdots=m(t)m(s)m(t)\cdots 
\]
where $s,t$ are simple reflections in $W$ and the number of factors on both sides is the order of the product $st$ in $W$.
\end{remark}

\subsection{Spherical subgroups and generalizations of Schubert varieties}

Now we will provide a recollection on the generalization of Schubert varieties in spherical homogeneous spaces. We also outline the combinatorics of closures of orbits of symmetric groups on flag varieties. Standard references for these topics are \cite{lusztig:vogan}, \cite{richardson:springer:survey} and  \cite{springer:schubert}. 

\begin{definition}
\index{spherical}
Let $G$ be a reductive group. A variety with action $G\looparrowright X$ is called \emph{spherical} if a Borel subgroup $B\subset G$ acts with finitely many orbits. A subgroup $H\subset G$ is called \emph{spherical} if the corresponding homogeneous space $G/H$ is spherical for the natural left $G$-action.
\end{definition}

\begin{example}
\begin{enumerate}
\item
If $B\subset G$ is a Borel subgroup in a connected split reductive group, then $B$ is spherical; this is the basis of Example~\ref{ex:parabolic}. 
\item
If $\theta$ is an involution of a connected split reductive group $G$, then the symmetric subgroup $K=G^\theta$ of $\theta$-fixed points is spherical. A special case is the maximal torus $T$ inside $\op{SL}_2$, cf. \cite{matsuki,springer:involution}.
\item 
The Borel subgroup case can be recovered from the symmetric case by using the switch involution on $G\times G$. 
\item If $G$ is an adjoint semisimple group and $X$ is a $G\times G$-equivariant compactification of $G$, then $G\times G\looparrowright X$ is a spherical variety.
\end{enumerate}
\end{example}


There are again partial orderings induced by inclusion of orbit closures. A further discussion of the Bruhat orders in the parabolic and symmetric cases can be found in \cite{yee}. 

\begin{definition}
\label{def:order}
\index{Bruhat order}
Let $G$ be a connected split reductive group, let $H\subset G$ be a spherical subgroup, and let $B\subset G$ be a Borel subgroup. The set of $B$-orbits on $G/H$ carries two natural partial orderings. 
\begin{enumerate}
\item the \emph{strong Bruhat order}, given by 
\[
V\leq W \textrm{ if and only if } \overline{V}\subset\overline{W}. 
\]
\item 
  The \emph{weak order} on $K$-orbit closures is defined using the Richardson--Springer monoid action as follows:
\[
Y\leq Y' \textrm{ if and only if } Y'=w\star Y.
\] 
\end{enumerate}
\end{definition}

\begin{Bemerkung}
For the strong Bruhat order, there is a unique maximal element -- the open $B$-orbit. The minimal elements are the closed $B$-orbits. Those are the orbits $BgH/H$ where $H\cap g^{-1}Bg$ is a Borel subgroup in $H$. Minimal orbits all have the same dimension. The example $T\subset \op{SL}_2$ shows that there can be several minimal orbits.

Inclusion in the weak order implies inclusion in the strong Bruhat order. More precisely, by \cite[Proposition 4.10]{richardson:springer:survey}, the strong Bruhat order is the weakest partial order on the set of orbits which is compatible with the action of the Richardson--Springer monoid $\mathcal{M}(W)$. 
\end{Bemerkung}

\begin{Bemerkung}
For $G$ a reductive group with a symmetric subgroup $K\leq G$ and a standard parabolic subgroup $P\leq G$, the structure of $P\times K$-orbits on $G$ can be deduced from the action of the Richardson--Springer monoid on the $B\times K$-orbits on $G$, cf. \cite[Section 3.4]{richardson:springer:survey,brion:helminck}. In particular, there is also an analogue of the Bruhat decomposition of $G$ into $P\times K$-double cosets. 
\end{Bemerkung}

The following discussion of orbit types for actions of the Borel group on spherical varieties, cf. \cite{Mars-Springer} or \cite[Section 2]{richardson:springer:survey}. 

\begin{Bemerkung}[{\bf $\mathbb{P}^1$-modifications}]
\label{ss:p1mod}
Let $G$ be a connected split reductive group. Fix a Borel subgroup $B\subset G$ and a maximal torus $T\subset B$. Let $W$ denote the Weyl group. The pair $(B, T)$ determines a set of simple reflections in $W$. Given a simple reflection $s$, let 
\[ P_s = B\sqcup BsB \]
denote the corresponding minimal parabolic. Let $X$ be a $G$-variety. Assume that there are only finitely many $B$-orbits in $X$. So, if $Z$ is a $B$-orbit, then 
\[ 
\mbox{$P_s \cdot Z$ contains a unique open $B$-orbit, denoted $s\star Z$.}
\index{$s\star Z$ - unique open orbit in $P_s\cdot Z$} 
\]
Pick a point $z$ in the orbit $Z$, and let $H$ denote the isotropy group of $z$ in $P_s$. Then
\[ 
\mathbb{D}^+_B(P_s \cdot z) \approx \mathbb{D}^+_B(P_s/H) \approx \mathbb{D}^+_{B\times H}(P_s)\approx \mathbb{D}^+_H(B\backslash P_s) \approx \mathbb{D}^+_H(\PP^1).
\]
As $B$ acts on $P_s \cdot Z$ with finitely many orbits, $H$ must act on $\PP^1$ with finitely many orbits. Let $\bar H$ be the image of $H$ in $\op{Aut}(\PP^1)\simeq \op{PGL}_2$. The only possibilities are the following, cf. also \cite[4.1.3]{Mars-Springer}:
\begin{description}
\item[Case G] $\bar H = \op{PGL}_2$;
\item[Case U] $\bar H$ is a proper subgroup of $\op{PGL}_2$ containing a unipotent subgroup;
\item[Case T] $\bar H$ is a maximal torus in $\op{PGL}_2$;
\item[Case N] $\bar H$ is the normalizer of a maximal torus in $\op{PGL}_2$.
\end{description}
Modulo conjugation, the corresponding $H$-orbit decompositions of $\PP^1$ are:
\begin{description}
\item[Case G] $\PP^1$;
\item[Case U] $\mathbb{A}^1 \sqcup \{\infty\}$;
\item[Case T] $\{0\} \sqcup \mathbb{G}_{\op{m}} \sqcup \{\infty\}$;
\item[Case N] $\{0,\infty\} \sqcup \mathbb{G}_{\op{m}}$.
\end{description}
We will say that \emph{the orbit $Z$ is of type \textbf{G}, \textbf{U}, \textbf{T} or \textbf{N} relative to the simple reflection $s$} depending on which of these cases actually occurs.
\index{type of orbit relative to simple reflection}
\end{Bemerkung}

\begin{remark}
Note that the orbit decomposition as above is more complicated when we consider arbitrary parabolic subgroups $P\subset G$. The approach to formality results below first establishes properties for the action of $B$ (via the simple orbit decomposition as above) and then deduces the results for actions of standard parabolics $P\supset B$ by push-forward along the natural projection $G/B\to G/P$, using e.g. the projective bundle formula of Proposition~\ref{prop:projectivebundle}.
\end{remark}

\begin{example}
If $X=G/B$ is the flag variety, then the Bruhat decomposition implies that each $B$-orbit in $G/B$ is of type \textbf{G} or \textbf{U} relative to any simple reflection. 
\end{example}

\begin{Bemerkung}
Let $K\subset G$ be a connected spherical subgroup. There is an action of reduced words on closures of $K$-orbits: assume $Y\subset G/B$ is the closure of a $K$-orbit, then for a simple reflection $s_i\in W$ we obtain another $K$-orbit closure by $s_i\star Y:=\pi_i^{-1}\pi_i(Y)$ where $\pi_i:G/B\to G/P_i$ is the natural projection for the minimal parabolic $P_i$. This extends to an action of the Richardson--Springer monoid $\mathcal{M}(W)$ on the set of $K$-orbit closures. 
\end{Bemerkung}

\begin{Bemerkung}
\label{symmBSresolution}
Using the action of the Richardson--Springer monoid on the set of $B\times K$-orbits in $G$, we can consider the \emph{reduced decomposition} of an orbit, cf. \cite[Section 3]{richardson:springer:survey}. A reduced decomposition of the $B$-orbit $V\subset G/K$ is a pair of a sequence $(V_0,\dots,V_r)$ of distinct $B$-orbits and a sequence $(s_1,\dots,s_r)$ of simple reflections such that the orbit $V_0$ is minimal, $V_r=V$ and $V_i=m(s_i)\star V_{i-1}$. If $K\leq G$ is a symmetric subgroup, then all $B$-orbits in $G/K$ have reduced decompositions. This can be used to prove statements about $B\times K$-orbits in $G$ by starting from closed orbits and iteratively go up $\mathbb{P}^1$-bundles. 

Let $K\subset G$ be a symmetric subgroup in a connected split reductive group, and let $B\subset G$ be a Borel subgroup. Let $V\subset G/K$ be a $B$-orbit, the closures $\overline{V}$ of $B$-orbits are analogues of the Schubert varieties for the symmetric case. Let $(\mathbf{v},\mathbf{s})$, $\mathbf{v}=(V_0,\dots,V_r)$ and $\mathbf{s}=(s_1,\dots,s_r)$ be a reduced decomposition of the $B$-orbit $V$. Then we define 
\[
Z_{(\mathbf{v},\mathbf{s})}:= P_{s_r}\times_{/B} P_{s_{r-1}}\times_{/B} \cdots \times_{/B} P_{s_1}\times_{/B} V_0
\]
Then there is a proper surjective morphism $\psi:Z_{(\mathbf{v},\mathbf{s})}\to \overline{V}$. This morphism is generically finite, and its degree is $2^{c(V)}$ where $c$ is the number of times the case $\mathbf{N}$ appears in the reduced decomposition of the orbit $V$, cf. \cite[3.10]{springer:schubert}. In particular, this is a resolution in the multiplicity-one cases, similar to the Bott--Samelson resolutions for Schubert varieties in flag varieties.\footnote{In general, this might still be an alteration in the sense of de Jong, so that we can use these to prove results for motives with rational coefficients. For now we are not aware of results in the literature discussing when the fibers of these resolutions have pavings by affine spaces.}
\end{Bemerkung}

\section{Bott--Samelson motives}
\label{sec:BS}

In this section, we discuss a formalism of Bott--Samelson motives which provides a way of showing that a variety with action satisfies the equivariant Whitney--Tate condition.

\index{motives!cuspidal}
\index{motives!clean}
\index{motives!Bott--Samelson}
\index{Bott--Samelson!motives}
The basic idea of the formalism can be described as follows: in the examples we are interested in, we always have an action of some parabolic in a reductive group. Given a collection of (equivariant) motives on orbits, we can generate a collection of $P$-equivariant motives, with $P$ running through all the standard parabolic subgroups, by using the six-functor formalism developed previously. There are two interesting collections of motives to apply this procedure to: if we start with \emph{cuspidal motives}, then we get a collection containing all $*$-orbitwise equivariant mixed Tate motives, cf. Proposition~\ref{prop:bsgenerates}; if we start with \emph{clean motives}, then we get a collection contained in the equivariant mixed Tate motives, cf. Proposition~\ref{prop:bstate}. \emph{If cuspidals are clean}, we have a very small collection, called \emph{Bott--Samelson motives}, which generate all equivariant mixed Tate motives. Since the collection of Bott--Samelson motives is closed under Verdier duality, we get the equivariant Whitney--Tate condition. 

If furthermore Bott--Samelson motives are pointwise pure, we can compute morphisms between them and see that they satisfy the orthogonality for weight structures. Consequently, there is a weight structure on equivariant mixed Tate motives whose heart is exactly the idempotent completion of the Bott--Samelson motives (and that weight structure coincides with the one discussed in Section~\ref{sec:weights}). This gives rise to tilting results and can be interpreted as a stronger form of known formality results for the equivariant derived category. 

The formalism will be applied to our standard examples in Sections~\ref{sec:parabolic}, \ref{sec:symmetric} and \ref{sec:wonderful}. The proof of the two pivotal assertions -- that cuspidals are clean and that Bott--Samelson motives are pointwise pure -- will be done in Section~\ref{sec:tiltingapp}. 

The arguments provided in this section can in principle be found already in works of Lusztig--Vogan \cite{lusztig:vogan} and Mars--Springer \cite{Mars-Springer}. We hope that the translation into the motivic language has the advantage of making the geometric inputs clearly visible.

\begin{Bemerkung}
From this moment on, we will really enforce our conventional assumption that the base field $k$ is algebraically closed of characteristic unequal to $2$.\footnote{Most of the results also go through over finite fields of characteristic unequal to $2$. However, for finite fields one ends up having to deal with some annoying separability issues that only serve to distract from the main ideas.} `Point' will always mean a `geometric point', and we set $\pt = \mathrm{Spec}(k)$. 
\end{Bemerkung}

\begin{Bemerkung}
In the following, we consider a homotopical stable algebraic derivator $\mathbb{D}$ satisfying the conditions of \ref{derivator:new} and the grading condition \ref{conditions:grading}. 
\end{Bemerkung}

\begin{Bemerkung}
We will always be concerned with varieties with action $G\looparrowright X$ over $k$ where $G$ is a connected reductive group such that for a Borel subgroup $B\subset G$ the variety with action  $B\subset G\looparrowright X$ has finitely many separably defined orbits. The goal of the formalism of Bott--Samelson motives discussed below is to establish the equivariant Whitney--Tate property for all varieties $P\looparrowright X$, where $B\subset P\subset G$ is a standard parabolic of $G$. 
\end{Bemerkung}

The following definition provides a way of generating equivariant mixed Tate motives for parabolic groups, by starting from a collection of motives on the various Borel orbits, via (essentially) successive induction and restriction. 

\begin{definition}[BS-closure]
\label{def:BSclosure}
\index{BS-closure}
Let $\mathbb{D}$ be a derivator satisfying the conditions of \ref{derivator:new} and the grading condition \ref{conditions:grading}, let $G$ be a connected reductive group and $B\subset G$ be a Borel subgroup and let $G\looparrowright X$ be a variety with $G$-action having finitely many $B$-orbits separably defined over $k$. Assume we are given, for each $B$-orbit $j:W\hookrightarrow X$ a collection $\mathcal{S}_W$ of motives in $\DMT_B(W)$. We define the \emph{BS-closure of $\mathcal{S}_W$} to be the smallest collection $\left\{\langle\mathcal{S}\rangle^{\op{BS}}\subset\mathbb{D}^+_{P}(X)\right\}_{P\supset B}$  of strictly full subcategories of $\mathbb{D}^+_P(X)$, for $P\subset G$ running through the parabolic subgroups of $G$ containing $B$, such that:
\begin{enumerate}
\item $j_*M \in \langle\mathcal{S}\rangle^{\op{BS}}\subset \mathbb{D}_B^+(X)$ for each $B$-orbit $j\colon W\hookrightarrow X$ and each $M\in \mathcal{S}_W$;
\item The categories $\langle\mathcal{S}\rangle^{\op{BS}}\subset\mathbb{D}^+_{P}(X)$ are stable under $M\mapsto M(n)[2n]$,  $n\in\mathbb{Z}$;
\item The categories $\langle\mathcal{S}\rangle^{\op{BS}}\subset\mathbb{D}^+_{P}(X)$ are stable under taking direct summands;
\item The categories $\langle\mathcal{S}\rangle^{\op{BS}}\subset\mathbb{D}^+_{P}(X)$ are extension stable, i.e., if there is a distinguished triangle 
\[ 
L\to M \to N \to L[1]
\] in 
$\mathbb{D}^+_P(X)$ with $L,N\in \langle\mathcal{S}\rangle^{\op{BS}}$, then $M\in\langle\mathcal{S}\rangle^{\op{BS}}$.
\item for any two parabolics $P,Q$ with $B\subset P\subset Q\subset G$, the functors  $\op{Res}_Q^P$ and $\op{Ind}_P^Q$ preserve the categories $\langle\mathcal{S}\rangle^{\op{BS}}$.
\end{enumerate}
\end{definition}

\begin{remark}
By Proposition~\ref{Lasp}, we could also use exceptional integration functors in the above without changing much. This will be used later for establishing that the BS-closure is stable under Verdier duality, cf. Theorem~\ref{thm:wtcond}.
\end{remark}

\begin{remark}
In the above definition, we actually need to include $G$ with the parabolic subgroups. Otherwise the BS-closure of cuspidals below doesn't produce enough mixed Tate motives, even in the case $\op{SL}_2\looparrowright \mathbb{P}^1$. 
\end{remark}

\begin{definition}[{\bf Hecke-connectedness}]
\label{ref:heckeconn}
\index{Hecke-connected}
\index{Hecke-simply-connected}
Let $G$ be a connected reductive group with Borel subgroup $B$, and let $G\looparrowright X$ be a variety with action. Recall that the Bruhat order $\leq$ is given by inclusion of orbit closures, cf. Definition~\ref{def:order}. The $G$-variety $X$ will be called \emph{Hecke-connected} if the following hold: 
\begin{enumerate}
\item there are finitely many $B$-orbits in $X$;
\item if $v$ is not a closed $B$-orbit, then there exists a $B$-orbit $u\lneq v$ and a simple reflection $s$ such that $s\star u =v$, i.e., for any $B$-orbit which is not closed there is a smaller orbit in the weak order.
\end{enumerate}
If, in addition, the isotropy group $B_x$ is connected for each point $x\in X$, then we will call $X$ \emph{Hecke-simply-connected}.
\footnote{The terminology Hecke-simply connected refers to the topological fact that the orbits for the $B$-action will be simply-connected.}
\end{definition}

\begin{example}
\label{ex:exgbhecke}
The flag varieties $X=G/P$ are Hecke-simply-connected.
\end{example}

\begin{example}
\label{SLSOexample}
Take $G=\op{SL}_2$, $B\subset G$ to be the usual Borel subgroup consisting of upper triangular matrices, $K$ to be the maximal torus consisting of diagonal matrices, and set $X = G/K$. Then $X$ is Hecke-connected but not Hecke-simply-connected: instead of dealing with $B$ acting on $G/K$ it is more convenient to visualize $K$ acting on $G/B$. Identify $G/B$ with $\PP^1$ so that $K$ acts via
\[ \begin{pmatrix} t & 0 \\ 0 & t^{-1} \end{pmatrix}\cdot x = t^2x. \]
There are three orbits: $\{0\}, \{\infty\}$ and $\PP^1 - \{0,\infty\}$. The isotropy group of a point in the open orbit is $\mu_2=\{1,-1\}$.
\end{example}

\begin{proposition}
\label{prop:gkhecke}
Let $G$ be a connected reductive group, let $\theta:G\to G$ be an involution, and let $B\subset G$ be a $\theta$-stable Borel subgroup. Denoting $K=G^\theta$ the fixed subgroup, the symmetric variety $G\looparrowright G/K$ is Hecke-connected. 
\end{proposition}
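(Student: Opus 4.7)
The plan is to verify directly the two axioms of Definition~\ref{ref:heckeconn} for the variety with action $G\looparrowright G/K$. The first axiom, namely finiteness of the set of $B$-orbits on $G/K$, is precisely the classical sphericity theorem for symmetric subgroups: since $\theta$ is an involution and $K=G^\theta$, the results of Matsuki \cite{matsuki} and Springer \cite{springer:involution} assert that $B$ acts on $G/K$ with finitely many orbits. (Equivalently, $K$ acts on $G/B$ with finitely many orbits.) So the work is entirely with axiom (2).

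For axiom (2), I would invoke the existence of reduced decompositions for $B$-orbits on symmetric varieties, as recalled in \ref{symmBSresolution}: every $B$-orbit $V\subset G/K$ admits a pair $(\mathbf{v},\mathbf{s})$ consisting of a sequence $\mathbf{v}=(V_0,\dots,V_r)$ of pairwise distinct $B$-orbits and a sequence $\mathbf{s}=(s_1,\dots,s_r)$ of simple reflections such that $V_0$ is closed, $V_r=V$, and $V_i=m(s_i)\star V_{i-1}$ in the Richardson--Springer monoid action on orbits. Now take any $B$-orbit $V$ that is not closed; then necessarily $r\ge 1$. Setting
\[
u\pdef V_{r-1},\qquad s\pdef s_r,
\]
we get $s\star u=V$ by the very definition of a reduced decomposition.

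It remains to verify the strict inequality $u\lneq V$ in the strong Bruhat order. Recall from \ref{ss:p1mod} that $s\star u$ is the unique open $B$-orbit in $P_s\cdot u$. Since the orbits in a reduced decomposition are distinct, we have $V\ne u$, so $u$ is not open in $P_s\cdot u$; as $P_s\cdot u$ is irreducible with $V$ as its unique open $B$-orbit, $u$ is contained in $\overline V$ and strictly smaller-dimensional, hence $u\lneq V$ as required.

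The only non-formal ingredient is the existence of reduced decompositions, which is the main structural fact about $B$-orbits on symmetric varieties (proved via the $\mathbb{P}^1$-modification analysis of \ref{ss:p1mod}); modulo this input, the argument is essentially a one-line unwinding of definitions, and is what distinguishes the symmetric case from the general spherical case where reduced decompositions need not exist. Note that we do \emph{not} obtain Hecke-simply-connectedness: Example~\ref{SLSOexample} with $G=\op{SL}_2$, $K=T$ shows that the isotropy $B_x$ at a point of the open orbit can be a finite non-trivial group (there, $\mu_2$).
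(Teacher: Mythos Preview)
Your proof is correct and takes essentially the same approach as the paper: the paper's proof is a one-line citation to \cite[Theorem 4.6]{Richardson-Springer}, and you have simply unpacked this by invoking the existence of reduced decompositions already recalled in \ref{symmBSresolution} (which is precisely the content of the Richardson--Springer result) and then reading off axiom (2) of Definition~\ref{ref:heckeconn}.
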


\begin{proof}
That $G/K$ is Hecke-connected is \cite[Theorem 4.6]{Richardson-Springer}. 
\end{proof}

\begin{remark}
Note that being Hecke connected is a requirement for the $B$-action, more precisely a statement about the weak order. The requirement for the $B$-action implies similar statements for the corresponding orders of orbits of parabolic groups containing $B$, cf. the discussion in Section~\ref{sec:korbit}. 
\end{remark}

\subsection{Cuspidal motives}

We now define cuspidal motives. The category of equivariant mixed Tate motives will be a subcategory of the BS-closure of the cuspidal motives. In particular, every equivariant mixed Tate motive can be obtained by induction from cuspidal motives on lower-dimensional strata, and cuspidals are those Tate motives that cannot be obtained inductively from lower-dimensional orbits.

\begin{definition}
\label{def:cuspidal}
\index{motives!cuspidal}
Let $G$ be a connected reductive group, let $B\subset G$ be a Borel subgroup and let $G\looparrowright X$ be a variety with action which has finitely many $B$-orbits separably defined over $k$. Given a $B$-orbit $j\colon B/H=V\hookrightarrow X$ and $M \in \DMT_B(V)_{\op{wt}=0}$, we will say that $M$ is \emph{cuspidal} if $\ind^{P_s}_B(j_*M)= 0$ for each simple reflection $s$ such that $V$ is a proper open subvariety of $P_s\cdot V$. 
\end{definition}

\begin{example}
\label{ex:cuspidal}
Note that if the orbit $V=B/H$ is closed in $X$, then there is no simple reflection satisfying the conditions of Definition~\ref{def:cuspidal}. So for closed $B$-orbits $V$, each $M\in \DMT_B(V)_{\op{wt}=0}$ is cuspidal. This is in particular true for $X=G/B$ the flag variety and $V$ the one-point orbit. Moreover, in the case of flag varieties, there cannot be cuspidal motives on orbits which are not closed: these are all of type \textbf{G} or \textbf{U} and these do not support cuspidal motives which is explained in more detail in \ref{RTZl}. Therefore, in the case of flag varieties, there is only one type of cuspidal motives - constant Tate motives pure of weight $0$ on the one-point orbit.
\end{example}

\begin{example}
\label{SLSOexamplecuspidal}
In the situation of Example \ref{SLSOexample} we certainly have cuspidals corresponding to the closed orbits. But there is also a cuspidal on the open orbit - the isotropy group of a point in the open orbit is isomorphic to $\mu_2$. The motive corresponding to the sign representation is cuspidal.
The cuspidality of this motive is not quite obvious (a more general situation is described in Lemma \ref{lemma:gmlocalsystem}; also see Lemma \ref{lemma:reducetop1}). For the moment we hope to assuage the reader with the following intuition: if we had been working topologically (i.e., with sheaves instead of motives) this would correspond to the `M\"obius band' local system on $\CC^{\times}$, i.e., the 1-dimensional local system with eigenvalue of monodromy equal to $-1$. This local system has no (derived) global sections.
\end{example}

Eventually, we want to show that the BS-closure of the cuspidal motives contains all equivariant mixed Tate motives. For this, we need to study the structure of cuspidal motives. The relevant features already appear in the case of $\mathbb{P}^1$, as discussed in the lemma below. Subsequent results will then extend the characterization of cuspidal motives to general Hecke-connected varieties. We also note that our assumptions on the base field being algebraically closed implies that the groups $\mu_N$ of roots of unity have $N$ distinct points and the same is then true for the fibers of the Kummer covering $\mathbb{G}_{\op{m}}\xrightarrow{x\mapsto x^N}\mathbb{G}_{\op{m}}$. 

\begin{lemma}
\label{lemma:gmlocalsystem}
Let $H$ be a linear algebraic group acting transitively on $\mathbb{G}_{\op{m}}$ via a group homomorphism $H\twoheadrightarrow \mathbb{G}_{\op{m}}$. Let $V\in \DMT_H(\mathbb{G}_{\op{m}})$ be a rank one local system in the sense of\ref{locsysrank}. Then $\fin_*V \neq 0$ if and only if $\For(V) = \const{\mathbb{G}_{\op{m}}}(n)$ for some $n\in \ZZ$.
\end{lemma}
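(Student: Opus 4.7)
The plan is to prove the two implications separately, using conservativity of the forgetful functor to reduce both directions to a non-equivariant computation of motivic cohomology on $\mathbb{G}_{\op{m}}$.

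By \ref{CompRES}, the forgetful functor $\op{Res}_H^1$ commutes with $\fin_*$, and by \ref{forget} it is conservative. Hence $\fin_* V \neq 0$ if and only if $\fin_* \For(V) \neq 0$ as a non-equivariant motive on $\pt$. For the ``if'' direction, the hypothesis $\For(V) \cong \const{\mathbb{G}_{\op{m}}}(n)$ immediately gives $\fin_* \For(V) \cong \const{\pt}(n) \oplus \const{\pt}(n-1)[-1]$ by the standard motivic cohomology of $\mathbb{G}_{\op{m}}$; this is nonzero.

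For the ``only if'' direction I argue by contrapositive. Setting $N = \ker(H \twoheadrightarrow \mathbb{G}_{\op{m}})$, the induction equivalence $\DMT_H(\mathbb{G}_{\op{m}}) \approx \DMT_N(\pt)$ of Proposition~\ref{cor:indequiv}, combined with the tilting description of Corollary~\ref{cor:tiltpoint} and the generating results of Proposition~\ref{prop:generatingtate}, identifies the rank one local system $V$ up to Tate twist with a rank one direct summand of $\op{Ind}_{N^0}^N \const{\pt}$; such summands correspond to characters $\chi \colon \pi_0(N) \to \Lambda^\times$ of the finite component group. The character $\chi$ is trivial precisely when $\For(V) \cong \const{\mathbb{G}_{\op{m}}}(n)$, so we may assume $\chi$ has order $m > 1$, which is coprime to $\op{char}(k)$ since $\Lambda$ has characteristic zero.

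The monodromy of $\For(V)$ then factors through $\mu_m$, and pulling back along the Kummer covering $p_m \colon \mathbb{G}_{\op{m}} \to \mathbb{G}_{\op{m}}$, $x \mapsto x^m$, trivializes $\For(V)$. The pushforward $(p_m)_* \const{\mathbb{G}_{\op{m}}}(n)$ decomposes under the $\mu_m$-action as a direct sum $\bigoplus_\psi \mathcal{L}_\psi$ indexed by characters of $\mu_m$, with $\For(V)$ appearing as the $\chi$-summand. Since $\fin \circ p_m = \fin$, we compute $\fin_*(p_m)_* \const{\mathbb{G}_{\op{m}}}(n) \cong \fin_* \const{\mathbb{G}_{\op{m}}}(n) \cong \const{\pt}(n) \oplus \const{\pt}(n-1)[-1]$, and the entirety of this is already exhausted by the trivial character summand $\fin_* \mathcal{L}_1(n)$. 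Thus the nontrivial summands, including $\fin_* \For(V)$, must vanish. The main subtlety will be carefully identifying $\For(V)$ with the appropriate Kummer-type summand of $(p_m)_* \const{\mathbb{G}_{\op{m}}}(n)$, which rests on the explicit description of rank one $N$-equivariant mixed Tate motives via characters of $\pi_0(N)$.
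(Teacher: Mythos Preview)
Your proof is correct and follows essentially the same strategy as the paper's: both reduce to the non-equivariant motive $\For(V)$ via conservativity, realize $\For(V)$ as a direct summand of the pushforward $\pi_{m,*}\const{\mathbb{G}_{\op{m}}}(n)$ along a Kummer covering, and then use that $\fin_*\pi_{m,*}\const{\mathbb{G}_{\op{m}}}(n) \cong \fin_*\const{\mathbb{G}_{\op{m}}}(n)$ is already accounted for by the trivial-character summand, forcing the remaining summands to have vanishing $\fin_*$. The paper phrases the vanishing step via the distinguished triangle $\const{\mathbb{G}_{\op{m}}}\to\pi_{m,*}\const{\mathbb{G}_{\op{m}}}\to M$ and shows the unit map becomes an isomorphism after $\fin_*$ (multiplication by $m$, invertible rationally), while you use the full character decomposition and dimension counting; these are equivalent.

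One small correction: your justification that $m$ is coprime to $\op{char}(k)$ ``since $\Lambda$ has characteristic zero'' is not the right reason. The correct argument is that after reducing to connected $H$ (which you may do since $H^0$ still surjects onto $\mathbb{G}_{\op{m}}$), the quotient $H/N^0$ is a one-dimensional connected group mapping onto $\mathbb{G}_{\op{m}}$, hence is itself $\mathbb{G}_{\op{m}}$, and the map $H/N^0\to\mathbb{G}_{\op{m}}$ is an isogeny with kernel $\pi_0(N)$. Since $\pi_0(N)$ is by definition \'etale, this forces $\pi_0(N)\cong\mu_m$ with $m$ prime to $\op{char}(k)$. This also supplies the missing link in your step ``the monodromy of $\For(V)$ factors through $\mu_m$'': it identifies $\pi_0(N)$ with the Galois group of the Kummer cover $H/N^0\to\mathbb{G}_{\op{m}}$, so that $p_m^*\For(V)$ is indeed constant. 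The paper's corresponding step (``we can assume $H'=\mu_m$'') is equally terse on this point, so you have correctly identified the main subtlety.
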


\begin{proof}
Denote by $H'$ the stabilizer group of $H\looparrowright \mathbb{G}_{\op{m}}$ of some point $x\in\mathbb{G}_{\op{m}}$ and let $V\in \DMT_H(\mathbb{G}_{\op{m}})$ be a rank one local system. By \ref{locsysrank}, this means that $V$ corresponds under the induction equivalence $\DMT_H(\mathbb{G}_{\op{m}})\approx \DMT_{H'}(\pt)$ to an equivariant mixed Tate motive of the form $\mathbb{Q}(i)[j]$. The stabilizer subgroup $H'$ maps to a closed subgroup in $\mathbb{G}_{\op{m}}$, therefore we can assume that $H'$ is actually identified with this subgroup, i.e., we can assume that $H'=\mu_m$ for some $m$. Denoting the morphism 
\[
\pi_m\colon(1:\mathbb{G}_{\op{m}}\looparrowright \mathbb{G}_{\op{m}})\to (m:\mathbb{G}_{\op{m}}\looparrowright \mathbb{G}_{\op{m}}):t\mapsto t^m,
\]
we have $\pi_m^\ast\For(V)=\const{\mathbb{G}_{\op{m}}}(n)$ for some $n$.\footnote{Note here that the target is the variety with action we consider, since we have reduced to assuming that the stabilizer is $\mu_m$.} Taking the cone of the canonical unit map $\const{\mathbb{G}_{\op{m}}} \to \pi_{m,*}\pi_m^*\const{\mathbb{G}_{\op{m}}}$ yields a distinguished triangle
\[ 
\const{\mathbb{G}_{\op{m}}} \to \pi_{m,*}\const{\mathbb{G}_{\op{m}}} \to M \to \const{\mathbb{G}_{\op{m}}}[1].
\]
Note that $\pi_{m,\ast}\const{\mathbb{G}_{\op{m}}}$ can be viewed as the group ring for $\mu_m$ over the coefficient field $\Lambda$ of the underlying derivator $\mathbb{D}$. The distinguished triangle maps the trivial representation into the regular representation, so that $M$ contains all the summands with non-trivial $\mu_m$-representations.

The unit map $\const{\mathbb{G}_{\op{m}}} \to \pi_{m,*}\const{\mathbb{G}_{\op{m}}}$ becomes an isomorphism on applying $\fin_*$: both source $\op{fin}_\ast \const{\mathbb{G}_{\op{m}}}$ and target $\op{fin}_\ast \pi_{m,*}\const{\mathbb{G}_{\op{m}}}$  are isomorphic to the (cohomological) motive of $\mathbb{G}_{\op{m}}$. Composing with the augmentation map from the group ring $\op{fin}_\ast\pi_{m,*}\const{\mathbb{G}_{\op{m}}}\to \op{fin}_\ast \const{\mathbb{G}_{\op{m}}}$ we get an endomorphism of $\op{fin}_\ast\const{\mathbb{G}_{\op{m}}}$ which can be identified with multiplication by the degree of $\pi_m$, which is $m$. Since we are working with rational coefficients, this is an isomorphism. Moreover, since the objects are finite-dimensional, the unit map is an isomorphism.\footnote{We could also deduce this using the conservativity statements in Proposition~\ref{prop:conservative} for the derivators we are interested in.} Therefore, we deduce $\fin_*M = 0$ from the above distinguished triangle. 

Suppose $\For(V)\neq \const{\mathbb{G}_{\op{m}}}(n)$ for any $n$. Then $\For(V)$ occurs as a direct summand of $M$ up to some Tate twist. This basically follows because $\pi_{m,*}\const{\mathbb{G}_{\op{m}}}$ contains $\const{\mathbb{G}_{\op{m}}}$ as a direct summand, and if the underlying motive is not (up to twist) isomorphic to $\const{\mathbb{G}_{\op{m}}}$, then it must occur in $M$. Consequently, since $\op{fin}_\ast$ is additive, $\fin_*V = 0$. Finally, $\op{fin}_\ast V\neq 0$ if  $\op{For}(V)=\const{\mathbb{G}_{\op{m}}}(n)$ as in this case $\op{fin}_\ast V$ is the cohomological motive of $\mathbb{G}_{\op{m}}$. 
\end{proof}

\begin{lemma}
\label{lem:cuspreduction}
Suppose $H$ acts on $\PP^1$ with finitely many orbits. Let $j\colon U \hookrightarrow \PP^1$ be the open orbit and let $i\colon Y \hookrightarrow \PP^1$ be the closed complement (i.e., $Y=\PP^1-U$). Let $N\in \DMT_H(U)_{\op{wt}=0}$. Then in the notation of \ref{ss:p1mod}, we have the following cases:
\begin{description}
\item[Case U] there is a distinguished triangle
\[ 
i_*\fin_*N[-1] \to j_!N \to \fin^*_{\PP^1}\fin_*N \to i_*\fin_*N 
\]
\item[Case T and N] if $N$ is a rank one local system with $\fin_*N\neq 0$, then $\fin_*N \sim N_0 \oplus N_1$ with $N_0$ pure of weight $0$ and $N_1$ pure of weight $1$; there is a distinguished triangle
\[ 
i_*\fin_Y^*N_0[-1] \to j_!N \to \fin_{\PP^1}^* N_0 \to i_*\fin_Y^* N_0.
\]
\end{description}
\end{lemma}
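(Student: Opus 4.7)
The plan for both cases is to invoke the standard localization distinguished triangle $j_!j^*M \to M \to i_*i^*M \to j_!j^*M[1]$ in $\mathbb{D}_H^+(\mathbb{P}^1)$ for a judiciously chosen $M$, and then identify the three terms (via base change, $\mathbb{A}^1$-invariance, and the weight structure) in order to reach the stated triangle after a rotation.

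\textbf{Case U.} Here $U \cong \mathbb{A}^1$ and $Y = \{\infty\}$. I would take $M = \fin_{\mathbb{P}^1}^*\fin_{U,*}N$. Base change gives $j^*M \cong \fin_U^*\fin_{U,*}N$ and, since $Y$ is a point so $\fin_Y^* = \op{id}$, also $i^*M \cong \fin_{U,*}N = \fin_*N$. The localization triangle therefore reads
\[
j_!\fin_U^*\fin_{U,*}N \to \fin_{\mathbb{P}^1}^*\fin_*N \to i_*\fin_*N \to [1],
\]
and rotating once yields exactly the form in the statement provided the counit $\fin_U^*\fin_{U,*}N \to N$ is an isomorphism. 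This is the first key point: because $\bar{H}$ contains a unipotent subgroup acting transitively on $\mathbb{A}^1$, the structural morphism $\fin_U:\mathbb{A}^1 \to \op{pt}$ is an $\mathbb{A}^1$-equivalence even after pulling it through the simplicial Borel construction termwise. Consequently $\fin_U^*:\mathbb{D}_H^+(\op{pt})\to \mathbb{D}_H^+(U)$ is an equivalence, and the counit is an iso as required.

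\textbf{Cases T and N.} Here $U \cong \mathbb{G}_{\op{m}}$ and $\mathbb{A}^1$-invariance is no longer available, so one must use weights. First, the assumption $\fin_*N \neq 0$ combined with Lemma~\ref{lemma:gmlocalsystem} forces $\For(N) \cong \const{U}(n)$. Since $N \in \DMT_H(U)_{\op{wt}=0}$ and the forgetful functor is weight-exact by Proposition~\ref{prop:wtfor}, we conclude $n=0$, so $\For(N) \cong \const{U}$. The underlying motive of $\fin_*N$ is then $\fin_*\const{U}$, which splits as $\const{\op{pt}} \oplus \const{\op{pt}}(-1)[-1]$, with summands pure of weights $0$ and $1$ respectively. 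Equivariantly, this decomposition lifts: via the tilting equivalence Theorem~\ref{thm:tiltpoint}, equivariant mixed Tate motives on a point correspond to complexes of free graded $\mathcal{A}_H$-modules, and a two-term complex of pure weight $0$ and $1$ summands splits canonically into the direct sum $N_0 \oplus N_1$ claimed in the statement. Now take $M = \fin_{\mathbb{P}^1}^*N_0$ in the localization triangle; base change gives
\[
j_!\fin_U^*N_0 \to \fin_{\mathbb{P}^1}^*N_0 \to i_*\fin_Y^*N_0 \to [1],
\]
and it remains to identify $\fin_U^*N_0 \cong N$. The composition $\fin_U^*N_0 \hookrightarrow \fin_U^*\fin_{U,*}N \to N$ (inclusion of the direct summand followed by the counit) is a morphism in the heart $\DMT_H(U)_{\op{wt}=0}$ (using weight-exactness of $\fin_U^*$ on the smooth morphism $\fin_U$). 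On underlying motives this morphism reduces to the identity $\const{U} \to \const{U}$, so by conservativity of $\op{For}$ (cf.~\ref{forget}) it is an isomorphism, and rotating the triangle finishes the argument.

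The main obstacle is the identification $\fin_U^*N_0 \cong N$ in cases T and N: this requires both extracting the weight-$0$ summand equivariantly (where the tilting description on the point plays a key role) and then promoting the underlying identity to an equivariant isomorphism via the forgetful functor's conservativity. Case U, by comparison, is largely formal once one has equivariant $\mathbb{A}^1$-invariance on the Borel construction in hand.
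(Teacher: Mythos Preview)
Your approach is the same as the paper's: in both cases apply the localization triangle $j_!j^* \to \op{id} \to i_*i^*$ to $M = \fin_{\mathbb{P}^1}^*(\text{something on the point})$ and rotate. The paper is considerably terser --- in case \textbf{U} it just observes that ``the underlying motive of $N$ is a constant mixed Tate motive'' so that ``the restriction of $\fin_{\mathbb{P}^1}^*\fin_*N$ to $U$ is isomorphic to $N$'', and in cases \textbf{T}/\textbf{N} that Lemma~\ref{lemma:gmlocalsystem} forces ``$N$ must be of the form $\const{\mathbb{G}_{\op{m}}}$'', after which the localization triangle for $\fin_{\mathbb{P}^1}^*N_0$ finishes it.

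One correction in case \textbf{U}: your claim that $\fin_U^*:\mathbb{D}_H^+(\pt)\to\mathbb{D}_H^+(U)$ is an \emph{equivalence} is too strong. Termwise $\mathbb{A}^1$-invariance on the Borel construction makes the unit $\op{id}\to\fin_{U,*}\fin_U^*$ an isomorphism (so $\fin_U^*$ is fully faithful), but essential surjectivity fails: under the induction equivalence $\mathbb{D}_H^+(\mathbb{A}^1)\approx\mathbb{D}_{H_x}^+(\pt)$ the functor $\fin_U^*$ becomes $\op{Res}_H^{H_x}$, which is not essentially surjective when $H_x\subsetneq H$. What you actually need --- that the counit is an isomorphism \emph{at this particular $N$} --- follows by exactly the conservativity argument you already deploy in cases \textbf{T}/\textbf{N}: since $U\cong\mathbb{A}^1$, the underlying motive $\For(N)$ is constant mixed Tate, hence lies in the essential image of the non-equivariant $\fin_U^*$, so the non-equivariant counit is an isomorphism, and you lift via~\ref{forget}. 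With that fix, your argument and the paper's coincide.
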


\begin{proof}
In case \textbf{U}, $U\cong\mathbb{A}^1$ and consequently the underlying motive of $N$ is a constant mixed Tate motive. In this case, $\op{fin}_{\mathbb{P}^1}^\ast \op{fin}_\ast N$ also has underlying motive constant mixed Tate; moreover, the restriction of this motive to $U$ is isomorphic to $N$. The distinguished triangle claimed is simply the localization triangle $j_! j^\ast \to \op{id}\to i_\ast i^\ast$. 

In case \textbf{T} and $\textbf{N}$, we have $U\cong\mathbb{G}_{\op{m}}$. By Lemma~\ref{lemma:gmlocalsystem}, $N$ must be of the form $\const{\mathbb{G}_{\op{m}}}$. This implies the statement about the form of $\op{fin}_\ast N$. The distinguished triangle is then again simply the localization triangle $j_!j^\ast\to\op{id}\to i_\ast i^\ast$ for the constant motive $\op{fin}_{\mathbb{P}^1}^\ast N_0$. 
\end{proof}

In general, testing for cuspidality is no more complicated than the situation for $\PP^1$:

\begin{lemma}
\label{lemma:reducetop1}
Let $G$ be a reductive group, and let $G\looparrowright X$ be a variety with action which has finitely many $B$-orbits. Let $Z$ be a $B$-orbit, and denote by $P_s$ the standard parabolic for the simple reflection $s$. Let $\op{inc}\colon P_s\cdot  Z\hookrightarrow X$ be the inclusion of the $P_s$-orbit generated by $Z$, let $z\in Z$ be a point, and let $H\subset P_s$ be its isotropy group. Then we have a commutative diagram:
\[ 
\xymatrix{
  \mathbb{D}^+_B(X) \ar[r]^-{\ind^{P_s}_B} & \mathbb{D}^+_{P_s}(X) \ar[r]^-{\res^{B}_{P_s}}& \mathbb{D}^+_B(X) \\
  \mathbb{D}^+_B(P_s \cdot Z)\ar[u]_-{\op{inc}_*} \ar[r]^-{\ind^{P_s}_B} \ar[d]_-{\approx} & \mathbb{D}^+_{P_s}(P_s \cdot Z)\ar[u]_-{\op{inc}_*} \ar[r]^-{\res^B_{P_s}} \ar[d]_-{\approx}& \mathbb{D}^+_B(P_s \cdot Z)\ar[u]_-{\op{inc}_*}\ar[d]_-{\approx} \\
  \mathbb{D}^+_H(\PP^1) \ar[r]^-{\fin_*} & \mathbb{D}^+_H(\pt) \ar[r]^-{\fin^*} & \mathbb{D}^+_H(\PP^1)
}
\]
\end{lemma}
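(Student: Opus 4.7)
The diagram decomposes into four squares (two on the top, two on the bottom), and my plan is to verify each square separately, using only the compatibility of the six functors with the induction and quotient equivalences.

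For the two top squares, I would use that $\op{inc}\colon P_s\cdot Z\hookrightarrow X$ is a $P_s$-equivariant morphism of varieties with action. The right top square (involving $\op{inc}_*$ and $\op{Res}^B_{P_s}$) commutes by \ref{CompRES}, which states that the six functors commute with restriction of the group action. The left top square (involving $\op{inc}_*$ and $\op{Ind}^{P_s}_B$) then commutes by Proposition~\ref{prop:integration}, which asserts in particular that the ordinary integration functor commutes with pushforwards $f_*$.

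For the two bottom squares, I would use the explicit chain of equivalences spelled out in~\ref{ss:p1mod}:
\[
\mathbb{D}^+_B(P_s/H) \xleftarrow{\approx} \mathbb{D}^+_{B\times H}(P_s) \xrightarrow{\approx} \mathbb{D}^+_H(B\backslash P_s) = \mathbb{D}^+_H(\mathbb{P}^1),
\]
where the first equivalence is the induction equivalence (Proposition~\ref{cor:indequiv}) for the free right $H$-action, and the second is the quotient equivalence (Proposition~\ref{prop:quotientequiv}) for the normal subgroup $B\subseteq B\times H$ acting freely on $P_s$. A parallel chain gives $\mathbb{D}^+_{P_s}(P_s/H)\approx \mathbb{D}^+_H(\pt)$ via the quotient equivalence for $P_s \subseteq P_s\times H$. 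The right bottom square—identifying $\op{Res}^B_{P_s}$ with $\op{fin}^*$—then reduces, by compatibility of the induction equivalence with the six functors, to showing that the group-restriction functor $\mathbb{D}^+_{P_s\times H}(P_s)\to \mathbb{D}^+_{B\times H}(P_s)$ corresponds, under the quotient equivalences, to $\op{fin}^*\colon \mathbb{D}^+_H(\pt)\to \mathbb{D}^+_H(\mathbb{P}^1)$. This in turn follows from the compatibility of the quotient equivalence with both restriction along a group homomorphism and ordinary pullback, which again goes back to \ref{CompRES} and Proposition~\ref{prop:quotientdmt}.

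Finally, the left bottom square—identifying $\op{Ind}^{P_s}_B$ with $\op{fin}_*$—follows by passing to right adjoints: once the right bottom square is established, we have a commutative square of left adjoints, and by uniqueness of right adjoints the square of right adjoints commutes (up to unique isotransformation). The main technical point, and really the only obstacle, lies in keeping track of the many chains of equivalences so that the identifications of $\op{Res}^B_{P_s}$ with $\op{fin}^*$ are compatible; but since the equivalences we use (induction and quotient) are all induced by morphisms of diagrams of varieties and hence come with built-in coherent compatibilities from the six-functor formalism, this is a matter of bookkeeping rather than substantive new argument.
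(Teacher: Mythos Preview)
Your proof is correct and follows essentially the same strategy as the paper's: verify the four squares separately using the established compatibilities of restriction, induction, and the quotient equivalence with the six functors. There are two minor organizational differences worth noting. For the top-left square you invoke Proposition~\ref{prop:integration} directly, whereas the paper deduces it by adjunction from the top-right square (since $\op{Res}^B_{P_s}$ commutes with $\op{inc}^*$, the right adjoints $\op{Ind}^{P_s}_B$ and $\op{inc}_*$ must commute). Conversely, for the bottom-left square you pass to right adjoints from the bottom-right square, whereas the paper verifies it directly by unwinding the construction of $\op{Ind}_*$ from~\ref{PBPF} and using the compatibility of the quotient equivalence (Proposition~\ref{prop:quotientequiv}) with pushforwards. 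Both routes are valid; your adjunction shortcut is arguably cleaner.

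Two small corrections: the equivalence $\mathbb{D}^+_B(P_s/H)\approx\mathbb{D}^+_{B\times H}(P_s)$ you describe is the \emph{quotient} equivalence for the normal subgroup $\{1\}\times H\subset B\times H$, not the induction equivalence---the paper makes this point explicitly. Also, the relevant compatibility reference is Proposition~\ref{prop:quotientequiv} (the generalized quotient equivalence is compatible with all six functors), not Proposition~\ref{prop:quotientdmt}, which concerns mixed Tate motives specifically. Neither affects the substance of your argument.
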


\begin{proof}
Restriction commutes with all the standard functors, by \ref{CompRES}, whence the commutativity of the top right square. As induction is defined to be the right adjoint to restriction, cf. Proposition~\ref{prop:integration}, and restriction commutes with $\op{inc}^*$, we obtain the commutativity of the top left square via adjunction (and the fact that compositions of right adjoints is right adjoint to the composition). 

We have to deal with the commutativity of the bottom squares. Recall from \ref{PBPF} that $\op{Ind}_\ast$ is given by push-forward along the composition 
\[
(B\looparrowright P_s\cdot Z) \xrightarrow{(i,s)} (P_s\looparrowright (P_s\times_{/B} P_s\cdot Z)) \xrightarrow{(\op{id},m)} (P_s\looparrowright P_s\cdot Z). 
\]
The lower vertical equivalences are the ones considered in \ref{ss:p1mod}, i.e., they are given by a composition of equivalences: the first and last equivalences are simply induced from isomorphisms of varieties and will therefore obviously commute with the pushforwards in the definition of induction. The two equivalences in the middle are obtained by changing the equivariance via the quotient equivalence, using that both $B$ and $H$ are normal subgroups in $B\times H$. By Proposition~\ref{prop:quotientequiv}, the quotient equivalences are compatible with the six functors, and in particular with the induction functors. This implies the commutativity of the lower left square. The commutativity of the lower right square follows similarly, since the quotient equivalence is compatible with restriction functors. 
\end{proof}

\begin{Bemerkung}
\label{RTZl}
Let $G$ be a reductive group with Borel subgroup $B\subset G$, and let $G\looparrowright X$ be a variety with action having finitely many $B$-orbits. Let $Z$ be a $B$-orbit of $X$, and let $Y$ be the open $B$-orbit in $P_s\cdot Z$. Denote the inclusions by $Y\xrightarrow{\tilde{\jmath}} P_s\cdot Z \xrightarrow{\op{inc}} X$, the composite is the inclusion $j:Y\to X$. Let $M$ be a cuspidal local system on $Y\subset P_s\cdot Z$, which means that 
\[
\ind^{P_s}_B (j_*M)=\op{inc}_*\ind^{P_s}_B (\tilde\jmath_*M)=0.
\]
Under the equivalences of \ref{ss:p1mod} and the commutative diagrams of Lemma~\ref{lemma:reducetop1}, this means necessarily $\op{fin}_*u_*N=0$ for $u:U\hra \mathbb P^1$ the inclusion of the open $H$-orbit and $N$ the local system on it corresponding to $M$.\footnote{This is the way in which Lemma~\ref{lemma:reducetop1} combined with Lemma~\ref{lemma:gmlocalsystem} can be used to check cuspidality of motives.}  Clearly this cannot happen if $U$ is all of $\mathbb P^1$ or the complement of a single point, corresponding to the cases \textbf{G} and \textbf{U} in \ref{ss:p1mod}. More generally, it cannot happen if $N$ is constant on $U$, but only if it has nontrivial monodromy (in the sense that it corresponds to a motive on the point with non-trivial stabilizer representation), cf. Lemma~\ref{lemma:gmlocalsystem}.
\end{Bemerkung}

\begin{proposition}
\label{prop:bsgenerates}
Let $\mathbb{D}$ be a derivator satisfying the conditions of \ref{derivator:new}. Let $G$ be a reductive group, let $G\looparrowright X$ be a variety with action which is Hecke-connected. Then for every standard parabolic $P$, the category $\DMT^\ast_P(X)$ is contained in the smallest triangulated subcategory of $\mathbb{D}^+_P(X)$ generated by the BS-closure of cuspidal motives.
\end{proposition}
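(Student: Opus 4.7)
The plan is to reduce to the case $P=B$ and then induct on $B$-orbits in the strong Bruhat order, using Hecke-connectedness and the $\mathbb{P}^1$-modification analysis of \ref{ss:p1mod}. By Proposition~\ref{lkio}(1), the triangulated category $\DMT^*_B(X)$ is generated by objects $j_!N$ with $j\colon V\hookrightarrow X$ the inclusion of a $B$-orbit and $N\in\DMT_B(V)$. Using the tilting description of $\DMT_B(V)$ via the induction equivalence (Corollary~\ref{cor:tiltpoint} and Proposition~\ref{cor:indequiv}), each such $N$ is built by extensions, twists, shifts and direct summands from objects in the heart $\DMT_B(V)_{\op{wt}=0}$. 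Since $\langle\mathcal{S}\rangle^{\op{BS}}$ is closed under these operations, we are reduced to showing $j_!N\in\langle\mathcal{S}\rangle^{\op{BS}}$ for $N\in\DMT_B(V)_{\op{wt}=0}$.

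We proceed by Noetherian induction on $\overline{V}$. If $V$ is closed, then $j_!=j_*$, every such $N$ is cuspidal by Example~\ref{ex:cuspidal}, and $j_*N\in\langle\mathcal{S}\rangle^{\op{BS}}$ by definition. If $V$ is not closed, Hecke-connectedness yields $u\lneq V$ and a simple reflection $s$ such that $V$ is the open $B$-orbit in $P_s\cdot u$. Writing $\op{inc}\colon P_s\cdot u\hookrightarrow X$ closed, $\tilde\jmath\colon V\hookrightarrow P_s\cdot u$ open, and $\tilde\imath\colon P_s\cdot u\setminus V\hookrightarrow P_s\cdot u$ closed, the orbits of $P_s\cdot u\setminus V$ are strictly smaller than $V$, so every object of $\DMT^*_B$ supported there lies in $\langle\mathcal{S}\rangle^{\op{BS}}$ by the inductive hypothesis. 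Using the $\mathbb{P}^1$-picture of Lemma~\ref{lemma:reducetop1} and Lemma~\ref{lemma:gmlocalsystem}, we decompose $N$ in the heart $\DMT_B(V)_{\op{wt}=0}$ into a cuspidal summand and a summand corresponding to a constant local system. For the cuspidal summand, $\op{inc}_*\tilde\jmath_*N\in\langle\mathcal{S}\rangle^{\op{BS}}$ by definition, and the localization triangle
\[
j_!N\to j_*N\to \op{inc}_*\tilde\imath_*\tilde\imath^*\tilde\jmath_*N\to j_!N[1]
\]
places $j_!N$ in the extension closure of two objects already in $\langle\mathcal{S}\rangle^{\op{BS}}$. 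For the constant summand, Lemma~\ref{lem:cuspreduction} combined with Lemma~\ref{lemma:reducetop1} provides a triangle
\[
\op{inc}_*\tilde\imath_*A[-1]\to j_!N\to \op{Res}^B_{P_s}\op{Ind}^{P_s}_B(j_*N)\to \op{inc}_*\tilde\imath_*A,
\]
whose outer term is supported on the strictly smaller orbits in $P_s\cdot u\setminus V$ and hence is in $\langle\mathcal{S}\rangle^{\op{BS}}$ by induction; the middle term is in $\langle\mathcal{S}\rangle^{\op{BS}}$ because $\op{Ind}^{P_s}_B(j_*N)$ is a constant $P_s$-equivariant motive on the single $P_s$-orbit $P_s\cdot u$, which via the induction equivalence $\mathbb{D}^+_{P_s}(P_s\cdot u)\approx \mathbb{D}^+_{H_{P_s}}(\op{pt})$ and Theorem~\ref{thm:tiltpoint} can be built, up to twist and shift, by applying $\op{Ind}^{P_s}_B$ to cuspidal motives supported on closed $B$-orbits contained in $P_s\cdot u$, and both $\op{Res}^B_{P_s}$ and $\op{Ind}^{P_s}_B$ preserve $\langle\mathcal{S}\rangle^{\op{BS}}$ by the very definition of BS-closure.

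To pass from $P=B$ to an arbitrary standard parabolic $B\subset P$, we apply the analog of Proposition~\ref{lkio}(1) to $P$-orbits: $\DMT^*_P(X)$ is generated by motives of the form $(j^P)_!N$ for $j^P\colon Z\hookrightarrow X$ a $P$-orbit and $N\in\DMT_P(Z)_{\op{wt}=0}$. Choosing a closed $B$-orbit $\tilde Z\subset Z$ with inclusion $\tilde\jmath\colon\tilde Z\hookrightarrow Z$, the induction equivalence $\mathbb{D}^+_P(Z)\approx\mathbb{D}^+_{H_P}(\op{pt})$ together with Proposition~\ref{Lasp} identifies $(j^P)_!N$, up to Tate twist and shift by $\dim(P/B)$, with $\op{Ind}^P_B(j^P\circ\tilde\jmath)_!\tilde N$ for an appropriate $\tilde N\in\DMT_B(\tilde Z)_{\op{wt}=0}$. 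By the already established $P=B$ case, $(j^P\circ\tilde\jmath)_!\tilde N$ lies in $\langle\mathcal{S}\rangle^{\op{BS}}$ for $B$-equivariance, and since $\op{Ind}^P_B$ preserves the BS-closure, we conclude $(j^P)_!N\in\langle\mathcal{S}\rangle^{\op{BS}}$ for $P$-equivariance.

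The main obstacle is the non-cuspidal step of the inductive argument, where one must verify rigorously that for a constant motive $N$ on the open $B$-orbit $V\subset P_s\cdot u$, the resulting $P_s$-equivariant motive $\op{Ind}^{P_s}_B(j_*N)$ can actually be assembled inside the $P_s$-BS-closure using only the closure operations of Definition~\ref{def:BSclosure}. This reduces, via the induction equivalence on the single $P_s$-orbit $P_s\cdot u$, to the algebraic question of how certain modules over the equivariant cohomology ring of the stabilizer group are built from induced modules, and requires combining the already-established $B$-case on strictly smaller orbits with a careful bookkeeping of the localization triangles arising from the closed $B$-orbits sitting inside $P_s\cdot u$.
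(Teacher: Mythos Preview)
Your overall strategy---reduce to $P=B$, induct on orbits, and use the $\mathbb{P}^1$-analysis supplied by Hecke-connectedness---matches the paper's. But there is a genuine gap in how you handle the ``cuspidal summand''.

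Your decomposition of $N$ into ``cuspidal $\oplus$ constant'' is done with respect to the \emph{single} simple reflection $s$ provided by Hecke-connectedness: via Lemma~\ref{lemma:gmlocalsystem}, the summand with $\fin_*=0$ in the $s$-direction is the $s$-nonconstant part. But being $s$-nonconstant is \emph{not} the same as being cuspidal in the sense of Definition~\ref{def:cuspidal}, which demands $\op{Ind}^{P_{s'}}_B(j_*N)=0$ for \emph{every} simple reflection $s'$ with $V$ proper open in $P_{s'}\cdot V$. An orbit $V$ can be of type \textbf{T}/\textbf{N} relative to $s$ yet of type \textbf{U} relative to some other $s'$; a nonconstant-in-$s$ local system then has $\op{Ind}^{P_{s'}}_B(j_*N)\neq 0$ and is not cuspidal. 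So the claim ``$\op{inc}_*\tilde\jmath_*N\in\langle\mathcal{S}\rangle^{\op{BS}}$ by definition'' is unjustified. (Your localization triangle is also misstated: the cone of $j_!N\to j_*N$ is supported on all of $\overline V\setminus V$, not only on $P_s\cdot u\setminus V$.) The correct move is to decompose $N$ into indecomposable local systems; for each genuinely cuspidal one use the definition, and for each non-cuspidal one choose a reflection $s'$ \emph{depending on that summand} with $\op{Ind}^{P_{s'}}_B(j_*N)\neq 0$, then run the constant-case argument with that $s'$. The paper's own proof is terse at exactly this point and does not spell this out either, but it does not make your explicit false claim.

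Your reduction from general $P$ to $B$ also needs correction. The identification $(j^P)_!N\cong\op{Ind}^P_B(j^P\circ\tilde\jmath)_!\tilde N$ up to twist and shift does not hold in general: under the induction equivalences, $\op{Ind}^P_B(j^B_!\tilde N)$ restricted to the $P$-orbit $Z$ is a pushforward along the $H_P/H_B$-bundle $P\times_B\tilde Z\to Z$, not $N$ itself. The paper instead uses the projective bundle formula (Proposition~\ref{prop:projectivebundle}) to observe that every $\mathcal{F}\in\DMT^*_P(X)$ is a direct summand of $\op{Ind}^P_B\op{Res}^B_P\mathcal{F}$; since $\op{Res}^B_P\mathcal{F}\in\DMT^*_B(X)$ lies in the $B$-BS-closure by the Borel case, and both $\op{Ind}^P_B$ and taking summands preserve the BS-closure, the claim follows.
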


\begin{proof}
Let $\cT_P$ denote the smallest triangulated subcategory of $\mathbb{D}^{\op{c}}_P(X)$ containing the BS-closure of the cuspidal motives. By Proposition~\ref{lkio}, the category $\DMT_P^\ast(X)$ is generated by objects of the form $j_!M$ for $j:V\hookrightarrow X$ the inclusion of a $P$-orbit and $M\in \DMT_P(V)$. In particular, we need to show that $j_!M\in \mathcal{T}_P$ for every parabolic $P$, every $P$-orbit $j:V\to X$ and every $M\in \DMT_P(V)$. 

First, we concentrate on the case of the Borel subgroup. Let $j:V\to X$ be a $B$-orbit, and let $M\in \DMT_B(V)$ be a motive. By Proposition~\ref{prop:wtpt}, there is a weight structure on $\DMT_B(V)$, and by using the weight decomposition triangles we can reduce to the case where $M\in \DMT_B(V)_{\op{wt}=0}$. 

Now the claim is proved by induction on the weak order on the set of $B$-orbits on $G/K$. If $V$ is a closed orbit, then $j_! M\cong j_\ast M$ and then $j_!M$ is in the BS-closure by definition and every weight $0$ motive is cuspidal by Example~\ref{ex:cuspidal}. Otherwise, since $X$ is Hecke-connected, we may find a simple reflection $s$ and an orbit $W\lneq V$ such that $s\star W = V$. This situation corresponds, via the remarks in \ref{ss:p1mod}, to an algebraic group $H$ acting on $\mathbb{P}^1$ with finitely many orbits, with $M$ corresponding to a motive on the open orbit. By Lemma~\ref{lem:cuspreduction}, to show that $j_!M$ is in the BS-closure, it suffices to show that all the other motives in the exact sequence are in the BS-closure. 

In case \textbf{U}, $M$ corresponds to a motive $N$ on the open orbit $\mathbb{A}^1$. In particular, $N$ is a constant pure Tate motive of weight $0$ and therefore $\op{fin}_\ast N$ is a constant pure Tate motive of weight $0$ such that $j^\ast \op{fin}_{\mathbb{P}^1}^\ast \op{fin}_\ast N\cong N$. By induction on the weak order we know that $i_\ast \op{fin}_\ast N$ is in the BS-closure. To show that $\op{fin}_{\mathbb{P}^1}^\ast\op{fin}_\ast N$ is in the BS-closure, we can use the localization sequence with $\tilde{N}=\op{fin}^\ast_{\mathbb{P}^1}\op{fin}_\ast N$ which is a constant pure Tate motive on $\mathbb{P}^1$: 
\[
i_! i^! \tilde{N} \to \tilde{N} \to j_\ast j^\ast \tilde{N} \to i_! i^! \tilde{N}[1] 
\]
For the first motive in the sequence, we can use absolute purity which implies that $i^! \tilde{N}(1)[2]$ is a constant pure Tate motive on the point, and the third motive is $j_\ast N$. Now we apply $\op{fin}_{\mathbb{P}^1,\ast}\op{fin}^\ast_{\mathbb{P}^1}$ to this sequence. Under this, the third motive maps to $\op{fin}^\ast_{\mathbb{P}^1}\op{fin}_\ast N$. We need to show that the other two summands are in the BS-closure. By construction, these will correspond to $\op{Res}_{P_s}^B\op{Ind}_B^{P_s}$ of the first two motives on $\mathbb{P}^1$; so it suffices to show that the motives corresponding to the skyscraper motive and the constant motive on $\mathbb{P}^1$ are in the BS-closure. For the first, this follows from induction on the weak Bruhat order, and the second is given by induction-restriction from the first. 

For case \textbf{T} and \textbf{N}, we can argue the same way. The only difference is that the relevant motives appearing in the sequence of Lemma~\ref{lem:cuspreduction} are only direct summands of the motives we would have considered in case \textbf{U}. 

To establish our claims for general $P$, we have to remark that by the projective bundle formula  Proposition~\ref{prop:projectivebundle} every $\mathcal F\in \mathbb D_{P\times K}(G)$ occurs as a direct summand in $\op{Ind}_B^P\op{Res}_P^B\mathcal F$, hence every $\mathcal F\in \op{MTDer}_{P\times K}^*(G)$ occurs as a direct summand  of $\op{Ind}_B^P\mathcal G$ for some $\mathcal G\in \op{MTDer}^*_{B\times K}(G)$  and therefore (1) and (2) for general $P$ follow from (1) and (2) in case $P=B$.   
\end{proof}

\subsection{Clean motives}

Now we define clean motives. The cleanness condition implies that the category generated by induction from clean motives is contained in the category of equivariant mixed Tate motives. 

\begin{definition}
\label{def:clean}
\index{motives!clean}
Let $G$ be a connected reductive group, let $B\subset G$ be a Borel subgroup and let $G\looparrowright X$ be a variety with action which has finitely many $B$-orbits separably defined over $k$. Given a $B$-orbit $j\colon B/H=V\hookrightarrow X$ and $M\in \DMT_B(V)_{\op{wt}=0}$ we will say $M$ is \emph{clean} if the canonical map $j_! M \to j_*M$ is an isomorphism. 
\end{definition}

\begin{example}
Take $X=G/B$ to be the flag variety. By Example~\ref{ex:cuspidal}, the cuspidals are exactly the motives $M\in \DMT_B(V)_{\op{wt}=0}$ where $V$ is the one-point orbit. These motives are clean because the inclusion $j:V\to G/B$ is proper. 
\end{example}

\begin{example}
In general, orbits that are not closed will often admit clean motives. For instance, the cuspidal motive on the open orbit in Example \ref{SLSOexamplecuspidal} is clean. Unfortunately, unlike cuspidality, cleanness cannot be completely reduced to a question on $\PP^1$, i.e., for cleanness it doesn't suffice to check that $i^\ast j_\ast M$ vanishes for $i$ the inclusion of a codimension 1 orbit. See Lemma~\ref{lemma:cleanlemma} and Proposition~\ref{prop:symmtilt} for how to establish cleanness of certain motives in the case of Borel-actions on symmetric varieties. 
\end{example}

\begin{proposition}
\label{prop:cleanverdier}
Let $G$ be a connected reductive group with Borel subgroup $B\subset G$ and let $G\looparrowright X$ be a variety with action which has finitely many $B$-orbits separably defined over $k$. Let $\mathbb{D}$ be a derivator satisfying the conditions of \ref{derivator:new}, the grading condition~\ref{conditions:grading} and the weight condition~\ref{conditions:weight}. Then the BS-closure of the clean motives is stable under Verdier duality.
\end{proposition}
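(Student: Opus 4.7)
The plan is to run a minimality/symmetry argument on the BS-closure itself. Specifically, I would define, for each parabolic $B \subseteq P \subseteq G$, the strictly full subcategory $\mathcal{C}_P \subseteq \mathbb{D}^+_P(X)$ consisting of Verdier duals of objects in $\langle \mathcal{S}\rangle^{\op{BS}} \cap \mathbb{D}^+_P(X)$, and show that the collection $\{\mathcal{C}_P\}$ satisfies the five conditions of Definition~\ref{def:BSclosure} for the same starting data of clean motives. Minimality of the BS-closure then forces $\langle \mathcal{S}\rangle^{\op{BS}} \subseteq \mathcal{C}$; applying Verdier duality once more and using $D^2 \cong \op{id}$ on constructibles (noting that $\langle \mathcal{S}\rangle^{\op{BS}}$ lies in the constructible subcategory since clean motives are constructible and all generating operations preserve constructibility by \ref{rem:constructible}) yields the reverse inclusion and hence stability of $\langle \mathcal{S}\rangle^{\op{BS}}$ under $D$.

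The first step is a small lemma: cleanness of a weight-zero mixed Tate motive $M$ on a $B$-orbit $V$ is self-dual. Preservation of mixed Tate by Proposition~\ref{prop:dmtverdier}, preservation of the weight-zero heart by Proposition~\ref{prop:wtdual}, and the interchange of $j_!$ with $j_\ast$ under $D$ together turn the cleanness isomorphism $j_! M \xrightarrow{\cong} j_\ast M$ into $j_! D(M)\xrightarrow{\cong} j_\ast D(M)$. This secures axiom (1) for $\mathcal{C}_B$: the object $j_\ast M = D(j_\ast D(M))$ lies in $\mathcal{C}_B$ because $j_\ast D(M) \in \langle \mathcal{S}\rangle^{\op{BS}}$ by axiom (1) applied to the clean motive $D(M)$.

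Axioms (2)--(4) are then bookkeeping: Verdier duality is an equivalence of triangulated categories on constructibles, so it preserves direct summands and distinguished triangles (with directions reversed), and it sends $M(n)[2n]$ to $D(M)(-n)[-2n]$, compensated by the symmetric twist-stability in axiom (2) of the BS-closure. For axiom (5), the restriction functor $\op{Res}_Q^P$ commutes with $D$ on the nose by Proposition~\ref{equiv:verdier}(iii), and so passes to $\mathcal{C}$. The genuinely interesting case is the ordinary integration $\op{Ind}_P^Q = \op{Ind}_\ast$, where Verdier duality would naively produce the exceptional integration $\op{Ind}_!$ instead -- this is precisely where the assumption that we are inducing between parabolics becomes essential, as Proposition~\ref{Lasp} identifies $\op{Ind}_! \cong \op{Ind}_\ast(d)[2d]$ with $d = \dim(Q/P)$. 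Consequently $\op{Ind}_\ast D(N) \cong D(\op{Ind}_! N) \cong D(\op{Ind}_\ast N (d)[2d])$ lies in $\mathcal{C}_Q$ by combining axioms (2) and (5) for $\langle \mathcal{S}\rangle^{\op{BS}}$. The main (minor) obstacle is precisely this Ind step and its reliance on the parabolic geometry via Proposition~\ref{Lasp}; everything else is formal from the compatibilities of $D$ with the six functors.
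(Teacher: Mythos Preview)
Your proof is correct and follows essentially the same route as the paper's: both arguments verify that each of the five closure operations in Definition~\ref{def:BSclosure} is compatible with Verdier duality, with the self-duality of cleanness handling axiom~(1), formal bookkeeping handling (2)--(4), and Proposition~\ref{Lasp} handling the crucial $\op{Ind}$ case in axiom~(5). Your minimality framing via the auxiliary categories $\mathcal{C}_P$ is a clean way to package this, and your direct appeal to Proposition~\ref{equiv:verdier}(iii) for the restriction case is in fact more efficient than the paper's explicit recomputation of $D \circ \op{Res}_Q^P$ via the factorization from \ref{PBPF}.
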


\begin{proof}
If $M$ is a clean motive on a $B$-orbit $j:Z\hookrightarrow X$, then its Verdier dual $D(M)$ is clean, since Verdier duality preserves $\DMT_B(Z)_{\op{wt}=0}$, cf. Proposition~\ref{prop:wtdual}, and the composition
\[
D(j_\ast M)\cong j_!D(M)\to j_\ast D(M)\cong D(j_! M)
\]
is the dual of the canonical map $j_!M \to j_\ast M$. But then also motives of the form $j_\ast M$ for $M$ clean will be closed under Verdier duality, since $D(j_\ast M)\cong j_! D(M)\cong j_\ast D(M)$. The operations $M\mapsto M(n)[2n]$, taking direct summands and extensions are obviously compatible with Verdier duality. 

It remains to discuss compatibility with restriction and induction. For two standard parabolics $P\subset Q$, we have $D(\op{Ind}_P^Q M)\cong \op{Ind}_! D(M)$, by \ref{Last}. Moreover, by \ref{Lasp} we have $\op{Ind}_! D(M)\cong \op{Ind}_\ast D(M)(d)[2d]$ where $d=\dim Q/P$, in particular exceptional integration preserves the Bott--Samelson closure of the clean motives. Combining the two statements, we find that Verdier duality commutes, up to appropriate twist and shift, with the ordinary induction functors.

A similar discussion can be obtained for the restriction functors. We want to show that $D(\op{Res}_Q^P)$ preserves the Bott--Samelson closure of the clean motives, for an inclusion  $P\subset Q$ of standard parabolics. As in \ref{PBPF}, we can write the restriction as pullback along the composition
\[
\left(P\looparrowright X\right) \xrightarrow{(i,s)} \left(Q\looparrowright(Q\qtimes{P} Q)\right) \xrightarrow{(\op{id},m)} \left(Q\looparrowright X\right)
\]
The functor $(i,s)^\ast$ is an equivalence and compatible with Verdier duality, by the induction equivalence of Proposition~\ref{cor:indequiv}. On the other hand, since $(\op{id},m)$ is smooth of relative dimension $d=\dim Q/P$, we have 
\[
D\circ (\op{id},m)^\ast\approx (\op{id},m)^!\circ D\approx (\op{id},m)^\ast(d)[2d]\circ D.\]
We find that restriction $\op{Res}_Q^P$ commutes with Verdier duality up to appropriate twist and shift and therefore $D(\op{Res}_Q^P)$ preserves the Bott--Samelson closure of the clean motives. 
\end{proof}

\begin{proposition}
\label{prop:bstate}
Let $G$ be a connected reductive group with Borel subgroup $B\subset G$ and let $G\looparrowright X$ be a variety with action which has finitely many $B$-orbits separably defined over $k$. Let $\mathbb{D}$ be a derivator satisfying the conditions of \ref{derivator:new} and the grading condition~\ref{conditions:grading}. Then the motives in the BS-closure of the clean motives are orbitwise mixed Tate. 
\end{proposition}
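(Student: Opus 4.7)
The plan is to apply the minimality of the BS-closure: I would show that the collection of subcategories $\{\DMT_P(X)\}_{B\subseteq P\subseteq G}$ (with $P$ running through the standard parabolics containing $B$) satisfies conditions (1)--(5) of Definition~\ref{def:BSclosure} for the starting collection of clean motives. The conclusion that the BS-closure of the clean motives lies in $\DMT_P(X)$ for every such $P$ is then immediate.

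The nontrivial step is verifying condition~(1): that $j_\ast M\in\DMT_B(X)$ for each $M\in\DMT_B(V)_{\op{wt}=0}$ clean on a $B$-orbit $j\colon V\hookrightarrow X$. I would factor $j = b\circ a$ with $a\colon V\hookrightarrow\overline{V}$ an open immersion and $b\colon\overline{V}\hookrightarrow X$ a closed immersion, and denote by $c\colon\overline{V}\setminus V\hookrightarrow\overline{V}$ the closed complement of $a$. Since $b_\ast$ is fully faithful, cleanness $j_!M\cong j_\ast M$ descends to $a_!M\cong a_\ast M$. The two standard localization triangles $a_!a^\ast\to\op{id}\to c_\ast c^\ast$ and $c_\ast c^!\to\op{id}\to a_\ast a^\ast$, applied to $a_\ast M\cong a_!M$ and using $a^\ast a_!M\cong M\cong a^\ast a_\ast M$ together with the fact that the natural map $a_!M\to a_\ast M$ is now an isomorphism, yield the two vanishings $c^\ast a_\ast M=0$ and $c^!a_!M=0$. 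Now for any $B$-orbit $i\colon W\hookrightarrow X$ with $W\neq V$: if $W\cap\overline{V}=\emptyset$ then $i^\ast j_\ast M$ and $i^!j_\ast M$ both vanish trivially (as $j_\ast M$ is supported on $\overline{V}$); if $W\subset\overline{V}\setminus V$, I would factor $i = b\circ c\circ\tilde\imath$ with $\tilde\imath\colon W\hookrightarrow\overline{V}\setminus V$, and use the identities $b^\ast b_\ast\cong\op{id}\cong b^!b_!$ for the closed immersion $b$ to conclude $i^\ast j_\ast M\cong\tilde\imath^\ast c^\ast a_\ast M = 0$ and $i^!j_\ast M\cong i^!j_!M\cong \tilde\imath^!c^!a_!M = 0$. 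At $V$ itself, $j^\ast j_\ast M\cong M\cong j^!j_!M\cong j^!j_\ast M$ are mixed Tate by assumption. Hence $j_\ast M$ is both $\ast$- and $!$-orbitwise mixed Tate.

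The other closure properties follow from results already at our disposal: condition~(2) is Lemma~\ref{lem:twistdmt}; condition~(3) on closure under direct summands follows from Lemma~\ref{lem:idempotent} applied to $\DMT^\ast_P(X)$ and $\DMT^!_P(X)$; condition~(4) on extension stability holds because $\DMT_P(X)$ is the intersection of two triangulated subcategories of $\mathbb{D}^+_P(X)$; and condition~(5) on preservation under $\op{Res}_Q^P$ and $\op{Ind}_P^Q$ for $B\subseteq P\subseteq Q\subseteq G$ is covered by Proposition~\ref{prop:dmtres} and Theorem~\ref{ifpt}. The main point that requires care is that cleanness is precisely the condition that produces the simultaneous vanishing $c^\ast a_\ast M = c^!a_!M = 0$, which in turn reduces the orbitwise mixed Tateness of $j_\ast M$ to the mixed Tateness of $M$ on $V$; once this observation is in place the verification of the closure conditions is essentially formal.
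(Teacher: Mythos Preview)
Your proof is correct and follows essentially the same approach as the paper: verify that the collection $\{\DMT_P(X)\}_P$ satisfies the five closure conditions of the BS-closure for the clean motives as starting data. The paper argues condition~(1) more tersely—asserting $i^\ast j_!M=0$ and $i^!j_\ast M=0$ ``by an induction using the localization sequence''—while you spell out the same vanishing via the factorization $j=b\circ a$ and the two localization triangles on $\overline{V}$; the remaining conditions are handled identically in both proofs (twist/shift stability, idempotent completeness, extension stability, and Proposition~\ref{prop:dmtres} with Theorem~\ref{ifpt} for restriction and induction).
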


\begin{proof}
By definition, if $j\colon Z\hookrightarrow X$ is a $B$-orbit and $M\in\DMT_{B}(Z)_{\op{wt}=0}$ is a clean motive, we have $j_!M\cong j_\ast M$. If $i:Y\to X$ is a different $B$-orbit, then $i^\ast j_! M\cong 0$ by an induction using the localization sequence. This implies that $j_\ast M$ is $\ast$-orbitwise mixed Tate: it is mixed Tate on $Z$ by definition and has trivial restrictions to any other orbit. Similarly, $j_\ast M$ is $!$-orbitwise mixed Tate because $j^! j_\ast M\cong j^\ast j_\ast M\cong M$ is mixed Tate by assumption and $i^! j_\ast M=0$ by induction using the localization sequence.

Further, the orbitwise mixed Tate properties are stable under $M\mapsto M(n)[2n]$, as well as taking direct summands and extensions. It is also stable under restriction functors $\op{Res}_P^Q$ by Proposition~\ref{prop:dmtres} and induction functors $\op{Ind}_Q^P$ by Theorem~\ref{ifpt}. This proves the claim.
\end{proof}


Let us point out here that the statements above concern motives in $\mathbb{D}^+_P(X)$ for every standard parabolic $B\subset P\subset G$.

\subsection{Bott--Samelson motives}

Now we have established the basic comparisons between the BS-closures of cuspidal and clean motives, we can identify the relevant condition: if cuspidals are clean, then these closures are equal and they coincide with the equivariant mixed Tate motives. 

\begin{proposition}
\label{prop:BScoincide}
Let $G$ be a reductive group, let $B\subset G$ be a Borel subgroup and let $X$ be a $G$-variety with finitely many $B$-orbits. Let $\mathbb{D}$ be a derivator satisfying the conditions of \ref{derivator:new}, the grading condition~\ref{conditions:grading} and the weight condition~\ref{conditions:weight}. If cuspidals are clean, then the Bott--Samelson closures of cuspidal and clean motives coincide.
\end{proposition}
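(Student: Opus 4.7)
I would split the proof of coincidence of the two BS-closures into two inclusions. The forward direction $\langle \mathcal{S}_{\op{cusp}}\rangle^{\op{BS}} \subseteq \langle \mathcal{S}_{\op{clean}}\rangle^{\op{BS}}$ is essentially formal from the hypothesis: by assumption, every cuspidal motive $M$ on a $B$-orbit $j\colon V\hookrightarrow X$ is clean, so the clean BS-closure contains every seed $j_*M$ of the cuspidal BS-closure (axiom (1) of Definition~\ref{def:BSclosure}). Since the clean BS-closure automatically satisfies axioms (2)--(5), by minimality of $\langle\mathcal{S}_{\op{cusp}}\rangle^{\op{BS}}$ we obtain the inclusion.

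For the reverse inclusion $\langle\mathcal{S}_{\op{clean}}\rangle^{\op{BS}}\subseteq \langle\mathcal{S}_{\op{cusp}}\rangle^{\op{BS}}$, it suffices to check that for every clean motive $M\in\DMT_B(V)_{\op{wt}=0}$ on a $B$-orbit $j\colon V\hookrightarrow X$, the motive $j_*M=j_!M$ lies in the cuspidal BS-closure; the remaining operations defining the clean closure then stay within the cuspidal closure by its own axioms (2)--(5). The plan is to proceed by induction on the weak Bruhat order on the finite set of $B$-orbits, exploiting Hecke-connectedness of $X$ (implicit in the setting in which Bott--Samelson motives are used, and anyway verified in the target applications).

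The base case is immediate: if $V$ is a closed $B$-orbit, then there is no simple reflection $s$ for which $V$ is a proper open subvariety of $P_s\cdot V$, so every pure weight $0$ motive on $V$ is vacuously cuspidal (cf. Example~\ref{ex:cuspidal}). In particular any clean $M$ on $V$ is cuspidal, and $j_*M$ is a seed for $\langle\mathcal{S}_{\op{cusp}}\rangle^{\op{BS}}$.

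For the inductive step, given a non-closed orbit $V$, Hecke-connectedness yields a simple reflection $s$ and a $B$-orbit $W\lneq V$ with $s\star W=V$. Using the commutative diagram of Lemma~\ref{lemma:reducetop1}, the analysis of $j_*M$ reduces to the $\PP^1$-fibration picture of \ref{ss:p1mod}, and the localization triangles of Lemma~\ref{lem:cuspreduction} allow one to write the motive associated to $V$ as built from $\op{Res}^B_{P_s}\op{Ind}_B^{P_s}$ applied to motives supported on the boundary orbits of $P_s\cdot V$ (which are strictly smaller than $V$ in the weak order, hence already in $\langle\mathcal{S}_{\op{cusp}}\rangle^{\op{BS}}$ by the induction hypothesis) together with residual terms living on $V$ that turn out to be cuspidal (and thus, by hypothesis, clean and in the cuspidal BS-closure). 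The main obstacle is that the BS-closure is only extension-stable and not cone-stable: a naive application of triangles arising from the $\PP^1$-geometry could produce cones whose target terms are not controlled inside $\langle\mathcal{S}_{\op{cusp}}\rangle^{\op{BS}}$. This is precisely where the hypothesis cuspidal $=$ clean intervenes, since it guarantees that in each relevant triangle coming from Lemma~\ref{lem:cuspreduction} both outer terms lie in the cuspidal BS-closure (the non-cuspidal rank one local systems giving $N\cong\const{\mathbb{G}_{\op{m}}}(n)$ of Lemma~\ref{lemma:gmlocalsystem} correspond to the non-clean case which is excluded by the hypothesis), so axiom (4) applies and the extension $j_*M$ lands in $\langle\mathcal{S}_{\op{cusp}}\rangle^{\op{BS}}$ as required.
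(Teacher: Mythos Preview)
Your forward inclusion is correct. The reverse inclusion has a gap. You set up an induction on the weak order and correctly identify that the BS-closure is only extension-stable, not cone-stable---so the triangles of Lemma~\ref{lem:cuspreduction} (which involve odd shifts like $i_\ast\fin_\ast N[-1]$) do not directly place $j_\ast M$ in $\langle\mathcal{S}_{\op{cusp}}\rangle^{\op{BS}}$ via axiom~(4). Your proposed resolution in the last paragraph is not coherent: you write that the hypothesis ``cuspidal $=$ clean'' excludes the non-clean case, but the hypothesis is only one implication (cuspidals are clean), and in any case you are already assuming $M$ is clean, so nothing is being excluded by the hypothesis.

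What actually makes the reverse inclusion work is the \emph{converse} implication, which holds without any hypothesis: every clean motive is cuspidal. Cleanness of $M$ on $V$ restricts to local cleanness in each $P_s\cdot V$ with $V$ proper open; in types \textbf{T} and \textbf{N}, Lemma~\ref{lemma:cleanlemma} says this is equivalent to $\fin_\ast N=0$, which is cuspidality with respect to $s$ via Lemma~\ref{lemma:reducetop1}; in type \textbf{U} no nonzero motive on $\mathbb{A}^1$ is locally clean. Thus under the hypothesis the seed sets for the two closures coincide, and the BS-closures agree immediately---no induction is needed, and the extension-stability issue never arises. Your last paragraph is circling around exactly this observation without landing on it.

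The paper's proof takes a different route: rather than comparing seeds, it runs a sandwich argument using Propositions~\ref{prop:bsgenerates} and~\ref{prop:bstate} to trap both BS-closures via $\DMT_P^\ast(X)_{\op{wt}=0}$ and $\DMT_P(X)$.
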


\begin{proof}
By Proposition~\ref{prop:bsgenerates}, $\DMT_P^\ast(X)_{\op{wt}=0}$ is contained in the BS-closure of the cuspidal motives. By Proposition~\ref{prop:bstate}, the BS-closure of the clean motives is contained in $\DMT_P(X)$. By the assumption, the BS-closure of the cuspidal motives is contained in the clean motives. Consequently, all the above categories have to coincide (and coincide with $\DMT_P(X)_{\op{wt}=0}$).
\end{proof}

\begin{definition}
\index{Bott--Samelson!motives}
\index{motives!Bott--Samelson}
\label{BSmot}
In the situation of Proposition~\ref{prop:BScoincide}, the BS-closure, cf. Definition~\ref{def:BSclosure}, of clean (or equivalently, cuspidal) motives will be called the category of \emph{Bott--Samelson motives}. These categories will be denoted by $\DMT_P^{\op{bs}}(X)$, where $P$ runs through the standard parabolics of $(G,B)$.
\end{definition}


\begin{remark}
We will later provide a different interpretation of the Bott--Samelson motives, by replacing the restriction and induction in the definition of BS-closure by convolution functors, cf. \ref{BSoo}.
\end{remark}

\begin{example}
\label{ex:BSflag}
Let $X=G/B$ be the flag variety, let $\underline w = (s_1, \ldots, s_n)$ be a reduced sequence of simple reflections in the Weyl group,  and consider the Bott--Samelson resolution of the Schubert variety $\overline{Bs_1\cdots s_nB/B}$: 
\[ 
\pi\colon\op{BS}(\underline w) = P_{s_1} \qtimes{B} \cdots \qtimes{B} P_{s_n} \qtimes{B} B/B\to G/B. 
\]
Each $\pi_*\const{\op{BS}(\underline w)}$ is a Bott--Samelson motive. In fact, each Bott--Samelson motive on $G/B$ is of this form (modulo isomorphism, shift, Tate twist and direct sums). 
\end{example}

\begin{theorem}
\label{thm:wtcond}
Let $\mathbb{D}$ be a derivator satisfying the conditions of \ref{derivator:new}, the grading condition~\ref{conditions:grading} and the weight condition~\ref{conditions:weight}. Let $G$ be a reductive group, let $B\subset G$ be a Borel subgroup and let $G\looparrowright X$ be a variety with action having finitely many $B$-orbits. Assume that $X$ is Hecke-connected and that cuspidals are clean. Then the equivariant Whitney--Tate condition holds for $P\subset G\looparrowright X$, for any standard parabolic $P$, i.e., 
\[
\DMT^*_P(X) = \DMT^!_P(X).
\]
\end{theorem}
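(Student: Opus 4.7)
The plan is to combine the previous three subsections: the results on BS-closure of cuspidal motives (which generate enough), clean motives (which are orbitwise mixed Tate), the hypothesis that cuspidals are clean (which forces these two BS-closures to agree), and finally Verdier duality to interchange $\ast$ and $!$.

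First, I would establish the inclusion $\DMT^\ast_P(X) \subseteq \DMT^!_P(X)$. Under the hypothesis that cuspidals are clean, Proposition~\ref{prop:BScoincide} identifies the BS-closure of cuspidal motives with the BS-closure of clean motives; denote this common category $\DMT^{\op{bs}}_P(X)$. On the one hand, by Proposition~\ref{prop:bsgenerates} the Hecke-connectedness hypothesis gives
\[
\DMT^\ast_P(X) \subseteq \langle \DMT^{\op{bs}}_P(X) \rangle_\Delta,
\]
where $\langle-\rangle_\Delta$ denotes the smallest triangulated subcategory of $\mathbb{D}^+_P(X)$ containing the given collection. On the other hand, Proposition~\ref{prop:bstate} ensures that every Bott--Samelson motive is orbitwise mixed Tate, so $\DMT^{\op{bs}}_P(X) \subseteq \DMT_P(X) \subseteq \DMT^!_P(X)$. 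Since $\DMT^!_P(X)$ is a triangulated (and idempotent complete, cf.~Lemma~\ref{lem:idempotent}) subcategory of $\mathbb{D}^+_P(X)$ by Proposition~\ref{lkio}, it contains the triangulated subcategory generated by $\DMT^{\op{bs}}_P(X)$. Chaining these inclusions yields $\DMT^\ast_P(X) \subseteq \DMT^!_P(X)$.

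Next, I would obtain the reverse inclusion by applying Verdier duality. Proposition~\ref{prop:dmtverdier} shows that $D$ restricts to an equivalence $\DMT^\ast_P(X)^{\op{op}} \xrightarrow{\approx} \DMT^!_P(X)$ (and dually $\DMT^!_P(X)^{\op{op}} \xrightarrow{\approx} \DMT^\ast_P(X)$). Applying the functor $D$ to the inclusion already proven and using that $D\circ D\cong\op{id}$ on constructible objects, we get
\[
\DMT^!_P(X) = D\bigl(\DMT^\ast_P(X)\bigr) \subseteq D\bigl(\DMT^!_P(X)\bigr) = \DMT^\ast_P(X).
\]
Combining the two inclusions gives $\DMT^\ast_P(X) = \DMT^!_P(X)$, which by Definition~\ref{def:mtderdef2} is exactly the equivariant Whitney--Tate condition.

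The argument is essentially just a bookkeeping exercise once the heavy lifting has been done in Propositions~\ref{prop:bsgenerates}, \ref{prop:bstate}, \ref{prop:BScoincide}, and \ref{prop:dmtverdier}; the only genuine content is the hypothesis that cuspidals are clean, which is precisely what forces the generating collection to land inside both $\DMT^\ast_P(X)$ and $\DMT^!_P(X)$. The main obstacle in applying this theorem in practice (to be addressed in Section~\ref{sec:tiltingapp}) is of course the verification of the cuspidals-are-clean hypothesis for the varieties with action relevant to representation theory.
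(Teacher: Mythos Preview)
Your proof is correct and follows essentially the same strategy as the paper's: sandwich $\DMT^\ast_P(X)$ and $\DMT^!_P(X)$ around the triangulated category generated by Bott--Samelson motives via Propositions~\ref{prop:bsgenerates}, \ref{prop:bstate} and \ref{prop:BScoincide}, then use Verdier duality to close the gap.

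There is one organizational difference worth noting. The paper invokes Proposition~\ref{prop:cleanverdier} to show that the BS-closure of clean motives is itself stable under Verdier duality, identifies the triangulated category it generates with $\DMT^\ast_P(X)$, and concludes that $\DMT^\ast_P(X)$ is $D$-stable, hence equal to $\DMT^!_P(X)$. You instead establish the one-sided inclusion $\DMT^\ast_P(X)\subseteq\DMT^!_P(X)$ directly from the sandwich and then flip it using only the general fact (Proposition~\ref{prop:dmtverdier}) that $D$ interchanges $\DMT^\ast$ and $\DMT^!$. Your route is marginally more economical since it bypasses Proposition~\ref{prop:cleanverdier} entirely; the paper's route has the advantage of making explicit that the Bott--Samelson category itself is $D$-stable, a fact used again later. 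One small quibble: your appeal to Proposition~\ref{lkio} for the triangulatedness of $\DMT^!_P(X)$ is slightly misplaced (that proposition is about generators), but the underlying claim is immediate from the definition since $j^!$ is exact and $\DMT_G(G/H)$ is triangulated.
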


\begin{proof}
Under the assumptions, we can speak about categories $\DMT^{\op{bs}}_P(X)$ of Bott--Samelson motives. By Proposition~\ref{prop:cleanverdier}, we know that $\DMT^{\op{bs}}_P(X)$ is closed under Verdier duality. Then the respective triangulated subcategories of $\mathbb{D}^+_P(X)$ generated by $\DMT^{\op{bs}}_P(X)$ must also be stable under duality. Now, Propositions~\ref{prop:BScoincide},  \ref{prop:bsgenerates} and \ref{prop:bstate} imply that the triangulated subcategory of $\mathbb{D}^+_P(X)$ generated by Bott--Samelson motives is  $\DMT_P^\ast(X)$. Now Verdier duality interchanges the categories $\DMT^*_P(X)$ and $\DMT^!_P(X)$, which concludes the proof.
\end{proof}

\subsection{Purity and formality}

The remainder of the section will be spent on discussing how the formalism of Bott--Samelson motives implies various purity and formality results. The additional assumption required is the pointwise purity. If we have that, pushforwards of Bott--Samelson motives will be pure Tate motives, and the resulting orthogonality shows that the Bott--Samelson motives generate the heart of the weight structure. Then we can show the tilting result which expresses equivariant mixed Tate motives as complexes of Bott--Samelson motives.  

\begin{proposition}
\label{prop:BSinheart}
In the situation of Theorem~\ref{thm:wtcond}, the Bott--Samelson motives are contained in the heart of the weight structure of Proposition~\ref{prop:equivweight}.
\end{proposition}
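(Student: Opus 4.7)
The plan is to verify the two ingredients defining the BS-closure directly against the weight structure of Proposition~\ref{prop:equivweight}. First I would show that the generating objects $j_*M$, for a $B$-orbit $j\colon V\hookrightarrow X$ and a clean motive $M\in \DMT_B(V)_{\op{wt}=0}$, lie in the heart $\DMT_B(X)_{\op{wt}=0}$. Then I would verify that each of the five operations in Definition~\ref{def:BSclosure} (twists, summands, extensions, restriction to subparabolics, and induction from subparabolics) sends the heart to itself. Combining the two gives the claim.

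For the generators, cleanness means $j_!M\cong j_*M$, so I set $N:=j_!M\cong j_*M$. On the orbit $V$ itself, $j^*N\cong j^*j_!M\cong M$ and $j^!N\cong j^!j_*M\cong M$, both of which lie in $\DMT_B(V)_{\op{wt}=0}$ by hypothesis. On any other $B$-orbit $i\colon Y\hookrightarrow X$, the argument in the proof of Proposition~\ref{prop:bstate} (localization along the closure of $V$ together with cleanness) shows $i^*N=0$ and $i^!N=0$, and these zero objects are trivially pure. By the definition of the weight structure in Proposition~\ref{prop:equivweight}, this places $N$ in $\DMT_B(X)_{\op{wt}=0}$, so every $j_*M$ obtained from a clean generator is pure of weight zero.

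For the closure operations, Tate twists $M\mapsto M(n)[2n]$ preserve the heart since $\const{X}(n)[2n]$ is pure of weight zero (cf. Proposition~\ref{prop:wttensor}(2)). Closure under summands is axiomatic, Definition~\ref{defin:wtstruct}(1). Closure under extensions follows because both $\DMT_P(X)_{\op{wt}\leq 0}$ and $\DMT_P(X)_{\op{wt}\geq 0}$ are extension-closed in any weight structure via the orthogonality axiom, so their intersection is too. For the restriction functor $\op{Res}_Q^P$ associated to an inclusion $P\subset Q$ of standard parabolics, the underlying group homomorphism is a closed immersion, so Proposition~\ref{prop:wtres}(3) yields weight-exactness and hence preservation of the heart.

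The one step that needs a small twist is the induction functor $\op{Ind}_P^Q$. By Proposition~\ref{prop:wtres}(5), $\op{Ind}_*$ preserves non-negativity and $\op{Ind}_!$ preserves non-positivity of weights, but neither statement alone suffices for weight-exactness. Since $P\subset Q$ is a parabolic inclusion, however, Proposition~\ref{Lasp} identifies $\op{Ind}_!\cong \op{Ind}_*(d)[2d]$, where $d=\op{dim}(Q/P)$. Because the Tate twist $(d)[2d]$ is weight-preserving, the non-positivity-preservation of $\op{Ind}_!$ transfers to $\op{Ind}_*$, whence $\op{Ind}_*$ is weight-exact. This handles induction and completes the check that the BS-closure remains inside the heart of the weight structure.
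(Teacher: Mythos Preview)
Your proof is correct and follows essentially the same route as the paper's: first verifying that the clean generators $j_*M\cong j_!M$ lie in the heart via the vanishing of $i^*$ and $i^!$ on other orbits, then checking that twists, summands, extensions, restriction, and induction preserve the heart, with the key step for induction being the parabolic identification $\op{Ind}_!\cong\op{Ind}_*(d)[2d]$ from Proposition~\ref{Lasp} combined with the half-exactness statements of Proposition~\ref{prop:wtres}(5).
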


\begin{proof}
  If $M$ is a clean motive on a $B$-orbit $j\colon Z\hookrightarrow X$, then by definition $M\in\DMT_B(X)_{\op{wt}=0}$. By definition $j_!M\cong j_\ast M$, and in particular $i^\ast j_\ast M=0$ for any other orbit inclusion $i\colon Y\hookrightarrow X$. Therefore, $j_\ast M\in \DMT_B(X)_{\op{wt}\leq 0}$. An argument as in the proof of Proposition~\ref{prop:bstate} implies that $j^!j_\ast M\cong M$ and $i^!j_\ast M=0$ for any other orbit $i:Y\hookrightarrow X$. Therefore, $j_\ast M\in\DMT_B(X)_{\op{wt}=0}$, which provides the base of the induction. 

The zero-weight categories are clearly preserved under $M\mapsto M(n)[2n]$, taking direct summands and extensions. 

It remains to deal with induction and restriction for standard parabolics $P\subset Q$. By Proposition~\ref{prop:wtres}, $\op{Res}_Q^P$ is weight exact. The induction functor $\op{Ind}_P^Q$ has left adjoint $\op{Res}_Q^P$, hence it is weight right-exact. On the other hand, Proposition~\ref{Lasp} implies that $\op{Ind}_\ast(d)[2d]\cong\op{Ind}_!$ has a right adjoint $\op{Res}_Q^P$, where $d=\dim Q/P$. Therefore, $\op{Ind}_\ast$ also has a right adjoint $\op{Res}_Q^P(d)[2d]$ which implies that it is weight exact. This proves the claim.
\end{proof}

\begin{remark}
Note that the above argument doesn't prove pointwise purity. While we show that all the functors involved in the definition of Bott--Samelson motives are weight-exact, we are not making any assertion about the restriction of the Bott--Samelson motives to orbits. For all we know at this point, the relevant restriction functors need not be weight-exact.
\end{remark}

\begin{proposition}
\label{puBS} 
Assume the situation of Theorem~\ref{thm:wtcond}, and assume that all Bott--Samelson motives are pointwise pure. Then for each standard parabolic $P$ there is a weight structure whose heart is $\DMT^{\op{bs}}_P(X)$. This weight structure coincides with the one defined in Proposition~\ref{prop:equivweight}, i.e., 
\[
\DMT^{\op{bs}}_P(X) = \DMT_P(X)_{\op{wt}=0}.
\]
\end{proposition}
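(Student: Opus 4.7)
The plan is to apply Bondarko's criterion for the existence of a weight structure generated by a ``negative collection''. Recall that a full additive subcategory $\mathcal{N}$ of a triangulated category $\mathcal{C}$ is a \emph{negative collection} if $\mathcal{C}(M,N[i]) = 0$ for all $M, N \in \mathcal{N}$ and all $i > 0$; by \cite[Proposition 1.7]{bondarko:imrn}, if $\mathcal{N}$ additionally generates $\mathcal{C}$ as a thick subcategory and $\mathcal{C}$ is idempotent complete, then there is a unique weight structure on $\mathcal{C}$ whose heart is the idempotent completion of $\mathcal{N}$. The strategy is to verify this for $\mathcal{N} = \DMT^{\op{bs}}_P(X)$ inside $\mathcal{C} = \DMT_P(X)$.

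First I would verify the negativity condition. Given two Bott--Samelson motives $M, N \in \DMT^{\op{bs}}_P(X)$, both are pointwise pure by the standing hypothesis, hence in particular $M \in \DMT^*_P(X)$ is $*$-pointwise pure and $N \in \DMT^!_P(X)$ is $!$-pointwise pure. Corollary~\ref{cor:hompuretate} then guarantees that $(\fin_{{\op{E}}P\times_{/P}X})_\ast \iHom(M,N)$ is a pure Tate motive in $\DMT_P(\pt)$. Since both $M$ and $N$ lie in $\DMT_P(X)_{\op{wt}=0}$ by Proposition~\ref{prop:BSinheart} (and tensor-Hom is weight-exact in the appropriate sense), this pushforward is actually pure of weight zero. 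Computing morphisms via equivariant cohomology then shows that
\[
\mathbb{D}^+_P(X)(M, N[i]) \;\cong\; \mathbb{D}^+_P(\pt)\bigl(\underline{\pt}, (\fin)_\ast\iHom(M,N)[i]\bigr) \;=\; 0 \quad\text{for } i > 0,
\]
since a pure Tate motive of weight zero over the point admits no positive-shift maps from $\underline{\pt}$ by the description $\DMT_P(\pt)_{\op{wt}=0} \simeq \mathcal{A}_P\textrm{-fModfg}^{\mathbb{Z}}$ of Theorem~\ref{thm:equivcoh}. This establishes the negative collection property.

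Next I would confirm that $\DMT^{\op{bs}}_P(X)$ thickly generates $\DMT_P(X)$: by Proposition~\ref{prop:bsgenerates} together with Proposition~\ref{prop:BScoincide}, the triangulated subcategory generated by the Bott--Samelson motives contains $\DMT^*_P(X)$, and under the Whitney--Tate condition of Theorem~\ref{thm:wtcond} this equals $\DMT_P(X)$. Idempotent completeness of $\DMT_P(X)$ was shown in Lemma~\ref{lem:idempotent}. Hence Bondarko's theorem gives a unique weight structure $w'$ on $\DMT_P(X)$ whose heart is the idempotent completion of $\DMT^{\op{bs}}_P(X)$.

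It remains to match $w'$ with the weight structure $w$ of Proposition~\ref{prop:equivweight}. By Proposition~\ref{prop:BSinheart}, the heart of $w'$ is contained in $\DMT_P(X)_{\op{wt}=0}$; since both weight structures are bounded and generated by their hearts, it suffices to argue that any negative collection generating a triangulated category determines the weight structure uniquely --- indeed the non-positive (resp. non-negative) part of $w'$ is precisely the subcategory obtained by closing $\DMT^{\op{bs}}_P(X)$ under extensions by non-positive (resp. non-negative) shifts, and both these closures sit inside the corresponding parts of $w$ by weight-exactness of shifts and extensions. Applying orthogonality (the definition of $w$ together with the fact that $w'$-hearts are $w$-weight zero) on both sides gives $\DMT_P(X)_{\op{wt}\leq 0} = \DMT_P(X)_{w'\leq 0}$ and similarly for non-negativity. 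The main subtle point, as expected, is the negativity check, which is the place where pointwise purity is essential; once Corollary~\ref{cor:hompuretate} is in hand everything else follows formally from Bondarko's criterion and the preceding identifications.
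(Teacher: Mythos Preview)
Your proposal is correct and follows essentially the same approach as the paper: verify negativity via Corollary~\ref{cor:hompuretate}, check generation and idempotent completeness, apply Bondarko's criterion, and then use Proposition~\ref{prop:BSinheart} together with uniqueness to identify the resulting weight structure with the one from Proposition~\ref{prop:equivweight}. The only cosmetic difference is that the paper computes morphisms via the non-equivariant pushforward $\fin_{{\op{E}}P\times_{/P}X,*}$ to $\mathbb{D}^+(\pt)$ and invokes the grading condition directly, whereas you route through $\mathbb{D}^+_P(\pt)$ and Theorem~\ref{thm:equivcoh}; both yield the same vanishing.
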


\begin{proof}
Let $M, N\in \DMT^{\op{bs}}_P(X)$. Then
\[ 
\mathbb{D}^+_P(X)(M,N[n]) \cong \mathbb{D}^+(\pt)(\const{\pt}, \fin_{{\op{E}}P\times_{/P}X,*}\iHom(M,N)[n]). 
\]
By Corollary \ref{cor:hompuretate} (together with the assumption on pointwise purity), the motive $\fin_{{\op{E}}P\times_{/P}X,*}\iHom(M,N)$ is mixed Tate, and pure of weight $0$. Since the category $\DMT(\pt)$ is graded semi-simple by assumption, we find 
\[ 
\mathbb{D}^+_P(X)(M,N[n]) = 0 \quad \mbox{unless $n=0$}. 
\]
In particular, the collection of Bott--Samelson motives $\DMT_P^{\op{bs}}(X)$ is negative in the sense of \cite[Definition 1.5.VII]{bondarko:imrn}. By Theorem~\ref{thm:wtcond} and Proposition~\ref{prop:bstate}, the Bott--Samelson motives generate the category $\DMT_P^{\op{bs}}(X)$ as a triangulated category. Recall that by Lemma~\ref{lem:idempotent}, the category $\DMT_P(X)$ is idempotent complete. By \cite[Proposition 1.7(6)]{bondarko:imrn}, there exists a unique weight structure on $\DMT_P(X)$ which satisfies $\DMT_P^{\op{bs}}(X)\subset \DMT_P(X)_{\op{wt}=0}$. Since Bott--Samelson motives form an additive idempotent complete category by definition, the heart of that weight structure will be $\DMT_P^{\op{bs}}(X)$. By Proposition~\ref{prop:BSinheart}, this weight structure also coincides with the one of Proposition~\ref{prop:equivweight}. 
\end{proof}

Now we can deduce the following tilting result. This is basically a formal consequence of the negativity just established.

\begin{theorem}
\label{thm:tilting}
Assume the situation of Theorem~\ref{thm:wtcond}, and assume that all Bott--Samelson motives are pointwise pure. Then the tilting functor of Theorem~\ref{thm:derivatortilting} induces an equivalence 
\[ 
\Ho(\DMT^{\op{bs}}_P(X)) \mapright{\approx} \DMT_P(X). 
\]
Under the equivalence, the weight structure on $\DMT_P(X)$ corresponds to the natural weight structure on complexes of \cite[Remark 1.6(2)]{bondarko:imrn}.
\end{theorem}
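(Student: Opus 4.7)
The plan is to deduce this as a direct application of the general tilting theorem for derivators (Theorem~\ref{thm:derivatortilting}) applied to the subcategory of Bott--Samelson motives inside the equivariant derivator $\mathbb{D}^+_P(X,-)$. To set this up, I would first note that by Proposition~\ref{prop:equivderivator}, $\mathbb{D}^+_P(X,-)$ is a stable derivator, so the general tilting machinery applies. The tilting subcategory we want to use is $\DMT^{\op{bs}}_P(X)$: by Definition~\ref{BSmot} it is closed under direct summands (hence idempotent complete by construction of the BS-closure), and the computation in the proof of Proposition~\ref{puBS} already supplies the key negativity condition
\[
\mathbb{D}^+_P(X)(M,N[n])=0 \quad \text{for all } n\neq 0 \text{ and } M,N\in\DMT^{\op{bs}}_P(X),
\]
via the purity of $\fin_{{\op{E}}P\times_{/G}X,\ast}\iHom(M,N)$ established in Corollary~\ref{cor:hompuretate} (this is where pointwise purity of Bott--Samelson motives is used).

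Once negativity and idempotent completeness are in hand, Theorem~\ref{thm:derivatortilting} yields a fully faithful functor
\[
\op{Hot}^{\op{b}}(\DMT^{\op{bs}}_P(X))\hookrightarrow \mathbb{D}^+_P(X)
\]
whose essential image, by (the analogue of) Proposition~\ref{prop:essimg}, is the thick triangulated subcategory of $\mathbb{D}^+_P(X)$ generated by $\DMT^{\op{bs}}_P(X)$. The next step is then to identify this essential image with $\DMT_P(X)$. For this I would invoke Theorem~\ref{thm:wtcond} together with Propositions~\ref{prop:bsgenerates} and \ref{prop:bstate}: combining these shows that $\DMT_P(X)$ coincides with the triangulated subcategory of $\mathbb{D}^+_P(X)$ generated by the Bott--Samelson motives, so the fully faithful tilting embedding is in fact essentially surjective onto $\DMT_P(X)$.

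For the compatibility of weight structures, I would argue as follows. By Proposition~\ref{puBS}, the heart of the weight structure on $\DMT_P(X)$ is precisely $\DMT^{\op{bs}}_P(X)$, and this weight structure is generated by the negative collection of Bott--Samelson motives in the sense of Bondarko. On the other hand, the homotopy category $\op{Hot}^{\op{b}}(\mathcal{A})$ of bounded complexes over any additive idempotent-complete category $\mathcal{A}$ carries its natural weight structure whose heart is $\mathcal{A}$ itself (\cite[Remark 1.6(2)]{bondarko:imrn}), and this weight structure is likewise generated by $\mathcal{A}$ as a negative collection. Since the tilting equivalence sends $\DMT^{\op{bs}}_P(X)\subset \op{Hot}^{\op{b}}(\DMT^{\op{bs}}_P(X))$ (as complexes concentrated in degree $0$) to $\DMT^{\op{bs}}_P(X)=\DMT_P(X)_{\op{wt}=0}$, the uniqueness of the weight structure generated by a negative collection (\cite[Proposition 1.7(6)]{bondarko:imrn}) forces the two weight structures to correspond under the tilting equivalence.

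The main obstacle I expect is purely organizational rather than mathematical: most of the real work has already been done in Proposition~\ref{puBS} and the results preceding it. The only delicate point is to make sure that the general tilting theorem~\ref{thm:derivatortilting} is applicable, i.e., that $\DMT^{\op{bs}}_P(X)$ really is a tilting subcategory of the stable derivator in the sense required there, and that the identification of the essential image of tilting with the triangulated hull of the tilting subcategory goes through in this setting. Modulo these verifications, the theorem is a formal consequence of the results already established.
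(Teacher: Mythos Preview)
Your proposal is correct and follows essentially the same approach as the paper's proof: both invoke Proposition~\ref{prop:equivderivator} for the derivator structure, use the negativity established in Proposition~\ref{puBS} to verify that $\DMT^{\op{bs}}_P(X)$ is a tilting subcategory, apply Theorem~\ref{thm:derivatortilting}, identify the essential image with $\DMT_P(X)$ via Proposition~\ref{prop:bsgenerates}, and deduce the weight structure compatibility from the uniqueness in \cite[Proposition 1.7(6)]{bondarko:imrn}. Your write-up is somewhat more explicit in its citations (e.g.\ Proposition~\ref{prop:essimg}, Theorem~\ref{thm:wtcond}) than the paper's terse version, but the underlying argument is identical.
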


\begin{proof}
The category of Bott--Samelson motives is a tilting subcategory by Proposition~\ref{puBS}. By Proposition~\ref{prop:equivderivator}, $\mathbb{D}^+_P(X,-)$ is a stable derivator. Applying Theorem~\ref{thm:derivatortilting}, we get a fully faithful functor 
\[
\op{Hot}^{\op{b}}(\DMT_P^{\op{bs}}(X))\to \mathbb{D}^+_P(X). 
\]
So the general tilting formalism yields an equivalence between $\Ho(\DMT_P^{\op{bs}}(X))$ and the smallest triangulated subcategory of $\mathbb{D}^+_P(X)$ containing the Bott--Samelson motives. By Proposition \ref{prop:bsgenerates} this latter subcategory is $\DMT_P(X)$. 

The claim concerning the weight structures follows from \cite[Proposition 1.7(6)]{bondarko:imrn} since $\DMT_P^{\op{bs}}(X)$ is a negative generating subcategory contained in both sides. 
\end{proof}

\section{Application of the Bott--Samelson formalism}
\label{sec:tiltingapp}

In this section, we apply the Bott--Samelson formalism developed in Section~\ref{sec:BS} to the relevant examples $P\times K\looparrowright G$. The central statements that need to be established are ``cuspidals are clean'' and ``pointwise purity''. This will imply that the corresponding varieties with action satisfy the equivariant Whitney--Tate condition of Definition~\ref{def:mtderdef2}, the category of equivariant mixed Tate motives has a well-behaved weight structure generated by Bott--Samelson motives, and the tilting functor of Theorem~\ref{thm:tilting} gives an equivalence
\[
\op{tilt}:\op{Hot}^{\op{b}}(\DMT^{\op{bs}}_G(X))\sirra\DMT_G(X).
\]

\begin{remark}
The equivariant motivic categories we will be interested in (for the applications to representation theory) are of the form $\mathbb{D}^+_{P\times K}(G)$, for $G$ reductive, $K\subset G$ a symmetric subgroup acting on the right and $P\subset G$ a parabolic subgroup acting on the left. Under the quotient equivalence, these are equivalent to $\mathbb{D}^+_P(G/K)\approx\mathbb{D}^+_K(G/P)$. The Bott--Samelson formalism applies to the latter, but it is really used to deduce the equivalent assertion that $P\times K\looparrowright G$ has the equivariant Whitney--Tate property.
\end{remark}

\subsection{Flag varieties}

Assume the situation of Example~\ref{ex:parabolic}, i.e., $G$ is a connected reductive group over an algebraically closed field $k$, $B\subseteq G$ a Borel subgroup, and $P,Q\supseteq B$ are parabolic subgroups containing $B$. Consider the variety with action $((P\times Q)\looparrowright G)$ where $P$ acts on $G$ by left multiplication and $Q$ acts on $G$ by right multiplication. The relevant geometric assertions are the following:

\begin{Bemerkung}
\label{pgqorbits}
Recall the combinatorial classification of $P$-orbits on partial flag varieties from Section~\ref{sec:korbit}. From the discussion there, we find that all $B$-orbits on $G/Q$ are of the types $\mathbf{G}$ or $\mathbf{U}$ with respect to the simple reflections, in the terminology of \ref{ss:p1mod}. By Example~\ref{ex:exgbhecke} the flag variety $G/B$ is Hecke-simply-connected in the sense of \ref{ref:heckeconn}. 
\end{Bemerkung}

We begin by listing two general situations in which cuspidals are clean. 

\begin{proposition}
\label{prop:simplyconnectedclean}
Let $G$ be a connected split reductive group and let $G\looparrowright X$ be a variety with action which is Hecke-simply-connected. Then all cuspidals are clean.\footnote{A similar criterion is also given in Lemma \ref{lemma:easypurity}.}
\end{proposition}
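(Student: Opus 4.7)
The strategy is to show that under the Hecke-simply-connected hypothesis, cuspidal motives can only live on closed $B$-orbits. Since closed immersions are proper, the natural transformation $j_!\to j_*$ is an isomorphism for such orbits, and cleanness follows automatically.

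Thus let $j\colon V\hookrightarrow X$ be a $B$-orbit that is not closed and let $M\in\DMT_B(V)_{\op{wt}=0}$; the plan is to show that $M$ cannot be cuspidal. Using Hecke-connectedness, I would first pick a simple reflection $s$ and a $B$-orbit $W\lneq V$ with $s\star W=V$, so that $V$ is the open $B$-orbit in $P_s\cdot V$. The $\mathbb{P}^1$-modification analysis of \ref{ss:p1mod} then reduces the situation to a group $H$ acting on $\mathbb{P}^1$ with finitely many orbits, with $V$ corresponding to the open $H$-orbit $U\subset\mathbb{P}^1$. Via Lemma~\ref{lemma:reducetop1}, the cuspidality condition $\op{Ind}_B^{P_s}(j_*M)=0$ translates into the vanishing $\op{fin}_*u_*N=0$, where $u\colon U\hookrightarrow\mathbb{P}^1$ is the open inclusion and $N\in\DMT_H(U)_{\op{wt}=0}$ is the object corresponding to $M$ under the equivalence $\mathbb{D}^+_B(P_s\cdot V)\approx\mathbb{D}^+_H(\mathbb{P}^1)$.

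The heart of the argument is to show that the Hecke-simply-connected hypothesis forces the underlying motive $\For(N)$ to be a direct sum of twisted constant motives $\const{U}(n)[2n]$. Fixing $x\in U$ with $H$-stabilizer $H_x$, the induction equivalence gives $\DMT_H(U)\approx\DMT_{H_x}(\pt)$, and I would first check that the $B$-stabilizer $B_v$ of a point $v\in V$ is conjugate to $H_x$ under the equivalences of \ref{ss:p1mod}; hence $H_x$ is connected by assumption. Proposition~\ref{prop:generatingtateconn} then implies that every object of $\DMT_{H_x}(\pt)_{\op{wt}=0}$ is a direct sum of objects of the form $\const{\pt}(n)[2n]$, which under the induction equivalence yields the claim about $\For(N)$. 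In particular, case $\mathbf{N}$ of \ref{ss:p1mod} is excluded, since the stabilizer of a generic point of $\mathbb{G}_{\op{m}}$ under the normalizer of a maximal torus in $\op{PGL}_2$ has two connected components. In the remaining cases $\mathbf{U}$ and $\mathbf{T}$, conservativity of the forgetful functor (\ref{forget}) together with its compatibility with the six functors reduces the cuspidality condition to the nonvanishing of $\op{fin}_*u_*\For(N)$, which is a direct sum of shifts and twists of the nontrivial cohomology motives of $\mathbb{A}^1$ or $\mathbb{G}_{\op{m}}$. This contradicts cuspidality and forces $V$ to be closed.

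The main obstacle is the stabilizer matching through the equivalences of \ref{ss:p1mod}, connecting the $B$-action on $P_s\cdot V$ with the $H$-action on $\mathbb{P}^1$; once this is in place, everything reduces to routine computations of the cohomology of $\mathbb{A}^1$ and $\mathbb{G}_{\op{m}}$ and the structure theorem for connected-group-equivariant weight-zero mixed Tate motives on a point.
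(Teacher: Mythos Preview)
Your proposal is correct and follows essentially the same strategy as the paper: show that under Hecke-simply-connectedness cuspidals can live only on closed orbits, by using connectedness of stabilizers to force the motive to be constant and then observing that constant motives on an open orbit of $\mathbb{P}^1$ never have vanishing cohomology.

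There are two minor differences worth noting. First, the paper applies the connectedness of $B_v$ directly on the $B$-side via the induction equivalence $\DMT_B(V)\approx\DMT_{B_v}(\pt)$ \emph{before} reducing to $\mathbb{P}^1$, so it never needs your stabilizer-matching step $H_x\cong B_v$; this makes the argument slightly shorter. Second, your separate exclusion of case~\textbf{N} (via the disconnected $\mathbb{Z}/2$ stabilizer in the normalizer) is correct but unnecessary: the paper simply observes, uniformly across cases \textbf{G}, \textbf{U}, \textbf{T}, \textbf{N}, that a constant motive on the open orbit always has nonzero push-forward to the point (cf.~\ref{RTZl}), so case~\textbf{N} needs no special treatment once constancy is known.
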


\begin{proof}
Let $V$ be a $B$-orbit. Assume $V$ admits a cuspidal motive. By assumption, the isotropy group $B_{v}$ is connected for each point $v$ in $V$. Using the induction equivalence $\DMT_B(V)\cong \DMT_{B_v}(\pt)$ of Definition~\ref{mtderdef1} and Proposition~\ref{prop:generatingtate}, we find that $\DMT_B(V)$ is generated (as a triangulated category) by the constant motive and Tate twists thereof. Thus, we may assume that the cuspidal in question is the constant motive. By Lemma~\ref{lemma:reducetop1}, it suffices to examine the cases of the Borel action on $\mathbb{P}^1$. In each of the orbit types \textbf{G}, \textbf{U}, \textbf{T}, \textbf{N}, we cannot have a constant cuspidal on the open orbit: by a localization argument, the constant motive on the orbit closure contributes to $j_\ast M$ and therefore the corresponding induction $\op{Ind}_B^{P_s}(j_\ast M)$ cannot be trivial, cf. \ref{RTZl}. We deduce that $V$ cannot be open in $P_s \cdot V$ for any simple reflection $s$. Expressed via the Richardson--Springer monoid structure of Definition~\ref{def:rsmonoid}, there is no pair consisting of a $B$-orbit $W\lneq V$ and a simple reflection $s$ such that $s\star W = V$. By definition of Hecke-connected, this means $V$ must be a closed orbit. Cuspidals on closed orbits are obviously clean.
%
\end{proof}

\begin{lemma}\label{lemma:easypurity}
Let $G$ be a connected reductive group and let $G\looparrowright X$ be a variety with action which is Hecke-connected. Assume that all orbits are of type \textbf{G} or \textbf{U} relative to each simple reflection. Then
\begin{enumerate}
\item all cuspidals are clean;
\item all Bott--Samelson motives are pointwise pure.
\end{enumerate}
\end{lemma}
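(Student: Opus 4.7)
The proof of (1) will adapt the argument used in Proposition~\ref{prop:simplyconnectedclean}. Let $j\colon V\hookrightarrow X$ be a $B$-orbit carrying a cuspidal motive $M\in\DMT_B(V)_{\op{wt}=0}$. If $V$ is closed then $j_!M\cong j_\ast M$ tautologically and $M$ is clean, so assume $V$ is not closed. By Hecke-connectedness we can then find a simple reflection $s$ and an orbit $W\lneq V$ with $s\star W=V$. Since $V\neq W$, the hypothesis that $W$ is of type \textbf{G} or \textbf{U} relative to $s$ forces type \textbf{U}. Via Lemma~\ref{lemma:reducetop1}, the condition $\op{Ind}_B^{P_s}(j_\ast M)=0$ translates into the vanishing of $\op{fin}_\ast N$ for the equivariant motive $N$ corresponding to $M$ on the open $\bar H$-orbit $\mathbb{A}^1\subset\mathbb{P}^1$. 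In type \textbf{U}, $\bar H$ is a Borel subgroup of $\op{PGL}_2$ and acts transitively on $\mathbb{A}^1$ with connected isotropy (a maximal torus of $\bar H$), so by Proposition~\ref{prop:generatingtate} the category $\DMT_{\bar H}(\mathbb{A}^1)$ is generated by Tate twists of the constant motive. Since $\op{fin}_\ast\underline{\mathbb{A}^1}\cong\underline{\op{pt}}$ is non-zero by $\mathbb{A}^1$-invariance, this forces $\op{fin}_\ast N\neq 0$ for any non-zero such $N$, contradicting cuspidality. Hence every cuspidal motive lives on a closed $B$-orbit and is therefore clean.

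Having established (1), the Bott--Samelson formalism applies: by Proposition~\ref{prop:BScoincide} and Theorem~\ref{thm:wtcond}, the varieties $P\looparrowright X$ satisfy the equivariant Whitney--Tate condition for every standard parabolic $P$, and by Proposition~\ref{prop:BSinheart} every Bott--Samelson motive already lies in $\DMT_P(X)_{\op{wt}=0}$. To deduce pointwise purity (2) from this heart-membership, we will invoke Lemma~\ref{lemma:contractingslices}, which reduces the problem to exhibiting a contracting slice at some point in each $P$-orbit of $X$.

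The construction of the contracting slices will proceed by induction on the weak order on $B$-orbits. For a closed orbit $Z$ and any point $z\in Z$, the singleton $\{z\}$ serves as a trivial contracting slice, since the corresponding action map factors through the orbit inclusion. For the inductive step, consider a non-closed orbit $V=s\star W$; by the types \textbf{G}, \textbf{U} hypothesis, $W$ must be of type \textbf{U} relative to $s$, so the $\mathbb{P}^1$-model of \ref{ss:p1mod} gives an identification of a neighborhood of $W$ in $P_s\cdot W$ with the total space of an $\mathbb{A}^1$-bundle. Given a contracting slice at some point $w\in W$ from the inductive hypothesis, we extend it by adjoining an $\mathbb{A}^1$-transverse direction to the chosen lift $v\in V$; the contraction is supplied by combining the ambient $\mathbb{G}_{\op{m}}$-action with a regular one-parameter subgroup of a maximal torus inside the Borel $\bar H\subset\op{PGL}_2$ that contracts the fiber $\mathbb{A}^1$ to $w$.

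The main technical obstacle will be to verify the smoothness condition (2) of Definition~\ref{def:slice} for the combined slice at each inductive step, namely that the induced action map $P\times S_V\to X$ remains smooth. This is a local computation analogous to the standard construction of transverse slices at Schubert cells in flag varieties, and relies crucially on the fact that in types \textbf{G} and \textbf{U} the local orbit structure is as simple as possible: there is no non-trivial torus or normalizer-of-torus contribution from $\bar H$ that would create additional components in the slice geometry.
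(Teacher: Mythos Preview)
Your argument for (1) is essentially the paper's: in types \textbf{G} and \textbf{U} there can be no cuspidal on a non-closed orbit, so cuspidals sit on closed orbits where they are tautologically clean. One small imprecision: in type \textbf{U} the image $\bar H$ is only required to contain a maximal unipotent, not to be a full Borel; but this does not matter, since the open orbit is $\mathbb{A}^1$ and by $\mathbb{A}^1$-invariance every equivariant motive there is constant, hence has nonzero $\op{fin}_\ast$.

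For (2) your route diverges sharply from the paper's, and the gap you yourself flag is real rather than cosmetic. You try to deduce pointwise purity from heart-membership via Lemma~\ref{lemma:contractingslices}, which forces you to manufacture contracting slices in the abstract setting. But the hypotheses of the lemma are purely local (type \textbf{G}/\textbf{U} for each simple reflection) and give no handle on the global geometry of $X$: there is no ``ambient $\mathbb{G}_{\op{m}}$-action'' to combine with, no opposite Borel to supply transverse directions, and no reason the inductively built $S_V$ should satisfy the smoothness condition of Definition~\ref{def:slice}. In the applications where contracting slices are actually used (Propositions~\ref{prop:symmpurity} and \ref{prop:wonderfulpurity}) they come from specific geometric input (Mars--Springer, Springer), not from the Bott--Samelson formalism.

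The paper's approach to (2) avoids all of this: it runs the same induction over the BS-closure as in Proposition~\ref{prop:bstate}, checking that each of the operations in Definition~\ref{def:BSclosure} preserves pointwise purity. Clean $j_\ast M$ are pointwise pure because their orbit restrictions are $M$ or zero; the operations $(n)[2n]$, summands, and extensions preserve the heart on each orbit (extensions of heart objects split by weight orthogonality); restriction is weight-exact; and the only nontrivial step is $\op{Ind}_B^{P_s}$, which under the $\mathbb{P}^1$-model becomes $\op{fin}_{\mathbb{P}^1,\ast}$. In types \textbf{G} and \textbf{U} one computes directly via the localization sequence that $\op{fin}_\ast N$ is pure whenever $N$ is pointwise pure on $\mathbb{P}^1$ (the connecting maps land in weight~$\geq 1$ and vanish). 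This is elementary and requires no geometry beyond the $\mathbb{P}^1$-model.
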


\begin{proof}
As all orbits are of type \textbf{G} or \textbf{U}, cuspidals must correspond to closed orbits. For \textbf{G} this is clear since there is only one orbit, for \textbf{U} it follows since the open orbit is $\mathbb{A}^1$ and then a motivic local system on that necessarily has to be constant and therefore cannot be cuspidal. By a proof similar to the one of Proposition~\ref{prop:simplyconnectedclean}, cuspidals on closed orbits are obviously clean.

The proof of pointwise purity is similar to the proof of Proposition \ref{prop:bstate}. Details are left to the reader. 
\end{proof}

\begin{theorem}
\label{thm:parabolicWT}
Let $k$ be an algebraically closed field, let $G$ be a connected reductive group, and let $Q$ be a parabolic subgroup containing a Borel subgroup $B$. Let $\mathbb{D}$ be a derivator satisfying the conditions of Theorem~\ref{thm:wtcond}. 

Then for every parabolic $P$ containing $B$, the variety with action $P\looparrowright G/Q$ satisfies the equivariant Whitney--Tate condition of Definition~\ref{def:mtderdef2}, i.e., we have
\[
\DMT_{P\times Q}^\ast(G) =\DMT_{P\times Q}^!(G)= \DMT_{P\times Q}(G).
\]
\end{theorem}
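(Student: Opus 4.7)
The plan is to deduce the theorem as a direct application of Theorem~\ref{thm:wtcond} to the variety with action $B \looparrowright G/Q$. Theorem~\ref{thm:wtcond} requires two hypotheses: that $G/Q$ is Hecke-connected in the sense of Definition~\ref{ref:heckeconn}, and that cuspidal motives on $B$-orbits of $G/Q$ are clean in the sense of Definition~\ref{def:clean}. Once both are verified, Theorem~\ref{thm:wtcond} yields $\DMT^*_P(G/Q) = \DMT^!_P(G/Q)$ for every standard parabolic $P \supseteq B$, which under the quotient/induction equivalence translates into the required equality $\DMT^*_{P \times Q}(G) = \DMT^!_{P \times Q}(G)$.

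First I would verify that $G/Q$ is Hecke-simply-connected. The Bruhat decomposition $G/Q = \bigsqcup_{\bar w \in W/W_Q} B \bar w Q/Q$ shows that there are only finitely many $B$-orbits and that each Schubert cell is isomorphic to an affine space $\mathbb{A}^{l(w)}$, where $w$ is the minimal length representative of the class $\bar w$. In particular, each isotropy group $B_x$ is connected, since it is a closed subgroup of $B$ whose quotient is an affine space and hence connected. The weak order condition required for Hecke-connectedness is a standard consequence of the theory of Schubert cells relative to minimal parabolics $P_s = B \cup BsB$: any non-closed $B$-orbit $\bar w Q/Q$ with $l(w) \geq 1$ can be written as $s \star \bar w'$ for a shorter class $\bar w'$ and an appropriate simple reflection $s$, where $\star$ is the Richardson–Springer monoid action of \ref{pgqorbits}.

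Having established that $G/Q$ is Hecke-simply-connected, Proposition~\ref{prop:simplyconnectedclean} immediately implies that every cuspidal motive in $\DMT_B(V)_{\op{wt}=0}$ is clean, for any $B$-orbit $V \subset G/Q$. Alternatively, one could invoke Lemma~\ref{lemma:easypurity}, since as noted in \ref{pgqorbits} all orbit types relative to simple reflections are $\mathbf{G}$ or $\mathbf{U}$. Either route gives the cleanness of cuspidals.

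The argument above is essentially formal once the Bott--Samelson formalism is in place; the only genuinely geometric inputs are the Bruhat decomposition and the connectedness of the isotropy groups of points in Schubert cells. There is no substantial obstacle to overcome: the combinatorial structure of $B$-orbits on partial flag varieties is exactly the simplest case of the formalism, in stark contrast to what will be needed for the symmetric and wonderful compactification cases, where Hecke-simple-connectedness fails and cleanness of cuspidals requires a more delicate analysis using contracting slices and the $\mathbb{P}^1$-reduction of Lemma~\ref{lemma:reducetop1}.
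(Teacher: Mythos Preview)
Your approach is essentially identical to the paper's: verify Hecke-(simply-)connectedness of $G/Q$, invoke either Proposition~\ref{prop:simplyconnectedclean} or Lemma~\ref{lemma:easypurity} to get that cuspidals are clean, apply Theorem~\ref{thm:wtcond}, and then pass from $\DMT_P^?(G/Q)$ to $\DMT_{P\times Q}^?(G)$ via the quotient equivalence (Propositions~\ref{prop:quotientequiv} and~\ref{prop:quotientdmt}). One small correction: your argument that $B_x$ is connected because ``it is a closed subgroup of $B$ whose quotient is an affine space and hence connected'' is not valid as stated---connectedness of $B$ and of $B/B_x$ does not force $B_x$ to be connected (e.g.\ $\mu_2 \subset \mathbb{G}_{\op{m}}$); the actual reason is that $B_x = B \cap wQw^{-1}$ is generated by the torus and certain root subgroups, but the paper simply cites this as the standard fact of Example~\ref{ex:exgbhecke}.
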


\begin{proof}
By Example~\ref{ex:exgbhecke}, the variety $G\looparrowright G/Q$ is Hecke-simply-connected. We can either apply Lemma~\ref{lemma:easypurity}, based on the discussion of orbit structure in Section~\ref{sec:korbit} or apply Proposition~\ref{prop:simplyconnectedclean} to deduce that cuspidals are clean. Then we can apply Theorem~\ref{thm:wtcond} to get the second claim. The final claim follows then using the quotient equivalence of Proposition~\ref{prop:quotientequiv} and its compatibility with mixed Tate motives from Proposition~\ref{prop:quotientdmt}:
\[
\DMT_P^?(G/Q)\cong \DMT_{P\times Q}^?(G). \qedhere
\]
\end{proof}

Recall that Theorem~\ref{thm:parabolicWT} implies in particular that we have a subcategory $\DMT_P^{\op{bs}}(G/Q)\subset \mathbb{D}^+_P(G/Q)$ of Bott--Samelson motives, as defined in \ref{BSmot}. We will denote by  
\[
\DMT_{P\times Q}^{\op{bs}}(G)\subset \mathbb D^+_{P\times Q}(G)
\]
the subcategory corresponding to $\DMT_P^{\op{bs}}(G/Q)$  under the generalized quotient equivalence $\mathbb{D}_P^+(G/Q)\cong \mathbb{D}^+_{P\times Q}(G)$ of Proposition~\ref{prop:quotientequiv}. 


\begin{corollary}
\label{tiFL}
In the situation of Theorem~\ref{thm:parabolicWT}, Bott--Samelson motives are pointwise pure. There is a weight structure on the category $\DMT_{P\times Q}(G)$ whose heart is exactly given by the subcategory of Bott--Samelson motives:
\[
\DMT_{P\times Q}^{\op{bs}}(G)\cong\DMT_{P\times Q}(G)_{\op{wt}=0}.
\]
Consequently, in the situation of Theorem~\ref{thm:parabolicWT}, the tilting functor induces an equivalence 
\[
\op{tilt}: \op{Hot}^{\op{b}}(\DMT_{P\times Q}(G)_{\op{wt}=0}) = \op{Hot}^{\op{b}}(\DMT^{\op{bs}}_{P\times Q}(G))  \stackrel{\simeq}{\longrightarrow} \DMT_{P\times Q}(G).
\]
\end{corollary}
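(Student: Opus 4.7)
The plan is essentially to assemble previously established results, since the hard work has already been done in Section~\ref{sec:BS} and the preceding Theorem~\ref{thm:parabolicWT}. Concretely, I would verify that the hypotheses of Proposition~\ref{puBS} and Theorem~\ref{thm:tilting} are met for the $P\times Q$-action on $G$, and then invoke those results directly.

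First, I would establish the pointwise purity of Bott--Samelson motives. Via the generalized quotient equivalence of Proposition~\ref{prop:quotientequiv}, it suffices to prove pointwise purity in the category $\DMT_P^{\op{bs}}(G/Q)$. By the discussion in \ref{pgqorbits}, every $B$-orbit on $G/Q$ is of type $\mathbf{G}$ or $\mathbf{U}$ relative to each simple reflection, and $G/Q$ is Hecke-connected (in fact Hecke-simply-connected, as in Example~\ref{ex:exgbhecke}). Therefore, Lemma~\ref{lemma:easypurity} applies, yielding both ``cuspidals are clean'' and pointwise purity of all Bott--Samelson motives on $G/Q$. Transporting along the quotient equivalence gives the pointwise purity claim for $\DMT^{\op{bs}}_{P\times Q}(G)$.

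Second, with pointwise purity in hand, the hypotheses of Proposition~\ref{puBS} are verified (recall Theorem~\ref{thm:wtcond} is already contained in Theorem~\ref{thm:parabolicWT}). Proposition~\ref{puBS} then produces a weight structure on $\DMT_P(G/Q)$, coinciding with the one from Proposition~\ref{prop:equivweight}, whose heart is precisely $\DMT_P^{\op{bs}}(G/Q)$. Again transporting along the quotient equivalence of Proposition~\ref{prop:quotientequiv} (which is compatible with the six-functor formalism and hence with the weight structures defined in Proposition~\ref{prop:equivweight} via the glued weight structures on orbits) identifies $\DMT^{\op{bs}}_{P\times Q}(G)$ with $\DMT_{P\times Q}(G)_{\op{wt}=0}$.

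Finally, the tilting equivalence is an immediate consequence of Theorem~\ref{thm:tilting} applied to $(G\looparrowright G/Q)$ with the parabolic $P$, whose hypotheses we have just verified. Transporting once more along the quotient equivalence gives the stated equivalence
\[
\op{Hot}^{\op{b}}(\DMT_{P\times Q}(G)_{\op{wt}=0}) \stackrel{\simeq}{\longrightarrow} \DMT_{P\times Q}(G).
\]
There is no real obstacle here: the entire proof is a matter of checking that Theorem~\ref{thm:parabolicWT} provides the input required by the general machinery of Section~\ref{sec:BS}, and translating between the $P$-equivariant picture on $G/Q$ and the $(P\times Q)$-equivariant picture on $G$ via the quotient equivalence.
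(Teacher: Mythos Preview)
Your proposal is correct and follows essentially the same approach as the paper: pointwise purity via Lemma~\ref{lemma:easypurity} (using that all $B$-orbits on $G/Q$ are of type $\mathbf{G}$ or $\mathbf{U}$), the weight structure claim via Proposition~\ref{puBS}, and the tilting equivalence via Theorem~\ref{thm:tilting}. You are simply more explicit than the paper about the role of the quotient equivalence in translating between $\DMT_P(G/Q)$ and $\DMT_{P\times Q}(G)$, which is already built into the definition of $\DMT_{P\times Q}^{\op{bs}}(G)$.
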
         

\begin{proof}
In our situation,  pointwise purity of Bott--Samelson motives follows from Lemma~\ref{lemma:easypurity}. The claim concerning the weight structure then follows from Corollary~\ref{puBS}. The claim concerning tilting then follows from Theorem~\ref{thm:parabolicWT} and Theorem~\ref{thm:tilting}. 
\end{proof}

\begin{Bemerkung}
This is a motivic version of the formality result for flag varieties of Olaf Schn\"urer, cf. \cite[Theorem 1]{SchTH}. The tilting result of Theorem~\ref{tiFL} implies that we can write mixed Tate motives as a category of complexes whose entries are Bott--Samelson motives. The fact that the derived automorphisms of intersection motives for orbits are formal follows by weight arguments using the weight structure of Theorem~\ref{tiFL}. Then we could replace Bott--Samelson motives by modules over the Ext-algebra of intersection motives. Theorem~\ref{thm:gradedparabolic} below will then imply the formality of the equivariant derived categories in the Hodge and $\ell$-adic setting and thus recover Schn\"urer's results. 
\end{Bemerkung}

We now show that Bott--Samelson motives push forward to mixed Tate motives on the point. This is a version of the statement that Bott--Samelson resolutions of parabolic orbit closures have resolutions whose motive is mixed Tate. 

\begin{proposition}
\label{prop:bsmt}
Let $k$ be a field, let $G$ be a connected reductive algebraic group over $k$. Let $B\subset G$ be a Borel subgroup, and let $P,Q\subset G$ be two parabolic subgroups containing $B$. Then the push-forward functor $\op{fin}_\ast:\mathbb{D}^+_{P\times Q}(G)\to\mathbb{D}^+_{P\times Q}(\pt)$ maps Bott--Samelson motives to equivariant mixed Tate motives, and therefore induces a functor
\[
\op{fin}_\ast:\DMT_{P\times Q}^{\op{bs}}(G)\to\DMT_{P\times Q}(\pt).
\]
\end{proposition}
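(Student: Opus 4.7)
The plan is to proceed by induction on the construction of the BS-closure in Definition~\ref{def:BSclosure}, establishing the more general statement that for every parabolic $B \subset R \subset G$ and every $M \in \DMT^{\op{bs}}_{R\times Q}(G)$, the pushforward $\op{fin}_\ast M$ lies in $\DMT_{R\times Q}(\pt)$. Since $\op{fin}_\ast$ is triangulated and commutes with Tate twists and direct summands, and the target categories $\DMT_{R\times Q}(\pt)$ are closed under these operations by Lemma~\ref{lem:twistdmt} and Lemma~\ref{lem:idempotent}, it suffices to verify the base case on cuspidals and to check compatibility of $\op{fin}_\ast$ with the parabolic restriction and induction functors used in the BS-closure.

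For the compatibility step, the restriction $\op{Res}$ between parabolics commutes with all six functors by~\ref{CompRES}, in particular with $\op{fin}_\ast$, and preserves mixed Tate motives on the point by Proposition~\ref{prop:dmtres}. The integration functor $\op{Ind}$ is right adjoint to restriction; a standard adjunction argument (using that $\op{fin}^\ast$ commutes with restriction) shows that $\op{fin}_\ast \op{Ind} \simeq \op{Ind}\, \op{fin}_\ast$, and Proposition~\ref{Las} guarantees that the integration functor preserves mixed Tate motives on the point.

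For the base case, let $j\colon W \hookrightarrow G/Q$ be the inclusion of a closed $B$-orbit and $N \in \DMT_B(W)_{\op{wt}=0}$ a cuspidal motive. By the orbit analysis in~\ref{pgqorbits}, $W$ is a single point, and since $B$ is connected, Proposition~\ref{prop:generatingtate} implies that $N$ is a direct sum of constant Tate twists $\const{W}(n)[2n]$. Under the quotient equivalence (Proposition~\ref{prop:quotientequiv}), the closed $B$-orbit $\{[eQ]\}$ corresponds to the $(B \times Q)$-orbit $Q \subset G$, which is $(B \times Q)$-equivariantly isomorphic to $(B \times Q)/\Delta_B$, with $\Delta_B = \{(b,b) : b \in B\}$ the stabilizer of $e \in Q$. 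Via the induction equivalence (Proposition~\ref{cor:indequiv}) and the factorization of integration functors in \ref{PBPF}, the pushforward $\op{fin}_\ast j_\ast N$ is identified with $\op{Ind}_{\Delta_B}^{B \times Q}(\const{\pt}(n)[2n])$, which lies in $\DMT_{B \times Q}(\pt)$ by Proposition~\ref{Las}. The main technical obstacle is the careful bookkeeping required to track the various equivariance structures and change-of-group functors through the quotient and induction equivalences, and in verifying the relevant base-change and commutation isomorphisms within the six-functor formalism.
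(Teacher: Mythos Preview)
Your proof is correct and follows essentially the same inductive strategy as the paper's own argument: reduce to the base case on cuspidals, and check that $\op{fin}_\ast$ commutes with the operations generating the BS-closure. There are two minor differences in execution. For the base case, the paper works more generally with a clean motive on an arbitrary $B$-orbit $Z$ and invokes Proposition~\ref{prop:tatelocalsystem} (pushforward from a homogeneous space sends equivariant mixed Tate motives to mixed Tate motives on the point); you instead use the specific feature of flag varieties, via Example~\ref{ex:cuspidal} and~\ref{pgqorbits}, that cuspidals sit only on the closed one-point orbit, and then identify the pushforward explicitly as an induction from $\Delta_B$. For the commutation $\op{fin}_\ast\op{Ind}\simeq\op{Ind}\,\op{fin}_\ast$, the paper unwinds the construction of $\op{Ind}$ from~\ref{PBPF} and uses 2-functoriality of pushforwards together with compatibility of the induction equivalence with the six functors, whereas you obtain it more cheaply by taking right adjoints of $\op{Res}\,\op{fin}^\ast\simeq\op{fin}^\ast\op{Res}$. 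Both routes are valid; the paper's base-case argument is slightly more portable (it would apply unchanged to orbits that are not points), while your adjunction argument for the induction step is a bit cleaner.
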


\begin{proof}
Let $j:Z\hookrightarrow X$ be a $B$-orbit and let $M\in\DMT_B(Z)_{\op{wt}=0}$ be a clean motive. It suffices to show $\op{fin}_{Z,\ast} M$ is mixed Tate, since $\op{fin}_Z=\op{fin}_X\circ j$. This follows, as in the proof of Proposition~\ref{Las}, from the fact that the motive of $Z$ is mixed Tate, cf. Proposition~\ref{prop:tatelocalsystem}. 

The push-forward $\op{fin}_\ast$ commutes with $M\mapsto M(n)[2n]$, direct sums and extensions, and $\DMT_P(\op{pt})$ is also closed under direct summands and extensions. The restriction functors $\op{Res}_P^Q$ commute with all the functors, cf. \ref{CompRES}, in particular with $\op{fin}_\ast$. 

To show the claim, it therefore suffices to discuss the relation between $\op{fin}_\ast$ and the induction functors. Recall from \ref{PBPF} that the induction functors are given by the composition 
\[
\mathbb{D}^+_P(X)\xrightarrow{\approx} \mathbb{D}^+_Q(Q\qtimes{P}X)\xrightarrow{(\op{id},m)_\ast} \mathbb{D}^+_Q(X)
\]
where the first equivalence is the induction equivalence and the second morphism is push-forward along the multiplication. By Proposition~\ref{cor:indequiv}, the induction equivalence is compatible with $\op{fin}_\ast$. By 2-functoriality, $\op{fin}_\ast$ commutes with the second functor $(\op{id},m)_\ast$ in the composition. Therefore, $\op{Ind}_P^Q$ commutes with $\op{fin}_\ast$.

By induction on the Weyl group, we get the claim. 
\end{proof}

\begin{remark}
An alternative proof would proceed along the following lines: the category $\DMT_{P}^{\op{bs}}(G/Q)$ is generated by pushforwards $\pi_\ast\const{\op{BS}(\underline{w})}$ of constant motives from Bott--Samelson resolutions of Schubert varieties in $G/Q$, where $\pi:\op{BS}(\underline{w})\to G/Q$ is the resolution of the Schubert variety associated to the reduced word $\underline{w}$. Under the quotient equivalence of Proposition~\ref{prop:quotientdmt}, we identify $\DMT_{P}(G/Q)\cong \DMT_{P\times Q}(G)$ and similarly 
\[
\DMT_P(\op{BS}(\underline{w}))\cong \DMT_{P\times Q}(\op{BS}(\underline{w})\times_{G/Q}G).
\]
Then the composition 
\[
\DMT_{P\times Q}(\op{BS}(\underline{w})\times_{G/Q}G)\xrightarrow{\pi_\ast}  \DMT_{P\times Q}(G)\xrightarrow{\op{fin}_\ast} \DMT_{P\times Q}(\pt)
\]
is the push-forward along $\op{fin}:\op{BS}(\underline{w})\times_{G/Q}G\to\pt$. The push-forward of the constant motive will have underlying motive $\op{M}(\op{BS}(\underline{w})\times_{G/Q}G)$. Now we use a version of the projective bundle formula \ref{prop:projectivebundle} together with the facts that the motives of $G$ and $\op{BS}(\underline{w})$ are mixed Tate to deduce that $\op{M}(\op{BS}(\underline{w})\times_{G/Q}G)$ is a mixed Tate motive.
\end{remark}

\subsection{Symmetric varieties}

In the following, we will discuss the Bott--Samelson formalism in the setting of parabolic actions on symmetric varieties, cf. Example~\ref{ex:symmetric}. 

\begin{Bemerkung}
\label{symmsit}
To fix the situation, let $G$ be a connected reductive group over an algebraically closed field $k$ of characteristic $\neq 2$. Let $\theta:G\to G$ be a non-trivial algebraic involution, $T$ be a $\theta$-stable maximal torus, and $B\supseteq T$ be a $\theta$-stable Borel subgroup. Denote by $K=G^\theta$ the fixed subgroup. We are interested in the varieties with action $P\times K\looparrowright G$ where $P$ is a standard parabolic of $(G,B)$.
\end{Bemerkung}

\begin{Bemerkung}
In this situation, the usual requirements of having finitely many orbits separably defined over $k$ are satisfied. Recall from Section~\ref{sec:korbit} that there are finitely many $B\times K$-orbits on $G$ and there is a natural partial order on $B\backslash G/K$ given by inclusion of orbit closures. It is also known that the groups of components of the isotropy groups of points have exponent 2, i.e., every element is its own inverse. By a slight extension of \cite[6.3]{Mars-Springer}, given a parabolic subgroup $P\subset G$, we have that for all $g\in G$ the map $P\times K\ra G$ given by $(p,k)\mapsto pgk$ is separable. 
\end{Bemerkung}

The major work is in proving that cuspidals are clean. A related statement concerning vanishing of intersection cohomology can be found in \cite[Lemma 7.4.1]{Mars-Springer}. The following is an adaptation of their argument to the current motivic setting.

First, a lemma dealing with the codimension 1.

\begin{lemma}
\label{lemma:cleanlemma}
Let $H$ be an algebraic group acting on $\PP^1$ with finitely many orbits. Assume that we are in case \textbf{T} or \textbf{N} of Section~\ref{ss:p1mod}. Let $j\colon U\hookrightarrow \PP^1$ be the inclusion of the open orbit, and let $V\in \DMT_H(U)$. Then $\fin_*V = 0$ if and only if the canonical map $j_!V\to j_*V$ is an isomorphism.\footnote{For intuition consider the topological context - complex local systems on $U = \PP^1 - \{0,\infty\}$ with finite dimensional stalks and vanishing global cohomology must necessarily have non-trivial monodromy around $\{0,\infty\}$.}
\end{lemma}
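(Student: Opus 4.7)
The plan is to identify the cone of $j_!V\to j_\ast V$ and then compute the fibres of $j_\ast V$ at the boundary points $Y \pdef \PP^1\setminus U$ using Springer's Homotopy Lemma applied to contracting one-parameter subgroups of $H$.

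First I would observe that the localization triangle $j_!j^\ast N\to N\to i_\ast i^\ast N$ applied to $N = j_\ast V$, combined with the adjunction isomorphism $j^\ast j_\ast V\cong V$, identifies the cone of the canonical map $j_!V\to j_\ast V$ with $i_\ast i^\ast j_\ast V$, where $i\colon Y\hookrightarrow \PP^1$ is the inclusion of the closed complement of $U$. Hence the canonical map is an isomorphism if and only if $i^\ast j_\ast V = 0$. In case \textbf{T}, $Y = \{0\}\sqcup\{\infty\}$ consists of two closed $H$-orbits which must be treated independently; in case \textbf{N}, $Y = \{0,\infty\}$ is a single $H$-orbit so that $H$-equivariance together with the induction equivalence of Proposition~\ref{cor:indequiv} reduces the vanishing to the single point $\{0\}$.

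Next I would compute $i_0^\ast j_\ast V$ via a contracting slice. In both cases the identity component of $\bar H \subset \op{PGL}_2$ is a maximal torus, which we may assume to act on $\PP^1$ by scaling in standard coordinates; this lifts to a one-parameter subgroup $\lambda\colon \mathbb{G}_{\op{m}}\to H$ whose induced action on the affine chart $\mathbb{A}^1 \pdef \PP^1\setminus\{\infty\}$ contracts $\mathbb{A}^1$ onto $\{0\}$. Since restriction of equivariance along $\lambda$ is conservative by \ref{forget} and commutes with the six functors by \ref{CompRES}, it suffices to establish identifications of the underlying $\mathbb{G}_{\op{m}}$-equivariant motives on $\mathbb{A}^1$. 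Writing $j_0\colon U\hookrightarrow \mathbb{A}^1$ for the inclusion and $\op{lim}\colon \mathbb{A}^1\to\{0\}$ for the limit map, smooth base change along the cartesian square with sides $j$, $j_0$ and the open inclusion $\mathbb{A}^1\hookrightarrow \PP^1$ gives a natural isomorphism $(j_\ast V)|_{\mathbb{A}^1}\cong (j_0)_\ast V$. Springer's Homotopy Lemma \ref{lemma:springer} applied to this object then produces an isomorphism $\op{lim}_\ast (j_0)_\ast V\xrightarrow{\approx} i_0^\ast (j_0)_\ast V$, and the equality $\op{lim}\circ j_0 = \fin_U$ identifies the left-hand side with $\fin_\ast V$. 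This yields the canonical isomorphism $\fin_\ast V\cong i_0^\ast j_\ast V$.

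In case \textbf{T}, the same argument applied to the inverse one-parameter subgroup $\lambda^{-1}$, which contracts $\PP^1\setminus\{0\}$ to $\{\infty\}$, produces $\fin_\ast V\cong i_\infty^\ast j_\ast V$ as well. Either way, $i^\ast j_\ast V$ vanishes exactly when $\fin_\ast V$ does, and combined with the first step this establishes the claimed equivalence. The main technical content is Springer's Homotopy Lemma itself; once this is in place the remaining identifications are formal consequences of the six-functor formalism, so no genuine obstacle arises.
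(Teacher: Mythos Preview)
Your proof is correct and takes essentially the same approach as the paper: localization triangle to reduce to $i^*j_*V=0$, passage to a one-parameter subgroup of $H$, and Springer's Homotopy Lemma to identify the boundary fibres with $\fin_*V$. The paper differs only cosmetically---it reduces case \textbf{N} to case \textbf{T} by restricting to the identity component of $H$ rather than using the induction equivalence on the orbit $\{0,\infty\}$, and it spells out in a short paragraph (via a maximal torus of $H$ not contained in $\ker(H\to\op{PGL}_2)$) why your one-parameter subgroup $\lambda$ actually exists, a point you pass over with ``this lifts.''
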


\begin{proof}
Let $i\colon \PP^1 - U \hookrightarrow \PP^1$ be the inclusion of the closed complement. Then we have a distinguished triangle
\[ 
j_!V \to j_*V \to i_*i^*j_*V \to j_!V[1].
\]
This triangle is an instance of the localization sequence applied to $j_\ast V$, where we note that $j^! j_\ast V\cong V$ since $j$ is an open immersion. The first map, viewed as $j_!j^\ast j_\ast V\to j_\ast V$ is the canonical map that we are concerned with, corresponding to the identity map on $j^\ast j_\ast V$ under adjunction.  Consequently, the canonical map $j_!V \to j_*V$ is an isomorphism if and only if $i^*j_*V = 0$. 

Now we want to reduce to the case $\textbf{T}$. For that, we note that restriction commutes with all the six functors by \ref{CompRES} so that we get a commutative diagram
\[
\xymatrix{
\mathbb{D}^+_N(\mathbb{G}_{\op{m}}) \ar[r]^{i^\ast j_\ast} \ar[d]_{\op{Res}_N^T} & \mathbb{D}^+_N(\{0,\infty\}) \ar[d]^{\op{Res}_N^T} \\
\mathbb{D}^+_T(\mathbb{G}_{\op{m}}) \ar[r]_{i^\ast j_\ast} & \mathbb{D}^+_T(\{0,\infty\}).
}
\]
Now recall from Remark~\ref{forget} that for a morphism of algebraic groups $G_1 \to G_2$ and a $G_2$-variety $X$, the restriction functor $\mathbb{D}^+_{G_2}(X) \to \mathbb{D}^+_{G_1}(X)$ is conservative. 
Therefore, $i^\ast j_\ast V=0$ (where we use the six functors for $N$-equivariant motives) if and only if $i^\ast j_\ast V=0$ (where we use the functors for $T$-equivariant motives). The same is true for $\op{fin}_\ast V$, by an analogous argument. Consequently, by restricting to the identity component, we may assume $H$ is connected and that we are in case \textbf{T}. 

Suppose we can find a one-parameter subgroup $\mathbb{G}_{\op{m}} \to H$ such that the composition $\mathbb{G}_{\op{m}} \to H\to \op{PGL}_2$ is non-trivial. By the argument above, we can check vanishing of the relevant motives after restriction to this one-parameter subgroup. Having done that, we can apply Springer's Homotopy Lemma \ref{lemma:springer} to $\mathbb{P}^1\setminus\{0\}$ resp. $\mathbb{P}^1\setminus\{\infty\}$ with the natural $\mathbb{G}_{\op{m}}$-contractions to see that $\fin_*V = 0$ if and only if $i^*j_*V=0$.

To complete the proof, we simply need to produce such a one-parameter subgroup. As the image of $H\to \op{PGL}_2$ is a maximal torus, there must be a semisimple element in $H$ outside the kernel of $H\to \op{PGL}_2$. Consequently, $H$ contains a maximal torus $T\subset H$ which is not completely contained in this kernel.
Hence, the induced map $T \to \op{PGL}_2$ yields a non-trivial character of $T$. Any one-parameter subgroup of $T$ not contained in the kernel of
this character gets the job done.
\end{proof}

\begin{proposition}
\label{prop:symmtilt}
Let $\mathbb{D}$ be a derivator satisfying the conditions of Theorem~\ref{thm:wtcond}, and assume the situation of \ref{symmsit}. Then for the variety with action $G\looparrowright G/K$, all cuspidals are clean. 
\end{proposition}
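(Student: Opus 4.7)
The plan is to show that for a cuspidal $M\in\DMT_B(V)_{\op{wt}=0}$ on a $B$-orbit $j\colon V\hookrightarrow G/K$, the canonical morphism $j_!M\to j_*M$ is an isomorphism. Applying the localization triangle along the inclusion $\overline V\setminus V\hookrightarrow \overline V$, it is equivalent to prove $i_W^*j_*M=0$ for every $B$-orbit $i_W\colon W\hookrightarrow G/K$ distinct from $V$ and contained in $\overline V$. If $V$ is closed, this is trivial, so assume not. Since $G/K$ is Hecke-connected by Proposition~\ref{prop:gkhecke}, there exists a simple reflection $s$ with $V$ open in $P_s\cdot V$.

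First I would pin down the $\mathbb{P}^1$-type of $V$ relative to each such $s$. The orbit $V$ cannot be of type \textbf{G} (then $P_s\cdot V = V$ has no proper boundary). It also cannot be of type \textbf{U}: translating through Lemma~\ref{lemma:reducetop1} to the $H$-action on $\mathbb{P}^1$, the cuspidal $M$ corresponds to a motive $N$ on the open orbit; in type \textbf{U} the open orbit is $\mathbb{A}^1$ and $N$ is forced to be constant, so by Lemma~\ref{lem:cuspreduction} (case \textbf{U}) we get $\fin_*N\neq 0$, contradicting $\ind_B^{P_s}(j_*M)=0$. Hence $V$ is of type \textbf{T} or \textbf{N} relative to every such $s$, and the corresponding $N$ is a rank-one local system on $\mathbb{G}_{\op{m}}$ with non-constant underlying motive (applying Lemma~\ref{lemma:gmlocalsystem}).

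Next I handle codimension-one boundary orbits. By the combinatorics of $B$-orbits on $G/K$ recalled in Section~\ref{sec:korbit}, every codimension-one inclusion $W\subset\overline V$ arises as the closed part of a $\mathbb{P}^1$-modification: there is a simple reflection $s$ such that $V$ is open in $P_s\cdot V$ and $W$ is contained in $(P_s\cdot V)\setminus V$. Under the identifications of \ref{ss:p1mod} and Lemma~\ref{lemma:reducetop1}, the cuspidality hypothesis $\ind_B^{P_s}(j_*M)=0$ becomes $\fin_*N=0$, and Lemma~\ref{lemma:cleanlemma} applies (we are in type \textbf{T} or \textbf{N}) to give $i^*j_*N=0$ on the closed complement in $\mathbb{P}^1$. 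Translating back through the induction and quotient equivalences, this yields $i_W^*j_*M=0$.

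The main obstacle is propagating this vanishing to deeper boundary orbits $W\subset\overline V$ of codimension $\geq 2$, where no single simple reflection places $V$ and $W$ in a common $P_s$-orbit. Here I would follow the approach of \cite[Lemma~7.4.1]{Mars-Springer}: using the action of the Richardson--Springer monoid on $B$-orbit closures, choose a chain of simple reflections $s_1,\dots,s_r$ and intermediate orbits $V=V_0,V_1,\dots,V_r$ with each $V_{i-1}$ open in $P_{s_i}\cdot V_{i-1}$, whose iterated $\mathbb{P}^1$-fibrations contract onto $W$. Combining the base-change and proper-pushforward compatibilities of the functor $\ind_B^{P_{s_i}}$ with Lemma~\ref{lem:cuspreduction} (cases \textbf{T}/\textbf{N}) at each step, one checks by descending induction along the chain that the cuspidality of $M$ forces $i_{V_i}^*j_*M$ to be (up to shift and twist) a local system of type \textbf{T}/\textbf{N} satisfying again the hypothesis of Lemma~\ref{lemma:cleanlemma}, so that its extension across the next boundary vanishes. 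Iterating down to $W$ yields $i_W^*j_*M=0$, completing the proof. The careful bookkeeping with the Bruhat order and the stabilizer representations along the chain is the principal technical point, but the vanishing at each step is always reduced to the $\mathbb{P}^1$-criterion already established.
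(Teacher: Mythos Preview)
Your reduction to proving $i_W^*j_*M=0$ for boundary orbits, the identification of the type of $V$ relative to each relevant simple reflection as \textbf{T} or \textbf{N}, and the codimension-one argument via Lemma~\ref{lemma:cleanlemma} are all correct and match the paper. One point you glide over, even in codimension one, is \emph{why} every codimension-one $W\subset\overline V\setminus V$ sits inside some $(P_s\cdot V)\setminus V$ with $s$ making $V$ open. The paper obtains this from the specific geometric fact \cite[\S7.2.1]{Mars-Springer} that $P_I\cdot V=\overline V$, where $I$ is the set of simple reflections for which $V$ is of type \textbf{T} or \textbf{N}; this is the step that genuinely uses the symmetric (rather than merely spherical) hypothesis.

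Your proposal for orbits of codimension $\geq 2$ has a genuine gap. The chain argument you describe cannot work as written: once you have established $i_{V_1}^*j_*M=0$ in codimension one, there is no nontrivial local system on $V_1$ to which one could reapply Lemma~\ref{lemma:cleanlemma}; and more fundamentally, vanishing of $i_{V_i}^*j_*M$ tells you nothing about $i_{V_{i+1}}^*j_*M$, since the latter is the restriction of the extension of $M$ from the \emph{original} orbit $V$, not the extension of anything supported on $V_i$. The paper's argument is quite different and does not proceed by descending along a chain. It is a contradiction argument: choose $Y\subset\overline V\setminus V$ of \emph{minimal} codimension with $j_*M|_Y\neq 0$. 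Using $P_I\cdot V=\overline V$ once more, find $s\in I$ for which $Y$ is \emph{not} open in $P_s\cdot Y$. Minimality forces $j_*M$ to vanish on the open orbit of $P_s\cdot Y$, so $j_*M|_{P_s\cdot Y}$ is a sum of pushforwards from the closed orbits (including $Y$). Cuspidality gives $\ind_B^{P_s}(j_*M|_{P_s\cdot Y})=0$; translating through Lemma~\ref{lemma:reducetop1} to $\mathbb{P}^1$, this says $\fin_*$ annihilates a motive supported on points, which forces that motive---and hence $j_*M|_Y$---to vanish. The missing idea in your sketch is precisely this combination of the minimality device with the fact $P_I\cdot V=\overline V$.
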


\begin{proof}
Let $j\colon Z\hookrightarrow G/K$ be a $B$-orbit, and let $M\in \DMT_B(Z)_{{\op{wt}}=0}$ be a cuspidal motive, i.e., $\op{Ind}_B^{P_s}(j_\ast M)=0$ for each simple reflection such that $Z$ is a proper open subvariety of $P_s\cdot Z$. We need to show that the canonical map $j_! M\to j_\ast M$ is an isomorphism. By the localization sequence, it suffices to show that for each $B$-orbit $i:Y\hookrightarrow G/K$ the restriction  $i^\ast j_! M\to i^\ast j_\ast M$ of the canonical map is an isomorphism. Note that we can actually assume $Y\subset \overline{Z}\setminus Z$, where $\overline{Z}$ denotes the closure of $Z$ in $G/K$. Since $i^\ast j_!M=0$ by base change, it suffices to show $i^\ast j_\ast M=0$ for each $B$-orbit $Y\subset \overline{Z}\setminus Z$.

Assume that $s$ is a simple reflection such that $Z$ is a proper open subvariety of $P_s\cdot Z$. In this case, $Z$ is the unique open $B$-orbit in $P_s\cdot Z$, which was denoted by $s\star Z=Z$ in \ref{ss:p1mod}; moreover, $P_s\cdot Z$ also contains an orbit other than $Z$. Using \ref{RTZl} and the assumption that $M$ is cuspidal, we know that $Z$ must be of type \textbf{T} or \textbf{N} relative to $s$, cf. the notation in \ref{ss:p1mod}. Now let $I$ be the set consisting of simple reflections $s$ such that $Z$ is of type \textbf{T} or \textbf{N} relative to $s$, and let $P_I\subseteq G$ be the standard parabolic corresponding to $I$. Then, according to \cite[\S7.2.1]{Mars-Springer}, $P_I\cdot Z = \overline{Z}$.
 
Now if $Y$ is an orbit in $\overline{Z}\setminus Z$ of codimension $1$, then  we may find a simple reflection $s\in I$ such that $s\star Y =Z$, i.e.,  $Z$ is the unique open $B$-orbit in ${P_sY}$. Otherwise $\overline{Y}$ would already be stable under $P_I$ which would contradict the previously deduced $P_I\cdot Z=\overline{Z}$. Using the commutative diagram of Lemma~\ref{lemma:reducetop1}, we are reduced to a question on $\mathbb{P}^1$ with $Z$ corresponding to the open orbit and $Y$ corresponding to a closed orbit. By Lemma~\ref{lemma:cleanlemma}, we can deduce $j_\ast M|_Y=0$ if we can show $\op{fin}_\ast M=0$. By \ref{RTZl}, $M$ must have non-trivial monodromy around $Y$, and by a variant of Lemma~\ref{lemma:gmlocalsystem} we find $\op{fin}_\ast M=0$.


Now assume that there are $B$-orbits $Y\subset \overline{Z}\setminus Z$ such that $j_*M|_Y\neq 0$. We choose one such $Y$ of minimal codimension and derive a contradiction. As before, there must exist a simple reflection $s\in I$ such that $Y$ is not open in $P_s\cdot Y$, otherwise $\overline{Y}$ would be stable under $P_I$. Since we are in the cases \textbf{T} or \textbf{N}, we know $P_s\cdot Y$ decomposes into an open $B$-orbit, the closed $B$-orbit $Y$ and maybe another closed $B$-orbit $A$, whose inclusions we denote $y$ and $a$.\footnote{Here open and closed are considered relatively in $P_s\cdot Y$, not absolutely in $G/K$.} By the assumed minimality of $Y$, $j_*M$ restricts to zero on the open orbit in $P_s\cdot Y$. From the localization sequence, we find 
\[
j_*M|_{P_sY}\cong y_*(j_*M|_{Y})\oplus  a_*(j_*M|_{A}).
\]
Since $M$ is cuspidal, we have $\ind^{P_s}_B(j_*M) =0$ by definition. From 
\[
0=\ind^{P_s}_B(j_*M)|_{P_s\cdot Y}\cong \ind^{P_s}_B(j_*M|_{P_s\cdot Y})
\]
we then deduce $\ind^{P_s}_B( y_*(j_*M|_{Y}))=0$. We now want to apply the local description of the induction $\op{Ind}_B^{P_s}$, cf. the lower left diagram of \ref{lemma:reducetop1}, to show $j_*M|_{Y}=0$. We repeat the corresponding diagram, where $H\subset P_s$ denotes the isotropy group of a point in $P_s\cdot Y$: 
\[ 
\xymatrix{
  \mathbb{D}^+_B(P_s \cdot Y) \ar[r]^-{\ind^{P_s}_B} & \mathbb{D}^+_{P_s}(P_s \cdot Y)  \\
  \mathbb{D}^+_H(\PP^1) \ar[r]^-{\fin_*}\ar[u]_-{\approx} & \mathbb{D}^+_H(\pt)\ar[u]_-{\approx} 
}
\]
Using this diagram, $y_\ast (j_\ast M|_Y)\in\mathbb{D}^+_B(P_s\cdot Y)$ corresponds to a motive $M'\in\mathbb{D}^+_H(\mathbb{P}^1)$ supported on one or two points. The vanishing of $\op{Ind}_B^{P_s}(y_\ast(j_\ast M|_Y))$ corresponds to $\op{fin}_{\mathbb{P}^1,\ast}M'=0$. Now, we get another commutative diagram from \ref{CompRES} whose vertical arrows are conservative functors by \ref{forget}:
\[
\xymatrix{
  \mathbb{D}^+_H(\PP^1) \ar[r]^-{\fin_*}\ar[d] & \mathbb{D}^+_H(\pt)\ar[d] \\
  \mathbb{D}^+(\mathbb{P}^1)\ar[r]_-{\fin_\ast} & \mathbb{D}^+(\pt).
}
\]
Now $\op{fin}_\ast M'\cong \op{fin}_\ast \op{Res}_H^1 M'=0$. For $p:\pt\to\mathbb{P}^1$ the inclusion of a point, the composition $\op{fin}_\ast\circ p_\ast:\mathbb{D}^+(\pt)\to\mathbb{D}^+(\pt)$ is isomorphic to the identity. Therefore, a motive supported on finitely many points has trivial image under $\op{fin}_{\mathbb{P}^1,\ast}$ if it is trivial to start with. Using conservativity of $\op{Res}_H^1$, we deduce from $\op{fin}_\ast M'=0$ that $M'=0$. Using the earlier local description of $\op{Ind}_B^{P_s}$, this means that $y_\ast(j_\ast M|_Y)=0$. Since $y$ is an immersion, $0=y^\ast y_\ast (j_\ast M|_Y)\cong j_\ast M|_Y$ and we arrive at a contradiction and the proof is done.
\end{proof}

\begin{Bemerkung}
We would like to point out, more clearly than in \cite{Mars-Springer}, the geometric reason why the Bott--Samelson formalism works in the case of symmetric varieties. The closure of the orbit under the parabolic generated by the simple reflections which give the directions where we have nontrivial monodromy equals the union of the $K$-orbits to which the extension of the cuspidal is trivial. This is also the reason why we are restricted to symmetric varieties and cannot prove that cuspidals are clean for arbitrary spherical varieties: we need the reference to \cite[\S 7.2.1]{Mars-Springer} which only holds for symmetric varieties.
\end{Bemerkung}

\begin{theorem}
\label{cor:symmetricWT} 
Let $\mathbb{D}$ be a derivator satisfying the conditions of Theorem~\ref{thm:wtcond}, and assume the situation of \ref{symmsit}. Then for every parabolic $P\subset G$ containing $B$, the variety with action $P\looparrowright G/K$ is equivariantly Whitney--Tate, i.e., we have 
\[
\DMT_{P\times K}^\ast(G) =\DMT_{P\times K}^!(G)= \DMT_{P\times K}(G).
\]
\end{theorem}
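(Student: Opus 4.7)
The plan is to assemble the theorem from the pieces already in place: the Hecke-connectedness of symmetric varieties, the cleanness of cuspidals just proved, the general equivariant Whitney--Tate criterion, and the quotient equivalence. Concretely, the approach mirrors the proof of Theorem~\ref{thm:parabolicWT}, with Proposition~\ref{prop:simplyconnectedclean} replaced by the symmetric-variety analogue Proposition~\ref{prop:symmtilt}.

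First I would verify the hypotheses of Theorem~\ref{thm:wtcond} for the variety with action $G \looparrowright G/K$. By Proposition~\ref{prop:gkhecke}, the symmetric variety $G \looparrowright G/K$ is Hecke-connected (though not in general Hecke-simply-connected, cf.~Example~\ref{SLSOexample}). By Proposition~\ref{prop:symmtilt}, all cuspidals on $G/K$ are clean. In addition, the assumptions on finitely many orbits separably defined over $k$ are met by the standard results on symmetric varieties recalled in Section~\ref{sec:korbit} together with the separability statement of~\cite[6.3]{Mars-Springer}. Hence Theorem~\ref{thm:wtcond} applies and gives, for every standard parabolic $P \supseteq B$, the equality
\[
\DMT^{\ast}_P(G/K) \;=\; \DMT^{!}_P(G/K) \;=\; \DMT_P(G/K).
\]

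Next I would translate this equality back to the $P \times K$-equivariant categories of motives on $G$. Since the right $K$-action on $G$ is free, the generalized quotient equivalence of Proposition~\ref{prop:quotientequiv} gives an equivalence of categories $\mathbb{D}^+_{P \times K}(G) \approx \mathbb{D}^+_P(G/K)$ compatible with the six-functor formalism. By Proposition~\ref{prop:quotientdmt}, this equivalence restricts to equivalences $\DMT^{?}_{P \times K}(G) \approx \DMT^{?}_P(G/K)$ for $? \in \{\ast,!\}$, and hence also on the intersection $\DMT_{P \times K}(G)$. Transporting the equalities along these equivalences yields the claimed identifications
\[
\DMT^{\ast}_{P \times K}(G) \;=\; \DMT^{!}_{P \times K}(G) \;=\; \DMT_{P \times K}(G).
\]

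There is no genuine obstacle here: the real work has already been done in establishing Proposition~\ref{prop:symmtilt} (cuspidals are clean), which in turn relied on the geometric input $P_I \cdot Z = \overline{Z}$ from~\cite[\S 7.2.1]{Mars-Springer} together with Lemma~\ref{lemma:cleanlemma} on vanishing of monodromic local systems on $\mathbb{P}^1$. The only thing to double-check is that one really has the hypothesis ``finitely many orbits separably defined over $k$'' required to speak about $\DMT^{?}_{P \times K}(G)$ in the sense of Definition~\ref{mtderdef1}; this is immediate since $k$ is algebraically closed and the relevant action maps $P \times K \to G$ are separable by~\cite[6.3]{Mars-Springer}. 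Once these ingredients are collected, the proof is a short formal deduction from Theorem~\ref{thm:wtcond} and the quotient equivalence.
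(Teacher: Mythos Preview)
Your proof is correct and follows essentially the same route as the paper: invoke Proposition~\ref{prop:gkhecke} for Hecke-connectedness, Proposition~\ref{prop:symmtilt} for cleanness of cuspidals, apply Theorem~\ref{thm:wtcond}, and then use the quotient equivalence (Propositions~\ref{prop:quotientequiv} and~\ref{prop:quotientdmt}) to pass from $\DMT_P(G/K)$ to $\DMT_{P\times K}(G)$. Your version is slightly more explicit about the separability and finite-orbit hypotheses, but the argument is the same.
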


\begin{proof}
By Proposition~\ref{prop:gkhecke}, the variety with action is Hecke-connected, and by Proposition~\ref{prop:symmtilt} we have that cuspidals are clean. Therefore, we can apply Theorem~\ref{thm:wtcond} to deduce the claim. The formulation using $\DMT_{P\times K}(G)$ instead of $\DMT_P(G/K)$ uses again the quotient equivalence. 
\end{proof}

\begin{proposition}
\label{prop:symmpurity}
In the situation of Theorem~\ref{cor:symmetricWT}, Bott--Samelson motives in $\DMT_{P\times K}(G)$ are pointwise pure. There is a weight structure on the category $\DMT_{P\times K}(G)$ whose heart is exactly given by the subcategory of Bott--Samelson motives: 
\[
\DMT_{P\times K}^{\op{bs}}(G)= \DMT_{P\times K}(G)_{\op{wt}=0}.
\]
Consequently, in the situation of Theorem~\ref{cor:symmetricWT}, the tilting functor induces an equivalence 
\[
\op{tilt}: \op{Hot}^{\op{b}}(\DMT_{P\times K}(G)_{\op{wt}=0}) = \op{Hot}^{\op{b}}(\DMT^{\op{bs}}_{P\times K}(G))  \stackrel{\approx}{\longrightarrow} \DMT_{P\times K}(G).
\]
\end{proposition}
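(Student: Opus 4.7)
The plan is to mimic the argument used for flag varieties in Corollary~\ref{tiFL}, with the crucial modification that Lemma~\ref{lemma:easypurity} is no longer available (orbits of type \textbf{T} and \textbf{N} occur in the symmetric setting). The three assertions of the proposition will then follow in sequence from a single pointwise purity input, combined with the general machinery already set up.

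The first and decisive step is to establish the pointwise purity of Bott--Samelson motives. Since by Proposition~\ref{prop:BSinheart} every Bott--Samelson motive lies in $\DMT_{P\times K}(G)_{\op{wt}=0}$, the cleanest route is to invoke Lemma~\ref{lemma:contractingslices}: it suffices to verify that every $B\times K$-orbit in $G$, equivalently every $B$-orbit $j:Z\hookrightarrow G/K$, contains a point admitting a contracting slice in the sense of Definition~\ref{def:slice}. This is the main obstacle, and it is exactly the geometric input that symmetric varieties provide but arbitrary spherical varieties do not. The existence of contracting slices at points of $B$-orbits on $G/K$ is a classical result of Richardson--Springer (see e.g. the discussion in \cite[\S4]{richardson:springer:survey} or the proof of \cite[Theorem~4.8]{Mars-Springer}): at each base point one can find a $T$-stable transversal slice together with a dominant one-parameter subgroup of $T$ that contracts it to the point, and smoothness of the multiplication map $a_S$ is built into the construction. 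To transport this to the $(B\times K)$-equivariant setting on $G$ it suffices to apply the quotient equivalence of Proposition~\ref{prop:quotientdmtx} and argue at the level of $B$-orbits on $G/K$; note that pointwise purity is preserved by the quotient equivalence.

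Once pointwise purity is in hand, the remaining steps are formal. By Theorem~\ref{cor:symmetricWT}, $P\times K\looparrowright G$ is equivariantly Whitney--Tate and the assumptions of Theorem~\ref{thm:wtcond} are satisfied. The combined input of pointwise purity and the Bott--Samelson formalism then feeds into Proposition~\ref{puBS}, which produces the desired weight structure on $\DMT_{P\times K}(G)$ together with the identification $\DMT_{P\times K}^{\op{bs}}(G) = \DMT_{P\times K}(G)_{\op{wt}=0}$. (Again, invariance of the weight structure under quotient equivalence is used to translate between $\DMT_P(G/K)$ and $\DMT_{P\times K}(G)$.)

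Finally, the tilting equivalence is obtained directly from Theorem~\ref{thm:tilting}: all its hypotheses (Hecke-connectedness by Proposition~\ref{prop:gkhecke}, cuspidals-are-clean by Proposition~\ref{prop:symmtilt}, pointwise purity just established) are now available, yielding
\[
\op{tilt}:\op{Hot}^{\op{b}}(\DMT^{\op{bs}}_{P\times K}(G))\stackrel{\approx}{\longrightarrow}\DMT_{P\times K}(G),
\]
and the compatibility with weight structures is the one asserted in Theorem~\ref{thm:tilting}. The only genuinely non-formal ingredient in the whole proposal is therefore the geometric construction of contracting slices, which is the symmetric-variety analogue of the role played by the Bruhat-cell/affine-space structure in the flag variety case.
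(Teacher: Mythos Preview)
Your proposal is correct and follows essentially the same route as the paper: invoke the existence of contracting slices for $B$-orbits on $G/K$ from \cite{Mars-Springer} (the paper cites \S6.4 there rather than the references you gave), apply Lemma~\ref{lemma:contractingslices} to obtain pointwise purity, and then feed this into Proposition~\ref{puBS} and Theorem~\ref{thm:tilting}. Your explicit mention of Proposition~\ref{prop:BSinheart} to justify that Bott--Samelson motives lie in the heart before applying Lemma~\ref{lemma:contractingslices}, and your care about the quotient equivalence translation, are correct elaborations that the paper leaves implicit.
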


\begin{proof}
By \cite[\S6.4]{Mars-Springer}, each point of each $B$-orbit of $G/K$ admits a contracting slice. So Lemma \ref{lemma:contractingslices} yields pointwise purity of Bott--Samelson motives. The remaining statements then follow from Corollary~\ref{puBS}, Theorem~\ref{cor:symmetricWT} and Theorem~\ref{thm:tilting}. 
\end{proof}

\begin{remark}
As in the case of flag varietes, cf. Proposition~\ref{prop:bsmt}, the push-forward of a Bott--Samelson motive under $\mathbb{D}^+_{P\times K}(G)\to \mathbb{D}^+_{P\times K}(\pt)$ is a mixed Tate motive. However, because there are no good combinatorial models available in the symmetric case, we won't really need this assertion. 
\end{remark} 

\begin{remark}
Recall the resolutions of orbit closures from \ref{symmBSresolution}. These are proper morphisms $X\to G/K$ which factor through surjections $\pi:X\twoheadrightarrow \overline{Y}$ for $Y$ a $B$-orbit. In the multiplicity one case, these are resolutions of singularities. Motives of the form $\pi_\ast\const{X}$ are particular examples of Bott--Samelson motives. 
\end{remark}

\subsection{Wonderful compactifications} 

As our last example, we will now discuss the Bott--Samelson formalism in the situation of wonderful compactifications, cf.~Example~\ref{ex:wonderful}. 

\begin{Bemerkung}
Assume the situation of Example~\ref{ex:wonderful}, i.e.,  $G$ is a connected semi-simple group of adjoint type, $B\subseteq G$ is a Borel subgroup and $X$ is the wonderful compactification of $G$. Recall from \cite[Lemma 1.3]{SpCompact} that $X$ is a $G\times G$-variety on which $B\times B$ acts with finitely many orbits. Information on the structure of orbits, Bruhat order etc. can be obtained from \cite{SpCompact}. Some of the results in  \cite{Mars-Springer} and \cite{richardson:springer:survey} apply to general spherical varieties and hence in particular to wonderful compactifications.
\end{Bemerkung}

\begin{theorem}
\label{thm:wonderfulWT}
Let $\mathbb{D}$ be a derivator satisfying the conditions of Theorem~\ref{thm:wtcond} and assume the situation of Example~\ref{ex:wonderful}. Then for every parabolic $P\times Q\subset G\times G$, the variety with action $P\times Q\looparrowright X$ is equivariantly Whitney--Tate. 
\end{theorem}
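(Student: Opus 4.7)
The plan is to verify the two hypotheses of Theorem~\ref{thm:wtcond} for the $G\times G$-variety $X$ with respect to the Borel subgroup $B\times B$, and then to read off the statement for general $P\times Q$. By Theorem~\ref{thm:wtcond} this gives the equivariant Whitney--Tate property for any standard parabolic containing $B\times B$; the formulation involving $P\times Q\looparrowright X$ is then equivalent to the statement that $X$ carries well-behaved $P\times Q$-equivariant mixed Tate motives, since the two subcategories $\DMT_{P\times Q}^\ast(X)$ and $\DMT_{P\times Q}^!(X)$ of $\mathbb{D}^+_{P\times Q}(X)$ coincide under the hypothesis.

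The first step is to check Hecke-connectedness of the $(G\times G)$-variety $X$ with respect to the Borel $B\times B$ in the sense of Definition~\ref{ref:heckeconn}. The set of $(B\times B)$-orbits in $X$ is finite by the results of \cite{SpCompact}, and the combinatorial description there of the $(B\times B)$-orbit structure together with the action of the Richardson--Springer monoid of $W\times W$ on this set produces, for any non-closed orbit $v$, a smaller orbit $u$ and a simple reflection $s$ of $W\times W$ with $s\star u = v$. The key input is that one can always find a minimal parabolic $P_s\times B$ or $B\times P_s$ whose orbit through a generic point of $v$ already contains a smaller orbit; this is a formal consequence of the orbit combinatorics given by Springer and can also be extracted from the general spherical-variety results recalled in Section~\ref{sec:korbit}.

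The second, and main, step is to show that cuspidals are clean. We imitate the argument in Proposition~\ref{prop:symmtilt}. Let $j\colon Z\hookrightarrow X$ be a $(B\times B)$-orbit and $M\in\DMT_{B\times B}(Z)_{\op{wt}=0}$ a cuspidal motive; we have to show $j_!M\xrightarrow{\cong} j_\ast M$, equivalently $i^\ast j_\ast M = 0$ for every other $(B\times B)$-orbit $i\colon Y\hookrightarrow X$ lying in $\overline Z\setminus Z$. The reduction via Lemma~\ref{lemma:reducetop1} to a question on $\mathbb P^1$ and the application of Lemma~\ref{lemma:cleanlemma} (for codimension-one degenerations of type \textbf{T} or \textbf{N}) goes through verbatim. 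As in Proposition~\ref{prop:symmtilt}, the remaining point is the global input: one needs, for $Z$ cuspidal with set $I$ of simple reflections of $W\times W$ in which $Z$ is of type \textbf{T} or \textbf{N}, the identity $P_I\cdot Z=\overline Z$, where $P_I\subseteq G\times G$ is the standard parabolic generated by $B\times B$ and $I$. The hard part is establishing this geometric identity, which plays the role that \cite[\S 7.2.1]{Mars-Springer} played in the symmetric case; it can be obtained from the explicit description of orbit closures in the wonderful compactification given in \cite{SpCompact} together with the general spherical-variety theory of \cite{richardson:springer:survey} (the orbit-closure-via-monoid-action theorem, applied to the pair $G\times G$ and the diagonal spherical subgroup $\Delta G$). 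With this identity in hand the inductive argument in the second half of the proof of Proposition~\ref{prop:symmtilt} applies without change: choose an orbit $Y\subset\overline Z\setminus Z$ of minimal codimension with $j_\ast M|_Y\neq 0$, pick $s\in I$ with $Y$ not open in $P_s\cdot Y$, and derive a contradiction from the cuspidality of $M$ via the local description of induction in Lemma~\ref{lemma:reducetop1} and the conservativity of the forgetful functor, cf.~\ref{forget}.

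Once Hecke-connectedness and the cuspidal-clean property are in place, Theorem~\ref{thm:wtcond} applies and yields $\DMT_{P\times Q}^\ast(X) = \DMT_{P\times Q}^!(X)$ for every standard parabolic $P\times Q\supseteq B\times B$ of $G\times G$. The main obstacle is the geometric identity $P_I\cdot Z=\overline Z$ in the wonderful compactification; once this is extracted from \cite{SpCompact} the rest of the argument is a routine adaptation of the symmetric-variety case.
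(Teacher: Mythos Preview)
Your proof is technically salvageable but substantially overcomplicated, and you miss the key structural fact that makes the wonderful compactification case \emph{easier} than the symmetric case, not merely analogous to it.

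The paper's proof is two lines: by \cite[Lemma~1.4]{SpCompact}, every $(B\times B)$-orbit in $X$ is of type~\textbf{G} or~\textbf{U} relative to every simple reflection of $W\times W$. This puts one in the situation of Lemma~\ref{lemma:easypurity}, which immediately gives that all cuspidals are clean (cuspidals can only live on closed orbits, where cleanness is automatic). Hecke-connectedness is \cite[Lemma~2.1]{SpCompact}, and Theorem~\ref{thm:wtcond} finishes.

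Your approach instead mimics Proposition~\ref{prop:symmtilt}, which is designed to handle the genuinely difficult case of orbits of type~\textbf{T} or~\textbf{N}. You identify as ``the main obstacle'' the geometric identity $P_I\cdot Z=\overline{Z}$ for the set $I$ of simple reflections where $Z$ has type~\textbf{T} or~\textbf{N}. But in the wonderful compactification this set $I$ is always \emph{empty}, so $P_I=B\times B$ and your identity reads $Z=\overline{Z}$. This does hold---precisely because any $Z$ supporting a cuspidal must be closed (non-closed orbits of type~\textbf{G} or~\textbf{U} admit no cuspidals by~\ref{RTZl})---but presenting it as a hard geometric input to be ``extracted from \cite{SpCompact}'' obscures what is actually going on. The inductive contradiction argument you sketch is then vacuous: there are no orbits $Y\subset\overline{Z}\setminus Z$ to worry about.

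In short: you should recognize that \cite[Lemma~1.4]{SpCompact} rules out types~\textbf{T} and~\textbf{N} entirely, invoke Lemma~\ref{lemma:easypurity}, and be done. The symmetric-variety machinery of Proposition~\ref{prop:symmtilt} is not needed here.
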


\begin{proof}
It follows from \cite[Lemma 2.1]{SpCompact} that $X$ is Hecke-connected. Further, \cite[Lemma 1.4]{SpCompact} implies that the assumptions of Lemma \ref{lemma:easypurity} are satisfied. This implies that all cuspidals are clean. Then we can apply Theorem~\ref{thm:wtcond} to get the claim. 
\end{proof}

\begin{proposition}
\label{prop:wonderfulpurity}
In the situation of Theorem~\ref{thm:wonderfulWT}, Bott--Samelson motives in $\DMT_{P\times Q}(X)$ are pointwise pure. There is a weight structure on the category $\DMT_{P\times Q}(X)$ whose heart is exactly given by the subcategory of Bott--Samelson motives:
\[
\DMT_{P\times Q}^{\op{bs}}(X)=\DMT_{P\times Q}(X)_{\op{wt}=0}.
\]
Consequently, in the situation of Theorem~\ref{thm:wonderfulWT}, the tilting functor induces an equivalence
\[
\op{tilt}: \op{Hot}^{\op{b}}(\DMT_{P\times Q}(X)_{\op{wt}=0}) = \op{Hot}^{\op{b}}(\DMT^{\op{bs}}_{P\times Q}(X))  \stackrel{\approx}{\longrightarrow} \DMT_{P\times Q}(X).
\]
\end{proposition}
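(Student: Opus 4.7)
The plan is to follow the exact template already used for Corollary~\ref{tiFL} (parabolic actions on flag varieties) and Proposition~\ref{prop:symmpurity} (parabolic-symmetric actions). All three statements are formal consequences once pointwise purity of Bott--Samelson motives has been established: the assertion about the weight structure then follows from Proposition~\ref{puBS}, and the tilting equivalence from Theorem~\ref{thm:tilting} combined with the equivariant Whitney--Tate property already proved in Theorem~\ref{thm:wonderfulWT}.

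The only nontrivial input is thus pointwise purity, and this is where the specific geometry of wonderful compactifications enters. Here the plan is to invoke part (2) of Lemma~\ref{lemma:easypurity}. This lemma has two hypotheses: Hecke-connectedness and the condition that every $B\times B$-orbit is of type \textbf{G} or \textbf{U} relative to every simple reflection of the Weyl group of $G\times G$. Both hypotheses are already recorded in the proof of Theorem~\ref{thm:wonderfulWT}: Hecke-connectedness comes from \cite[Lemma 2.1]{SpCompact}, and the statement about orbit types (i.e., the absence of type \textbf{T} and type \textbf{N} orbits) comes from \cite[Lemma 1.4]{SpCompact}. So Lemma~\ref{lemma:easypurity}(2) applies directly and yields pointwise purity of all Bott--Samelson motives in $\DMT_{B\times B}^{\op{bs}}(X)$. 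Pointwise purity on $\DMT_{P\times Q}^{\op{bs}}(X)$ for general standard parabolics $P,Q$ then follows, since by definition $\DMT_{P\times Q}^{\op{bs}}(X)$ is generated from the Borel case by the operations listed in Definition~\ref{def:BSclosure}, all of which preserve pointwise purity (weight-exactness of induction and restriction was verified in the proof of Proposition~\ref{prop:BSinheart}, and compatibility with orbit restrictions was used in the weight decomposition argument for Proposition~\ref{prop:equivweight}).

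Once pointwise purity is available, Proposition~\ref{puBS} (whose statement and proof are formulated in the abstract setting of the Bott--Samelson formalism, not restricted to the flag or symmetric case) produces a weight structure on $\DMT_{P\times Q}(X)$ whose heart is precisely the idempotent completion of $\DMT_{P\times Q}^{\op{bs}}(X)$, and this weight structure agrees with the one glued from orbits in Proposition~\ref{prop:equivweight}. Finally, Theorem~\ref{thm:tilting} upgrades this to the claimed tilting equivalence
\[
\op{Hot}^{\op{b}}(\DMT_{P\times Q}^{\op{bs}}(X)) \stackrel{\approx}{\longrightarrow} \DMT_{P\times Q}(X),
\]
using the equivariant Whitney--Tate condition of Theorem~\ref{thm:wonderfulWT} to identify the essential image.

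The ``hard part'' in this proof is essentially vacuous, since all the genuine geometric content has been absorbed into the citations to \cite{SpCompact} and into the abstract Bott--Samelson machinery of Section~\ref{sec:BS}. The one point worth double-checking is that the orbit-type classification of \cite[Lemma 1.4]{SpCompact} is phrased for the action of the Borel $B\times B \subset G\times G$ in exactly the form required by Lemma~\ref{lemma:easypurity}, so that the quoted lemma applies without having to pass through an auxiliary reduction (for instance via the quotient equivalence $\DMT_{P\times Q}(X)\cong\DMT_{P}(X/Q)$, which would in any case be available if needed).
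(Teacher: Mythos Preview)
Your proof is correct, but you establish pointwise purity via a different route than the paper. The paper invokes \cite[Proposition 1.6]{SpCompact} to get contracting slices at points of $B\times B$-orbits and then applies Lemma~\ref{lemma:contractingslices}, exactly in parallel with the symmetric case (Proposition~\ref{prop:symmpurity}). You instead observe that the hypotheses of Lemma~\ref{lemma:easypurity} were already verified in the proof of Theorem~\ref{thm:wonderfulWT} (via \cite[Lemmas 1.4 and 2.1]{SpCompact}), and simply take part~(2) of that lemma. Your route is arguably more economical here, since it reuses the same lemma invoked for the Whitney--Tate condition and avoids citing a separate geometric result from \cite{SpCompact}; the paper's route has the advantage of uniformity with the symmetric case, where types \textbf{T} and \textbf{N} do occur and Lemma~\ref{lemma:easypurity} is not available. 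One minor remark: your paragraph on extending pointwise purity from the Borel to general parabolics is unnecessary, since Lemma~\ref{lemma:easypurity}(2) is already stated for all Bott--Samelson motives and those are by definition indexed by all standard parabolics.
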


\begin{proof}
By \cite[Proposition 1.6]{SpCompact}, points of the $B\times B$-orbits of $X$ admit contracting slices. So Lemma~\ref{lemma:contractingslices} yields pointwise purity of Bott--Samelson motives. The remaining statements then follow from Corollary~\ref{puBS}, Theorem~\ref{thm:wonderfulWT} and Theorem~\ref{thm:tilting}. 
\end{proof}

\section{Complements on convolution}
\label{sec:convolution2}

In this section, we revisit the convolution of equivariant (mixed Tate) motives discussed in Section~\ref{sec:convolution} and prove some compatibilities of convolution with six functors and the tilting statements. Essentially, the proofs of all the results below require a working formalism of Bott--Samelson motives.

\subsection{Compatibility with six functors}

For the compatibility of convolution with a tilting from motives to Soergel bimodules, we will need an exchange isomorphism between ordinary push-forward and exterior product. The exchange morphism of~\ref{extprodshriek} fails to be an isomorphism in general; this can already be seen on the level of sheaves. We shortly discuss a special case sufficient for our purposes.\footnote{This result will be used to compare two categorifications, via motives and via bimodules. Since we don't have combinatorial models available in the symmetric cases, we don't need to worry about the analogous statements in the case of symmetric varieties for now.} 

\begin{proposition}
\label{prop:extprodstar}
Assume $\mathbb{D}$ is one of the derivators $\mathbf{DA}^{\et}$ or $\op{MDer}(-;\mathbb{C})$. Let $G$ be a connected split reductive group and $P,Q,R\subseteq G$ be three standard parabolic subgroups. Let $M\in\mathbb{D}^{\op{bs}}_{P\times Q}(G)$ and $N\in\mathbb{D}^{\op{bs}}_{Q\times R}(G)$ be Bott--Samelson motives. Then the exchange morphism
\[
\op{fin}_\ast (M\boxtimes N)\stackrel{\cong}{\longrightarrow} (\op{fin}_\ast M)\boxtimes (\op{fin}_\ast N)
\]
discussed in~\ref{extprodshriek} is an isomorphism. 
\end{proposition}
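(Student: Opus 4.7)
The plan is to reduce to the case where $M$ and $N$ are pushforwards of constant motives from Bott--Samelson resolutions, and then exploit properness. The exchange morphism of~\ref{extprodshriek} is natural in both arguments and compatible with distinguished triangles, shifts, twists, direct sums and direct summands, so by a standard dévissage argument (five-lemma plus additivity in each variable) it suffices to verify the exchange isomorphism on a generating collection of Bott--Samelson motives in $\DMT^{\op{bs}}_{P\times Q}(G)$ and $\DMT^{\op{bs}}_{Q\times R}(G)$.

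Using the generalized quotient equivalence $\mathbb{D}^+_{P\times Q}(G)\approx\mathbb{D}^+_P(G/Q)$ of Proposition~\ref{prop:quotientequiv} together with its compatibility with mixed Tate structures from Proposition~\ref{prop:quotientdmt}, I would identify $\DMT^{\op{bs}}_{P\times Q}(G)$ with $\DMT^{\op{bs}}_P(G/Q)$. By Example~\ref{ex:BSflag} and the projective bundle formula of Proposition~\ref{prop:projectivebundle} used to pass from $G/B$ to $G/Q$ and to enlarge the group of equivariance, these categories are generated, under shifts, twists, direct sums, summands and extensions, by pushforwards $\pi_\ast\const{\op{BS}(\underline{w})}$ arising from Bott--Samelson resolutions $\pi\colon \op{BS}(\underline{w})\to G/Q$ of Schubert varieties.

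For generators $M=\pi^v_\ast\const{\op{BS}(\underline{v})}$ and $N=\pi^w_\ast\const{\op{BS}(\underline{w})}$, the maps $\pi^v$, $\pi^w$, and also $\op{fin}\circ(\pi^v\times\pi^w)$, are proper since Bott--Samelson varieties are smooth and projective. Consequently $\pi^v_\ast\cong\pi^v_!$ (and similarly for $\pi^w$) by~\ref{BCp}, and the exchange isomorphism for exceptional pushforward recalled in~\ref{extprodshriek} becomes applicable, yielding the chain of natural isomorphisms
$$\op{fin}_\ast(M\boxtimes N)\cong(\op{fin}\circ(\pi^v\times\pi^w))_!\bigl(\const{\op{BS}(\underline{v})}\boxtimes\const{\op{BS}(\underline{w})}\bigr)\cong (\op{fin}_\ast M)\boxtimes(\op{fin}_\ast N),$$
in which the first isomorphism combines $\pi_\ast\cong\pi_!$ on proper maps with the exchange for $(\pi^v\times\pi^w)_!$, and the second is the exchange for the exceptional pushforward under $\op{fin}\circ\pi^v$ and $\op{fin}\circ\pi^w$.

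The main obstacle I expect will be the bookkeeping required to make the whole argument fully equivariant: the Bott--Samelson resolutions must be interpreted in the appropriate equivariant setting (as arising from balanced products of minimal parabolics), and the exchange isomorphisms from~\ref{extprodshriek} must be transported consistently through the various quotient and induction equivalences relating different formulations of equivariant motives. This is a formal matter flowing from the compatibility of the six-functor formalism with the quotient and induction equivalences established in Sections~\ref{sec:sixfunctors} and~\ref{sec:further}, but it requires careful tracking of the equivariance structures on both sides of the exchange morphism, in particular of the $(P\times Q\times Q\times R)$-equivariance on $G\times G$.
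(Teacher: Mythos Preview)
Your approach is genuinely different from the paper's and, as written, contains a gap that is more than the ``bookkeeping'' you anticipate. The paper does not reduce to Bott--Samelson generators or invoke properness at all; instead it uses Proposition~\ref{prop:bsmt} to see that both sides of the exchange morphism lie in $\DMT_{P\times Q\times Q\times R}(\pt)$, and then applies conservativity of the realization functor on equivariant mixed Tate motives (Proposition~\ref{prop:conservative}). Since realization commutes with $\op{fin}_\ast$ and $\boxtimes$, the motivic exchange map realizes to the classical one in the equivariant derived category, where the result is known for compact objects. This is precisely why the hypothesis $\mathbb{D}\in\{\mathbf{DA}^{\et},\op{MDer}\}$ enters.

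The gap in your argument is the properness claim for $\op{fin}\circ(\pi^v\times\pi^w)$. After the quotient equivalence you are working on $G/Q$ and $G/R$, where the structure maps are indeed proper since these are projective varieties. But the $\op{fin}$ in the statement is the structure morphism of $G$ (respectively $G\times G$), and $G$ is affine. The quotient equivalence $(\pi_Q,p_Q)^\ast$ does identify $\DMT^{\op{bs}}_{P\times Q}(G)$ with $\DMT^{\op{bs}}_P(G/Q)$, but it does \emph{not} transport $(\op{fin}_G)_\ast\colon\mathbb{D}^+_{P\times Q}(G)\to\mathbb{D}^+_{P\times Q}(\pt)$ to $(\op{fin}_{G/Q})_\ast$: the target $\pt$ carries no free $Q$-action, so there is no matching quotient equivalence there, and the two pushforwards differ by the general direct image $(\pi_Q,\op{id})_\ast$ of Section~\ref{sec:qfstar}. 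If instead you unwind everything on $G$, the Bott--Samelson motive $M$ becomes $\tilde\pi_\ast\const{\op{BS}(\underline v)\times_{G/Q}G}$ (cf.\ the remark after Proposition~\ref{prop:bsmt}), with $\tilde\pi$ proper; but $\op{BS}(\underline v)\times_{G/Q}G$ is a $Q$-torsor over a projective variety, hence not proper over $\pt$, and so the reduction to the exchange isomorphism for $(-)_!$ from~\ref{extprodshriek} does not go through. The Verdier-duality variant sketched in the remark following the proposition is closer in spirit to what you want, but it too needs extra input (strong dualizability and smoothness) rather than properness alone.
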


\begin{proof}
By Proposition~\ref{prop:bsmt}, the push-forward maps Bott--Samelson motives on $G$ to equivariant mixed Tate motives on the point (for the appropriate group). To check that we have an isomorphism in $\DMT_{P\times R}(\pt)$ we use the assumption on the derivator. In the cases assumed, we have the $\ell$-adic and Hodge realization functors, respectively, and these are conservative on mixed Tate motives, cf. Proposition~\ref{prop:conservative}.   Therefore, it suffices to check that the exchange morphism induces an isomorphism on realizations. Since realization is compatible with everything appearing in the exchange morphism, i.e., the push-forward functors $\op{fin}_\ast$ and the exterior product $\boxtimes$, the exchange morphism for motives is mapped to the corresponding exchange morphism for the realizations. Then it suffices to prove that the exchange morphism in the realization is an isomorphism:
\[
(\op{fin}\times \op{fin})_\ast (\op{Real} M\boxtimes \op{Real} N)\stackrel{\cong}{\longrightarrow} \left(\op{fin}_\ast \circ \op{Real} M\right)\boxtimes \left(\op{fin}_\ast\circ \op{Real} N\right).
\]
Since $M$ and $N$ are Bott--Samelson motives, the realizations $\op{Real}(M)$ and $\op{Real}(N)$ will be compact objects in the respective equivariant derived categories they live in. In that case, the fact that the exchange morphism is an isomorphism is known for the equivariant derived categories, cf. \cite{KS} or \cite[Section 4.11]{soergel:6f}. 
\end{proof}

\begin{remark}
The compatibility above also follows from the compatibility of Verdier duality and exterior product, cf.~\ref{extverdier}, via reduction to~\ref{extprodshriek}: 
\begin{eqnarray*}
(f\times g)_\ast (M\boxtimes N) &\cong&D\circ (f\times g)_!\circ D (M\boxtimes N) \\ &\cong&
D\circ (f\times g)_!\left( DM\boxtimes DN\right) \\
&\cong& D\left(f_! DM \boxtimes g_! DN\right)\\
&\cong& D\circ f_! \circ D (M) \boxtimes D\circ g_!\circ D (N)\\
&\cong& f_\ast M\boxtimes  g_\ast N
\end{eqnarray*}
Here we need that $M$ and $N$ are strongly dualizable and constructible, that source and target of $f$ and $g$ are smooth. There are alternative arguments to establish exchange properties for $f_\ast$ and $\boxtimes$, but these would also use smooth base change or some fibered version of it, cf. \cite[Section 4.11]{soergel:6f}.
\end{remark}

\begin{remark}
In the end, all we want to state is that the equivariant cohomology functor is weakly monoidal. To do this, it would also suffice to first pass to realization and then use the known monoidality statement for the equivariant derived categories. 
\end{remark}

\begin{proposition}
\label{prop:boo3}
Assume $\mathbb{D}$ is one of the derivators $\mathbf{DA}^{\et}$ or $\op{MDer}(-;\mathbb{C})$, and let $G$ be a connected split reductive group. Then the composition of equivariant push-forward with the motivic equivariant cohomology functor $\mathbb{H}_G^{\op{mot}}$ from Theorem~\ref{thm:tiltpoint} is weaky monoidal on Bott--Samelson objects, in the sense that for three standard parabolics $P,Q,R\subset G$ and Bott--Samelson motives $M\in\DMT_{P\times Q}^{\op{bs}}(G)$ and $N\in\DMT_{Q\times R}^{\op{bs}}(G)$, there is a natural isomorphism in $\DMT_{P\times R}(\pt)$:
\[
\left(\mathbb{H}_G^{\op{mot}}\circ\op{fin}_\ast(M)\right)\otimes_{\mathcal{A}_Q} \left(\mathbb{H}_G^{\op{mot}}\circ\op{fin}_\ast(N)\right) \stackrel{\cong}{\longrightarrow} \mathbb{H}_G^{\op{mot}}\circ\op{fin}_\ast(M\conv_Q N). 
\]
\end{proposition}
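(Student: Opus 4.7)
The strategy is to identify both sides via properness of the multiplication map, the quotient equivalence, and a K\"unneth identification for Bott--Samelson push-forwards, then pass to modules via tilting. Since $Q$ is parabolic in $G$, the quotient $G/Q$ is proper and the multiplication map $\op{mult}\colon G\times_{/Q}G\to G$, whose fibers are $G/Q$, is itself proper, so $\op{mult}_!\cong\op{mult}_\ast$; 2-functoriality of push-forward then gives $\op{fin}_\ast(M\conv_Q N)\cong \op{fin}_{G\times_{/Q}G,\ast}(M\ttimes N)$ in $\mathbb{D}^+_{P\times R}(\pt)$. Recalling that $M\ttimes N$ arises by construction from $\op{diag}^\ast(M\boxtimes N)\in \mathbb{D}^+_{P\times Q\times R}(G\times G)$ via the quotient equivalence for the free $Q$-action on $G\times G$, and that this equivalence commutes with the six functors by Proposition~\ref{prop:quotientequiv}, the push-forward above corresponds, as a $P\times Q\times R$-equivariant motive on $\pt$, to $\op{fin}_{G\times G,\ast}\op{diag}^\ast(M\boxtimes N)$. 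Applying commutativity of $\op{fin}_\ast$ with restriction of equivariance (\ref{CompRES}) and the K\"unneth exchange isomorphism of Proposition~\ref{prop:extprodstar} (valid because $M$ and $N$ are Bott--Samelson) yields $\op{diag}^\ast\bigl(\op{fin}_\ast M\boxtimes\op{fin}_\ast N\bigr)$.

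The translation to modules is now achieved by applying $\mathbb{H}^{\op{mot}}$ and invoking Proposition~\ref{prop:exterior} to identify $\mathbb{H}^{\op{mot}}(\op{fin}_\ast M\boxtimes\op{fin}_\ast N)$ with $\mathbb{H}^{\op{mot}}(\op{fin}_\ast M)\otimes_k\mathbb{H}^{\op{mot}}(\op{fin}_\ast N)$ as an $\mathcal{A}_{P\times Q\times Q\times R}$-module, together with Proposition~\ref{tilting} applied to $\op{diag}$. Since on the cohomology ring the diagonal $\op{diag}$ induces the multiplication $\mathcal{A}_Q\otimes\mathcal{A}_Q\to\mathcal{A}_Q$, the extension of scalars along this map identifies the resulting $\mathcal{A}_{P\times Q\times R}$-module with $\mathbb{H}^{\op{mot}}(\op{fin}_\ast M)\otimes_{\mathcal{A}_Q}\mathbb{H}^{\op{mot}}(\op{fin}_\ast N)$, as desired.

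The main technical obstacle lies in extracting the intrinsic $\mathcal{A}_{P\times R}$-module structure of $\mathbb{H}^{\op{mot}}(\op{fin}_\ast(M\conv_Q N))$ from the $\mathcal{A}_{P\times Q\times R}$-module identification above: the restriction of equivariance $\op{Res}_\pi$ along the quotient $\pi\colon P\times Q\times R\twoheadrightarrow P\times R$ corresponds via Proposition~\ref{tilting} to extension of scalars $\mathcal{A}_{P\times Q\times R}\otimes_{\mathcal{A}_{P\times R}}(-)$, which is faithful since $\mathcal{A}_{P\times Q\times R}$ is free over $\mathcal{A}_{P\times R}$, and the desired descent can be carried out using the freeness of Soergel bimodules as $\mathcal{A}_Q$-modules on either side. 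Alternatively, once the natural morphism has been constructed as above, one may verify it is an isomorphism after applying the $\ell$-adic or Hodge realization, using conservativity on mixed Tate motives (Proposition~\ref{prop:conservative}) and the standard monoidality of equivariant cohomology for convolution in the classical equivariant derived category (cf.~\cite{BeLu}).
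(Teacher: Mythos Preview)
Your overall strategy matches the paper's: factor convolution into its constituent steps, show each commutes with $\op{fin}_\ast$, then use the tilting compatibilities (Propositions~\ref{prop:exterior}, \ref{tilting}) to translate to modules. The first two paragraphs are essentially correct and reproduce the first two squares of each of the paper's two big ladders.

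The genuine gap is in your third paragraph. You claim that the $P\times Q\times R$-equivariant motive $\op{fin}_{G\times G,\ast}\op{diag}^\ast(M\boxtimes N)$ is related to the $P\times R$-equivariant motive $\op{fin}_\ast(M\conv_Q N)$ via restriction $\op{Res}_\pi$ along $\pi\colon P\times Q\times R\twoheadrightarrow P\times R$. This is false: the two have different underlying motives (one comes from $G\times G$, the other from $G\times_{/Q}G$), and restriction preserves underlying motives. The correct relation goes the other way: writing $(p,q)\colon (P\times Q\times R\looparrowright G\times G)\to(P\times R\looparrowright G\times_{/Q}G)$ for the quotient map, the inverse of the quotient equivalence is $(p,q)_\ast$, and $2$-functoriality of push-forward gives
\[
\op{fin}_{G\times_{/Q}G,\ast}\circ(p,q)_\ast \;\cong\;(p,\op{id})_\ast\circ\op{fin}_{G\times G,\ast}.
\]
Thus the passage from $P\times Q\times R$-equivariant to $P\times R$-equivariant on the point is via the \emph{general direct image} $\op{Ind}_\ast=(p,\op{id})_\ast$, not via restriction. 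On the module side, Proposition~\ref{tiltingv:countable} identifies this $\op{Ind}_\ast$ with \emph{restriction of scalars} along $\mathcal{A}_{P\times R}\to\mathcal{A}_{P\times Q\times R}$, i.e.\ simply forgetting the middle $\mathcal{A}_Q$-action. This immediately yields the $\mathcal{A}_{P}$-$\mathcal{A}_R$-bimodule $\mathbb{H}(\op{fin}_\ast M)\otimes_{\mathcal{A}_Q}\mathbb{H}(\op{fin}_\ast N)$ with no descent argument needed; your proposed faithfulness/descent discussion is both unnecessary and aimed at the wrong adjoint. Note also that $\op{Ind}_\ast$ along a surjection with kernel $Q$ can leave $\DMT$ for Ind-mixed Tate motives, which is why the paper passes through countably generated modules (Proposition~\ref{tiltingv:countable}); your argument would need to address this as well.
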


\begin{proof}
To prove the claim, we will prove that both functors in the composition are monoidal. 

We first show that the push-forward to the point is monoidal. Note that the monoidal structure on the categories of motives over $G$ as well as over $\pt$ is given by convolution. Consider the following diagram:
\[
\xymatrix{
\mathbb{D}_{P \times Q}^{\op{c}} (G) \times \mathbb{D}_{Q\times R}^{\op{c}} (G) \ar[rr]^{\op{fin}_* \times \op{fin}_*} \ar[d]_-{\boxtimes} && \ar@{=>}[dll]_-{\sim} \mathbb{D}^+_{P\times Q} (\op{pt}) \times \mathbb{D}^+_{Q \times R} (\op{pt}) \ar[d]^-{\boxtimes}\\ 
\mathbb{D}^+_{P\times Q \times Q \times R} (G \times G) \ar[d]_-{\op{Res}} \ar[rr]^{\op{fin}_*} && \ar@{=>}[dll]_-{\sim} \mathbb{D}^+_{P \times Q \times Q \times R} (\op{pt}) \ar[d]^-{\op{Res}} \\
\mathbb{D}^+_{P\times \Delta Q \times R} (G \times G) \ar[d]^-{\wr\wr}_-{(p,q)_*} \ar[rr]^{\op{fin}_*} && \ar@{=>}[dll]_-{\sim} \mathbb{D}^+_{P \times \Delta  Q \times R} (\op{pt}) \ar[d]^-{\op{Ind}_*=(p,\op{id})_*}\\
\mathbb{D}^+_{P\times R} (G \times_{Q} G) \ar[d]_-{\op{mult}_!} \ar[rr]^{\op{fin}_*}  &&\ar@{=>}[dll]_-{\sim} \mathbb{D}^+_{P \times  R} (\op{pt})\ar@{=}[d] \\ 
\mathbb{D}^+_{P\times  R} (G) \ar[rr]^-{\op{fin}_*} && \mathbb{D}^+_{P \times R} (\op{pt})
}
\]
Here, the vertical compositions on both sides provide are the compositions defining the convolution bifunctors, and the horizontal arrows are the appropriate ordinary push-forward functors associated to $G\to\pt$ (for varying group actions). Furthermore, $\Delta Q\subset Q\times Q$ denotes the diagonal inclusion, $p: P \times \Delta  Q \times R\ra P \times  R$ the projection and $q:G \times G\ra  G \times_{/Q} G$ the quotient map. 

We discuss the commutativity of the various squares involved in the diagram. The commutativity of the first square is essentially the compatibility of ordinary push-forward with the exterior product, cf.~Proposition~\ref{prop:extprodstar}. The commutativity of the second square is the commutativity of restriction functors with the other six functors, cf.~\ref{CompRES}. The commutativity of the third square is the 2-functoriality for push-forward plus the fact that the ordinary push-forward is a quasi-inverse of the generalized quotient equivalence, cf.~Proposition~\ref{prop:quotientequiv}. Finally, the commutativity of the last square is simply the 2-functoriality of the equivariant ordinary push-forward functors, noting that the parabolicity of $Q$ means that the action $\op{mult}:G \times_{/Q} X\ra X$ is proper and therefore we have  $\op{mult}_!=\op{mult}_*$.  

The second part of the proof now is the compatibility of the motivic equivariant cohomology with the multiplicative structures. Again, let $G$ be a connected split reductive group and $P,Q,R\subset G$ be standard parabolic subgroups. Consider the following commutative diagram, where the left vertical consists of full triangulated subcategories of the right vertical in the previous diagram. Note that we write $\DMT_H(\op{pt})=\langle\mathcal T_{H}\rangle_\Delta$ to save space. 

\[
\xymatrix{
  \langle\mathcal T_{P\times Q}\rangle_\Delta \times \langle\mathcal T_{Q\times R}\rangle_\Delta\ar[d]_{\boxtimes}\ar[r]^-{\simeq} &\op{Der}^{\op{b}}(\mathcal A_P\otimes\mathcal A_Q\op{-Modfg^\DZ})\times \op{Der}^{\op{b}}(\mathcal A_Q\otimes\mathcal A_R\op{-Modfg^\DZ}) \ar[d]^\boxtimes \ar@{=>}[dl]_-{\sim}\\
  \langle\mathcal T_{P \times Q \times Q \times R}\rangle_\Delta \ar[d]_{\op{Res}} \ar[r]^-{\simeq} & \op{Der}^{\op{b}}(\mathcal A_P\otimes\mathcal A_Q\otimes\mathcal A_Q\otimes \mathcal A_R \op{-Modfg^\DZ}) \ar[d]^{\mathcal A_{\Delta Q}\otimes_{(\mathcal A_Q\otimes\mathcal A_Q)}^{\op{L}}(-)} \ar@{=>}[dl]_-{\sim}\\
  \langle\mathcal T_{P \times \Delta Q  \times R}\rangle_\Delta\ar[d]_{\op{Ind}_*}\ar[r]^-{\simeq} &\op{Der}^{\op{b}}(\mathcal A_P\otimes\mathcal A_{\Delta Q}\otimes \mathcal A_R \op{-Modfg^\DZ}) \ar[d]^{\op{res}} \ar@{=>}[dl]_-{\sim}\\
  \langle\mathcal T_{P \times  R}^\infty\rangle_\Delta \ar[r]_{\simeq}  &\op{Der}^{\op{b}}(\mathcal A_P\otimes\mathcal  A_R \op{-Modcg^\DZ})
}
\]
Here, all the horizontal functors are motivic equivariant cohomology functors, which are equivalences by Theorem~\ref{thm:tiltpoint}, Corollary~\ref{cor:tiltpoint} and the remark in~\ref{sts}. On the left vertical, we have the composition of functors defining convolution for equivariant motives on the point. On the right, we have the exterior tensor product, the (derived) extension of scalars $\mathcal A_{\Delta Q}\otimes_{(\mathcal A_Q\otimes\mathcal A_Q)}^{\op{L}}(-)$ and the restriction forgetting the $\mathcal A_{\Delta Q}$-part of the module structure using \ref{tiltingv:countable}. In particular, the composition on the right-hand side can be identified with the derived tensor product $(-)\otimes_{\mathcal A_Q}^{\op{L}}(-)$ over $\mathcal A_Q$. 

The commutativity of the first square, i.e., the compatibility of the equivariant cohomology with the exterior products, is a consequence of Proposition~\ref{prop:tilttensor} resp. Proposition~\ref{prop:exterior}. The commutativity of the second square is a consequence of Proposition~\ref{tilting}, and the commutativity of the third square is a consequence of Proposition~\ref{tiltingv:countable}. 

From the commutativity of the diagrams (resp. from the explicit natural iso-transformations), we get the claimed weak monoidality. We refrain from checking that our natural isomorphisms are compatible with the associators on both sides. 
\end{proof}

\begin{Bemerkung}  
In the situation above, the morphism $\op{mult}:G\times_{/Q} G\to G$ above is projective since it is the composition of the isomorphism $G\qtimes{Q}G \sira G/Q \times G$ followed by the projection $\op{pr}_2:G/Q \times G \to G$ which has projective fiber $G/Q$. In this case, convolution is additive on weights and its relation to Verdier duality is given by the formula
\[ 
\DD(M\conv_Q N) \cong (\DD M \conv_Q \DD N)(-\dim Q) [-2\dim Q].
\]
\end{Bemerkung}

\subsection{Compatibility with tilting}

The next statement checks compatibility of the tilting results derived from Theorem~\ref{thm:tilting} (especially Propositions \ref{tiFL}, \ref{prop:symmpurity} and \ref{prop:wonderfulpurity}) with the convolution functors. This will allow to provide the multiplicative statements about categorifications of the Schur algebroid resp. its modules in the symmetric and wonderful cases. 

\begin{proposition}
\label{prop:convtilt}
Let $G$ be a split connected reductive group, let $H\subseteq G$ be a subgroup. Denote $X=G/H$ and assume that $P,Q\subset G$ are parabolic subgroups such that the conditions of Theorem~\ref{thm:tilting} are satisfied for $P\looparrowright X$ and $Q\looparrowright X$. 

\begin{enumerate}
\item The convolution functors $-\star_Q-:\mathbb{D}^+_{P\times Q}(G)\times\mathbb{D}_{Q\times H}^+(G)\to \mathbb{D}^+_{P\times H}(G)$ are compatible with equivariant mixed Tate motives, i.e., they restrict to functors 
\[
-\star_Q-:\DMT_{P\times Q}(G)\times\DMT_{Q\times H}(G)\to \DMT_{P\times H}(G)
\]
\item
Denoting by $\mathcal{T}_{?}(G)$ the subcategory of Bott--Samelson motives in the category $\DMT_?(G)$ of equivariant mixed Tate motives, the convolution bifunctors are compatible with the tilting equivalence $\op{Hot}^{\op{b}}(\mathcal{T}_?)\stackrel{\simeq}{\longrightarrow} \langle\mathcal{T}_?\rangle_\Delta$ of Theorem~\ref{thm:tilting} in the following sense: for any two Bott--Samelson motives $M\in\DMT^{\op{bs}}_{P\times Q}(G)$ and $N\in\DMT^{\op{bs}}_{Q\times H}(G)$ there is a natural equivalence
in $\DMT_{P\times H}(G)$:
\[
\op{tilt}(M\conv_Q N)\stackrel{\cong}{\longrightarrow} \op{tilt}(M)\conv_Q \op{tilt}(N).
\]
\end{enumerate}
\end{proposition}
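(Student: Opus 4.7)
\emph{Part (1).} The convolution $M \conv_Q N$ is by definition the composition of the exterior product, the restriction along the diagonal $\Delta Q \subset Q \times Q$, the generalized quotient equivalence for the free $Q$-action on $G \times G$, and the exceptional push-forward $\op{mult}_!$ along $\op{mult}\colon G \times_{/Q} G \to G$. Preservation of mixed Tate motives for each of these constituents has already been established: the exterior product combines Propositions~\ref{prop:pullbackdmt} and \ref{prop:monodmt}; the diagonal restriction is Proposition~\ref{prop:dmtres}; the quotient equivalence is Proposition~\ref{prop:quotientdmt}. For the last step, the parabolicity of $Q$ ensures that $G/Q$ is proper; under the isomorphism $G \times_{/Q} G \cong G/Q \times G$, the morphism $\op{mult}$ becomes the projection onto the second factor, which is smooth and proper, so $\op{mult}_! \cong \op{mult}_*$. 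Preservation of mixed Tate then follows from Theorem~\ref{ifpt} applied to the integration along the parabolic inclusion, or equivalently from a direct K\"unneth-type argument using that $\op{M}(G/Q)$ is pure mixed Tate.

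\emph{Part (2).} I would apply the functorial tilting compatibility, Theorem~\ref{thm:funtilt}, variable-by-variable. Two ingredients are needed. First, the convolution must restrict to a bifunctor on the Bott--Samelson hearts. By Corollary~\ref{tiFL} (respectively Propositions~\ref{prop:symmpurity} and \ref{prop:wonderfulpurity} in the symmetric and wonderful settings), the Bott--Samelson subcategory coincides with the heart of the weight structure, so the claim reduces to weight-exactness of each of the four constituent functors: steps (a)--(c) are weight-exact by Propositions~\ref{prop:wttensor}, \ref{prop:wtres} and \ref{prop:quotverdier} respectively, and step (d) is weight-exact because $\op{mult}$ is smooth and proper, so by Proposition~\ref{prop:wtpush} together with $\op{mult}_! \cong \op{mult}_*$ the push-forward preserves both non-positivity and non-negativity of weights. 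Second, convolution preserves homotopy left Kan extensions separately in each variable, since every constituent functor is a left adjoint (pullbacks and tensors have right adjoints, the quotient equivalence is part of an equivalence with quasi-inverse $\op{mult}_*$ on the relevant free action, and $\op{mult}_!$ is left adjoint to $\op{mult}^!$). A double application of Theorem~\ref{thm:funtilt} then produces the claimed natural isotransformation.

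\emph{Main obstacle.} The principal technical point is the bifunctorial version of Theorem~\ref{thm:funtilt} together with the bookkeeping of weight structures on the product-type intermediate categories such as $\DMT_{P \times Q \times Q \times H}(G \times G)$ that arise in the definition of convolution. These categories inherit well-behaved Bott--Samelson formalisms and tilting equivalences from their factors via the exterior product (Proposition~\ref{prop:exterior}), but verifying that the requisite equivariant Whitney--Tate and pointwise purity conditions transfer cleanly to these product-type categories is the most delicate step of the bookkeeping. Once this transfer is in place, both parts fit into a single application of the tilting machinery already assembled in Section~\ref{sec:tilting}.
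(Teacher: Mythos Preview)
Your proposal is essentially correct and follows the same approach as the paper: decompose the convolution into its four constituent functors (exterior product, diagonal restriction, quotient equivalence, $\op{mult}_!$), verify each preserves mixed Tate motives for Part~(1), and for Part~(2) apply the tilting compatibility Theorems~\ref{thm:funtilt} and~\ref{thm:tiltmonoid} step-by-step through the same decomposition, using that each constituent is a left adjoint or part of the monoidal structure. Two minor remarks: in Part~(1), your invocation of Theorem~\ref{ifpt} for the $\op{mult}_!$ step is not quite the right citation---that theorem concerns integration functors $\op{Ind}_P^Q$ on a fixed variety, not pushforward along $G\times_{/Q}G\to G$; the paper instead argues as in the proof of Proposition~\ref{Las} (pushforward along a homogeneous-space bundle with mixed Tate fibre) together with Verdier duality via Proposition~\ref{prop:dmtverdier}, though your alternative via smoothness and properness of $\op{mult}$ is equally valid. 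Your ``main obstacle'' paragraph correctly identifies the bookkeeping issue with the intermediate product-type categories $\DMT_{P\times Q\times Q\times H}(G\times G)$ etc.; the paper handles this by simply writing down the full ladder diagram through these intermediate categories and noting that each square commutes by the cited compatibility theorems, tacitly assuming the tilting formalism is available there.
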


\begin{proof}
(1) We trace through the definition of convolution, cf. Definition~\ref{def:convolution}. First, if $M\in\DMT_{P\times Q}(G)$ and $N\in\DMT_{Q\times H}(G)$ are equivariant mixed Tate motives, then we have  
\[
M\boxtimes N=\op{pr}_1^\ast M\otimes \op{pr}_2^\ast N\in \DMT_{P\times Q\times Q\times H}^\ast(G\times G)
\]
because the restriction functors along the smooth projections $\op{pr}_i$ preserve $\ast$- and $!$-pointwise mixed Tate motives by Proposition~\ref{prop:pullbackdmt} and the monoidal structure $\otimes$ restricts to $\ast$-pointwise mixed Tate motives by Proposition~\ref{prop:monodmt}. The restriction functor $\op{diag}^\ast:\mathbb{D}^+_{P\times Q\times Q\times H}(G\times G)\to \mathbb{D}_{P\times Q\times H}^+(G\times G)$ preserves equivariant mixed Tate motives by Proposition~\ref{prop:dmtres}. The quotient equivalence $\mathbb{D}^+_{P\times Q\times H}(G\times G)\simeq \mathbb{D}^+_{P\times H}(G\times_{/Q}G)$ preserves equivariant mixed Tate motives by Proposition~\ref{prop:quotientdmt}. Finally, the compatibility with $\op{mult}_!:\mathbb{D}_{P\times H}^+(G\times_{/Q} G)\to\mathbb{D}^+_{P\times H}(G)$ follows as in the proof of Proposition~\ref{Las} (which established a compatibility of ordinary pushforward along a homogeneous space bundles for $!$-equivariant mixed Tate motives), combined with the fact that Verdier duality exchanges $\ast$- and $!$-pointwise motives, cf. Proposition~\ref{prop:dmtverdier}. As a consequence, we see that $\ast$-pointwise equivariant mixed Tate motives are preserved by convolution. By assumption, all the relevant categories $\DMT_{P\times Q}(G)$, $\DMT_{Q\times H}(G)$ and $\DMT_{P\times H}(G)$ satisfy the equivariant Whitney--Tate condition, hence compatibility of convolution with $\ast$-pointwise mixed Tate motives implies that the convolution  functor restricts to the categories of equivariant mixed Tate motives as claimed. 

(2) The claim will follow from the natural iso-transformations filling the following diagram
\[
\xymatrix{
\op{Hot}^{\op{b}}(\mathcal{T}_{P\times Q}(G)) \times \op{Hot}^{\op{b}}(\mathcal{T}_{Q\times R}(G)) \ar[rr]^-{\op{tilt}\times\op{tilt}} \ar[d]_{\boxtimes} && \ar@{=>}[dll]_-{\sim} \langle\mathcal{T}_{P\times Q}(G)\rangle_\Delta\times \langle\mathcal{T}_{Q\times R}(G)\rangle_\Delta \ar[d]^-{\boxtimes}\\
\op{Hot}^{\op{b}}(\mathcal{T}_{P\times Q\times Q\times R}(G\times G)) \ar[d]_-{\op{Res}}  \ar[rr]^-{\op{tilt}} &&
\ar@{=>}[dll]_-{\sim}\langle\mathcal{T}_{P\times Q \times Q \times R}(G\times G)\rangle_\Delta\ar[d]^-{\op{Res}}\\
\op{Hot}^{\op{b}}(\mathcal{T}_{P\times Q\times R}(G\times G)) \ar[d]_-{\op{quot equiv}} \ar[rr]^-{\op{tilt}} && \ar@{=>}[dll]_-{\sim} \langle\mathcal{T}_{P\times Q\times R}(G\times G)\rangle_\Delta \ar[d]^-{\op{quot equiv}}\\
\op{Hot}^{\op{b}}(\mathcal{T}_{P\times R}(G\times_QG)) \ar[d]_-{\op{mult}_!} \ar[rr]^-{\op{tilt}} && \ar@{=>}[dll]_-{\sim} \langle\mathcal{T}_{P\times R}(G\times_Q G)\rangle_\Delta \ar[d]^-{\op{mult}_!} \\
\op{Hot}^{\op{b}}(\mathcal{T}_{P\times R}(G)) \ar[rr]^-{\op{tilt}} &&  \langle\mathcal{T}_{P\times R}(G)\rangle_\Delta
}
\]

The vertical functors are all left adjoints or given by the monoidal structure. In particular, we can apply Theorem~\ref{thm:funtilt} and Theorem~\ref{thm:tiltmonoid} to obtain the relevant isotransformations. For the first square, $M\boxtimes N=\op{pr}_1^\ast M\otimes\op{pr}_2^\ast N$, i.e., it is given as combination of the left adjoint functor $\op{pr}_i^\ast$ and the tensor product. The restriction functor in the second square and the proper pushforward $\op{mult}_!$ in the last square are also left adjoints and consequently compatible with the tilting by Theorem~\ref{thm:funtilt}. Finally, the quotient equivalence used is the inverse of a functor induced from the left-adjoint restriction, cf. Proposition~\ref{prop:quotientequiv}, and therefore compatible with the tilting by Theorem~\ref{thm:funtilt}. Consequently, we get the claimed compatibility of tilting with convolution.
\end{proof}

\begin{Bemerkung}
We refrain from checking that our natural isomorphisms are compatible with the associators on both sides. 
\end{Bemerkung}

\begin{Bemerkung}
As a consequence, the above statement that convolution preserves equivariant mixed Tate motives implies, via Propositions~\ref{prop:convres} and \ref{prop:convind}, that (in the situation for symmetric varieties) the $K$-equivariant motives on flag varieties are stable under pushforward and pullback for projection maps $G/P\to G/Q$ for parabolic subgroups $P\subset Q$. This could also be established more directly. The non-trivial assertion is the compatibility with push-forward, which can be reduced to the case of $\mathbb{P}^1$-fibrations. In these cases, the specific knowledge about the orbit structure, cf. \ref{ss:p1mod}, can be used to show that the pushforward preserves equivariant mixed Tate motives. 
\end{Bemerkung}

\section{Categorification of the Schur algebroid and knot theory}
\label{sec:parabolic}

In this section, we will now discuss the categorification of the Schur algebroid. There are two possibilities which we will both develop, one via the realization functors to the equivariant derived categories and one via hypercohomology to the combinatorial models given by Soergel bimodules. Both categorifications of the Schur algebroid are compatible. We also discuss the application of the bimodule categorification to Khovanov's link homology.

Throughout the section, we fix a field $k$ and a connected reductive group $G$ over $k$ with two standard parabolic subgroups $P,Q\subset G$. We assume that $\mathbb{D}$ is a derivator satisfying the conditions~\ref{derivator:new} as well as the grading condition~\ref{conditions:grading} and the weight condition~\ref{conditions:weight}. 

\subsection{Setup and notation}
\label{setup}

Before we come to the proofs of the main categorification results, we need to introduce notation for some functors that will appear throughout. Basically, the whole section revolves around the following commutative diagrams relating equivariant motives, equivariant derived categories and suitable categories of bimodules which will be explained in the subsequent paragraphs. These diagrams compare the categories of equivariant mixed Tate motives to previously considered categorifications, using perverse sheaves or singular Soergel bimodules.

\begin{Bemerkung}
The comparison of our motivic categorification of the universal Schur algebroid with the previous categorifications in terms of (complexes of) Soergel bimodules is based on the following commutative diagram: 
\[
\xymatrix{
\DMT_{P\times Q}(G)_{\op{wt}=0} \ar[d]_{\approx} \ar[r] & \op{Hot}^{\op{b}}(\DMT_{P\times Q}(G)_{\op{wt}=0}) \ar[d]_{\approx} \ar[r]_>>>>>>>{\approx}^>>>>>>{\op{tilt}}& \DMT_{P\times Q}(G) \ar[ld]^{\mathbb{T}} \\
\mathcal{A}_P\op{-SMod-}\mathcal{A}_Q \ar[r] & \op{Hot}^{\op{b}}(\mathcal{A}_P\op{-SMod-}\mathcal{A}_Q) 
}
\]

In the top row, we have the categories of equivariant mixed Tate motives, the gadgets we spent the last hundred pages to define, cf. in particular Theorem~\ref{thm:parabolicWT}. On the right-hand side we have the category of $P\times Q$-equivariant motives on $G$, on the left-hand side the heart of the weight structure given by equivariant motives which are pure of weight zero. In the bottom row, we have the category of singular Soergel bimodules and its homotopy category. These provide well-known categorifications of the Schur algebroid, with the slight drawback that they are not ``geometric'' enough, which complicates some computations. The left-hand vertical arrow is essentially given by equivariant motivic cohomology, sending the constant motive $_P\const{G}_Q$ to the bimodule  $\mathcal{A}_P\otimes_{\mathcal{A}_G}\mathcal{A}_Q$, and the middle vertical arrow is given by termwise extension to complexes; both vertical arrows are equivalences of categories. 

The right-hand triangle commutes by construction: the functor $\mathbb{T}$ is defined as composition of an inverse of the tilting, mapping an equivariant mixed Tate motive $M\in \DMT_{P\times Q}(G)$ to its weight complex, and then taking cohomology degree-wise, i.e., replacing the pure weight $0$ motives in the complex by the appropriate $\mathcal{A}_P$-$\mathcal{A}_Q$-bimodule. Moreover, the $P\times Q$-equivariant cohomology of a motive $M$ can be recovered via $\mathbb{T}$ by 
\[
\mathbb{H}^\bullet_{P\times Q}(M)\cong \DMT^\bullet[_P\const{G}_Q,M]\cong \op{Hot}^{\op{b}}[\mathcal{A}_P\otimes_{\mathcal{A}_G}\mathcal{A}_Q,\mathbb{T}(M)].
\]

Finally, the categorification of the universal Schur algebroid can then be obtained in two ways: either taking the split Grothendieck group for the left-hand categories $\DMT_{P\times Q}(G)_{\op{wt}=0}$ and $\mathcal{A}_P\op{-SMod-}\mathcal{A}_Q$,  respectively, or taking the ordinary Grothendieck group for the categories in the middle  $\DMT_{P\times Q}(G)$ and $\op{Hot}^{\op{b}}(\mathcal{A}_P\op{-SMod-}\mathcal{A}_Q)$, respectively.
\end{Bemerkung}

\begin{Bemerkung}
On the other hand, the categorification of the Schur algebroid via equivariant derived categories also has a motivic version. This is best explained using the following commutative diagram
\[
\xymatrix{
\DMT_{P\times Q}(G)_{\op{wt}=0} \ar[d] \ar[r]^\approx & \op{Der}^{\op{b},\op{ss},\op{ev}}_{P\times Q}(G) \ar[d] \\
\DMT_{P\times Q}(G) \ar[r]_{\op{Real}} & \op{Der}^{\op{b}}_{P\times Q}(G)
}
\]
On the left-hand side, we again have our categories of equivariant motives, the left-hand vertical arrow being the inclusion of the heart. Alternatively, via the identification $\DMT_{P\times Q}(G)\cong\op{Hot}^{\op{b}}(\DMT_{P\times Q}(G)_{\op{wt}=0})$, it is the inclusion as complexes concentrated in degree $0$. On the right-hand side, we have the equivariant derived category and its subcategory of perversely semisimple complexes concentrated in even degrees. The two horizontal arrows are both induced by the appropriate realization functor $\op{Real}:\DMT_{P\times Q}(G)\to\op{Der}^{\op{b}}_{P\times Q}(G;\Lambda)$ from equivariant mixed Tate motives to equivariant sheaves -- depending on the situation, this is the $\ell$-adic realization or the Hodge realization. Note that the realization functor induces an equivalence from the Bott--Samelson motives onto the perversely semisimple complexes. 

If we are working over a finite fields, with $\mathbb{Q}_\ell$-coefficients, then Grothendieck's function--sheaf correspondence provides a categorification of the universal Schur algebroid via the categories $\op{Der}^{\op{b},\op{ss},\op{ev}}_{P\times Q}(G)$. Combined with the above diagram, we get a categorification of the universal Schur algebroid, as split Grothendieck group of one of the categories in the top row. The ordinary Grothendieck group of the lower left corner is also identified with the appropriate part of the Schur algebroid, while the lower right corner only retains the information at $q=1$ because it doesn't detect the underlying ``mixed geometry''. 
\end{Bemerkung}

\begin{Bemerkung}
The compatibility between these two categorifications is now expressed in the following commutative diagram:
\[
\xymatrix{
\DMT_{P\times Q}(G)_{\op{wt}=0} \ar[rr] \ar[rd] && \op{Der}^{\op{b},\op{ss},\op{ev}}_{P\times Q}(G) \ar[ld] \\
&\mathcal{A}_P\op{-SMod-}\mathcal{A}_Q 
}
\]
The horizontal arrow is induced from the appropriate realization functor, and the right-hand arrow takes a perversely semisimple complex to its equivariant cohomology, viewed as $\mathcal{A}_P$-$\mathcal{A}_Q$-bimodule. Similarly, the left-hand arrow is basically given as the appropriate equivariant motivic cohomology, making the diagram commute.
\end{Bemerkung}

\begin{Bemerkung}
With all the notation and the diagram set up, what we really want to prove in this section are the following two points: 
\begin{enumerate}
\item The tilting, relating equivariant mixed Tate motives and complexes of singular Soergel bimodules is an equivalence of tensor-triangulated categories. Under this equivalence, the Bott--Samelson motives correspond exactly to the category of singular Soergel bimodules, included as heart of the homotopy t-structure on the category of complexes. 
\item The realization functor $\DMT_{P\times Q}(G)\to \op{Der}^{\op{b}}_{P\times Q}(G)$ is a degrading functor in the sense of \cite{BGSo}. 
\end{enumerate}
The above commutative diagrams then also provide an explicit compatibility relation between these two categorifications. 
\end{Bemerkung}

\subsection{Bott--Samelson motives and Soergel bimodules}

The following provides a reinterpretation of the construction of Bott--Samelson motives, using convolution rather than induction and restriction. Proofs concerning the category of Bott--Samelson motives can then be reduced to generating motives using the convolution formalism. 

\begin{Bemerkung}[{\bf Bott--Samelson motives via convolution}] 
\label{BSoo}  

Using the description of restriction and induction functors in terms of convolution, cf.~\ref{irc}, we get an alternative recursive construction of the collection of Bott--Samelson motives in Definition~\ref{BSmot} as follows. The collection of the categories $(\op{DMT}_{P\times Q}^{\op{bs}}(G))_{P,Q\supset B}$ can be described as the smallest collection of subcategories of the various $\mathbb{D}^+_{P\times Q}(G)$ (with $P,Q$ running through the standard parabolics) such that: 
\begin{enumerate}
\item For an inclusion of standard parabolics $Q\subset P$, we have the  morphisms $j_1:((P\times Q)\looparrowright P)\to((P\times Q)\looparrowright G)$ and $j_2:((Q\times P)\looparrowright P)\to((Q\times P)\looparrowright G)$ given by inclusion of the subgroup $P\subset G$. Denoting by $\leftidx{_P}{\underline{P}}{_Q}\in\mathbb{D}^+_{P\times Q}(P)$ and 
$\leftidx{_Q}{\underline{P}}{_P}\in\mathbb{D}^+_{Q\times P}(P)$ the constant equivariant motives, we require
\[
j_{1\ast}\left(\leftidx{_P}{\underline{P}}{_Q}\right)\in\DMT^{\op{bs}}_{P\times Q}(G) \quad\textrm{ and }\quad
j_{2\ast}\left(\leftidx{_Q}{\underline{P}}{_P}\right)\in\DMT^{\op{bs}}_{Q\times P}(G).
\]
\item the categories $\DMT_{P\times Q}^{\op{bs}}(G)$ are stable under $M\mapsto M(n)[2n]$;
\item the categories $\DMT_{P\times Q}^{\op{bs}}(G)$ are stable under taking direct summands;
\item the categories $\DMT_{P\times Q}^{\op{bs}}(G)$ are extension stable; and
\item the collection of categories $\left(\DMT_{P\times Q}^{\op{bs}}(G)\right)_{P,Q\supset B}$ is stable under the convolution bifunctors
\[
-\conv_Q-:\mathbb{D}^+_{P\times Q}(G)\times \mathbb{D}^+_{Q\times R}(G)\ra \mathbb{D}^+_{P\times R}(G)
\]
\end{enumerate}
The equivalence of the above definition with Definition~\ref{BSmot} will then follow from the discussion in the two points below:
\begin{itemize}
\item We discuss the differences in point (1) of the definition. 

Let $j:Z\to G/Q$ be a $P$-orbit and let $M$ be a clean motive on $Z$, i.e., a pure $P$-equivariant Tate motive such that $j_!M\to j_\ast M$ is an isomorphism. Actually, using the constant mixed Tate motives in point (1) above we can build all $P$-equivariant pure Tate motives on the orbit, showing that the new definition contains the clean motives from Definition~\ref{BSmot}.

Conversely,  the motives in point (1) above are Bott--Samelson motives: the cuspidal on the point is clean, and the constant motives on the other orbits are obtained by induction and restriction, using \ref{irc}. 
\item
We discuss the differences in point (5) of the definition. By \ref{irc}, convolution with $\leftidx{_Q}{\underline{P}}{_P}$ and $\leftidx{_P}{\underline{P}}{_Q}$ provides restriction and induction. The stability of the motives in Definition~\ref{BSmot} under convolution follows from an inductive proof, using Propositions~\ref{prop:convres} and \ref{prop:convind} to express the convolution in terms of induction  and restriction. 
\end{itemize}
\end{Bemerkung}

\begin{proposition}
\label{prop:boo4}
Let $G$ be a connected reductive group. Fix a Borel subgroup $B\subset G$ and let $Q\subset P$ be an inclusion of standard parabolic subgroups of $G$. 
\begin{enumerate}
\item The realization functor $\op{Real}:\DMT_{P\times Q}(G)\to\op{Der}^{\op{b}}_{P\times Q}(G)$ maps the Bott--Samelson motive $j_\ast \leftidx{_P}{\underline{P}}{_Q}$ to the extension of the corresponding constant sheaf $\leftidx{_P}{\underline{P}}{_Q}$. A similar statement holds for $\leftidx{_Q}{\underline{P}}{_P}$. 
\item 
We have the following quasi-isomorphisms of bimodules
\[
\mathbb{H}(\leftidx{_P}{\underline{P}}{_Q})\simeq \mathcal{A}_Q, \quad 
\mathbb{H}(\leftidx{_Q}{\underline{P}}{_P})\simeq \mathcal{A}_Q
\]
in $\mathcal{A}_P{\op{-Modfg_\Lambda^{\mathbb{Z}}-}}\mathcal{A}_Q$ and $\mathcal{A}_Q{\op{-Modfg_\Lambda^{\mathbb{Z}}-}}\mathcal{A}_P$, respectively. Here we denote by $\mathbb{H}$ the appropriate equivariant cohomology functors, and $\mathcal{A}_Q$ is equipped with the appropriate natural bimodule structure. 
\end{enumerate}
\end{proposition}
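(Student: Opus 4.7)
The plan is to treat the two parts separately. Part (1) is essentially a formal consequence of the compatibility of realization with the six-functor formalism, while part (2) requires a concrete computation combining the generalized quotient equivalence with the explicit description of equivariant motivic cohomology of the point from Theorem~\ref{thm:equivcoh}.

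For part (1), I would invoke Proposition~\ref{prop:realization}, which states that the realization functor commutes with all six functors. In particular, it commutes with $j_*$ where $j\colon P\hookrightarrow G$ is the closed immersion with its induced $(P\times Q)$-equivariance. By construction of the realization functors, the constant equivariant motive on any variety is sent to the constant equivariant sheaf with the same notation. Hence $\op{Real}(j_*\leftidx{_P}{\underline{P}}{_Q})\cong j_*\leftidx{_P}{\underline{P}}{_Q}$, where the right-hand side is now interpreted as the extension of the constant sheaf. The analogous assertion for $\leftidx{_Q}{\underline{P}}{_P}$ follows by interchanging the roles of left and right group actions.

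For part (2), the key reduction is via the generalized quotient equivalence of Proposition~\ref{prop:quotientequiv}. Since $j_*\leftidx{_P}{\underline{P}}{_Q}$ is supported on $P\subset G$, the required $(P\times Q)$-equivariant cohomology of $G$ equals that of the constant motive $\leftidx{_P}{\underline{P}}{_Q}\in\mathbb{D}^+_{P\times Q}(P)$. The normal subgroup $P\times\{e\}\subset P\times Q$ acts freely on $P$ by left multiplication with quotient $\op{pt}$, so the quotient equivalence yields an equivalence $\mathbb{D}^+_{P\times Q}(P)\approx\mathbb{D}^+_Q(\op{pt})$ sending $\leftidx{_P}{\underline{P}}{_Q}$ to $\const{\op{pt}}_Q$. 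Theorem~\ref{thm:equivcoh} then identifies the right $\mathcal{A}_Q$-module underlying $\mathbb{H}(\leftidx{_P}{\underline{P}}{_Q})$ with $\mathcal{A}_Q$ itself.

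What remains is to pin down the left $\mathcal{A}_P$-module structure, and this is where the main obstacle lies: the bookkeeping through the quotient equivalence obscures the origin of the two equivariances. The $(\mathcal{A}_P\otimes\mathcal{A}_Q)$-action on the equivariant cohomology of $\leftidx{_P}{\underline{P}}{_Q}$ is induced by pullback along the structural morphism $P\to\op{pt}$ followed by multiplication. Under the quotient equivalence, this pullback transforms into the restriction along the inclusion of the stabilizer $Q\hookrightarrow P\times Q$, $q\mapsto(q,q)$, of the point $e\in P$ for the two-sided action. Proposition~\ref{prop:compatres} identifies this restriction on equivariant cohomology with the extension of scalars along the ring homomorphism $\mathcal{A}_P\otimes\mathcal{A}_Q\to\mathcal{A}_Q$, $a\otimes b\mapsto\op{res}_P^Q(a)\cdot b$, where $\op{res}_P^Q$ is the restriction homomorphism associated to $Q\subset P$. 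Hence the left $\mathcal{A}_P$-action on $\mathcal{A}_Q$ factors through $\op{res}_P^Q$, yielding precisely the natural bimodule structure. The analogous statement for $\leftidx{_Q}{\underline{P}}{_P}$ follows from the same argument applied to the free right $P$-action.
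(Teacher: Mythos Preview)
Your argument for part~(1) is exactly the paper's: compatibility of realization with the six functors plus the fact that constant motives realize to constant sheaves.

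For part~(2), your approach is correct but takes a different route from the paper's main proof. The paper dispatches~(2) in one line, deducing it from~(1) and the classical computation of equivariant cohomology of the constant sheaf on the sheaf side. Your argument is instead a direct motivic computation: you use the generalized quotient equivalence for the free left $P$-action on $P$ to reduce to $Q$-equivariant motives on the point, and then track the bimodule structure by identifying the stabilizer of $e\in P$ under the two-sided action as the diagonal copy of $Q$, invoking Proposition~\ref{prop:compatres} to see that the $\mathcal{A}_P\otimes\mathcal{A}_Q$-action factors through $\mathcal{A}_Q$ via restriction. This is essentially the alternative argument the paper records in the Remark immediately following the proof, which phrases the same reduction as $\op{fin}_\ast(\leftidx{_P}{\underline{P}}{_Q})\cong\op{Ind}_Q^{P\times Q}(\underline{\pt})$ along the diagonal $Q\hookrightarrow P\times Q$ and then appeals to the compatibility of tilting with integration (Proposition~\ref{tiltingv}). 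Your quotient-equivalence formulation and the paper's induction formulation are two sides of the same coin; the advantage of your presentation is that it makes the origin of the bimodule structure on $\mathcal{A}_Q$ explicit without passing through realization, while the paper's main proof is shorter but relies on the reader knowing the classical sheaf-theoretic answer.
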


\begin{proof}
(1) Recall that $j_\ast \left(\leftidx{_P}{\underline{P}}{_Q}\right)$ takes a constant motive on a $P$-orbit of $G/Q$ and extends it to all of $G/Q$. The realization functors are compatible with the six-functor formalism, so the claim reduces to the fact that realization maps the constant mixed Tate motive $\underline{X}$ to the corresponding constant sheaf $\underline{X}$. The claim is then clear.

(2) 
follows from (1) and the computation of equivariant cohomology of the push-forward of the constant sheaf.
\end{proof}

\begin{remark}
In Example~\ref{ex:BSflag}, we explained how Bott--Samelson motives can be seen as push-forwards of constant motives from Bott--Samelson resolutions. The compatibility of realization and the six-functor formalism implies that the Bott--Samelson motive $M=p_\ast \underline{X}$ for the Bott--Samelson resolution $p:X\to G/Q$ realizes to the pushforward $p_\ast\underline{X}$ of the constant sheaf from the very same resolution. 
\end{remark}

\begin{remark}
Here is another way to see (2) in Proposition~\ref{prop:boo4}. We can consider the push-forward $\op{fin}_\ast \left(\leftidx{_P}{\underline{P}}{_Q}\right)$ along the structure morphism $((P\times Q)\looparrowright P)\to ((P\times Q)\looparrowright \pt)$. By Proposition~\ref{prop:bsmt}, this is a $(P\times Q)$-equivariant mixed Tate motive on the point. By \ref{PBPF}, we have an isomorphism
\[
\op{fin}_\ast (\leftidx{_P}{\underline{P}}{_Q})\cong \op{Ind}_Q^{P\times Q}\left(\underline{\pt}\right)
\]
in $\mathbb D^+_{P\times Q}(\op{pt})$, where the induction is along the diagonal inclusion $\Delta:Q\hookrightarrow P\times Q$. The cohomology functor in Theorem~\ref{thm:tiltpoint} maps the constant motive $\underline{\pt}\in\mathbb{D}^+_Q(\pt)$ to $\mathcal{A}_Q\in\mathcal{A}_Q\op{-Modfg_\Lambda^{\mathbb{Z}}}$, with the natural module structure. The compatibility of tilting and integration, cf.~\ref{tiltingv} Item~\ref{TInd}, implies that $\op{Ind}_Q^{P\times Q}\left(\underline{\pt}\right)$ is mapped to $\mathcal{A}_Q\in \mathcal{A}_P{\op{-Modfg_\Lambda^{\mathbb{Z}}-}}\mathcal{A}_Q$. In particular $\op{fin}_\ast( \leftidx{_P}{\underline{P}}{_Q})$ belongs to the heart of the tilting t-structure from \ref{sts}. 
\end{remark}

\begin{Bemerkung}[{\bf Tilting Bruhat cells}]
\label{TBC}
Here is another computation of equivariant cohomology which we will need later. Let $G\supset B\supset T$ be a connected reductive group with chosen Borel subgroup $B$ and maximal torus $T$. For an element $x\in \op{N}_G(T)$ (representing an element of the Weyl group $\op{N}_G(T)/T$), we can consider the corresponding Bruhat cell $((B\times B)\looparrowright BxB)$ in $((B\times B)\looparrowright G)$. Then the equivariant cohomology of the constant $B\times B$-equivariant Tate motive $\underline{BxB}$ can be computed as 
\[
\mathbb{H}(\underline{BxB})\cong \mathcal{A}_B1_x\in \mathcal{A}_B\op{-Modfg^\Lambda_\mathbb{Z}-}\mathcal{A}_B,
\]
where $1_x$ satisfies $a\cdot 1_x=1_x\cdot x(a)$ for the natural action of $x$ on $\mathcal{A}_B$ via the isomorphism $\mathcal{A}_T\cong\mathcal{A}_B$ induced from the inclusion $T\subset B$. The argument is the same as in Proposition~\ref{prop:boo4}. 
\end{Bemerkung}

\begin{corollary}
\label{cor:boo4}
The equivariant motivic cohomology functor induces a functor 
\[
\mathbb{H}:\DMT^{\op{bs}}_{P\times Q}(G)\to \mathcal{A}_P{\op{-Modfg_\Lambda^{\mathbb{Z}}-}}\mathcal{A}_Q
\]
whose essential image is the category $\mathcal{A}_P\op{-SMod-}\mathcal{A}_Q$ of even singular Soergel bimodules.
\end{corollary}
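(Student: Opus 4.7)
The plan is to reduce everything to the recursive description of Bott--Samelson motives via convolution given in \ref{BSoo} and then translate the geometric construction into the bimodule construction using the two computational inputs already at our disposal: Proposition~\ref{prop:boo4}(2) identifying $\mathbb{H}$ on the basic generating motives, and Proposition~\ref{prop:boo3} relating convolution of Bott--Samelson motives to tensor products of bimodules over $\mathcal{A}_Q$.

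First I would check that $\mathbb{H}$ lands in the heart. By Proposition~\ref{prop:bsmt}, push-forward along $\mathrm{fin}:G\to \pt$ sends a Bott--Samelson motive to a $(P\times Q)$-equivariant mixed Tate motive on the point; by Corollary~\ref{tiFL} it moreover lies in the heart of the weight structure. Under the tilting equivalence of Theorem~\ref{thm:tiltpoint} (in its bimodule incarnation), the heart corresponds precisely to finitely generated graded $\mathcal{A}_P\otimes_\Lambda\mathcal{A}_Q$-modules, viewed as complexes concentrated in degree zero. Hence $\mathbb{H}$ really defines a functor into the abelian category $\mathcal{A}_P{\op{-Modfg^\mathbb{Z}_\Lambda-}}\mathcal{A}_Q$.

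Next I would compute the essential image inductively following \ref{BSoo}. The basic generators $j_\ast\leftidx{_P}{\underline{P}}{_Q}$ and $j_\ast\leftidx{_Q}{\underline{P}}{_P}$ map under $\mathbb{H}$ to $\mathcal{A}_Q$ with its standard $\mathcal{A}_P$--$\mathcal{A}_Q$-bimodule structure (respectively $\mathcal{A}_Q$--$\mathcal{A}_P$), by Proposition~\ref{prop:boo4}(2). Tate twists $(n)[2n]$ translate into the grading shift $\langle n\rangle$. By Proposition~\ref{prop:boo3}, for Bott--Samelson motives $M\in\DMT^{\op{bs}}_{P\times Q}(G)$ and $N\in\DMT^{\op{bs}}_{Q\times R}(G)$, we have a natural isomorphism
\[
\mathbb{H}(M\star_Q N)\;\cong\;\mathbb{H}(M)\otimes_{\mathcal{A}_Q}\mathbb{H}(N),
\]
where the right-hand side is the underived tensor product (the derived one agrees with it because, by induction, the relevant bimodules are direct summands of iterated tensor products of $\mathcal{A}_R$'s, hence graded free, in particular flat, over $\mathcal{A}_Q$). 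Iterating the basic computation along the convolution-recursive construction of \ref{BSoo} therefore shows that $\mathbb{H}$ sends an iterated convolution of constant motives $\leftidx{_{R_i}}{\underline{R_i}}{_{R_{i+1}}}$ to the corresponding iterated tensor product $\mathcal{A}_{R_1}\otimes_{\mathcal{A}_{R_2}}\mathcal{A}_{R_3}\otimes\cdots\otimes_{\mathcal{A}_{R_{n-1}}}\mathcal{A}_{R_n}$. By definition, these are precisely the Bott--Samelson bimodules.

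Finally I would handle the closure operations appearing in the BS-closure, cf.~Definition~\ref{def:BSclosure}. Direct summands and direct sums are manifestly preserved by $\mathbb{H}$ and correspond to the same operations on the bimodule side; together with the Tate twists/grading shifts this shows that the essential image is contained in the category of (even) singular Soergel bimodules $\mathcal{A}_P\op{-SMod-}\mathcal{A}_Q$, which is by definition generated from Bott--Samelson bimodules under shifts, sums, and summands. Conversely, every singular Soergel bimodule is reached by this construction, proving the essential image is all of $\mathcal{A}_P\op{-SMod-}\mathcal{A}_Q$. The one subtle point---and the main potential obstacle---is extension-stability in Definition~\ref{def:BSclosure}: a priori one might be tempted to exit the heart of bimodules via a triangle of Bott--Samelson motives. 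This is controlled by the purity established in Corollary~\ref{tiFL}: Bott--Samelson motives lie in the heart of the weight structure, which forms a negative collection, so any distinguished triangle of Bott--Samelson motives splits and the third vertex remains a direct sum of the other two, whose image under $\mathbb{H}$ is again a Soergel bimodule. This closes the argument.
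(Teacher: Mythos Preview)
Your proof is essentially correct and follows the same route as the paper: compute $\mathbb{H}$ on the generating motives via Proposition~\ref{prop:boo4}(2), propagate through convolution via Proposition~\ref{prop:boo3} using that $\mathcal{A}_Q$ is free over $\mathcal{A}_P$ for $Q\subset P$ so derived tensor products agree with underived ones, and handle extension-stability by the weight argument (triangles in the heart split). Your identification of the essential image with singular Soergel bimodules via the iterated-tensor-product description is exactly the paper's conclusion.

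One small gap: in your first paragraph you invoke Corollary~\ref{tiFL} to conclude that $\op{fin}_\ast$ of a Bott--Samelson motive lies in the heart of the weight structure on $\DMT_{P\times Q}(\pt)$. But Corollary~\ref{tiFL} only places Bott--Samelson motives in the heart on $G$; you still need that $\op{fin}_\ast$ is weight-exact. This is true (under the quotient equivalence it becomes pushforward along the proper map $G/Q\to\pt$, so $\op{fin}_\ast=\op{fin}_!$ and both half-exactness statements from Propositions~\ref{prop:wtres} and \ref{prop:wtpush} apply), but it deserves a word. In any case this global argument is redundant with your inductive treatment in the later paragraphs, which is where the real work is and which matches the paper's proof.
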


\begin{proof}
We first prove the factorization. Note that it follows directly from Proposition~\ref{prop:boo4} that the generating Bott--Samelson motives $j_\ast \left(\leftidx{_P}{\underline{P}}{_Q}\right)$ land in the category of bimodules (as opposed to the derived category of complexes). 

The first point is in proving that the extension stability in (4) remains in the category of modules and doesn't pass to the category of complexes. This follows from the fact that the Bott--Samelson motives are the heart of a weight structure on $\DMT_{P\times Q}(G)$. Therefore, for any two Bott--Samelson motives $M_1$ and $M_3$, a distinguished triangle $M_1\to M_2\to M_3\to M_1[1]$ must have the boundary map $M_3\to M_1[1]$ equal to the zero-map because $M_3$ is of weight $\leq 0$ and $M_1[1]$ is of weight $\geq 1$. The motive $M_2$ is then an extension of the motives $M_1$ and $M_3$, and the same will be true for the realization. 

The second point is proving that the convolution functors in point (5) of~\ref{BSoo} remain in the category of modules. To see this, we note that, for an inclusion of parabolic subgroups $Q\subset P$, the cohomology algebra $\mathcal{A}_Q$ is a free $\mathcal{A}_P$-module of rank $\dim\op{H}^\bullet(P/Q)$ via the induced homomorphism $\mathcal{A}_P\to\mathcal{A}_Q$. In particular, the derived tensor product is simply an ordinary tensor products, and therefore the convolutions of point (5) of~\ref{BSoo} do not leave the category of bimodules.


 Stability under (2) and (3)  in~\ref{BSoo} doesn't destroy the factorization, and the claim is proved.

It remains to prove the claim on the essential image. By Proposition~\ref{prop:boo3} and Proposition~\ref{prop:boo4}, the essential image in $\mathcal{A}_P{\op{-Modfg_\Lambda^{\mathbb{Z}}-}}\mathcal{A}_Q$ consists of those bimodules which occur as direct summands in iterated tensor products of the form \label{tifieq}
\[
\mathcal A_P\otimes_{\mathcal A_{R(1)}}\mathcal A_{S(1)}\otimes_{\mathcal A_{R(2)}}\mathcal A_{S(2)}\ldots\otimes_{\mathcal A_{R(n)}} \mathcal A_Q
\]
for chains $P\supset R(1) \subset S(1)\supset R(2)\subset S(2) \ldots\supset R(n)\subset Q$ of parabolics above $B$. Hence we recover exactly the ``singular Soergel bimodules'' investigated by Williamson in \cite{GWi}.
\end{proof}

\subsection{Full faithfulness}
The previous reductions have shown that the equivariant cohomology functor restricts to a functor from Bott--Samelson motives to singular Soergel bimodules. The heart of the present section is now the statement that this functor is indeed fully faithful, i.e., that singular Soergel bimodules provide combinatorial models for Bott--Samelson motives. 

\begin{proposition}
\label{prop:ff}
The equivariant cohomology functor 
\[
\mathbb{H}:\DMT_{P\times Q}^{\op{bs}}(G)\to \mathcal{A}_P\op{-SMod^\DZ-}\mathcal{A}_Q 
\]
is fully faithful.
\end{proposition}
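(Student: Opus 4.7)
The plan is to reduce the full faithfulness to a computation on generators, using the convolution description of Bott--Samelson motives from \ref{BSoo}, the adjunctions for convolution, and the compatibility of equivariant cohomology with convolution already established in Proposition~\ref{prop:boo3}. By Corollary~\ref{tiFL} we have $\DMT_{P\times Q}^{\op{bs}}(G)=\DMT_{P\times Q}(G)_{\op{wt}=0}$, and this is the additive heart of a weight structure, so morphisms in the source are morphisms in $\DMT_{P\times Q}(G)$ concentrated in weight zero. Correspondingly, the target consists of genuine bimodules (no complexes), and Corollary~\ref{cor:boo4} has already matched up the two classes of generators.

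First, for $M,N\in\DMT_{P\times Q}^{\op{bs}}(G)$ I rewrite
\[
\DMT_{P\times Q}(G)(M,N)\cong \mathbb{D}^+(\pt)\bigl(\underline{\pt},\op{fin}_{{\op{E}}(P\times Q)\times_{/(P\times Q)}G,*}\iHom(M,N)\bigr).
\]
Bott--Samelson motives are pointwise pure by Corollary~\ref{tiFL}, so Corollary~\ref{cor:hompuretate} applies: the motive $\op{fin}_*\iHom(M,N)$ is a pure Tate motive of weight $0$ in $\DMT_{P\times Q}(\pt)$. By the grading condition~\ref{conditions:grading} together with Theorem~\ref{thm:tiltpoint}, such a motive corresponds under tilting to a free graded $(\mathcal{A}_P\otimes\mathcal{A}_Q)$-module sitting in weight zero, and the $\mathbb{D}^+(\pt)(\underline{\pt},-)$ above simply extracts its degree-zero part. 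Hence the source Hom group is identified with the degree-zero piece of a naturally defined $(\mathcal{A}_P,\mathcal{A}_Q)$-bimodule attached to the pair $(M,N)$.

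Next, I treat the target analogously. By the standard adjunction in graded bimodules,
\[
\mathcal A_P\op{-SMod-}\mathcal A_Q(\mathbb H(M),\mathbb H(N))=\op{Hom}^{\circledast}_{\mathcal A_P\otimes\mathcal A_Q}\!\bigl(\mathbb H(M),\mathbb H(N)\bigr)_{0},
\]
and I use the generator-by-generator description of Bott--Samelson motives in \ref{BSoo}, expressing $M$ as a finite convolution $M_1\star_{Q_1}\cdots\star_{Q_{n-1}}M_n$ of the basic generators $j_*\leftidx{_R}{\underline{R}}{_{R'}}$. The convolution--$\iHom$ adjunction, in the form developed after Proposition~\ref{prop:convassoc} using strong dualizability of these generators, then reduces both sides simultaneously to morphisms from a single generator $\leftidx{_P}{\underline{G}}{_Q}$ into a suitable convolution, where Proposition~\ref{prop:convres} and Proposition~\ref{prop:convind} identify the resulting Hom with ordinary equivariant cohomology of that convolution. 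Proposition~\ref{prop:boo3} then says that this equivariant cohomology computes exactly the tensor product of bimodules which defines the generators of singular Soergel bimodules, so the natural comparison map on Hom groups becomes an isomorphism on the generating case. The weak monoidality of $\mathbb{H}$ (Proposition~\ref{prop:boo3}) propagates this isomorphism to arbitrary convolutions of generators, and the stability of both sides under shifts, twists, finite direct sums and direct summands extends it to all of $\DMT^{\op{bs}}_{P\times Q}(G)$.

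The main technical obstacle will be the coherence check in the reduction step: verifying that the convolution--$\iHom$ adjunction on the motivic side is matched, under $\mathbb{H}$, by the ordinary tensor--Hom adjunction of graded bimodules, so that the comparison map is truly an isomorphism and not merely a non-canonical identification of the two Hom groups. This coherence rests on the fact that along the chain of parabolics involved, $\mathcal{A}_{Q}$ is free of finite rank over $\mathcal{A}_P$ whenever $Q\subseteq P$ (so derived and underived tensor products agree, as used in Corollary~\ref{cor:boo4}), together with Propositions~\ref{tilting}, \ref{tiltingv}, and \ref{prop:compatotimes}, which ensure that all the adjunctions between restriction, induction and tensoring appearing in the definition of convolution are transported faithfully by equivariant cohomology; assembling these into the required diagram of isomorphisms is the technical heart of the argument.
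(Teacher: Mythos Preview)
Your reduction step (1)--(2) via convolution adjunctions mirrors the paper's step (1), and the coherence issues you flag are indeed what the paper's commutative diagrams in that step address. But the argument has a genuine gap at the base case.

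After reducing $M$ to a single generator $\leftidx{_Q}{\underline{P}}{_P}$ (or eventually $\leftidx{_B}{\underline{B}}{_B}$), you claim that Proposition~\ref{prop:boo3} finishes the job because ``equivariant cohomology computes exactly the tensor product of bimodules''. This is where the argument breaks down. Proposition~\ref{prop:boo3} is a statement about \emph{objects}: it says $\mathbb{H}(M\conv_Q N)\cong\mathbb{H}(M)\otimes_{\mathcal{A}_Q}\mathbb{H}(N)$. It does not by itself show that the comparison map on \emph{morphisms} is an isomorphism. Concretely, after all your reductions you still need to prove that
\[
\mathbb{D}^+_{B\times B}\bigl(i_\ast\,\leftidx{_B}{\underline{B}}{_B},\,N\bigr)\longrightarrow \op{Mod}^\DZ_{\mathcal{A}_B-\mathcal{A}_B}\bigl(\mathcal{A}_B,\mathbb{H}(N)\bigr)
\]
is an isomorphism for an arbitrary Bott--Samelson motive $N$. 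This is not formal: the left side sees only the part of $N$ supported on the closed orbit $B\subset G$, and you must show the right side does too. The paper handles this (its step~(3)) by a localization argument along $i\colon B\hookrightarrow G$, $j\colon G\setminus B\hookrightarrow G$, combined with the tilting of Bruhat cells~\ref{TBC} and the key algebraic fact that there are no nonzero $\mathcal{A}_B$--$\mathcal{A}_B$--bimodule maps $\mathcal{A}_B\to\mathcal{A}_B 1_x$ for $x\neq e$ in the Weyl group. Nothing in your outline supplies this ingredient.

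Your opening paragraph, identifying the source Hom via $\op{fin}_\ast\iHom(M,N)$ and Corollary~\ref{cor:hompuretate}, establishes only that the source is the degree-zero part of \emph{some} pure Tate bimodule; it does not identify that bimodule with $\op{Hom}(\mathbb{H}(M),\mathbb{H}(N))$, so it does not advance the argument. The paper also needs an intermediate reduction (its step~(2)) from general $P,Q$ down to $B$ via the projective bundle formula, which is absent from your sketch.
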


\begin{proof}
Recall that we have implicitly fixed a connected reductive group $G$ with a Borel subgroup $B\subset G$. We need to show, for any pair $P,Q$ of standard parabolics and any Bott--Samelson motives $M,N\in\DMT_{P\times Q}^{\op{bs}}(G)$, that the functor $\mathbb{H}$ induces isomorphisms
\[
\mathbb{D}_{P\times Q}^+(M,N)\stackrel{\cong}{\longrightarrow} \op{Mod}_{\mathcal{A}_P-\mathcal{A}_Q}^{\mathbb{Z}}(\mathbb{H}(M),\mathbb{H}(N)).
\]
Here and in the following, we write an upper $\DZ$ on the right to denote homogeneous homomorphisms of degree zero. 

The proof of the statement will now proceed by first reducing this general statements (using the compatibility with the monoidal structure) to a very special case, which is then established via a localization argument.

To avoid overloading notation, we will mostly omit the push-forward in the generating Bott--Samelson motives $j_\ast\left(\leftidx{_P}{\underline{P}}{_Q}\right)$. 

(1) We reduce the claim to the special case where $M=j_\ast\left(\leftidx{_P}{\underline{P}}{_Q}\right)$, using an induction over the description of Bott--Samelson motives from~\ref{BSoo}. The compatibility of realization with the full six-functor formalism provides a commutative diagram
\[
\xymatrix{
\mathbb{D}^+_{P\times Q}(M(n)[2n],N)\ar[d]_\cong \ar[r] & \op{Mod}_{\mathcal{A}_P-\mathcal{A}_Q}(\mathbb{H}(M)\langle n\rangle,\mathbb{H}(N)) \ar[d]^\cong \\
\mathbb{D}^+_{P\times Q}(M,N(-n)[-2n]) \ar[r] & \op{Mod}_{\mathcal{A}_P-\mathcal{A}_Q}(\mathbb{H}(M),\mathbb{H}(N)\langle -n\rangle)
}
\]
hence full faithfulness is compatible with the appropriate twists and shifts. The standard 5-lemma argument shows that full faithfulness is compatible with extensions and direct summands.

It remains to deal with the convolution. From the discussion in~\ref{BSoo} and the comparison to the definition of Bott--Samelson motives in Definition~\ref{BSmot}, it suffices to deal with convolutions corresponding to induction and restriction functors. Assume we have an inclusion $Q\subset P$ of standard parabolics and an additional standard parabolic $R$, and assume $M=\op{Res}_P^Q M'$ for $M'\in \DMT_{P\times R}^{\op{bs}}(G)$. By \ref{irc}, the adjunction $\op{Res}_P^Q\dashv \op{Ind}_Q^P$ can be rewritten as an adjunction \[
\leftidx{_Q}{\underline{P}}{_P}\conv_P(-)\dashv \leftidx{_P}{\underline{P}}{_Q}\conv_Q(-).
\]
The monoidality result of Proposition~\ref{prop:boo3} then implies that we get a commutative diagram
\[
\xymatrix{
\mathbb{D}^+_{P\times R}(M,\leftidx{_P}{\underline{P}}{_Q}\conv N) \ar[d]^{\wr} \ar[r] & \op{Mod}^\DZ_{\mathcal A_P-\mathcal A_R}(\mathbb{H}(M), \op{Res}_{\mathcal A_Q}^{\mathcal A_P}\mathbb{H}(N)) \ar[d]^{\wr} \\
\mathbb{D}^+_{Q\times R}(\leftidx{_Q}{\underline{P}}{_P}\conv M,N) \ar[r] & \op{Mod}^\DZ_{\mathcal A_Q-\mathcal A_R}(\mathcal A_Q\otimes_{\mathcal A_P}\mathbb{H}(M), \mathbb{H}(N)) 
}
\]

Similarly, we can deal with the induction functors. Therefore, assume that $M=\op{Ind}_Q^P M'$ for $M'\in \DMT_{Q\times R}^{\op{bs}}(G)$. To use adjunction, notice that Proposition~\ref{Lasp} provides an isomorphism $\op{Ind}_! M\stackrel{\cong}{\longrightarrow} \op{Ind}_Q^P M(d)[2d]$ with $\dim P/Q=d$. In particular, it suffices to deal with the case $M=\op{Ind}_! M'$. Rewriting the adjunction $\op{Ind}_!\dashv \op{Res}_P^Q$ as the adjunction 
\[
\leftidx{_P}{\underline{P}}{_Q}(d)[2d]\conv_Q(-)\dashv \leftidx{_Q}{\underline{P}}{_P}\conv_P(-)
\]
and using monoidality  provides us with a commutative diagram
\[
\xymatrix{
\mathbb{D}^+_{P\times R}(\leftidx{_P}{\underline{P}}{_Q}\conv M(d)[2d],N) \ar[d]^{\wr} \ar[r] & \op{Mod}^\DZ_{\mathcal A_P-\mathcal A_R}(\op{Res}_{\mathcal A_Q}^{\mathcal A_P}\mathbb{H}(M)\langle d\rangle, \mathbb{H}(N)) \ar[d]^{\wr}  \\ \mathbb{D}^+_{Q\times R}(M,\leftidx{_Q}{\underline{P}}{_P}\conv N) \ar[r] & \op{Mod}^\DZ_{\mathcal A_Q-\mathcal A_R}(\mathbb{H}(M), \mathcal A_Q\otimes_{\mathcal A_P} \mathbb{H}(N)) 
}
\]
Note that there is a slight twist in the right vertical morphism. We get a commutative diagram if we compare the left-hand adjunction with the adjunction \[
\op{Res}_{\mathcal A_Q}^{\mathcal A_P}(-)\dashv \op{Hom}_{\mathcal A_P}(\mathcal A_Q,-)
\]
on the bimodule side. Since $\mathcal{A}_Q$ is a free $\mathcal{A}_P$-module of finite rank, we get a natural isomorphism $\op{Hom}_{\mathcal{A}_P}(\mathcal{A}_Q, \mathcal{A}_P)\otimes_{\mathcal{A}_P}F\sira \op{Hom}_{\mathcal{A}_P}(\mathcal{A}_Q, F)$ for any bimodule $F$. On the other hand, we have an isomorphism $\op{Hom}_{\mathcal{A}_P}(\mathcal{A}_Q,\mathcal{A}_P) \cong \mathcal{A}_Q\langle d\rangle$ as graded $\mathcal{A}_P$-modules (which basically is the Verdier duality isomorphism coming from the fact that $P/Q$ is projective). This produces the right-hand vertical isomorphism.

Combining all the above, we reduce the full faithfulness statement to the special case where $M$ is one of the generating motives in point (1) of~\ref{BSoo}, i.e., to isomorphisms 
\[
{\mathbb D}^+_{Q\times P}(\leftidx{_Q}{\underline{P}}{_P},N) \sira \op{Mod}^\DZ_{\mathcal A_Q-\mathcal A_P}( \mathcal A_Q , \mathbb{H}(N))
\]

(2) We now reduce further to the case where the parabolics $P$ and $Q$ are both equal to the chosen Borel subgroup $B$. 

Consider an inclusion $Q\subseteq P$ of standard parabolics. From the projective bundle formula, we get an isomorphism 
\[
\op{Ind}_Q^P\circ\op{Res}_P^Q (M) \cong \op{H}(P/Q)\otimes M
\]
where $\op{H}(P/Q)$ is the cohomology of the projective variety $P/Q$, viewed as constant equivariant motive in $\DMT_{P\times Q}(G)$. Similarly, on the module side, we have 
\[
\op{Res}_{\mathcal{A}_Q}^{\mathcal{A}_P}(\mathcal{A}_Q\otimes_{\mathcal{A}_P} F)\cong \op{H}(P/Q)\otimes F,
\]
and these two identifications are compatible via the equivariant cohomology functor $\mathbb{H}$. 

Consequently, by induction-restriction from $Q$ via the Borel $B$, we get a commutative diagram
\[
\xymatrix{
\mathbb{D}^+_{Q\times P}\left(\op{Ind}_B^Q \op{Res}_Q^B\left(\leftidx{_Q}{\underline{P}}{_P}\right),N\right) \ar[r] \ar[d]_\cong & \op{Mod}^\DZ_{\mathcal{A}_Q-\mathcal{A}_P}\left( \op{Res}_{\mathcal{A}_B}^{\mathcal{A}_Q}\left(\mathcal{A}_B\otimes_{\mathcal{A}_Q} \mathcal{A}_Q\right), \mathbb{H}(N)\right) \ar[d]^\cong \\
\mathbb{D}^+_{Q\times P}\left(\leftidx{_Q}{\underline{P}}{_P},N\right)^{\oplus \dim\op{H}(Q/B)} \ar[r] & \op{Mod}^\DZ_{\mathcal{A}_Q-\mathcal{A}_P}\left( \mathcal{A}_Q, \mathbb{H}(N)\right)^{\oplus\dim\op{H}(Q/B)}
}
\]
where the lower horizontal arrow is the direct sum of the $\dim\op{H}(Q/B)$ copies of the natural morphism $\mathbb{D}^+_{Q\times P}\left(\leftidx{_Q}{\underline{P}}{_P},N\right)\rightarrow \op{Mod}^\DZ_{\mathcal{A}_Q-\mathcal{A}_P}\left( \mathcal{A}_Q, \mathbb{H}(N)\right)$ induced from $\mathbb{H}$. To prove that the latter morphism is an isomorphism, it suffices to prove that the horizontal morphisms in the above diagram are isomorphisms (since a map of abelian groups is an isomorphism if a sum  of some copies of it is). For this, it suffices to show that the map 
\[
\mathbb{D}^+_{B\times P}\left(\op{Res}_Q^B \left(\leftidx{_Q}{\underline{P}}{_P}\right),N\right)\rightarrow  \op{Mod}^\DZ_{\mathcal{A}_B-\mathcal{A}_P}\left(\mathcal{A}_B\otimes_{\mathcal{A}_Q} \mathcal{A}_Q, \mathbb{H}(N)\right)
\]
is an isomorphism. Note here that $\op{Res}_Q^B \left(\leftidx{_Q}{\underline{P}}{_P}\right)\cong \leftidx{_B}{\underline{P}}{_P}$ and $\mathcal{A}_B\otimes_{\mathcal{A}_Q}\mathcal{A}_Q\cong\mathcal{A}_B$. Now applying the same argument for the $P$-action on the right, we are reduced to showing that the equivariant cohomology induces an isomorphism 
\[
\mathbb{D}^+_{B\times B}\left( \leftidx{_B}{\underline{B}}{_B},N\right)\rightarrow  \op{Mod}^\DZ_{\mathcal{A}_B-\mathcal{A}_B}\left(\mathcal{A}_B, \mathbb{H}(N)\right)
\]
for every Bott--Samelson motive $N\in\DMT_{B\times B}^{\op{bs}}(G)$. 


(3) We now prove for any Bott--Samelson motive $N$, the equivariant cohomology $\mathbb{H}$ induces an isomorphism
\[
\mathbb{D}_{B\times B}^+(\leftidx{_B}{\underline{B}}{_B}, N)\stackrel{\cong}{\longrightarrow} \op{Mod}_{\mathcal{A}_B-\mathcal{A}_B}^{\mathbb{Z}}(\mathcal{A}_B,\mathbb{H}(N)).
\]
Denote by $i:B\hookrightarrow G$ the closed  embedding of the Borel subgroup,  by $j:U=G\setminus B\hookrightarrow G$ its open complement and consider the associated localization sequence 
\[
i_!i^! N\to N\to j_\ast j^\ast N\to i_!i^!N[1]
\]
for an arbitrary Bott--Samelson motive $N\in\DMT_{B\times B}^{\op{bs}}(G)$. The motive $i_!i^!N$ is then also a Bott--Samelson motive. By an induction over the dimension of the orbits, using pointwise purity of $N$ and localization sequences, we get that $j_\ast j^\ast N$ is also a Bott--Samelson motive. As a consequence, applying equivariant cohomology, we get an exact sequence of $\mathcal{A}_B$-$\mathcal{A}_B$-bimodules
\[
0\to\mathbb{H}(i_!i^! N)\to\mathbb{H}(N)\to\mathbb{H}(j_\ast j^\ast N)\to 0.
\]

Since $i^!N$ is a pure Tate motive on $B$, $\mathbb{H}(i_!i^!N)$ will be a direct sum of copies of $\mathcal{A}_B$, possibly twisted. On the other hand, by the tilting for Bruhat-cells in~\ref{TBC}, $\mathbb{H}(j_\ast j^\ast N)$ will be an iterated extension of copies of $\mathcal{A}_B 1_x\langle n\rangle$ for elements $x\in W$ different from $e$. As a consequence, there will be no nonzero $\mathcal{A}_B$-$\mathcal{A}_B$-bimodule homomorphisms $\mathcal{A}_B\to \mathbb{H}(j_\ast j^\ast N)$. Now consider the commutative diagram
\[
\xymatrix{
\mathbb{D}^+_{B\times B}\left(i_\ast\left(\leftidx{_B}{\underline{B}}{_B}\right), i_!i^!N\right) \ar[r] \ar[d] & \op{Mod}^\DZ_{\mathcal{A}_B-\mathcal{A}_B}(\mathcal{A}_B, \mathbb{H}(i_!i^!N)) \ar[d]\\
\mathbb{D}^+_{B\times B}\left(i_\ast\left(\leftidx{_B}{\underline{B}}{_B}\right),N\right) \ar[r] & \op{Mod}^\DZ_{\mathcal{A}_B-\mathcal{A}_B}(\mathcal{A}_B, \mathbb{H}(N))
}
\]
where the horizontal morphisms are induced from $\mathbb{H}$ and the vertical morphisms are induced from the maps $i_!i^!N\to N$ in the localization sequence above. 

The top horizontal morphism is an isomorphism, since it can be rewritten (by adjunction) to a morphism between constant mixed Tate motives on $B$, where we have conservativity (and hence full faithfulness) by assumption. The left vertical morphism is an isomorphism since 
\[
\mathbb{D}^+_{B\times B}\left(i_\ast \left(\leftidx{_B}{\underline{B}}{_B}\right),j_\ast j^\ast N\right)\cong \mathbb{D}^+_{B\times B}\left(j^\ast i_!\left(\leftidx{_B}{\underline{B}}{_B}\right), j^\ast N\right)=0
\]
The right vertical morphism is an isomorphism: it is automatically injective because $\mathbb{H}(i_!i^!N)\to \mathbb{H}(N)$ is, and it is surjective because we noted above that there can be no nontrivial bimodule homomorphisms 
\[
\mathcal{A}_B\to \mathbb{H}(j_\ast j^\ast N)\cong \op{coker}\left(\mathbb{H}(i_!i^!N)\to\mathbb{H}(N)\right). 
\]
From these statements, we deduce that the lower horizontal morphism in the above commutative diagram is also an  isomorphism. But this was what remained to prove. 
\end{proof}

The following now completes the proof of Theorem~\ref{thm:motivebimod} in the introduction. 

\begin{corollary}
\label{cor:pf142}
We have a zig-zag of equivalences of triangulated categories  
\[
\mathbb{T}: \DMT_{P\times Q}(G) \stackrel{\approx}{\longleftarrow} \op{Hot}^{\op{b}}\left(\DMT_{P\times Q}^{\op{bs}}(G)\right) \stackrel{\approx}{\longrightarrow}  \op{Hot}^{\op{b}}\left(\mathcal{A}_P\op{-SMod^\DZ-}\mathcal{A}_Q\right)
\]
The restriction to Bott--Samelson motives coincides with equivariant motivic cohomology. The equivalence is compatible with the monoidal structures, i.e., for any equivariant mixed Tate motives $M\in \DMT_{P\times Q}(G)$ and $N\in \DMT_{Q\times R}(G)$, there are natural isomorphisms in $\op{Hot}^{\op{b}}\left(\mathcal{A}_P\op{-SMod^\DZ-}\mathcal{A}_R\right)$:
\[
\mathbb{T}(M\conv_Q N)\stackrel{\cong}{\longrightarrow} \mathbb{T}(M)\otimes_{\mathcal{A}_Q}\mathbb{T}(N).
\]
\end{corollary}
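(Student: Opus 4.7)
The left-pointing equivalence is immediate from Corollary~\ref{tiFL}, which gives $\op{Hot}^{\op{b}}(\DMT_{P\times Q}^{\op{bs}}(G)) = \op{Hot}^{\op{b}}(\DMT_{P\times Q}(G)_{\op{wt}=0})\xrightarrow{\approx}\DMT_{P\times Q}(G)$ via the tilting functor. For the right-pointing equivalence, I would take the functor $\mathbb{H}:\DMT_{P\times Q}^{\op{bs}}(G)\to\mathcal{A}_P\op{-SMod}^\DZ\op{-}\mathcal{A}_Q$ of Proposition~\ref{prop:ff}/Corollary~\ref{cor:boo4}, which is fully faithful and essentially surjective, hence an equivalence of additive categories; applying it termwise to complexes yields an equivalence of the bounded homotopy categories. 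This defines $\mathbb{T}$ as the composition.

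The statement that the restriction of $\mathbb{T}$ to Bott--Samelson motives coincides with equivariant motivic cohomology is then essentially tautological. The tilting equivalence restricts, on the heart of the weight structure, to the inclusion $\DMT_{P\times Q}^{\op{bs}}(G)\hookrightarrow\op{Hot}^{\op{b}}(\DMT_{P\times Q}^{\op{bs}}(G))$ as complexes concentrated in degree $0$ (this is part of Theorem~\ref{thm:tilting} and the identification of the weight structure with the natural one on complexes). Applying $\mathbb{H}$ termwise to such a complex just returns $\mathbb{H}$ of the object, which proves the second claim.

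For the monoidal compatibility, the plan is to combine Proposition~\ref{prop:convtilt} with Proposition~\ref{prop:boo3}. Proposition~\ref{prop:convtilt} provides a natural isomorphism $\op{tilt}(M\conv_Q N)\cong \op{tilt}(M)\conv_Q\op{tilt}(N)$ in $\DMT_{P\times R}(G)$ for Bott--Samelson motives $M,N$, and more generally after passing to bounded homotopy categories, convolution corresponds under the zig-zag's left arrow to the convolution of complexes of Bott--Samelson motives defined termwise via the convolution of Bott--Samelson motives. On the other hand, Proposition~\ref{prop:boo3} (combined with Corollary~\ref{cor:boo4} identifying essential images with Soergel bimodules) shows that $\mathbb{H}$ intertwines convolution of Bott--Samelson motives with tensor product of singular Soergel bimodules over $\mathcal{A}_Q$, with a natural isomorphism
\[
\mathbb{H}(M\conv_Q N)\xrightarrow{\cong}\mathbb{H}(M)\otimes_{\mathcal{A}_Q}\mathbb{H}(N).
\]
Extending this termwise to bounded complexes of Bott--Samelson motives (using that convolution distributes over direct sums and is bi-exact on the Bott--Samelson hearts because $\mathcal{A}_Q$ is free over the relevant subring, as already exploited in the proof of Corollary~\ref{cor:boo4}) yields a natural isomorphism compatible with the differentials.

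The main obstacle will be verifying the bookkeeping of the monoidal natural isomorphism: one must check that the isomorphisms from Propositions~\ref{prop:convtilt} and~\ref{prop:boo3}, both of which were constructed only up to the data of an isotransformation filling commutative diagrams of left adjoints, splice together coherently when combined with termwise extension to complexes. Concretely, one needs that the double complex constructed from convolving two bounded complexes of Bott--Samelson motives, when hit by $\mathbb{H}$ termwise, yields the double complex of bimodule tensor products, and that the totalization matches the convolution of the complexes; the key input is the exactness ensured by freeness of $\mathcal{A}_Q$ as a module over the parabolic subalgebras appearing on the Bott--Samelson side (cf.\ the argument in Corollary~\ref{cor:boo4} showing that derived tensor products are underived in this setting). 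As in the cited propositions, we do not belabor the pentagon/associator compatibilities.
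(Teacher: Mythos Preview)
Your proposal is correct and follows essentially the same approach as the paper: the left equivalence from Corollary~\ref{tiFL}, the right equivalence from the additive equivalence $\mathbb{H}$ of Corollary~\ref{cor:boo4} and Proposition~\ref{prop:ff} extended termwise to complexes, and the monoidal compatibility via Proposition~\ref{prop:convtilt} for the tilting side and Proposition~\ref{prop:boo3} for the bimodule side. Your additional remarks on the bookkeeping and on freeness are accurate elaborations that the paper leaves implicit.
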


\begin{proof}
The first equivalence is the tilting functor, cf. Corollary~\ref{tiFL}. For the second equivalence, we note that Corollary~\ref{cor:boo4} and Proposition~\ref{prop:ff} imply that equivariant cohomology induces an equivalence 
\[
\mathbb{H}:\DMT_{P\times Q}^{\op{bs}}(G)\stackrel{\simeq}{\longrightarrow} \mathcal{A}_P\op{-SMod^{\mathbb{Z}}-}\mathcal{A}_Q.
\]
The second equivalence is then the induced equivalence of categories of complexes. 

It remains to prove that the equivalences are compatible with the monoidal structures. The compatibility of convolution with the tilting equivalence is proven in Proposition~\ref{prop:convtilt}. For the second equivalence between homotopy categories, this follows from the naturality of the isomorphims of the additive categories in Proposition~\ref{prop:boo3}. 
\end{proof}

\subsection{Grading on equivariant derived category}
Now we deal with another categorification of the Schur algebroid: we will show that the equivariant mixed Tate motives provide a graded version of the well-known equivariant derived categories, with the appropriate realization functor as degrading functor. This will, in particular, prove Theorems~\ref{thm:motivesparabolic} and \ref{thm:functionsheaf} from the introduction. 

We begin with the proof of Theorem~\ref{thm:motivesparabolic} which establishes that equivariant mixed Tate motives provide a graded version of the corresponding equivariant derived category.

\begin{theorem}
\label{thm:gradedparabolic}
\begin{enumerate}
\item Let $k=\mathbb{F}_q$ be a finite field and assume that we take  $\mathbb{D}=\mathbf{DA}^{\et}(-;\mathbb{Q})$ as the homotopical stable algebraic derivator underlying the construction of equivariant motives. Let $G$ be a split connected reductive group over $k$, and let $P,Q\subseteq G$ be two parabolic subgroups. Then the $\ell$-adic realization functor
\[
\op{Real}_\ell:\DMT_{P\times Q}(G)_{\op{wt}=0}\hookrightarrow \DMT_{P\times Q}(G)\to\op{Der}^{\op{b},\op{ss},\op{ev}}_{P\times Q}(G;\mathbb{Q}_\ell) 
\]
on the category of Bott--Samelson motives is fully faithful, with essential image consisting of intersection complexes concentrated in even degrees. 
\item Let $k=\mathbb{C}$ and consider $\mathbb{D}=\op{MDer}(-;\mathbb{C})$ as homotopical stable algebraic derivator underlying the construction of equivariant motives. Let $G$ be a connected reductive group over $k$, and let $P,Q\subseteq G$ be two parabolic subgroups. Then the Hodge realization functor
\[
\op{Real}_{\op{H}}:\DMT_{P\times Q}(G)_{\op{wt}=0}\hookrightarrow \DMT_{P\times Q}(G)\to\op{Der}^{\op{b},\op{ss},\op{ev}}_{P\times Q}(G;\mathbb{C}) 
\]
on the category of Bott--Samelson motives is fully faithful, with essential image consisting of intersection complexes concentrated in even degrees. 
\item Motivic lifts of the standard and costandard objects in the equivariant derived categories above are given by $i_\ast\left(\const{PwQ}\right)$ and $i_!\left(\const{PwQ}\right)$, respectively, where $w\in W$ is an element of the Weyl group, $PwQ$ the corresponding $P\times Q$-orbit and $i:(P\times Q\looparrowright PwQ)\to (P\times Q\looparrowright G)$. 
\item The functors in points (1) and (2) are compatible with convolution and Verdier duality.
\item The equivariant mixed Tate motives provide a grading on the equivariant derived categories in the sense of~\ref{bgs43}, with the realization functors in (1) and (2) as degrading functors. 
\end{enumerate}
\end{theorem}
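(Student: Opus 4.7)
The plan is to reduce all five claims to properties of Bott--Samelson motives, exploiting the fact that the realization functors $\op{Real}_\ell$ and $\op{Real}_{\op{H}}$ commute with the entire equivariant six-functor formalism by Proposition~\ref{prop:realization}. The key input on the motivic side will be Proposition~\ref{prop:ff} and Corollary~\ref{cor:pf142}, which identify Bott--Samelson motives with singular Soergel bimodules via equivariant cohomology. On the sheaf side, the analogous Soergel-type descriptions of perversely semisimple complexes concentrated in even degrees are classically known.

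For points (1) and (2), I would first show that the essential image of the realization on $\DMT_{P\times Q}(G)_{\op{wt}=0}=\DMT^{\op{bs}}_{P\times Q}(G)$ lies in $\op{Der}^{\op{b},\op{ss},\op{ev}}_{P\times Q}(G;\Lambda)$. By Example~\ref{ex:BSflag} and the corresponding generation in the parabolic case, the Bott--Samelson motives are generated (as an additive category, up to twists and direct summands) by motives of the form $\pi_\ast\const{\op{BS}(\underline{w})}$, where $\pi$ is a Bott--Samelson resolution. Compatibility of realization with $f_\ast$ and constant sheaves identifies these with the corresponding perverse push-forwards, which are pure and decompose into shifted intersection complexes by the decomposition theorem. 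The restriction to even degrees follows from the paving by affine cells. For full faithfulness, I would use the commutative diagram outlined in Section~\ref{setup}: the motivic equivariant cohomology $\mathbb{H}$ gives an equivalence $\DMT^{\op{bs}}_{P\times Q}(G) \xrightarrow{\approx}\mathcal{A}_P\op{-SMod-}\mathcal{A}_Q$ by Proposition~\ref{prop:ff}, and the analogous classical result identifies the category of even-degree perversely semisimple complexes with the same category of Soergel bimodules via equivariant cohomology. Both identifications are compatible with realization, so $\op{Real}$ is fully faithful on Bott--Samelson motives.

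For (3), the claim is immediate: $i_{w,!}(\underline{PwQ}[l(w)])$ and $i_{w,\ast}(\underline{PwQ}[l(w)])$ are equivariant mixed Tate motives (their orbitwise restrictions are constant by base change for orbit inclusions), and realization commutes with $i_!$, $i_\ast$ and constant motives by Proposition~\ref{prop:realization}, so these map to the corresponding standard and costandard objects in the equivariant derived category. For (4), compatibility with convolution reduces, via Definition~\ref{def:convolution}, to compatibility of realization with pullback, tensor product, the generalized quotient equivalence of Proposition~\ref{prop:quotientequiv} and proper push-forward, all of which is in Proposition~\ref{prop:realization}. Verdier duality compatibility is also part of Proposition~\ref{prop:realization}.

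For (5), the Tate twist $(-1)$ on motives is a degree $2$ Tate twist in the sense of Section~\ref{bgs43}, since it shifts weights by $2$ and realization maps $\const{X}(-1)$ to $\const{X}[2]$ up to isomorphism coming from the chosen realization $\epsilon\colon \op{Real}\circ(-1)\xrightarrow{\approx}[2]\circ \op{Real}$. Essential surjectivity follows because the equivariant derived category is generated, as a triangulated category, by standard or costandard objects (classical), and these lift motivically by (3). The central remaining point is to verify the grading formula
\[
\bigoplus_{n\in\mathbb{Z}}\DMT_{P\times Q}(G)\bigl(M, N(-n)[2n]\bigr) \xrightarrow{\cong} \op{Der}^{\op{b}}_{P\times Q}(G;\Lambda)\bigl(\op{Real}(M),\op{Real}(N)\bigr),
\]
which is the hard part of the argument. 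I would approach this via a d\'evissage using the tilting equivalence of Corollary~\ref{tiFL}: the homotopy category of Bott--Samelson motives computes morphisms in $\DMT_{P\times Q}(G)$, and by the Soergel bimodule identification both sides become morphism spaces in the homotopy category of Soergel bimodules summed over grading shifts. The main obstacle is bookkeeping of the compatibility of the two tilting pictures on both sides; once one checks the formula for Bott--Samelson generators using the full faithfulness from (1)/(2) together with Proposition~\ref{prop:boo3}, a standard five-lemma argument on the triangulated structure extends it to all objects.
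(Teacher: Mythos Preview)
Your overall strategy is sound and close to the paper's, but there is one genuine error and one notable difference in approach.

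\textbf{The error in (5).} Your claim that ``realization maps $\const{X}(-1)$ to $\const{X}[2]$'' is wrong: under $\op{Real}_\ell$ or $\op{Real}_{\op{H}}$, the Tate object $\const{X}(1)$ realizes to a one-dimensional object in degree $0$ (a Tate twist of the constant sheaf, which becomes trivial once Frobenius or Hodge data are forgotten), not to a shift. Consequently the correct degrading formula is
\[
\bigoplus_{i\in\mathbb{Z}}\DMT_{P\times Q}(G)\bigl(M, N(i)\bigr) \xrightarrow{\cong} \op{Der}^{\op{b}}_{P\times Q}(G;\Lambda)\bigl(\op{Real}(M),\op{Real}(N)\bigr),
\]
exactly as in the paper, and not your version with $N(-n)[2n]$. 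Your formula does not even type-check: $\op{Real}(N(-n)[2n])\cong\op{Real}(N)[2n]$, so the summands map into different Hom-spaces. Once you fix this, your d\'evissage strategy works: for $M,N$ Bott--Samelson motives one has $\DMT(M,N(i))=0$ for $i\neq 0$ by the negativity established in Proposition~\ref{puBS}, so the formula for generators reduces to the full faithfulness in (1)/(2), and then the five-lemma extends it.

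\textbf{Comparison with the paper.} For (1)/(2), the paper argues directly from the recursive Definition~\ref{def:BSclosure} of Bott--Samelson motives: the clean motive on the closed orbit realizes to an intersection complex, and every step of the BS-closure (restriction, induction, twist by $(n)[2n]$, summands, extensions) preserves the property of being a sum of even-shifted intersection complexes. Your route through Bott--Samelson resolutions and the decomposition theorem gives the same conclusion and is equally valid. For full faithfulness you invoke Proposition~\ref{prop:ff} together with the classical Soergel bimodule description on the sheaf side; the paper instead derives it as part of (5). For (5), the paper takes a more direct route: faithfulness comes from conservativity of realization on equivariant mixed Tate motives (Proposition~\ref{prop:conservative}), and the grading isomorphism is reduced by d\'evissage to morphisms between the standard and costandard objects of (3), which in turn reduces via the induction equivalence to the point, where conservativity gives the claim. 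Your Soergel-bimodule approach works but is less elementary, since it imports the classical full faithfulness of equivariant hypercohomology on the sheaf side as an external input.
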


\begin{proof}
(3) follows directly from Proposition~\ref{prop:boo4} and the description of the standard and costandard objects in the equivariant derived categories as push-forwards of constant sheaves.

The proofs for (1) and (2) are similar, so we only need to prove (1). Let $M$ be the constant motive on the unique closed $P$-orbit  $j:Y\hookrightarrow G/Q$.  Then $j_\ast M\cong j_!M$ by definition, hence its realization  is an intersection cohomology sheaf and therefore perverse. Now the other steps in the construction of Bott--Samelson motives, cf. Definitions~\ref{def:BSclosure} and \ref{BSmot}, preserve the required perversity. Therefore, Bott--Samelson motives will map to direct sums of intersection cohomology complexes and by Theorem~\ref{thm:parabolicWT} and Proposition~\ref{tiFL} this implies that Bott--Samelson motives are sent to direct sums of even shifts of intersection complexes. Note that the shifts we obtain are only even, this is the realization of the operation $M\mapsto M(1)[2]$. We get every intersection complex as a direct summand because (3) provides the motivic lifts standard and costandard objects. 

(4) follows directly from the fact that realization functors are compatible with the full six-functor formalism, cf. Section~\ref{sec:realization}, as well as quotient and induction equivalences. 

(5) Realization is obviously an exact functor. Faithfulness essentially follows from the conservativity of realization functors on equivariant mixed Tate motives, cf. Proposition~\ref{prop:conservative}. Note that by (3) we have preferred lifts for irreducible objects from $\op{Der}^{\op{b}}_{P\times Q}(G)$: the irreducible objects are the intersection complexes for orbits, and these have explicit lifts as Bott--Samelson motives. So it remains to show that for any two equivariant mixed Tate motives $M,N\in\DMT_{P\times Q}(G)$ we have isomorphisms
\[
\bigoplus_{i\in\mathbb{Z}} \DMT_{P\times Q}\left(M,N(i)\right) \stackrel{\cong}{\longrightarrow} \op{Der}^{\op{b}}_{P\times Q}\left(\op{Real}  M, \op{Real} N\right)
\]
We can write any Bott--Samelson motive as iterated extension of standard and costandard objects, and a standard devissage argument reduces the claim to morphisms between standard and costandard objects. In this case, the claim follows from the full faithfulness on constant mixed Tate motives over orbits, cf.~Proposition~\ref{prop:conservative}. 
\end{proof}

Now we prove Theorem~\ref{thm:functionsheaf} which provides another categorification of the Schur algebroid, via the equivariant derived categories. Special cases provide categorifications of the Hecke algebra $(P=Q=B)$ and the parabolic Hecke modules of \cite{Deopar} $(Q=B)$. 

\begin{theorem}
\label{thm:schur}
Assume one of the situations in points (1) or (2) of Theorem~\ref{thm:gradedparabolic}, let $G$ be a split connected reductive group, and let $P,Q$ be two parabolic subgroups. 
\begin{enumerate}
\item Then the composition
\[
\op{K}_0\left(\DMT_{P\times Q}(G)\right)\stackrel{\op{Real}}{\longrightarrow} \op{K}_0^{\oplus}\left(\op{Der}^{\op{b},\op{ss},\op{ev}}_{P\times Q}(G)\right)\to {}^P\mathcal{H}^Q
\]
of realization with the function--sheaf correspondence of Grothendieck induces an isomorphism, providing a categorification of the Schur algebroid via equivariant mixed Tate motives. 
\item The tilting functor $\op{Hot}^{\op{b}}(\DMT_{P\times Q}(G)_{\op{wt}=0})\to \DMT_{P\times Q}(G)$ induces an isomorphism of Grothendieck groups. Combined with (1), this provides a categorification of the Schur algebroid via Bott--Samelson motives. 
\item The isomorphisms above are compatible with convolution. The involution induced by Verdier duality on the motives side is mapped to the Kazhdan--Lusztig involution on the Schur algebroid side.
\end{enumerate}
\end{theorem}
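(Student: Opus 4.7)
The plan is to first establish the abstract structure of $\op{K}_0(\DMT_{P\times Q}(G))$ using the weight structure and tilting results already available, and then identify this with the Schur algebroid basis by a computation on the standard motives. I will treat (2) first, since it provides the cleanest access to the Grothendieck group, and then deduce (1) from the existing categorification via Soergel bimodules.

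For (2), the tilting equivalence $\op{Hot}^{\op{b}}(\DMT_{P\times Q}(G)_{\op{wt}=0})\stackrel{\approx}{\longrightarrow}\DMT_{P\times Q}(G)$ of Corollary~\ref{tiFL} induces an isomorphism of Grothendieck groups, and by a standard fact for bounded homotopy categories of idempotent complete additive categories we have $\op{K}_0(\op{Hot}^{\op{b}}(\mathcal{A}))\cong\op{K}_0^{\oplus}(\mathcal{A})$. Applied here, this gives $\op{K}_0(\DMT_{P\times Q}(G))\cong \op{K}_0^{\oplus}(\DMT_{P\times Q}(G)_{\op{wt}=0})$. The $\mathbb{Z}[v,v^{-1}]$-action on both sides coming from Tate twist, together with the generating set of standard objects $i_{w,!}\underline{PwQ}[l(w)]$ from Theorem~\ref{thm:gradedparabolic}(3) and a devissage along the Bruhat order via the localization triangles, then realizes this group as a free $\mathbb{Z}[v,v^{-1}]$-module on the double cosets $W_P\backslash W/W_Q$.

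For (1), combining (2) with the equivalence $\mathbb{H}\colon\DMT_{P\times Q}^{\op{bs}}(G)\xrightarrow{\approx}\mathcal{A}_P\op{-SMod-}\mathcal{A}_Q$ of Proposition~\ref{prop:ff} and Corollary~\ref{cor:boo4}, we get an isomorphism $\op{K}_0(\DMT_{P\times Q}(G))\cong\op{K}_0^{\oplus}(\mathcal{A}_P\op{-SMod-}\mathcal{A}_Q)$, and by the classical categorification result of Williamson~\cite{GWi} the latter is the universal Schur algebroid ${}^P\mathcal{H}^Q$. It remains to identify this abstract isomorphism with the composition stated in the theorem. Here I would trace the standard motive $i_{w,!}\underline{PwQ}[l(w)]$: its $\ell$-adic realization is the $!$-extension (shifted for perversity) of the constant sheaf on the double coset $PwQ$ with its Frobenius action, and by Grothendieck's function--sheaf correspondence this maps to the standard basis element $T_{\bar w}\in{}^P\mathcal{H}^Q$. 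The Tate twist $(1)$ on motives translates into multiplication by $v$ under the correspondence (once conventions on eigenvalues of Frobenius and shifts are fixed), and so the composition sends the basis of $\op{K}_0(\DMT_{P\times Q}(G))$ bijectively onto the Kazhdan--Lusztig basis $\{v^i T_{\bar w}\}$ of ${}^P\mathcal{H}^Q$.

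Claim (3) then follows from compatibilities that are already in hand. Compatibility of the composition with convolution reduces to compatibility of realization with convolution (Theorem~\ref{thm:gradedparabolic}(4)) together with the classical fact that the function--sheaf correspondence intertwines convolution of sheaves with convolution of functions. For Verdier duality, Theorem~\ref{thm:gradedparabolic}(4) gives compatibility of $\op{Real}$ with $\DD$, and on the Hecke side it is a classical consequence of Deligne's purity theorem that Verdier duality induces the Kazhdan--Lusztig involution under the function--sheaf correspondence, as the standard objects are sent to costandard objects with a sign twist by $v^{-l(w)}$. The main obstacle I anticipate is the bookkeeping needed to pin down the precise normalization constants (shifts, Tate twists, and Frobenius weights) so that the composition hits exactly $T_{\bar w}$ rather than some power-of-$v$ multiple; this is essentially combinatorial once the weight filtration and tilting framework are in place, but it requires care to align conventions with those of~\cite{Mars-Springer} and~\cite{GWi}. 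A secondary technical point is verifying that the generating set of standard motives is actually a free basis, which can be handled either by the weight-structure identification with the split heart or by comparing with the known basis of ${}^P\mathcal{H}^Q$ through the bimodule side.
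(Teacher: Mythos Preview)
Your proposal is correct, and parts (2) and (3) match the paper's argument essentially verbatim: the paper simply invokes Corollary~\ref{tiFL} for (2), and for (3) cites compatibility of realization with the six functors together with known computations in the equivariant derived category.

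For (1), however, you take a genuine detour. The paper's argument is one line: the second arrow in the composition, $\op{K}_0^{\oplus}\bigl(\op{Der}^{\op{b},\op{ss},\op{ev}}_{P\times Q}(G)\bigr)\to {}^P\mathcal{H}^Q$, is already an isomorphism by the classical categorification via even shifts of intersection complexes, and Theorem~\ref{thm:gradedparabolic}(1) says that $\op{Real}$ restricted to $\DMT_{P\times Q}(G)_{\op{wt}=0}$ is an equivalence onto exactly that category. Combined with (2), the composition is an isomorphism by construction, with no need to trace standard motives or match normalizations. Your route instead passes through the Soergel bimodule equivalence and Williamson's categorification, which yields an a priori different isomorphism $\op{K}_0(\DMT_{P\times Q}(G))\cong {}^P\mathcal{H}^Q$; you then must check separately that this agrees with the specific composition in the statement, which is precisely the ``bookkeeping'' step you flag as an obstacle. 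This is not wrong---it is essentially the compatibility diagram the paper discusses immediately after the theorem---but the paper avoids it entirely by working on the perverse-sheaf side rather than the bimodule side. Your approach has the merit of exhibiting the bimodule categorification and the sheaf-theoretic one as compatible in a single stroke, at the cost of the extra verification; the paper's approach is shorter precisely because it stays on the side of the composition actually being asserted.
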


\begin{proof}
For (1), we can use the classical categorification of ${}^P\mathcal{H}^Q$ in terms of even shifts of intersection complexes together with Theorem~\ref{thm:gradedparabolic} (1). 

By Corollary~\ref{tiFL}, the tilting functor is an equivalence and thus (2) is clear.

(3) Compatibility with the convolution follows from the fact that realization is compatible with the full six-functor formalism. The claim on the Kazhdan--Lusztig involution follows from compatibility of realization with Verdier duality and known computations in the equivariant derived category. 
\end{proof}

\begin{Bemerkung}
Note that the realization functors map the Bott--Samelson motives $p_{w\ast}\const{X_w}$ for the Bott--Samelson resolution $p_w:X_w\to G/B$ of the Schubert cell for the word $w\in W$ to the corresponding push-forwards of the constant sheaf from $X_w$. The realization of $p_{w\ast}\const{X_w}$ then decomposes as a direct sum of IC-sheaves. In particular, the images of indecomposable Bott--Samelson motives in the Hecke algebra ${}^B\mathcal{H}^B$ will be given by the element $C_w$ of the Kazhdan--Lusztig basis. The standard motives (given by pushforwards of constant sheaves on Bruhat cells) will be mapped to the standard basis of the Hecke algebra. 

On the other hand, we can also map equivariant mixed Tate motives to the homotopy category of complexes of Soergel bimodules. Under this tilting functor, the indecomposable Bott--Samelson motives are mapped to Soergel bimodules, and the standard motives are mapped to Rouquier complexes. 
\end{Bemerkung}

We finally discuss the compatibility of the two categorifications of the Schur algebroid. From the diagrams in Section~\ref{setup}, we obtain the following diagram, related to the categorification via equivariant derived categories: 
\[
\xymatrix{
\op{K}_0^{\oplus}(\DMT_{P\times Q}(G)_{\op{wt}=0}) \ar[d]^{\op{tilt}}_{\approx}  \ar[r]^\approx & \op{K}_0^{\oplus}\left(\op{Der}^{\op{b},\op{ss},\op{ev}}_{P\times Q}(G)\right) \ar[d] \ar[rr] && \mathcal{H} \ar[d]^{\textrm{evaluation at } q}\\
\op{K}_0(\DMT_{P\times Q}(G)) \ar[r]_{\op{Real}_\ell} & \op{K}_0(\op{Der}^{\op{b}}_{P\times Q}(\pt;\mathbb{Q}_\ell)) \ar[rr]_>>>>>>>>>{\textrm{function-sheaf}} && \mathcal{H}_q
}
\]
Note that the function--sheaf correspondence requires that we are working over a finite field (to have Frobenius traces available). The right upper horizontal arrow is the categorification of the universal Schur algebroid considered in previous works, going via perversely semisimple complexes. This commutative diagram now expresses that we have a motivic categorification of the universal Schur algebroid which is compatible with the previously known one. 

Similarly, we can consider a commutative diagram of isomorphisms expressing the compatibility of the motivic categorification with the one using singular Soergel bimodules: 
\[
\xymatrix{
\op{K}_0^{\oplus}(\DMT_{P\times Q}(G)_{\op{wt}=0}) \ar[d]_{\cong} \ar[r] &  \op{K}_0(\DMT_{P\times Q}(G))  \ar[d]^{\mathbb{T}}_{\cong} \\
\op{K}_0^{\oplus}(\mathcal{A}_P\op{-SMod-}\mathcal{A}_Q) \ar[r] & \op{K}_0(\op{Hot}^{\op{b}}(\mathcal{A}_P\op{-SMod-}\mathcal{A}_Q))
}
\]

For the compatibility of the two approaches, we can now combine all the previously considered diagrams into one: 
\[
\xymatrix{
\op{K}_0(\DMT_{P\times Q}(G))\ar[rr]^{\op{Real}} && 
\op{K}_0^{\oplus}\left(\op{Der}^{\op{b},\op{ss},\op{ev}}_{P\times Q}(G)\right) 
 \ar[dd]^{\textrm{function-sheaf}} \\
\op{K}_0(\DMT_{P\times Q}(G)_{\op{wt}=0})\ar[d]_{\mathbb{H}} \ar[u]^{\op{tilt}}
\\
\op{K}_0(\mathcal{A}_P{\op{-SMod-}}\mathcal{A}_Q)  \ar[rr]_{\sim} &&
{}^P\mathcal{H}^Q
}
\]

The top horizontal map is the realization functor, the right-hand vertical map is the previously known categorification via perversely semi-simple complexes, and the corresponding composition is the categorification of the Schur algebroid discussed in Theorem~\ref{thm:schur}. The bottom horizontal map is the known categorification of the Schur algebroid via singular Soergel bimodules, the left vertical maps are the identification of equivariant motives and complexes of singular Soergel bimodules via equivariant cohomology and tilting equivalence. Their composition is the categorification of the Schur algebroid following from Corollary~\ref{cor:pf142}. 

Commutativity of the square follows from the diagrams in Section~\ref{setup}: the upper composition takes the realization of a motive, computes its equivariant cohomology and produces the function encoding the traces of Frobenius; the lower composition also simply takes hypercohomology and encodes its bimodule structure. The commutativity is then a known computation with bimodules and equivariant sheaves. 

Now it remains to compare the categorification of the universal Schur algebroid with the categorification of the Schur algebroid over a fixed finite field $\mathbb{F}_q$. For the universal Schur algebroid, the $\mathbb{Z}[v,v^{-1}]$-module structure is given by (a square root of) the Tate twist $(-)\otimes\mathbb{Q}(1)$. For the Schur algebroid over a fixed finite field $\mathbb{F}_q$, the specialization of the $\mathbb{Z}[v,v^{-1}]$-module structure from the universal one is encoded in the Frobenius-action of the Weil sheaves over $\overline{\mathbb{F}_q}$. The relation between the two is given by the fact that for mixed Tate motives, the eigenvalue of the Frobenius action encodes exactly the weight.

\begin{Bemerkung}
The relevance of this categorification $\op{K}_0(\DMT_{B\times B}(G))\to{}^B\mathcal{H}^B$ for representation theory is the following. Denoting by $i:BwB\hookrightarrow G$ the inclusion of a double coset $BwB$ into $G$, the classes of the standard objects $i_\ast(BwB)$ in $\op{K}_0(\DMT_{B\times B}(G))$ map to the \emph{standard basis} of ${}^B\mathcal{H}^B$ and the classes of the costandard objects $i_!(BwB)$ map to the dual basis of the standard basis. On the other hand, the intermediate extensions, which are motivic lifts of the intersection complexes, map to the \emph{Kazhdan--Lusztig basis} of the Hecke algebra ${}^B\mathcal{H}^B$. Actually, the coefficients have to be extended from $\mathbb{Z}[q,q^{-1}]$ by adding a square root $v=\sqrt{q}$ to accomodate intersection complexes concentrated in odd degrees. Then the change-of-basis matrix over $\mathbb{Z}[v,v^{-1}]$ from the standard basis to the Kazhdan--Lusztig basis describes exactly the multiplicities of indecomposables in induced modules; this is encoded in the Kazhdan--Lusztig polynomials (resp. their values at $1$). Now the fact that $\op{K}_0(\DMT_{B\times B}(G))$ has an additional integer grading allows to extract the coefficients of the Kazhdan--Lusztig polynomials. On the level of the categorification, the coefficients of Kazhdan--Lusztig polynomials can then be related to Ext-groups in the categories $\DMT_{B\times B}(G)$ of equivariant mixed Tate motives. References for further information on the relevant representation theoretic aspects include \cite{KL-C,KL-S,Sp,soergel:slides}.
\end{Bemerkung}

\subsection{It's ... knot motives}
\label{sec:knot}

Now, as an application of the categorification of the Schur algebroid using equivariant mixed Tate motives, we can provide a proof of Corollary~\ref{cor:link} in the introduction. We first compute the images of standard and costandard equivariant Tate motives. 

\begin{proposition}
\label{prop:rouquier}
Let $G$ be a split connected reductive group, let $B\subset G$ be a Borel subgroup. Fix a set of simple reflections $S$ of the Weyl group $W$, let $s\in S$ be a simple reflection, and denote by $i:((B\times B)\looparrowright BsB)\to ((B\times B)\looparrowright G)$ be the inclusion of the corresponding Bruhat cell in the group. Then the functor
\[
\mathbb{T}:\DMT_{B\times B}(G)\to \op{Hot}^{\op{b}}\left(\mathcal{A}_B\op{-SMod^{\mathbb{Z}}-}\mathcal{A}_B\right)
\]
from Corollary~\ref{cor:pf142} maps the motives $i_!\left(\underline{BsB}\right)$ and $i_\ast\left(\underline{BsB}\right)$ to the so-called Rouquier complexes
\[
\left[\mathcal{A}_B\otimes_{\mathcal{A}_B^s}\mathcal{A}_B\to\mathcal{A}_B\right], \quad\textrm{ and }\quad
\left[\mathcal{A}_B\langle 2\rangle \to \mathcal{A}_B\otimes_{\mathcal{A}_B^s}\mathcal{A}_B\right]
\]
In both cases, the complexes are graded such that the term $\mathcal{A}_B\otimes_{\mathcal{A}_B^s}\mathcal{A}_B$ sits in degree zero. 
\end{proposition}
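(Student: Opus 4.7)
The plan is to establish both isomorphisms by first computing $\mathbb{T}$ on the two ``resolution'' Bott--Samelson motives $j_\ast\underline{B}$ and $j'_\ast\underline{P_s}$, where $j\colon B\hookrightarrow G$ and $j'\colon P_s\hookrightarrow G$ are the closed embeddings, and then reading off the Rouquier complexes for $T_s^!$ and $T_s^\ast$ from localization triangles associated to the open/closed decomposition $P_s = B\sqcup BsB$ inside $P_s$. By construction these two objects agree with $\leftidx{_B}{\underline{B}}{_B}$ and $\leftidx{_B}{\underline{P_s}}{_B}$ as in Definition~\ref{def:qpp}, and are Bott--Samelson motives on which $\mathbb{T}$ coincides with the equivariant cohomology functor of Corollary~\ref{cor:pf142}.

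For the first step, Proposition~\ref{prop:boo4}(2) gives $\mathbb{T}(j_\ast\underline{B})\cong\mathcal{A}_B$ directly. To compute $\mathbb{T}(j'_\ast\underline{P_s})$ I would use the convolution identity $\leftidx{_B}{\underline{P_s}}{_B}\cong\leftidx{_B}{\underline{P_s}}{_{P_s}}\conv_{P_s}\leftidx{_{P_s}}{\underline{P_s}}{_B}$ coming from Proposition~\ref{prop:convres} (applied with $G$ replaced by $P_s$ and $R=B$), combine it with the monoidality of $\mathbb{T}$ from Corollary~\ref{cor:pf142}, and evaluate each factor via Proposition~\ref{prop:boo4}(2), getting $\mathbb{T}(j'_\ast\underline{P_s})\cong\mathcal{A}_B\otimes_{\mathcal{A}_{P_s}}\mathcal{A}_B\cong\mathcal{A}_B\otimes_{\mathcal{A}_B^s}\mathcal{A}_B$ via the standard identification $\mathcal{A}_{P_s}=\mathcal{A}_B^s$. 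For the second step, denote by $\iota\colon B\hookrightarrow P_s$ and $\iota_s\colon BsB\hookrightarrow P_s$ the closed and open embeddings, so that $j=j'\circ\iota$ and $i=j'\circ\iota_s$. The standard localization triangle $(\iota_s)_!\underline{BsB}\to\underline{P_s}\to\iota_\ast\underline{B}$, pushed forward along the closed embedding $j'$, yields in $\DMT_{B\times B}(G)$ the triangle $T_s^!\to j'_\ast\underline{P_s}\to j_\ast\underline{B}$. Applying the exact functor $\mathbb{T}$ and inserting the computations above produces a distinguished triangle $\mathbb{T}(T_s^!)\to\mathcal{A}_B\otimes_{\mathcal{A}_B^s}\mathcal{A}_B\to\mathcal{A}_B$ in $\op{Hot}^{\op{b}}(\mathcal{A}_B\op{-SMod-}\mathcal{A}_B)$, which identifies $\mathbb{T}(T_s^!)$ with the two-term complex $[\mathcal{A}_B\otimes_{\mathcal{A}_B^s}\mathcal{A}_B\to\mathcal{A}_B]$ concentrated in degrees $0$ and $1$. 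Dually, the triangle $\iota_!\iota^!\underline{P_s}\to\underline{P_s}\to(\iota_s)_\ast\underline{BsB}$ together with absolute purity (cf.\ \ref{BCs}) $\iota^!\underline{P_s}\cong\underline{B}(-1)[-2]$ for the codimension-one closed embedding $\iota$, pushed forward along $j'$ and then tilted, gives $\mathbb{T}(T_s^\ast)\cong[\mathcal{A}_B\langle 2\rangle\to\mathcal{A}_B\otimes_{\mathcal{A}_B^s}\mathcal{A}_B]$, with the grading shift $\langle 2\rangle$ reflecting the image of the Tate twist $(-1)[-2]$ under the tilting equivalence of Theorem~\ref{thm:equivcoh}.

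The remaining work, and the main technical point, is to identify the two connecting bimodule maps as the multiplication $\mathcal{A}_B\otimes_{\mathcal{A}_B^s}\mathcal{A}_B\twoheadrightarrow\mathcal{A}_B$ and the Demazure-type inclusion $\mathcal{A}_B\langle 2\rangle\hookrightarrow\mathcal{A}_B\otimes_{\mathcal{A}_B^s}\mathcal{A}_B$ (sending $1$ to $\alpha_s\otimes 1-1\otimes\alpha_s$, with $\alpha_s$ the simple root for $s$). I would argue this by a dimension count: the relevant graded bimodule $\op{Hom}$-spaces in the appropriate internal degree are each one-dimensional, so any non-zero morphism is correct up to a non-zero scalar, and the scalar can be absorbed into the choice of isomorphism. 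The non-vanishing of the maps arising from the localization triangles then follows because if either of them were the zero map, the corresponding distinguished triangle would split in $\op{Hot}^{\op{b}}(\mathcal{A}_B\op{-SMod-}\mathcal{A}_B)$, which would contradict the indecomposability of $T_s^!$ and $T_s^\ast$ as genuine (non-split) $!$- and $\ast$-extensions of the constant local system on the open Bruhat cell $BsB$. I expect the bookkeeping of the grading-shift conventions matching the $(-1)[-2]$ coming from absolute purity with the stated $\langle 2\rangle$ on the bimodule side to be the most delicate point of the proof, while the geometric content reduces entirely to the standard localization sequence for the decomposition $P_s=B\sqcup BsB$.
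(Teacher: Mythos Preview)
Your proposal is correct and follows essentially the same route as the paper: both arguments reduce to the localization triangles for the decomposition $P_s=B\sqcup BsB$, compute $\mathbb{H}(\leftidx{_B}{\underline{P_s}}{_B})\cong\mathcal{A}_B\otimes_{\mathcal{A}_B^s}\mathcal{A}_B$ via the convolution factorization $\leftidx{_B}{\underline{P_s}}{_{P_s}}\conv_{P_s}\leftidx{_{P_s}}{\underline{P_s}}{_B}$ together with Proposition~\ref{prop:boo4}, and read off the Rouquier complexes. Your treatment of the differentials (dimension count plus nonvanishing) is more detailed than the paper's, which simply asserts that one checks the first differential is multiplication and that the second is the unique nonzero map up to scalar; note that nonvanishing follows more directly from full faithfulness of $\mathbb{H}$ on the heart (Proposition~\ref{prop:ff}) applied to the adjunction unit/counit, without appealing to indecomposability.
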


\begin{proof}
It suffices to consider the pushforward to $P$, where $P=P_s$ is the minimal standard parabolic corresponding to the simple reflection $s$. Denote by $i:B\hra P$ the closed embedding of the Borel subgroup and by $j:BsB\hra P$ the open embedding of its complement. In the associated localization triangle
\[
j_!j^!\left(\underline{P}\right)\ra \underline{P}\ra i_*i^*\left(\underline{P}\right)\ra j_!j^!\left(\underline{P}\right)[1]
\] 
we can replace $j^!\left(\underline{P}\right)\cong \underline{BsB}$ and $i^\ast \left(\underline{P}\right)\cong \underline{B}$. The localization triangle then implies that we have an isomorphism of motives
\[
j_!\underline{BsB}\cong\op{tilt}\left(\left[\underline{P}\to i_\ast\underline{B}\right]\right)
\]
where on the right we have the tilting functor applied to a complex in the category $\op{Hot}^{\op{b}}\left(\DMT_{B\times B}(G)\right)$, where the motive $\underline{P}$ sits in degree $0$. Then we can rewrite $\underline{P}$ in terms of convolution of generating Bott--Samelson motives as
\[
\underline{P}\cong\leftidx{_B}{\underline{P}}_B =\leftidx{_B}{\underline{P}}_P\conv_P \leftidx{_P}{\underline{P}}_B.
\]
Applying the equivariant cohomology to this, we see that $\underline{P}$ maps to $\mathcal{A}_B\otimes_{\mathcal{A}_B^s}\mathcal{A}_B$, and we obtain
\[
\mathbb{T}\left(\left[\underline{P}\to i_\ast\underline{B}\right]\right)\cong 
\left[\mathcal{A}_B\otimes_{\mathcal{A}_B^s}\mathcal{A}_B\to\mathcal{A}_B\right]
\] 
One checks that the differential of the complex on the right is simply the multiplication map, and therefore the complex on the right is the first Rouquier complex from \cite{RouC}. 

The second statement follows similarly, by considering the appropriate localization triangle 
\[
i_!i^!\left(\underline{P}\right)\ra \underline{P}\ra j_*j^*\left(\underline{P}\right)\ra i_!i^!\left(\underline{P}\right)[1]
\]
where we can replace $j^\ast\left(\underline{P}\right)\cong \underline{BsB}$ and $i^!\left(\underline{P}\right)\cong \underline{B}(1)[2]$. We get an isomorphism of motives 
\[
j_\ast\underline{BsB}\cong\op{tilt}\left(\left[i_\ast\underline{B}(1)[2] \to \underline{P}\right]\right)
\]
where now $\underline{P}$ sits in degree $0$. Applying equivariant cohomology, we get 
\[
\mathbb{T}\left(\left[i_\ast\underline{B}(1)[2]\to \underline{P}\right]\right)\cong 
\left[\mathcal{A}_B\langle 2\rangle \to \mathcal{A}_B\otimes_{\mathcal{A}_B^s}\mathcal{A}_B\right]
\] 
The differential is the (unique up to scalar) non-zero morphism, and we obtain precisely the second Rouquier complex from \cite{RouC}. 
\end{proof}

\begin{Bemerkung}
As a consequence of the above, we obtain an assignment from braids on $n$ strands to equivariant mixed Tate motives. Denote by $s\in W$ the simple reflection in the Weyl group $\op{S}_n$ corresponding to the transposition $(i,i+1)$, and denote by $j:BsB\hookrightarrow \op{GL}_n$ the inclusion of the corresponding double coset. An overcrossing of the strands $i$ and $i+1$ is mapped to the costandard motive $j_!\const{BsB}\in \DMT_{B\times B}(\op{GL}_n)$, and an undercrossing of the strands $i$ and $i+1$ is mapped to the standard motive $j_\ast\const{BsB}(1)[2]\in\DMT_{B\times B}(\op{GL}_n)$. Then a braid, given by composition $s_{i_1}^{\pm 1}\circ\cdots s_{i_m}^{\pm 1}$ of such crossings is mapped to the tensor product $j_{i_1,!/\ast}\const{Bs_{i_1}B}\otimes \cdots\otimes j_{i_m,!/\ast}\const{Bs_{i_m}B}$ in $\DMT_{B\times B}(\op{GL}_n)$. 

By Proposition~\ref{prop:rouquier} and Corollary~\ref{cor:pf142}, the realization/hypercohomology of such a motive associated to a braid in $\op{Hot}^{\op{b}}\left(\mathcal{A}_B\op{-SMod^{\mathbb{Z}}-}\mathcal{A}_B\right)$ is the tensor product of the appropriate Rouquier complexes.  These complexes of graded bimodules associated to a braid are those used by Khovanov \cite{KhoHH} to obtain triply graded invariants of said braid closed to a knot  by applying termwise Hochschild homology and then taking cohomology. In geometric terms this has been reformulated and extended by \cite{WW}.
\end{Bemerkung}

At this point, we have an assignment from braids to $B\times B$-equivariant mixed Tate motives on $\op{GL}_n$, or alternatively, an assignment from the generators $\sigma_i^{\pm 1}$ of the braid group (corresponding to the appropriate simple reflections $s_i\in \op{S}_n$) to such motives. Showing that this assignment factors through the braid group requires checking the braid relations. For the original construction of Khovanov \cite{KhoHH} with Rouquier complexes, the proof of the braid relations requires computing the tensor products of Rouquier complexes. The motivic lift of Khovanov's construction described above now provides a more geometric proof of the braid relations, reducing everything to isomorphisms of the relevant Bruhat cells: 

\begin{Bemerkung}
Let $G$ be a split connected reductive group with Weyl group $W$, and denote by $l:W\to\mathbb{N}$ the length function. Given elements $x,y\in W$ of the Weyl group with $l(xy)=l(x)+l(y)$, it is well known that the multiplication induces an isomorphism of Bruhat cells
\[
BxB\times_B ByB\stackrel{\cong}{\longrightarrow} BxyB.
\]
Consequently, we get isomorphisms for convolutions of constant equivariant mixed Tate motives
\[
j_!\left(\underline{BxB}\right)\conv_B j_!\left(\underline{ByB}\right)\stackrel{\cong}{\longrightarrow} j_!\left(\underline{BxyB}\right),\,\textrm{ and }\,
j_\ast\left(\underline{BxB}\right)\conv_B j_\ast\left(\underline{ByB}\right)\stackrel{\cong}{\longrightarrow} j_\ast\left(\underline{BxyB}\right)
\]
Note that the maps $j$ in the above are all different: whenever applied to the constant motive $\underline{BuB}$, the appropriate map $j$ is the inclusion of the corresponding Bruhat cell $j_u:(B\times B\looparrowright BuB)\hookrightarrow (B\times B\looparrowright G)$. 

As a consequence of the above isomorphisms, we get the braid relations for the standard/costandard equivariant Tate motives: for any two simple reflections $s,t\in W$ with $sts=tst$, we find
\[
j_!\left(\underline{BsB}\right)\conv_B j_!\left(\underline{BtB}\right)\conv_B j_!\left(\underline{BsB}\right)\cong j_!\left(\underline{BtB}\right)\conv_B j_!\left(\underline{BsB}\right)\conv_B j_!\left(\underline{BtB}\right),
\]
\[
j_\ast\left(\underline{BsB}\right)\conv_B j_\ast\left(\underline{BtB}\right)\conv_B j_\ast\left(\underline{BsB}\right)\cong j_\ast\left(\underline{BtB}\right)\conv_B j_\ast\left(\underline{BsB}\right)\conv_B j_\ast\left(\underline{BtB}\right). 
\]

In addition, one easily checks   
\[
j_!\left(\underline{BsB}\right)\conv_B  j_*\left(\underline{BsB}\right)\cong i_\ast\left(\underline{B}(-1)[-2]\right)\cong j_*\left(\underline{BsB}\right) \conv_B  j_!\left(\underline{BsB}\right)
\] 
either by calculation with Rouquier complexes or by (possibly more transparent) geometric arguments. As a consequence, we get a homomorphism of the braid group to the  monoid of isomorphism classes of $(B \times B)$-equivariant mixed Tate motives on $G$, with monoid multiplication given by convolution. Instead of the motivic category $\DMT_{B\times B}(G)$, we can equivalently use the homotopy category $\op{Hot}^{\op{b}}(\mathcal{A}_B\op{-SMod^\DZ-}\mathcal{A}_B)$ of Soergel bimodules and obtain an action of the braid group on the latter category.
\end{Bemerkung}

Now we only need to translate the termwise Hochschild homology construction required for closing the braid to a link to get a motivic lift of Khovanov's triply graded knot homology, cf. \cite{WW}.

\section{Hecke modules for symmetric varieties}
\label{sec:symmetric}

In this section, we show how equivariant mixed Tate motives provide a different model for the Hecke-module of Mars--Springer \cite{Mars-Springer}, cf. also \cite[Section 3.7]{springer:schubert}. More precisely, we show that the $\ell$-adic realization induces an isomorphism on the relevant Grothendieck groups. This in particular provides a grading on the Hecke module associated to a symmetric variety. We also discuss the relevance of this motivic realization for the study of Koszul duality patterns in the representation theory of real Lie groups. 

\subsection{Setup and notation}

\begin{Bemerkung}
\label{symmsetup}
Throughout the section, we fix an algebraically closed field $k$ of characteristic $\neq 2$ and a connected reductive group $G$, with an involution $\theta:G\to G$ and a $\theta$-stable Borel subgroup $B\subset G$. Set $K=G^\theta$, and let $P$ be a standard parabolic. We assume that $\mathbb{D}$ is derivator satisfying the conditions \ref{derivator:new} as well as the grading condition \ref{conditions:grading} and the weight condition \ref{conditions:weight}. 
\end{Bemerkung}

\begin{Bemerkung}
\label{symmdiag}
Assume either that $\mathbb{D}=\mathbf{DA}^{\et}(-;\Lambda)$ or $\mathbb{D}=\op{MDer}(-;\mathbb{C})$, and denote by $\op{Real}$ the appropriate realization functor. Consider the following diagram of Grothendieck groups\footnote{Except for the right-hand arrow, all the maps in the diagram naturally arise from functors on the categorical level. However, the right-hand map is given by alternating sum of the equivariant cohomology groups of perverse sheaves. It could be made categorical using a bit of $\mathscr{D}$-module machinery, but we stick to simply using the map $\mathcal{H}$ considered in \cite{Mars-Springer}.}:
\[
\xymatrix{
\op{K}_0(\op{Hot}^{\op{b}}(\DMT_{P\times K}(G)_{\op{wt}=0})) \ar[rr]^{\op{Real}} \ar[d]_{\op{tilt}} && \op{K}_0(\op{Hot}^{\op{b}}(\op{Per}^{\op{ss}}_{P\times K}(G;\Lambda))) \ar[d]^{\mathcal{H}} \\
\op{K}_0(\DMT_{P\times K}(G)) \ar[rr]_{\op{Real}} && \op{K}_0(\op{Der}^{\op{b}}_{P\times K}(G;\Lambda)). 
}
\]

On the left-hand side, we have the categories of motives discussed in Section~\ref{sec:tiltingapp}. The category on the upper right-hand side is a category of $P\times K$-equivariant perverse sheaves on the group $G$, as was considered in \cite{Mars-Springer}. The left-hand vertical arrow is the tilting functor of Proposition~\ref{prop:symmpurity}, and the lower horizontal is given by realization functors on equivariant categories, cf. Proposition~\ref{prop:realization}, restricted to equivariant mixed Tate motives.

The top horizontal functor and the right-hand vertical functor are a bit more complicated to set up. For the top horizontal functor, we will prove that the realization of a pure weight $0$ motive is an intersection complex, cf. Theorem~\ref{thm:gradedsymm}. However, it is not necessarily a perverse sheaf, as it may be concentrated in the wrong degrees, but a suitable shift of it will be a semisimple perverse sheaf. This is why the target of the realization functor in the top horizontal is the category of complexes of perverse sheaves. The right-hand vertical morphism is then induced from taking cohomology of perverse sheaves, cf. \cite{Mars-Springer}. 

The commutativity of the above diagram follows from the fact that the realization functors commute with the full six functor formalism. Essentially both paths through the diagram compute the equivariant cohomology of a $P\times K$-equivariant motive on $G$ which is pure of weight $0$. In a sense, the above diagram provides a nice motivic categorification of the cohomology map $\mathcal{H}$ of \cite{Mars-Springer}. 
\end{Bemerkung}

\begin{Bemerkung}
The goal of the next section, cf. Theorem~\ref{thm:comparisonMS}, is to show that the horizontal arrows, induced from the realization functors, are isomorphisms of the respective Grothendieck groups. The lower horizontal arrow makes equivariant mixed Tate motives a graded version of the equivariant derived category, cf. Theorem~\ref{thm:gradedsymm}. Moreover, the collection of categories $\DMT_{P\times K}(G)$ where $P\leq G$ runs through the standard parabolics in $G$ is compatible with the convolution by objects from the categories $\DMT_{Q\times P}(G)$ categorifying the Schur algebroid. In abuse of language, the categories $\DMT_{P\times K}(G)$ form a module for the motivic categorification of the Schur algebroid, cf. Proposition~\ref{prop:schurmodulesymm}. A similar statement is true for the categories $\op{Hot}^{\op{b}}(\DMT_{P\times K}(G)_{\op{wt}=0})$ and the corresponding motivic realization of the Schur algebroid, and the left vertical tilting map in the above diagram is compatible with these module structures.
\end{Bemerkung}

\begin{remark}
\label{rem:fq}
As discussed in \ref{descent}, equivariant mixed Tate motives are very combinatorial; in particular, if we have a finite field $k=\mathbb{F}_q$, we get an induced equivalence of mixed Tate motives over $\mathbb{F}_q$ and over $\overline{\mathbb{F}_q}$. The same is then true for the equivariant mixed Tate motives in our cases. In particular, it suffices to work over the algebraic closure $\overline{\mathbb{F}_q}$ to establish the equivariant Whitney--Tate condition, and then the same thing will be true over the finite field. Over the finite field, the $\ell$-adic realization functors can be equipped with natural Frobenius structures. Moreover, the weight arising from eigenvalues of Frobenius will agree with the motivic weight via the $\ell$-adic realization functor, and this is how we do the comparison to \cite{Mars-Springer}. 
\end{remark}

\subsection{Categorification of the Mars--Springer module}

We first establish that the motivic categories $\DMT_{P\times K}(G)$ provide a graded version of the equivariant derived categories of symmetric varieties. 

\begin{theorem}
\label{thm:gradedsymm}
Assume the situation in \ref{symmsetup}
\begin{enumerate}
\item Let $k=\mathbb{F}_q$ be a finite field of odd characteristic, and let $\mathbb{D}=\mathbf{DA}^{\et}(-;\mathbb{Q}_\ell)$ be the homotopical stable algebraic derivator underlying the construction of equivariant motives. Then composition with the $\ell$-adic realization functor
\[
\DMT_{P\times K}(G)_{\op{wt}=0} \hookrightarrow \DMT_{P\times K}(G) \xrightarrow{\op{Real}_\ell} \op{Der}^{\op{b}}_{P\times K}(G;\mathbb{Q}_\ell)
\] 
is fully faithful on the category of Bott--Samelson motives. The essential image consists of intersection complexes concentrated in even degrees. 
\item Let $k=\mathbb{C}$ and consider  $\mathbb{D}=\op{MDer}(-;\mathbb{C})$ as homotopical stable algebraic derivator underlying the construction of equivariant motives. Then the composition with the Hodge realization functor
\[
\DMT_{P\times K}(G)_{\op{wt}=0} \hookrightarrow \DMT_{P\times K}(G) \xrightarrow{\op{Real}_{\op{H}}} \op{Der}^{\op{b}}_{P\times K}(G;\mathbb{C})
\] 
is fully faithful on the category of Bott--Samelson motives. The essential image consists of intersection complexes concentrated in even degrees. 
\item Motivic lifts of the standard and costandard objects in the equivariant derived categories above are given by $i_\ast(\mathcal{L})$ and $i_!(\mathcal{L})$, respectively, where $i:PxK\hookrightarrow G$ is the inclusion of a $P\times K$-double coset in $G$ and $\mathcal{L}$ is a  $K$-equivariant motivic local system on the corresponding $K$-orbit of $G/P$. 
\item The functors in points (1) and (2) are compatible with convolution and Verdier duality. 
\item Adjoining a root of the Tate twist, the equivariant mixed Tate motives provide a grading on the constructible equivariant derived categories in the sense of \ref{bgs43}, with the realization functors in (1) and (2) as degrading functors.
\end{enumerate}
\end{theorem}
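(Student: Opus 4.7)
The strategy closely parallels the proof of Theorem~\ref{thm:gradedparabolic}, replacing the parabolic geometric input by the symmetric input from Section~\ref{sec:tiltingapp}. In particular, the key foundational results we rely on are Theorem~\ref{cor:symmetricWT} (equivariant Whitney--Tate condition), Proposition~\ref{prop:symmpurity} (pointwise purity of Bott--Samelson motives and identification of the heart of the weight structure), Proposition~\ref{prop:realization} (compatibility of the realization functors with the six functors, the quotient and induction equivalences, and Verdier duality), and Proposition~\ref{prop:conservative} (conservativity of realization on equivariant mixed Tate motives).

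For part (3), one first establishes the motivic lifts of standard and costandard objects. By Proposition~\ref{prop:symmpurity}, the heart of the weight structure on $\DMT_{P\times K}(G)$ is generated by Bott--Samelson motives, which via the quotient equivalence correspond to objects of $\DMT_P(G/K)$. For the inclusion $j\colon V\hookrightarrow G/K$ of a $P$-orbit and a motivic local system $\mathcal{L}\in\DMT_P(V)_{\op{wt}=0}$ (coming via the induction equivalence of Proposition~\ref{cor:indequiv} from a pure weight $0$ object in $\DMT_H(\pt)$ for the stabilizer $H$), the objects $j_!\mathcal{L}$ and $j_*\mathcal{L}$ furnish the required motivic lifts. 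That their realizations recover the classical standard and costandard objects follows from compatibility of $\op{Real}$ with the six functors and the induction equivalence.

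For parts (1) and (2), one proceeds by induction on the dimension of orbits. Since cuspidals are clean (Proposition~\ref{prop:symmtilt}), the clean motives on closed orbits $j\colon V\hookrightarrow G/K$ satisfy $j_!\mathcal{L}\cong j_*\mathcal{L}$, and the realization sends such a motive to the intermediate extension of the realization of $\mathcal{L}$, i.e., to an intersection complex. Purity arguments using pointwise purity of Bott--Samelson motives and the even-shift structure coming from the weight condition combine to show that the essential image lands in even-shift intersection complexes. Full faithfulness on the heart reduces, via Corollary~\ref{tiFL}-style devissage, to full faithfulness on constant motives supported on individual orbits, where it follows from the conservativity and explicit computation of morphisms on orbits via the induction equivalence. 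The passage to $G$-equivariant cohomology and the explicit description of morphisms in $\DMT_H(\pt)$ from Theorem~\ref{thm:equivcoh} completes the identification.

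For part (4), compatibility with Verdier duality is immediate from Propositions~\ref{equiv:verdier} and \ref{prop:realization}, and compatibility with convolution follows from the compatibility of realization with the full equivariant six-functor formalism, including the quotient and induction equivalences, cf.~Section~\ref{sec:convolution2}. For part (5), exactness is clear, faithfulness follows from Proposition~\ref{prop:conservative}, and essential surjectivity from part (3) together with the fact that the equivariant derived category is generated by standard/costandard objects on orbits. The required degrading isomorphism
\[
\bigoplus_{n\in\DZ} \DMT_{P\times K}(G)(M,N(n)[2n]) \xrightarrow{\cong} \op{Der}^{\op{b}}_{P\times K}(G;\Lambda)(\op{Real}\, M,\op{Real}\, N)
\]
is established by a standard devissage: one reduces to $M,N$ of standard/costandard shape via the weight filtration from Proposition~\ref{prop:symmpurity}, and then the required isomorphism is reduced by adjunction to a computation of morphisms between local systems on single orbits, which in turn reduces via the induction equivalence to the known graded identification of morphisms between mixed Tate motives on the point. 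The main technical obstacle here is handling the possibility of non-trivial local systems on $B$-orbits (unlike the flag variety case), which requires careful use of the induction equivalence and the fact that the stabilizer groups have finite component groups of exponent $2$, so that the relevant local systems are pulled back from finite \'etale covers and still give rise to graded semisimple Hom-spaces between pure Tate objects. The extension of scalars argument in Remark~\ref{rem:fq} ensures that all of this can be carried out working over the algebraic closure.
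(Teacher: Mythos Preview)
Your proposal is essentially correct and follows the same approach as the paper: reduce to the parabolic case Theorem~\ref{thm:gradedparabolic} as a template, invoke Theorem~\ref{cor:symmetricWT} and Proposition~\ref{prop:symmpurity} for the symmetric-variety-specific input, use compatibility of realization with the six functors for (4), and for (5) devissage via standard/costandard objects down to local systems on single orbits, then via the induction equivalence to the point.

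There is one concrete slip. Your degrading isomorphism in part (5) should read
\[
\bigoplus_{i\in\mathbb{Z}} \DMT_{P\times K}(G)\bigl(M,N(i)\bigr) \xrightarrow{\cong} \op{Der}^{\op{b}}_{P\times K}(G;\Lambda)\bigl(\op{Real}\, M,\op{Real}\, N\bigr),
\]
summing over pure Tate twists $(i)$, not over $(n)[2n]$. Under realization one has $\op{Real}(N(i))\cong\op{Real}(N)$ (the Tate twist is forgotten), so the sum over $(i)$ lands in a single Hom-group on the right, as required by the definition of graded version in \ref{bgs43}. By contrast $\op{Real}(N(n)[2n])\cong\op{Real}(N)[2n]$ lands in different cohomological degrees, and the combination $(n)[2n]$ is weight-exact, so it cannot serve as the degrading autoequivalence. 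This is also the reason part (5) requires adjoining a square root of the Tate twist: with only integer twists $(i)$ one only reaches even shifts of intersection complexes, and the square root is needed for the odd ones. The paper makes exactly this point in the last line of its proof.
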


\begin{proof}
The arguments are similar to the ones in Theorem~\ref{thm:gradedparabolic}. 

We first note that the realization functors in statements (1) and (2) are compatible with the six functors, cf. Proposition~\ref{prop:realization}. Compatibility with Verdier duality is then clear, and compatibility with convolution is proved in Proposition~\ref{prop:schurmodulesymm}. Thus (4) follows if we have (1) and (2).

To prove (3), we first note that the standard and costandard objects in the equivariant derived category are obtained exactly as $i_\ast(\mathcal{L})$ and $i_!(\mathcal{L})$ where $i:PxK\hookrightarrow G$ is the inclusion of a double coset and $\mathcal{L}$ is a $K$-equivariant local system on the corresponding $K$-orbit in $G/P$, cf. e.g. the discussion in \cite[Section 4]{So-L}. Since the realization functors are compatible with the six functors, it suffices to show that local systems lift to the motivic categories $\DMT_K(Y)$ where $Y=K/K'$ denotes the $K$-orbit in $G/P$. Using the induction  equivalence of Proposition~\ref{cor:indequiv}, we have $\DMT_K(Y)\approx\DMT_{K'}(\op{pt})$ and therefore the $K$-equivariant local systems on $Y$ correspond exactly to the motivic representations of the finite group of components of the stabilizer group $K'$. Now, by the induction equivalence for the ordinary equivariant derived categories, a local system (in the topological sense) on $Y$ corresponds similarly to a representation of the component group of $K'$. For such a representation, we then have obvious lifts to the motivic categories, and this establishes (3). 

Now we prove (1), (2) is proved similarly. Let $M$ be a clean motive on a $B$-orbit $j:V\to G/K$. Then $j_\ast M\cong j_!M$ by definition, hence its realization  is an intersection cohomology sheaf and therefore perverse. Now the other steps in the construction of Bott--Samelson motives, cf. Definitions~\ref{def:BSclosure} and \ref{BSmot}, preserve the required perversity. Therefore, Bott--Samelson motives will map to direct sums of intersection cohomology complexes and by Theorem~\ref{cor:symmetricWT} and Proposition~\ref{prop:symmpurity} this implies that Bott--Samelson motives are sent to direct sums of even shifts of intersection complexes. Note that the shifts we obtain are only even, this is the realization of the operation $M\mapsto M(1)[2]$. We get every intersection complex as a direct summand because we have explicit lifts of the relevant local systems on orbits from (3). 

(5) Realization is an exact functor, and faithfulness follows from conservativity of realization on equivariant mixed Tate motives, cf. Proposition~\ref{prop:conservative}. By (3) we have lifts for the irreducible objects in $\op{Der}^{\op{b}}_{P\times K}(G)$ given by Bott--Samelson motives. As in the proof of Theorem~\ref{thm:gradedparabolic}, for two equivariant mixed Tate motives $M,N\in \DMT_{P\times K}(G)$ we can compute 
\[
\bigoplus_{i\in\mathbb{Z}}\DMT_{P\times K}(M,N(i))\xrightarrow{\cong} \op{Der}^{\op{b}}_{P\times K}(\op{Real} M,\op{Real} N)
\]
by writing the motives as iterated extensions of standard and costandard objects from (3) and reducing the claim to morphisms between those with a standard devissage argument. This reduces us to the claim for morphisms between motivic local systems on $K$-orbits of $G/P$. Again, this reduces to a full faithfulness claim for motives (equivariant for the stabilizer of the $K$-orbit) on the point where we can employ conservativity, cf. Proposition~\ref{prop:conservative}. 

Finally, adjoining a root of the Tate twist on the motivic side is necessary to get all shifts of intersection complexes, not just the even ones. 
\end{proof}

Now we want to show that the motivic categories $\DMT_{P\times K}(G)$ provide a categorification of the module over the Schur algebroid considered in \cite{Mars-Springer}. Actually, only the Hecke module $\DMT_{B\times K}(G)$ was considered in loc.cit., but once the idea of Schur algebroid is formulated explicitly, the results in \cite{Mars-Springer} can be extended to this setting. We first formulate explicitly the statements about the convolution functors, this implies that the Grothendieck groups of the motivic categories $\DMT_{P\times K}(G)$ will form a module for the Schur algebroid. 

\begin{proposition}
\label{prop:schurmodulesymm}
Assume the situation of \ref{symmsetup}. 
\begin{enumerate}
\item We have, for every pair of standard parabolic subgroups $P,Q\subset G$, convolution functors 
\[
-\star_Q-:\DMT_{P\times Q}(G)\times \DMT_{Q\times K}(G)\to \DMT_{P\times K}(G).
\]
\item 
These functors restrict to the hearts of the relevant weight structures:  
\[
-\star_Q-:\DMT_{P\times Q}(G)_{\op{wt}=0}\times \DMT_{Q\times K}(G)_{\op{wt}=0}\to \DMT_{P\times K}(G)_{\op{wt}=0}.
\]
\item The above convolutions are compatible with the tilting functor of Proposition~\ref{prop:symmpurity}:
\[
\op{tilt}: \op{Hot}^{\op{b}}(\DMT_{P\times Q}(G)_{\op{wt}=0})\to \DMT_{P\times Q}(G)
\]
\item On the level of Grothendieck groups, these can be identified, via the isomorphisms from Theorem~\ref{thm:comparisonMS}, with the module structure for the Schur algebroid developed in \cite{Mars-Springer}.
\end{enumerate}
\end{proposition}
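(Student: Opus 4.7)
The strategy is to obtain (1)--(3) by instantiating the general convolution formalism of Section~\ref{sec:convolution2} at the symmetric situation, and then reducing (4) to the comparison theorem~\ref{thm:comparisonMS}.

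For (1) and (3), the plan is to apply Proposition~\ref{prop:convtilt} with $H=K$ and $X=G/K$. The hypotheses are that the conditions of Theorem~\ref{thm:tilting} hold both for $P\looparrowright X$ and $Q\looparrowright X$; these are exactly Theorem~\ref{cor:symmetricWT} together with Proposition~\ref{prop:symmpurity}, which establish Hecke-connectedness, cleanness of cuspidals and pointwise purity of Bott--Samelson motives in the symmetric setting. One also needs the corresponding statements for $(P\times Q)\looparrowright G$, which are provided by Theorem~\ref{thm:parabolicWT} and Corollary~\ref{tiFL}. Granting these, Proposition~\ref{prop:convtilt}(1) yields the convolution functor of (1) preserving equivariant mixed Tate motives, and Proposition~\ref{prop:convtilt}(2) furnishes the tilting compatibility asserted in (3).

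For (2), the plan is to trace through the definition of convolution (Definition~\ref{def:convolution}) and verify weight-exactness on the heart at each step. The exterior product $M\boxtimes N = \op{pr}_1^\ast M\otimes \op{pr}_2^\ast N$ sends weight-zero objects to weight-zero objects because smooth pullbacks are weight-exact (Proposition~\ref{prop:wtres}) and the tensor product on equivariant mixed Tate motives preserves the weight grading additively (Proposition~\ref{prop:wttensor}). Restriction along the diagonal $\Delta Q\hookrightarrow Q\times Q$ is weight-exact (Proposition~\ref{prop:wtres}), and the generalized quotient equivalence of Proposition~\ref{prop:quotientequiv} preserves weights by Proposition~\ref{prop:quotverdier} (up to the shift built into the relative dualizing object, which is trivial here since the quotient is by a smooth $Q$). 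Finally, since $Q$ is parabolic, the multiplication map $\op{mult}\colon G\times_{/Q}G\to G$ is proper, so $\op{mult}_!\cong\op{mult}_\ast$ is weight-exact (Propositions~\ref{prop:wtres} and \ref{prop:wtpush}). Composing these, convolution preserves weight zero.

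For (4), the plan is to combine the $\ell$-adic realization functor with the results of \cite{Mars-Springer}. The realization functor is compatible with the six-functor formalism (Proposition~\ref{prop:realization}), and in particular with convolution, quotient equivalences and all the building blocks above. Under the isomorphisms of Theorem~\ref{thm:comparisonMS}, the Grothendieck group $\op{K}_0(\DMT_{B\times K}(G))$ is identified with the Mars--Springer module, and the realization of the convolution functor defined here matches, term by term, the convolution of $\ell$-adic sheaves used in \cite[\S 3.2.5--3.2.8]{Mars-Springer} to define the module structure; the extension to arbitrary parabolics $P$ follows from the Schur-algebroid compatibility via the induction-restriction formalism of \ref{irc}. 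The main technical obstacle will be matching conventions: tracking that the twists, shifts and sign conventions appearing in the motivic convolution correspond precisely to those of \cite{Mars-Springer}, and in particular that the Kazhdan--Lusztig involution arising from Verdier duality on the motivic side agrees with the semilinear involution on the Mars--Springer module discussed in \cite[\S 3.3]{Mars-Springer}. This last point is delicate because our categorification is over $\mathbb{Z}[q,q^{-1}]$ via the Tate twist, whereas the Mars--Springer version involves the $\mathbb{Z}[C]$-structure coming from Frobenius eigenvalues; the comparison proceeds by extension of scalars along $\mathbb{Z}\to\mathbb{Z}[C]$ as indicated in \ref{motsym}.
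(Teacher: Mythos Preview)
Your proposal is correct and follows essentially the same approach as the paper: invoking Proposition~\ref{prop:convtilt} for (1) and (3), tracing weight-exactness through each step of the convolution for (2), and using compatibility of realization with the six functors for (4). One small imprecision: for the quotient equivalence preserving weight zero in (2), the paper simply observes that the weight structures are \emph{defined} so as to be compatible with the quotient equivalence (cf.\ Corollary~\ref{cor:wtorb} and Proposition~\ref{prop:equivweight}), rather than invoking Proposition~\ref{prop:quotverdier}, which concerns Verdier duality.
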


\begin{proof}
(1) The first statement is essentially the content of Proposition~\ref{prop:convtilt}, together with the associativity of convolution, cf. Proposition~\ref{prop:convassoc}. 

(2) Following the argument of Proposition~\ref{prop:convtilt}, we can actually show that the convolution functors $-\star_Q-$ can be restricted to the hearts of the relevant weight structures, cf. Section~\ref{sec:weights}. First, the exterior product $M\boxtimes N=\op{pr}_1^\ast M\otimes \op{pr}_2^\ast N$ of pure weight $0$ motives is pure of weight $0$: the projections $\op{pr}_i$ are smooth and therefore the restriction functors are weight-exact, cf. Proposition~\ref{prop:wtres}, and the tensor product of pure weight $0$ motives is again pure of weight $0$, cf. Proposition~\ref{prop:wttensor}. The restriction along the diagonal $P\times Q\times K\hookrightarrow P\times Q\times Q\times K$ is weight-exact by Proposition~\ref{prop:wtres}. Then the weight structures are constructed in such a way that they are compatible with the quotient equivalence of Proposition~\ref{prop:quotientequiv}, and finally the remaining proper pushforward $\op{mult}_!$ is also weight-exact by Propositions~\ref{prop:wtres} and \ref{prop:wtpush} since the relevant multiplication map $G\times_{/Q}G\to G$ is a $G/Q$-fiber bundle and hence projective. 

(3) Compatibility with the tilting follows from the general results Theorem~\ref{thm:funtilt} and \ref{thm:tiltmonoid}, noting that all the functors in the definition of convolution, cf. Definition~\ref{def:convolution}, are left adjoints or monoidal structures. 

(4) The final statement concerning the categorification of the module structure from \cite{Mars-Springer}, it is clear from the definition in Section 3.2 of loc.cit. that the module structure is given by convolution (resp. the induced map on Grothendieck groups). The claim then follows from the fact that the realization functors in the diagram in \ref{symmdiag} are compatible with the full six-functor formalism, cf. Theorem~\ref{thm:gradedsymm}.
\end{proof}

Now we can formulate the compatibility of the above module structure with the one discussed in \cite{Mars-Springer} via the realization functors: 

\begin{theorem}
\label{thm:comparisonMS}
On the level of Grothendieck groups, the $\ell$-adic case of the diagram in \ref{symmdiag} induces the following commutative diagram of  Hecke-modules
\[
\xymatrix{
\op{K}_0^{\oplus}\left(\DMT_{B\times K}(G)_{\op{wt}=0}\right) \ar[rr]^{\op{Real}_\ell} \ar[d]_{\cong}^{\op{tilt}} & &
\op{K}_0^{\oplus}(\mathcal{A}_{G/K})\ar[d]^\cong \\
\op{K}_0(\DMT_{B\times K}(G))\ar[rr]_{\mathbb{H}^\bullet(\op{Real}_\ell)} && \op{K}_0(\mathcal{C}_{G/K}),
}
\]
where $\op{K}_0(\mathcal{A}_{G/K})$ and $\op{K}_0(\mathcal{C}_{G/K})$ are the Hecke-modules considered in \cite{Mars-Springer}. The horizontal maps become isomorphisms after extending scalars to $\mathbb{Z}[C]$ on the left-hand side.
\end{theorem}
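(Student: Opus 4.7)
The plan is to establish commutativity of the diagram first, then show that each vertical arrow is already an isomorphism of abelian groups, so that the claim about horizontal arrows reduces to showing one of them becomes an isomorphism after scalar extension to $\mathbb{Z}[C]$. Commutativity is essentially formal: by Proposition~\ref{prop:realization}, the $\ell$-adic realization functor commutes with the full six-functor formalism and is exact for the motivic weight structure of Proposition~\ref{prop:equivweight} and the weight structure on $\ell$-adic complexes coming from the Mars--Springer category $\mathcal{A}_{G/K}$. Both the left-hand vertical $\op{tilt}$ and the right-hand vertical $\mathcal{H}$ are instances of the weight-complex construction in their respective settings; the compatibility of $\op{Real}_\ell$ with weights identifies these constructions up to coherent natural isomorphism, yielding the commutativity on the level of Grothendieck groups.

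For the vertical arrows, the left-hand map is the Grothendieck group shadow of the tilting equivalence from Proposition~\ref{prop:symmpurity}, hence an isomorphism. The right-hand map is the morphism $\mathcal{H}$ of \cite[(3.2.5--3.2.8)]{Mars-Springer}, which is an isomorphism by loc.~cit. Thanks to the commutativity of the square, it therefore suffices to prove that one of the two horizontal maps becomes an isomorphism after the scalar extension, and we focus on the top horizontal. By Theorem~\ref{thm:gradedsymm}(1) the realization functor restricted to the heart of the weight structure is fully faithful, with essential image consisting of intersection complexes concentrated in even degrees, and by Theorem~\ref{thm:gradedsymm}(3) we have explicit motivic lifts $i_*\mathcal{L}$ and $i_!\mathcal{L}$ of standard and costandard objects. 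Thus the image of the top horizontal map is generated by those IC-sheaves whose local systems are obtained by $\ell$-adic realization from motivic local systems on $K$-orbits.

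The main obstacle will be the precise bookkeeping for the scalar extension. The motivic Tate twist equips the source $\op{K}_0^\oplus\bigl(\DMT_{B\times K}(G)_{\op{wt}=0}\bigr)$ with a $\mathbb{Z}[q,q^{-1}]$-module structure, which is matched on the target with the subring of $\mathbb{Z}[C]$ generated by integral powers of $q$, accounting only for intersection complexes living in even degrees. The remaining (odd-shift) intersection complexes in $\mathcal{A}_{G/K}$ correspond, under the Frobenius--weight dictionary recalled in \ref{rem:fq}, to eigenvalues involving $q^{1/2}$, i.e.~to classes not in the image of the unextended map. Extending scalars along $\mathbb{Z}\to\mathbb{Z}[C]$ on the motivic side introduces precisely the half-twists needed to hit every irreducible generator of $\op{K}_0^\oplus(\mathcal{A}_{G/K})$. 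The verification that the extended map then sends the basis of motivic intermediate extensions (after half-twists) bijectively to the basis of intersection complexes in $\mathcal{A}_{G/K}$ follows from the full faithfulness in Theorem~\ref{thm:gradedsymm}(1), and the commutative square then propagates the isomorphism to the bottom horizontal.
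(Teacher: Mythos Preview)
Your proposal is correct and follows essentially the same route as the paper: both arguments cite Proposition~\ref{prop:symmpurity} and \cite{Mars-Springer} for the vertical isomorphisms, reduce to analyzing the top horizontal via Theorem~\ref{thm:gradedsymm}, and invoke the scalar extension to $\mathbb{Z}[C]$ to account for odd-degree intersection complexes. The only cosmetic difference is that the paper phrases injectivity as a bijection between irreducible motivic and $\ell$-adic local systems on $K$-orbits (via conservativity, Proposition~\ref{prop:conservative}), whereas you deduce the basis-to-basis bijection from the full faithfulness in Theorem~\ref{thm:gradedsymm}(1); these are the same content.
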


\begin{proof}
The Hecke module structures on the left-hand side have been established in Proposition~\ref{prop:schurmodulesymm}, and the module structures on the right-hand side are the ones from \cite{Mars-Springer}. The left vertical isomorphism $\op{tilt}$ is a consequence of Proposition~\ref{prop:symmpurity}, and the right vertical isomorphism is established in \cite{Mars-Springer}. 

We want to show that the horizontal maps induced from $\ell$-adic realization functors are isomorphisms, after extending scalars to the ring $\mathbb{Z}[C]$ considered in \cite{Mars-Springer}. By Theorem~\ref{thm:gradedsymm} (1), the real realization $\op{Real}_\ell$  maps Bott--Samelson motives to direct sums of intersection complexes. This implies surjectivity: from the definition in \cite[Section 3.1]{Mars-Springer}, the group $\op{K}_0(\mathcal{A}_Y)$ is generated by the classes of intermediate extensions $j_{\ast!}(\mathcal{L}_Z[\dim Z])$ for $\mathcal{L}_Z$ a local system on the $K$-orbit $Z$ in $G/B$. After extension of scalars on the left-hand side (to ensure existence of odd-degree shifts) these intermediate extensions lift to Bott--Samelson motives by Theorem~\ref{thm:gradedsymm} (1). 

To prove injectivity, we note that the motivic Hecke modules on the left-hand side of the diagram are free $\mathbb{Z}[q,q^{-1}]$-modules of finite rank. A basis is given by the irreducible motivic local systems on the $K$-orbits of $G/B$. As in the proof of (5) of Theorem~\ref{thm:gradedsymm}, the conservativity on equivariant mixed Tate motives, cf. Proposition~\ref{prop:conservative}, implies that we have a bijection between the motivic local systems and the $\ell$-adic local systems, induced by the realization functor. In particular, the realization functor $\op{Real}_\ell:\op{K}_0\left(\DMT_{B\times K}(G)_{\op{wt}=0}\right)\to \op{K}_0(\mathcal{A}_{G/K})$ maps a basis of the motivic Hecke module to a basis of the Hecke module of Mars--Springer. This implies the required injectivity. 
\end{proof}

\subsection{Application to representation theory} 

We discuss some of the representation-theoretic consequences of the previous results. The first point is that the tilting results in Section~\ref{sec:tiltingapp} are statements about the formality of the equivariant derived categories. In particular, we are able to establish the conjecture of Soergel and Lunts for symmetric varieties, cf. \cite{So-L} and \cite{Lu-tor}. A different approach to formality of equivariant derived categories has been explored recently in (yet unpublished) work of Brion and Joshua \cite{brion:joshua}. 

\begin{Bemerkung}
\label{SLconj}
  To formulate the result,  we need to set up some notation, cf. \cite[0.1]{Lu-tor}. Let $B\looparrowright G/K$ be a symmetric variety with action by a Borel subgroup $B\subseteq G$. Let $\mathcal{L}_1,\dots,\mathcal{L}_m$ be the collection of isomorphism classes of simple equivariant perverse sheaves on $G/K$, and put $\mathcal{L}=\bigoplus\mathcal{L}_i$. Consider the Ext-algebra $A=\op{Ext}^\bullet_{\op{Der}_B(G/K)}(\mathcal{L},\mathcal{L})^{\op{op}}$ and the projective $A$-modules $Q_i=A e_i$ where $e_i:\mathcal{L}\to\mathcal{L}_i$ is the projection. Then we have a dg-algebra $\mathcal{A}=(A,d=0)$ and denote by $\op{Der}_{\mathcal{A}}^f$ the subcategory of the derived category $\op{Der}_{\mathcal{A}}$ of dg-modules over $\mathcal{A}$ generated by the dg-$\mathcal{A}$-modules $Q_i$. 
\end{Bemerkung}

\begin{theorem}[Soergel--Lunts conjecture]
\label{thm:slsymm}
\index{Soergel--Lunts conjecture}
In the situation of \ref{SLconj}, there is a natural equivalence of triangulated categories
\[
\op{Der}^{\op{b}}_B(G/K)\approx\op{Der}_{\mathcal{A}}^f.
\]
\end{theorem}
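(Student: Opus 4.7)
The plan is to deduce the Soergel--Lunts conjecture by combining the tilting equivalence of Proposition~\ref{prop:symmpurity} with the degrading functor property of realization from Theorem~\ref{thm:gradedsymm}(5), reducing formality of the equivariant derived category to the (tautological) formality of morphisms in the heart of a weight structure. The central observation is that the motivic category $\DMT_{B\times K}(G)$ carries an intrinsic $\DZ$-grading via Tate twists, and forgetting this grading under realization produces a formality statement for the Ext algebra $A$.

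First, I would lift the simple equivariant perverse sheaves $\mathcal L_1,\dots,\mathcal L_m$ to motivic objects $\mathcal L_1^{\op{mot}},\dots,\mathcal L_m^{\op{mot}}$ in the heart $\DMT_{B\times K}(G)_{\op{wt}=0}$, using essential surjectivity of realization onto intersection complexes from Theorem~\ref{thm:gradedsymm}(1) and point (3) of the same theorem, and set $\mathcal L^{\op{mot}}=\bigoplus_i \mathcal L_i^{\op{mot}}$. Form the bigraded motivic endomorphism algebra
\[
A^{\op{mot}}=\bigoplus_{n,k}\DMT_{B\times K}(G)\bigl(\mathcal L^{\op{mot}},\mathcal L^{\op{mot}}(n)[2n+k]\bigr)^{\op{op}}.
\]
By orthogonality for the weight structure on the tilting subcategory of Bott--Samelson motives (Proposition~\ref{puBS}), morphisms between pure weight-zero objects vanish in nonzero shift, so only the bigraded pieces with $k=0$ contribute; hence $A^{\op{mot}}$ is concentrated in a single ``diagonal'' slice, indexed by the Tate-twist grading $n$. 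By Theorem~\ref{thm:gradedsymm}(5), realization induces a graded isomorphism
\[
\bigoplus_n\DMT\bigl(\mathcal L^{\op{mot}},\mathcal L^{\op{mot}}(n)[2n+k]\bigr)\;\xrightarrow{\;\cong\;}\;\op{Ext}^{k}_{\op{Der}^{\op{b}}_B(G/K)}(\mathcal L,\mathcal L),
\]
so that forgetting the Tate grading of $A^{\op{mot}}$ exactly recovers the algebra $A$ with cohomological grading. This identifies $\mathcal A=(A,d=0)$ as the image of a bigraded (hence automatically formal) dg-algebra under the degrading functor.

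Next, I would use the tilting equivalence of Proposition~\ref{prop:symmpurity} to produce
\[
\DMT_{B\times K}(G)\;\approx\;\op{Hot}^{\op{b}}\bigl(\DMT_{B\times K}(G)_{\op{wt}=0}\bigr),
\]
and combine this with Lemma~\ref{lem:addemb}(2) applied to $\mathcal L^{\op{mot}}$ together with its Tate twists. Because the Bott--Samelson motives generate the heart and $\mathcal L^{\op{mot}}$ is (a summand of) a generator, this yields a fully faithful embedding of $\DMT_{B\times K}(G)$ into the derived category of graded $A^{\op{mot}}$-modules, with essential image the thick triangulated subcategory generated by the free projectives $A^{\op{mot}}e_i\langle n\rangle$. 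This is the graded analog of the Soergel--Lunts conjecture at the motivic level, and is essentially formal from the general theory of tilting subcategories once the weight structure and its heart are in place.

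Finally, I would descend to the equivariant derived category by applying realization and forgetting the Tate grading on both sides. On the motivic side, Theorem~\ref{thm:gradedsymm}(5) says that summing over Tate twists collapses $\DMT_{B\times K}(G)$ onto $\op{Der}^{\op{b}}_B(G/K)$; on the dg-module side, summing over the internal grading of $A^{\op{mot}}$-modules collapses the graded derived category onto the derived category of $(A,d=0)$-modules, with the compact generators $A^{\op{mot}}e_i$ mapping to the $Q_i=Ae_i$. The desired equivalence $\op{Der}^{\op{b}}_B(G/K)\approx\op{Der}^f_{\mathcal A}$ is then realized by the functor $M\mapsto \op{RHom}(\mathcal L,M)$, with full faithfulness on generators provided by the degrading property and with essential image $\op{Der}^f_{\mathcal A}$ by construction. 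The main obstacle I anticipate is to provide a sufficiently rigid dg-enhancement of the realization functor so that the forgetful step from graded $A^{\op{mot}}$-modules to ungraded $\mathcal A$-modules genuinely matches the realization of the tilting equivalence; concretely, this requires verifying that Keller's characterization of algebraic triangulated categories applies uniformly in this symmetric-variety setting, where — unlike the flag-variety case treated in \cite{SchTH} — the heart of the weight structure is not semisimple and one needs the combinatorics of the full category of Bott--Samelson motives to ensure that the generating projectives really do generate $\op{Der}^{\op{b}}_B(G/K)$ as a triangulated category.
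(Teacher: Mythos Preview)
Your core idea is right and matches the paper's: the motivic grading via Tate twists forces the Ext algebra $A$ to be formal, since for a weight-zero lift $\tilde{\mathcal L}$ the groups $\DMT_{B\times K}(\tilde{\mathcal L},\tilde{\mathcal L}(j)[n])$ vanish unless $n=2j$ by weight orthogonality, so summing over $j$ recovers $A$ with its cohomological grading coming from an honest internal grading.

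The paper, however, takes a shorter route that entirely avoids the dg-enhancement obstacle you flag at the end. Rather than constructing the equivalence $\op{Der}^{\op{b}}_B(G/K)\approx\op{Der}^f_{\mathcal A}$ directly and then worrying about compatibility of realization with the module-theoretic degrading, the paper quotes from \cite[0.3]{Lu-tor} the already-established equivalence $\op{Der}^{\op{b}}_B(G/K)\approx\op{Der}^f_{\mathcal B}$, where $\mathcal B$ is the \emph{full} endomorphism dg-algebra of the generator $\mathcal L$. The Soergel--Lunts conjecture then reduces to the purely algebraic statement that the inclusion $\mathcal A=(A,d=0)\hookrightarrow\mathcal B$ is a quasi-isomorphism, i.e.\ that $\mathcal B$ is formal. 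This is exactly your diagonal-concentration argument via Theorem~\ref{thm:gradedsymm}(5): writing $\op{End}^\bullet_{\op{Der}_{B\times K}(G)}(\mathcal L,\mathcal L)\cong\bigoplus_j\DMT_{B\times K}(\tilde{\mathcal L},\tilde{\mathcal L}(j))$ and noting that for each indecomposable summand only one $j$ survives. So your formality argument is precisely what is needed, but the black-box use of Lunts's Proposition~0.3.1 means there is nothing to check about Keller-type enhancements or compatibility of degrading functors on the two sides --- all of that is packaged into the cited result, and the only new input is the formality of the dg-algebra.
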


\begin{proof}
Recall from \cite[0.3]{Lu-tor} that, parallel to \ref{SLconj}, the equivariant derived category $\op{Der}^{\op{b}}_B(G/K)$ is generated by finitely many objects $F_1,\dots,F_n$ and we can consider the dg-algebra $\mathcal{B}=\op{End}^\bullet_{\op{Der}_B(G/K)}(\bigoplus F_i,\bigoplus F_i)^{\op{op}}$. There are also dg-modules $P_i=\mathcal{B}e_i$ for the projection $e_i:F\to F_i$. Denoting by $\op{Der}^f_{\mathcal{B}}$ the subcategory of the derived category of dg-$\mathcal{B}$-modules generated by $P_i$, \cite[Proposition 0.3.1]{Lu-tor} establishes a natural equivalence of triangulated categories
\[
\op{Der}_B(G/K)\approx \op{Der}^f_{\mathcal{B}}.
\]
The claim follows if we can prove that the natural inclusion $\mathcal{A}\hookrightarrow\mathcal{B}$ is a quasi-isomorphism. 

Now we can apply Theorem~\ref{thm:gradedsymm} (5) which states that $\DMT_{B\times K}(G)$ is a graded version of the equivariant derived category $\op{Der}^{\op{b}}_{B\times K}(G)$. This allows to compute
\[
\op{End}_{\op{Der}_{B\times K}(G)}\left(F,F\right)\cong \bigoplus_{j\in\mathbb{Z}}\DMT_{B\times K}(\tilde{F},\tilde{F}(j))
\]
where $\tilde{F}$ is a lift of the generator $F=\bigoplus F_i$ to $\DMT_{B\times K}(G)$. On the right-hand side, only one of the summands for $j\in\mathbb{Z}$ can be non-trivial for any indecomposable object, due to weight reasons. In particular, the endomorphism-dg-algebra $\op{End}_{\op{Der}_{B\times K}(G)}\left(F,F\right)$ is formal. But this implies that $\mathcal{A}\hookrightarrow\mathcal{B}$ is a quasi-isomorphism which proves the formality conjecture. 
\end{proof}

As a second application to representation theory, the results about equivariant mixed Tate motives for symmetric varieties allow to prove Conjectures 4.2.2 and 4.2.3 of \cite{So-L}. 

\begin{Bemerkung}
We recall the relevant setup. The goal is to study the representations of a real Lie group $G_{\mathbb{R}}$ with maximal compact subgroup $K_{\mathbb{R}}$. However, the geometric approach to representation theory of $G_{\mathbb{R}}$ considers the complexified group $G=G_{\mathbb{R}}\times_{\mathbb{R}}\mathbb{C}$ with maximal subgroup $K=K_{\mathbb{R}}\times_{\mathbb{R}}\mathbb{C}$. By the Matsuki correspondence, the orbits of $G_{\mathbb{R}}$ on the complex flag variety $G/B$ are in bijective correspondence with the $K$-orbits on the flag variety $G/B$. The relation between representations of the real Lie group (actually, Harish-Chandra-modules with trivial central character) and perverse sheaves on $K$-orbits on $G/B$ is discussed in \cite{BB} and more generally (for the derived categories and arbitrary characters) in \cite{BeLuK}. 

The geometric side of the conjectural Koszul duality for representations of real Lie groups from \cite{So-L} considers the action of the reductive complex group on suitable varieties $X(\chi)$. In a more recent formulation, cf. \cite{soergel:slides}, the relevant varieties with action are of the form $B\looparrowright \mathcal{Z}^1_\gamma(\op{Gal}(\mathbb{C}/\mathbb{R}),G)$ where a Borel subgroup of $G$ acts on the space of 1-cocycles for the Galois group, twisted by a holomorphic involution $\gamma$ of $G$.\footnote{More precisely, we should probably be talking about the Langlands dual groups here; but as we are not discussing Koszul duality issues, this doesn't really matter.} Since the stabilizer in $G$ of such a 1-cocycle is a symmetric subgroup, the variety with action above breaks up into a disjoint union of varieties $B\looparrowright G/K'$ where $K'$ is a symmetric subgroup (these correspond to complexifications of maximal compacts for inner forms of $G_{\mathbb{R}}$). In the original formulation of \cite{So-L}, the varieties considered were $G\looparrowright G\times_BG/K'$ so that we have equivalences of equivariant derived categories $\op{Der}_G(G\times_{/B}G/K')\approx\op{Der}_B(G/K')$. In particular, the symmetric variety case discussed in this section is exactly the relevant case for the Koszul duality question in the representation theory of real Lie groups. 
\end{Bemerkung}

\begin{Bemerkung}
Let now $B\looparrowright G/K$ (or $G\looparrowright G\times_{/B}G/K$) be a symmetric variety as considered above. As in \ref{SLconj}, we define the geometric extension algebra $\op{Ext}_G(X)=\op{Ext}^\bullet_G(\mathcal{L},\mathcal{L})$, where $\mathcal{L}$ is the direct sum of the finitely many simple $G$-equivariant perverse sheaves on $X$. 
\end{Bemerkung}

\begin{theorem}[geometric part of Soergel's conjectures]
\label{thm:abvconjecture}
Adjoining a square root of the Tate twist, the category $\DMT_G(X)$ satisfies the requirements for $\mathcal{D}_g$ in \cite[Conjecture 4.2.2 and 4.2.3]{So-L}. More precisely:
\begin{enumerate}
\item The functor $\DMT_G(X)\to\op{Der}^{\op{b}}_G(X)$ is a degrading functor in the sense of \cite{BGSo}. 
\item Motivic lifts of the standard and costandard objects in $\op{Der}^{\op{b}}_G(X)$ are given by $i_\ast(\mathcal{L})$ and $i_!(\mathcal{L})$, respectively, where $i:PxK\hookrightarrow G$ is the inclusion of a $P\times K$-double coset in $G$ and $\mathcal{L}$ is a  $K$-equivariant local system on the corresponding $K$-orbit of $G/P$. These lifts are unique up to Tate twist. 
\item There is a fully faithful embedding $\DMT_G(X)\hookrightarrow \op{Der}^-(\op{Ext}_G(X)\op{-mod})$, related to the ``geometric t-structure'' on the derived category of the geometric extension algebra.
\end{enumerate}
\end{theorem}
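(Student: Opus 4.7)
The plan is to assemble the three assertions from the structural results already in place: the grading statement of Theorem~\ref{thm:gradedsymm}, the tilting equivalence of Proposition~\ref{prop:symmpurity}, and the Soergel--Lunts formality Theorem~\ref{thm:slsymm}. All three assertions reduce to the case of $B\looparrowright G/K$ via the induction equivalence of Proposition~\ref{cor:indequiv}, which identifies $\DMT_G(G\times_{/B}G/K)\approx\DMT_B(G/K)$ and matches all relevant structures (six functors, weights, realization).

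For (1), I would simply invoke Theorem~\ref{thm:gradedsymm}(5): after adjoining a square root of the Tate twist to account for shifts of intersection complexes of odd-dimensional orbits, the realization functor $\op{Real}:\DMT_B(G/K)\to\op{Der}^{\op{b}}_B(G/K)$ is exact and essentially surjective on the collection of simple perverse sheaves by part~(3) of that theorem. Faithfulness is part of the degrading-functor definition and follows from the conservativity statement in Proposition~\ref{prop:conservative}. The natural isotransformation $\op{Real}\Rightarrow \op{Real}\circ(1)[2]$ is supplied by the Tate twist. For (2), I would read off the motivic lifts from Theorem~\ref{thm:gradedsymm}(3) as $i_!\mathcal L$ and $i_\ast\mathcal L$ for motivic $K$-equivariant local systems $\mathcal L$ on orbits. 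Uniqueness up to Tate twist reduces, via the induction equivalence, to the following observation on a point: by Proposition~\ref{prop:wtpt} the heart $\DMT_{K'}(\pt)_{\op{wt}=0}$ is the additive category generated by $\op{Ind}_{K'^0}^{K'}\underline{\pt}(n)[2n]$, so a pure weight-zero motive is determined by its underlying local system up to the operation $(n)[2n]$.

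For (3), which is the conceptual heart of the statement, my plan is as follows. Combining Proposition~\ref{prop:symmpurity} with Corollary~\ref{cor:hompuretate} shows that for the direct sum $\widetilde{\mathcal L}$ of motivic lifts of the simple equivariant perverse sheaves, the bigraded algebra
\[
\mathcal E^{\bullet,\bullet}\pdef\bigoplus_{i,j}\DMT_G(X)(\widetilde{\mathcal L},\widetilde{\mathcal L}(j)[i])
\]
is concentrated on the ``pure'' diagonal $i=2j$ by weight reasons, and its total degree part recovers the geometric extension algebra $\op{Ext}_G(X)$ of \ref{SLconj}. In particular $\op{Ext}_G(X)$ acquires a canonical internal grading whose associated formal dg-algebra realizes the quasi-isomorphism implicit in the proof of Theorem~\ref{thm:slsymm}. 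I would then define the functor
\[
\Phi:\DMT_G(X)\longrightarrow \op{Der}^{-}\bigl(\op{Ext}_G(X)\op{-grmod}\bigr),\qquad M\mapsto \bigoplus_{i,j}\DMT_G(X)(\widetilde{\mathcal L},M(j)[i]),
\]
which on the tilting heart sends $\DMT_G(X)_{\op{wt}=0}$ to finitely generated graded projective modules and extends to complexes via Proposition~\ref{prop:symmpurity}. Full faithfulness on the heart follows by expressing objects as summands of Bott--Samelson motives and using the orthogonality from the proof of \ref{puBS}; extension to all of $\DMT_G(X)$ uses the tilting equivalence and the standard tilting principle of Theorem~\ref{thm:derivatortilting}. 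Matching the geometric t-structure amounts to observing that the standard objects $i_{!}\mathcal L$ and costandard objects $i_{\ast}\mathcal L$ of (2) map to modules with prescribed support/cosupport properties on the simples, which is the definition of the t-structure in \cite{So-L}.

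The main obstacle, and the only place where care is required beyond bookkeeping, is the verification that the internal grading on $\op{Ext}_G(X)$ coming from motivic weights is the one required by \cite[Conjecture 4.2.2 and 4.2.3]{So-L}, together with the compatibility of the t-structure. Once the purity Corollary~\ref{cor:hompuretate} is in hand this becomes mechanical, but translating between the several equivalent descriptions of the Ext-algebra (as hypercohomology of $\iHom(\widetilde{\mathcal L},\widetilde{\mathcal L})$ in $\DMT_G(X)$, as Yoneda Ext in $\op{Der}^{\op{b}}_G(X)$, and as the formal dg-algebra of Theorem~\ref{thm:slsymm}) needs to be done carefully to match the conventions of \cite{So-L}.
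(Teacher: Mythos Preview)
Your proposal is correct and follows essentially the same route as the paper: parts (1) and (2) are read off from Theorem~\ref{thm:gradedsymm}, and part (3) is obtained by mapping the heart $\DMT_G(X)_{\op{wt}=0}$ into $\op{Ext}_G(X)$-modules via $\Hom$ into the motivic lift $\widetilde{\mathcal L}$ of the direct sum of simples, then extending to all of $\DMT_G(X)$ via the tilting equivalence of Proposition~\ref{prop:symmpurity} and Theorem~\ref{thm:derivatortilting}. Your explicit invocation of Corollary~\ref{cor:hompuretate} to pin down the purity of the bigraded Ext-algebra is a useful addition, and your caution about matching the geometric t-structure conventions of \cite{So-L} is well placed: the paper itself only claims the embedding is ``related to'' that t-structure and leaves the precise identification of the essential image with its heart as an open point in the subsequent remark.
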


\begin{proof}
Actually the statements (1) and (2) have already been established in Theorem~\ref{thm:gradedsymm}. It remains to note that Conjecture 4.2.2 of \cite{So-L} is really the existence of a graded version of the equivariant derived category $\op{Der}_B(G/K)$ together with the existence of lifts of the standard objects. Conjecture 4.2.3 of \cite{So-L} is the existence of lifts of the costandard objects. The uniqueness required in the conjecture follows from the uniqueness of the lifts of standard and costandard objects up to Tate twists together with the fact that the nontriviality of the Hom implies that there is no choice of Tate twist. 

For (3), the fully faithful embedding arises from 
\[
\DMT_G(X)_{\op{wt}=0}\hookrightarrow \op{Der}^-(\op{Ext}_G(X)\op{-mod})
\]
which maps a weight $0$ motive to the cohomology of its realization (similar to the constructions in Sections~\ref{sec:motcohom} and \ref{sec:tilting}), viewed as a module over the geometric extension algebra. Alternatively, we can map a motive into the motivic lift of the direct sum of the simple objects $\mathcal{L}_x$. This embeds $\DMT_G(X)_{\op{wt}=0}$ as a tilting subcategory, and we can apply Theorem~\ref{thm:derivatortilting} to extend this embedding to a fully faithful embedding of $\op{Hot}^{\op{b}}(\DMT_G(X)_{\op{wt}=0})$. The latter is, by Proposition~\ref{prop:symmpurity}, equivalent to $\DMT_G(X)$ and this proves the claim. 
\end{proof}

\begin{Bemerkung}
There is one remaining claim in the geometric part of Soergel's conjectures \cite{So-L}: the essential image of the embedding 
\[
\DMT_G(X)_{\op{wt}=0}\hookrightarrow \op{Der}^-(\op{Ext}_G(X)\op{-mod})
\]
should be identified with the heart of the geometric t-structure on the derived category of the geometric extension algebra. 
\end{Bemerkung}

\begin{Bemerkung}
We don't discuss explicitly how the Koszul duality conjectures should model Vogan's character duality for real Lie groups. Further information on combinatorics in the representation theory of real Lie groups and Kazhdan--Lusztig--Vogan polynomials can be found in \cite{lusztig:vogan,Vo4,ABV,So-L}, cf. also \cite[3.10]{springer:schubert}. 

Another direction, relating motivic theories with representation theory of real Lie groups is opened up by work of Casian and Stanton \cite{casian:stanton}. Their work relates the integral cohomology of real flag varieties with the geometric approach to representation theory of real Lie groups via Beilinson--Bernstein localization of Harish-Chandra-modules \cite{BB}. The motivic analogue of the integral cohomology of real flag varieties is given by the Chow--Witt rings of flag varieties (over $\mathbb{R}$ or more general fields) which have been studied and partially computed recently. More work needs to be done to understand those connections properly.
\end{Bemerkung}


\section{Hecke modules for wonderful compactifications}
\label{sec:wonderful}

In this section, we provide a model for the Hecke module associated in \cite{SpCompact} to a wonderful compactification, which we will call the \emph{wonderful Hecke module}. The model uses equivariant mixed Tate motives, and we show that the $\ell$-adic realization induces an isomorphism from the category of $B\times B$-equivariant mixed Tate motives on the wonderful compactification to Springer's Hecke module. 

For the comparison with Springer's Hecke module of \cite{SpCompact}, Remark~\ref{rem:fq} applies again: Springer's arguments work over finite fields while our motivic categories usually work over algebraically closed fields. But the descent statements for equivariant mixed Tate motives imply that our descriptions of motivic categories work equally well over finite fields and thus can be compared to Springer's constructions via the $\ell$-adic realization functors. 

\begin{Bemerkung}
\label{wonderfulsetup}
Let $G$ be an adjoint semisimple group and let $G\times G\looparrowright X$ be a wonderful compactification of $G$. Assume that $\mathbb{D}=\mathbf{DA}^{\et}(-;\Lambda)$ or $\mathbb{D}=\op{MDer}(-;\mathbb{C})$ and denote by $\op{Real}$ the appropriate realization functor. For fixed standard parabolic subgroups $P,Q\subset G$, consider the following diagram of Grothendieck groups:
\[
\xymatrix{
\op{K}_0(\op{Hot}^{\op{b}}(\DMT_{P\times Q}(X)_{\op{wt}=0})) \ar[rr]^{\op{Real}} \ar[d]_{\op{tilt}} && \op{K}_0(\op{Hot}^{\op{b}}(\op{Per}^{\op{ss}}_{P\times Q}(X;\Lambda))) \ar[d]^{\mathcal{H}} \\
\op{K}_0(\DMT_{P\times Q}(X)) \ar[rr]_{\op{Real}} && \op{K}_0(\op{Der}^{\op{b}}_{P\times Q}(X;\Lambda)). 
}
\]
On the left-hand side, we have the motivic categories of Section~\ref{sec:tiltingapp}; on the right-hand side, we have the categories of equivariant perverse sheaves on $X$ and the equivariant derived category considered in \cite{SpCompact}. The left vertical functor is the tilting functor of Proposition~\ref{prop:wonderfulpurity} and the horizontal functors are the realization functors of Proposition~\ref{prop:realization}. The other functors are set up in the same way as in the case of symmetric varieties, cf. \ref{symmdiag}. 
\end{Bemerkung}

\begin{theorem}
\label{thm:gradedwonderful}
Assume the situation in \ref{wonderfulsetup}
\begin{enumerate}
\item Let $k=\mathbb{F}_q$ be a finite field of odd characteristic, and let $\mathbb{D}=\mathbf{DA}^{\et}(-;\mathbb{Q}_\ell)$ be the homotopical stable algebraic derivator underlying the construction of equivariant motives. Let $G$ be an adjoint semisimple group, let $G\times G\looparrowright X$ be a wonderful compactification, and let $P\times Q$ be a parabolic subgroup of $G\times G$. Then the $\ell$-adic realization functor
\[
\op{Real}_\ell:\DMT_{P\times Q}(X)\to \op{Der}_{P\times Q}(X;\mathbb{Q}_\ell)
\] 
is fully faithful on the heart of the weight structure, i.e., the category of Bott--Samelson motives. The essential image consists of intersection complexes concentrated in even degrees. 
\item Let $k=\mathbb{C}$ and consider  $\mathbb{D}=\op{MDer}(-;\mathbb{C})$ as homotopical stable algebraic derivator underlying the construction of equivariant motives. Let $G$ be an adjoint semisimple group, let $G\times G\looparrowright X$ be a wonderful compactification, and let $P\times Q$ be a parabolic subgroup of $G\times G$. Then the Hodge realization functor
\[
\op{Real}_{\op{H}}:\DMT_{P\times Q}(X)\to \op{Der}_{P\times Q}(X;\mathbb{C})
\] 
is fully faithful on the heart of the weight structure, i.e., the category of Bott--Samelson motives. The essential image consists of intersection complexes concentrated in even degrees. 
\item Motivic lifts of the standard and costandard objects in the equivariant derived categories above are given by $i_\ast(\mathcal{L})$ and $i_!(\mathcal{L})$, respectively, where $i:PxQ\hookrightarrow X$ is the inclusion of a $P\times Q$-orbit in $X$ and $\mathcal{L}$ is a $P\times Q$-equivariant local system (cf. \cite[Section 5]{SpCompact}). 
\item The functors in points (1) and (2) are compatible with convolution and Verdier duality. 
\item Adjoining a root of the Tate twist, the equivariant mixed Tate motives provide a grading on the constructible equivariant derived categories in the sense of \ref{bgs43}, with the realization functors in (1) and (2) as degrading functors. 
\end{enumerate}
\end{theorem}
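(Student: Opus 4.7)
The proof will follow the same strategy already employed successfully for the parabolic case in Theorem~\ref{thm:gradedparabolic} and the symmetric case in Theorem~\ref{thm:gradedsymm}. The key inputs are all in place: the equivariant Whitney--Tate condition from Theorem~\ref{thm:wonderfulWT}, the existence of the weight structure and tilting equivalence from Proposition~\ref{prop:wonderfulpurity}, and the general compatibility of realization functors with the six-functor formalism, quotient/induction equivalences, and Verdier duality from Proposition~\ref{prop:realization}. Thus (4) is immediate from the fact that realization functors are morphisms of homotopical stable algebraic derivators preserving convolution (by the same reasoning as in Proposition~\ref{prop:schurmodulesymm}) and commuting with Verdier duality.

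For (3), the plan is to apply the induction equivalence $\DMT_{P\times Q}(PxQ) \approx \DMT_{H_x}(\pt)$, where $H_x \subseteq P\times Q$ is the stabilizer of a point in the orbit $PxQ$. Under this equivalence, a $(P\times Q)$-equivariant local system on the orbit corresponds to a representation of the component group $\pi_0(H_x)$ on a mixed Tate motive on the point, and such representations obviously admit motivic lifts (by choosing a representative in $\DMT_{H_x}(\pt)$ using the description via $\mathcal{A}_{H_x}$-modules). Then $i_!\mathcal{L}$ and $i_\ast\mathcal{L}$ for $i:PxQ\hookrightarrow X$ provide the required motivic lifts of the standard and costandard objects, because realization commutes with $i_!$ and $i_\ast$.

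For (1) and (2), the key observation is that cuspidals are clean in the wonderful compactification setting (this was established in the proof of Theorem~\ref{thm:wonderfulWT} via Lemma~\ref{lemma:easypurity}). So for a clean motive $M$ on a $B\times B$-orbit $j:V\hookrightarrow X$, the realization of $j_\ast M = j_! M$ is an intersection cohomology complex. The construction of Bott--Samelson motives via extension stability, shift/twist, direct summands and the convolution/induction operations preserves the property of being a direct sum of even-shifted intersection complexes. Combined with pointwise purity from Proposition~\ref{prop:wonderfulpurity} and the identification $\DMT_{P\times Q}^{\op{bs}}(X) = \DMT_{P\times Q}(X)_{\op{wt}=0}$, this shows that the realization functor sends the heart onto direct sums of even-shifted intersection complexes, and every such intersection complex is attained thanks to the explicit lifts from (3).

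For (5), the plan is a devissage argument identical to the one in Theorem~\ref{thm:gradedsymm}(5). Exactness of realization is clear, faithfulness follows from conservativity on equivariant mixed Tate motives (Proposition~\ref{prop:conservative}). To establish the isomorphism
\[
\bigoplus_{i\in\mathbb{Z}} \DMT_{P\times Q}(X)(M, N(i)) \xrightarrow{\cong} \op{Der}^{\op{b}}_{P\times Q}(X)(\op{Real}\, M, \op{Real}\, N),
\]
one writes $M$ and $N$ as iterated extensions of standard/costandard objects from (3), reducing the claim to morphisms between motivic local systems on single orbits. Via the induction equivalence, this reduces further to full faithfulness of the realization on equivariant mixed Tate motives over the point for the stabilizer group, which again follows from conservativity together with the description of morphisms in terms of the cohomology rings $\mathcal{A}_{H_x}$. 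The square root of the Tate twist has to be adjoined only to produce the odd shifts needed to obtain all (not merely even-shifted) intersection complexes in the target. The main obstacle in transferring the argument from the symmetric case is verifying that the combinatorics of $(P\times Q)$-orbits on $X$ and their stabilizers are well-behaved enough for the motivic local systems on orbits to match the $(P\times Q)$-equivariant local systems appearing in \cite{SpCompact,richardson:springer:survey}; this is where we invoke the detailed orbit analysis of \cite[Section 1]{SpCompact} together with the contracting slices from \cite[Proposition 1.6]{SpCompact} already used in Proposition~\ref{prop:wonderfulpurity}.
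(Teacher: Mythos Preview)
Your proposal is correct and follows essentially the same approach as the paper: the paper's proof is a one-line pointer to the symmetric case, Theorem~\ref{thm:gradedsymm}, with Theorem~\ref{thm:wonderfulWT} and Proposition~\ref{prop:wonderfulpurity} substituted for their symmetric analogues, and you have written out precisely that transfer in detail. The only superfluous element is your closing worry about the orbit combinatorics of \cite{SpCompact}; as in the symmetric case, the induction equivalence and conservativity handle this without needing any further combinatorial input.
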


\begin{proof}
The arguments are the same as in the proof of Theorem~\ref{thm:gradedsymm}, with the occasional replacement of the tilting and purity results for the symmetric case by Theorem~\ref{thm:wonderfulWT} and Proposition~\ref{prop:wonderfulpurity}. 
\end{proof}

\begin{proposition}
\label{prop:schurmodulewonderful}
Assume the situation of \ref{wonderfulsetup}
\begin{enumerate}
\item We have, for standard parabolic subgroups $P_1,P_2,Q_1,Q_2\subset G$, convolution functors
\[
-\star_Q-:\DMT_{(P_1\times P_2)\times (Q_1\times Q_2)}(G\times G)\times \DMT_{P_2\times Q_2}(X)\to \DMT_{P_1\times Q_1}(X). 
\]
\item These functors restrict to the hearts of the relevant weight structures.
\item The above convolution functors are compatible with the tilting functor of Proposition~\ref{prop:wonderfulpurity}:
\[
\op{tilt}:\op{Hot}^{\op{b}}(\DMT_{P\times Q}(X)_{\op{wt}=0})\to \DMT_{P\times Q}(X).
\]
\item On the level of Grothendieck groups, the case $P=Q=B$ can be identified, via the isomorphisms of Theorem~\ref{thm:comparisonSpCompact}, with the Hecke module structure from \cite{SpCompact}. 
\end{enumerate}
\end{proposition}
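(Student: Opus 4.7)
The plan is to follow the template of Proposition~\ref{prop:schurmodulesymm}, with the only new ingredient being a careful set-up of the convolution in the presence of a $G\times G$-action. Throughout, one regards $X$ as acted on by $G\times G$, so that $\DMT_{P\times Q}(X)$ becomes a module category over the Schur algebroid for $G\times G$, categorified by the motivic categories $\DMT_{(P_1\times P_2)\times(Q_1\times Q_2)}(G\times G)$ of Section~\ref{sec:parabolic}.

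For (1), I would define the convolution by the usual pattern: take the exterior product in $\mathbb{D}^+_{(P_1\times P_2)\times (Q_1\times Q_2)\times (Q_1\times Q_2)}(G\times G\times X)$, restrict along the diagonal of $Q_1\times Q_2$, descend via the generalized quotient equivalence to $\mathbb{D}^+_{P_1\times P_1}((G\times G)\times_{/(Q_1\times Q_2)}X)$, and finally push forward via the proper action map to $X$. Each step preserves (both $\ast$- and $!$-) orbitwise mixed Tate motives: the exterior product preserves mixed Tate motives by Propositions~\ref{prop:pullbackdmt} and \ref{prop:monodmt}; the diagonal restriction preserves them by Proposition~\ref{prop:dmtres}; the quotient equivalence preserves them by Proposition~\ref{prop:quotientdmt}; and the proper pushforward along $(G\times G)\times_{/(Q_1\times Q_2)}X\to X$ preserves $!$-orbitwise mixed Tate motives as in the proof of Proposition~\ref{Las}, while the $\ast$-version follows via Verdier duality (Proposition~\ref{prop:dmtverdier}). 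Since $X$ and $G\times G$ are equivariantly Whitney--Tate by Theorems~\ref{thm:parabolicWT} and \ref{thm:wonderfulWT}, this gives the required convolution functor. Associativity follows from Proposition~\ref{prop:convassoc}.

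For (2), I would repeat the weight-exactness checks of Proposition~\ref{prop:schurmodulesymm}: exterior product of pure weight $0$ motives is pure of weight $0$ by the smooth-pullback weight-exactness of Proposition~\ref{prop:wtres} and the tensor compatibility Proposition~\ref{prop:wttensor}; diagonal restriction is weight-exact (Proposition~\ref{prop:wtres}); the quotient equivalence is constructed to be compatible with the weight structures from Proposition~\ref{prop:equivweight}; and the multiplication/action map $(G\times G)\times_{/(Q_1\times Q_2)}X\to X$ is projective since $X$ is proper and $(G\times G)/(Q_1\times Q_2)$ is a flag variety, so its proper pushforward is weight-exact by Propositions~\ref{prop:wtres} and \ref{prop:wtpush}. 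For (3), I would invoke Theorems~\ref{thm:funtilt} and \ref{thm:tiltmonoid}: every functor appearing in the definition of convolution is either a left adjoint (pullback, proper pushforward, quotient equivalence) or the symmetric monoidal tensor product, so the tilting functor of Proposition~\ref{prop:wonderfulpurity} intertwines convolution on $\op{Hot}^{\op{b}}$ of the heart with convolution on the full motivic category, up to natural iso-transformations supplied by those tilting theorems.

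For (4), the argument is as in Theorem~\ref{thm:comparisonMS}: the $\ell$-adic realization functor is compatible with the full six-functor formalism (Proposition~\ref{prop:realization}), and hence with the convolution defined as above. The module structure on $\op{K}_0$ of Springer's categories $\mathcal{A}_X$ and $\mathcal{C}_X$ from \cite{SpCompact} is given by exactly the same convolution recipe (applied to perverse sheaves, respectively constructible complexes), so compatibility with the motivic convolution is automatic once Theorem~\ref{thm:comparisonSpCompact} provides the isomorphisms of the underlying abelian groups. I expect the main obstacle to be purely bookkeeping: keeping track of which $G$-factor acts on which side in the convolution diagram, and ensuring the quotient equivalence is taken with respect to the correct diagonal $Q_1\times Q_2\hookrightarrow (Q_1\times Q_2)\times(Q_1\times Q_2)$; no substantially new geometric input beyond what was used for the symmetric case is required.
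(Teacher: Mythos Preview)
Your proposal is correct and follows essentially the same approach as the paper: the paper's proof is a one-line reference back to Proposition~\ref{prop:schurmodulesymm} (with Theorem~\ref{thm:gradedwonderful} replacing Theorem~\ref{thm:gradedsymm} for part (4)), and you have faithfully unpacked that template step by step, invoking the same chain of propositions for each stage of the convolution. One small imprecision: the projectivity of the action map $(G\times G)\times_{/(Q_1\times Q_2)}X\to X$ comes solely from its fiber $(G\times G)/(Q_1\times Q_2)$ being a projective flag variety (via the identification with the product projection as in \ref{PBPF}); the properness of $X$ plays no role there.
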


\begin{proof}
This is proved the same way as Proposition~\ref{prop:schurmodulesymm}, but using Theorem~\ref{thm:gradedwonderful} for the identification of the Hecke module structure with the ones of \cite{SpCompact}. 
\end{proof}

\begin{theorem}
\label{thm:comparisonSpCompact}
On the level of Grothendieck groups, the $\ell$-adic realization functor induces the following commutative diagram  of Hecke-modules
\[
\xymatrix{
\op{K}_0^{\oplus}\left(\DMT_{B\times B}(X)_{\op{wt}=0}\right) \ar[rr]^{\op{Real}_\ell} \ar[d]_{\cong}^{\op{tilt}} & &
\op{K}_0^{\oplus}(\mathcal{A}_{X})\ar[d]^\cong \\
\op{K}_0(\DMT_{B\times B}(X))\ar[rr]_{\mathbb{H}^\bullet(\op{Real}_\ell)} && \op{K}_0(\mathcal{C}_{X}),
}
\]
where $\op{K}_0(\mathcal{A}_{X})$ and $\op{K}_0(\mathcal{C}_{X})$ are the Hecke-modules considered in \cite{SpCompact}. The horizontal maps become isomorphisms after extending scalars to $\mathbb{Z}[C]$ on the left-hand side, cf. \cite[Section 3]{SpCompact}. 
\end{theorem}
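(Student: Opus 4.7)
The plan is to essentially mimic the proof of Theorem~\ref{thm:comparisonMS} in the symmetric variety case, with the symmetric subgroup $K$ replaced by the second Borel factor acting on the wonderful compactification $X$. The Hecke module structures on the left-hand side are constructed in Proposition~\ref{prop:schurmodulewonderful}, while the structures on the right-hand side are those of \cite{SpCompact}. The left vertical isomorphism $\op{tilt}$ is Proposition~\ref{prop:wonderfulpurity}, and the right vertical isomorphism $\mathcal{H}$ is established in \cite{SpCompact}. Commutativity of the square is automatic from the fact that both compositions amount to applying $\ell$-adic realization and then taking equivariant cohomology, together with the compatibility of realization with the full six-functor formalism, cf.~Proposition~\ref{prop:realization}.

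The substantive content is showing that, after extending scalars to $\mathbb{Z}[C]$ on the left-hand side, the horizontal maps become isomorphisms of Hecke modules. For surjectivity, we use that by \cite[Section 3]{SpCompact}, the group $\op{K}_0(\mathcal{A}_X)$ is generated by the classes of intermediate extensions $j_{\ast!}(\mathcal{L}_Z[\dim Z])$ of local systems on $B\times B$-orbits $Z$ of $X$. By Theorem~\ref{thm:gradedwonderful}(1), Bott--Samelson motives realize to direct sums of (shifts of) intersection complexes, and point (3) of that theorem provides explicit motivic lifts of the standard and costandard objects via $i_\ast(\mathcal{L})$ and $i_!(\mathcal{L})$ for $i\colon PxQ\hookrightarrow X$ an orbit inclusion and $\mathcal{L}$ a $P\times Q$-equivariant local system; extending scalars to $\mathbb{Z}[C]$ allows for the odd-degree shifts that may appear in Springer's setup, so every simple object in $\mathcal{A}_X$ lifts.

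For injectivity, we argue exactly as in the proof of Theorem~\ref{thm:comparisonMS}: the motivic Hecke module $\op{K}_0^\oplus(\DMT_{B\times B}(X)_{\op{wt}=0})$ is a free $\mathbb{Z}[q,q^{-1}]$-module of finite rank, with basis indexed by the irreducible Bott--Samelson motives, i.e., motivic lifts of intersection complexes associated to pairs of an orbit and an irreducible equivariant local system on it. Conservativity of the $\ell$-adic realization on equivariant mixed Tate motives, cf.~Proposition~\ref{prop:conservative}, together with the existence of explicit motivic lifts from Theorem~\ref{thm:gradedwonderful}(3), produces a bijection between the motivic and the $\ell$-adic irreducible local systems on $B\times B$-orbits. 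Hence the realization functor sends a basis to a basis, which yields injectivity and completes the isomorphism statement.

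The main obstacle, and the place where the wonderful case differs technically from the symmetric case, is tracking the correct normalization of Tate twists against the integral shifts of intersection complexes appearing in \cite{SpCompact}; as in \cite{Mars-Springer}, this forces the passage from $\mathbb{Z}[q,q^{-1}]$ to $\mathbb{Z}[C]$ (equivalently, adjoining a square root of the Tate twist as in Theorem~\ref{thm:gradedwonderful}(5)) and requires one to match the $C$-action on the Springer side with the half-Tate-twist on the motivic side. Once this bookkeeping is in place, the compatibility with the Hecke module structure follows from Proposition~\ref{prop:schurmodulewonderful}(4), so the Grothendieck-group isomorphism is automatically one of Hecke modules.
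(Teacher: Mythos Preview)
Your proposal is correct and follows exactly the approach the paper takes: the paper's own proof is a single sentence saying the argument is parallel to that of Theorem~\ref{thm:comparisonMS}, using Theorem~\ref{thm:gradedwonderful} and Proposition~\ref{prop:schurmodulewonderful} in place of their symmetric-variety analogues. You have simply spelled out that parallel in detail, including the surjectivity via lifts of intersection complexes and injectivity via conservativity, which is precisely what the reference to Theorem~\ref{thm:comparisonMS} amounts to.
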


\begin{proof}
Again, the proof is closely parallel to the one of Theorem~\ref{thm:comparisonMS}, using Theorem~\ref{thm:gradedwonderful} and Proposition~\ref{prop:schurmodulewonderful}. 
\end{proof}

\begin{theorem}[Soergel--Lunts conjecture]
There is a natural equivalence of categories 
\[
\op{Der}^{\op{b}}_{P\times Q}(X)\approx\op{Der}^f_{\mathcal{A}},
\]
where the category $\op{Der}^f_{\mathcal{A}}$ is the one discussed in \cite{Lu-tor}, cf. also \ref{SLconj}, obtained from the simple equivariant perverse sheaves on the wonderful compactification $G\times G\looparrowright X$. 
\end{theorem}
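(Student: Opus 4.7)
The plan is to mimic the strategy used in Theorem~\ref{thm:slsymm} for the symmetric variety case, replacing the input on tilting and gradings for symmetric varieties by the corresponding results for wonderful compactifications established in Section~\ref{sec:tiltingapp}. Fix the notation from \ref{SLconj}, now applied to the $(P\times Q)$-action on the wonderful compactification $X$: let $\mathcal{L}_1,\ldots,\mathcal{L}_m$ be the isomorphism classes of simple $(P\times Q)$-equivariant perverse sheaves on $X$, put $\mathcal{L}=\bigoplus\mathcal{L}_i$, set $A=\op{Ext}^\bullet_{\op{Der}^{\op{b}}_{P\times Q}(X)}(\mathcal{L},\mathcal{L})^{\op{op}}$, form the dg-algebra $\mathcal{A}=(A,d=0)$, and let $\op{Der}_{\mathcal{A}}^f$ be the subcategory of the derived category of dg-$\mathcal{A}$-modules generated by the projectives $Q_i=\mathcal{A}e_i$.

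First, I would invoke the general formalism of Lunts, cf.~\cite[Proposition~0.3.1]{Lu-tor}, which produces a set of generators $F_1,\ldots,F_n$ of $\op{Der}^{\op{b}}_{P\times Q}(X)$ together with an associated endomorphism dg-algebra $\mathcal{B}=\op{End}^\bullet(\bigoplus F_i)^{\op{op}}$ and a natural equivalence of triangulated categories
\[
\op{Der}^{\op{b}}_{P\times Q}(X)\approx\op{Der}^f_{\mathcal{B}}.
\]
Arranging the choice of $F_i$ so that $\mathcal{L}$ is a direct summand of $\bigoplus F_i$ yields a canonical morphism of dg-algebras $\mathcal{A}\hookrightarrow\mathcal{B}$, and the desired equivalence $\op{Der}^{\op{b}}_{P\times Q}(X)\approx\op{Der}_{\mathcal{A}}^f$ follows as soon as this morphism is shown to be a quasi-isomorphism.

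The heart of the argument is then the formality of $\mathcal{B}$ (equivalently of $\mathcal{A}\hookrightarrow\mathcal{B}$), and this is the step that relies crucially on the motivic framework set up earlier. By Theorem~\ref{thm:gradedwonderful}~(5), the realization functor $\op{Real}:\DMT_{P\times Q}(X)\to\op{Der}^{\op{b}}_{P\times Q}(X)$ is a degrading functor, so lifting the generator $F=\bigoplus F_i$ to an object $\widetilde F\in\DMT_{P\times Q}(X)$ yields
\[
\op{End}^\bullet_{\op{Der}^{\op{b}}_{P\times Q}(X)}(F,F)\cong \bigoplus_{j\in\mathbb{Z}}\DMT_{P\times Q}(X)\bigl(\widetilde F,\widetilde F(j)\bigr).
\]
By Proposition~\ref{prop:wonderfulpurity} there is a weight structure on $\DMT_{P\times Q}(X)$ whose heart is given by the Bott--Samelson motives, and the indecomposable summands of $\widetilde F$ can be taken to be motivic lifts of the simple perverse sheaves $\mathcal{L}_i$, hence pure of weight zero (Theorem~\ref{thm:gradedwonderful}~(1)--(3)). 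Weight considerations then force at most one summand on the right-hand side to be non-zero for each pair of indecomposables, and the resulting graded structure collapses the dg-algebra to its cohomology: the dg-subalgebra $\mathcal{A}\hookrightarrow\mathcal{B}$ is a quasi-isomorphism.

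The main obstacle is bookkeeping rather than substance: one must check that the generators $F_i$ supplied by Lunts' theorem can be chosen to lift to the motivic category (so that the weight argument applies) and that the inclusion of the simples into $\bigoplus F_i$ really does induce the dg-algebra inclusion $\mathcal{A}\hookrightarrow\mathcal{B}$. All other ingredients --- the equivalence $\op{Der}^{\op{b}}_{P\times Q}(X)\approx\op{Der}^f_{\mathcal{B}}$, the existence of a weight structure with a Bott--Samelson heart, and the motivic lifts of intersection complexes --- are already in place, and the proof is formally the same as that of Theorem~\ref{thm:slsymm}, with Proposition~\ref{prop:wonderfulpurity} and Theorem~\ref{thm:gradedwonderful} playing the roles formerly played by Proposition~\ref{prop:symmpurity} and Theorem~\ref{thm:gradedsymm}.
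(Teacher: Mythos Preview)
Your proposal is correct and follows exactly the approach of the paper, which simply says ``Similar to the proof of Theorem~\ref{thm:slsymm}, using the graded versions from Theorem~\ref{thm:gradedwonderful}.'' You have spelled out precisely what that sentence means, correctly identifying that Proposition~\ref{prop:wonderfulpurity} and Theorem~\ref{thm:gradedwonderful} replace Proposition~\ref{prop:symmpurity} and Theorem~\ref{thm:gradedsymm} in the argument.
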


\begin{proof}
Similar to the proof of Theorem~\ref{thm:slsymm}, using the graded versions from Theorem~\ref{thm:gradedwonderful}. 
\end{proof}

\begin{Bemerkung}
Another approach to the formality of the equivariant derived category for symmetric varieties has been explored by Brion and Joshua \cite{brion:joshua}. 
\end{Bemerkung}

\appendix

\chapter{Motives of classifying and homogeneous spaces}
\renewcommand{\thesection}{A.\arabic{section}}

In this appendix, we provide a recollection on the structure of motives of classifying spaces of reductive groups as well as motives of homogeneous spaces. Most important for us will be that these are mixed Tate motives, which is relevant for having induction functors on categories of equivariant mixed Tate motives. We would like to point out at the very beginning that we work, as always, with rational coefficients. In fact, some of the assertions about motives being mixed Tate are wrong integrally or with finite coefficients. 

\section{Bar constructions and their motives} 
\label{sec:bar}

\begin{definition}
\label{def:twosided}
\index{bar construction}
\index{classifying space}
Let $k$ be a field, and let $G$ be an algebraic group over $k$. Let $(X\looparrowleft G)$ and $(G\looparrowright Y)$ be varieties with right and left $G$-actions, respectively. Then the \emph{two-sided bar construction} is the following simplicial variety 
\[
{\op{B}}(X,G,Y)_n:=X\times G^n\times Y.
\]
\end{definition}

\begin{example}
The universal $G$-bundle can be written as the natural morphism 
\[
{\op{E}}G={\op{B}}(\op{pt},G,G)\to {\op{B}}(\op{pt},G,\op{pt})={\op{B}}G
\]
induced from the morphism $\op{fin}_G:G\to\op{pt}$. The Borel construction for a variety $G\looparrowright X$, cf. Definition~\ref{def:borel}, is also a special case ${\op{E}}G\times_{/G}X={\op{B}}(\op{pt},G,X)$. 

More specifically, if $G$ is an algebraic group with a closed subgroup $H\leq G$, the bar construction ${\op{B}}(G,H,\op{pt})$ for the variety with action $G\looparrowleft H$ is a model for the homogeneous space $G/H$. The claim that the natural projection ${\op{B}}(G,H,\op{pt})\to G/H$ is a weak equivalence in the $\mathbb{A}^1$-homotopy category (alternatively, induces an isomorphism of motives) is established in the proof of Proposition~\ref{prop:freeaction}. 
\end{example}

\begin{lemma}
\label{lem:bgtate}
Let $k$ be a field and let $\mathbb{D}$ be a homotopical stable algebraic derivator satisfying the conditions of~\ref{derivator:new}. Let $G$ be an algebraic group over $k$ and let $(X\looparrowleft G)$ and $(G\looparrowright Y)$ be varieties with right and left $G$-actions, respectively. Assume that $\op{M}(G), \op{M}(X),\op{M}(Y)\in \DMT(\op{pt})$, i.e., all varieties involved have mixed Tate motives. Then we have  $\op{M}({\op{B}}(X,G,Y))\in \op{Ind-}\DMT(\op{pt})$, i.e., the two-sided bar construction  is an Ind-mixed Tate motive. 
\end{lemma}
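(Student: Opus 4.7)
The strategy is to compute the motive of the bar construction via the skeletal filtration of the simplicial variety ${\op{B}}(X,G,Y)_\bullet$ and to show that each skeleton has mixed Tate motive, so that the full motive lies in the Ind-completion. First, I would interpret $\op{M}({\op{B}}(X,G,Y))$ as the structural pushforward $\op{fin}_\sharp\,\underline{{\op{B}}(X,G,Y)}$, where $\op{fin}\colon {\op{B}}(X,G,Y) \to \op{pt}$ is regarded as a morphism of diagrams of schemes (with $\op{pt}$ viewed as a constant simplicial scheme). By the construction of motives over diagrams recalled in Definition~\ref{def:daet} and the discussion around $2$-functoriality of $f_\sharp$, this pushforward is the homotopy colimit in $\mathbb{D}(\op{pt})$ of the simplicial object
\[
[n] \mapsto \op{M}({\op{B}}(X,G,Y)_n) = \op{M}(X \times G^n \times Y).
\]

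Next, since $\op{M}(X), \op{M}(G), \op{M}(Y) \in \DMT(\op{pt})$ by assumption, the Künneth isomorphism
\[
\op{M}(X \times G^n \times Y) \cong \op{M}(X) \otimes \op{M}(G)^{\otimes n} \otimes \op{M}(Y)
\]
combined with the fact that $\DMT(\op{pt})$ is a tensor triangulated subcategory of $\mathbb{D}(\op{pt})$ (cf.~\ref{mtder}) implies that each simplicial level lives in $\DMT(\op{pt})$. Moreover, the face and degeneracy maps of ${\op{B}}(X,G,Y)$ arise from the multiplication of $G$, the two actions, partial diagonals and the identity of $G$; on the level of motives these induce morphisms between mixed Tate motives, so we obtain a simplicial object in $\DMT(\op{pt})$.

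Then I would use the skeletal filtration of the simplicial variety: the motives of the skeleta fit into a sequence
\[
\op{M}(\op{sk}_0) \to \op{M}(\op{sk}_1) \to \op{M}(\op{sk}_2) \to \cdots
\]
with colimit $\op{M}({\op{B}}(X,G,Y))$, and the successive cofibers are shifts of the normalized (nondegenerate) pieces $\op{N}_n \subset \op{M}(X) \otimes \op{M}(G)^{\otimes n} \otimes \op{M}(Y)$, which are direct summands of mixed Tate motives and therefore mixed Tate by idempotent completeness and closure under summands of $\DMT(\op{pt})$. By induction on $n$ and extension-stability of $\DMT(\op{pt})$, each $\op{M}(\op{sk}_n)$ lies in $\DMT(\op{pt})$. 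The filtered colimit over $n$ then presents $\op{M}({\op{B}}(X,G,Y))$ as an object of $\op{Ind-}\DMT(\op{pt})$.

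The main obstacle will be to justify cleanly the identification of the pushforward along a simplicial scheme with the homotopy colimit of the term-wise motives, and in particular the existence and good behaviour of the skeletal filtration in the derivator setting of \cite{ayoub:thesis1}. Once this is in place (the relevant tools being precisely the 2-functoriality of $f_\sharp$ and the compatibility of homotopy colimits in $\mathbb{D}(\op{pt})$ with colimits of simplicial diagrams, as used in Section~\ref{sec:equivdef}), the Künneth step and the inductive argument on skeleta are formal.
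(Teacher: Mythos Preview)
Your proposal is correct and follows essentially the same approach as the paper: identify $\op{M}({\op{B}}(X,G,Y))$ with the homotopy colimit of the simplicial object $[n]\mapsto \op{M}(X\times G^n\times Y)$, observe via K\"unneth that each level is mixed Tate, and conclude the colimit is Ind-mixed Tate. The only difference is that the paper is more direct at the last step---it simply notes that $\op{Ind-}\DMT(\op{pt})$ is closed under countable homotopy colimits and applies this immediately to the simplicial diagram---whereas you pass through the skeletal filtration to show each $\op{M}(\op{sk}_n)\in\DMT(\op{pt})$ before taking the sequential colimit; this extra work is not needed for the stated conclusion but does yield the mildly sharper intermediate fact that the finite skeleta are genuinely mixed Tate.
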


\begin{proof}
By Definition~\ref{mtder}, the category $\DMT(\op{pt})$ is closed under triangles and tensor products. Passing to the Ind-objects implies that we additionally acquire closure under countable colimits. In particular, countable colimits exist in the category $\op{Ind-}\DMT(\op{pt})$ of Ind-mixed Tate motives. 

Now the functor $\op{M}$ associating to a smooth variety $X$ its motive $\op{M}(X)\in\mathbb{D}(\op{pt})$ can be extended to smooth simplicial varieties $X_\bullet$ as follows: we apply the functor $\op{M}$ termwise to get a simplicial motive $\mathbf{\Delta}^{\op{op}}\to\mathbb{D}(\op{pt}):[n]\mapsto\op{M}(X_n)$, and then the realization (alternatively, the homotopy colimit) is the motive $\op{M}(X_\bullet)$ of the simplicial object. In particular, the motive of the simplicial object is a (countable) homotopy colimit of a diagram whose individual terms are of the form $\op{M}(X_n)$. As discussed in the first paragraph, the category of Ind-mixed Tate motives is closed under countable homotopy colimits, hence we see that $\op{M}(X_\bullet)\in \op{Ind-}\DMT(\op{pt})$ if $\op{M}(X_n)\in\DMT(\op{pt})$ for all $n\in\mathbb{N}$.  

Under the assumptions on the motives of $G$, $X$ and $Y$, we know that the individual terms $X\times G^n\times Y$, $n\in\mathbb{N}$ in the simplicial variety ${\op{B}}(X,G,Y)$ are all mixed Tate because products of varieties correspond to tensor products of their motives. By the previous discussion, $\op{M}({\op{B}}(X,G,Y))\in\op{Ind-}\DMT(\op{pt})$ which proves our claim. 
\end{proof}

\begin{Bemerkung}
The motive of a linear group $G$ over a perfect field $k$ is well-known to be mixed Tate. Note that for any linear group $G$, the natural projection $G\to G/{\op{R}_{\op{u}}}G$ to the reductive group $G/{\op{R}_{\op{u}}}G$ induces an isomorphism of motives. Moreover, if $\pi_0(G)=G/G^0$ is a direct sum of $k$-points, then $G$ is a disjoint union of copies of $G^0$, hence the motive $\op{M}(G)$ is a direct sum of copies of $\op{M}(G^0)$. Now, for a split reductive group $G$ over a perfect field $k$, the motive $\op{M}(G)$ with rational coefficients was computed in \cite[Theorem 6.1]{biglari}. From his computations, it follows in particular that the motives of reductive groups are mixed Tate. Note that while Biglari's computations are done in Voevodsky's category of motives, similar formulas can be obtained by the same methods for any homotopical stable algebraic derivator $\mathbb{D}$ satisfying the conditions of~\ref{derivator:new}. 
\end{Bemerkung}

\begin{remark}
It should be pointed out that rational coefficients are really necessary here, cf. \cite{totaro:motivebg}. There are finite groups where the motive (with integral coefficients) of the classifying space fails to be mixed Tate. This can be detected e.g. by noting that the corresponding approximations of the classifying space $\mathbb{A}^n\sslash G$ fail to be stably rational and have non-trivial unramified invariants. Of course, with rational coefficients, the motive of the classifying space of a finite group is trivial. 
\end{remark}

\begin{corollary}
\label{prop:tatehomogeneous}
Let $k$ be a perfect field and let $G$ be a split reductive group over $k$. Let $\mathbb{D}$ be a homotopical stable algebraic derivator satisfying the conditions of~\ref{derivator:new}. 
\begin{enumerate}
\item The motive $\op{M}({\op{B}}G)$ is Ind-mixed Tate.
\item For a closed subgroup $H\leq G$, the motive $\op{M}(G/H)\cong \op{M}({\op{B}}(G,H,\op{pt}))$ is mixed Tate.
\end{enumerate}
\end{corollary}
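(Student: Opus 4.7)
The plan is to deduce both statements directly from Lemma~\ref{lem:bgtate}, with a compactness argument upgrading Ind-mixed Tate to mixed Tate in the case of homogeneous spaces. The essential input is that the motive of any linear algebraic group over a perfect field is mixed Tate with rational coefficients, which we may take as given from the remark just preceding the statement (Biglari's computation in the split reductive case, combined with $\mathbb{A}^1$-invariance to reduce the general case to the reductive quotient, as split unipotent radicals contribute only an affine space factor up to $\mathbb{A}^1$-equivalence).

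For part (1), I would specialize Lemma~\ref{lem:bgtate} to the case $X = Y = \op{pt}$, so that the two-sided bar construction ${\op{B}}(\op{pt}, G, \op{pt})$ is exactly ${\op{B}}G$. The hypotheses of the lemma require only $\op{M}(\op{pt})$ and $\op{M}(G)$ to be mixed Tate, both of which hold, yielding $\op{M}({\op{B}}G)\in \op{Ind-}\DMT(\op{pt})$ directly. This is as much as one can hope for here, since $\op{M}({\op{B}}G)$ is generally not compact.

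For part (2), I would first identify $\op{M}(G/H)\cong \op{M}({\op{B}}(G,H,\op{pt}))$: this is implicit in the proof of Proposition~\ref{prop:freeaction}, where the projection from the simplicial Borel construction for a free action to the quotient is shown to induce an isomorphism on motives via the argument that the fiber is simplicially contractible. Applying Lemma~\ref{lem:bgtate} with $X = G$ and $Y = \op{pt}$ produces $\op{M}(G/H)\in \op{Ind-}\DMT(\op{pt})$, provided that $\op{M}(G)$ and $\op{M}(H)$ are mixed Tate; both hold by the input recalled in the opening paragraph of this proof.

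The key step is then to promote this conclusion from Ind-mixed Tate to mixed Tate. Because $G/H$ is a smooth quasi-projective variety of finite type over $k$, its motive $\op{M}(G/H)$ is a compact object of $\mathbb{D}(\op{pt})$, cf.~\ref{compgen}. Mixed Tate motives are themselves compact in $\mathbb{D}(\op{pt})$, cf.~\ref{mtder}. Writing $\op{M}(G/H)$ as a filtered colimit of mixed Tate motives, the identity map factors through a single term of the system by compactness, exhibiting $\op{M}(G/H)$ as a direct summand of a mixed Tate motive. Since $\DMT(\op{pt})$ is by definition closed under direct summands (it is a \emph{thick} subcategory in the sense of \ref{mtder}), this yields $\op{M}(G/H)\in\DMT(\op{pt})$. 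The main obstacle, and the only place one has to be careful, is justifying that $\op{M}(H)$ is mixed Tate for an arbitrary closed subgroup $H \leq G$ (which need not itself be reductive); this is handled by the reduction along the unipotent radical $H\to H/\mathrm{R}_{\op{u}}H$, which is an iterated $\mathbb{A}^1$-bundle and hence induces an isomorphism on motives.
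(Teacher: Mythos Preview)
Your proof is correct and follows essentially the same approach as the paper. The paper's own argument is terser on the final step---it simply notes that $\op{M}(G/H)$ is a geometric motive because $G/H$ is a smooth scheme and hence is mixed Tate rather than merely Ind-mixed Tate---whereas you spell out the compactness-and-retract argument explicitly; but the underlying logic is the same.
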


\begin{proof}
(1) follows from Lemma~\ref{lem:bgtate}, using the fact that the motive $\op{M}(G)$ is mixed Tate. (2) also follows from Lemma~\ref{lem:bgtate}. Both $\op{M}(G)$ and $\op{M}(H)$ are mixed Tate by Biglari's result. The isomorphism of motives follows from Proposition~\ref{prop:freeaction}. It remains to note that $\op{M}(G/H)$ is a geometric motive because $G/H$ is a smooth scheme, so that $\op{M}(G/H)$ is actually a mixed Tate motive (not a general Ind-object).
\end{proof}

\begin{Bemerkung}
Having a good model ${\op{B}}(G,H,\op{pt})$ for the homogeneous space $G/H$, one could also set up a motivic version of the Eilenberg--Moore spectral sequence, by considering suitable filtrations of the two-sided bar construction. This would allow to compute motives of homogeneous spaces (with rational coefficients) from the knowledge of the induced morphisms $\op{M}(H)\to\op{M}(G)$. An Eilenberg--Moore spectral sequence for motivic cohomology with rational coefficients was discussed in \cite{krishna:em}. 
\end{Bemerkung} 

\begin{proposition}
\label{prop:tatelocalsystem}
Let $G\looparrowright X$ be a homogeneous variety with $G$-action. If $M\in \DMT_G(X)$, then $\op{fin}_!\op{For}(M) \in \DMT(\pt)$.
\end{proposition}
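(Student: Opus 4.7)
The strategy is to convert this non-equivariant statement into an equivariant preservation statement via the induction equivalence, and then appeal to the fact that the exceptional integration functor preserves equivariant mixed Tate motives.

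I would begin by using compatibility of restriction with exceptional push-forward, cf.~\ref{CompRES}, which gives a natural isomorphism
\[
\op{fin}_! \op{For}(M) \cong \op{For}\bigl((\op{id}, \op{fin}_{G/H})_! M\bigr),
\]
where on the right the push-forward is taken in the equivariant category $\mathbb{D}^+_G(-)$. By Definition~\ref{mtderdef1}~(1), it therefore suffices to establish the equivariant refinement
\[
(\op{id}, \op{fin}_{G/H})_! M \in \DMT_G(\pt).
\]

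Next, I would identify this equivariant push-forward, up to a Tate twist and shift, with the exceptional integration functor $\op{Ind}_!^{H \to G}: \mathbb{D}^+_H(\pt) \to \mathbb{D}^+_G(\pt)$. Writing $d = \dim(G/H)$, and using the definition $\op{Ind}_! = (\op{id}, m)_\sharp \circ (i,s)_*$ from~\ref{PBPF}, where $(i,s): (H \looparrowright \pt) \to (G \looparrowright G/H)$ is the base-point inclusion and $(\op{id}, m) = (\op{id}, \op{fin}_{G/H})$, together with the relative purity isomorphism $(\op{id}, m)_\sharp \cong (\op{id}, m)_!(d)[2d]$ from~\ref{BCs}, one obtains
\[
(\op{id}, \op{fin}_{G/H})_! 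M \cong \op{Ind}_!^{H \to G}\bigl((i,s)^* M\bigr)(-d)[-2d],
\]
since $(i,s)_*$ is the quasi-inverse of the induction equivalence $(i,s)^*$.

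By the very definition of $\DMT_G(G/H)$ via the induction equivalence (Definition~\ref{mtderdef1}), the assumption $M \in \DMT_G(G/H)$ means that $(i,s)^* M$ lies in $\DMT_H(\pt)$. Remark~\ref{Last}, in conjunction with Proposition~\ref{Las} applied to $X = \pt$, then ensures that $\op{Ind}_!^{H \to G}$ sends $\DMT_H(\pt)$ into $\DMT_G(\pt)$, while Lemma~\ref{lem:twistdmt} guarantees that twisting and shifting preserve this subcategory. Combining these steps establishes the equivariant refinement, and hence the proposition. The only delicate point is the identification $(\op{id}, \op{fin})_! \circ (i,s)_* \cong \op{Ind}_!(-d)[-2d]$, which requires unwinding the definition of $\op{Ind}_!$ together with relative purity; once this is in hand the rest of the argument is formal. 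Alternatively, one could give a more computational proof via Proposition~\ref{prop:generatingtate}, reducing to explicit generators such as $\const{G/H}_G(n)[2n]$ and $\pi_* \const{G/H^0}_G(n)[2n]$ for $\pi: G/H^0 \to G/H$ the natural projection, and verifying the assertion directly using Corollary~\ref{prop:tatehomogeneous} together with the relative-purity identification of $\op{M}^{\op{BM}}(G/H^0)$ with a twist-and-shift of $\op{M}(G/H^0)$.
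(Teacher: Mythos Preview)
Your argument is correct, and in fact yields the stronger equivariant statement $(\op{id},\op{fin}_{G/H})_!M\in\DMT_G(\pt)$. The identification with $\op{Ind}_!$ via \ref{PBPF} and relative purity is exactly right, and there is no circularity: the proof of Proposition~\ref{Las} only invokes Corollary~\ref{prop:tatehomogeneous}, not the present proposition.

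The paper takes a more hands-on, non-equivariant route. It first observes from Corollary~\ref{prop:tatehomogeneous} that $\op{fin}_\sharp\const{G/H}=\op{M}(G/H)$ is mixed Tate, whence by relative purity so is $\op{fin}_!\const{G/H}$. If $H$ is connected, every $M\in\DMT_G(G/H)$ is an iterated extension of twists and shifts of the constant motive $\const{G/H}$, so $\op{fin}_!\op{For}(M)\in\DMT(\pt)$ follows. For $H$ not connected, the paper uses the finite \'etale cover $p\colon G/H^0\to G/H$: since $p_\ast p^\ast\to\op{id}$ is multiplication by the degree (hence invertible with rational coefficients), $M$ is a summand of $p_\ast p^\ast M$, and one reduces to the connected case on $G/H^0$. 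This is precisely the ``alternative'' argument you sketch at the end.

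Your structural approach has the advantage of treating the connected and non-connected cases uniformly and of producing the equivariant refinement for free; the paper's approach is more self-contained, relying only on Corollary~\ref{prop:tatehomogeneous} and a transfer argument rather than on the machinery of Proposition~\ref{Las} and Remark~\ref{Last}.
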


\begin{proof}
By assumption $X=G/H$, and the motive $\op{M}(G/H)$ is mixed Tate by Corollary~\ref{prop:tatehomogeneous}. In particular $\op{fin}_\sharp\underline{X}\in \DMT(\op{pt})$. Relative purity implies $\op{fin}_!\underline{X}\in \DMT(\op{pt})$, and then also $\op{fin}_\ast\underline{X}\in\DMT(\op{pt})$ by Verdier duality. If $H$ is connected, any equivariant mixed Tate motive $M\in \DMT_G(X)$ is an extension of constant motives $\const{X}$, possibly twisted and shifted. In particular, for such a motive we will have $\op{fin}_!\op{For}(M)\in \DMT(\pt)$. If $H$ is not connected, then $p:G/H^0\to G/H$ is a finite \'etale map. By \cite[Lemme 2.1.165]{ayoub:thesis1}, the composition of canonical transformations $\op{id}\Rightarrow p_\ast p^\ast\Leftarrow p_!p^!\Rightarrow\op{id}$ is invertible, given by multiplication with the degree of $p$. In particular, $M$ appears as a direct summand of $\op{Ind}_{H^0}^H\op{Res}_H^{H^0} M\cong p_\ast p^\ast M$. But from the connected case, $\op{fin}_! p_\ast p^\ast M$ is the pushforward of a mixed Tate motive on $G/H^0$; so this is a mixed Tate motive by the connected case discussed above. Since $\op{fin}_!M$ is a direct summand of $\op{fin}_! p_\ast p^\ast M$, we have proved the claim. 
\end{proof}

\section{Motives of classifying spaces}
\label{sec:motivebg}

In this section, we will provide motivic versions of well-known basic computations related to groups and their classifying spaces, describing the equivariant cohomology rings as well as the behaviour of equivariant cohomology under the six functor formalism. Most of this is well-known, cf. e.g.~\cite{totaro,totaro:motivebg}. This information will be relevant input for the tilting results in Sections~\ref{sec:motcohom} and \ref{sec:tilting}, allowing to describe the category of equivariant mixed Tate motives over a point in terms of (a homotopy category of) modules over the equivariant cohomology ring.

In this section we work with varieties (satisfying the standing assumptions~\ref{standing}) over a perfect field $k$ and a derivator $\mathbb{D}$ satisfying the conditions of~\ref{derivator:new}. The coefficient ring of the derivator $\mathbb{D}$ will be denoted by $\Lambda$, and assumed to be a field of characteristic zero. 

\subsection{Algebraic preliminaries} 

We shortly discuss the coinvariant algebras and twisted group rings which appear in the description of equivariant motivic cohomology rings. 

\begin{Bemerkung}
\label{bem:coinv}
\index{coinvariant algebras}
We recall the standard description of coinvariant algebras. Let $G$ be a split reductive group, with identity component $G^0$. Let $T\subset G^0$ be a split maximal torus, let $N=\op{N}_{G^0}(T)$ be the normalizer of $T$ in $G^0$, and let $W=N/T$ be the Weyl group of $G^0$. Write $\mathfrak{X}(T)$ for the group of characters of $T$ over $k$. Then $W$ acts on $\mathfrak{X}(T)$ in the evident way, and we set
\[ S^W = \mathrm{Sym}(\mathfrak{X}(T) \otimes_{\ZZ} \Lambda)^W.\]
We view $\mathrm{Sym}(\mathfrak{X}(T) \otimes_{\ZZ} \Lambda)$ (and consequently $S^W$) as a graded ring, with the generating character group in degree $1$. 
\end{Bemerkung}

\begin{example}
For $G=\op{GL}_n$, the ring $\op{Sym}\left(\mathfrak{X}(T)\otimes_{\mathbb{Z}}\Lambda\right)$ is a polynomial ring in $n$ variables. The Weyl group $W$ is the symmetric group $\op{S}_n$ and the coinvariant algebra $S^W$ is the algebra of symmetric polynomials, generated by the elementary symmetric polynomials. On the cohomological side, the elementary symmetric polynomials correspond to the Chern classes. 
\end{example}

\begin{Bemerkung}
\index{twisted group rings}
\label{twistedgroup}
Let $G$ be a split reductive group with identity component $G^0$. The finite group $\pi_0(G):=G/G^0$ of components acts on $G^0$ via conjugation. This induces an action of $\pi_0(G)$ on the coinvariant algebra $S^W$. Using this action, we can consider the \emph{twisted group ring} $S^W\rtimes\Lambda[\pi_0(G)]$, cf. \cite[Section 2.4]{So-L}: the underlying abelian group is the tensor product $S^W\otimes\Lambda[\pi_0(G)]$ of the coinvariant algebra and the group ring $\Lambda[\pi_0(G)]$, and the multiplication is given by 
\[ 
(f\otimes u)\cdot (g \otimes v) = (f\cdot g^u \otimes u\cdot v).
\]
A more detailed discussion of the twisted group ring and its relevance in representation theory of real reductive groups can be found in \cite[Section 2.4]{So-L}. 
\end{Bemerkung}

\begin{example}
If $G$ is a finite group, then $\op{fin}_G:G\to\pt$ is a finite \'etale morphism. Then $(\op{fin}_G)_\ast\op{fin}_G^\ast \const{\pt}\cong \op{Res}_G^1\op{Ind}_1^G\Lambda$ is the group ring of the finite group $G$. 
\end{example}

\subsection{Finite group torsors}

We first describe the behaviour of motives in torsors under finite groups. Note that for us, a finite group will be an algebraic group $G$ whose underlying variety is just a disjoint union of finitely many copies of $\Spec k$. 

Let $W$ be a finite group of cardinality $n$. Then  $\fin_*\const{W} \mapright{\cong} \oplus_{i=1}^n \const{\pt}$, and  the composition 
\[ 
\const{\pt} \to \fin_*\const{W} \mapright{\cong} \bigoplus_{i=1}^n \const{\pt}
\]
obtained by pre-composing with the adjunction map $\id \to \fin_{W*}\fin^*_W$ is the diagonal map. Similarly, 
\[ 
\bigoplus_{i=1}^n \const{\pt} \mapright{\cong} \fin_*\const{W} \to \const{\pt}
\]
obtained by composing with the adjunction $\fin_{W*}\fin_W^* \to \id$ 
is the evident sum map. Consequently, the composition of the adjunction maps above
\begin{equation}
\label{multW}
\const{\pt} \to \fin_*\const{W} \to \const{\pt}
\end{equation}
is multiplication with the cardinality of $W$. 

\begin{lemma}
\label{finitetorsorNew}
Let $\mathbb{D}$ be a derivator satisfying the conditions of~\ref{derivator:new}. Let $W$ be a finite group and let $\pi\colon X\to Y$ be a $W$-torsor, and let $M\in \mathbb{D}(Y)$ be a motive on $Y$. Then the composition of the adjunction maps
\[
M \to \pi_*\pi^*M \to M 
\]
is an isomorphism.
\end{lemma}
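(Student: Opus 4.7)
The plan is to show that the composition $M \to \pi_*\pi^*M \to M$ equals multiplication by $|W|$, which is invertible in $\Lambda$ (a field of characteristic zero by \ref{derivator:new}) and hence an isomorphism. Since $\pi\colon X \to Y$ is a $W$-torsor it is in particular surjective, so $\pi^*$ is conservative by separatedness of $\mathbb{D}$, and it therefore suffices to verify the claim after pulling back along $\pi$.

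To this end, I would consider the cartesian square
\[
\xymatrix{
X \times_Y X \ar[r]^-{p_1} \ar[d]_-{p_2} & X \ar[d]^-{\pi} \\
X \ar[r]_-{\pi} & Y.
}
\]
Proper base change produces a natural isomorphism $\pi^*\pi_* \xrightarrow{\approx} p_{2*}p_1^*$, and since the square commutes we have $p_1^*\pi^* \cong p_2^*\pi^*$. Compatibility of units and counits with this base change then identifies the pullback $\pi^*$ of the composition $M \to \pi_*\pi^*M \to M$ with the composition
\[
\pi^*M \to p_{2*}p_2^*(\pi^*M) \to \pi^*M
\]
arising from the unit and counit of the adjunction $p_2^* \dashv p_{2*}$ for the finite \'etale map $p_2$.

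Finally, the $W$-torsor trivialization $X \times_Y X \cong W \times X$ identifies $p_2$ with the projection $W \times X \to X$, i.e., with the disjoint union of $|W|$ copies of $\id_X$. The paragraph containing \eqref{multW}, applied to this trivialized $p_2$ in place of $\op{fin}_W$, shows that the composition above is multiplication by $|W|$ on $\pi^*M$, which is an isomorphism since $|W|\in\Lambda^{\times}$; conservativity of $\pi^*$ then concludes the proof. The only non-automatic ingredient in this plan is the identification of the pulled-back unit and counit with those of the adjunction $p_2^* \dashv p_{2*}$ via the base change isomorphism, which is a standard diagrammatic compatibility and should present no real obstacle.
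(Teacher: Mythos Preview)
Your proof is correct and follows essentially the same approach as the paper: reduce by conservativity of $\pi^*$ to the trivialized situation over $X$, use base change for the cartesian square, and invoke \eqref{multW} to identify the composite with multiplication by $|W|$. The only cosmetic difference is that the paper first applies the projection formula to reduce to $M=\const{Y}$ before running the same base-change argument, whereas you carry a general $M$ throughout; both versions are fine.
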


\begin{proof}
The projection formula reduces us to the case $M=\const{Y}$.
As pullback along surjective maps is conservative, it suffices to verify the assertion after pulling back along $X\to Y$. Now we have a cartesian square
\[
\xymatrix{
W\times X \ar[r]\ar[d] & X\ar[d] \\ X \ar[r]& Y
}
\]
and the result follows by applying base change and \eqref{multW}.
\end{proof}

\begin{Bemerkung}
Now we want to consider endomorphism rings of motives. Assume the situation of Lemma~\ref{finitetorsorNew}. 

Any element $w\in W$ gives rise to an automorphism of $X$ over $Y$, which we still denote by $w$. For each motive $M\in\mathbb{D}(Y)$, the identity $\pi\circ w=w$  gives rise to a canonical isomorphism $\mathrm{iso}\colon w^\ast \circ \pi^\ast M\mapright{\cong} \pi^\ast M$. Now, given a motive $M\in\mathbb{D}(Y)$, we can define a right action of $W$ on the endomorphism algebra $\End_X(\pi^*M)$ of $\pi^\ast M$ in the category $\mathbb{D}(X)$ as follows: for $w\in W$ and an endomorphism $f\in \End_X(\pi^\ast M)$, we can set 
\[ 
f^w = \mathrm{iso}\circ w^*(f) \circ \mathrm{iso}^{-1}.
\]
Using this right action, we can define an algebra $\End_X(\pi^*M)\rtimes \Lambda[W]$ (similar to the twisted group ring in \ref{twistedgroup}) whose underlying set is $\End_X(\pi^*M)\otimes \Lambda[W]$ and whose multiplication is given by
\[ 
(f\otimes u)\cdot (g \otimes v) = (f\circ g^u \otimes uv).
\]
\end{Bemerkung}

\begin{Bemerkung}
Still keeping the same situation, the elements $w\in W$ of the group $W$ also provide endomorphisms of the motive $\pi_\ast \pi^\ast M$ in the category $\mathbb{D}(Y)$. For $w\in W$, the composition
\[ 
\pi_*\pi^*M \to \pi_*w_*w^*\pi^*M\mapright{\cong}\pi_*w^*\pi^*M \mapright{\cong}\pi_*\pi^*M
\]
provides a map $[w]\in \End_Y(\pi_*\pi^*M)$. The first map is the adjunction map $\id \to w_*w^*$, and the second and third isomorphism arise from the identity $\pi\circ w=w$.
\end{Bemerkung}

\begin{theorem}
\label{endring}
Let $\mathbb{D}$ be a derivator satisfying the conditions of~\ref{derivator:new}. Let $W$ be a finite group, and let $\pi\colon X\to Y$ be a $W$-torsor. For each motive $M\in \mathbb{D}(Y)$, the map
\begin{align*} 
\End_X(\pi^*M)\rtimes \Lambda[W] &\to \End_Y(\pi_*\pi^*M), \\ 
f\otimes w &\mapsto \pi_*(f)\circ [w]
\end{align*}
is a canonical isomorphism of $\Lambda$-algebras.
\end{theorem}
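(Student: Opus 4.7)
The plan is to use base change along the torsor together with the adjunction $\pi^\ast\dashv \pi_\ast$ to identify both sides explicitly, then to match the multiplicative structures.

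First I would verify that the proposed map is a ring homomorphism. The essential identity to check is the commutation relation
\[
[w]\circ \pi_\ast(f) = \pi_\ast(f^w)\circ [w]
\]
in $\End_Y(\pi_\ast\pi^\ast M)$. Unwinding the definition of $[w]$ as the composition
$\pi_\ast\pi^\ast M \to \pi_\ast w_\ast w^\ast\pi^\ast M \xrightarrow{\cong} \pi_\ast\pi^\ast M$
and using naturality of the unit $\op{id}\to w_\ast w^\ast$ together with the canonical isomorphism $\mathrm{iso}\colon w^\ast\pi^\ast M\xrightarrow{\cong}\pi^\ast M$, this reduces to a straightforward $2$-functorial diagram chase. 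The relation $[w_1]\circ[w_2] = [w_1 w_2]$ follows similarly from $2$-functoriality of $(-)_\ast$ and the identity $w_1\circ w_2 = w_1 w_2$ in $\op{Aut}_Y(X)$; additivity in $f$ is evident.

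For bijectivity, I would compute $\End_Y(\pi_\ast\pi^\ast M)$ explicitly. The adjunction $\pi^\ast\dashv\pi_\ast$ gives
\[
\End_Y(\pi_\ast\pi^\ast M)\cong \Hom_X(\pi^\ast\pi_\ast\pi^\ast M,\pi^\ast M).
\]
Since $\pi$ is a $W$-torsor, there is a canonical isomorphism $X\times_Y X\cong \bigsqcup_{w\in W}X$ under which the two projections correspond, on the copy indexed by $w$, to the identity and to the automorphism $w\colon X\to X$ respectively. Applying base change along the resulting cartesian square and using $\pi\circ w = \pi$ yields
\[
\pi^\ast\pi_\ast\pi^\ast M \cong \bigoplus_{w\in W}\pi^\ast M.
\]
Combined with the $\Hom$-identification above, this provides an isomorphism of abelian groups
\[
\End_Y(\pi_\ast\pi^\ast M)\cong \bigoplus_{w\in W}\End_X(\pi^\ast M),
\]
whose underlying $\Lambda$-module matches $\End_X(\pi^\ast M)\rtimes \Lambda[W]$.

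The main obstacle will be verifying that this abelian-group bijection agrees with the map $f\otimes w\mapsto \pi_\ast(f)\circ[w]$, i.e., that under the adjunction identification the summand indexed by $w$ on the right is exactly $\{\pi_\ast(f)\circ[w]\mid f\in\End_X(\pi^\ast M)\}$ on the left. This is a naturality check: the inclusion of the $w$-th summand in the base-change decomposition is induced by the graph of $w$, which under the $\pi^\ast\dashv\pi_\ast$ adjunction corresponds precisely to $[w]$ precomposed with $\pi_\ast(-)$. Once this identification is in place, the multiplicativity verified in the first step promotes the bijection to an algebra isomorphism. As a sanity check, pulling back along the conservative map $\pi$ itself reduces the statement to the trivial-torsor case $\pi^\ast X\cong W\times X$, where the claim is immediate from the decomposition $\pi^\ast\pi_\ast\pi^\ast M\cong \bigoplus_{w}\pi^\ast M$; alternatively, Lemma~\ref{finitetorsorNew} confirms that $\pi_\ast\pi^\ast M$ has the expected ``rank'' $|W|$ over $\End_X(\pi^\ast M)$.
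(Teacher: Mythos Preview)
Your proposal is correct and follows essentially the same route as the paper: verify the commutation relation $[w]\circ\pi_\ast(f)=\pi_\ast(f^w)\circ[w]$ by naturality, then use the adjunction $\pi^\ast\dashv\pi_\ast$ and base change along the cartesian square $W\times X\cong X\times_Y X$ to identify $\End_Y(\pi_\ast\pi^\ast M)$ with $\bigoplus_{w\in W}\End_X(\pi^\ast M)$, and finally check that $f\otimes w$ lands in the $w$-summand. The paper phrases the decomposition via the projection formula and cites \cite[Corollary 2.1.166]{ayoub:thesis1} for the final identification, whereas you describe it directly via the components of $X\times_Y X$ and the graph of $w$; these are the same argument.
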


\begin{proof}
Let $f\in \End_X(\pi^*M)$ and $w\in W$. To show that our map is an algebra morphism, it suffices to show
\[ [w]\circ \pi_*(f) = \pi_*(f^w) \circ [w].\]
This is immediate from the commutativity of the following diagram:
\[\xymatrix{
\pi_*\pi^*M \ar[r]\ar[d]_-{\pi_*(f)} & \pi_*w_*w^*\pi^*M\ar[r]^-{\cong}\ar[d]_-{\pi_*w_*w^*(f)} &\pi_*w^*\pi^*M \ar[r]^-{\pi_*(\mathrm{iso})}\ar[d]_-{\pi_*w^*(f)} & \pi_*\pi^*M \ar[d]_-{\pi_*(f^w)} \\
\pi_*\pi^*M \ar[r] & \pi_*w_*w^*\pi^*M\ar[r]^-{\cong} &\pi_*w^*\pi^*M \ar[r]^-{\pi_*(\mathrm{iso})} & \pi_*\pi^*M
}\]
Now consider the composition 
\[ 
\End_X(\pi^*M)\rtimes \Lambda[W] \to \End_Y(\pi_*\pi^*M) \mapright{\cong}
\mathbb{D}_X(\pi^*\pi_*\pi^*M, \pi^*M)
\]
with the adjunction isomorphism. As $\pi\colon X\to Y$ is a $W$-torsor, we have a cartesian square
\[ \xymatrix{
W\times X \ar[d]_-{p} \ar[r]^-{p} & X\ar[d]_-{\pi} \\
X\ar[r]^-{\pi} & Y
}\]
By base change
\[ 
\mathbb{D}_X(\pi^*\pi_*\pi^*M, \pi^*M) \mapright{\cong} \mathbb{D}_X(p_*p^*\pi^*M, \pi^*M). 
\]
Applying the projection formula yields
\[ 
\mathbb{D}_X(p_*p^*\pi^*M, \pi^*M) \mapright{\cong} \bigoplus_{v\in W}\End_X(\pi^*M).
\]
So to check that our map is an isomorphism, it is sufficient to verify that the map
\[ \End_X(\pi^*M)\rtimes \Lambda[W] \to \bigoplus_{v\in W}\End_X(\pi^*M), \]
obtained via the above identifications, is an isomorphism of (finitely generated free) $\End_X(\pi^\ast M)$-modules. For $f\in \End_X(\pi^*M)$ and $w\in W$, this map sends $f\otimes w$ to $\pi_\ast(f)\circ[w]$ and the projection formula sends the latter to the tuple which is $f$ on the $w$-component and $0$ elsewhere, cf. \cite[Corollary 2.1.166]{ayoub:thesis1}. This implies that we have an isomorphism, as claimed.
\end{proof}

\subsection{Variety of maximal tori}

\begin{lemma}
\label{GNacyclic}
Let $\mathbb{D}$ be a derivator satisfying the conditions~\ref{derivator:new} and the grading condition~\ref{conditions:grading}. Denote by $\Lambda$ the coefficients of $\mathbb{D}$, which we assume to be a field of characteristic zero. 

Let $G$ be a split connected reductive group. Let $T\subset G$ be a split maximal torus and $N=\op{N}_G(T)$ the normalizer of $T$ in $G$.   
Then the canonical map $\const{\pt} \to \fin_*\const{G/N}$ yields an identification
\[ 
\mathbb{D}_k(\const{\pt}, \fin_*\const{G/N}(d)[2d]) \cong
\mathbb{D}_k(\const{\pt}, \const{\pt}(d)[2d]) = 
\begin{cases}
\Lambda &\mbox{if $d=0$}, \\
0 &\mbox{otherwise}.
\end{cases}
\]
\end{lemma}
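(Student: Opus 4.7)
The plan is to reduce the computation to the flag variety $G/B$ via the two-step factorization $G/T \xrightarrow{\pi} G/N \xrightarrow{\fin} \pt$, where $\pi$ is a finite étale $W$-torsor with $W = N/T$, and the canonical affine-bundle projection $G/T \to G/B$ with fibre the unipotent radical $U$ of $B$ (an affine space as a variety, since $G$ is split). The answer will then be read off from the classical identification of $\op{H}^{*,*}(G/B;\Lambda)$ with the coinvariant algebra $S/S^W_+$ as a graded $W$-module.

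First, Lemma~\ref{finitetorsorNew} applied to $\pi$ shows that the composition $\const{G/N} \to \pi_*\const{G/T} \to \const{G/N}$ of unit and counit is multiplication by $|W|$, which is invertible in $\Lambda$. Pushing forward and using $\fin_{G/N}\circ\pi = \fin_{G/T}$ therefore realises $\fin_*\const{G/N}$ as a direct summand of $\fin_*\const{G/T}$, cut out by the idempotent $|W|^{-1}\sum_{w\in W}[w]$ in the notation of Theorem~\ref{endring}, where the endomorphisms $[w]$ come from right multiplication by $N$ on $G/T$. Next, the $\mathbb{A}^1$-invariance of $\mathbb{D}$, applied to the locally trivial $U$-bundle $G/T \to G/B$, gives $\fin_*\const{G/T} \cong \fin_*\const{G/B}$. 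Finally, iterating the localization triangle along the Bruhat decomposition $G/B = \bigsqcup_{w\in W} BwB/B$, together with $\mathbb{A}^1$-invariance on each cell $BwB/B \cong \mathbb{A}^{\ell(w)}$, yields
\[
\fin_*\const{G/B} \cong \bigoplus_{w\in W}\const{\pt}(-\ell(w))[-2\ell(w)],
\]
so that the grading condition~\ref{conditions:grading} identifies $\op{H}^{2d,d}(G/B;\Lambda)$ with the degree-$d$ piece of the coinvariant algebra $S/S^W_+$.

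To conclude, one appeals to Chevalley's theorem that $S/S^W_+$ is isomorphic to the regular representation of $W$, so that its $W$-invariants are $\Lambda$ concentrated in degree zero; the canonical map $\const{\pt} \to \fin_*\const{G/N}$ of the statement is precisely the inclusion of this line (readily verified for $d=0$ from connectedness of $G/N$). The main obstacle is to verify that the $W$-action on $\op{H}^{*,*}(G/B;\Lambda)$ obtained by transporting the geometric action on $G/T$ through the $\mathbb{A}^1$-equivalence $G/T \simeq G/B$ coincides with the standard Weyl-group action on $S/S^W_+$, since $W$ does not act on $G/B$ itself. This identification is not formal; it can be pinned down by working $T$-equivariantly and invoking a Borel-type presentation of $\op{H}^{*,*}_T(G/B;\Lambda)$ as a free module of rank $|W|$ over $\mathcal{A}_T = S$ whose $W$-action is compatible with the natural action on $S$.
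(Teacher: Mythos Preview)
Your proposal is correct and follows essentially the same route as the paper: factor through the $W$-torsor $G/T\to G/N$, pass to $G/B$ via $\mathbb{A}^1$-invariance, compute $\fin_*\const{G/B}$ from the Bruhat cell decomposition, and conclude by the fact that the cohomology of $G/B$ carries the regular representation of $W$. The paper simply asserts this last point as ``known'' without flagging the subtlety you raise about identifying the geometric $W$-action; in fact, the remark following the paper's proof offers an alternative that sidesteps this issue entirely, computing the Euler characteristic of $G/N$ via multiplicativity and parity to see it equals $1$, which together with the vanishing of odd cohomology forces the answer.
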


\begin{proof}
Let $W=N/T$ be the Weyl group. Then $G/T\to G/N$ is a $W$-torsor, and the projection $G/T\to G/B$ induces an isomorphism $\fin_\ast\const{G/T}\cong \fin_\ast\const{G/B}$. Then Lemma \ref{finitetorsorNew} provides an identification 
\[
\mathbb{D}_k(\const{\pt}, \fin_*\const{G/N}(d)[2d])\cong \mathbb{D}_k(\const{\pt}, \fin_*\const{G/B}(d)[2d])^W.
\]
By the standard argument using the cellularity of $G/B$, the latter can be identified with the $W$-invariants in the Chow ring $\op{CH}^\ast(G/B)$. Since the  $W$-representation on $\op{CH}^\ast(G/B)$ is known to be the regular representation, the result follows. 
\end{proof}

\begin{remark}
In the proof above one can avoid having to explicitly know the $W$-action on $\op{CH}^*(G/B)$ under the assumption that $\mathrm{char}(k)$ is coprime to  $|W|$ using the cycle class map
\[ 
\op{CH}^*(G/N) \otimes \QQ_{\ell} \mapright{\cong} \op{H}^{*}(G/N; \QQ_{\ell}),
\]
where the right hand side denotes $\ell$-adic cohomology with $\ell\neq \mathrm{char}(k)$. In the case $G/N$, the cycle class map is known to be an isomorphism, hence it suffices to show that the $\ell$-adic cohomology is trivial. As $|W|$ is coprime to $\mathrm{char}(k)$, \cite[Corollary 3.3]{illusie:zheng} yields
\[ 
\sum_i (-1)^i \op{H}^i(G/N; \QQ_{\ell}) = \frac{1}{|W|}\sum_i (-1)^i \op{H}^i(G/T;\QQ_{\ell}) = 1.
\]
Now $\op{H}^*(G/N; \QQ_{\ell})$ vanishes in odd degrees, so this gets the job done.

In characteristic zero this is even simpler, using Betti realization and multiplicativity of Euler characteristics. The primary difficulty in characteristic $p$ above, requiring the restrictions on the characteristic, is that Euler characteristics are not multiplicative e.g. for Artin--Schreier coverings.
\end{remark}

\begin{proposition}
\label{azyk}
Let $\mathbb{D}$ be a derivator satisfying the conditions~\ref{derivator:new} and the grading condition~\ref{conditions:grading}. Denote by $\Lambda$ the coefficients of $\mathbb{D}$, which we assume to be a field of characteristic zero. 

Let $G$ be a split connected reductive group. Let $T\subset G$ be a split maximal torus and $N=\op{N}_G(T)$ the normalizer of $T$ in $G$. Then the canonical map $\const{\pt} \to \fin_*\const{G/N}$ is an isomorphism.
\end{proposition}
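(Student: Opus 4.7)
The plan is to exhibit $\fin_*\const{G/N}$ as a direct summand of an explicitly decomposable pure Tate motive, use Lemma~\ref{GNacyclic} to pin down the multiplicities in its decomposition, and finally conclude that the canonical map is an isomorphism by a one-dimensional Hom-space argument. The first step is to obtain the decomposition $\fin_{G/T,*}\const{G/T}\cong\bigoplus_{w\in W}\const{\pt}(-l(w))[-2l(w)]$. The projection $p\colon G/T\to G/B$ is Zariski-locally trivial with fiber isomorphic to the unipotent radical of $B$, an affine space; by the homotopy axiom for $\mathbb{D}$, the unit $\const{G/B}\to p_*p^*\const{G/B}\cong p_*\const{G/T}$ is an isomorphism, so $\fin_{G/T,*}\const{G/T}\cong\fin_{G/B,*}\const{G/B}$ by $2$-functoriality. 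The Bruhat decomposition of $G/B$ into Schubert cells, each an affine space of dimension $l(w)$, yields the claimed direct sum by iterated application of the localization sequence; the extensions split because the relevant $\op{Ext}^{1}$-groups between $\const{\pt}(a)[2a]$ and $\const{\pt}(b)[2b]$ vanish, which is a consequence of the grading condition.

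Next I would use the finite \'etale $W$-torsor $\pi\colon G/T\to G/N$ to realise $\fin_*\const{G/N}$ as a direct summand of the already decomposed motive. By Lemma~\ref{finitetorsorNew} the composition of unit and counit $\const{G/N}\to\pi_*\pi^*\const{G/N}\to\const{G/N}$ is multiplication by $|W|$; with rational coefficients this is invertible, so $\const{G/N}$ is a direct summand of $\pi_*\const{G/T}$. Pushing forward along $\fin_{G/N}$ exhibits $\fin_*\const{G/N}$ as a direct summand of $\fin_{G/T,*}\const{G/T}\cong\bigoplus_w\const{\pt}(-l(w))[-2l(w)]$, so we can write $\fin_*\const{G/N}\cong\bigoplus_{n\in\mathbb{Z}}\const{\pt}(n)[2n]^{\oplus a_n}$ for some non-negative integers $a_n$.

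Finally, the grading condition implies $\mathbb{D}_k(\const{\pt},\const{\pt}(n)[2n])\cong\Lambda$ if $n=0$ and vanishes otherwise, so $\mathbb{D}_k(\const{\pt},\fin_*\const{G/N}(d)[2d])\cong\Lambda^{\oplus a_{-d}}$. Comparing with Lemma~\ref{GNacyclic} forces $a_0=1$ and $a_n=0$ for $n\neq 0$, whence $\fin_*\const{G/N}\cong\const{\pt}$. The canonical map $\const{\pt}\to\fin_*\const{G/N}$ corresponds under the $\fin^*\dashv\fin_*$ adjunction to the identity on $\const{G/N}$, so it is a non-zero element of the one-dimensional space $\mathbb{D}_k(\const{\pt},\fin_*\const{G/N})\cong\Lambda$, and between two objects isomorphic to $\const{\pt}$ any non-zero morphism is automatically an isomorphism. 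The main technical point is the first step: producing the explicit direct sum decomposition of $\fin_{G/B,*}\const{G/B}$ in the abstract derivator setting. One could alternatively avoid the explicit splitting argument by first showing that $\fin_*\const{G/N}$ is mixed Tate (via the above realisation as a direct summand of $\fin_{G/T,*}\const{G/T}$) and then invoking the semisimplicity of $\DMT(\pt)$ supplied by the grading condition to guarantee an abstract direct sum decomposition, after which Lemma~\ref{GNacyclic} again fixes the multiplicities.
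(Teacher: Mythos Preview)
Your proof is correct and follows essentially the same approach as the paper: both use the $\mathbb{A}^1$-equivalence $G/T\to G/B$ together with the Bruhat cell decomposition to identify $\fin_{G/T,*}\const{G/T}$, then Lemma~\ref{finitetorsorNew} for the $W$-torsor $G/T\to G/N$ to exhibit $\fin_*\const{G/N}$ as a direct summand, and finally Lemma~\ref{GNacyclic} to pin things down. The only organizational difference is that the paper phrases the endgame via the cone $C$ of the canonical map (showing $\Hom(\const{\pt},C(d)[2d])=0$ for all $d$ and that $C$ is a summand of a pure Tate motive, hence zero), whereas you decompose $\fin_*\const{G/N}$ first and then argue that the canonical map is a nonzero element of a one-dimensional Hom-space; these are equivalent.
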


\begin{proof}
Let $C$ be the cone of $\Lambda\to \fin_*\const{G/N}$. Lemma \ref{GNacyclic} implies
\[ 
\Hom(\const{\pt}, C(d)[2d]) = 0 
\]
for all $d$. On the other hand, $\fin_\ast\const{G/N}$ is a direct summand of $\fin_\ast(G/B)$ as can be seen by applying Lemma~\ref{finitetorsorNew} to the covering $G/T\to G/N$ and noting that the $\mathbb{A}^1$-equivalence $G/B\to G/T$ induces an isomorphism $\fin_\ast(G/B)\cong \fin_\ast(G/T)$. The standard computation using the cell structure of $G/B$ implies that 
\[
\fin_\ast\const{G/B}\cong \bigoplus_{w\in W}\const{\pt}(-l(w))[-2l(w)],
\]
where $W=N/T$ is the Weyl group and $l\colon W \to \ZZ_{\geq 0}$ is the length function. Now $\fin_\ast\const{G/N}$ being a direct summand of the latter implies that $C=0$.
\end{proof}


\begin{Bemerkung}
\label{bem:torus}
To compare the coinvariant algebra to the Chow ring of the classifying space, we first define a morphism from the symmetric algebra to the Chow ring of the classifying space of the maximal torus. Each character $\chi\in \mathfrak X(T)$ has an associated first Chern class 
\[
\op{c}_1(\chi)\in \op{H}^2_{\mathbb{D},T}(\pt; \Lambda(1)),
\]
induced in motivic cohomology from the map ${\op{B}}\chi:{\op{B}}T\to{\op{B}}\mathbb{G}_{\op{m}}$. In slightly more detail, let $L(\chi)$ be the $1$-dimensional $T$-representation over $k$ determined by $\chi$, viewed as a $T$-equivariant line bundle $f\colon L(\chi) \to \pt$. Let $i\colon \pt\hookrightarrow L(\chi)$ be the zero section. Apply $\fin_{*}$ to the counit of adjunction $i_*i^! \to \id$, and use the isomorphism $i^!\const{L(\chi)} \mapright{\cong} \Lambda(-1)[-2]$ to obtain a map $\fin_*\const{L(\chi)} \to \Lambda(1)[2]$. Then the first Chern class $\op{c}_1(\chi)$ is the map
\[ \Lambda \mapright{\cong} \fin_*\const{L(\chi)} \to \Lambda(1)[2].\]
This determines a homomorphism of graded rings
\[ 
\mathrm{Sym}(\mathfrak X(T) \otimes_{\ZZ} \Lambda) \to \bigoplus_d \op{H}^{2d}_{\mathbb{D},T}(\pt; \Lambda(d)).
\]
\end{Bemerkung}

\begin{Bemerkung}
\label{weaksplitting}
It now remains to provide a relation between the coinvariant algebra and the Chow ring of the classifying space. We have canonical isomorphisms of graded rings
\begin{equation}
\op{H}^i_{\mathbb{D},G^0}(\pt; \Lambda(j)) \mapright{\cong} \op{H}^i_{\mathbb{D},N}(\pt; \Lambda(j)) \mapright{\cong} \op{H}^i_{\mathbb{D},T}(\pt; \Lambda(j))^W,
\end{equation}
where the first map (induced from the inclusion $N\hookrightarrow G^0$) is an  isomorphism by Proposition~\ref{azyk}, and the second isomorphism is a consequence of Lemma~\ref{finitetorsorNew}. Combining \ref{bem:coinv} and \ref{bem:torus} with the above isomorphisms, we obtain a morphism of graded rings
\[ 
S^W \to \bigoplus_d \op{H}^{2d}_{\mathbb{D},G^0}(\pt; \Lambda(d)). 
\]
\end{Bemerkung}

\begin{proposition}
\label{prop:swcompconnect}
Let $\mathbb{D}$ be a derivator satisfying the conditions of~\ref{derivator:new} and the grading condition~\ref{conditions:grading}, and let $G$ be a split reductive group with identity component $G^0$. Then the canonical map
\[ 
S^W \to \bigoplus_d \op{H}^{2d}_{\mathbb{D},G^0}(\pt; \Lambda(d)) 
\]
is an isomorphism.
\end{proposition}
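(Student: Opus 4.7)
The plan is to reduce the statement first to the case where $G^0$ is a split torus, then to the case of $\mathbb{G}_{\op{m}}$, where everything becomes a very concrete computation with approximations of $B\mathbb{G}_{\op{m}}$ by projective spaces. The reduction from connected reductive $G^0$ to the maximal torus $T$ is essentially already packaged into \ref{weaksplitting}: Proposition~\ref{azyk} and Lemma~\ref{finitetorsorNew} together give an isomorphism
\[
\op{H}^i_{\mathbb{D},G^0}(\pt;\Lambda(j))\xrightarrow{\cong}\op{H}^i_{\mathbb{D},T}(\pt;\Lambda(j))^W,
\]
and by construction the map $S^W\to\op{H}^{2d}_{\mathbb{D},G^0}(\pt;\Lambda(d))$ is obtained by restricting the $T$-level map $\mathrm{Sym}(\mathfrak{X}(T)\otimes_\ZZ\Lambda)\to \op{H}^{2d}_{\mathbb{D},T}(\pt;\Lambda(d))$ to $W$-invariants. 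Taking $W$-invariants is exact on $\Lambda$-vector spaces (characteristic zero), so it suffices to prove that the map is an isomorphism for $G^0=T$ a split torus, before taking $W$-invariants.

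So assume $T\cong\mathbb{G}_{\op{m}}^r$ is a split torus. The multiplicativity of the first Chern class and the compatibility of the classifying space construction with products (via $B(T_1\times T_2)\cong BT_1\times BT_2$, together with the Künneth formula for mixed Tate motives, which is straightforward in our setting because of the grading condition~\ref{conditions:grading}) allows us to reduce to $T=\mathbb{G}_{\op{m}}$. In this case, $\mathfrak{X}(T)\otimes_\ZZ\Lambda=\Lambda\cdot\chi$ with $\chi$ the identity character, so we must show that the map $\Lambda[\chi]\to\bigoplus_d\op{H}^{2d}_{\mathbb{D},\mathbb{G}_{\op{m}}}(\pt;\Lambda(d))$ sending $\chi$ to the first Chern class of the tautological representation is an isomorphism of graded rings.

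This last statement is a direct computation using the Totaro-style approximations of $B\mathbb{G}_{\op{m}}$ by $(\mathbb{A}^{n+1}\setminus\{0\})/\mathbb{G}_{\op{m}}\cong\mathbb{P}^n$, as discussed in Section~\ref{equiv:chow}. By the acyclicity result Proposition~\ref{prop:excision}, the natural map from $\mathbb{G}_{\op{m}}$-equivariant motivic cohomology of the point to $\op{H}^{2d}(\mathbb{P}^n;\Lambda(d))$ is an isomorphism in degrees $\leq 2n$; letting $n\to\infty$ identifies the equivariant cohomology with the inverse limit $\varprojlim_n\op{H}^{2d}(\mathbb{P}^n;\Lambda(d))$. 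The motivic cohomology of $\mathbb{P}^n$ with $\Lambda$-coefficients is the truncated polynomial ring $\Lambda[c]/(c^{n+1})$ with $c$ in bidegree $(2,1)$ (this is standard, e.g., via the projective bundle formula or Voevodsky's comparison with higher Chow groups), and the Chern class $c$ is precisely the image of the tautological character. Passing to the inverse limit yields the polynomial ring $\Lambda[\chi]$, and naturality of the Chern class construction identifies this with the map defined in \ref{bem:torus}. The main technical check — which should be routine but deserves to be done carefully — is that the Chern class construction via the $\mathbb{P}^n$-approximation really agrees with the intrinsic definition in \ref{bem:torus} using the zero-section adjunction on the line bundle $L(\chi)$; this ultimately follows because both are the universal first Chern class of the tautological line bundle on $B\mathbb{G}_{\op{m}}$. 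This completes the plan; reassembling via Künneth and $W$-invariants yields the proposition.
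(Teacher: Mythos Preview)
Your proposal is correct and follows essentially the same route as the paper: reduce to the maximal torus via \ref{weaksplitting}, then to $\mathbb{G}_{\op{m}}$, then compute via the approximations $\mathbb{P}^n$ of ${\op{B}}\mathbb{G}_{\op{m}}$. You supply more detail than the paper does (the exactness of taking $W$-invariants, the K\"unneth reduction for tori, the explicit identification of the Chern class), but the architecture is the same.
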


\begin{proof}
By \ref{weaksplitting}, this reduces to the case of a split torus. This, in turn, reduces to the case of $\GG_{\op{m}}$. In this situation, one can approximate the classifying space ${\op{B}}\mathbb{G}_{\op{m}}$ by projective spaces $\mathbb{P}^n$. Then the computation of 
\[
\op{H}^{2d}_{\mathbb{D},\mathbb{G}_{\op{m}}}(\pt;\Lambda(d))= \mathbb{D}_{\mathbb{G}_{\op{m}}}(\pt)\left(\const{\pt},\const{\pt}(d)[2d]\right) 
\]
via these approximations, cf. Section~\ref{equiv:chow}, proves the claim.
\end{proof}

\begin{Bemerkung}
Computations of the Chow rings of classifying spaces can also be found in the literature, cf. e.g. \cite{totaro}. Most of the available literature focuses on integral computations, but of course the above rational computations are well-known. 
\end{Bemerkung}

\begin{proposition}
\label{prop:swcomp}
Let $\mathbb{D}$ be a derivator satisfying the conditions of \ref{derivator:new} and the grading condition~\ref{conditions:grading}. Let $G$ be a split reductive group. Then the canonical map 
\[
S^W\rtimes\Lambda[\pi_0(G)] \to \bigoplus_d \op{H}^{2d}_{\mathbb{D},G}(\pt; \Lambda(d)) 
\]
is an isomorphism.
\end{proposition}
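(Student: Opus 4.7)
The plan is to bootstrap the non-connected case from the connected case handled in Proposition~\ref{prop:swcompconnect}, exploiting the fact that $G^0 \subset G$ is a normal subgroup with finite quotient $\pi_0(G) = G/G^0$. The key technical input will be the finite-torsor formalism developed in Theorem~\ref{endring}.

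First, I would observe that the natural map $p\colon {\op{B}}G^0 \to {\op{B}}G$ (induced from the inclusion of $G^0$, or visible on finite-dimensional approximations as $U/G^0 \to U/G$ for a sufficiently acyclic $G$-representation $U$) realizes ${\op{B}}G^0$ as a torsor under the finite group $\pi_0(G)$, which acts on ${\op{B}}G^0$ via the conjugation action on $G^0$. Theorem~\ref{endring}, applied to this torsor with $M = \const{{\op{B}}G}$ and summed over Tate twists, then yields a canonical isomorphism of graded $\Lambda$-algebras
\[
\End^\bullet_{{\op{B}}G^0}(\const{{\op{B}}G^0}) \rtimes \Lambda[\pi_0(G)] \xrightarrow{\cong} \End^\bullet_{{\op{B}}G}(p_*\const{{\op{B}}G^0}).
\]
By the construction of the integration functor in~\ref{PBPF}, the motive $p_*\const{{\op{B}}G^0}$ corresponds to $\op{Ind}_{G^0}^G\const{\pt}_{G^0}$ under the identification of cartesian motives on ${\op{B}}G$ with $G$-equivariant motives on the point, so the right-hand side is the graded endomorphism ring computing $\mathcal{A}_G$ in the sense used throughout the tilting results (cf.\ Theorem~\ref{thm:equivcoh} and Remark~\ref{eqmotlocal}). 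On the left-hand side, Proposition~\ref{prop:swcompconnect} provides $S^W \cong \End^\bullet_{{\op{B}}G^0}(\const{{\op{B}}G^0})$. Combining these identifications delivers the desired isomorphism $S^W \rtimes \Lambda[\pi_0(G)] \cong \mathcal{A}_G$; tracing through the construction shows that this is the canonical map described in~\ref{weaksplitting}, extended in the evident way by the group-ring factor.

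The main point to verify carefully is that the action of $\pi_0(G)$ on $S^W \cong \End^\bullet_{{\op{B}}G^0}(\const{{\op{B}}G^0})$ induced by Theorem~\ref{endring} --- coming from the deck transformations $w\colon {\op{B}}G^0 \to {\op{B}}G^0$ for $w \in \pi_0(G)$ --- coincides with the conjugation action of $\pi_0(G) = G/G^0$ on $S^W$ used to form the twisted group ring in~\ref{twistedgroup}. This is a naturality statement: a representative $g \in G$ of $w \in \pi_0(G)$ acts on $G^0$ by conjugation (hence on ${\op{B}}G^0$) and correspondingly on the maximal torus $T$ (after an inner adjustment) and its character group, inducing precisely the action on $S^W$ from~\ref{bem:coinv}. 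This being essentially bookkeeping, no serious obstacle remains beyond the straightforward application of Theorem~\ref{endring} combined with Proposition~\ref{prop:swcompconnect}.
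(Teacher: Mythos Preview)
Your proposal is correct and follows essentially the same approach as the paper: apply Theorem~\ref{endring} to the $\pi_0(G)$-torsor ${\op{B}}G^0\to{\op{B}}G$ (via finite-dimensional approximations) to identify the graded endomorphism ring of $\op{Ind}_{G^0}^G\Lambda$ with $\op{End}^\bullet_{{\op{B}}G^0}(\const{{\op{B}}G^0})\rtimes\Lambda[\pi_0(G)]$, then invoke Proposition~\ref{prop:swcompconnect} for the connected case. The paper's proof is simply a terser version of yours; your additional remarks about identifying $p_\ast\const{{\op{B}}G^0}$ with the induced object and matching the $\pi_0(G)$-actions are the details the paper leaves implicit.
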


\begin{proof}
Let $\pi_0(G) = G/G^0$ be the group of components of $G$. The canonical isomorphism from Theorem~\ref{endring}, applied to approximations of the Borel construction produces a canonical isomorphism
\[ 
\Hom_{\mathbb{D}(\pt)}(\ind_{G_0}^G\Lambda, \ind_{G_0}^G\Lambda(j)[i]) \cong \op{H}^i_{\mathbb{D},G^0}(\pt; \Lambda(j)) \rtimes \Lambda[\pi_0(G)].
\]
The claim then follows from Proposition~\ref{prop:swcompconnect}.
\end{proof}

\begin{proposition}
\label{prop:motbg}
Let $G$ be a split reductive group. 
\begin{enumerate}
\item The motive of the classifying space ${\op{B}}G$ is pure of weight zero (for the weight structure on Ind-mixed Tate motives induced from Remark~\ref{WDM}). 
\item If $M\in \DMT_G(\pt)_{\op{wt}=0}$ then $(\op{fin}_{{\op{B}}G})_\ast M\in \DMT(\pt)_{\op{wt}=0}$. 
\end{enumerate}
\end{proposition}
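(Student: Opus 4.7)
The plan is to first establish (1) by a reduction to the classifying space of a split torus, and then derive (2) from (1) by a generation argument based on the explicit description of the heart of the weight structure in Proposition~\ref{prop:wtpt}.

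For (1), I first reduce to the case of a connected reductive $G$. Let $G^0$ denote the identity component and let $p: {\op{B}}G^0 \to {\op{B}}G$ be the natural projection, which is a finite \'etale $\pi_0(G)$-torsor. By Lemma~\ref{finitetorsorNew}, the composition of the unit and counit $\const{{\op{B}}G} \to p_\ast p^\ast \const{{\op{B}}G} \cong p_\ast \const{{\op{B}}G^0} \to \const{{\op{B}}G}$ is multiplication by $|\pi_0(G)|$, which is invertible with rational coefficients. Hence $\const{{\op{B}}G}$ is a direct summand of $p_\ast \const{{\op{B}}G^0}$, and applying $\op{fin}_{{\op{B}}G,\ast}$ together with the identity $\op{fin}_{{\op{B}}G,\ast} \circ p_\ast = \op{fin}_{{\op{B}}G^0, \ast}$ exhibits $\op{M}({\op{B}}G)$ as a direct summand of $\op{M}({\op{B}}G^0)$. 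Since the heart of the weight structure is closed under direct summands, it suffices to treat connected $G$.

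So assume $G$ is connected split reductive with Borel subgroup $B \subset G$ containing a split maximal torus $T$. Consider $\pi: {\op{B}}B \to {\op{B}}G$. Under the quotient equivalence of Proposition~\ref{prop:quotientequiv}, this corresponds to the equivariant structure morphism of $G \looparrowright G/B$; by the equivariant projective bundle formula of Proposition~\ref{prop:projectivebundle}, $\pi_\ast \const{{\op{B}}B}$ splits as $\bigoplus_{w\in W}\const{{\op{B}}G}(-l(w))[-2l(w)]$, indexed by Schubert cells. Hence $\const{{\op{B}}G}$ is a direct summand of $\pi_\ast \const{{\op{B}}B}$, and $\op{M}({\op{B}}G)$ is a direct summand of $\op{M}({\op{B}}B)$ via $\op{fin}_{{\op{B}}G,\ast}\circ \pi_\ast = \op{fin}_{{\op{B}}B,\ast}$. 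The quotient homomorphism $B \twoheadrightarrow T$ has as kernel the unipotent radical $U$, which is an affine space; the induced map ${\op{B}}B \to {\op{B}}T$ is levelwise an $\mathbb{A}^n$-bundle, and $\mathbb{A}^1$-invariance (or equivalently Proposition~\ref{prop:suk} applied on the classifying space level) yields $\op{M}({\op{B}}B) \cong \op{M}({\op{B}}T)$. For the split torus $T \cong \mathbb{G}_{\op{m}}^r$ we have ${\op{B}}T \cong ({\op{B}}\mathbb{G}_{\op{m}})^r$; the K\"unneth formula combined with the standard computation $\op{M}({\op{B}}\mathbb{G}_{\op{m}}) \cong \bigoplus_{n\geq 0}\const{\pt}(-n)[-2n]$ (obtained as a colimit of the motives of finite-dimensional projective space approximations) exhibits $\op{M}({\op{B}}T)$ as a countable direct sum of Tate motives $\const{\pt}(-d)[-2d]$. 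All such summands lie in the heart of the weight structure on Ind-mixed Tate motives; closure of the heart under direct sums and direct summands yields (1).

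For (2), by Proposition~\ref{prop:wtpt} together with Proposition~\ref{prop:generatingtate}, the heart $\DMT_G(\pt)_{\op{wt}=0}$ is the idempotent completion of the additive subcategory generated by $\op{Ind}_{G^0}^G \const{\pt}(n)[2n]$ for $n \in \mathbb{Z}$. Since $\op{fin}_{{\op{B}}G, \ast}$ is additive and the heart of the target is closed under direct summands and Tate twists, it suffices to check the claim on these generators. Writing $\op{Ind}_{G^0}^G \cong p_\ast$ for $p: {\op{B}}G^0 \to {\op{B}}G$ and using $\op{fin}_{{\op{B}}G, \ast}\circ p_\ast = \op{fin}_{{\op{B}}G^0, \ast}$, we compute
\[
\op{fin}_{{\op{B}}G, \ast}\bigl(\op{Ind}_{G^0}^G \const{\pt}(n)[2n]\bigr) \cong \op{M}({\op{B}}G^0)(n)[2n],
\]
which is pure of weight zero by (1) applied to the connected group $G^0$. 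The main technical obstacle is the transposition of the equivariant projective bundle formula of Proposition~\ref{prop:projectivebundle} to the simplicial setting of the fibration $\pi: {\op{B}}B \to {\op{B}}G$; this is routine via the quotient equivalence and proper base change, but must be done carefully in order to obtain $\op{M}({\op{B}}G)$ as a direct summand of the fully computable $\op{M}({\op{B}}T)$.
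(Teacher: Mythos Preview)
Your proof is correct and follows essentially the same strategy as the paper: reduce (1) to the split torus case and compute explicitly, then derive (2) from (1) via the description of the heart in Proposition~\ref{prop:wtpt}. The paper's proof of (1) is a terse ``follows directly from the above explicit calculations'', leaning on the reduction to $T$ via the normalizer $N$ (Proposition~\ref{azyk} and \ref{weaksplitting}); you instead reduce via the $G/B$-bundle ${\op{B}}B\to{\op{B}}G$ and the projective bundle formula, which is an equally standard route and arguably more transparent. For (2), the paper first reduces to connected $G$ using weight-exactness of $\op{Ind}_\ast\cong\op{Ind}_!$ for finite quotients and then invokes that every weight-zero motive is built from constants, whereas you compute directly on the generators $\op{Ind}_{G^0}^G\const{\pt}(n)[2n]$; these are two packagings of the same reduction.
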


\begin{proof}
(1) follows directly from the above explicit calculations.  

(2) For a finite group $H$, we have $\op{Ind}_\ast\cong \op{Ind}_!$, and hence by Proposition~\ref{prop:wtres} (5), the induction functor is weight-exact. Using the restriction-induction adjunction, we can reduce the second claim to the case of connected split reductive groups $G$. In this case, all the mixed Tate motives are extensions of constant motives $\const{{\op{B}}G}$, possibly Tate twisted and shifted. So the claim reduces to the case $M=\const{{\op{B}}G}$ which is the first assertion. 
\end{proof}

\begin{Bemerkung}
One could also compute the motives with compact support of the classifying space as in \cite[Section 8]{totaro:motivebg}. With rational coefficients, the resulting formulas would be much simpler than the ones in \cite{totaro:motivebg}; they would agree with the formulas for compactly supported cohomology of classifying spaces of the corresponding complex Lie groups. 
\end{Bemerkung}

We can also draw some consequences for equivariant motivic cohomology of homogeneous spaces. 

\begin{lemma}[{\bf Cohomology of homogeneous varieties}]
\label{fgre}  
Let $G$ be a linear algebraic group, let $\phi:H\subset G$ be the inclusion of a closed subgroup, and let $\phi^\ast:\mathcal{A}_G\ra \mathcal{A}_H$ be the induced homomorphism of Chow rings of classifying spaces. Then  $\mathcal{A}_H$ is finitely generated for this $\mathcal{A}_G$-module structure.
\end{lemma}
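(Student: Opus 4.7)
The plan is to reduce the statement to a purely algebraic assertion about coinvariant rings of Weyl groups, which then follows from Chevalley's theorem together with Noetherianity of finitely generated commutative algebras. The reduction will proceed in three steps; the main difficulty will lie in the first step, where the failure of $\phi$ to descend cleanly to the reductive quotients requires passing through an intermediate group and verifying compatibility with the $\mathcal{A}_G$-module structure on $\mathcal{A}_H$.

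First, I will reduce to the case where both $G$ and $H$ are reductive using Proposition~\ref{prop:suk}: pullback along the quotient morphism $H\twoheadrightarrow H/{\op{R}_{\op{u}}}H$ induces a ring isomorphism $\mathcal{A}_{H/{\op{R}_{\op{u}}}H}\xrightarrow{\sim}\mathcal{A}_H$, and analogously for $G$. The subtlety is that $\phi:H\hookrightarrow G$ need not descend to a morphism of reductive quotients; one instead factors the composite $H\to G\to G/{\op{R}_{\op{u}}}G$ through its image $H':=H/(H\cap {\op{R}_{\op{u}}}G)\hookrightarrow G/{\op{R}_{\op{u}}}G$, noting that $H\cap {\op{R}_{\op{u}}}G$ is a unipotent normal subgroup of $H$, and iterates with $H'$ in place of $H$. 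Second, I will reduce to connected $G$ and $H$: by Proposition~\ref{prop:swcomp}, $\mathcal{A}_G$ is a twisted group ring $S_G^{W_G}\rtimes \Lambda[\pi_0(G)]$ which is free of finite rank $|\pi_0(G)|$ as a left $\mathcal{A}_{G^0}=S_G^{W_G}$-module, so finite generation of an $\mathcal{A}_G$-module is equivalent to finite generation over the subring $\mathcal{A}_{G^0}$. Using the commutativity of the diagram of inclusions $H^0\hookrightarrow G^0\hookrightarrow G$ and $H^0\hookrightarrow H\hookrightarrow G$, the $\mathcal{A}_{G^0}$-action on $\mathcal{A}_H$ factors through $\mathcal{A}_{H^0}\subseteq \mathcal{A}_H$; since $\mathcal{A}_H$ is itself free of finite rank $|\pi_0(H)|$ over $\mathcal{A}_{H^0}$, the question reduces to showing that $\mathcal{A}_{H^0}$ is finitely generated as an $\mathcal{A}_{G^0}$-module.

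Third and finally, assume $G$ and $H$ are connected split reductive with maximal tori $T_H\subseteq T_G$ (always achievable after $G$-conjugating $H$). Writing $S_{?}:=\mathrm{Sym}(\mathfrak{X}(T_{?})\otimes_{\mathbb{Z}}\Lambda)$, the closed embedding of split tori $T_H\hookrightarrow T_G$ yields a character restriction $\mathfrak{X}(T_G)\to\mathfrak{X}(T_H)$ whose cokernel is torsion; after tensoring with the characteristic-zero coefficient field $\Lambda$, the induced map $S_G\twoheadrightarrow S_H$ of polynomial rings is surjective. Chevalley's theorem makes $S_G$ a finite free $S_G^{W_G}$-module, so $S_H$ is finitely generated as an $\mathcal{A}_G=S_G^{W_G}$-module via the composition $\mathcal{A}_G\hookrightarrow S_G\twoheadrightarrow S_H$. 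Since $\mathcal{A}_G$ is Noetherian (being a finitely generated commutative $\Lambda$-algebra), its submodule $\mathcal{A}_H=S_H^{W_H}\subseteq S_H$ is automatically finitely generated, completing the argument.
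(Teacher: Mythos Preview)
Your argument is correct in its core and lands on exactly the same algebraic fact as the paper: the surjection $S_G\twoheadrightarrow S_H$ of polynomial rings, finiteness of $S_G$ over $S_G^{W_G}$ via Chevalley, and Noetherianity of $\mathcal{A}_G$ to control the submodule $\mathcal{A}_H\subseteq S_H$. The paper, however, reaches this point by a shorter route that bypasses your steps~1 and~2 entirely: instead of first reducing to reductive and then to connected groups, it simply chooses a maximal torus $S\subset H$, enlarges it to a maximal torus $T\subset G$, and works with the commutative square relating $\mathcal{A}_G,\mathcal{A}_H,\mathcal{A}_T,\mathcal{A}_S$. Finiteness of the vertical maps $\mathcal{A}_G\to\mathcal{A}_T$ and $\mathcal{A}_H\to\mathcal{A}_S$ comes from Proposition~\ref{prop:swcomp} (implicitly via Proposition~\ref{prop:suk}, applied only to the unipotent radicals of $G$ and $H$ themselves, which are connected by definition), and together with finite type over $\Lambda$ this suffices. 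This also sidesteps a small wrinkle in your step~1: in positive characteristic the kernel $H\cap\op{R}_{\op{u}}G$ need not be connected (e.g.\ $\mathbb{Z}/p\subset\mathbb{G}_a$), so Proposition~\ref{prop:suk} as stated does not apply directly; the fix is easy (finite groups have trivial rational cohomology), but the paper's maximal-torus argument avoids the issue altogether.
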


\begin{proof}
  Choose a maximal torus $S\subset H$, enlarge it to a maximal torus $T\subset G$, and consider the commutative square
\[
\xymatrix{
  \mathcal A_G \ar[r]\ar[d]&  \mathcal A_H \ar[d]\\
  \mathcal A_T \ar[r]& \mathcal A_S
}
\]
By Proposition~\ref{prop:swcomp}, we know that the vertical maps are finite ring extensions and each of our rings, in particular $\mathcal{A}_H$, is known to be of finite type over the coefficient field. The lemma follows.
\end{proof}

\begin{remark}
Essentially, the reason for the finiteness above is that the homotopy fiber of ${\op{B}}\phi:{\op{B}}H\to{\op{B}}G$ is $G/H$, and the motive of the latter is mixed Tate. Hence we only need finitely many generators to describe this motive in $\DMT(\pt)$. As mentioned before, in specific situations, one can use the Eilenberg--Moore spectral sequence to do explicit computations. 
\end{remark}

\chapter[Derivators and tilting]{Derivators and tilting\\ (an appendix by F. H\"ormann and M. Wendt)}
\renewcommand{\thesection}{B.\arabic{section}}

\appendix

The appendix recalls basic definitions and statements about derivators, with a particular view toward the homotopical stable algebraic derivators of \cite{ayoub:thesis1}. We provide a tilting result for derivators as well as various results related to compatibility of tilting with six functors which are used in the description of categories of equivariant motives in terms of complexes of modules over the equivariant Chow ring. 

\section{Derivators} 
\label{sec:derivators}

We recall the definition of derivator from \cite[Definition 1.10]{groth}. As a preliminary notion, a \emph{prederivator} $\mathbb{D}$ is a strict 2-functor $\mathbb{D}:\mathsf{Cat}^{\op{op}}\to\mathsf{CAT}$ where we follow the post-Russell--Whitehead habit of ignoring set-theoretic issues. We'll also consider variations of the notion that have a source different from $\mathsf{Cat}^{\op{op}}$, e.g. the category of diagrams of schemes over some base $S$. The category with one object (which only has an identity automorphism) is denoted by $\ast$. For every category $I$ and object $i\in I$, there is an induced evaluation functor $\op{ev}_i:\mathbb{D}(I)\to\mathbb{D}(\ast)$. 

\begin{definition}
\label{def:derivator}\index{derivator}
A \emph{derivator} $\mathbb{D}$ is a prederivator such that 
\begin{enumerate}
\item For two categories $I_1$ and $I_2$ the natural functor $\mathbb{D}(I_1\sqcup I_2)\to\mathbb{D}(I_1)\times\mathbb{D}(I_2)$ is an equivalence, and $\mathbb{D}(\emptyset)$ is the terminal category. 
\item A morphism $f:X\to Y$ in $\mathbb{D}(I)$ is an isomorphism if and only if $\op{ev}_i(f):\op{ev}_i(X)\to \op{ev}_i(Y)$ is an isomorphism in $\mathbb{D}(\ast)$ for every object $i\in I$. 
\item For every functor $u:I\to J$ there are homotopy left and right Kan extensions along $u$:
\[
u_!:\mathbb{D}(I)\leftrightarrows \mathbb{D}(J):u^\ast \quad\textrm{ and } \quad u^\ast:\mathbb{D}(J)\leftrightarrows\mathbb{D}(I):u_\ast.
\]
\item The Kan extensions in (3) can be computed as homotopy colimits and limits of suitable slice categories.
\end{enumerate}
\end{definition}

For the functors $u:I\to \ast$, the left homotopy Kan extension $u_!$ is a version of the homotopy colimit of $I$-diagrams, and the right homotopy Kan extension $u_\ast$ is a version of the homotopy limit. Point (4) in the above definition provides a more general relation between the Kan extensions and homotopy colimits and limits. Since we will not need it we refrain from giving a more precise statement which would require introducing a bulk of notation,  see \cite[pp.8--10]{groth} for the precise definitions.

Combining the evaluation functors $\op{ev}_i:\mathbb{D}(I)\to\mathbb{D}(\ast)$ for all objects $i\in I$ together with the natural transformations induced from morphisms $i\to j$ in $I$, we get diagram evaluation functors
\[
\op{ev}:\mathbb{D}(I)\to \op{Fun}(I,\mathbb{D}(\ast)),
\]
called \emph{underlying diagram functor} in \cite{groth}. A derivator is called \emph{strong} if the partial diagram evaluation $\mathbb{D}([1]\times J)\to\op{Fun}([1],\mathbb{D}(J))$ is full and essentially surjective for every category $J$. 

By \cite[Section 2.1]{groth}, a \emph{morphism of prederivators} $F:\mathbb{D}\to\mathbb{D}'$ is a pseudo-natural transformation between the 2-functors $\mathbb{D}$ and $\mathbb{D}'$, i.e., a collection of functors $F_J:\mathbb{D}(J)\to\mathbb{D}'(J)$ for every $J\in\mathsf{Cat}$ which is compatible with restriction functors $u^\ast$ for $u:J\to K$ via explicit natural isomorphisms. 

The following definitions are given in \cite[Definition 3.1, 4.1]{groth}.
\begin{definition}
\index{derivator:stable}
A derivator $\mathbb{D}$ is \emph{pointed} if the underlying category $\mathbb{D}(\ast)$ of $\mathbb{D}$ is pointed. A strong derivator $\mathbb{D}$ is \emph{stable} if it is pointed and objects of $\mathbb{D}(\Box)$ are cartesian if and only if they are cocartesian. 
\end{definition}

\begin{example}
Let $\mathcal{A}$ be an Grothendieck abelian category. For any diagram $I$ the category $\op{Fun}(I,\mathcal{A})$ of $I$-diagrams in $\mathcal{A}$ is again an abelian category. Every morphism of diagrams $f:I\to J$ induces a restriction functor $f^\ast:\op{Fun}(J,\mathcal{A})\to\op{Fun}(I,\mathcal{A})$ which is exact. Then we have an associated derivator $\mathbb{D}_{\mathcal{A}}$, which sends the diagram $I$  to the derived category $\mathbb{D}_{\mathcal{A}}(I):=\op{Der}(\op{Fun}(I,\mathcal{A}))$ and which sends the morphism $f:I\to J$ of diagrams to the induced functor $f^\ast:\mathbb{D}_{\mathcal{A}}(J)\to\mathbb{D}_{\mathcal{A}}(I)$. 

All the examples of derivators appearing throughout the present paper are derivators associated to localizations of suitable model categories as in \cite[Theorem 1.38]{groth}.
\end{example}

\index{derivator:monoidal}
Now we recall the notion of monoidal derivators from \cite{groth:ponto:shulman}. Prederivators form a 2-category $\mathsf{pDer}$ whose morphisms were discussed above and whose 2-cells are given by natural transformations. The category $\mathsf{pDer}$ is cartesian, with products computed pointwise in $\mathsf{CAT}$. A \emph{monoidal prederivator} is a strict 2-functor $\mathbb{D}:\mathsf{Cat}^{\op{op}}\to\mathsf{MonCAT}$ from small categories to not necessarily small monoidal categories. Similarly, we have the notions of symmetric monoidal prederivators and strong resp. lax monoidal morphisms of prederivators. A \emph{monoidal derivator} is then a monoidal prederivator which is a derivator and whose product $\otimes: \mathbb{D}\times\mathbb{D} \to\mathbb{D}$ is cocontinuous in each variable. For a precise discussion of what that means, we refer to \cite[Section 3]{groth:ponto:shulman}; roughly speaking, monoidality of derivators requires compatibility of the monoidal structures with the left homotopy Kan extensions. 

Now we discuss Ayoub's notion of homotopical stable algebraic derivator which is the basis for his formulation of the six-functor formalism for motivic categories. In the algebraic setting, the source category $\mathsf{Cat}$ is replaced by a more algebraic category of diagrams of schemes over a suitable base scheme $S$. Effectively, one fixes a suitable subcategory $\mathsf{Dia}$ of $\mathsf{Cat}$ of diagram shapes and then considers the category $\mathsf{DiaSch}$ of $\mathsf{Dia}$-diagrams of quasi-projective schemes over $S$ with quasi-projective morphisms, cf. \cite[Section 2.4.1]{ayoub:thesis1}. An \emph{algebraic prederivator} is then a 2-functor $\mathbb{D}:\mathsf{DiaSch}/S\to \mathfrak{TR}$ from diagram of schemes to triangulated categories.\footnote{Note that Ayoub's definition doesn't require the 2-functor to be strict.}

Now we can recall the definition of homotopical stable algebraic derivator from \cite[Section 2.4.2]{ayoub:thesis1}. 

\begin{definition}
\label{def:hsad}\index{derivator!homotopical stable algebraic}
An algebraic prederivator  $\mathbb{D}:\mathsf{DiaSch}/S\to\mathfrak{TR}$ is called \emph{homotopical stable algebraic derivator} if the following axioms are satisfied: 
\begin{description}
\item[DerAlg 0] If $(\mathscr{F},\mathcal{I})$ is a diagram of quasi-projective $S$-schemes with $\mathcal{I}$ a discrete category, then we have an equivalence 
\[
\mathbb{D}(\mathscr{F},\mathcal{I})\to \prod_{i\in\op{Ob}(\mathcal{I})}\mathbb{D}(\mathscr{F}(i)).
\]
\item[DerAlg 1] Let $\alpha:\mathcal{J}\to\mathcal{I}$ an essentially surjective functor and a diagram $(\mathscr{F},\mathcal{I})$, the functor $\mathbb{D}(\mathscr{F},\mathcal{I})\to\mathbb{D}(\mathscr{F}\circ\alpha,\mathcal{J})$ is conservative. 
\item[DerAlg 2R] For every morphism $(f,\alpha):(\mathscr{F},\mathcal{I})\to (\mathscr{G},\mathcal{J})$, the induced functor $(f,\alpha)^\ast$ has a right adjoint $(f,\alpha)_\ast$. 
\item[DerAlg 2L] For every morphism $(f,\alpha):(\mathscr{F},\mathcal{I})\to (\mathscr{G},\mathcal{J})$ which is pointwise smooth, the induced functor $(f,\alpha)^\ast$ has a left adjoint $(f,\alpha)_\sharp$. 
\item[DerAlg 3R] For a diagram 
\[
\xymatrix{(\mathscr{G}\circ\alpha,\mathcal{J}) \ar[r]^\alpha \ar[d]_{f|_{\mathcal{J}}} & (\mathscr{G},\mathcal{I}) \ar[d]^f \\
(\mathscr{F}\circ\alpha,\mathcal{J}) \ar[r]_\alpha & (\mathscr{F},\mathcal{I}),
}
\]
the natural exchange transformation  $\alpha^\ast f_\ast\to (f|_{\mathcal{J}})_\ast\circ \alpha^\ast$ is an isotransformation.
\item[DerAlg 3L] In a diagram as above, where $f$ is cartesian and pointwise smooth, the natural exchange transformation $(f|_{\mathcal{J}})_\sharp\circ\alpha^\ast \to \alpha^\ast f_\sharp$ is an isotransformation. 
\item[DerAlg 4] For every quasi-projective $S$-scheme $X$, the  2-functor $\mathbb{D}(X,-):\mathsf{Dia}\to\mathfrak{TR}$ is a stable derivator in the sense discussed above. 
\item[DerAlg 5] The $2$-functor $\mathsf{H}=\mathbb{D}(-,\ast):\mathsf{Sch}/S\to\mathfrak{TR}$ is a homotopical stable 2-functor, i.e.,
\begin{enumerate}
\item $\mathsf{H}(\emptyset)=0$
\item For every morphism $f:X\to Y$ the natural functor $f^\ast:\mathsf{H}(Y)\to\mathsf{H}(X)$ has a right adjoint $f_\ast$, and for each immersion $i$ the counit $i^\ast i_\ast\to\op{id}$ is an isotransformation.
\item For every smooth morphism $f:X\to Y$, the functor $f^\ast$ has a left adjoint $f_\sharp$. For smooth base-change squares, the exchange transformation $f_\sharp' g'^\ast\to g^\ast f_\sharp$ is an isotransformation. 
\item If $j:U\hookrightarrow X$ is an open immersion with closed complement $i:Z\hookrightarrow X$, the pair $(j^\ast,i^\ast)$ is conservative.
\item For the canonical projection $p:X\times\mathbb{A}^1\to X$, the unit $\op{id}\to p_\ast p^\ast$ is an isotransformation.
\item For $s$ the zero section of $p:X\times\mathbb{A}^1\to X$, the endofunctor $p_\sharp s_\ast$ is an auto-equivalence of $\mathsf{H}(X)$.
\end{enumerate}
\end{description}
\end{definition}

\begin{remark}
It should be noted that there is some difference between Ayoub's axioms for derivators in \cite[Definition 2.1.43]{ayoub:thesis1} and the definitions above taken from \cite{groth}. Ayoub's axioms 1 and 2 encode axioms 1 and 2 in Definition~\ref{def:derivator} plus strongness, 3 is the existence of Kan extensions, 4 encodes the base change formulas. Axiom 5 is the stability and 6 enforces a relation between induced triangulated structure from the derivator-definition of suspension and the one given. 

Note in particular that Ayoub's definition implies that the derivators are also pointed in the sense of \cite{groth}. 
\end{remark}

Finally, we recall from \cite[Definition 2.4.48]{ayoub:thesis1} the appropriate notion of monoidal structures. A \emph{monoidal} homotopical stable algebraic derivator is a 2-functor $\mathbb{D}:\mathsf{DiaSch}/S\to\mathfrak{MonoTR}$ from diagrams of schemes to monoidal triangulated categories such that 
\begin{enumerate}
\item Forgetting the monoidal structure yields a homotopical stable algebraic derivator. 
\item For a pointwise smooth morphism $(f,\alpha):(\mathscr{G},\mathcal{J})\to(\mathscr{F},\mathcal{I})$ of schemes, the following morphisms are isomorphisms if either (i) $\alpha$ is the identity and $f$ is cartesian, or (b) the morphism of diagrams of schemes is the identity of a single scheme $X$: 
\[
(f,\alpha)_\sharp((f,\alpha)^\ast A\otimes_{\mathscr{G},\mathcal{J}}B') \to A\otimes_{\mathscr{F},\mathcal{I}}(f,\alpha)_\sharp B'
\]
\[ (f,\alpha)_\sharp( A'\otimes_{\mathscr{G},\mathcal{J}} (f,\alpha)^\ast B) \to (f,\alpha)_\sharp A'\otimes_{\mathscr{F},\mathcal{I}}B.
\]
\end{enumerate}
 
By \cite[Corollaire 2.4.51 and Proposition 2.1.152]{ayoub:thesis1}, there is a monoidal analogue of Axiom DerAlg 4: if $\mathbb{D}$ is a monoidal homotopical stable algebraic derivator, then for every quasi-projective $S$-scheme $X$, the induced 2-functor $\mathbb{D}(X,-):\mathsf{Dia}\to\mathfrak{MonoTR}$ is a monoidal stable derivator in the sense of \cite{groth:ponto:shulman}. 

\begin{convention}
\label{derivator:conditions}
\index{derivator:standing assumptions}
In all of this work, we consider homotopical stable algebraic derivators $\mathbb{D}$ over $\op{Var}/k$, i.e., we work over schemes which are quasi-projective, separated and of finite type over some field $k$. 
When we speak of \emph{motives}, we are referring to objects in some category $\mathbb{D}(X)$.

In addition to these requirements, the following assumptions are made for all the derivators $\mathbb{D}$ that appear:
\begin{enumerate}
\item The derivators $\mathbb{D}$ are $\Lambda$-linear for some ring $\Lambda\supset \mathbb{Q}$. Most of the time, fields of characteristic zero will suffice.
\item The derivators $\mathbb{D}$ are \emph{separated} in the sense of Definition 2.1.160 of \cite{ayoub:thesis1}, i.e., for any surjective morphism $f:X\to Y$ of varieties, the associated functor $f^\ast$ is conservative.
\item The derivators $\mathbb{D}$ are monoidal in the sense discussed above. For a variety $X$, the tensor unit in $\mathbb{D}(X)$ is denoted by $\const{X}$, cf.~\ref{def:motive}. We denote $\const{X}(1)[1]$ the cone of the unit map $X\to\mathbb{G}_{\op{m},X}$. 
\item The derivators $\mathbb{D}$ are equipped with a t-structure, i.e., for each variety $X\in\op{Var}/k$, the category $\mathbb{D}(X)$ admits a t-structure such that the associated six functors have the exactness properties detailed in \cite[Scholie 2.2.95]{ayoub:thesis1}. 
\item The derivators $\mathbb{D}$ are orientable in the sense that for a vector bundle $E\to X$ of rank $r$ we have $\op{M}_X(\op{Th}(E))\cong \underline{X}(r)[2r]$, where $\op{Th}(E)=E/(E\setminus X)$ is the Thom space of the vector bundle $E$ and the motive $\op{M}_X(-)$ is defined as in \ref{def:motive}.
\end{enumerate}
\end{convention}

\begin{remark}
See Section~\ref{sec:basicmotives} for the construction of various motivic homotopical stable algebraic derivators that are used in this work. The above requirements are satisfied for these. 
\end{remark}

\section{Tilting for derivators} 
\label{sec:fritz}

The purpose of this appendix is to establish lifting results for diagrams in derivators. This allows, under suitable conditions, to define a total complex or tilting functor for complexes in derived categories of motives, which is needed in the tilting statements for equivariant mixed Tate motives and their compatibilities with the six functor formalism.

\begin{remark}
In the following, we use ``isotransformation'' as a synonym  for ``natural  transformation, which is an isomorphism in the category of functors''. Quite often we draw diagrams with categories in the corners and functors as arrows and double arrows as transformations of functors filling the cells. These are meant to convey which transformations or isotransformations of compositions of functors we claim to exist or to even explicitly construct.
\end{remark}

\subsection{Dold--Kan correspondence}

Let $\mathcal{A}$ be an abelian category, and let $X\in\op{Fun}(\mathbf{\Delta}^{\op{op}},\mathcal{A})$ be a simplicial object. The \emph{normalized chain complex $\op{N}(X)$} is given by $\op{N}(U_n)=\bigcap_{i=0}^{n-1}\ker \op{d}_i^n$ with the boundary map $(-1)^n\op{d}_n^n$, cf. \cite[Tag 0194, before Lemma 14.23.4]{stacks}. This induces a functor $\op{N}:\op{Fun}(\mathbf{\Delta}^{\op{op}},\mathcal{A})\to \op{Ch}_{\geq 0}(\mathcal{A})$ which is an equivalence of categories, cf. \cite[Tag 019D, Theorem 14.24.3]{stacks}, called the \emph{Dold--Kan correspondence.} 

For an abelian category $\mathcal{A}$, consider a complex 
\[
A=\left(A_n\xrightarrow{\op{d}_{n-1}} A_{n-1}\to\cdots\xrightarrow{\op{d}_1} A_1\right)
\]
of objects in $\op{Ch}(\mathcal{A})$, i.e., a complex of complexes. Under the Dold--Kan correspondence, it corresponds to a simplicial object $A^\Delta\in\op{Fun}(\mathbf{\Delta}^{\op{op}},\op{Ch}(\mathcal{A}))$. An explicit realization of an inverse of the Dold--Kan correspondence can be found in the proof of \cite[Tag 019D, Theorem 14.24.3]{stacks}. The total complex of the double complex $A$ is equivalent to the homotopy colimit of  $A^\Delta$, viewed as a simplicial diagram in $\op{Ch}(\mathcal{A})$, using the usual model structure on chain complexes in an abelian category. This follows by applying the Dold--Kan correspondence once more, interpret the total complex of the double complex $A$ as the diagonal of a bisimplicial set and then see that the latter is equivalent to the homotopy colimit of the simplicial diagram $A^\Delta$. 

The functors in the Dold--Kan correspondence are lax monoidal functors, but not strict because the Eilenberg--Zilber map is only a homotopy inverse of the Alexander--Whitney map. This provides a compatibility of the monoidal structures induced on the derived categories. There are also properly monoidal versions of the Dold--Kan correspondence, cf. e.g. \cite{schwede:shipley}.

\subsection{Directed categories}

The key tool in the discussion of the tilting theorem are directed categories. This provides a structure encoding inductive construction of diagrams; in the context of tilting theory, the structure of directed categories allows to inductively lift diagrams from the homotopy category to the respective diagram category of the derivator, provided suitable hom-sets vanish. 

\begin{definition}
\label{def:directed}
A category $\mathcal{I}$ is called \emph{directed} if there exists a functor $\deg:\mathcal{I}\to\mathbb{N}$ such that preimages of identity maps are identity. Here the category structure on $\mathbb{N}$ is the one given by the total order $\leq$ on $\mathbb{N}$: there exists a unique morphism $n\to m$ if and only if $n\leq m$. 
\end{definition}

\begin{definition}
The key example of a directed category is the category $\mathbf{\Delta}$ with the obvious degree function. 
\end{definition}

\begin{remark}
\label{rem:decomposein}
For a directed category $\mathcal{I}$, we denote by $\mathcal{I}_{\leq n}$ the full subcategory of $\mathcal{I}$ of objects with $\deg\leq n$. 

There is a diagram of embeddings of full subcategories 
\[
\mathcal{I}_{<n}\xrightarrow{i}\mathcal{I}\xleftarrow{j} \mathcal{I}_n,
\]
where the indices refer to preimages of $[0,n-1]$ and $\{n\}$ under $\deg$. In particular, $\mathcal{I}_n$ is a discrete category. By \cite[4.3]{groth}, there is a distinguished triangle
\[
j_! j^\ast Y\to Y\to i_\ast i^\ast Y\to j_!j^\ast Y[1] 
\]
for any $Y\in\mathbb{D}(\mathcal{I})$. For $X,Y\in\mathcal{D}$, this triangle induces a long exact sequence of abelian groups
\[
\cdots\to\mathcal{D}(X,j_!j^\ast Y[k])\to \mathcal{D}(X,Y[k])\to \mathcal{D}(X,i_\ast i^\ast Y[k])\to \mathcal{D}(X,j_!j^\ast Y[k+1])\to \cdots
\]
\end{remark}

\begin{definition}
Let $\mathcal{M}$ be a category having all small limits and colimits. The inclusion $\mathcal{I}_{\leq n}$ induces a restriction functor $\op{Fun}(\mathcal{I},\mathcal{M})\to \op{Fun}(\mathcal{I}_{\leq n},\mathcal{M})$. The left adjoint $\op{sk}_n$ of the restriction functor is called the $n$-skeleton, and the right adjoint $\op{cosk}_n$ of the restriction functor is called the $n$-cosekeleton. 
\end{definition}

\begin{example}
Following \cite[Tag 017Z, Definition 14.12.1]{stacks}, denote by $\mathbf{\Delta}_{\leq n}$ the full subcategory of the simplicial category $\mathbf{\Delta}$ with objects $[0], [1], \dots,[n]$. An \emph{$n$-truncated simplicial object} in a category $\mathcal{C}$ is  a functor $\mathbf{\Delta}^{\op{op}}_{\leq n}\to\mathcal{C}$. For any simplicial object $X\in\op{Fun}(\mathbf{\Delta}^{\op{op}},\mathcal{C})$, the above notions of skeleton and coskeleton reduce to the classical notions. In particular, the \emph{$n$-skeleton functor} is given as left adjoint of the restriction along the inclusion $\mathbf{\Delta}_{\leq n}\hookrightarrow\mathbf{\Delta}$. 
\end{example} 

There are natural transformations $\tau_n:\op{sk}_n\to\op{cosk}_n$. A diagram $X:\mathcal{I}_{\leq n-1}\to\mathcal{M}$ together with a family of factorizations $\op{sk}_n(X)^c\xrightarrow{i^c} X^c\xrightarrow{p^c}\op{cosk}_n(X)^c$ of the natural transformations $\tau_{n-1}^c:\op{sk}_{n-1}(X)^c\to \op{cosk}_{n-1}(X)^c$ for each object $c\in\mathcal{I}$ with $\deg(c)=n$ uniquely determines a diagram $X:\mathcal{I}_{n}\to\mathcal{M}$ whose restriction to degree $n-1$ coincides with the given diagram, cf. \cite[Lemma 3.10]{riehl:verity}. The objects $\op{sk}_{n-1}(\op{res}_{n-1}(X))^c$ and $\op{cosk}_{n-1}(\op{res}_{n-1}(X))^c$ with $n=\deg(c)$ are called the $n$-th \emph{latching} and \emph{matching objects} of the diagram $X$, respectively. The associated counit $\op{sk}_{n-1}(\op{res}_{n-1}(X))^c\to X^c$ and unit $X^c\to\op{cosk}_{n-1}(\op{res}_{n-1}(X))^c$ are called \emph{latching} and \emph{matching maps} of $X$, respectively. 



In the setting of derivators, we can replace the latching objects by diagrams over a suitably defined category: 

\begin{definition}
\label{def:latching}
For a diagram $I$ of length $n$, we define the \emph{latching category} $L_n$ (a directed category of length $n-1$) as follows: the  objects are morphisms $\alpha:X\to Y$ of $I$ with $\deg(Y)=n, \deg(X)<n$, and the morphisms are given by the evident commutative diagrams. Mapping morphisms to source and target yields functors $p_n:L_n\to I_n$ and $\iota_n:L_n\to I_{<n}$. 
\end{definition}

\begin{lemma}
\label{lem:latching}
Let $I$ be a directed category of length $n$, let $\mathbb{D}$ be a derivator and consider an object $X\in\mathbb{D}(I)$. Then there is an exact triangle in $\mathbb{D}(I_n)$ of the form
\[
p_{n,!} \iota_n^\ast X\to j^\ast X\to j^?X\to p_{n,!}\iota_n^\ast X[1]
\]
where $j^\ast X\to j^?X$ is the adjoint map of $j_!X\to j_\ast X$ arising from the composition $j_!X \xleftarrow{\cong} j^!j^\ast j_\ast X\rightarrow j_\ast X$. 
\end{lemma}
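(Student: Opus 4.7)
The plan is to deduce this from the recollement structure associated with the sieve/cosieve decomposition $I = I_{<n} \sqcup I_n$, combined with a base change computation for the comma square defining $L_n$.

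First, I would set up the recollement. Since morphisms in $I$ go from lower to higher degree, the inclusion $i\colon I_{<n}\hookrightarrow I$ is a sieve (closed under predecessors), and the inclusion $j\colon I_n\hookrightarrow I$ is the complementary cosieve. A pointwise computation using Kan's formula shows that $i_!$ extends by the latching object and $j_*$ extends by the matching object, whereas $i_*$ and $j_!$ extend by zero on the respective complements. The standard derivator recollement (dual to the triangle recalled in Remark~\ref{rem:decomposein}) yields, for every $X\in\mathbb{D}(I)$, a distinguished triangle in $\mathbb{D}(I)$
\[
i_! i^\ast X \to X \to j_\ast j^\ast X \to i_! i^\ast X[1],
\]
where the two inner maps come from the unit of $i_!\dashv i^\ast$ and the unit of $j^\ast\dashv j_\ast$, respectively. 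Applying the exact functor $j^\ast$ gives a distinguished triangle in $\mathbb{D}(I_n)$
\[
j^\ast i_! i^\ast X \to j^\ast X \to j^\ast j_\ast j^\ast X \to j^\ast i_! i^\ast X[1].
\]
The term $j^\ast j_\ast j^\ast X$ is precisely the object $j^?X$ whose comparison map with $j^\ast X$ is the adjoint of the natural comparison $j_!j^\ast X \to j_\ast j^\ast X$ between the two Kan extensions, as in the statement.

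The central step is the identification of the first term with $p_{n,!}\iota_n^\ast X$. For this, I would apply the base change formula for derivators to the square
\[
\xymatrix{
L_n \ar[r]^{\iota_n} \ar[d]_{p_n} & I_{<n} \ar[d]^i \\
I_n \ar[r]_j & I.
}
\]
By construction, $L_n$ is exactly the comma category $i/j$: its objects are pairs $(x\in I_{<n}, y\in I_n)$ together with a morphism $x\to y$ in $I$, and $\iota_n, p_n$ are the two projections. Equivalently, for each $c\in I_n$ the slice $p_n/c$ is isomorphic to the comma category $i/c$, which is the slice category used to compute $(i_!i^\ast X)^c$ via the Kan extension formula. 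This identifies the square as a homotopy exact square, so the base change isomorphism $j^\ast i_! \stackrel{\approx}{\Rightarrow} p_{n,!}\iota_n^\ast$ holds, and precomposition with $i^\ast$ (which is absorbed since $\iota_n = i\circ\iota_n$) yields $j^\ast i_!i^\ast X \cong p_{n,!}\iota_n^\ast X$, completing the triangle.

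The main technical obstacle is precisely the verification that the above square is homotopy exact for a general derivator; this is a standard, though non-formal, fact about comma squares in derivator theory and reduces to the pointwise Kan extension formula. The remaining verification — that the connecting map in the restricted triangle coincides with the one described in the statement — is a straightforward bookkeeping exercise with the units and counits of the adjunctions $i_!\dashv i^\ast\dashv i_\ast$ and $j_!\dashv j^\ast\dashv j_\ast$.
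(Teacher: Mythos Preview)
Your proposed ``dual recollement triangle''
\[
i_! i^\ast X \to X \to j_\ast j^\ast X \to i_! i^\ast X[1]
\]
is \emph{not} distinguished. For the sieve $i\colon I_{<n}\hookrightarrow I$ and the complementary cosieve $j\colon I_n\hookrightarrow I$, only $i_\ast$ and $j_!$ are extension-by-zero; the functors $i_!$ and $j_\ast$ are not, and there is no localization triangle pairing them with the identity. Already for $I=[1]$ your sequence evaluates at $0\in I_{<1}$ to $X_0\xrightarrow{\op{id}}X_0\to X_1$, which is a cofiber sequence only when $X_1\cong 0$. The identification $j^?X=j^\ast j_\ast j^\ast X$ fails for the same reason: $j$ is fully faithful, so $j^\ast j_\ast\cong\op{id}$ and your formula would force $j^?X\cong j^\ast X$, contradicting the lemma.

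Your base-change step $j^\ast i_! i^\ast X\cong p_{n,!}\iota_n^\ast X$ via the comma square $L_n=i/j$ is correct and is indeed the key geometric input. A repaired version of your strategy would take the cofiber $C$ of the counit $i_! i^\ast X\to X$, observe that $i^\ast C=0$ (since $i^\ast i_!\cong\op{id}$) and hence $C\cong j_!(j^\ast C)$, apply $j^\ast$, and then identify $j^\ast C$ with $j^?X$. That last identification, however, is precisely the nontrivial content: $j^?$ is defined in the paper via Groth's cylinder construction, and the later adjunction $j^?\dashv j_!$ is \emph{deduced from} Lemma~\ref{lem:latching}, so you cannot invoke it here without circularity. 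The paper instead unwinds the cylinder definition of $j^?$ directly, using a homotopy-exact square over $\op{cyl}\times_{/I}I_n$ and a further reduction to a $\ulcorner$-shaped diagram, to produce the cofiber sequence with first term $p_{n,!}\iota_n^\ast X$ on the nose.
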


\begin{proof}
We recall from \cite[Section 3.1]{groth} the description of the functor $j^?X$. Note that the functor $j:I_n\to I$ is a cosieve, cf. \cite[Definition 1.28]{groth}, and $i:I_{<n}\to I$ is the complementary sieve. From \cite[Proof of Lemmma 3.7]{groth}, the cylinder diagram for the sieve $i:I_{<n}\to I$ is given as the pushout
\[
\xymatrix{
I_{<n}\ar[r]^i \ar[d]_{\iota_0} & I \ar[d]^s\\
I_{<n}\times [1] \ar[r] & \op{cyl}
}
\]
where $\iota_0:I_{<n}\to I_{<n}\times [1]$ is the inclusion at $0$ and $s:I\to\op{cyl}$ denotes the corresponding inclusion. Applying the universal property of the pushout to the two functors $\op{id}:I\to I$ and $I_{<n}\times[1]\xrightarrow{\op{pr}}I_{<n}\xrightarrow{i}I$ induces a projection functor denoted by $q:\op{cyl}\to I$. With this notation, the functor $j^?$ is given by the composition 
\[
j^?:\mathbb{D}(I)\xrightarrow{s_\ast} \mathbb{D}(\op{cyl},I_{<n}) \xrightarrow{q_!} \mathbb{D}(I,I_{<n}) \xrightarrow{j^\ast} \mathbb{D}(I_n),
\]
cf. \cite[end of Section 3.1]{groth}. 

Consider the commutative diagram
\[
\xymatrix{
\op{cyl}\times_{/I}I_n\ar[r]^\alpha \ar[d]_p & \op{cyl} \ar[d]^q \\
I_n \ar[r]_j & I
}
\]
where $\op{cyl}\times_{/I}I_n$ denotes the appropriate comma category. By an argument as in \cite[proof of Theorem 1.31]{groth}, the diagram is $\mathbb{D}$-exact. In particular, 
\[
j^?=j^\ast \circ q_!\circ s_\ast=p_!\circ \alpha^\ast\circ s_\ast. 
\]
We can also write the comma category above as a pushout (obtained as the mapping cylinder pushout above)
\[
\xymatrix{
I_{<n}\times_{/I}I_n \ar[r] \ar[d] & I\times_{/I}I_n \ar[d] \\
I_{<n}\times_{/I} I_n\times [1] \ar[r] & \op{cyl}\times_{/I} I_n,
}
\]
where $I_{<n}\times_{/I}I_n$ is the latching category of Definition~\ref{def:latching}.

Denoting by  $\ulcorner$ the category $(0,1)\leftarrow(0,0)\rightarrow(1,0)$, we have a functor $F:\op{cyl}\times_{/I}I_n\to\ulcorner\times I_n$ obtained via the universal property from the following two functors: the functor 
\[
I_{<n}\times_{/I} I_n\times[1]\to\ulcorner\times I_n:(\alpha:x\to y,\epsilon) \mapsto (0,\epsilon)\times y
\]
and the functor 
\[
I\times_{/I}I_n\to\ulcorner\times I_n: (\alpha:x\to y) \mapsto \left\{\begin{array}{ll}(0,0)\times y & \nu(x)<n\\ (1,0)\times y & \nu(x)=n. \end{array}\right.
\]
Note that the two definitions are compatible because they send a morphism $\alpha:x\to y$ of $I_{<n}\times_{/I}I_n$ to $(0,0)\times y$. We also have the projection functor $P:\ulcorner\times I_n\to I_n$, and it is clear that the composition $P\circ F=p$. Consequently, $j^\ast=P_!F_!\alpha^\ast s_\ast$. 

We now claim that the underlying diagram of $F_!\alpha^\ast s_\ast X$ is the diagram
\[
\xymatrix{
p_{n,!}\iota_n^\ast X\ar[r] \ar[d] & j^\ast X\\ 0
}
\]
where the functors $p_n$ and $\iota_n$ are the ones defined in Definition~\ref{def:latching}. This will imply the claim about the exact triangle. The corner of the underlying diagram of $F_!\alpha^\ast s_\ast X$ has the $I_n$-indexed homotopy colimit over $\op{cyl}\times_{/I}I_n\times_{/\ulcorner}(0,0)=I_{<n}\times_{/I}I_n=L_n$, and it is easy to check that $\iota_n^\ast=\alpha^\ast s_\ast$. The entry $(1,0)$ of the underlying diagram is the $I_n$-indexed homotopy colimit over $\op{cyl}\times_{/I}I_n\times_{/\ulcorner}(1,0)=I\times_{/I}I_n$. This is simply $j^\ast$ because the only maps that appear are the $\op{id}_y$, cf. the case distinction above. The entry $(0,1)$ of the underlying diagram is the $I_n$-indexed homotopy colimit over $\op{cyl}\times_{/I}I_n\times_{/\ulcorner}(1,0)=(1)\times\op{cyl}\times_{/I}I_n$. But $s_\ast X$ is, by definition, identically zero on this part. 
\end{proof}

\subsection{The tilting theorem}
Fix an abelian category $\mathcal{A}$. Recall that the Dold--Kan correspondence allowed to write the total complex for a double complex in $\mathcal{A}$ as a homotopy colimit over a simplicial diagram in $\op{Ch}(\mathcal{A})$. Defining total complexes for complexes in the derived category $\op{Der}^{(\op{b})}(\mathcal{A})$ is significantly more complicated. In derivator language, there is a zig-zag of functors
\[
\op{Fun}(\mathbf{\Delta}^{\op{op}},\mathbb{D}_{\mathcal{A}}(\ast))\xleftarrow{\op{ev}} \mathbb{D}_{\mathcal{A}}(\mathbf{\Delta}^{\op{op}})\xrightarrow{\op{hocolim}} \mathbb{D}_{\mathcal{A}}(\ast)
\]
where the first arrow is the evaluation (or underlying diagram) functor, sending an object in $\mathbb{D}(\mathbf{\Delta}^{\op{op}})$, represented by a complex of simplicial diagrams, to a simplicial diagram of complexes by evaluating at every point of the diagram. The second arrow takes the homotopy colimit of the simplicial object (corresponding to the double complex). However, the evaluation functor $\mathbb{D}_{\mathcal{A}}(\mathbf{\Delta}^{\op{op}})\to \op{Fun}(\mathbf{\Delta}^{\op{op}},\mathbb{D}_{\mathcal{A}}(\ast))$ is in general neither fully faithful nor essentially surjective. The problem in defining a ``total complex'' for a complex in the derived category is to find a lift of the complex along the evaluation functor, and to show that the resulting homotopy colimit is independent of such choices. 

The purpose of the tilting result for derivators is now to identify conditions on a subcategory of $\mathbb{D}(\ast)$ such that the evaluation functor, when restricted to diagrams with values in this subcategory, is an equivalence. This will imply the existence of lifts, well-defined up to isomorphism, which then allows to define a total complex or tilting functor.

\begin{definition}
\label{def:tiltsubcat}
\index{tilting subcategory}
Let $\mathcal{T}$ be a triangulated category. We call a full additive subcategory $\mathcal{X}\subset \mathcal{T}$ a \emph{tilting subcategory} if for all natural numbers $k\geq 1$ and all objects $A,B\in\mathcal{X}$, we have 
\[
\mathcal{X}(A[k],B)=0.
\]
\end{definition}


\begin{theorem}
\label{thm:fritz}
Let $\mathbb{D}$ be a stable derivator, let $I$ be a finite directed category and let $\mathcal{X}\subset \mathbb{D}(\ast)$ be a tilting subcategory. Then the diagram evaluation $\op{ev}:\mathbb{D}(I)\to \op{Fun}(I,\mathcal{D}(\ast))$ restricts to an equivalence 
\[
\op{ev}:\mathbb{D}(I)|_{\mathcal{X}}\to\op{Fun}(I,\mathcal{X}),
\]
where $\mathbb{D}(I)|_{\mathcal{X}}$ denotes the preimage of $\op{Fun}(I,\mathcal{X})$ under the evaluation functor $\op{ev}$. 
\end{theorem}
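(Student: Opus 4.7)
The plan is to proceed by induction on the length $n$ of the directed category $I$. The base case $n = 0$ is immediate: $I$ is discrete, so by axiom DerAlg~0 for the stable derivator we have $\mathbb{D}(I) \simeq \prod_{i \in I} \mathbb{D}(\ast)$ and the evaluation functor is itself an equivalence, which certainly restricts to an equivalence on $\mathcal{X}$-valued diagrams.

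For the inductive step, I would use the decomposition $I_{<n} \xrightarrow{i} I \xleftarrow{j} I_n$ of Remark~\ref{rem:decomposein} together with the exact triangle from Lemma~\ref{lem:latching}. Concretely, the data of an object $X \in \mathbb{D}(I)$ is determined (up to gluing along the latching category) by $i^\ast X \in \mathbb{D}(I_{<n})$, $j^\ast X \in \mathbb{D}(I_n)$, and the connecting morphism $p_{n,!}\iota_n^\ast X \to j^\ast X$ from Lemma~\ref{lem:latching}. Note that $i^\ast X$ lives over a category of length $n-1$ and so by induction is fully determined (up to canonical equivalence) by its underlying diagram in $\mathcal{X}$; the same holds for $j^\ast X$ because $I_n$ is discrete; and the latching object $p_{n,!}\iota_n^\ast X$ is therefore determined too.

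For essential surjectivity, given $Y \colon I \to \mathcal{X}$, restrict to $I_{<n}$ and invoke the inductive hypothesis to obtain a lift $\tilde Y_{<n} \in \mathbb{D}(I_{<n})$ of $Y|_{I_{<n}}$ whose underlying diagram coincides with $Y|_{I_{<n}}$. The induction also yields a lift of the latching diagram $L_n \to \mathcal{X}$, and applying $p_{n,!}$ produces the object $p_{n,!}\iota_n^\ast \tilde Y_{<n} \in \mathbb{D}(I_n)$. The latching morphism for $Y$ at each object of $I_n$ is a morphism in $\mathcal{X}$, and I would combine these into a morphism $p_{n,!}\iota_n^\ast \tilde Y_{<n} \to j^\ast Y$ in $\mathbb{D}(I_n)$; this is where the tilting hypothesis enters, since the Hom-groups in $\mathbb{D}(I_n)$ between two objects with underlying $\mathcal{X}$-diagrams are completely determined by the underlying diagrams precisely when all higher Exts in $\mathcal{X}$ vanish. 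Finally, using that $\mathbb{D}$ is strong and that $I$ is assembled from $I_{<n}$ and $I_n$ via a cosieve/sieve decomposition (as in \cite{groth}), this gluing datum assembles into an object of $\mathbb{D}(I)$ realising $Y$.

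For full faithfulness, for $X, Y \in \mathbb{D}(I)|_{\mathcal{X}}$ I would compare two long exact sequences, obtained by applying $\mathbb{D}(I)(X, -)$ and $\op{Fun}(I, \mathcal{X})(\op{ev} X, -)$ respectively to the distinguished triangle $j_! j^\ast Y \to Y \to i_\ast i^\ast Y \to j_! j^\ast Y[1]$. The terms involving $i_\ast i^\ast Y$ are controlled by the inductive hypothesis on $I_{<n}$, and the terms involving $j_! j^\ast Y$ reduce via adjunction to Homs in $\mathbb{D}(I_n)$ and hence to ordinary Homs in $\mathcal{X}$ plus possibly Ext-terms coming from the latching decomposition; the tilting condition $\mathcal{X}(A[k], B) = 0$ for $k \geq 1$ is precisely what forces these extra Ext-terms to vanish. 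A five-lemma argument then yields the claimed isomorphism on Hom-groups.

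The hard part will be the essential surjectivity step, specifically the rigorous construction of the lift of the latching map and the subsequent gluing. One must verify that the resulting object of $\mathbb{D}(I)$ has underlying diagram isomorphic to $Y$ not only pointwise but compatibly with all structure maps of $I$, and that the Hom-vanishing given by the tilting assumption suffices to kill all obstructions at every stage of the induction. This requires a careful compatibility argument between the mapping cylinder construction implicit in \cite{groth}'s treatment of cosieves and the latching construction of Definition~\ref{def:latching}.
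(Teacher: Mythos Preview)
Your plan is correct and follows essentially the same route as the paper: induction on the length of $I$, using the sieve/cosieve decomposition of Remark~\ref{rem:decomposein}, the latching triangle of Lemma~\ref{lem:latching}, and a five-lemma argument for full faithfulness. The paper organizes this into two separate propositions (full faithfulness and essential surjectivity), but the ingredients are precisely the ones you list.

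One small refinement worth noting: for the term $\mathbb{D}(I)(X, j_! j^\ast Y)$ the relevant adjunction is not $j_! \dashv j^\ast$ (which goes the wrong way) but rather the adjunction $j^? \dashv j_!$ coming from Lemma~\ref{lem:latching}, giving $\mathbb{D}(I)(X, j_! j^\ast Y[k]) \cong \mathbb{D}(I_n)(j^? X, j^\ast Y[k])$; the latching triangle then controls $j^? X$. You seem to anticipate this when you mention ``Ext-terms coming from the latching decomposition''. For essential surjectivity, the paper sidesteps any explicit ``gluing'' machinery by computing directly that $\op{Ext}^1_{\mathbb{D}(I_{\leq n})}(i_\ast A, j_! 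B) \cong \op{Hom}_{\op{Fun}(L_n,\mathbb{D}(\ast))}(\op{ev}(\iota_n^\ast A), \op{ev}(p_n^\ast B))$; the latching data of $Y$ then singles out an extension class, hence an object of $\mathbb{D}(I_{\leq n})$. This is cleaner than invoking strongness of the derivator and avoids the compatibility verification you flag as the ``hard part''.
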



\begin{proof}
The result will follow from Propositions~\ref{prop:tiltff} and \ref{prop:tiltes}.
\end{proof}

\begin{proposition}
\label{prop:tiltff}
In the situation of Theorem~\ref{thm:fritz}, denote $\mathcal{D}=\mathbb{D}(I)|_{\mathcal{X}}$. For any two objects  $X,Y\in\mathcal{D}$ and any $k>0$, the evaluation functor induces isomorphisms 
\[
\mathcal{D}(X,Y)\cong\mathcal{X}(\op{ev}(X),\op{ev}(Y))\; \textrm{ and }\;
\mathcal{D}(X[k],Y)=0.
\]
In particular, the restriction $\op{ev}:\mathbb{D}(I)|_{\mathcal{X}}\to\op{Fun}(I,\mathcal{X})$ of the evaluation functor is fully faithful. 
\end{proposition}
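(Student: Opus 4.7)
My plan is to proceed by induction on the length of the directed category $I$.

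In the base case $I$ has length zero, hence is discrete, and axiom DerAlg 0 yields $\mathbb{D}(I)\cong\prod_{i\in I}\mathbb{D}(\ast)$; the claim then reduces directly to the tilting condition, since $\mathcal{D}(X[k],Y)=\prod_i\mathcal{X}(X_i[k],Y_i)$ vanishes for $k>0$ by Definition~\ref{def:tiltsubcat} and identifies with $\op{Fun}(I,\mathcal{X})(\op{ev}(X),\op{ev}(Y))$ for $k=0$ since natural transformations of functors on a discrete category are just families of morphisms.

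For the inductive step, write $I=I_{<n}\sqcup I_n$ with $n$ the maximal degree; let $i\colon I_{<n}\hookrightarrow I$ be the sieve and $j\colon I_n\hookrightarrow I$ the complementary cosieve, and invoke the localization triangle from Remark~\ref{rem:decomposein}:
\[
j_!j^\ast X\to X\to i_\ast i^\ast X\to j_!j^\ast X[1].
\]
Applying $\op{Hom}(-,Y[k])$ yields a long exact sequence. Via the adjunction $j_!\dashv j^\ast$, the term $\op{Hom}(j_!j^\ast X,Y[k])\cong\op{Hom}_{\mathbb{D}(I_n)}(j^\ast X,j^\ast Y[k])$ is, by the discreteness of $I_n$, a product $\prod_c\mathcal{X}(X_c,Y_c[k])$; this vanishes for $k<0$ by the tilting condition and for $k=0$ recovers $\op{Fun}(I_n,\mathcal{X})(j^\ast\op{ev}(X),j^\ast\op{ev}(Y))$. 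The term $\op{Hom}(i_\ast i^\ast X,Y[k])$ is handled by applying $\op{Hom}(i_\ast i^\ast X,-)$ to the analogous triangle on $Y$, using the fully-faithfulness $i^\ast i_\ast\cong\op{id}$, and invoking the inductive hypothesis on $\mathbb{D}(I_{<n})$ to identify the result with $\op{Hom}_{\mathbb{D}(I_{<n})}(i^\ast X,i^\ast Y[k])$ modulo obstruction classes controlled by $\op{Hom}(i_\ast i^\ast X,j_!j^\ast Y[m])$ for small $m>0$.

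A careful chase of the long exact sequence, proceeding upward from very negative shifts where all terms vanish by induction and tilting, yields $\mathcal{D}(X[k],Y)=\op{Hom}(X,Y[-k])=0$ for $k>0$. The $k=0$ case identifying $\mathcal{D}(X,Y)\cong\op{Fun}(I,\mathcal{X})(\op{ev}(X),\op{ev}(Y))$ is then obtained by comparing the short exact sequence extracted from the derivator LES with the analogous pullback decomposition of $\op{Fun}(I,\mathcal{X})(\op{ev}(X),\op{ev}(Y))$ along the restriction maps to $I_{<n}$ and $I_n$; naturality produces a morphism of these sequences whose outer terms are isomorphisms by induction and the base case, so a five-lemma argument concludes. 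The main obstacle is the precise identification of the connecting homomorphism in the derivator LES with the classical compatibility obstruction in the functor category (i.e., showing that a map on $I_n$ lifts to an $I$-diagram map iff the corresponding boundary vanishes); this requires using the strongness axiom to compare morphisms in $\mathbb{D}(I\times[1])$ with morphisms of underlying $I$-diagrams, and it is here that the bookkeeping is most delicate.
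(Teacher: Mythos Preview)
Your inductive setup along the length of $I$ is the right framework and matches the paper. The base case is fine. The gap is in the inductive step: you reduce to controlling the cross-term $\mathcal{D}(i_\ast i^\ast X, j_!j^\ast Y[m])$ and then wave at it as ``obstruction classes'' to be handled by ``a careful chase''. But there is no standard adjunction that simplifies $\op{Hom}(i_\ast A, j_!B)$: we only have $i_!\dashv i^\ast\dashv i_\ast$ and $j_!\dashv j^\ast\dashv j_\ast$, so neither variable moves. The relation $j^\ast i_\ast=0$ does not make this group vanish, and for generic $A,B$ it is nonzero.

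What the paper does (and what your argument is missing) is to bring in the \emph{latching category} $L_n$ and the left adjoint $j^?\dashv j_!$ from Lemma~\ref{lem:latching}. This gives $\mathcal{D}(X,j_!j^\ast Y[k])\cong\mathbb{D}(I_n)(j^?X,j^\ast Y[k])$, and then the exact triangle $p_{n,!}\iota_n^\ast X\to j^\ast X\to j^?X$ breaks this into a piece over the discrete $I_n$ and a piece over $L_n$. Since $L_n$ has length $\leq n-1$, the inductive hypothesis applies there. Without this input your bootstrap cannot get off the ground: the term $\op{Hom}(i_\ast i^\ast X,j_!j^\ast Y[m])$ that you defer is, via $j^?$, exactly $\op{Hom}_{L_n}(\iota_n^\ast X,p_n^\ast j^\ast Y[m-1])$, and recognising this is the whole content of the step. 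Your final remark about invoking strongness to compare $\mathbb{D}(I\times[1])$ with underlying diagrams is a red herring; strongness is not used here.
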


\begin{proof}
It suffices to prove the statement for the categories $I_{\leq n}$, since a directed colimit of isomorphisms will give an isomorphism (and the evaluation functor commutes with colimits). Then the proof is by induction on the length of $I$. 

The base case is given by index categories $I$ of length $1$, i.e., finite discrete categories. In this case, the evaluation functor is an equivalence, by the derivator axioms. 

For the inductive step, consider the category $I_{\leq n}$, replace $X$ and $Y$ by the appropriate restrictions, and consider the exact sequence of Remark~\ref{rem:decomposein}. By our inductive assumption, we have 
\[
\mathcal{D}(X,i_\ast i^\ast Y[k])\cong \mathbb{D}_{<n}(i^\ast X, i^\ast Y[k])\cong \left\{\begin{array}{ll} 
\mathcal{X}(\op{ev}(X),\op{ev}(Y)) & k=0\\
0 & k<0 \end{array}\right.
\]
Via the latching category and Lemma~\ref{lem:latching}, there is an adjunction $j^?\dashv j_!$ and hence an isomorphism
\[
\mathcal{D}(X,j_!j^\ast Y[k])\cong \mathbb{D}_n(j^?X,j^\ast Y[k]).
\]
Using the distinguished triangle of Lemma~\ref{lem:latching} produces a long exact sequence 
\begin{eqnarray*}
\cdots\to\mathbb{D}_n(p_{n,!}\iota_n^\ast X,j^\ast Y[k-1])&\to& \mathbb{D}_n(j^?X,j^\ast Y[k])\to \\
\to \mathbb{D}_n(j^\ast X, j^\ast Y[k])&\to& \mathbb{D}_n(p_{n,!}\iota_n^\ast X,j^\ast Y[k])\to \cdots
\end{eqnarray*}
where the functors $p_n$ and $\iota_n$ are the ones defined in Definition~\ref{def:latching}. For the middle arrow, we note that Lemma~\ref{lem:latching} implies the commutativity of the following diagram
\[
\xymatrix{
\mathcal{D}(X,j_!j^\ast Y[k]) \ar[r] \ar[d]_\cong & \mathcal{D}(X,Y[k]) \ar[d]^{j^\ast} \\
\mathbb{D}_n(j^?X,j^\ast Y[k]) \ar[r] & \mathbb{D}_n(j^\ast X, j^\ast Y[k]).
}
\]

We first prove the required vanishing. Putting in $k<0$ in the above exact sequence implies $\mathbb{D}_n(j^\ast X, j^\ast Y[k])=0$ because $I_n$ is  discrete, and 
\[
\mathbb{D}_n(p_{n,!}\iota_n^\ast X,j^\ast Y[l])=0
\]
for $l=k,k-1$ by the inductive assumption since $L_n$ is of length $\leq n-1$. Consequently, we have $\mathcal{D}(X,j_!j^\ast Y[k])=\mathbb{D}_n(j^?X,j^\ast Y[k])=0$. Combined with the previous exact sequence, this implies $\mathcal{D}(X,Y[k])=0$ for $k<0$. 

Now we want to prove the identification in degree $0$. We have the exact sequence
\[
\cdots\to\mathcal{D}(X,j_!j^\ast Y)\to \mathcal{D}(X,Y)\to \mathcal{X}(\op{ev}(i^\ast X),\op{ev}(i^\ast Y))\to \mathcal{D}(X,j_!j^\ast Y[1])\to \cdots
\]
Considering the case $k=0$ of the previous exact sequence, we still have the vanishing $\mathbb{D}_n(p_{n,!}\iota_n^\ast X,j^\ast Y[-1])=0$ which yields 
\[
\mathbb{D}_n(j^?X,j^\ast Y)=\ker\left(\mathbb{D}_n(j^\ast X, j^\ast Y)\to \mathbb{D}_{L_n}(\iota_n^\ast X,p_n^\ast j^\ast Y)\right). 
\]
By induction assumption, we can rewrite this as 
\[
\mathbb{D}_n(j^?X,j^\ast Y)=\ker\left(\mathcal{X}(\op{ev}(j^\ast X), \op{ev}(j^\ast Y))\to \mathcal{X}(\op{ev}(\iota_n^\ast X),\op{ev}(p_n^\ast j^\ast Y))\right)=:T_1. 
\]

Now assume $k=1$ and consider the previous exact sequence. In this case, we are interested in the kernel $T_2=\ker\left(\mathbb{D}_n(j^?X,j^\ast Y[1])\to \mathbb{D}_n(j^\ast X, j^\ast Y[1])\right)$. By the inductive assumption and the exact sequence, we can rewrite this as 
\begin{eqnarray*}
T_2&=&\op{coker}\left( \mathbb{D}_n(j^\ast X, j^\ast Y)\to \mathbb{D}_{L_n}(\iota_n^\ast X,p_n^\ast j^\ast Y)\right)\\
&=& \op{coker}\left( \mathcal{X}(\op{ev}(j^\ast X), \op{ev}(j^\ast Y))\to \mathcal{X}(\op{ev}(\iota_n^\ast X),\op{ev}(p_n^\ast j^\ast Y))\right).
\end{eqnarray*}

Putting these statements together allows to rewrite the previous sequence as 
\[
0\to T_1\to \mathcal{D}(X,Y)\to \mathcal{X}(\op{ev}(i^\ast X),\op{ev}(i^\ast Y))\to T_2.
\]
There is a ladder of exact sequences containing this and a similar exact sequence where $\mathcal{D}(X,Y)$ is replaced by  $\mathcal{X}(\op{ev}(X),\op{ev}(Y))$. By the 5-lemma, we obtain that evaluation induces an isomorphism $\mathcal{D}(X,Y)\cong\mathcal{X}(\op{ev}(X),\op{ev}(Y))$, finishing the proof of full faithfulness. 
\end{proof}

\begin{remark}
A similar result will hold for infinite directed categories if we assume that the derivator is continuous, i.e., that the category $\mathbb{D}(I)$ is equivalent to the 2-colimit of the diagram of categories $\mathbb{D}(I_{\leq n})$ given by the natural restriction functors. We'll not really need this point.
\end{remark}

\begin{proposition}
\label{prop:tiltes}
In the situation of Theorem~\ref{thm:fritz}, denote $\mathcal{D}=\mathbb{D}(I)|_{\mathcal{X}}$. Then the restriction $\op{ev}:\mathbb{D}(I)|_{\mathcal{X}}\to\op{Fun}(I,\mathcal{X})$ of the evaluation functor is essentially surjective. 
\end{proposition}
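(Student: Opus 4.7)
My proof proposal proceeds by induction on the length of the finite directed category $I$, parallel to the structure of the full-faithfulness proof in Proposition~\ref{prop:tiltff}.

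\textbf{Base case.} When $I$ has length $1$, it is discrete, so by axiom DerAlg~0 the evaluation functor $\mathbb{D}(I)\to\prod_{i\in\op{Ob}(I)}\mathbb{D}(\ast)\cong\op{Fun}(I,\mathbb{D}(\ast))$ is an equivalence and the claim is immediate.

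\textbf{Inductive step.} Suppose the theorem holds for all finite directed categories of length $<n$, and let $I$ have length $n$ with $F\in\op{Fun}(I,\mathcal{X})$. Applying the inductive hypothesis to $I_{<n}$ yields an object $Y\in\mathbb{D}(I_{<n})|_{\mathcal{X}}$ and an isomorphism $\op{ev}(Y)\cong F|_{I_{<n}}$. Consider the latching category $L_n$ with its functors $\iota_n\colon L_n\to I_{<n}$ and $p_n\colon L_n\to I_n$ of Definition~\ref{def:latching}. The diagram $F$ restricts to a natural transformation $F\circ\iota_n\to F\circ p_n$ in $\op{Fun}(L_n,\mathcal{X})$, given on an object $\alpha\colon x\to y$ of $L_n$ by $F(\alpha)\colon F(x)\to F(y)$. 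Since $L_n$ has length $\le n-1$, the inductive essential surjectivity plus Proposition~\ref{prop:tiltff} applied to $L_n$ provides a unique lift $\widehat{\alpha}\colon \iota_n^\ast Y\to p_n^\ast(F|_{I_n})$ in $\mathbb{D}(L_n)$ whose evaluation is the given transformation. By the adjunction $p_{n,!}\dashv p_n^\ast$ this yields a morphism $\widetilde{\alpha}\colon p_{n,!}\iota_n^\ast Y\to F|_{I_n}$ in $\mathbb{D}(I_n)$.

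\textbf{Gluing.} It remains to produce $X\in\mathbb{D}(I_{\leq n})$ with $i^\ast X\cong Y$, $j^\ast X\cong F|_{I_n}$ and latching map (in the sense of Lemma~\ref{lem:latching}) equal to $\widetilde{\alpha}$. Because $i\colon I_{<n}\hookrightarrow I_{\leq n}$ is a sieve and $j\colon I_n\hookrightarrow I_{\leq n}$ the complementary cosieve, the stability and strongness of $\mathbb{D}$ yield a gluing equivalence between $\mathbb{D}(I_{\leq n})$ and the category of triples $(Y',Z',\gamma)$ with $Y'\in\mathbb{D}(I_{<n})$, $Z'\in\mathbb{D}(I_n)$ and a morphism $\gamma\colon p_{n,!}\iota_n^\ast Y'\to Z'$ in $\mathbb{D}(I_n)$; this is a well-known consequence of the cylinder-functor construction used in the proof of Lemma~\ref{lem:latching}, and is implicit in \cite[Sec.~3 and~4]{groth}. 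Feeding in our triple $(Y,F|_{I_n},\widetilde{\alpha})$ produces the desired $X$. The triangle of Lemma~\ref{lem:latching} for this $X$, combined with the facts that $\op{ev}(Y)\cong F|_{I_{<n}}$ and $\op{ev}(\widetilde{\alpha})$ reproduces the transition morphisms of $F$ from objects of degree $<n$ into objects of degree $n$, shows that $\op{ev}(X)\cong F$ in $\op{Fun}(I,\mathcal{X})$.

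\textbf{Main obstacle.} The essential difficulty is the gluing step: to verify that the triple $(Y,F|_{I_n},\widetilde{\alpha})$ assembles to an object of $\mathbb{D}(I_{\leq n})$ whose evaluation recovers $F$ rather than merely reproducing $F$ up to some unnamed additional data. This requires invoking the strongness axiom and the explicit recollement structure coming from the sieve--cosieve decomposition, and checking that the latching morphism extracted from $X$ via Lemma~\ref{lem:latching} coincides with $\widetilde{\alpha}$. Once this bookkeeping is done, the induction closes.
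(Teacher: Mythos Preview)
Your proof is correct and follows essentially the same approach as the paper's: induction on the length of $I$, with the key gluing data identified as a morphism over the latching category $L_n$. The only difference is packaging of the gluing step. Where you invoke a gluing equivalence between $\mathbb{D}(I_{\leq n})$ and triples $(Y',Z',\gamma)$, the paper instead computes directly
\[
\mathbb{D}_{I_{\leq n}}(i_\ast A, j_!B[1]) \;\cong\; \mathbb{D}_{I_n}(j^? i_\ast A, B[1]) \;\cong\; \op{Hom}_{\op{Fun}(L_n,\mathbb{D}(\ast))}\bigl(\op{ev}(\iota_n^\ast A),\op{ev}(p_n^\ast B)\bigr)
\]
via the exact sequence already established in the proof of Proposition~\ref{prop:tiltff}, reads off from the original diagram $F$ an element of the right-hand side, and then completes the resulting boundary map $i_\ast A\to j_!B[1]$ to a distinguished triangle to produce the lift. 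This avoids having to cite the external Reedy-type gluing equivalence you appeal to, and recycles the computations of Proposition~\ref{prop:tiltff}; your version is more conceptual but relies on a statement not proved in the paper itself.
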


\begin{proof}
Let $Y\in\op{Fun}(I,\mathcal{X})$ and assume by induction that we have constructed preimages $A\in\mathbb{D}(I_{<n})$ and $B\in\mathbb{D}(I_n)$ of the restrictions $i^\ast Y\in\op{Fun}(I_{<n},\mathcal{X})$ and $j^\ast Y\in \op{Fun}(I_n,\mathcal{X})$. From the relevant adjunctions we obtain
\[
\mathbb{D}_{I_{\leq n}}(i_\ast A, j_!B[1])\cong \mathbb{D}_{I_n}(j^?i_\ast A, B[1]).
\]
Now we use the exact sequence established in the proof of Proposition~\ref{prop:tiltff}:
\[
0\to \mathbb{D}_{I_n}(p_{n,!}\iota_n^\ast i_\ast A, B) \to \mathbb{D}_{I_n}(j^?i_\ast A,B[1]) \to \mathbb{D}_{I_n}(j^\ast i_\ast A,B[1])=0.
\]
This in particular means 
\[
\op{Ext}_{I_{\leq n}}(i_\ast A, j_! B)\cong\op{Hom}(\op{ev}(\iota_n^\ast A),\op{ev}(p_n^\ast B)) 
\]
where the Hom on the right-hand side is taken in the category $\op{Fun}(L_n,\mathbb{D}(\ast))$, i.e., we are considering morphisms of ordinary diagrams over the latching category $L_n$. The diagram $Y\in \op{Fun}(I,\mathcal{X})$ defines an object of morphisms over the latching category, and the above isomorphism yields a corresponding coherent lift of $X$ to the degree $\leq n$ part of $I$.
\end{proof}

\section{Consequences of the tilting theorem}
\label{sec:tiltcompat}

\begin{theorem}
\label{thm:derivatortilting}
Let  $\mathbb{D}$ be a stable derivator and let $\iota:\mathcal{X}\subset \mathbb{D}(\ast)$ be a tilting subcategory. Assume that $\op{hocolim}:\mathbb{D}(\mathbf{\Delta}^{\op{op}})\to\mathbb{D}(\ast)$ maps homotopic maps of simplicial objects to the same maps in $\mathbb{D}(\ast)$. Then $\iota$ extends to a fully faithful triangulated functor 
\[
\op{Hot}^{\op{b}}(\mathcal{X})\xrightarrow{\op{tilt}}\mathbb{D}(\ast).
\]
\end{theorem}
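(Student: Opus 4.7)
The plan is to construct $\op{tilt}$ as the composition
\[
\op{Hot}^{\op{b}}(\mathcal{X}) \xrightarrow{\mathrm{DK}} \op{Fun}(\mathbf{\Delta}^{\op{op}},\mathcal{X})_{\mathrm{bdd}} \xrightarrow{\widetilde{(-)}} \mathbb{D}(\mathbf{\Delta}^{\op{op}}) \xrightarrow{\op{hocolim}} \mathbb{D}(\ast),
\]
where the first map is the Dold--Kan correspondence, the middle map is a coherent lift provided by Theorem~\ref{thm:fritz}, and the last is the homotopy colimit. For a bounded complex $X^\bullet \in \op{Hot}^{\op{b}}(\mathcal{X})$ supported in degrees $[0,n]$, I would first associate via Dold--Kan the simplicial object $X^\Delta \in \op{Fun}(\mathbf{\Delta}^{\op{op}},\mathcal{X})$, which is $n$-coskeletal so that its essential data lies over the finite truncated subcategory $\mathbf{\Delta}^{\op{op}}_{\leq n}$.

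Next I would lift $X^\Delta$ to an object $\widetilde{X}^\Delta \in \mathbb{D}(\mathbf{\Delta}^{\op{op}})$. Although $\mathbf{\Delta}^{\op{op}}_{\leq n}$ itself is not directed (face and degeneracy maps go in opposite directions), one proceeds inductively along the semisimplicial skeleton $\mathbf{\Delta}^{\op{op}}_{+,\leq n}$, which is a finite directed subcategory (with degree function given by simplicial dimension), and Theorem~\ref{thm:fritz} gives an essentially unique lift there. The degeneracy maps are then filled in by the standard additive-category Reedy construction (a simplicial object in an additive category is determined up to canonical isomorphism by its underlying semisimplicial data together with the splitting of latching maps), which is available in the derivator by the skeleton/coskeleton formalism recalled before Lemma~\ref{lem:latching}. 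Setting $\op{tilt}(X^\bullet) := \op{hocolim}(\widetilde{X}^\Delta)$ then gives a well-defined object of $\mathbb{D}(\ast)$.

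For functoriality on morphisms and passage to the homotopy category: a chain map $f\colon X^\bullet \to Y^\bullet$ produces under Dold--Kan a simplicial morphism $X^\Delta \to Y^\Delta$, which by the full faithfulness part of Theorem~\ref{thm:fritz} (Proposition~\ref{prop:tiltff}) lifts uniquely to a morphism $\widetilde{X}^\Delta \to \widetilde{Y}^\Delta$, and one applies $\op{hocolim}$ to obtain $\op{tilt}(f)$. Chain homotopies translate to simplicial homotopies under Dold--Kan, so the standing hypothesis that $\op{hocolim}:\mathbb{D}(\mathbf{\Delta}^{\op{op}})\to\mathbb{D}(\ast)$ equates homotopic simplicial morphisms ensures that $\op{tilt}$ descends to a well-defined functor on $\op{Hot}^{\op{b}}(\mathcal{X})$. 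The functor is triangulated because shifts of complexes correspond to shifts of the lifted diagrams (and $\op{hocolim}$, being a homotopy left Kan extension, commutes with suspension in the stable derivator), and because mapping cones of chain maps correspond under the construction to the cofibers that produce the distinguished triangles in $\mathbb{D}(\ast)$.

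Full faithfulness would follow exactly as in the argument for Proposition~\ref{prop:tiltff}: having produced the lifted diagrams $\widetilde{X}^\Delta, \widetilde{Y}^\Delta$, I would compute $\mathbb{D}(\mathbf{\Delta}^{\op{op}})(\widetilde{X}^\Delta, \widetilde{Y}^\Delta)$ by the skeletal filtration, using the latching long exact sequences of Lemma~\ref{lem:latching} together with the tilting vanishing $\mathcal{X}(A[k],B) = 0$ for $k \geq 1$ to collapse everything onto the group of degree-$0$ chain maps modulo boundaries, which is exactly $\op{Hot}^{\op{b}}(\mathcal{X})(X^\bullet, Y^\bullet)$. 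The main obstacle in carrying this plan out will be the second step: giving a genuinely functorial (and not just essentially unique) lift $X^\Delta \mapsto \widetilde{X}^\Delta$ that is compatible with arbitrary simplicial morphisms and homotopies, since the lifts from Theorem~\ref{thm:fritz} are only unique up to non-canonical isomorphism, so one must either work with a rigid model or verify coherence of the lifts through the standard inductive machinery.
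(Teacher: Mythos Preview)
Your overall architecture matches the paper's: Dold--Kan, then invert the evaluation equivalence of Theorem~\ref{thm:fritz}, then take $\op{hocolim}$, and use the hypothesis to descend to the homotopy category. Two points deserve comment.

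First, the obstacle you flag as ``main'' --- obtaining a genuinely functorial lift $X^\Delta \mapsto \widetilde{X}^\Delta$ --- is not actually an obstacle. Theorem~\ref{thm:fritz} asserts that $\op{ev}:\mathbb{D}(\mathbf{\Delta}^{\op{op}})|_{\mathcal{X}} \to \op{Fun}(\mathbf{\Delta}^{\op{op}},\mathcal{X})$ is an \emph{equivalence of categories}, not merely essentially surjective on objects. Any equivalence admits a quasi-inverse functor, so functoriality of the lift is automatic; the uniqueness of morphism lifts (from full faithfulness) handles all coherence. The paper simply writes $\mathbb{D}(\mathbf{\Delta}^{\op{op}})|_{\mathcal{X}} \xleftarrow{\op{ev},\approx} $ and treats this as a genuine functor. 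Your detour through the semisimplicial subcategory and Reedy-style reconstruction of degeneracies is not needed for this purpose.

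Second, and more substantively, your proposed route to full faithfulness has a gap. You want to compute $\mathbb{D}(\mathbf{\Delta}^{\op{op}})(\widetilde{X}^\Delta,\widetilde{Y}^\Delta)$ via the skeletal filtration and claim it collapses to chain maps modulo boundaries. But Theorem~\ref{thm:fritz} already tells you this Hom-set equals $\op{Fun}(\mathbf{\Delta}^{\op{op}},\mathcal{X})(X^\Delta,Y^\Delta)$, i.e.\ simplicial maps, which under Dold--Kan correspond to chain maps --- \emph{not} chain maps modulo homotopy. The passage to homotopy happens only after applying $\op{hocolim}$, so what you must show is that
\[
\op{Hot}^{\op{b}}(\mathcal{X})(A,B) \to \mathbb{D}(\ast)(\op{tilt}(A),\op{tilt}(B))
\]
is an isomorphism, and this cannot be read off from Hom-sets in $\mathbb{D}(\mathbf{\Delta}^{\op{op}})$. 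The paper instead argues directly in $\mathbb{D}(\ast)$: the base case (complexes of length $1$) is the hypothesis that $\iota$ is fully faithful, and for the inductive step one writes a length-$n$ complex $B$ via a triangle $B' \to B \to C \to B'[1]$ with $B'$ of length $n-1$ and $C$ of length $1$, then applies $\op{tilt}$ (which is triangulated) to get a distinguished triangle in $\mathbb{D}(\ast)$, and concludes by the five-lemma applied to the resulting ladder of long exact sequences. This inductive d\'evissage is the missing ingredient in your argument.
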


\begin{proof}
We first note that we have the following composition:
\[
\op{Ch}^-(\mathcal{X})\approx \mathbf{\Delta}^{\op{op}}\mathcal{X} \xleftarrow{\op{ev},\approx}\mathbb{D}(\mathbf{\Delta}^{\op{op}})|_{\mathcal{X}}\subset \mathbb{D}(\mathbf{\Delta}^{\op{op}}) \xrightarrow{\op{hocolim}} \mathbb{D}(\ast). 
\]
The first equivalence is given by the Dold--Kan correspondence between complexes bounded above by $0$ and simplicial objects. The second functor is the evaluation functor which is an equivalence by Theorem~\ref{thm:fritz}. The last functor is the homotopy left Kan extension for the terminal functor $\mathbf{\Delta}^{\op{op}}\to \ast$, which is identified with the homotopy colimit of a simplicial object. In other words, the condition that $\mathcal{X}$ is a tilting subcategory allows to produce coherent lifts of simplicial diagrams, unique up to isomorphism, and take their homotopy colimit.

Now we want to pass to the homotopy category. From the Dold--Kan correspondence, we get an induced equivalence $\op{Hot}(\op{Ch}^-(\mathcal{X}))\approx \op{Hot}(\mathbf{\Delta}^{\op{op}}\mathcal{X})$ of homotopy categories. The fact that the above functor factors through $\op{Hot}^{\op{b}}(\mathcal{X})=\op{Hot}(\op{Ch}^-(\mathcal{X}))\to \mathbb{D}(\ast)$ follows from the assumptions. 

Every step in the construction is triangulated, therefore the composition is a triangulated functor. 

We still need to prove that the tilting functor is fully faithful, i.e., that for all complexes $A,B\in\op{Hot}^{\op{b}}(\mathcal{X})$ we have induced isomorphisms
\[
\op{Hot}^{\op{b}}_{\mathcal{X}}(A,B)\to \mathbb{D}(\ast)(\op{tilt}(A),\op{tilt}(B)).
\] 
This is done by an induction on the length of the complexes involved. 
By assumption, $\mathcal{X}\subset \mathbb{D}(\ast)$ is fully faithful, so that the claim is true whenever the complexes $A$ and $B$ have length 1. Now fix a complex $A$ of length $1$, and let $B$ be a complex sitting in an exact sequence $0\to B'\to B\to C\to 0$ where $B'$ has length $n-1$ and $C$ has length $1$. Then we get a ladder of long exact sequences 
\[
\xymatrix{
\cdots \ar[r] & \op{Hot}(A,B') \ar[r] \ar[d]^\cong & \op{Hot}(A,B) \ar[r] \ar[d] &\op{Hot}(A,C) \ar[r] \ar[d]^\cong & \cdots\\
\cdots \ar[r] & \mathbb{D}(\op{tilt}A,\op{tilt}B') \ar[r] & \mathbb{D}(\op{tilt} A,\op{tilt}B) \ar[r] &\mathbb{D}(\op{tilt}A,\op{tilt}C) \ar[r] & \cdots
}
\]
Note that we omitted decorations. The right isomorphism was the base case of the induction, the left isomorphism is the inductive assumption, and the five-lemma implies an isomorphism in the middle. A similar argument shows fully faithfulness for arbitrary bounded complexes in $\mathcal{X}$.
\end{proof}

\begin{remark}
Note that the assumption that $\op{hocolim}:\mathbb{D}(\mathbf{\Delta}^{\op{op}})\to\mathbb{D}(\ast)$ maps homotopic maps of simplicial objects to the same maps in $\mathbb{D}(\ast)$ is satisfied for all the derivators we consider since these arise as localizations of derived categories of sheaves. This works more generally for derivators arising from simplicial model categories. 
\end{remark}

\begin{proposition}
\label{prop:essimg}
Let  $\mathbb{D}$ be a stable derivator and let $\iota:\mathcal{X}\subset \mathbb{D}(\ast)$ be a tilting subcategory. Then the essential image of the tilting functor $\op{tilt}:\op{Hot}^{\op{b}}(\mathcal{X})\to\mathbb{D}(\ast)$ is the triangulated subcategory $\langle\mathcal{X}\rangle_\Delta$ generated by the tilting subcategory. If in addition, $\mathcal{X}$ is idempotent complete, then the essential image is the thick subcategory of $\mathbb{D}(\ast)$ generated by $\mathcal{T}$. 
\end{proposition}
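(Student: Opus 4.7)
The plan is to analyze the essential image $\mathcal{E}$ of the fully faithful triangulated functor $\op{tilt}:\op{Hot}^{\op{b}}(\mathcal{X})\to \mathbb{D}(\ast)$ constructed in Theorem~\ref{thm:derivatortilting}. First I would establish the inclusion $\mathcal{E}\subset\langle\mathcal{X}\rangle_\Delta$: by construction the tilting functor sends an object $X\in\mathcal{X}$, viewed as a complex concentrated in degree zero, to $\iota(X)$ itself, and the category $\op{Hot}^{\op{b}}(\mathcal{X})$ is generated as a triangulated category by these degree-zero complexes. Since $\op{tilt}$ is a triangulated functor, its essential image is contained in the triangulated subcategory of $\mathbb{D}(\ast)$ generated by $\mathcal{X}$, which is precisely $\langle\mathcal{X}\rangle_\Delta$.

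For the reverse inclusion, the key step is to verify that $\mathcal{E}$ is itself a strictly full triangulated subcategory of $\mathbb{D}(\ast)$. Closure under shifts follows immediately from the fact that $\op{tilt}$ is exact. For closure under cones, suppose $M, N \in \mathcal{E}$ with preimages $\tilde{M}, \tilde{N}$ and let $f\colon M\to N$ be any morphism in $\mathbb{D}(\ast)$; full faithfulness of $\op{tilt}$, proven in Theorem~\ref{thm:derivatortilting}, produces a unique lift $\tilde f\colon \tilde M\to\tilde N$, and exactness of the tilting functor then yields $\op{tilt}(\op{cone}(\tilde f))\cong \op{cone}(f)$. Since $\mathcal{E}$ contains $\mathcal{X}$ and is triangulated, it contains $\langle\mathcal{X}\rangle_\Delta$, giving the first claim.

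For the second statement, assume $\mathcal{X}$ is idempotent complete. Then $\op{Hot}^{\op{b}}(\mathcal{X})$ is again idempotent complete by \cite[Theorem 3.4(3)]{WtS}. I would then observe that the essential image of a fully faithful triangulated functor whose source is idempotent complete is itself idempotent complete: given any projector $p$ on an object $M=\op{tilt}(\tilde M)\in\mathcal{E}$, full faithfulness lifts $p$ to a projector $\tilde p$ on $\tilde M$, which splits in $\op{Hot}^{\op{b}}(\mathcal{X})$, and applying $\op{tilt}$ transports this splitting to $\mathbb{D}(\ast)$. Thus $\mathcal{E}$ is a thick subcategory of $\mathbb{D}(\ast)$ containing $\mathcal{X}$, hence contains the thick subcategory generated by $\mathcal{X}$; the reverse inclusion follows by the first part of the argument together with the observation that $\mathcal{E}$ is now closed under direct summands.

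There is no substantial obstacle in this argument — it is a standard structural statement about essential images of fully faithful triangulated functors. The only minor subtlety worth emphasizing is that closure under cones requires lifting morphisms \emph{in $\mathbb{D}(\ast)$} to morphisms in $\op{Hot}^{\op{b}}(\mathcal{X})$, which is exactly the content of full faithfulness established in Proposition~\ref{prop:tiltff}; without that input the statement would not be accessible from the mere presence of a triangulated functor with the prescribed image on objects.
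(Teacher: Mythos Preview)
Your proposal is correct and follows essentially the same approach as the paper: both directions rely on the triangulated structure together with full faithfulness of $\op{tilt}$ to lift morphisms and cones, and the thick case is handled by lifting projectors. The only cosmetic difference is that the paper proves the inclusion $\mathcal{E}\subset\langle\mathcal{X}\rangle_\Delta$ by an explicit induction on the length of complexes rather than invoking that degree-zero complexes generate $\op{Hot}^{\op{b}}(\mathcal{X})$, but this is the same argument in different packaging.
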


\begin{proof}
First, we note that an induction on the length of the complexes implies that the tilting functor lands inside the triangulated subcategory $\langle\mathcal{T}\rangle_\Delta$. The base case is clear since a complex of length $1$ is sent to the respective object in the tilting subcategory. For the induction step, let $C$ be a complex of length $n$. We can decompose $C$ via a triangle $C'\to C\to A\to C'[1]$, where $C'$ has length $n-1$ and $A$ is a complex of length $1$. Since the tilting functor is triangulated, and $C',A\in\langle\mathcal{T}\rangle_\Delta$, we find that $C\in\langle\mathcal{T}\rangle_\Delta$ as well. 

Now it remains to show that every object $M\in \langle\mathcal{T}\rangle_\Delta$ is in the essential image of the tilting functor. By definition, we can construct $M$ by finitely many distinguished triangles, starting with objects from $\mathcal{T}$. The objects of $\mathcal{T}$ are obviously in the essential image. Now assume that $M_1,M_2$ are in the essential and there is a triangle $M_1\to M_2\to M\to M_1[1]$. By Theorem~\ref{thm:derivatortilting}, the tilting functor is fully faithful, hence we have a unique lift of the morphism $M_1\to M_2$ to $\op{Hot}^{\op{b}}(\mathcal{T})$. This lift $\tilde{M_1}\to\tilde{M_2}$ can be completed to a triangle in $\op{Hot}^{\op{b}}(\mathcal{T})$, which maps to the triangle $M_1\to M_2\to M\to M_1[1]$ hence the cone of $\tilde{M_1}\to\tilde{M_2}$ maps to an object isomorphic to $M$. Therefore, $M$ is in the essential image. By induction on the number of triangles needed to present an object in $\langle\mathcal{T}\rangle_\Delta$, the claim is proved.

Finally, the last statement follows. Any projector which splits in $\mathbb{D}(\ast)$ must also split in the homotopy category of complexes by assumption. 
\end{proof}

The next result provides a general condition on the compatibility of the tilting functor of Theorem~\ref{thm:derivatortilting} with morphisms of derivators. This will be applied in Section~\ref{sec:tilting} to prove compatibility of tilting with restriction functors. 

\begin{theorem}
\label{thm:funtilt}
Let $F:\mathbb{D}_1\to\mathbb{D}_2$ be a morphism of derivators which preserves homotopy left Kan extensions. Assume that there are tilting subcategories $\mathcal{X}_i\subset \mathbb{D}_i(\ast)$ such that $F$ factors through a functor $F:\mathcal{X}_1\to\mathcal{X}_2$. Then there is a diagram, commutative up to isotransformation
\[
 \xymatrix{
\op{Hot}^{\op{b}}(\mathcal{X}_1)\ar[d]_F \ar[r] & \mathbb{D}_1(\ast)\ar[d]^{F} \ar@{=>}[dl]_{\sim}\\
\op{Hot}^{\op{b}}(\mathcal{X}_2) \ar[r] 
& \mathbb{D}_2(\ast). 
}
\]
The horizontal functors are tilting functors which exist by Theorem~\ref{thm:derivatortilting}, and the left vertical functor is the functor induced by applying $F$ levelwise.
\end{theorem}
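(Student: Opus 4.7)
The plan is to verify compatibility of $F$ with each step in the construction of the tilting functor from Theorem~\ref{thm:derivatortilting}. Recall that the tilting functor is defined as the composition
\[
\op{Hot}^{\op{b}}(\mathcal{X}_i)\xrightarrow{\op{DK}} \op{Hot}(\mathbf{\Delta}^{\op{op}}\mathcal{X}_i) \xleftarrow[\approx]{\op{ev}} \mathbb{D}_i(\mathbf{\Delta}^{\op{op}})|_{\mathcal{X}_i}\hookrightarrow \mathbb{D}_i(\mathbf{\Delta}^{\op{op}}) \xrightarrow{\op{hocolim}} \mathbb{D}_i(\ast),
\]
where $\op{DK}$ is induced from the Dold--Kan correspondence. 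So the task reduces to producing natural isomorphisms filling the corresponding squares for each of these four functors and composing them.

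For the Dold--Kan step, the equivalence $\op{Hot}^{\op{b}}(\mathcal{X}_i)\approx\op{Hot}(\mathbf{\Delta}^{\op{op}}\mathcal{X}_i)$ is purely category-theoretic, defined termwise from the underlying additive categories $\mathcal{X}_i$. Since $F:\mathcal{X}_1\to\mathcal{X}_2$ is additive, commutativity here is strict (no isotransformation needed). For the evaluation step, the fact that $F$ is a morphism of derivators means that it is a pseudo-natural transformation of 2-functors on $\mathsf{Cat}^{\op{op}}$, and in particular $F$ commutes, up to a chosen iso-transformation, with restriction along every functor $u:J\to \mathbf{\Delta}^{\op{op}}$, hence with the evaluation functors. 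Moreover, because $F(\mathcal{X}_1)\subset \mathcal{X}_2$ by assumption, $F$ restricts to a functor $\mathbb{D}_1(\mathbf{\Delta}^{\op{op}})|_{\mathcal{X}_1}\to \mathbb{D}_2(\mathbf{\Delta}^{\op{op}})|_{\mathcal{X}_2}$, and the compatibility with evaluation restricts accordingly. Combining the previous two isotransformations with their inverses (available since the evaluation is an equivalence by Theorem~\ref{thm:fritz}) yields an isotransformation for the zig-zag up to the inclusion into $\mathbb{D}_i(\mathbf{\Delta}^{\op{op}})$.

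Finally, for the homotopy colimit step, note that $\op{hocolim}$ is the homotopy left Kan extension along the terminal functor $\mathbf{\Delta}^{\op{op}}\to\ast$. By the assumption that $F$ preserves homotopy left Kan extensions, we get a canonical isotransformation $F\circ\op{hocolim}\stackrel{\approx}{\Rightarrow} \op{hocolim}\circ F$, which fills the last remaining square.

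Pasting the three isotransformations furnishes the required isotransformation filling the diagram of the theorem. The main subtlety is bookkeeping: the zig-zag going through $\mathbb{D}_i(\mathbf{\Delta}^{\op{op}})|_{\mathcal{X}_i}$ involves inverting an equivalence, so one should be a little careful to ensure that the resulting 2-cell is coherent (i.e., independent of choices up to canonical isomorphism), but this is formal once one fixes pseudo-inverses for the evaluation equivalences. No further input beyond the pseudo-naturality of $F$ and the preservation of left Kan extensions is required.
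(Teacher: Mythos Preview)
Your proof is correct and follows essentially the same approach as the paper's own proof: both decompose the tilting functor into the Dold--Kan, evaluation, and homotopy colimit steps, and verify compatibility of $F$ with each step using, respectively, the functoriality of Dold--Kan in additive functors, the pseudo-naturality of $F$ as a morphism of derivators, and the hypothesis that $F$ preserves homotopy left Kan extensions. Your remark about the bookkeeping involved in inverting the evaluation equivalence is a point the paper leaves implicit.
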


\begin{proof}
We need to check that every step of the construction in the proof of Theorem~\ref{thm:derivatortilting} is compatible with the functor $F$, i.e., yields isotransformations filling the respective commutative squares. 

First, we note that the Dold--Kan correspondence between complexes in $\mathcal{X}$ and simplicial objects in $\mathcal{X}$ is functorial. In particular, we even get an isotransformation without passing to homotopy categories. Moreover, under the Dold--Kan correspondence, the homotopies for complexes correspond exactly to homotopies for the simplicial objects, yields a diagram 
\[
\xymatrix{
\op{Hot}(\op{Ch}^-(\mathcal{X}_1))\ar[d]_F \ar[r] & \op{Hot}(\mathbf{\Delta}^{\op{op}}(\mathcal{X}_1)) \ar[d]^{F} \ar@{=>}[dl]_{\sim}\\
\op{Hot}(\op{Ch}^-(\mathcal{X}_2)) \ar[r] 
& \op{Hot}(\mathbf{\Delta}^{\op{op}}(\mathcal{X}_2)). 
}
\]

Now we consider the evaluation functors $\op{ev}:\mathbb{D}(\mathbf{\Delta}^{\op{op}})\to \op{Hot}(\mathbf{\Delta}^{\op{op}})$. We required that $F:\mathbb{D}_1\to\mathbb{D}_2$ is a morphism of derivators. This implies, in particular, that for every functor of diagrams $u:I\to J$, there are explicitly given isotransformations $u^\ast\circ F_J\to F_I\circ u^\ast$. Now the evaluation functors are given exactly by applying the evaluation to every object in the diagram. In particular, the isotransformations encoding that $F$ is a morphism of derivators assemble into an isotransformation filling the commutative diagram
\[
\xymatrix{
\mathbb{D}_1(\mathbf{\Delta}^{\op{op}})\ar[d]_F \ar[r]^{\op{ev}} & \op{Hot}(\mathbf{\Delta}^{\op{op}}(\mathcal{X}_1)) \ar[d]^{F} \ar@{=>}[dl]_{\sim}\\
\mathbb{D}_2(\mathbf{\Delta}^{\op{op}}) \ar[r]_{\op{ev}} 
& \op{Hot}(\mathbf{\Delta}^{\op{op}}(\mathcal{X}_2)). 
}
\]

Finally, we need to consider the homotopy colimit step. Since $F$ is a morphism of derivators, we have natural transformations $\gamma_{u,!}^F:u_!\circ F_I\to F_J\circ u_!$ for every functor $u:I\to J$. Requiring that $F$ preserves homotopy left Kan extensions is exactly the requirement that that these natural transformations are isomorphisms for every functor $u:I\to J$. In particular, we get the required isotransformation
\[
\xymatrix{
\mathbb{D}_1(\mathbf{\Delta}^{\op{op}})\ar[d]_F \ar[r]^{u_!} & \mathbb{D}_1(\ast) \ar[d]^{F} \ar@{=>}[dl]_{\sim}\\
\mathbb{D}_2(\mathbf{\Delta}^{\op{op}}) \ar[r]_{u_!} 
& \mathbb{D}_2(\ast).
}
\]
where $u:\mathbf{\Delta}^{\op{op}}\to\ast$ is the terminal morphism. 

Combining the three isotransformation described above proves the claim.
\end{proof}

\begin{theorem}
\label{thm:tiltmonoid}
Let $\mathbb{D}$ be a monoidal stable derivator, and let $\mathcal{X}\subset \mathbb{D}(\ast)$ be a tilting $\otimes$-subcategory. Then the tilting functor $\op{Hot}^{\op{b}}(\mathcal{X})\to \mathbb{D}(\ast)$ is monoidal, where the monoidal structure on $\op{Hot}^{\op{b}}(\mathcal{X})$ is the natural one induced from the monoidal structure on $\mathcal{X}$. 
\end{theorem}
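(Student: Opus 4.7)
The plan is to verify compatibility with the monoidal structure at each step of the construction of the tilting functor given in the proof of Theorem~\ref{thm:derivatortilting}, essentially by applying a bifunctorial analogue of Theorem~\ref{thm:funtilt} to the tensor product $\otimes\colon \mathbb{D}\times\mathbb{D}\to\mathbb{D}$. The key input is that, since $\mathbb{D}$ is a monoidal stable derivator, the tensor product is cocontinuous in each variable, so it preserves homotopy left Kan extensions in each variable. In particular, for the terminal map $u\colon \mathbf{\Delta}^{\op{op}}\to\ast$ we have natural isomorphisms $u_!(M\otimes N)\xrightarrow{\cong} u_!M\otimes u_!N$ whenever $M,N\in\mathbb{D}(\mathbf{\Delta}^{\op{op}})$, and similarly for the diagonal $\mathbf{\Delta}^{\op{op}}\to\mathbf{\Delta}^{\op{op}}\times\mathbf{\Delta}^{\op{op}}$ up to a shuffle.

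First I would check compatibility with the Dold--Kan equivalence. On the level of complexes in the additive subcategory $\mathcal{X}$ the bounded homotopy category $\op{Hot}^{\op{b}}(\mathcal{X})$ carries its usual tensor product (total complex of the termwise tensor product); under Dold--Kan this corresponds, up to coherent quasi-isomorphism given by the Eilenberg--Zilber shuffle map, to the termwise tensor product of simplicial objects in $\mathcal{X}$. Since $\mathcal{X}$ is assumed closed under $\otimes$, this step stays inside $\mathbf{\Delta}^{\op{op}}\mathcal{X}$ and is compatible with homotopies on both sides, yielding a natural isotransformation between the two possible compositions at the level of homotopy categories of simplicial objects in $\mathcal{X}$.

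Next I would handle the evaluation functor $\op{ev}\colon \mathbb{D}(\mathbf{\Delta}^{\op{op}})|_{\mathcal{X}}\to \op{Hot}(\mathbf{\Delta}^{\op{op}}\mathcal{X})$, which by Theorem~\ref{thm:fritz} is an equivalence. Using that $\otimes$ on $\mathbb{D}$ is a morphism of derivators in each variable, the exchange isotransformations $u^\ast(M\otimes N)\cong u^\ast M\otimes u^\ast N$ for inclusions of points $u\colon \ast\to \mathbf{\Delta}^{\op{op}}$ assemble (as in the proof of Theorem~\ref{thm:funtilt}) into a natural isotransformation filling the square comparing $\otimes$ on $\mathbb{D}(\mathbf{\Delta}^{\op{op}})$ and termwise $\otimes$ of underlying simplicial diagrams. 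Finally, cocontinuity of $\otimes$ in each variable provides the isotransformation $u_!(M\otimes N)\cong u_!M\otimes u_!N$ for the homotopy colimit step; splicing the three isotransformations together yields the required monoidal natural isomorphism $\op{tilt}(A\otimes B)\xrightarrow{\cong}\op{tilt}(A)\otimes\op{tilt}(B)$.

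The only subtle point, and the step I expect to require most care, is the compatibility of the shuffle maps of Dold--Kan with the passage to homotopy categories and with the various exchange isotransformations coming from the derivator axioms, in particular the coherence of the resulting associativity and unitality constraints; this is precisely the issue that the last sentence of the proof of Corollary~\ref{cor:pf142} chooses not to verify in full. For our purposes it is enough to obtain a natural iso-transformation $\op{tilt}(A\otimes B)\cong\op{tilt}(A)\otimes\op{tilt}(B)$ filling the relevant square; the full pentagon coherence can be checked formally from the coherence of the underlying monoidal derivator structure on $\mathbb{D}$ in the sense of \cite{groth:ponto:shulman}, which is available by \ref{gmonderivator}.
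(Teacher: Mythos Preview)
Your approach is essentially the same as the paper's: verify monoidality at each of the three steps (Dold--Kan, evaluation, homotopy colimit) of the tilting construction, using Eilenberg--Zilber for the first, strong monoidality of $u^\ast$ for the second, and cocontinuity of $\otimes$ for the third. The paper's proof follows exactly this outline.

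One point deserves care. Your sentence ``for the terminal map $u\colon \mathbf{\Delta}^{\op{op}}\to\ast$ we have natural isomorphisms $u_!(M\otimes N)\xrightarrow{\cong} u_!M\otimes u_!N$'' reads as though this follows directly from cocontinuity of $\otimes$ in each variable, but it does not. Cocontinuity in each variable yields projection-formula-type statements such as $u_!A\otimes B\cong u_!(A\otimes u^\ast B)$; to pass from the internal $\otimes$ on $\mathbb{D}(\mathbf{\Delta}^{\op{op}})$ to $\otimes$ on $\mathbb{D}(\ast)$ one must factor through the external product on $\mathbb{D}(\mathbf{\Delta}^{\op{op}}\times\mathbf{\Delta}^{\op{op}})$ and then invoke the bisimplicial realization theorem ($u_!\circ\Delta^\ast\cong (u\times u)_!$). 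The paper makes this explicit with a diagram identifying the internal tensor as $\Delta^\ast$ of the external one (via \cite[Theorem 3.11]{groth:ponto:shulman}) together with the lower triangle encoding the bisimplicial diagonal statement. Your parenthetical ``similarly for the diagonal $\mathbf{\Delta}^{\op{op}}\to\mathbf{\Delta}^{\op{op}}\times\mathbf{\Delta}^{\op{op}}$ up to a shuffle'' shows you are aware of this ingredient, but it would be clearer to state explicitly that this is what bridges cocontinuity in each variable to the desired isomorphism, rather than presenting the latter as an immediate consequence.
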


\begin{proof}
We follow the structure of argument of Theorem~\ref{thm:funtilt}. In the first step, the Dold--Kan correspondence induces lax monoidal functors on the categories of complexes. If we use the monoidal Dold--Kan correspondence instead, the functor will be strongly monoidal. For the second step, since a prederivator can be seen as a lift of $\mathsf{Cat}\to\mathsf{CAT}$ to monoidal categories with strong monoidal functors, the functors $u^\ast:\mathbb{D}(K)\to\mathbb{D}(J)$ for any functor $u:J\to K$ are strong monoidal functors. This implies, in particular, that the diagram evaluation functors $\mathbb{D}(\mathbf{\Delta}^{\op{op}})\to\op{Fun}(\mathbf{\Delta}^{\op{op}},\mathbb{D}(\ast))$ are strong monoidal functors. Now the inverse of a strong monoidal equivalence is also strong monoidal.\footnote{This is exercise 1.4.6 in Turaev--Virelizier: Monoidal categories and topological field theory; and is also discussed as MathStackExchange question 183285.}

The final step is now to consider the homotopy colimit. Since we assume that $\mathbb{D}$ is a monoidal derivator in the sense of \cite{groth:ponto:shulman}, $\otimes$ is cocontinuous in each variable, i.e., homotopy colimits in each variable commute with $\otimes$. Consider the following diagram:
\[
\xymatrix{
\mathbb{D}(\mathbf{\Delta}^{\op{op}})\times\mathbb{D}(\mathbf{\Delta}^{\op{op}}) \ar[d]_{\otimes} \ar[r]^{(u_!,\op{id})} \ar@/_5pc/[dd]_{\otimes}& \mathbb{D}(\ast)\times\mathbb{D}(\mathbf{\Delta}^{\op{op}}) \ar[d]^{\otimes} \ar@{=>}[dl]_{\sim} \ar[r]^{(\op{id},u_!)} & \mathbb{D}(\ast)\times\mathbb{D}(\ast) \ar[d]^{\otimes} \ar@{=>}[dl]_{\sim}\\
\mathbb{D}(\mathbf{\Delta}^{\op{op}}\times\mathbf{\Delta}^{\op{op}}) \ar[r]_{u_!} \ar[d]_{\Delta^\ast}
& \mathbb{D}(\ast\times\mathbf{\Delta}^{\op{op}}) \ar[r]_{u_!} & \mathbb{D}(\ast) \\
\mathbb{D}(\mathbf{\Delta}^{\op{op}}) \ar[rru]_{u_!}
}
\]
The isotransformations written in the squares are direct consequences of the fact that we have a monoidal derivator. There is an isotransformation in the left that identifies the internal tensor product as composition of the external tensor product with the restriction along the diagonal, this is a consequence of \cite[Theorem 3.11]{groth:ponto:shulman}. Finally, there is also an isotransformation in the lower triangle, which is a version of the  basic theorem of bisimplicial sets: the realization of a bisimplicial set by first taking horizontal and then vertical realization is equivalent to taking the realization of the diagonal. Taking all these statements together, we find that the outer diagram can be filled by an isotransformation. This implies the claim that the tilting functor is in fact monoidal. 
\end{proof}


\backmatter 

\bibliographystyle{amsalpha}
\bibliography{pub}
\printindex

\end{document}